\DeclareSymbolFont{cyrletters}{OT2}{wncyr}{m}{n}
\DeclareMathSymbol{\Sha}{\mathalpha}{cyrletters}{"58}
\titleformat{\subsection}[runin]
{\normalfont\large\bfseries}{\thesubsection}{1em}{}
\titleformat{\subsubsection}[runin]
{\normalfont\large\bfseries}{\thesubsubsection}{1em}{}
\theoremstyle{plain}
\newtheorem{thm}[subsubsection]{Theorem}
\newtheorem{thm*}{Theorem}
\newtheorem{cor}[subsubsection]{Corollary}
\newtheorem{lem}[subsubsection]{Lemma}
\newtheorem{prop}[subsubsection]{Proposition}
\newtheorem{conj}[subsubsection]{Conjecture}
\theoremstyle{definition}
\newtheorem{defn}[subsubsection]{Definition}
\theoremstyle{remark}
\newtheorem{rem}[subsubsection]{Remark}
\numberwithin{equation}{subsubsection}
\newcommand{\N}{\mathbb N}
\newcommand{\Z}{\mathbb Z}
\newcommand{\Q}{\mathbb Q}
\newcommand{\R}{\mathbb R}
\newcommand{\C}{\mathbb C}
\newcommand{\A}{\mathbb A}
\newcommand{\F}{\mathbb F}
\newcommand{\Fp}{\mathbb{F}_{p}}
\newcommand{\Fpb}{\bar{\mathbb{F}}_{p}}
\newcommand{\Zp}{{\mathbb Z}_p}
\newcommand{\Qp}{{\mathbb Q}_p}
\newcommand{\Ql}{{\mathbb Q}_l}
\newcommand{\Qv}{{\mathbb Q}_v}
\newcommand{\Qb}{\overline{\mathbb Q}}
\newcommand{\Qpb}{\overline{\mathbb{Q}}_p}
\newcommand{\Qlb}{\overline{\mathbb{Q}}_l}
\newcommand{\Qvb}{\overline{\mathbb{Q}}_v}
\newcommand{\kb}{\overline{k}} 
\newcommand{\Lb}{\overline{L}} 
\newcommand{\Qpnr}{\Q_p^{\mathrm{ur}}} 
\newcommand{\Zpnr}{\Z_p^{\mathrm{ur}}}
\newcommand{\nr}{\mathrm{ur}} 
\newcommand{\Gal}{\mathrm{Gal}} 
\newcommand{\Hom}{\mathrm{Hom}}
\newcommand{\Aut}{\mathrm{Aut}}
\newcommand{\im}{\mathrm{im}}
\newcommand{\Nm}{\mathrm{N}} 
\newcommand{\Tr}{\mathrm{tr}} 
\newcommand{\et}{\text{\'et}} 
\newcommand{\Int }{\mathrm{Int}} 
\newcommand{\Inn}{\mathrm{Int}} 
\newcommand{\Cent}{\mathrm{Cent}} 
\newcommand{\lisom}{\stackrel{\sim}{\longrightarrow}} 
\newcommand{\isom}{\stackrel{\sim}{\rightarrow}} 
\newcommand{\tors}{\mathrm{tors}} 
\newcommand{\ra}{\rightarrow}
\newcommand{\lra}{\longrightarrow}
\newcommand{\hra}{\hookrightarrow}
\newcommand{\thra}{\twoheadrightarrow}
\newcommand{\Spec}{\mathrm{Spec}}
\newcommand{\Res}{\mathrm{Res}} 
\newcommand{\cO}{\mathcal{O}} 
\newcommand{\sS}{\mathscr{S}} 
\newcommand{\dS}{\mathbb{S}} 
\newcommand{\Sh}{\mathrm{Sh}} 
\newcommand{\sF}{\mathscr{F}} 
\newcommand{\mrU}{\mathrm{U}}
\newcommand{\SU}{\mathrm{SU}}
\newcommand{\Sp}{\mathrm{Sp}}
\newcommand{\SO}{\mathrm{SO}}
\newcommand{\Spin}{\mathrm{Spin}}
\newcommand{\Gm}{\mathbb{G}_{\mathrm{m}}} 
\newcommand{\ab}{\mathrm{ab}} 
\newcommand{\ad}{\mathrm{ad}} 
\newcommand{\der}{\mathrm{der}} 
\newcommand{\uc}{\mathrm{sc}} 
\mathchardef\mhyphen="2D
\newcommand{\mcB}{\mathcal{B}} 
\newcommand{\mcA}{\mathcal{A}} 
\newcommand{\mbfa}{\mathbf{a}} 
\newcommand{\mbff}{\mathbf{f}} 
\newcommand{\mbfo}{\mathbf{0}} 
\newcommand{\mbfv}{\mathbf{v}}
\newcommand{\mcG}{\mathcal{G}} 
\newcommand{\mro}{\mathrm{o}}
\newcommand{\mbfK}{\mathbf{K}} 
\newcommand{\mbfKt}{\tilde{\mathbf{K}}} 
\newcommand{\mfk}{\mathfrak{k}}
\newcommand{\fG}{\mathfrak{G}} 
\newcommand{\fE}{\mathfrak{E}} 
\newcommand{\fD}{\mathfrak{D}} 
\newcommand{\fQ}{\mathfrak{Q}} 
\newcommand{\fP}{\mathfrak{P}} 
\newcommand{\Adm}{\mathrm{Adm}} 
\newcommand{\bfab}{\mathbf{ab}}
\newcommand{\sI}{\mathscr{I}} 
\newcommand{\uvA}{\mathcal{A}}
\newcommand{\xb}{\bar{x}} 
\newcommand{\cris}{\mathrm{cris}}
\newcommand{\dR}{\mathrm{dR}}
\newcommand{\Betti}{\mathrm{B}}
\newcommand{\Fr}{\mathrm{Fr}}
\newcommand{\ST}{\mathrm{ST}}
\newcommand{\elp}{\mathrm{ell}}
\newcommand{\loccit}{\textit{loc. cit.}}
\newcommand{\WgF}{W(\bar{K}/F)}
\begin{document}

\title{Galois gerbs and Lefschetz number formula for Shimura varieties of Hodge type}
\author{Dong Uk Lee}
\date{}
\maketitle

\begin{abstract}
For any Shimura variety of Hodge type with hyperspecial level at a prime $p$ and any automorphic lisse sheaf on it, we prove a formula, conjectured by Kottwitz \cite{Kottwitz90}, for the Lefschetz number of an arbitrary Frobenius-twisted Hecke correspondence acting on the compactly supported \'etale cohomology and verify another conjecture of Kottwitz \cite{Kottwitz90} on stabilization of that formula.
The main ingredients of our proof of the formula are a recent work of Kisin \cite{Kisin17} on Langlands-Rapoport conjecture and the theory of Galois gerbs developed by Langlands and Rapoport \cite{LR87}. Especially, we use the Galois gerb theory to establish an effectivity criterion of Kottwitz triple, and mimic the arguments of Langlands and Rapoport of deriving the Kottwitz formula from their conjectural description of the $\Fpb$-point set of Shimura variety (Langlands-Rapoport conjecture). 
We do not assume that the derived group is simply connected, and also obtain partial results at (special) parahoric levels under some condition at $p$. For that, in the first part of our work we extend the results of Langlands and Rapoport to such general cases. 
\end{abstract}

\tableofcontents

\section{Introduction}

\subsection{Statement of the main results}

The main results of this work are the following two descriptions of the Lefschetz numbers of Frobenius-twisted Hecke correspondences on the compactly supported cohomology of Shimura varieties of Hodge type with hyperspecial level, which were conjectured by Kottwitz  \cite{Kottwitz90}, (3.1) and Thm. 7.2. 
The first one is a consequence of the second one.

Fix two distinct primes $p$, $l$ of $\Q$.
Let $(G,X)$ be a Shimura datum of Hodge type, $K_p$ a hyperspecial subgroup of $G(\Qp)$, $K^p$ a (sufficiently small) compact open subgroup of $G(\A_f^p)$; put $K=K_pK^p$. Let $Sh_K(G,X)$ be (the canonical model over the reflex field $E(G,X)$ of) the associated Shimura variety, and $\sF_K$ a $\lambda$-adic lisse sheaf on it defined by a finite-dimensional algebraic representation $\xi$ of $G$ over a number field $L$ and choice of a place $\lambda$ of $L$ over $l$. For $i\in\N_{\geq0}$, let $H_c^i(Sh_K(G,X)_{\Qb},\sF_K)$ be the compactly supported cohomology of $\sF_K$.
Let $\wp$ be a prime of $E=E(G,X)$ above $p$ and $\Phi\in \Gal(\Qpb/E_{\wp})$ be a geometric Frobenius for an embedding $E_{\wp}\hookrightarrow \Qpb$. It is known \cite{Kisin10}, \cite{Vasiu99} that $Sh_K(G,X)$ has a canonical smooth integral model over the ring of integers $\mathcal{O}_{E_{\wp}}$ and $\sF_K$ extends over it.

\begin{thm} \label{thm_intro:Kottwitz_conj}
Assume that the center $Z(G)$ of $G$ has same ranks over $\Q$ and $\R$.
\footnote{In this work, for Hodge-type $(G,X)$, we always assume that $G$ is the smallest algebraic $\Q$-group such that every $h\in X$ factors through $G_{\R}$; then this assumption holds.}
For every $f^p$ in the Hecke algebra $\mathcal{H}(G(\A_f^p)/\!\!/ K^p)$, there exists $m(f^p)\in\N$, depending on $f^p$, such that for each $m\geq m(f^p)$, the Lefschetz number of $\Phi^m\times f^p$ is given by
\[\sum_i (-1)^i \mathrm{tr}(\Phi^m\times f^p | H_c^i(Sh_K(G,X)_{\Qb},\sF_K))=\sum_{\underline{H}\in \mathscr{E}_{\mathrm{ell}}(G)} \iota(G,\underline{H}) \ST_{\elp}^{H_1}(f^{H_1}),\]
where the right sum is over a set $\mathscr{E}_{\mathrm{ell}}(G)$ of representatives of the isomorphism classes of elliptic endoscopic data of $G$ with a choice of a $z$-pair $(H_1,\xi_1)$ for each $\underline{H}\in \mathscr{E}_{\mathrm{ell}}(G)$,  and $\ST_{\elp}^{H_1}(f^{H_1})$ is the elliptic part of the geometric side of the stable trace formula for a suitable function $f^{H_1}$ on $H_1(\A)$, $\iota(G,\underline{H})$ is a certain constant depending only on the pair $(G,\underline{H})$.

Moreover, if the adjoint group $G^{\ad}$ is $\Q$-anisotropic or $f^p$ is the identity, we can take $m(f^p)$ to be $1$.
\end{thm}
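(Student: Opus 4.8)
The plan is to compute the Lefschetz number by counting fixed points on the special fibre of the canonical integral model, and then to convert that count into the stabilized right-hand side, following the two-step strategy of Langlands--Rapoport and Kottwitz. First I would apply a Lefschetz-type trace formula to the Frobenius-twisted Hecke correspondence $\Phi^m\times f^p$ acting on the compactly supported \'etale cohomology of the (in general non-proper) model over $\cO_{E_\wp}$. Two cases are immediate. If $G^{\ad}$ is $\Q$-anisotropic then $Sh_K(G,X)$ is proper over $\cO_{E_\wp}$ and the ordinary Grothendieck--Lefschetz formula applies for every $m\geq1$; if $f^p$ is the unit, the correspondence is simply the $m$-th power of geometric Frobenius, and Grothendieck--Lefschetz for a separated scheme of finite type over a finite field (no properness needed) expresses the left-hand side as $\sum_{x}\Tr(\Fr_x\mid \sF_{K,x})$ over the $\Fpb$-points fixed by $\Phi^m$, again for every $m\geq1$. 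Hence $m(f^p)=1$ in both cases. In general I would invoke the fixed-point formula of Fujiwara and Varshavsky for open varieties (a form of Deligne's conjecture): its hypothesis is that the correspondence be contracting near its fixed points, and since large powers of Frobenius contract, this holds for all $m$ beyond a bound $m(f^p)$ determined by the support of $f^p$, the local terms then being the naive ones, namely traces on the stalks of $\sF_K$.

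To evaluate this point count I would use the Langlands--Rapoport description of $Sh_K(G,X)(\Fpb)$ together with its commuting actions of geometric Frobenius and of $G(\A_f^p)$. For $G^{\der}$ simply connected this is Kisin's theorem \cite{Kisin17}; in the generality needed here---arbitrary $G^{\der}$, and partial results at special parahoric level---it is the extension established in the first part of the paper by means of the Galois gerb theory of Langlands and Rapoport \cite{LR87}. As in \loccit, the fixed points of $\Phi^m\times f^p$ are parametrized, up to conjugacy, by pairs $(\phi,\epsilon)$ with $\phi$ an admissible morphism of Galois gerbs and $\epsilon\in I_\phi(\Q)$, and these fall into \emph{Kottwitz triples} $(\gamma_0;\gamma,\delta)$: a stable conjugacy class $\gamma_0$ in $G(\Q)$, a conjugacy class $\gamma=(\gamma_\ell)_{\ell\neq p}$ in $G(\A_f^p)$ stably conjugate to $\gamma_0$, and a $\sigma$-conjugacy class $\delta$ in $G(\Q_{p^r})$ (for the residue degree $r$ attached to $\wp$ and $m$) whose norm is stably conjugate to $\gamma_0$. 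The genuinely new point at this stage is to determine exactly which triples are \emph{effective}, i.e. actually occur; I would prove this effectivity criterion by a Galois-gerb computation, showing that Kottwitz's cohomological obstruction $\alpha(\gamma_0;\gamma,\delta)$ vanishes precisely on the triples coming from admissible morphisms.

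Then I would count the fixed points triple by triple. As in Langlands--Rapoport and Kottwitz, the total contribution of a given effective triple $(\gamma_0;\gamma,\delta)$ factors into an orbital integral $O_\gamma(f^p)$ on $G(\A_f^p)$, a twisted orbital integral $TO_\delta(\phi_r)$ on $G(\Q_{p^r})$ for an explicit function $\phi_r$ at $p$, a coefficient contribution from the character of $\xi$ at $\gamma_0$, and a volume factor (a signed volume of $I(\Q)\backslash I(\A_f)$ for an inner form $I$ of the centralizer of $\gamma_0$) times a constant of cohomological nature built from Galois cohomology of $Z(G)$. Summing over all effective triples yields Kottwitz's intermediate formula, which is exactly the expression conjectured in \cite{Kottwitz90}, (3.1). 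Finally I would stabilize: using Langlands--Shelstad transfer and the fundamental lemma of Ng\^o (with its twisted variant), together with Kottwitz's formal manipulations, the sum of products of (twisted) orbital integrals over Kottwitz triples is rewritten as $\sum_{\underline H\in\mathscr{E}_{\mathrm{ell}}(G)}\iota(G,\underline H)\,\ST_{\elp}^{H_1}(f^{H_1})$, which is the stabilization conjectured in \cite{Kottwitz90}, Thm.~7.2, and gives the displayed identity; the intermediate formula (3.1) then follows from it by the easy reverse passage.

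I expect the principal difficulty to lie in the Galois gerb bookkeeping in the generality of arbitrary $G^{\der}$: matching the Langlands--Rapoport parametrization of fixed points---where centralizers of semisimple elements need not be connected, so admissible morphisms and their automorphism groups behave less rigidly---with Kottwitz's triples and invariants, proving the effectivity criterion, and checking that the volume and sign factors come out exactly as predicted. By comparison, the Lefschetz reduction and the final stabilization are applications of now-standard machinery.
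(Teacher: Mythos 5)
Your outline follows the paper's conditional route (Theorem 6.1.6) fairly faithfully — Lefschetz via Fujiwara/Varshavsky for general $f^p$, Grothendieck--Lefschetz in the proper or unit-Hecke-element cases, then a count by admissible pairs with effectivity criterion, then Kottwitz's stabilization — but there is one genuine gap concerning how the $\Fpb$-point count is actually obtained unconditionally.

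You write that you "would use the Langlands--Rapoport description of $Sh_K(G,X)(\Fpb)$" and attribute it, for simply connected derived group, to Kisin's theorem. This does not match what is actually available, and the discrepancy is exactly what the paper has to work around. Kisin's version of the conjecture (\cite[Thm.~0.3]{Kisin17}, which covers arbitrary $G^{\der}$ at hyperspecial level, not only simply connected) gives a bijection $\sS_{\mbfK_p}(\Fpb)\isom \bigsqcup_\phi S(\phi)$, but the action of $I_\phi(\Q)$ on $X_p(\phi)\times X^p(\phi)$ in that statement is \emph{not} the natural one prescribed in the Langlands--Rapoport conjecture; it is only well-defined up to an imprecision that kills the elementary Kottwitz--Langlands $\S1.4$ decomposition of $S(\phi)^{\Phi^m\circ f=\mathrm{id}}$ into pieces indexed by pairs $(\phi,\epsilon)$. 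So one cannot simply plug "the LR description" into the fixed-point machinery. This is the reason the paper gives \emph{two} proofs of the Kottwitz formula: Thm.~\ref{thm:Kottwitz_formula:LR}, which is exactly your argument but is explicitly conditional on Conj.~\ref{conj:Langlands-Rapoport_conjecture_ver1}, and the unconditional Thm.~\ref{thm:Kottwitz_formula:Kisin}, which is the one actually used to derive the statement in question. The unconditional proof does not invoke any form of the LR parametrization as a black box; it instead works directly with the isogeny-class decomposition $\sS_{\mbfK_p}(\Fpb)=\bigsqcup_\sI S(\sI)$ and introduces a parallel notion of K-admissible K-pairs $(\sI,\epsilon)$, then imports several strong geometric inputs from \cite{Kisin17} (the strong CM lifting theorem, the generalization of Tate's theorem on endomorphisms, the twisting construction on isogeny classes) together with a second version of the effectivity criterion (Thm.~\ref{thm:LR-Satz5.25b2}) to set up the dictionary between K-pairs, special LR-pairs, and stable Kottwitz triples. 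This is substantially more than "the extension established in the first part of the paper": the first part extends the \emph{Galois-gerb side} of the theory (admissible morphisms, special pairs, effectivity criterion) to general $G^{\der}$ and parahoric level, but it does not prove the LR conjecture in that generality, and your argument as stated would therefore only yield the conditional version.

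A smaller point worth noting: the effectivity criterion you plan to prove ("Kottwitz's cohomological obstruction vanishes precisely on the triples coming from admissible morphisms") is not quite what is needed or proved. The criterion is not merely vanishing of the Kottwitz invariant — there are Kottwitz triples with trivial invariant that are not effective. The correct criterion, and the one proved in the paper (Thm.~\ref{thm:LR-Satz5.25} and Thm.~\ref{thm:LR-Satz5.25b2}), is that a triple with trivial Kottwitz invariant is effective \emph{if and only if} the set $Y_p(\delta)$ is non-empty, equivalently the local summand $\mathrm{O}_\gamma(f^p)\cdot\mathrm{TO}_\delta(\phi_p)$ is non-zero. This non-vanishing condition is crucial: it is what lets one pass from a sum over effective triples to a sum over all triples with trivial invariant, since non-effective triples then contribute zero automatically.
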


In particular, we obtain a formula for the local zeta functions of the same Shimura varieties. 
We refer to Thm. \ref{thm:EP_GS_STF} for explanation of the objects appearing here. 
We remark that this formula is the first step in the Langlands's program for establishing the celebrated conjecture that the Hasse-Weil zeta function of an arbitrary Shimura variety is a product of automorphic $L$-functions. 
This formula is deduced from the following formula, via the machinery of stabilization of the trace formula.

\begin{thm} \label{thm_intro:Kottwitz_formula} 
For any $f^p$ in the Hecke algebra $\mathcal{H}(G(\A_f^p)/\!\!/ K^p)$, there exists $m(f^p)\in\N$, depending on $f^p$, such that for each $m\geq m(f^p)$, the Lefschetz number of $\Phi^m\times f^p$ is given by
\begin{align} \label{eq_intro:Kottwitz_formula}
\sum_{i}(-1)^i\mathrm{tr}( & \Phi^m\times f^p | H^i_c(Sh_{K}(G,X)_{\Qb},\sF_K)) \\
& = \sum_{(\gamma_0;\gamma,\delta)} c(\gamma_0;\gamma,\delta)\cdot \mathrm{O}_{\gamma}(f^p)\cdot \mathrm{TO}_{\delta}(\phi_p) \cdot \mathrm{tr}\xi(\gamma_0), \nonumber
\end{align}
where the sum is over a set of representatives $(\gamma_0;\gamma,\delta)$ of \emph{all} equivalence classes of \emph{stable} Kottwitz triples of level $n$ having trivial Kottwitz invariant and $c(\gamma_0;\gamma,\delta)$ is a certain constant (defined in terms of Galois cohomology). Moreover, if $G^{\ad}$ is $\Q$-anisotropic or $f^p$ is the identity, we can take $m(f^p)$ to be $1$.
\end{thm}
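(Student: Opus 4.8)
The plan is to carry out, in the Hodge-type setting and \emph{without} assuming $G^{\der}$ simply connected, the derivation of Kottwitz's counting formula from the Langlands--Rapoport description of $\sS_K(\Fpb)$, following the strategy of \cite{LR87} and \cite{Kottwitz90}. The inputs are Kisin's theorem \cite{Kisin17} on the Langlands--Rapoport conjecture for $\sS_K$ (together with its extension to the present generality, carried out in the first part of this paper) and the fact \cite{Kisin10}, \cite{Vasiu99} that $\sS_K$ is a smooth $\cO_{E_\wp}$-model to which $\sF_K$ extends. First I would reduce the alternating trace of $\Phi^m\times f^p$ on $H^*_c$ to a weighted count of the fixed points of this correspondence on $\sS_K(\Fpb)$, the weight at a fixed point being the local term prescribed by $f^p$ together with $\Tr\xi$. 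When $G^{\ad}$ is $\Q$-anisotropic, $\sS_K$ is proper and this is the Grothendieck--Lefschetz trace formula, valid for all $m\geq 1$. In general $\sS_K$ is only quasi-projective; there I would invoke the Fujiwara--Varshavsky form of Deligne's conjecture for the twisted-Frobenius correspondence $\Phi^m\times f^p$ on $H^*_c$: for $m$ past a bound $m(f^p)$ depending only on $f^p$ — produced by the contracting behaviour of a high enough power of Frobenius near the boundary of a compactification — the local terms are the naive ones and the trace formula holds, with $m(f^p)=1$ when $f^p$ is the identity.

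Next I would organize the fixed-point set by isogeny class. By the Langlands--Rapoport description, $\sS_K(\Fpb)$ is a disjoint union, over conjugacy classes of admissible morphisms $\phi$ from the quasi-motivic Galois gerb $\fQ$ to the Galois gerb of $G$, of sets of the form $I_\phi(\Q)\backslash\big(X_p(\phi)\times X^p(\phi)\big)$, where $I_\phi$ is the $\Q$-group of automorphisms of $\phi$ (an inner form of $\Cent_G(\gamma_0)^{\circ}$ for a semisimple $\gamma_0\in G(\Q)$ determined by $\phi$), $X^p(\phi)$ is a $G(\A_f^p)$-torsor, and $X_p(\phi)$ is an affine Deligne--Lusztig-type set at $p$ carrying the Frobenius operator. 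To each $\phi$ whose isogeny class meets the fixed locus of $\Phi^m\times f^p$ I would attach a Kottwitz triple $(\gamma_0;\gamma,\delta)$ of level $n$ (the level being determined by $m$ and $\wp$): $\gamma\in G(\A_f^p)$ read off from the action on $X^p(\phi)$, and $\delta\in G(\Q_{p^n})$, taken up to $\sigma$-conjugacy, read off from $X_p(\phi)$, so that the norm $N\delta$ is conjugate to $\gamma_0$ over $\Qpb$. Unfolding the double coset — summing $f^p$ over the $G(\A_f^p)$-orbit of $\gamma$ and $\phi_p$ over the twisted orbit of $\delta$, with $\Tr\xi(\gamma_0)$ supplied by the coefficient sheaf — I would show that the isogeny class of $\phi$ contributes $\mathrm{vol}\big(I_\phi(\Q)\backslash I_\phi(\A_f)\big)\cdot\mathrm{O}_{\gamma}(f^p)\cdot\mathrm{TO}_{\delta}(\phi_p)\cdot\Tr\xi(\gamma_0)$.

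Finally I would pass from the sum over admissible morphisms to the sum over stable Kottwitz triples with trivial Kottwitz invariant, in two steps. First, the effectivity criterion established earlier through the Galois-gerb formalism: a stable Kottwitz triple $(\gamma_0;\gamma,\delta)$ of level $n$ is realized by some admissible morphism if and only if its Kottwitz invariant $\alpha(\gamma_0;\gamma,\delta)$ in the finite group $\mfK(\Cent_G(\gamma_0)^{\circ}/\Q)^{D}$ vanishes, so exactly the triples on the right-hand side of \eqref{eq_intro:Kottwitz_formula} occur. Second, a cohomological count: the admissible morphisms lying over a fixed such triple form a torsor under a finite group whose order, combined with the comparison of Tamagawa measures on the inner forms $I_\phi$ and on $\Cent_G(\gamma_0)^{\circ}$ and with the defect $\ker\!\big(\ker^1(\Q,\Cent_G(\gamma_0)^{\circ})\to\ker^1(\Q,G)\big)$, reassembles into Kottwitz's constant $c(\gamma_0;\gamma,\delta)$. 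Summing the per-isogeny-class contributions and regrouping by triples then yields \eqref{eq_intro:Kottwitz_formula}. I expect the principal obstacle to be precisely this last step in the non-simply-connected case: one must track the $\pi_1(G)$-part of the gerb data and the associated abelianized Galois cohomology — which governs both the set of admissible morphisms over a given triple and the discrepancy between the cohomology of $G$ and of $G^{\uc}$ — accurately enough that the finite multiplicities and the Tamagawa-number ratios collapse into the single constant $c(\gamma_0;\gamma,\delta)$; a secondary difficulty is making the bound $m(f^p)$ in the non-proper case explicit and excluding residual boundary contributions.
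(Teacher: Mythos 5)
Your outline reproduces the architecture of the paper's \emph{conditional} proof of this theorem (Thm.~\ref{thm:Kottwitz_formula:LR}): Lefschetz/Fujiwara--Varshavsky to reduce to a fixed-point count; decomposition by admissible morphism with sets $I_\phi(\Q)\backslash\big(X_p(\phi)\times X^p(\phi)\big)$; the further decomposition of each fixed-point set by Frobenius-descent data $\epsilon\in I_\phi(\Q)$; attachment of Kottwitz triples; unfolding into $\mathrm{O}_\gamma(f^p)\cdot\mathrm{TO}_\delta(\phi_p)$; the effectivity criterion; and the regrouping of multiplicities into $c(\gamma_0;\gamma,\delta)$ through the Tamagawa-number identity and $\ker^1$. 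That route is correct.

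But there is a genuine gap in the way you claim to make it unconditional. You invoke ``Kisin's theorem on the Langlands--Rapoport conjecture for $\sS_K$'' and then write $\sS_K(\Fpb)=\bigsqcup_\phi I_\phi(\Q)\backslash\big(X_p(\phi)\times X^p(\phi)\big)$ as if the \emph{natural} $I_\phi(\Q)$-action on $X_p(\phi)\times X^p(\phi)$ were part of what Kisin proves. It is not: in Kisin's version of the Langlands--Rapoport uniformization the $I_\phi(\Q)$-action on $X_p(\phi)\times X^p(\phi)$ is obtained by twisting and is \emph{not} the one specified by the conjecture, and the paper explicitly observes (footnote in the introduction, and again at the start of \S6.2) that the Langlands--Rapoport-to-Kottwitz derivation ``per se does not work'' for that imprecise description. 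Passing from $\sS_K(\Fpb)^{\Phi^m=\mathrm{id}}$ to the sets $I_{\phi,\epsilon}(\Q)\backslash X(\phi,\epsilon)$ and reading off a Kottwitz triple requires knowing how $\epsilon$ acts on both $X_p$ and $X^p$ compatibly; if the action on $X_p(\phi)\times X^p(\phi)$ is twisted by an unknown cochain, the centralizer $I_{\phi,\epsilon}$, the element $\delta$, and the resulting triple $(\gamma_0;\gamma,\delta)$ all become ambiguous, and the argument stalls before the orbital-integral unfolding.

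The paper's actual unconditional proof (Thm.~\ref{thm:Kottwitz_formula:Kisin}) therefore does \emph{not} pass through an admissible-morphism decomposition of $\sS_K(\Fpb)$ at all. It works directly with the \emph{isogeny classes} $\sI$, the $\Q$-groups $I_{\sI}$, the embeddings $(i_p,i^p)$, and a ``K-pair'' $(\sI,\epsilon)$ with $\epsilon\in I_{\sI}(\Q)$, using (Thm.~\ref{thm:Kisin17_Cor.1.4.13,Prop.2.1.3,Cor.2.2.5}) the strong CM-lifting theorem, (Thm.~\ref{thm:Kisin17_Cor.2.3.2;Tate_isom}) Kisin's generalization of Tate's theorem to identify $(I_{\sI,\epsilon})_{\Q_v}$ with the local centralizers, (Prop.~\ref{prop:Kisin17_Prop.4.4.8}) a twisting of K-pairs by $\Sha^{\infty}_G$-classes, and a \emph{second} effectivity criterion (Thm.~\ref{thm:LR-Satz5.25b2}) formulated for K-pairs rather than for LR-pairs. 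Lemma~\ref{lem:key_observations} is the dictionary equating K-admissibility of $(\sI,\epsilon)$ with LR-admissibility of the corresponding special pair $(\psi_{i_T(T),\mu_h},i_T(\epsilon))$, and it is this dictionary that replaces the unavailable full Langlands--Rapoport description. Your proposal invokes the first effectivity criterion (Thm.~\ref{thm:LR-Satz5.25}), which is enough for the conditional proof but not for the unconditional one; it also omits the whole $I_{\sI}$/Tate's-theorem/germ-of-Frobenius layer on which the unconditional proof actually rests. The Deligne-conjecture step and the combinatorics behind $c(\gamma_0;\gamma,\delta)$ in your sketch are fine; it is the bridge from Kisin's geometric input to the group-theoretic counting that you have not supplied.
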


See Thm. \ref{thm:Kottwitz_formula:Kisin} for a precise description of the terms involved and more details. We emphasize that in both theorems, we do \emph{not} assume that the derived group $G^{\der}$ of $G$ is simply connected. In such case that $G^{\der}=G^{\uc}$, $G^{\uc}$ being the simply connected cover of $G^{\der}$, the expression on the right-hand side of (\ref{eq_intro:Kottwitz_formula}) equals the one in the Kottwitz's conjecture \cite[(3.1)]{Kottwitz90}.
Regarding this generality, we remark on two points. First, we take the sum only over certain Kottwitz triples which we call \emph{stable} (Def. \ref{defn:stable_Kottwitz_triple}). This notion of stableness has the same meaning as in the stable conjugacy (e.g. every Kottwitz triple is stable in our sense if $G^{\der}=G^{\uc}$), so is a natural condition in this context. But there is also some subtle point at $p$ (cf. Remark \ref{rem:Kottwitz_triples} and \autoref{subsubsec:w-stable_sigma-conjugacy}).
Secondly, the condition of ``\textit{having trivial Kottwitz invariant}'' should be also taken with a grain of salt: again unless $G^{\der}=G^{\uc}$, Kottwitz invariant itself is not a notion well-defined by a (stable) Kottwitz triple alone, although its vanishing is so (cf. \autoref{subsubsec:Kottwitz_invariant}).

To have a complete picture of the Hasse-Weil zeta function (and for applications to construction of Galois representations, cf. \cite{ScholzeShin13}), one also needs similar descriptions at bad reductions (cf. \cite[$\S$10]{Rapoport05}, \cite[Conj.4.30, 4.31]{Haines14}). For that, we remark that we also obtain some partial results towards Thm. \ref{thm_intro:Kottwitz_formula} for (special) parahoric levels under the assumption that $G_{\Qp}$ is quasi-split and is tamely ramified, and together with such results, the same methods proving the above theorems (based on \cite{Kottwitz84b}, \cite{LR87}) should allow the wanted descriptions, once those results in \cite{Kisin17} which were geometric ingredients in our arguments (and further some relevant results in harmonic analysis) are also available for general parahoric levels.

\subsection{Strategy of proof}

We begin with some general comments on our proofs. 
First, Thm. \ref{thm_intro:Kottwitz_conj} is a consequence of the stabilization of the right-hand side of the identity (\ref{eq_intro:Kottwitz_formula}), the key tools in this machinery being the endoscopic transfer conjectures which are now established by Ngo, Waldspurger. Such stabilization was carried out by Kottwitz \cite[$\S$4-$\S$7]{Kottwitz90}, \cite{Kottwitz10} in the case $G^{\der}=G^{\uc}$. We follow closely his arguments which however need occasionally modifications due to our generality. Our main contribution is the proof of Thm. \ref{thm_intro:Kottwitz_formula} and now we give a quick overview of the historical origin of our strategy. 
After previous successes with examples (by Eichler, Shimura, Kuga, Sato, and Ihara), a systematic approach to a formula for the Lefschetz number in question (and its stabilization) was conceived by Langlands based on Lefschetz-Grothendieck trace formula, and a set of ideas and techniques was developed by Langlands \cite{Langlands76}, \cite{Langlands79} and Kottwitz \cite{Kottwitz84b} (``Langlands-Kottwitz method''). Meanwhile, it was also realized that this method itself needed some refinement. Then, Langlands and Rapoport \cite{LR87} recast the problem by developing the theory of Galois gerbs, modeled on the theory of Grothendieck motives. Its purpose was to provide a general framework for formulating a conjectural description of the $\Fpb$-point set of Shimura variety which is \emph{precise} enough to allow one to derive the Kottwitz formula from it.
 
Our proof of the Kottwitz formula (\ref{eq_intro:Kottwitz_formula}) imitates these arguments of Langlands and Rapoport of deriving the Kottwitz formula from their conjecture. As such, their theory of Galois gerbs is a major ingredient in this work. Another essential ingredient is a recent work of Kisin \cite{Kisin17} on the aforementioned conjecture of Langlands and Rapoport.
Previously, Kottwitz \cite{Kottwitz92} proved the formula (\ref{eq_intro:Kottwitz_formula}) in PEL-type cases (of simple Lie type $A$ or $C$) by a method which is based on the Honda-Tate theory. This method however cannot be applied in general Hodge-type situations, and indeed our proof is different from his proof in his PEL-type cases.

In the next, we give more detailed discussion of the idea of our proof. We begin by introducing the work of Langlands and Rapoport \cite{LR87}.

\subsubsection{Langlands-Rapoport conjecture}
The conjecture of Langlands and Rapoport, which was stated in \cite{LR87} and a significant progress towards which was recently made by Kisin \cite{Kisin17}, aims to give a group-theoretic description of the set of $\Fpb$-points of the mod-$p$ reduction of a Shimura variety, as provided with Hecke operators and Frobenius automorphism. 

Let $(G,X)$ be a (general) Shimura datum and $\mbfK^p\subset G(\A_f^p)$, $\mbfK_p\subset G(\Qp)$ compact open subgroups, and set $\mbfK:=\mbfK_p\times \mbfK^p\subset G(\A_f)$.
The original conjecture mainly concerned good reduction cases, where $\mbfK_p$ is \textit{hyperspecial}, i.e. $\mbfK_p=G_{\Zp}(\Zp)$ for a reductive $\Zp$-group scheme with generic fiber $G_{\Qp}$. 
We also choose a place $\wp$ of the reflex field $E(G,X)$ dividing $p$, and let $\cO_{\wp}$, $\kappa(\wp)$ denote respectively the integer ring of the local field $E(G,X)_{\wp}$ and its residue field.
Then, Langlands and Rapoport conjectured that there exists an integral model $\sS_{\mbfK_p}(G,X)$ of $\Sh_K(G,X)$ over $\cO_{\wp}$, for which there is a bijection 
\begin{equation} \label{eqn:LRconj-ver1}
\sS_{\mbfK_p}(G,X)(\Fpb)\isom \bigsqcup_{[\phi]}S(\phi)
\end{equation}
where
\[S(\phi)=\varprojlim_{\mbfK^p} I_{\phi}(\Q)\backslash X_p(\phi)\times X^p(\phi)/\mbfK^p.\]
To give an idea of what these objects look like, suppose that our Shimura variety $\Sh_{\mbfK_p}(G,X)$ is a moduli space of abelian varieties endowed with a certain prescribed set of additional structures defined by $G$ (called $G$-structure, for short), and that there exists an integral model whose reduction affords a similar moduli description (at least over $\Fpb$). Then, roughly speaking, each $\phi$ is supposed to correspond to an isogeny class of abelian varieties with $G$-structure, and the set $S(\phi)$ is to parameterize the isomorphism classes in the corresponding isogeny class. 
More precisely, $X_p(\phi)$ and $X^p(\phi)$ should correspond, repsectively, to the isogenies of $p$-power order and prime-to-$p$ order (say, leaving from a fixed member in the isogeny class $\phi$) preserving $G$-structure, and $X_p(\phi)$ can be also identified with a suitable affine Deligne-Lusztig variety $X(\{\mu_X\},b)_{\mbfK_p}$. 
The term $I_{\phi}(\Q)$ is to be the automorphism group of the isogeny class attached to $\phi$, and thus acts naturally on $X_p(\phi)$ and $X^p(\phi)$.
Moreover, each of the sets $S(\phi)$ carries a compatible action of the group $G(\A_f^p)$ and the Frobenius automorphism $\Phi$ (which is an element of $\Gal(\Fpb/\kappa(\wp))$), and the bijection (\ref{eqn:LRconj-ver1}) should be compatible with these actions.

When it comes to the precise definition, the most tricky object is the parameter $\phi$. Its precise definition makes use of the language of \textit{Galois gerb}: A Galois gerb is a gerb, in the sense of \textit{Cohomologie non ab\'elienne} \`a la Giraud, on the \'etale site of a field (with choice of a neutralizing object). This is motivated by the fact \cite{Milne94} that there is a well-determined class of Shimura varieties (i.e. Shimura varieties of abelian type) which, in characteristic zero, have a description of their point sets similar to (\ref{eqn:LRconj-ver1}) with the parameter $\phi$ being an abelian motive (constructed using absolute Hodge cycles). For $\Fpb$-points, the parameter $\phi$, called an \emph{admissible morphism}, is to represent ``a motive over $\Fpb$ with $G$-structure''. A proper definition of such object involves a Tannakian-theoretic description of the category of motives. But, the Tannakian category of Grothendieck motives over $\Fpb$, being non-neutral, is identified (after choice of a fiber functor) with the representation category of a certain Galois gerb (``motivic Galois gerb''), not an affine group scheme, and ``a motive over $\Fpb$ with $G$-structure'' should be a morphism from this motivic Galois gerb to the neutral Galois gerb attached to $G$.

\subsubsection{From Langlands-Rapoport conjecture to Kottwitz formula} \label{subsubsec:fromLRtoKF}
The conjectural description (\ref{eqn:LRconj-ver1}) allows one to obtain a manageable description of the fixed point set of any Hecke correspondence twisted by a Frobenius $\Phi^m$, and eventually a purely group-theoretic formula for its cardinality; in particular, it gives a formula for the cardinalities of the finite sets $\sS_{\mbfK}(G,X)(\F_{q^m})=[\sS_{\mbfK}(G,X)(\Fpb)]^{\Phi^m=\mathrm{id}}$ for each finite extension $\F_{q^m}$ of the base field $\F_q=\kappa(\wp)$ whose knowledge amounts to that of the local zeta function of $\sS_{\mbfK}(G,X)_{\kappa(\wp)}$. 

Such deduction arguments were provided by Kottwitz \cite{Kottwitz84b} and Langlands-Rapoport \cite{LR87}.
Here, we sketch its basic idea in the trivial correspondence case (cf. \cite{Milne92}).
For simplicity, we assume (only here) that the derived group $G^{\der}$ of $G$ is simply connected.
By an elementary argument \cite[$\S$1.4]{Kottwitz84b}, one readily sees that for each admissible morphism $\phi$, the corresponding fixed-point set $S(\phi)^{\Phi^m=\mathrm{Id}}$ breaks up further as a disjoint union of subsets $S(\phi,\epsilon)$ indexed by (the equivalence class of) a pair $(\phi,\epsilon)$, where $\epsilon$ is an automorphism of $\phi$ which exists as an element of $G(\Qb)$ and is to be regarded as a Frobenius descent datum of $\phi$: 
\[\sS_{\mbfK}(G,X)(\F_{q^m})\isom \bigsqcup_{[\phi]}S(\phi)^{\Phi^m=\mathrm{id}} = \bigsqcup_{[\phi,\epsilon]} S(\phi,\epsilon)\]
Among such pairs $(\phi,\epsilon)$, we are interested only in the pairs, called \emph{admissible}, satisfying some natural conditions that are necessary (but, not sufficient in general!) for the corresponding set $S(\phi,\epsilon)$ to be non-empty (Def. \ref{defn:admissible_pair}). 
Then, with any admissible pair $(\phi,\epsilon)$, one can associate a triple of group elements (\textit{Kottwitz triple})
\[(\gamma_0;\gamma=(\gamma_l)_{l\neq p},\delta)\in G(\Q)\times G(\A_f^p)\times G(L_n)\] 
satisfying certain compatibilities among themselves, where $L_n$ is the unramified extension of $\Qp$ of degree $n=m[\kappa(\wp):\Fp]$, and express the cardinality of the set $S(\phi,\epsilon)$ in terms of it as 
\[|S(\phi,\epsilon)|=\mathrm{vol}(G_{\gamma_0}(\Q)\backslash G_{\gamma_0}(\A_f))\cdot O_{\gamma}(f^p)\cdot TO_{\delta}(\phi_p),\] where $G_{\gamma_0}$ is the centralizer subgroup of $\gamma_0$ in $G$, $O_{\gamma}(f^p)$ and $TO_{\delta}(\phi_p)$ are some orbital integral and twisted orbital integral, respectively. Thus, the final formula for $|\sS_{\mbfK}(G,X)(\F_{q^m})|$ (for any sufficiently small $K^p$) takes the form of a sum, indexed by (the equivalence classes of) Kottwitz triples, of a product of quantities that can be defined purely group theoretically:
\begin{equation} \label{eqn:formulra_for_number_of_pts}
|\sS_{\mbfK}(G,X)(\F_{q^m})|=\sum_{(\gamma_0;\gamma,\delta)}\iota(\gamma_0;\gamma,\delta)\cdot \mathrm{vol}(G_{\gamma_0}(\Q)\backslash G_{\gamma_0}(\A_f))\cdot O_{\gamma}(f^p)\cdot TO_{\delta}(\phi_p),
\end{equation}
where $\iota(\gamma_0;\gamma,\delta)$ is by definition the number of equivalence classes of admissible pairs giving rise to a fixed Kottwitz triple $(\gamma_0;\gamma,\delta)$. 

With an explicit cohomological description for $\iota(\gamma_0;\gamma,\delta)$, this is the formula conjectured (and proved in certain PEL-type cases \cite[(19.6)]{Kottwitz92}) by Kottwitz \cite[(3.1)]{Kottwitz90}, except that here the sum is only over the \emph{effective} Kottwitz triples, namely those Kottwitz triples attached to some admissible pair $(\phi,\epsilon)$, while in the original formula, one takes \emph{all} Kottwitz triples with trivial Kottwitz invariant (it is known that every effective Kottwitz triple has trivial Kottwitz invariant).
We bring the reader's attention to this usuage of the terminology \emph{effectivity} and the possible confusion that a Kottwitz triple $(\gamma_0;\gamma,\delta)$ which is effective in this sense may not appear as a summation index ``effectively'' in the sum (\ref{eqn:formulra_for_number_of_pts}) (i.e. the corresponding summand could be zero): one could also have defined $\iota(\gamma_0;\gamma;\delta)$ to be zero if is not effective, but then it is not the definition $\iota(\gamma_0;\gamma,\delta):=|\ker[\ker^1(\Q,G_{\gamma_0})\rightarrow \ker^1(\Q,G)]|$ used by Kottwitz in his formula (which is always non-zero).
We note that to reconcile the formula (\ref{eqn:formulra_for_number_of_pts}) with the Kottwitz's formula \cite[(3.1)]{Kottwitz90}, it suffices to establish the following \emph{effectivity criterion of Kottwitz triple}: a Kottwitz triple with trivial Kottwitz invariant is effective (in the sense just defined) if the corresponding summand $O_{\gamma}(f^p)\cdot TO_{\delta}(\phi_p)$ is non-zero (one also needs the fact that $\iota(\gamma_0;\gamma,\delta)=|\ker[\ker^1(\Q,G_{\gamma_0})\rightarrow \ker^1(\Q,G)]|$ which is however proved in \cite[Satz 5.25]{LR87}).

\subsubsection{The works of Langlands and Rapoport \cite{LR87}  and of Kisin \cite{Kisin17}}
A substantial part of the work \cite{LR87} of Langlands and Rapoport is devoted to constructing all these objects and establishing their basic properties, especially those facts that are needed to carry out the deduction just sketched of the formula (\ref{eqn:formulra_for_number_of_pts}) from the (conjectural) description (\ref{eqn:LRconj-ver1}). 
The only missing ingredient in completing this deduction arguments was the effectivity criterion of Kottwitz triple stated above.
In fact, Langlands and Rapoport also suggest one such effectivity criterion (condition $\ast(\epsilon)$ in \cite[Satz.5.21]{LR87}), but fail to relate it to the more natural one of non-vanishing of $O_{\gamma}(f^p)\cdot TO_{\delta}(\phi_p)$.
In this work, we prove this effectivity criterion (Thm. \ref{thm:LR-Satz5.25}) and thereby (with a little more work allowing general Hecke correspondences) complete the arguments of Langlands and Rapoport of deriving the Kottwitz formula from their conjecture.
\footnote{In \cite{Milne92}, Milne uses this criterion which was at that time unjustified, for the same purpose of completing this deduction arguments, cf. see Remark \ref{rem:comments_on_Milne92}.}
We emphasize that our proof of the effectivity criterion (even though it could be formulated without the language of Galois gerbs) uses the full force of the theory of Galois gerbs and admissible morphisms from \cite{LR87}.

On the other hand, recently Kisin \cite{Kisin17} obtained a description of $\sS_{\mbfK_p}(G,X)(\Fpb)$ quite similar to (\ref{eqn:LRconj-ver1}). But, in his version of the description (\ref{eqn:LRconj-ver1}) (i.e. Theorem 0.3 of \textit{loc. cit.}), the action of the group $I_{\phi}(\Q)$ on $X_p(\phi)\times X^p(\phi)$ (which is somewhat artificial) is not the \emph{natural} one specified in the original conjecture.
An unfortunate consequence of this is that the deduction argument above \emph{per se} does not work for such imprecise description.
\footnote{In fact, this is an issue that has already been known for some time (namely, the kind of ambiguity as appearing in the Kisin's description becomes a problem when deriving a point-counting formula from it along the line of \cite{Langlands76}, \cite{Kottwitz84b}), and removing such ambiguity was one of the very motivations for Langlands and Rapoport introducing their conjecture \cite[p.116,line+15]{LR87}.}
In fact, in our proof of Kottwitz formula, we do \emph{not} use this weaker form (of Langlands-Rapoport conjecture) itself. Instead, we will just emulate the Langlands-Rapoport arguments above. Fortunately, this is possible due to the ``geometrical'' results of \cite{Kisin17} which are often quite strong (``geometrical'' in the sense that it can be formulated without using the language of Galois gerbs). Here, we mention one such result, \cite[Cor.2.2.5]{Kisin17} which we have dubbed  ``strong CM lifting theorem''  to distinguish it from the usual CM lifting theorem \cite[Thm.2.2.3]{Kisin17} (which only says that every isogeny class $\sI$ in a SV of Hodge type over $\Fpb$ contains a point which lifts to a CM point).

\subsection{Generalization of results of \cite{LR87}}
The main results of \cite{LR87} assume that the level subgroup $\mbfK_p$ is hyperspecial  (so, in particular that $G_{\Qp}$ is unramified) and that the derived group is simply connected.
To extend the scope of our method of proof beyond these cases, 
our first primary task in this article is to generalize their works to more general parahoric levels (so as to allow possibly bad reductions) under the assumption that $G_{\Qp}$ is quasi-split and spits over a tamely ramified extension of $\Qp$ (in fact, we will assume less than that, see Thm. \ref{thm:LR-Satz5.3} for a precise condition), and also to remove the restriction on the derived group; some of our results will further assume that $\mbfK_p$ is special maximal parahoric. 
When one works with this general level subgroups, the first notable change occurs in definitions, especially that of admissible morphism, where instead of a single affine Deligne-Lusztig variety $X(\{\mu_X\},b)_{\mbfK_p}$ which defined the set $X_p(\phi)$, one needs to use a \emph{finite union} of \emph{generalized} affine Deligne-Lusztig varieties $X(w,b)_{\mbfK_p}$:
\[X(\{\mu_X\},b)_{\mbfK_p}:=\bigsqcup_{w\in\Adm_{\mbfKt_p}(\{\mu_X\})} X(w,b)_{\mbfK_p}\]
Here, $\Adm_{\mbfKt_p}(\{\mu_X\})$ is a certain subset of the double coset $\mbfKt_p\backslash G(\mfk)/\mbfKt_p$ determined by the datum $(G,X)$. 
When $\mbfK_p$ is hyperspecial, this specializes to the previous definition.
Meanwhile, the conjecture (\ref{eqn:LRconj-ver1}) itself was extended by Rapoport \cite{Rapoport05} to cover general parahoric levels. 
Secondly, to work beyond the restriction of simply-connected derived group, we use the definition of admissible morphism generalized by Kisin for this purpose (cf. Def. \ref{defn:admissible_morphism}, \cite[(3.3.6)]{Kisin17}).

To keep statements short, we will call the following condition the \textit{Serre condition for the Shimura datum $(G,X)$}
\footnote{to distinguish this from two other similar conditions: first, from the original Serre condition which is applied to a $\Q$-torus $T$ endowed with a cocharacter $\mu\in X_{\ast}(T)$ (cf. Lemma \ref{lem:defn_of_psi_T,mu}), and secondly from the condition that $Z(G)$ has the same ranks over $\Q$ and $\R$ (equiv. the anisotropic kernel of $Z(G)$ remains anisotropic over $\R$); if $(G,X)$ is of Hodge type and $G$ is the Mumford-Tate group, it satisfies all these conditions.}
: \textit{the center $Z(G)$ of $G$ splits over a CM field and the weight homomorphism $w_X$ is defined over $\Q$.}
Note that this condition holds if the Shimura datum is of Hodge type.

Our main results in the first part are generalizations (sometimes, including improvements) of the key properties of admissible morphisms and admissible pairs:

\begin{thm} \label{thm_intro:1st_Main_thm} Let $p>2$ be a rational prime.
Let $(G,X)$ be a Shimura datum satisfying the Serre condition. Assume that $G$ is of classical Lie type, and that $G_{\Qp}$ is quasi-split and splits over a tamely ramified extension of $\Qp$. Let $\mbfK_p$ be a parahoric subgroup of $G(\Qp)$. Then, we have the followings.

(1) Any admissible morphism $\phi:\fP\rightarrow \fG_G$ is \emph{special}, namely there exists a special Shimura sub-datum $(T,h)$ and $g\in G(\Qb)$ such that $\Int g\circ\phi=i\circ\psi_{T,h}$, where $i:\fG_T\rightarrow \fG_G$ is the canonical morphism of neutral Galois gerbs induced by the inclusion $T\hra G$. If $\mbfK_p$ is special maximal parahoric, then every such morphism $i\circ\psi_{T,h}$ is admissible.
Every admissible pair $(\phi,\epsilon)$ is conjugate to $(i\circ\psi_{T,h},\epsilon'\in T(\Q))$ for a special Shimura sub-datum $(T,h)$.

(2) Suppose that $\mbfK_p$ is special maximal parahoric. For any $\gamma_0\in G(\Q)$ that is elliptic over $\R$, there exists an admissible pair $(\phi,\gamma_0)$ if and only if there exists $\epsilon\in G(\Q)$ stably conjugate to $\gamma_0$ and satisfying condition $\ast(\epsilon)$ of \autoref{subsubsec:pre-Kottwitz_triple}. If the latter condition holds, there exists a $\mbfK_p$-effective admissible pair $(\phi,\epsilon)$ with $\epsilon$ stably conjugate to $\gamma_0^t$ for some $t\in\N$.
\end{thm}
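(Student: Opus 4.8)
The plan is to follow closely the architecture of \cite[\S5]{LR87} (the chain Satz~5.3 $\Rightarrow$ Satz~5.21 $\Rightarrow$ Satz~5.23), while (a) replacing the hyperspecial $p$-component by its parahoric incarnation, built from the combinatorial set $\Adm_{\mbfKt_p}(\{\mu_X\})$ and the generalized affine Deligne--Lusztig varieties $X(w,b)_{\mbfK_p}$, and (b) replacing every appeal to the vanishing of $H^1$ of a semisimple simply connected group by a computation with abelianized Galois cohomology $H^1_{\ab}$, so that Kisin's notion of admissible morphism (Def.~\ref{defn:admissible_morphism}, \cite[(3.3.6)]{Kisin17}) is the object one actually manipulates. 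For the direct half of Part~(1), I would first restrict an admissible $\phi\colon\fP\to\fG_G$ to the kernel $P$ of $\fP$: this is a homomorphism of $\Q$-groups from a protorus, so its image is a subtorus, contained in a maximal torus $T\subset G$; and, thanks to the Serre condition on $(G,X)$, $T$ may be chosen to split over a CM field. The crux is then a conjugacy statement: after applying some $\Int g$ with $g\in G(\Qb)$, the morphism $\phi$ itself factors through the neutral sub-gerb $\fG_T$. I would prove this as in \loccit, by extracting from $\phi_\infty$ a cocharacter $\mu\in X_\ast(T)$, comparing $\phi$ with the standard (special) morphism $\psi_{T,\mu}$ attached to $(T,\mu)$ by Lemma~\ref{lem:defn_of_psi_T,mu}, and showing that the class of the discrepancy lies in a Galois cohomology group that vanishes by Tate--Nakayama duality for the CM torus $T$ together with Kottwitz's description of $B(G_{\Qp})$; the only genuinely new bookkeeping beyond \cite{LR87} is that for general $G^{\der}$ one runs this with $H^1_{\ab}$ and with all of $B(G_{\Qp})$, not merely its basic part. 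Finally the $\infty$-component condition in Def.~\ref{defn:admissible_morphism} forces $\mu$ to lie in $\{\mu_X\}$ and to be of the form $\mu_h$ for some $h\in X$ factoring through $T_\R$, so that $(T,h)$ is a special Shimura sub-datum with $\Int g\circ\phi=i\circ\psi_{T,h}$. Carrying the Frobenius element $\epsilon$ along the same conjugation gives the assertion about admissible pairs.

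For the converse half of Part~(1), when $\mbfK_p$ is special maximal parahoric, every $i\circ\psi_{T,h}$ is admissible, I would verify the conditions of Def.~\ref{defn:admissible_morphism} one by one. The conditions away from $p$ and at $\infty$ are immediate from the construction of $\psi_{T,h}$ in Lemma~\ref{lem:defn_of_psi_T,mu}. The substantive point is the $p$-adic condition: the $\sigma$-conjugacy class $[b]=[\phi_p]$ attached to $\psi_{T,h}$ must meet $X(w,b)_{\mbfK_p}$ for some $w\in\Adm_{\mbfKt_p}(\{\mu_X\})$. Here $b$ is by construction the image under $T(\mfk)\to G(\mfk)$ of a representative of $\{\mu_X\}$, and for a \emph{special} maximal parahoric the $\mu$-admissible set and the $\mu$-permissible set coincide and are compatible with passage to subtori; this is exactly where the hypotheses that $G_{\Qp}$ is quasi-split and tamely ramified, that $G$ is of classical Lie type, and that $p>2$ are used, via Bruhat--Tits theory and the explicit classification. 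For non-simply-connected $G^{\der}$ I would either pass to a $z$-extension, verifying the constructions descend, or argue directly with Kisin's $\fG$-gerb formalism.

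Part~(2) combines Part~(1) with a non-emptiness (effectivity) analysis. If an admissible pair $(\phi,\gamma_0)$ exists, Part~(1) conjugates it to $(i\circ\psi_{T,h},\epsilon')$ with $\epsilon'\in T(\Q)$ stably conjugate to $\gamma_0$, and unwinding the three local components of admissibility shows $\epsilon'$ satisfies $\ast(\epsilon)$ of \autoref{subsubsec:pre-Kottwitz_triple} --- essentially a translation exercise, the clauses of $\ast(\epsilon)$ being precisely the necessary local conditions. Conversely, given $\epsilon\in G(\Q)$ stably conjugate to $\gamma_0$ with $\ast(\epsilon)$, I would embed $\epsilon$ in a maximal torus $T\subset G$ that is elliptic at $\infty$ --- possible since $\gamma_0$, hence $\epsilon$, is $\R$-elliptic and $Z(G)$ has equal $\Q$- and $\R$-ranks, so $X$ meets the $T_\R$-locus --- and is in good position with respect to the chosen special parahoric at $p$; choosing $h\in X$ through $T_\R$ and setting $\phi:=i\circ\psi_{T,h}$ with $\epsilon$ as Frobenius descent datum gives an admissible pair, since $\ast(\epsilon)$ supplies the remaining compatibilities, and conjugating by the element realizing the stable conjugacy produces $(\phi',\gamma_0)$ as required. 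To upgrade to a $\mbfK_p$-\emph{effective} pair one must further ensure that the $\sigma$-conjugacy class $b$ of the constructed $\phi$ is $\{\mu_X\}$-admissible, i.e. meets some $X(w,b)_{\mbfK_p}$ with $w\in\Adm_{\mbfKt_p}(\{\mu_X\})$; I would obtain this from the non-emptiness criterion for generalized affine Deligne--Lusztig varieties at special maximal parahoric level together with the Kottwitz-invariant vanishing already contained in $\ast(\epsilon)$, at the cost of replacing $\epsilon$ by a power $\epsilon^t$ (which preserves $\ast$), yielding an effective admissible pair with Frobenius stably conjugate to $\gamma_0^t$.

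The step I expect to be the main obstacle is the conjugacy theorem underpinning Part~(1) in the stated generality. In \cite{LR87} it leans decisively on $G^{\der}=G^{\uc}$, to annihilate $H^1$ of simply connected groups both globally and at $p$, and on $\mbfK_p$ hyperspecial, so that the $p$-component of an admissible morphism is rigidified by an unramified datum; dropping the first forces one to track abelianized cohomology and the full group $B(G_{\Qp})$, and dropping the second means the $p$-component is only constrained to lie over \emph{some} $w$ in the combinatorial admissible set, so that matching the gerb-theoretic $\sigma$-conjugacy class against that datum --- and, in Part~(2), controlling the non-emptiness of $X(w,b)_{\mbfK_p}$ after passing to a power --- is delicate, and is precisely where the package ``$G_{\Qp}$ quasi-split, tamely ramified, $G$ classical, $p>2$, $\mbfK_p$ special maximal'' is consumed.
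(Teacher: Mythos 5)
Your overall architecture (well-locate $\phi$, land it in a torus, then perform transfers) does match the paper's decomposition into Prop.~\ref{prop:existence_of_admissible_morphism_factoring_thru_given_maximal_torus}, Prop.~\ref{prop:equivalence_to_special_adimssible_morphism}, and Lemmas~\ref{lem:LR-Lemma5.2}, \ref{lem:LR-Lemma5.23}, \ref{lem:LR-Lemma5.11}, \ref{lem:LR-Lemma5.12}, and you correctly identify $\S5$ of \cite{LR87} as the template and abelianized cohomology as the right substitute for the simply-connectedness hypothesis. But two of your key steps are wrong about where the difficulty actually sits, and one of them covers a genuine gap.

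First, the cocharacter matching in Step~III is not cohomological and is not extracted from $\phi_\infty$. Lemma~\ref{lem:LR-Lemma5.11} extracts $\mu\in X_\ast(T)\cap\{\mu_X\}$ from the $p$-adic admissibility condition (non-emptiness of $X(\{\mu_X\},b)_{\mbfK_p}$), and proves $\Nm_{K/\Qp}\mu=[K:\Qp]\nu_b$ by a purely combinatorial argument in the extended affine Weyl group: Iwasawa then Cartan decompositions relative to a special maximal parahoric, passage between Iwahori and special-parahoric Bruhat orders, Stembridge's theorem on dominance order, and a case-by-case inspection of the reduced root systems ${}^{\mbfo}\Sigma$ attached to the quasi-split, tamely ramified, classical types ($B\operatorname{-}C_m$, $C\operatorname{-}BC_m$, $C\operatorname{-}B_m$). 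Tate--Nakayama duality does not appear; the cohomological vanishing that does appear in Prop.~\ref{prop:equivalence_to_special_adimssible_morphism} is Borovoi's Hasse principle for $\im[H^1(\Q,G^{\uc})\to H^1(\Q,G)]$ (\cite[Thm.~5.12]{Borovoi98}), which replaces \cite[Lem.~5.13]{LR87}. You also do not mention Prop.~\ref{prop:existence_of_elliptic_tori_in_special_parahorics}, the appendix result on elliptic tori inside special maximal parahorics, which is the single place where the hypotheses ``classical, tamely ramified, quasi-split, $p>2$'' are consumed (via the explicit Bruhat--Tits tables), and which is needed both for the converse of Part~(1) (Lemma~\ref{lem:LR-Lemma5.2}) and throughout Part~(2). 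Your appeal to ``$\mu$-admissible equals $\mu$-permissible'' is not the mechanism used; the paper instead works directly with the image of $t^{\underline{\mu'}}$ in $\tilde W_{\mbfKt_p}\backslash\tilde W/\tilde W_{\mbfKt_p}$ after embedding $T'(\mfk)_1\subset\mbfKt_p$.

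Second, the ``only if'' direction of Part~(2) is not a translation exercise, and your claim that it follows by ``unwinding the three local components'' is a genuine gap. The passage from an admissible pair to $\ast(\epsilon)$ requires the non-emptiness of $Y_p(\delta)$, which is strictly stronger than condition~(3) of Def.~\ref{defn:admissible_pair} (which only asks for a solution in $G(\mfk)/\mbfKt_p$, \emph{not} in $X(\{\mu_X\},b)_{\mbfK_p}$, cf.\ Remark~\ref{rem:admissible_pair},(2)). The paper's own Remark~\ref{rem:equality_of_two_ADLVs} explicitly flags that \cite{LR87}'s unqualified claim of this implication is dubious, and Thm.~\ref{thm:LR-Satz5.21},(2) therefore carries $Y_p(\delta)\neq\emptyset$ as a hypothesis, proved by the same Iwasawa-then-Cartan arguments as Lemma~\ref{lem:LR-Lemma5.11}. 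Likewise the upgrade to $\mbfK_p$-effectiveness for $\gamma_0^t$ is not a quick appeal to non-emptiness of $X(w,b)$; the paper carries out a bounded-subgroup argument on $k_0=\epsilon^{-[K:\Qp]}p^{n\Nm\mu_h}$, forcing some power of $k_0$ into $\ker(v_{Z_\epsilon^{\mathrm{o}}})$, and then Greenberg's theorem at special parahoric level.
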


The tame condition on $G_{\Qp}$ can be relaxed significantly (see Thm. \ref{thm:LR-Satz5.3} for a precise condition).
These statements (and their counterpart results in \cite{LR87}) are found respectively in Theorem \ref{thm:LR-Satz5.3}, Lemma \ref{lem:LR-Lemma5.2}, \ref{lem:LR-Lemma5.23} (for (1)) and Theorem \ref{thm:LR-Satz5.21} (for (2)).
The statement (1) and the first claim of (2) are generalizations of the same results in \textit{loc. cit.}, while the second claim of statement (2) is new.
We also remark that the condition $\ast(\epsilon)$ in (2) is our generalization of the original condition in \cite[Satz 5.21]{LR87} of the same name $\ast(\epsilon)$, in our general set-up that the level subgroup is parahoric and $G_{\Qp}$ is not necessarily unramified nor $G^{\der}$ is simply connected.

The statement (1) is a fundamental fact about admissible morphisms, and underlies the CM lifting theorem \cite[Thm. 0.4]{Kisin17} (in the hyperspecial level case) that every isogeny class in $\sS_{\mbfK_p}(G,X)(\Fpb)$ contains a point that is the reduction of a special(=CM) point.

The following theorem is the first version of the aforementioned effectivity criterion of Kottwitz triple which is established for the first time here (even in the original set-up of \cite{LR87}, cf. Remark \ref{rem:comments_on_Milne92}).

\begin{thm} \label{thm_intro:2st_Main_thm} [Thm. \ref{thm:LR-Satz5.25}]
Under the same assumptions as in (2) of Thm. \ref{thm_intro:1st_Main_thm}, for every Kottwitz triple $(\gamma_0;(\gamma_l)_{l\neq p},\delta)$ with trivial Kottwitz invariant (Def. \ref{defn:Kottwitz_triple}), if $X(\{\mu_X\},\delta)_{\mbfK_p}\neq\emptyset$, there exists an admissible pair $(\phi,\epsilon)$ giving rise to it. There exists an explicit cohomological expression for the number of non-equivalent pairs $(\phi,\epsilon)$ producing a given triple $(\gamma_0;(\gamma_l)_{l\neq p},\delta)$.
\end{thm}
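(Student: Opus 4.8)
The plan is to realise the required admissible pair in the ``special'' form predicted by Theorem~\ref{thm_intro:1st_Main_thm}(1), using triviality of the Kottwitz invariant to glue the prescribed local data and the hypothesis $X(\{\mu_X\},\delta)_{\mbfK_p}\neq\emptyset$ to meet the local condition at $p$; the counting statement is then a Galois-cohomology computation reproducing \cite[Satz~5.25]{LR87} in the present generality. First I would, starting from the Kottwitz triple $(\gamma_0;(\gamma_l)_{l\neq p},\delta)$ and after replacing $\gamma_0$ inside its stable conjugacy class, construct an elliptic maximal $\Q$-torus $T\subset G$, a point $h$ making $(T,h)$ a special Shimura sub-datum of $(G,X)$, and an element $\epsilon'\in T(\Q)$ stably conjugate to $\gamma_0$: this is the standard construction (cf. \cite{Kottwitz84b}), combining $\R$-ellipticity of $\gamma_0$, the local conjugacies $\gamma_l\sim\gamma_0$ in $G(\Q_l)$ and the datum $\delta$ at $p$, together with weak approximation in $G$. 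The conjugacy class $\{\mu_X\}$ restricts to a cocharacter $\mu\in X_{\ast}(T)$ with $(T,\mu)$ satisfying the Serre condition, so $\psi_{T,h}:=\psi_{T,\mu}$ is defined (Lemma~\ref{lem:defn_of_psi_T,mu}), and since $\mbfK_p$ is special maximal parahoric, $i\circ\psi_{T,h}$ is admissible by Theorem~\ref{thm_intro:1st_Main_thm}(1).

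The crux is to show that the two hypotheses on the triple force the twisted pair $(i\circ\psi_{T,h},\epsilon')$ to be an admissible pair with associated Kottwitz triple $(\gamma_0;(\gamma_l)_{l\neq p},\delta)$, equivalently that our generalized condition $\ast(\epsilon')$ of \autoref{subsubsec:pre-Kottwitz_triple} holds, so that Theorem~\ref{thm_intro:1st_Main_thm}(2) produces the pair. At the places $l\neq p$ and at $\infty$ the matching of local components is immediate from the axioms defining a Kottwitz triple. At $p$ the relevant condition is that the $\sigma$-conjugacy class of $\delta$ be bounded by $\{\mu_X\}$ in the Kottwitz--Rapoport sense, and I would obtain it from the non-emptiness criterion for the generalized affine Deligne--Lusztig variety $X(\{\mu_X\},\delta)_{\mbfK_p}=\bigsqcup_{w\in\Adm_{\mbfKt_p}(\{\mu_X\})}X(w,\delta)_{\mbfK_p}$ at a special maximal parahoric for $G_{\Qp}$ quasi-split and tamely ramified; this is exactly the local condition at $p$ built into admissibility of $\psi_{T,h}$, and it is supplied by the hypothesis $X(\{\mu_X\},\delta)_{\mbfK_p}\neq\emptyset$. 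What then remains is a \emph{global} obstruction: the prescribed local components must arise from a single admissible morphism, and this obstruction is a class in a Tate--Nakayama-type group attached to $I_0:=G_{\gamma_0}$ which, via Kottwitz's duality and abelianization of Galois cohomology, is identified with the Kottwitz invariant of the triple; its triviality is the remaining hypothesis. I expect this combined $p$-adic and global step to be the main obstacle, on two counts: translating non-emptiness of $X(\{\mu_X\},\delta)_{\mbfK_p}$ into the Galois-gerb local condition when $G_{\Qp}$ is genuinely ramified and $\mbfK_p$ is only special maximal parahoric, and carrying out the cohomological bookkeeping turning triviality of the Kottwitz invariant into the verification of $\ast(\epsilon')$ --- delicate because, when $G^{\der}\neq G^{\uc}$, only the \emph{vanishing} of the Kottwitz invariant is intrinsic to the triple (cf. \autoref{subsubsec:Kottwitz_invariant}), so the identifications must be made canonically enough for the argument to survive.

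For the counting, fix one admissible pair $(\phi_0,\epsilon_0)$ with associated triple $(\gamma_0;(\gamma_l)_{l\neq p},\delta)$. By Theorem~\ref{thm_intro:1st_Main_thm}(1), every admissible pair with this triple is conjugate to one of the form $(i\circ\psi_{T,h},\epsilon')$, and two such are equivalent exactly when they differ by a class in $H^1(\Q,I_0)$ that is locally trivial everywhere; accounting for the conjugacy of the underlying morphisms, the set of equivalence classes is in bijection with $\ker\!\big[\ker^1(\Q,I_0)\to\ker^1(\Q,G)\big]$, a finite group. This is the asserted explicit cohomological expression, and it coincides with Kottwitz's $\iota(\gamma_0;(\gamma_l)_{l\neq p},\delta)$, in agreement with \cite[Satz~5.25]{LR87}, so that together with the effectivity statement this reconciles the point-counting formula of \autoref{subsubsec:fromLRtoKF} with the Kottwitz conjecture.
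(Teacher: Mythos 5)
Your proposal has the right starting point (use Theorem~\ref{thm_intro:1st_Main_thm}(2) and the hypothesis $X(\{\mu_X\},\delta)_{\mbfK_p}\neq\emptyset$ to produce \emph{some} special admissible pair $(\psi_{T,h},\epsilon')$ nested in a special Shimura sub-datum), but there is a genuine gap in the middle. You write that ``at the places $l\neq p$ and at $\infty$ the matching of local components is immediate from the axioms defining a Kottwitz triple.'' That is not true. The associated Kottwitz triple of a special admissible pair $(\psi_{T,h},\epsilon')$ has $\gamma_l=\epsilon'$ for all $l\neq p$ (because $T(\bar{\A}_f^p)\cap X^p(\psi_{T,h})\neq\emptyset$ by Lemma~\ref{lem:properties_of_psi_T,mu}), whereas the prescribed $\gamma_l$ are only required to be \emph{stably} conjugate to $\gamma_0$, not $G(\Q_l)$-conjugate. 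In general these will not coincide, so the base special pair produces the wrong triple, and no amount of verifying $\ast(\epsilon')$ will fix this, because $\ast(\epsilon')$ only governs existence of an admissible pair with rational part stably conjugate to $\gamma_0$, not the finite-adelic and $p$-adic components. The missing step is the \emph{twisting} of Lemma~\ref{lem:LR-Lem5.26,Satz5.25}: one fixes the base special pair $(\phi_1,\epsilon)$ and then shows there is a cocycle $a\in Z^1(\Q,I_1)$ (with $I_1$ the twist of $I_0=G_{\gamma_0}^{\mathrm{o}}$ via $\phi_1$) such that $(a\phi_1,\epsilon)$ is admissible and its associated triple is the prescribed one. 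Triviality of the Kottwitz invariant is then precisely what guarantees, via the Poitou--Tate/Borovoi machinery (Lemma~\ref{lem:proof_of_Kottwitz86_Thm.6.6}), that the adelic ``difference'' classes $\alpha(v)$ between the prescribed local conjugacy classes and those of $(\phi_1,\epsilon)$ lift to a global class in $H^1(\Q,\tilde{I}_1)$. Your paragraph gestures at a ``global obstruction,'' but frames it as a gluing problem for local gerb morphisms rather than as an obstruction to twisting a fixed admissible pair; without the twisting mechanism the argument does not go through.

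The counting statement is also off in the generality of the paper. You assert the count is $\ker\!\big[\ker^1(\Q,I_0)\to\ker^1(\Q,G)\big]$. That formula is correct only when $G_{\gamma_0}$ is connected (in particular when $G^{\der}=G^{\uc}$, recovering \cite[Satz~5.25]{LR87}). Here $G_{\gamma_0}$ can be disconnected, and the correct answer is the set $\Sha_G(\Q,I_{\phi,\epsilon})^+=\im[\widetilde{\Sha}_G(\Q,I_{\phi,\epsilon})^+\to H^1(\Q,I_{\phi,\epsilon})]$, involving the cohomology of the (possibly disconnected) group $I_{\phi,\epsilon}$. By Lemma~\ref{eq:|widetilde{Sha}_G(Q,I_{phi,epsilon})^+|} this is $|\ker[\ker^1(\Q,I_0)\to\ker^1(\Q,G)]|\cdot|\mathfrak{D}(\gamma_0;\gamma,\delta)|$, and the extra factor $|\mathfrak{D}(\gamma_0;\gamma,\delta)|$ encoding the discrepancy between $H^1(\A_f,G_{\gamma,\delta}^{\mathrm o})$ and $H^1(\A_f,G_{\gamma,\delta})$ is exactly the non-connectedness correction that your proposal drops.
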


This is one of the key ingredients in our proof of the Kottwitz formula as well as in the Langlands-Rapoport's arguments of deriving it from Langlands-Rapoport conjecture.
There is also a similar, second version of effectivity criterion of Kottwitz triple (Thm. \ref{thm:LR-Satz5.25b2}) which will be needed in our (unconditional) proof of Thm. \ref{thm_intro:Kottwitz_formula}.

\subsubsection{Some further results and comments}

According to the discussion in \autoref{subsubsec:fromLRtoKF}, Langlands-Rapoport conjecture implies that
to any $\F_{p^n}$-point ($n$ being a multiple of $[\kappa(\wp):\F_p]$), one should be able to attach a Kottwitz triple $(\gamma_0;(\gamma_l)_{l\neq p},\delta)$ of level $n$ (i.e. with $\delta\in G(L_n)$). For Hodge-type Shimura varieties with hyperspecial level, this was done by Kisin \cite[Cor. 2.3.1]{Kisin17}.
\footnote{But for general parahoric $\mbfK_p$, without having control on the level, one can still attach a Kottwitz triple to any point that is the reduction of a CM point (thus in this case the triple is well-defined only up to powers).}
Then, the formula (\ref{eqn:formulra_for_number_of_pts}) implies that only those triples whose corresponding summation term is non-zero should be \textit{geometrically effective}, i.e. arises from an $\F_{p^n}$-point (where $n$ is the level of the triple). 
Thus, a natural question arises whether such necessary condition for geometric effectivity is also a sufficient condition. Closely related questions are which ($\R$-elliptic) stably conjugacy classes in $G(\Q)$ and which $\sigma$-conjugacy classes in $G(L_n)$  are ``effective'' (i.e. can be the classes of the elements $\gamma_0$ and $\delta$ attached to some $\F_{p^n}$-point, respectively). 
This question for (the stable conjugacy class of) an $\R$-elliptic rational element $\gamma_0\in G(\Q)$ can be regarded as Honda-Tate theorem in the context of Shimura varieties, while the question for $\delta$ is known as non-emptiness problem of Newton strata. 

For hyperspecial level, the proof of Theorem \ref{thm_intro:Kottwitz_formula} answers the first question affirmatively: the natural necessary condition is also sufficient. We also answer the second question too. 
For that, we consider Shimura varieties of Hodge type and fix an integral model $\sS_{\mbfK}$ over $\cO_{\wp}$ of the canonical model $\Sh_{\mbfK}(G,X)_{E_{\wp}}$ with the extension property that every $F$-point of $\Sh_{\mbfK}(G,X)$ for a finite extension $F$ of $E_{\wp}$ extends uniquely to $\sS_{\mbfK}$ over its local ring (for example, integral model constructed by a suitable normalization); see \cite{KisinPappas15} for a construction of such integral model in general parhoric levels.

\begin{cor} [Cor. \ref{cor:geom_effectivity_of_K-triple}] 
Keep the assumptions of Theorem \ref{thm_intro:1st_Main_thm}.
a Kottwitz triple $(\gamma_0;\gamma,\delta)$ of level $n$ with trivial Kottwitz invariant is \emph{geometrically effective} in the sense that it arises from a $\F_{p^n}$-valued point of $\sS$ as (\ref{eq:K-triple_for_isogeny_adm.pair}) if and only if $\mathrm{O}_{\gamma}(f^p)\cdot\mathrm{TO}_{\delta}(\phi_p)$ is non-zero. 
\end{cor}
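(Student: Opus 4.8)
The plan is to prove the two implications separately: the forward direction amounts to unwinding the recipe~(\ref{eq:K-triple_for_isogeny_adm.pair}) that attaches a Kottwitz triple to an $\F_{p^n}$-point, while the converse is where the results of the first part of the paper, together with Kisin's geometric input, enter.

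For the forward direction, suppose $(\gamma_0;\gamma,\delta)$, of level $n$, is the triple attached to some $x\in\sS(\F_{p^n})$. I would argue that the prime-to-$p$ component $\gamma\in G(\A_f^p)$, being produced from the \'etale realization of $x$ together with its $\mbfK^p$-level structure, is conjugate into $\mbfK^p$, so that $\mathrm{O}_{\gamma}(\mathbf{1}_{\mbfK^p})\neq0$; and that $\delta\in G(L_n)$, being the twisted Frobenius of the integral Dieudonn\'e module of $x$, satisfies the $\{\mu_X\}$-admissibility condition at $p$ precisely because $x$ lies on the integral model $\sS$. Since the support of $\phi_p$ is the union of the double cosets indexed by $\Adm_{\mbfKt_p}(\{\mu_X\})$, this admissibility is equivalent to $X(\{\mu_X\},\delta)_{\mbfK_p}\neq\emptyset$, hence to $\mathrm{TO}_{\delta}(\phi_p)\neq0$; so the product is non-zero.

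For the converse, assume $\mathrm{O}_{\gamma}(f^p)\cdot\mathrm{TO}_{\delta}(\phi_p)\neq0$, with $f^p=\mathbf{1}_{\mbfK^p}$. From $\mathrm{TO}_{\delta}(\phi_p)\neq0$ I would deduce $X(\{\mu_X\},\delta)_{\mbfK_p}\neq\emptyset$ (via the identification of the support of the twisted orbital integral of $\phi_p$ with a union of generalized affine Deligne--Lusztig sets), so that Theorem~\ref{thm:LR-Satz5.25} — whose hypothesis of trivial Kottwitz invariant holds by assumption — produces an admissible pair $(\phi,\epsilon)$ with associated Kottwitz triple $(\gamma_0;\gamma,\delta)$, together with the stated cohomological count of such pairs. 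By Theorem~\ref{thm_intro:1st_Main_thm}(1), after conjugation $\phi=i\circ\psi_{T,h}$ for a special Shimura sub-datum $(T,h)$ with $\epsilon\in T(\Q)$. I would then realize this pair geometrically: the datum $(T,h)$, together with a suitable $g\in G(\A_f)$ whose prime-to-$p$ part is chosen — using $\mathrm{O}_{\gamma}(\mathbf{1}_{\mbfK^p})\neq0$ — so that the associated $\mbfK^p$-level structure matches $\gamma$, defines a CM point of $\Sh_{\mbfK}(G,X)$; by the extension property of $\sS$ this extends to an $\cO_{\wp}$-point whose special fibre is a point $\bar x\in\sS(\Fpb)$, and Kisin's strong CM lifting theorem (\cite[Cor.~2.2.5]{Kisin17}) lets me select the lift inside the isogeny class so that the Frobenius descent datum of $\bar x$ is exactly $\epsilon$. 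Finally I would check that the triple~(\ref{eq:K-triple_for_isogeny_adm.pair}) attached to $\bar x$ is $(\gamma_0;\gamma,\delta)$ and that $\bar x$ is fixed by $\Phi^n$ (which is forced by $\delta\in G(L_n)$ together with the role of $\epsilon$ as Frobenius descent datum); hence $\bar x\in\sS(\F_{p^n})$ and the triple is geometrically effective.

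The hard part will be this last step — converting an abstract admissible pair into a genuine $\F_{p^n}$-point of $\sS$ with the correct field of definition — since at parahoric level one cannot appeal to a moduli description or to the Langlands--Rapoport bijection itself. Two subtleties demand care: (i) Kisin's CM-lifting results must be applied with enough precision to realize the prescribed descent datum $\epsilon$ rather than merely some power of it — the bare existence assertion in Theorem~\ref{thm_intro:1st_Main_thm}(2) only yields $\epsilon$ stably conjugate to $\gamma_0^t$ for some $t$, so this is a genuine issue — and (ii) the level $n$ must be tracked faithfully through the correspondence between special sub-data of $(G,X)$ and reductions of CM points. A lesser but still non-trivial ingredient is the harmonic-analysis fact identifying $\{\delta:\mathrm{TO}_{\delta}(\phi_p)\neq0\}$ with $\{\delta:X(\{\mu_X\},\delta)_{\mbfK_p}\neq\emptyset\}$ for the test function $\phi_p$ at (special maximal) parahoric level.
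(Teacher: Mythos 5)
Your overall route is the right one, but the proposal has a genuine gap exactly at the step you flag as "the hard part"; and the paper's actual argument closes this gap by a mechanism you do not invoke.

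The paper does not construct a CM point and then adjust its Frobenius by hand. Instead, the corollary is extracted directly from the proof of Theorem~\ref{thm:Kottwitz_formula:Kisin}: the second effectivity criterion, Theorem~\ref{thm:LR-Satz5.25b2}, already produces a K-admissible K-pair $(\sI,\epsilon)$ — that is, a genuine isogeny class $\sI\subset\sS_{\mbfK_p}(\Fpb)$ together with a self-quasi-isogeny $\epsilon\in I_{\sI}(\Q)$ — giving rise to $(\gamma_0;\gamma,\delta)$. (Its proof passes through the LR-version, Theorem~\ref{thm:LR-Satz5.25}, and Lemma~\ref{lem:key_observations}, but the output is already geometric; one does not need a further geometrization step.) The translation from "K-admissible pair" to "$\F_{p^n}$-point" is then handled by the fixed-point decomposition~(\ref{eq:fixed_pt_set_of_Heck-corresp2}) for $\Phi^n$ (take $g=1$), which exhibits the $\F_{p^n}$-points inside $\sS(\sI)_{K^p}$ precisely as $I_{\sI,\epsilon}(\Q)\backslash[X_p(\sI,\epsilon)\times X^p(\sI,\epsilon;1)/K^p]$. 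The nonvanishing of $\mathrm{TO}_{\delta}(\phi_p)$ is equivalent — through the bijection $Y_p(\delta)\simeq X_p(b,\epsilon')\simeq X_p(\sI,\epsilon)$ of Remark~\ref{rem:equality_of_two_ADLVs} — to $X_p(\sI,\epsilon)\ne\emptyset$, and the nonvanishing of $\mathrm{O}_{\gamma}(f^p)$ to $X^p(\sI,\epsilon;1)\ne\emptyset$; so the piece indexed by $\epsilon$ is nonempty iff the product of orbital integrals is nonzero. This is the content of Lemma~\ref{lem:fixed-pt_subset_of_Frob-Hecke_corr} (applied via (\ref{eq:fixed_pt_set_of_Heck-corresp2})) and it is exactly what "geometrically effective" means.

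Your step 4, by contrast, does not work as written. You reduce a CM point $[h,g_f]$ to get $\bar x\in\sS(\Fpb)$ and then claim that Kisin's strong CM lifting theorem ``lets me select the lift inside the isogeny class so that the Frobenius descent datum of $\bar x$ is exactly $\epsilon$.'' But Corollary~2.2.5 of \cite{Kisin17} is a statement about maximal tori of $I_{\sI}$: it says every maximal $\Q$-torus of $I_{\sI}$ is realized by a CM lift — it does not let you prescribe the $p^n$-Frobenius. The element $\epsilon$ need not equal the relative $p^n$-Frobenius of the point $\bar x$ you built; the set of points in the isogeny class with Frobenius descent datum $\epsilon$ (and prime-to-$p$ data matching $\gamma$) is precisely $I_{\sI,\epsilon}(\Q)\backslash[X_p(\sI,\epsilon)\times X^p(\sI,\epsilon;1)/K^p]$, and nonemptiness of this set is the substantive condition — and is detected by the orbital integrals, not by CM lifting. (Your worry (i) about $\epsilon$ vs.\ $\gamma_0^t$ is also misplaced: the ``$t$''-ambiguity in Theorem~\ref{thm_intro:1st_Main_thm}(2) concerns $\mbfK_p$-effectivity, while the relevant input for the corollary is Theorem~\ref{thm:LR-Satz5.25b2}, which has no such ambiguity.) Finally, your worry about parahoric level being a hard case is moot here: the corollary as proved lives at hyperspecial level (it is stated in the body under the hypotheses of Theorem~\ref{thm:Kottwitz_formula:Kisin}, where Kisin's isogeny-class description applies).
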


\begin{thm} [Thm. \ref{thm:non-emptiness_of_NS}]  \label{thm:3rd_Main_thm}
Keep the assumptions of Theorem \ref{thm_intro:1st_Main_thm}, and further assume that $G_{\Qp}$ splits over a \emph{cyclic} tame extension of $\Qp$.
Let $\mbfK_p$ be a (not necessarily special) parahoric subgroup of $G(\Qp)$ and put $\mbfK=\mbfK_p\mbfK^p$ for a compact open subgroup $\mbfK^p$ of $G(\A_f^p)$. 

(1) Then, for any $[b]\in B(G_{\Qp},\{\mu_X\}$) (\autoref{subsubsec:B(G,{mu})}), there exists a special Shimura sub-datum $(T,h\in\Hom(\dS,T_{\R})\cap X)$ such that for any $g_f\in\mbfK^p$, the reduction in $\sS_{\mbfK}\otimes\Fpb$ of the special point $[h,g_f\cdot\mbfK]\in \Sh_{\mbfK}(G,X)(\Qb)$ has the $F$-isocrystal represented by $[b]$.

(2) The reduction $\sS_{\mbfK}(G,X)\otimes\Fpb$ has non-empty ordinary locus if and only if $\wp$ has absolute height one (i.e. $E(G,X)_{\wp}=\Q_p$). 
\end{thm}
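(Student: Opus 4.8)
The plan is to prove both parts by producing, for each target class $[b]$, a special Shimura sub-datum whose CM points reduce into the Newton stratum indexed by $[b]$, and then to deduce part (2) from the combinatorics of $B(G_{\Qp},\{\mu_X\})$. For part (1), I would first invoke Kottwitz's classification: a class $[b]\in B(G_{\Qp})$ is determined by its dominant Galois-averaged Newton point $\nu_{[b]}$ together with its image $\kappa([b])$ in the coinvariants $\pi_1(G_{\Qp})_{\Gamma}$ of the algebraic fundamental group, and $[b]$ lies in $B(G_{\Qp},\{\mu_X\})$ precisely when $\nu_{[b]}\preceq\overline{\mu_X}$ (the dominant Galois average of $\mu_X$) and $\kappa([b])=\mu_X^{\natural}$. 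The essential local step is a \emph{torus realization}: given such a $[b]$, there exist a maximal torus $T'\subset G_{\Qp}$, a cocharacter $\mu\in\{\mu_X\}$ factoring through $T'$, and an element $b\in T'(\mfk)$ in the class $[b]$ whose Newton cocharacter, computed inside $T'$, is a chosen lift of $\nu_{[b]}$; moreover $T'$ can be taken to fix a point of the Bruhat--Tits building corresponding to $\mbfK_p$. This is where the hypothesis that $G_{\Qp}$ is quasi-split and splits over a \emph{cyclic} tame extension enters, since it affords enough control on the ramification of $T'$ for the identifications below.

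Next I would globalize $T'$: by weak approximation on the (unirational) variety of maximal tori of $G$ one finds a maximal $\Q$-torus $T\subset G$ which is elliptic over $\R$, is $G(\Qp)$-conjugate to $T'$, and carries some $h\in\Hom(\dS,T_{\R})\cap X$ with $\mu_h$ matching $\mu$ over $\Qpb$ (compatibly with the reflex datum of $\{\mu_X\}$); this is the usual construction of special points with prescribed local behaviour, as in \cite{LR87} and \cite{Kisin17}. Then $(T,h)$ is a special Shimura sub-datum and, for any $g_f\in\mbfK^p$, the CM point $x=[h,g_f\cdot\mbfK]\in\Sh_{\mbfK}(G,X)(\Qb)$ extends, by the extension property of $\sS_{\mbfK}$, over the local ring at a place above $p$, hence has a reduction $\xb\in\sS_{\mbfK}(G,X)(\Fpb)$. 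Finally I would identify the $F$-isocrystal with $G$-structure of $\xb$: by the structure theory of the Galois gerb morphism $\psi_{T,h}$ (its $p$-component computing the $p$-adic realization of the pseudomotive attached to $(T,h)$), together with Kisin's strong CM lifting theorem and the compatibility of the Kottwitz triple attached to $\xb$ with $\psi_{T,h}$ (\cite[Cor.~2.3.1]{Kisin17}), the $\sigma$-conjugacy class of $\xb$ is the image in $B(G_{\Qp})$ of the class of $b\in T'(\mfk)$, which by the torus realization is $[b]$. This gives (1).

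For part (2), recall that the ordinary locus is the Newton stratum of the honestly ordinary class, the one whose $F$-isocrystal has slopes in $\{0,1\}$. If $E(G,X)_{\wp}=\Qp$ then $\{\mu_X\}$ is defined over $\Qp$, so $\overline{\mu_X}=\mu_X$ is the Newton point of the maximal element of $B(G_{\Qp},\{\mu_X\})$, and that maximal element is honestly ordinary; applying part (1) to it produces a CM point reducing into the ordinary locus, which is therefore non-empty. Conversely, an $\Fpb$-point of the ordinary locus has $\sigma$-conjugacy class in $B(G_{\Qp},\{\mu_X\})$ with Newton point of slopes in $\{0,1\}$; as such a class is forced to be the (unique) maximal, $\mu_X$-ordinary element, whose Newton point is $\overline{\mu_X}$, we conclude $\overline{\mu_X}=\mu_X$, i.e.\ $\{\mu_X\}$ is fixed by $\Gal(\Qpb/\Qp)$, i.e.\ $E(G,X)_{\wp}=\Qp$.

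I expect the main obstacle to be the torus-realization step: realizing an \emph{arbitrary} $[b]\in B(G_{\Qp},\{\mu_X\})$ by a maximal torus $T'$ of $G_{\Qp}$ through which some $\mu\in\{\mu_X\}$ factors, while keeping enough control on $T'$ (its splitting field and its relative position with respect to $\mbfK_p$) both to carry out the class-field-theoretic computation of the isocrystal and to produce the $\R$-elliptic $\Q$-torus $T$ compatible with $X$. This is exactly where the cyclic-tame assumption is needed; the globalization and the part (2) combinatorics are comparatively routine.
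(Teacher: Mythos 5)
Your plan has the right broad shape --- produce a local torus realization of $[b]$, globalize it to a special Shimura sub-datum, then read off the isocrystal of the CM reduction --- and this is essentially the shape of the paper's proof. But the crux of the argument is precisely the ``torus realization'' step that you defer, and what you say about it is off in two material ways.

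First, the mechanism. You suggest that the torus realization amounts to gaining ``enough control on the ramification of $T'$'' and attribute its feasibility to the cyclic-tame hypothesis. That is not what is going on. The step is carried out by Lemma~\ref{lem:LR-Lemma5.11} of the paper, applied to a (globalized) maximal torus $T_0$ through which the Newton homomorphism $\nu_b$ factors. What that lemma actually uses is the non-emptiness of the union of generalized affine Deligne--Lusztig varieties $X(\{\mu_X\},b)_{\mbfK_p}$ (available for every $[b]\in B(G_{\Qp},\{\mu_X\})$ by He's Theorem~A), together with an intricate combinatorial argument in the extended affine Weyl group: an Iwasawa decomposition along a Levi, a Cartan decomposition inside that Levi with respect to a special maximal parahoric, comparison of Bruhat orders via \cite[Lem.~10.2]{HainesRostami10} and \cite{Stembridge05}, and a case analysis of the reduced local root system ${}^{\mbfo}\Sigma$. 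Nothing in your sketch engages with this; you assert existence of $\mu\in\{\mu_X\}\cap X_{\ast}(T')$ and $b\in T'(\mfk)$ realizing $[b]$ without indicating how the admissibility constraint on $\mbfKt_p\backslash G(\mfk)/\mbfKt_p$ forces the required relation $\Nm_{K/\Qp}\mu=[K:\Qp]\,\nu_b$, which is the hard part. The quasi-split hypothesis already gives a $\Qp$-torus through which $\nu_b$ factors (Kottwitz); the real content is finding the cocharacter $\mu$ with that Galois-average.

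Second, the hypotheses. The tame assumption is used inside Lemma~\ref{lem:LR-Lemma5.11} (to reduce to a manageable list of residually split tamely ramified quasi-split classical groups) and, independently, in Prop.~\ref{prop:existence_of_elliptic_tori_in_special_parahorics}. The \emph{cyclic} assumption is invoked only in part~(2) of the body Theorem~\ref{thm:non-emptiness_of_NS} --- to apply Sansuc's theorem that $G(\Q)$ is dense in $G(\Qp)$ so as to place the parahoric of $T(\Qp)$ inside $\mbfK_p$ --- and the paper explicitly notes that the cyclicity is \emph{not} needed for the ordinary-locus statement. Attributing cyclic-tameness to ``control of ramification in the torus realization'' misreads where each hypothesis enters.

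Two smaller points. Your globalization step needs more than weak approximation: the transfer of maximal tori one wants must restrict, over $\Qpb$, to conjugation by an element of $G(\Qp)$, and this is secured via a Galois-cohomology argument (Lemma~\ref{lem:LR-Lemma5.12}, using Lemma~\ref{lem:Lee14-lem.4.1.2} and ellipticity of $T_0$ at some auxiliary finite place $l\neq p$), not by approximating tori. And the identification of the isocrystal of the CM reduction in the paper is a class-field-theoretic computation (Lemma~\ref{lem:unramified_conj_of_special_morphism}, \cite[Thm.~1.15]{RR96}, and \cite[Lem.~3.24]{Lee16}) showing $[b]$ coincides with $[\Nm_{K_{v_2}/K_0}(\mu_h(\pi))]$, rather than an appeal to Kisin's attachment of Kottwitz triples to $\F_{p^n}$-points. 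Your part~(2) argument via the combinatorics of $B(G_{\Qp},\{\mu_X\})$ is a reasonable alternative to the paper's reduction to \cite[Cor.~4.3.2]{Lee16}, but it needs part~(1) to be available first, which brings us back to the missing torus-realization step.
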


This theorem generalizes Theorem 4.3.1 and Corollary 4.3.2 of \cite{Lee16} in the hyperspecial cases.

Next, we give some comments on possible generalizations of Thm. \ref{thm_intro:Kottwitz_conj} and Thm. \ref{thm_intro:Kottwitz_formula} for bad reductions. In view of the recent works \cite[Thm. 4.7.11]{KisinPappas15} and \cite{Zhou17}, it seems very likely that our methods and results allow us to establish also 
the conjecture on the semisimple zeta function (\cite{Rapoport05}, \cite{Haines14}) in special parahoric level case, when $G_{\Qp}$ is unramified and $p\nmid |\pi_1(G^{\der})|$, and as a result Thm. \ref{thm_intro:Kottwitz_conj} (for which one might also need certain results on the endoscopic transfer of stable Bernstein center, cf. \cite{Haines14}). But, we can already use our partial results to extend the scope of some previous results, for example, we can relax the ramification condition imposed on the PEL datum in the main result of \cite{Scholze13}. 

Finally, we make some comments on the various assumptions appearing in this article.
The running assumption, which will be effective except in some general discussions, is that $G_{\Qp}$ is quasi-split and splits over a tamely ramified extension of $\Qp$ (as mentioned above, for the latter condition, in fact, we only need some less restrictive one: see Thm. \ref{thm:LR-Satz5.3} and Prop. \ref{prop:existence_of_elliptic_tori_in_special_parahorics} for a precise condition). Equally universal assumption, although it is not needed for the important Theorem \ref{thm_intro:1st_Main_thm}, (1), is that $\mbfK_p$ is special maximal parahoric.
These two assumptions are somewhat forced on us because we follow closely the original line of arguments \cite{LR87} for our proofs.
We however remark that the two conditions that $G_{\Qp}$ splits over a tamely ramified extension and the level subgroup is special maxima parahoric 
are imposed only via Lemma \ref{lem:LR-Lemma5.11}, or via Prop. \ref{prop:existence_of_elliptic_tori_in_special_parahorics},
where these assumptions are of more techinical nature rather than of intrinsic nature (e.g. in their proofs,
we verify certain statements by case-by-case analysis, where the possible cases are restricted or reduce substantially under such assumptions). It would be interesting to know if one can remove either of these conditions in these statements.

We remark that our sign convention up to Section 6 is the same as that of Langlands-Rapoport in \cite{LR87}; so for example, it is opposite to that of Kisin in \cite{Kisin17}. In Section 7, we also need to fix some more sign conventions, especially sign normalization of the local Langlands correspondence for tori (see Footnote \ref{ftn:LLC_sign} for our choice).

This article is organized as follows. 
The second section is a preliminary discussion, devoted to a review of some basic objects, including Kottwitz and Newton maps (defined for algebraic groups over $p$-adic fields), parahoric groups (in the Bruhat-Tits theory), extended affine Weyl groups, and $\{\mu\}$-admissible set. 
In the third section, we attempt to give a self-contained overview of the notions of Galois gerbs, the pseudo-motivic Galois gerb, admissible morphisms, Kottwitz triples, and admissible pairs, following closely the original source \cite{LR87}. At the same time, we generalize these notions and establish their properties (notably, admissible morphism and Kottwitz triple) beyond the original assumption that the group has simply connected derived group.
We also give a statement of the Langlands-Rapoport conjecture, as formulated by Rapoport \cite[$\S$8]{Rapoport05} so as to cover parahoric levels. Along the way, we extend results on special admissible morphisms to (special maximal) parahoric levels, under the assumption that $G_{\Qp}$ is quasi-split. 
In Section 4, we prove Theorem \ref{thm_intro:1st_Main_thm}, (1) above, namely that every admissible morphism is conjugate to a special admissible morphism (in our case of general parahoric level), as well as the fact that every admissible pair is nested in a special Shimura datum. 
For some other potential applications in mind, we spilt the proof into a few steps and formalize each of them into a separate proposition (incorporating slight improvements). 
The results in this section are in large part translations of the original results, except for generalizations to our setting and reorganization (with small improvements). However, some of the generalizations, e.g. the proof of Lemma \ref{lem:LR-Lemma5.11}, are rather non-trivial.
In Section 5, we establish the first version of effectivity criterion of Kottwitz triple (Thm. \ref{thm:LR-Satz5.25}). 
In Section 6, we prove the Kottwitz formula, in two ways, one assuming validity of Langlands-Rapoport conjecture (Thm. \ref{thm:Kottwitz_formula:LR}) and another one being unconditional (Thm. \ref{thm:Kottwitz_formula:Kisin}), which uses the second version of effectivity criterion of Kottwitz triple (Thm. \ref{thm:LR-Satz5.25b2}).
In the last section, we stabilize the Kottwitz formula, thereby proving Thm. \ref{thm_intro:Kottwitz_conj}.

In this work, a certain result in the Bruhat-Tits theory, whose hyperspecial case was already used critically in the original work \cite{LR87} (cf. \cite[Lem. A.0.4]{Lee16}), plays a key role. We provide its proof in Appendix \ref{sec:elliptic_tori_in_special_parahorics}. 
Also, generalizations to our setting (i.e. $G^{\der}\neq G^{\uc}$ in general) of previous arguments occur frequently throughout the entire work. For that, it is necessary to work with abelianized cohomology groups which are cohomology groups of complexes of tori. In appendix \ref{sec:abelianization_complex}, we collect basic facts about complexes of tori attached to connected reductive groups.

\textbf{Acknowledgement}
A part of this work was supported by IBS-R003-D1. The author would like to thank M. Rapoport and C.-L. Chai for their interests in this work and encouragement.

\textbf{Notations}

Throughout this paper, $\Qb$ denotes the algebraic closure of $\Q$ inside $\C$. 


For a connected reductive group $G$ over a field, we let $G^{\uc}$ be the universal covering of its derived group $G^{\der}$, and for a (linear algebraic) group $G$, $Z(G)$, and $G^{\ad}$ denote its center, and the adjoint group $G/Z(G)$, respectively. 

For a group $I$ and an $I$-module $A$, we let $A_I$ denote the quotient group of $I$-coinvariants: $A_I=A/\langle ia-a\ |\ i\in I, a\in A\rangle$. For an element $a\in A$, we write $\underline{a}$ for the image of $a$ in $A_I$. In case of need for distinction, sometimes we write $\underline{a}_A$.

For a finitely generated abelian group $A$, we denote by $A_{\mathrm{tors}}$ its subgroup of torsion elements. For a locally compact abelian group $A$, we let $X^{\ast}(A):=\Hom_{\mathrm{cont}}(A,\C^{\times})$ (continuous character group) and $A^D:=\Hom_{\mathrm{cont}}(A,S^1)$ (Pontryagin dual).
For a (commutative) algebraic group $A$ over a field $F$, $X_{\ast}(A):=\Hom_{\mathrm{alg}}(\Gm,A)$, $X^{\ast}(A):=\Hom_{\mathrm{alg}}(A,\Gm)$. So, for a diagonalizable $\C$-group $A$, we have $\pi_0(A)^D=X^{\ast}(A)_{\mathrm{tors}}$ (with the embeddings $\Q/\Z\subset \R/Z=S^1\subset \C^{\times}$ understood).

In this article, the german letter $\mfk$ denotes the completion of the maximal unramified extension (in a fixed algebraic closure $\Qpb$) of $\Qp$, and for $n\in\N$ $L_n$ will denote $\mathrm{Frac}(W(\F_{p^n}))$. 


\section{Preliminaries: Parahoric subgroups and $\mu$-admissible set}


\subsection{Kottwitz maps and Newton map}

In this section, we briefly recall the definitions of the Kottwitz maps and the Newton map. We refer to \cite{Kottwitz97}, \cite{Kottwitz85}, \cite{RR96}, and references therein for further details.

\subsubsection{The Kottwitz maps $w_G$, $v_G$, $\kappa_{G}$} \label{subsubsec:Kottwitz_hom}

Let $L$ be a complete discrete valued field with algebraically closed residue field and set $I:=\Gal(\overline{L}/L)$. 
For any connected reductive group $G$ over $L$, Kottwitz \cite[$\S$7]{Kottwitz97} constructs a group homomorphism
\[w_G:G(L)\rightarrow X^{\ast}(Z(\widehat{G})^{I})=\pi_1(G)_{I}.\]
Here, $\widehat{G}$ denotes the Langlands dual group of $G$, $\pi_1(G)=X_{\ast}(T)/\Sigma_{\alpha\in R^{\ast}}\Z\alpha^{\vee}$ is the fundamental group of $G$ (\`a la Borovoi) (i.e. the quotient of $X_{\ast}(T)$ for a maximal torus $T$ over $F$ of $G$ by the coroot lattice), and $\pi_1(G)_I$ is the (quotient) group of coinvariants of the $I$-module $\pi_1(G)$. This map $w_G$ is sometimes denoted by $\widetilde{\kappa}_G$, e.g. in \cite{Rapoport05}. When $G^{\der}$ is simply connected (so that $\pi_1(G)=X_{\ast}(G^{\ab})$ for $G^{\ab}=G/G^{\der}$), $w_G$ factors through $G^{\ab}$: $w_G=w_{G^{\ab}}\circ p_G$, where $p_G:G\rightarrow G^{\ab}$ is the natural projection \cite[7.4]{Kottwitz97}.

There is also a homomorphism 
\[v_G:G(L)\rightarrow \Hom(X_{\ast}(Z(\widehat{G}))^I,\Z)\] 
sending $g\in G(L)$ to the homomorphism $\chi\mapsto \mathrm{val}(\chi(g))$ from $X_{\ast}(Z(\widehat{G}))^I=\Hom_L(G,\Gm)$ to $\Z$, where $\mathrm{val}$ is the usual valuation on $L$, normalized so that uniformizing elements have valuation $1$. It is clear from this definition that $v_G=v_{G^{\ab}}\circ p_G$ for \emph{any} $G$ (i.e. not necessarily having the property $G^{\der}=G^{\uc}$).

There is the relation: 
\[v_G=q_G\circ w_G,\] 
where $q_G$ is the natural surjective map 
\[q_G:X^{\ast}(Z(\widehat{G})^{I})=X^{\ast}(Z(\widehat{G}))_I \rightarrow \Hom(X_{\ast}(Z(\widehat{G}))^I,\Z).\]
The kernel of $q_G$ is the torsion subgroup of $X^{\ast}(Z(\widehat{G}))_I$, i.e. $\Hom(X_{\ast}(Z(\widehat{G}))^I,\Z)\cong \pi_1(G)_I/\text{torsions}$; in particular, $q_G$ is an isomorphism if the coinvariant group $X^{\ast}(Z(\widehat{G}))_I$ is free (e.g. the $I$-module $X^{\ast}(Z(\widehat{G}))$ is trivial or more generally \textit{induced}, i.e. has a $\Z$-basis permuted by $I$).

For example, when $G$ is a torus $T$, we have $\langle \chi,w_T(t)\rangle= \mathrm{val}(\chi(t))$ for $ t\in T(L)$, $\chi\in X^{\ast}(T)^I$, where $\langle\ ,\ \rangle$ is the canonical pairing between $X^{\ast}(T)^I$ and $X_{\ast}(T)_I$.

Now suppose that $G$ is defined over a local field $F$, i.e. a finite extension of $\Qp$ (in a fixed algebraic closure $\Qpb$). Let $L$
\footnote{In this case that the residue field is $\Fpb$, we will write $\mfk$ for $L$ more often.}
be the completion of the maximal unramified extension $F^{\nr}$ of $F$ in $\Qpb$ 
and let $\sigma$ denote the Frobenius automorphism on $L$ which fixes $F$ and induces $x\mapsto x^q$ on the residue field of $L$ ($\cong\Fpb$), where the residue field of $F$ is $\F_q$. In this situation, the maps $v_{G_L}$, $w_{G_L}$ each induce notable maps.

First, as $w_{G_L}$ (and $v_{G_L}$ too) commutes with the action of $\Gal(F^{\nr}/F)$, by taking $H^{\mathrm{o}}(\Gal(F^{\nr}/F),-)$ on both sides of $w_{G_L}$, we obtain a homomorphism
\[\lambda_G:G(F)\rightarrow X^{\ast}(Z(\widehat{G})^I)^{\langle\sigma\rangle},\]
where $I\cong\Gal(\overline{F}/F^{\nr})$. This map is introduced in \cite[$\S$3]{Kottwitz84b} (cf. \cite[7.7]{Kottwitz97}) when $G$ is \emph{unramified} over $F$ (in which case the canonical action of $I$ on $Z(\widehat{G})$ is trivial) and used in \cite{LR87} (with the same notation) under the additional assumption $G^{\der}=G^{\uc}$ so that $w_{G_L}=v_{G_L}$. We remark that in our general set-up that $G_{\Qp}$ is not necessarily unramified nor $G^{\der}=G^{\uc}$, to achieve what $\lambda_G$ did in \cite{LR87}, we use $v_G$, or $w_G$ depending on the situation. 

Next, let $B(G)$ denote the set of $\sigma$-conjugacy classes:
\[B(G):=G(L)/\stackrel{\sigma}{\sim},\]
where two elements $b_1$, $b_2$ of $G(L)$ are said to be \textit{$\sigma$-conjugate}, denoted $b_1\stackrel{\sigma}{\sim} b_2$, if there exists $g\in G(L)$ such that $b_2=gb_1\sigma(g)^{-1}$. 
Then, $w_{G_L}$ induces a map of sets 
\begin{equation} \label{eq:kappa_G}
\kappa_G:B(G)\rightarrow X^{\ast}(Z(\widehat{G})^{\Gamma_F})=\pi_1(G)_{\Gamma_F}: \kappa_G([b])=\overline{w_{G_L}(b)}.
\end{equation}
Here, for $b\in G(L)$, $[b]$ denotes its $\sigma$-conjugacy class, and for $x\in \pi(G)_{I}$, $\overline{x}$ denotes its image under the natural quotient map $\pi(G)_{I}\rightarrow \pi(G)_{\Gamma_F}$. For further details, see \cite[7.5]{Kottwitz97}.

All these maps are functorial in $G$ (i.e. for group homomorphisms). 

\subsubsection{The Newton map $\nu_G$}

Let $\mathbb{D}$ denote the protorus $\varprojlim\Gm$ with the character group $\Q=\varinjlim\Z$.
For an algebraic group $G$ over a $p$-adic local field $F$, we put
\[\mathcal{N}(G):=(\Hom_{L}(\mathbb{D},G)/\Int(G(L)))^{\sigma}\]
(the subset of $\sigma$-invariants in the set of $G(L)$-conjugacy classes of $L$-rational quasi-cocharacters into $G_L$). We will use the notation $\overline{\nu}$ for the the conjugacy class of $\nu\in\Hom_{L}(\mathbb{D},G)$. 

For every $b\in G(L)$, Kottwitz \cite[$\S$4.3]{Kottwitz85} constructs an element $\nu=\nu_G(b)=\nu_b\in\Hom_L(\mathbb{D},G)$
\footnote{We interchangeably write $\nu_G(b)$ or $\nu_b$.}
uniquely characterized by the property that there are an integer $s>0$, an element $c\in G(L)$ and a uniformizing element $\pi$ of $F$ such that:
\begin{itemize}\addtolength{\itemsep}{-4pt}
\item[(i)] $s\nu\in\Hom_L(\Gm,G)$.
\item[(ii)] $\Int (c)\circ s\nu$ is defined over the fixed field of $\sigma^s$ in $L$.
\item[(iii)] $c\cdot (b\sigma)^s\cdot c^{-1}=c\cdot (s\nu)(\pi)\cdot c^{-1}\cdot \sigma^{s}$.
\end{itemize}
In (iii), the product (and the equality as well) take place in the semi-direct product group $G(L)\rtimes\langle\sigma\rangle$. We call $\nu_b$ the \textit{Newton homomorphism} attached to $b\in G(L)$

When $G$ is a torus $T$, $\nu_b=\mathrm{av}\circ w_{T_L}(b)$, where $\mathrm{av}:X_{\ast}(T)_I\rightarrow X_{\ast}(T)_{\Q}^{\Gamma_F}$ is ``the average map'' $X_{\ast}(T)_I\rightarrow X_{\ast}(T)_{\Gamma_F}\rightarrow X_{\ast}(T)_{\Q}^{\Gamma_F}$ sending $\underline{\mu}\ (\mu\in X_{\ast}(T))$ to $|\Gamma_F\cdot\mu|^{-1} \sum_{\mu'\in \Gamma_F\cdot\mu}\mu'$ (cf. \cite[Thm. 1.15, (iii)]{RR96}). Hence, it follows that if $T$ is split by a finite Galois extension $K\supset F$, for $b\in T(L)$, $[K:F]\nu_b\in X_{\ast}(T)$ and that $\langle \chi,\nu_b\rangle= \mathrm{val}(\chi(b))$ (especially $\in\Z$) for every $F$-rational character $\chi$ of $T$.

The map $b\mapsto \nu_b$ has the following properties.
\begin{itemize}\addtolength{\itemsep}{-4pt}
\item[(a)] $\nu_{\sigma(b)}=\sigma(\nu_b)$.
\item[(b)] $gb\sigma(g)^{-1}\mapsto \Int (g)\circ \nu,\ g\in G(L)$.
\item[(c)] $\nu_b=\Int (b)\circ\sigma(\nu_b)$.
\end{itemize}
It follows from (b) and (c) that $\nu_G:G(L)\rightarrow \Hom_L(\mathbb{D},G)$ gives rise to a map $\overline{\nu}_G:B(G)\rightarrow \mathcal{N}(G)$, which we call the \textit{Newton map}. This can be also regarded as a functor from the category of connected reductive groups to the category of sets (endowed with partial orders defined as below):
\begin{equation*}\overline{\nu}:B(\cdot)\rightarrow \mathcal{N}(\cdot)\ ;\
\overline{\nu}_{G}([b])=\overline{\nu}_{b},\quad
b\in[b].\end{equation*}

\subsubsection{} For a connected reductive group $G$ over an arbitrary (i.e. not necessarily $p$-adic) field $F$, let $\mathcal{BR}(G)=(X^{\ast},R^{\ast},X_{\ast},R_{\ast},\Delta)$ be the based root datum of $G$: we may take $X^{\ast}=X^{\ast}(T)$, $X_{\ast}=X_{\ast}(T)$ for a maximal $F$-torus $T$ of $G$ and $R^{\ast}\subset X^{\ast}(T)$, $R_{\ast}\subset X_{\ast}(T)$ are respectively the roots and the coroots for the pair $(G,T)$ with a choice of basis $\Delta$ of $R^{\ast}$ (whose choice corresponds to that of a Borel subgroup $B\subset G_{\overline{F}}$ containing $T_{\overline{F}}$).
Let $\overline{C}\subset (X_{\ast})_{\Q}$ denote the closed Weyl chamber associated with the root base $\Delta$. It comes with a canonical action of $\Gamma_F:=\Gal(\overline{F}/F)$ on $\overline{C}$. 

For a cocharacter $\mu\in\Hom_{\overline{F}}(\Gm,G)$ lying in $\overline{C}$, we set 
\[\overline{\mu}:=|\Gamma_F\cdot\mu|^{-1} \sum_{\mu'\in\Gamma_F\cdot\mu}\mu'\quad\in\overline{C}.\]
Here, the orbit $\Gamma_F\cdot\mu$ is obtained using the canonical Galois action on $\overline{C}$. Once a Weyl chamber $\overline{C}$ (equivalently, a Borel subgroup $B$ or a root base $\Delta$) is chosen, $\overline{\mu}$ depends only on the $G(\overline{F})$-conjugacy class $\{\mu\}$ of $\mu$.

Suppose that $\mu\in X_{\ast}(T)\cap\overline{C}$. 
As $X_{\ast}(T)=X^{\ast}(\widehat{T})$ for the dual torus $\widehat{T}$ of $T$, regarded as a character on $\widehat{T}$, we can restrict $\mu$ to the subgroup $Z(\widehat{G})^{\Gamma_F}$ of $\widehat{T}$, obtaining an element 
\begin{equation} \label{eqn:mu_natural}
\mu^{\natural}\in X^{\ast}(Z(\widehat{G})^{\Gamma_F})=\pi_1(G)_{\Gamma_F}.
\end{equation}
Again, $\mu^{\natural}$ depends only on the $G(\overline{F})$-conjugacy class $\{\mu\}$ of $\mu$.
Alternatively, $\mu^{\natural}$ equals the image (sometimes, also denoted by $\underline{\mu}$) of $\mu\in X_{\ast}(T)$ under the canonical map $X_{\ast}(T)\rightarrow \pi_1(G)_{\Gamma_F}$.

\subsubsection{The set $B(G,\{\mu\})$} \label{subsubsec:B(G,{mu})} 
Again, let us return to a $p$-adic field $F$. We fix a closed Weyl chamber $\overline{C}$ (equiv. a Borel subgroup $B$ over $\overline{F}$). 
Suppose given a $G(\overline{F})$-conjugacy class $\{\mu\}$ of cocharacters into $G_{\overline{F}}$.
Let $\mu$ be the representative of $\{\mu\}$ in $\overline{C}$; so we have $\overline{\mu}=\overline{\mu}(G,\{\mu\})\in\overline{C}$ and $\mu^{\natural}\in X^{\ast}(Z(\widehat{G})^{\Gamma_F})$. 
We define a finite subset $B(G,\{\mu\})$ of $B(G)$ (cf. \cite[Sec.6]{Kottwitz97}, \cite[Sec.4]{Rapoport05}): 
\[B(G,\{\mu\}):=\left\{\ [b]\in B(G)\ |\quad \kappa_{G}([b])=\mu^{\natural},\quad \overline{\nu}_{G}([b])\preceq \overline{\mu}\ \right\},\]
where $\preceq$ is the natural partial order on the closed Weyl chamber $\overline{C}$ defined by that $\nu\preceq \nu'$ if $\nu'-\nu$ is a nonnegative linear combination (with \emph{rational} coefficients) of simple coroots in $R_{\ast}(T)$ \cite{RR96}, Lemma 2.2). 
One knows \cite[4.13]{Kottwitz97} that the map 
\[(\overline{\nu},\kappa):B(G)\rightarrow \mathcal{N}(G)\times X^{\ast}(Z(\widehat{G})^{\Gamma_F})\] 
is injective, hence $B(G,\{\mu\})$ can be identified with a subset of $\mathcal{N}(G)$.


\subsection{Parahoric subgroups}

Our references here include \cite{Rapoport05}, \cite{HainesRapoport08}, \cite{HainesRostami10}, in addition to the original sources \cite{BT72}, \cite{BT84}, \cite{Tits79}.

\subsubsection{}  \label{subsubsec:parahoric}
Let $G$ be a connected reductive group $G$ over a strictly henselian discrete valued field $L$. Let $\mcB(G,L)$ be the Bruhat-Tits building of $G$ over $L$ (cf. \cite{Tits79}, \cite{BT72}, \cite{BT84}). Then, a \textit{parahoric subgroup} of $G(\mfk)$ is a subgroup of the form
\[ K_{\mbff}=\mathrm{Fix}\ \mbff \cap \ker\ w_G\]
for a facet $\mbff$ of $\mcB(G,L)$. Here, $\mathrm{Fix}\ \mbff$ denotes the subgroup of $G(\mfk)$ fixing $\mbff$ pointwise and $w_G$ is the Kottwitz map (\autoref{subsubsec:Kottwitz_hom}). When $\mbff$ is an alcove of $\mcB(G,L)$ (i.e. a maximal facet), the parahoric subgroup is called an \textit{Iwahori} subgroup. A \textit{special maximal parahoric subgroup} of $G(\mfk)$ is the parahoric subgroup attached to a special point in $\mcB(G,L)$.
More precisely, choose a maximal split torus $A$ of $G$ and let $\mcA(A,L)$ be the associated apartment; let $\mcA(A^{\ad},L)$ be the apartment in $\mcB(G^{\ad},L)$ corresponding to the image $A^{\ad}$ of $A$ in $G^{\ad}$. Then, there exists a canonical simplicial isomorphism \cite[1.2]{Tits79}
\[\mcA(A,L)\cong \mcA(A^{\ad},L)\times X_{\ast}(Z(G))_{\Gamma_F}\otimes\R.\]
Then, every special point in $\mcA(A,L)$ is of the form $\{\mbfv\}\times x$ for a unique special \emph{vertex} $\mbfv$ of $\mcA(A^{\ad},L)$ (in the sense of \cite[1.9]{Tits79}) and some $x\in X_{\ast}(Z(G))_{\Gamma_F}\otimes\R$.

The original definition of pararhoic subgroups by Bruhat-Tits \cite[5.2.6]{BT84}, cf. \cite[3.4]{Tits79} uses group schemes. With every facet $\mbff$ of $\mcB(G,L)$ they associate a smooth group scheme $\mcG_{\mbff}$ over $\Spec(\cO_L)$ with generic fiber $G$ such that $\mcG_{\mbff}(\cO_L)=\mathrm{Fix}\ \mbff$. Also, there exists an open subgroup $\mcG_{\mbff}^{\mathrm{o}}$ with the same generic fiber $G$ and the connected special fiber. Then, the parahoric subgroup attached to $\mbff$ by Bruhat-Tits is $\mcG_{\mbff}^{\mathrm{o}}(\cO_L)$. It is known \cite[Prop.3]{HainesRapoport08} that they coincide:
\[K_{\mbff}=\mcG_{\mbff}^{\mathrm{o}}(\cO_L).\]

Now suppose that $G$ is defined over a local field $F$, as before given as a finite extension of $\Qp$ in $\Qpb$.
Again, $L$ denotes the completion of the maximal unramified extension of $F$ in $\Qpb$ and let $\sigma$ be the Frobenius automorphism of $L$ fixing $F$. Let $\mcB(G,L)$ (resp. $\mcB(G,F)$) be the Bruhat-Tits building of $G$ over $L$ (resp. over $F$); as $G$ is defined over $F$, $\mcB(G,L)$ carries an action of $G(\mfk)\rtimes\langle\sigma\rangle$ and $\mcB(G,F)$ is identified with the set of fixed points of $\mcB(G,L)$ under $\langle\sigma\rangle$ \cite[5.1.25]{BT84}. This procedure of taking $\sigma$-fixed points $\mbff\mapsto \mbff^{\sigma}$ gives a bijection from the set of $\sigma$-stable facets in $\mcB(G,L)$ to the set of facets in $\mcB(G,F)$.

A \textit{parahoric subgroup} of $G(F)$ is by definition $\mcG_{\mbff}^{\mathrm{o}}(\cO_L)^{\sigma}(:=\mcG_{\mbff}^{\mathrm{o}}(\cO_L)\cap G(F))$ for a $\sigma$-stable facet $\mbff$ of $\mcB(G,L)$. A \textit{special maximal parahoric subgroup} of $G(F)$ is $\mcG_{\mbff}^{\mathrm{o}}(\cO_L)^{\sigma}$ for a special point $\mbff\in\mcB(G,F)$. 

\subsubsection{Extended affine Weyl group} \label{subsubsec:EAWG}

Let $G$ be a connected reductive group over a complete discrete valued field $L$ with algebraically closed residue field.
Let $S$ be a maximal split $L$-torus of $G$ and $T$ its centralizer; $T$ is a maximal torus since $G_L$ is quasi-split by a well-known theorem of Steinberg. Let $N=N_G(T)$ be the normalizer of $T$. The \textit{extended affine Weyl group} (or \textit{Iwahori Weyl group}) associated with $S$ is the quotient group
\[\tilde{W}:=N(L)/T(L)_1,\]
where $T(L)_1$ is the kernel of the Kottwitz map $w_T:T(L)\rightarrow X_{\ast}(T)_{I}$. As $w_T$ is surjective, $\tilde{W}$ is an extension of the relative Weyl group $W_0:=N(L)/T(L)$ by $X_{\ast}(T)_I$: 
\begin{equation} \label{eqn:EAWG1}
0\rightarrow X_{\ast}(T)_I\rightarrow \tilde{W}\rightarrow W_0\rightarrow 0.
\end{equation}
The normal subgroup $X_{\ast}(T)_I$ is called the \textit{translation subgroup} of $\tilde{W}$, and any $\lambda\in X_{\ast}(T)_I$, viewed as an element in $\tilde{W}$ in this way, will be denoted by $t^{\lambda}$ (\textit{translation element}).

This extension splits by choice of a special vertex $\mbfv$ in the apartment corresponding to $S$, namely if $K=K_{\mbfv}\subset G(L)$ is the associated parahoric subgroup, the subgroup
\[\tilde{W}_K:=(N(L)\cap K)/T(L)_1\]
of $\tilde{W}$ projects isomorphically to $W_0$, and thus gives a splitting
\[\tilde{W}=X_{\ast}(T)_I\rtimes \tilde{W}_K.\]

For two parahoric subgroups $K$ and $K'$ associated with facets in the apartment corresponding to $S$, there exists an isomorphism
\begin{equation} \label{eqn:parahoric_double_coset}
K\backslash G(L)/ K'\cong \tilde{W}_K\backslash \tilde{W}/ \tilde{W}_{K'}.
\end{equation}

Let $S^{\uc}$ (resp. $T^{\uc}$, $N^{\uc}$) be the inverse image of $S$ (resp. $T$, $N$) in the universal covering $G^{\uc}$ of $G^{\der}$; then, $S^{\uc}$ is a maximal split torus of $G^{\uc}$ and $T^{\uc}$ (resp. $N^{\uc}$) is its centralizer (resp. the normalizer). 
The natural map $N^{\uc}(L)\rightarrow N(L)$ induces an injection $X_{\ast}(T^{\uc})_I\hra X_{\ast}(T)_I$ and presents the extended affine Weyl group associated with $(G^{\uc},S^{\uc})$
\[W_a:=N^{\uc}(L)/ T^{\uc}(L)_1\]
as a normal subgroup of the extended affine Weyl group $\tilde{W}$ (attached to $S$) such that the translation subgroup $X_{\ast}(T)_I$ maps onto the quotient $\tilde{W}/W_a$ with kernel $X_{\ast}(T^{\uc})_I$:
\begin{equation} \label{eqn:EAWG2}
0\rightarrow W_a\rightarrow \tilde{W}\rightarrow X_{\ast}(T)_I/X_{\ast}(T^{\uc})_I\rightarrow 0.
\end{equation}
The quotient group $X_{\ast}(T)_I/X_{\ast}(T^{\uc})_I$ is identified in a natural way with $\pi_1(G)_I$ \cite[p.196]{HainesRapoport08}.
The group $W_a$ can be also regarded as an affine Weyl group attached to some reduced root system (\textit{loc. cit.}, p.195).
\footnote{With $(G,S)$, Tits \cite{Tits79} define an affine root system $\Phi_{\mathrm{af}}$ in the affine space $A$ under $V:=X_{\ast}(S)_{\R}$, and a homomorphism $\nu:\tilde{W}\rightarrow \mathrm{Aff}(A)$, where $\mathrm{Aff}(A)$ is the group of affine transformations of $A$. When $G$ is semi-simple, the affine Weyl group of the reduced root system in question equals the Weyl group of $\Phi_{\mathrm{af}}$, i.e. the group generated by the reflections about the hyperplanes that are zero sets of affine functions in $\Phi_{\mathrm{af}}$.
When we choose a special vertex $\mbfv$ (so, identify the affine space $A$ with $V$, and $W_0$ with the stabilizer subgroup of $\mbfv$), there exists a reduced root system ${}^{\mbfv}\Sigma$ in question having roots in $V$, and $W_a\cong \nu(W_a)$ is isomorphic to the affine Weyl group $Q^{\vee}({}^{\mbfv}\Sigma)\rtimes W({}^{\mbfv}\Sigma)$.}

This extension (\ref{eqn:EAWG2}) also splits by choice of an alcove in the apartment $\mcA(S,L)$ of $S$. More precisely, the extended affine Weyl group $\tilde{W}$ (resp. the affine Weyl group $W_a$) acts transitively (resp. simply transitively) on the set of alcoves in $\mcA(S,L)$, hence when we choose a base alcove $\mbfa$ in $\mcA(S,L)$, 
\begin{equation} \label{eqn:splitting_of_EAWG2}
\tilde{W}=W_a\rtimes \Omega_{\mbfa},
\end{equation}
where $\Omega_{\mbfa}$ is the normalizer of $\mbfa$; $\Omega_{\mbfa}$ will be often identified with $X_{\ast}(T)_I/X_{\ast}(T^{\uc})_I$.

Finally, suppose that there is an automorphism $\sigma$ of $L$ such that $L$ is the strict henselization of its fixed field $L^{\natural}$ and that $G$ is defined over $L^{\natural}$.
Then, we can find a $L^{\natural}$-torus $S$ such that $S_L$ becomes a maximal split $L$-torus, and a maximal $L^{\natural}$-torus $T$ containing $S$; set $N$ to be the normalizer of $T$. Then $\sigma$ acts on the extended Weyl group $\tilde{W}$ in an obvious way. Moreover, if $K_{\mbfv}\subset G(L)$ is the parahoric subgroup attached to a $\sigma$-stable facet $\mbfv$, then the subgroup $\tilde{W}_{K_{\mbfv}}$ is stable under $\sigma$. We refer the reader to \cite[Remark 9]{HainesRapoport08} for a ``descent theory'' in this situation.

\subsubsection{The $\{\mu\}$-admissible set} \label{subsubsec:mu-admissible_set}
As before, let $G$ be a connected reductive group $G$ over a complete discrete valued field $L$ with algebraically closed residue field. Let $W=N(\Lb)/T(\Lb)$ be the absolute Weyl group. Let
$\{\mu\}$ be a $G(\Lb)$-conjugacy class of cocharacters of $G$ over $\Lb$. We use $\{\mu\}$ again to denote the corresponding $W$-orbit in $X_{\ast}(T)$. Let us choose a Borel subgroup $B$ over $L$ containing $T$ (which exists as $G_{/L}$ is automatically quasi-split), and let $\mu_B$ be the unique representative of $\{\mu\}$ lying in the associated absolute closed Weyl chamber in $X_{\ast}(T)_{\R}$. Then, the $W_0$-orbit of the image $\underline{\mu_B}$ of $\mu_B$ in $X_{\ast}(T)_I$ is well-determined, since any two Borel subgroups over $L$ containing $T$ are conjugate under $G(L)$. We denote this $W_0$-orbit by $\Lambda(\{\mu\})$:
\[ \Lambda(\{\mu\}):=W_0\cdot\underline{\mu_B}\ \subset\ X_{\ast}(T)_I.\]
 It is known \cite[Lem. 3.1]{Rapoport05} that the image of $\Lambda(\{\mu\})$ in the quotient group $X_{\ast}(T)_I/X_{\ast}(T^{\uc})_I$ consists of a single element, which we denote by $\tau(\{\mu\})$. 

Let us now fix an alcove $\mbfa$ in the apartment corresponding to $S$. This determines a Bruhat order on the affine Weyl group $W_a$ which further extends to the extended Weyl group $\tilde{W}=W_a\rtimes \Omega_{\mbfa}$ (\ref{eqn:splitting_of_EAWG2}), \cite[$\S$1]{KR00}. Also, when $K\subset G(\mfk)$ is a parahoric subgroup associated with a facet of $\mbfa$, it induces a Bruhat order on the double coset space $\tilde{W}_K\backslash \tilde{W}/\tilde{W}_K$ \cite[$\S$8]{KR00}. We will denote all these orders by $\leq$; this should not cause much confusion.

\begin{defn} \label{defn:mu-admissible_subset}
The \textit{$\{\mu\}$-admissible subset} of $\tilde{W}$ is 
\[\mathrm{Adm}(\{\mu\})=\{w\in\tilde{W}\ |\ w\leq t^{\lambda}\text{ for some }\lambda\in\Lambda(\{\mu\})\},\]
and the \textit{$\{\mu\}$-admissible subset} of $\tilde{W}_K\backslash \tilde{W}/\tilde{W}_K$ is 
\[\mathrm{Adm}_K(\{\mu\})=\{w\in\tilde{W}_K\backslash \tilde{W}/\tilde{W}_K\ |\ w\leq \tilde{W}_Kt^{\lambda}\tilde{W}_K\text{ for some }\lambda\in\Lambda(\{\mu\})\}.\]
\end{defn}
One knows \cite[(3.8)]{Rapoport05} that $\mathrm{Adm}_K(\{\mu\})$ is the image of $\mathrm{Adm}(\{\mu\})$ under the natural map $\tilde{W}\rightarrow \tilde{W}_K\backslash \tilde{W}/\tilde{W}_K$.

\begin{prop}
Suppose that $G$ splits over $L$ (thus $S=T$) and $K$ is a special maximal parahoric subgroup. Then, 
\[\mathrm{Adm}_K(\{\mu\})=\{\nu\in X_{\ast}(S)\cap\overline{C}\ |\ \nu\stackrel{!}{\leq} \mu\},\]
where $\mu$ denotes the representative in $\overline{C}$ of $\{\mu\}$.
If furthermore $\{\mu\}$ is minuscule, $\mathrm{Adm}_K(\{\mu\})$ consists of a single element, i.e. $\{\mu\}\in X_{\ast}(T)/W$ itself.
\end{prop}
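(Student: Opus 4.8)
The plan is to reduce the identity to the combinatorial description of the Bruhat order on a double-coset space for a special maximal parahoric, and then to treat the minuscule case by a weight-theoretic argument.

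First I would unwind the definitions using that $G$ splits over $L$. Then $S=T$, the inertia group $I=\Gal(\overline L/L)$ acts trivially on $X_{\ast}(T)$, so $X_{\ast}(T)_I=X_{\ast}(T)$, the relative Weyl group $W_0$ equals the absolute Weyl group $W$, and $\tilde W=X_{\ast}(T)\rtimes W$ by (\ref{eqn:EAWG1}). Since $K=K_{\mbfv}$ is attached to a special vertex $\mbfv$, the splitting recalled in \autoref{subsubsec:EAWG} identifies $\tilde W_K$ with $W$ acting in the usual way on the translation lattice $X_{\ast}(S)=X_{\ast}(T)$. A direct computation then shows that every double coset in $\tilde W_K\backslash\tilde W/\tilde W_K=W\backslash(X_{\ast}(S)\rtimes W)/W$ equals $W t^{\lambda}W$ for some $\lambda\in X_{\ast}(S)$ and depends only on the $W$-orbit of $\lambda$, so that $\lambda\mapsto \tilde W_K t^{\lambda}\tilde W_K$ induces a bijection
\[X_{\ast}(S)\cap\overline C\ \lisom\ \tilde W_K\backslash\tilde W/\tilde W_K .\]
Moreover, in the split case $\Lambda(\{\mu\})$ is just the orbit $W\cdot\mu$ of the dominant representative $\mu$ of $\{\mu\}$, so all the double cosets $\tilde W_K t^{\lambda}\tilde W_K$ with $\lambda\in\Lambda(\{\mu\})$ coincide with $\tilde W_K t^{\mu}\tilde W_K$; hence by Definition \ref{defn:mu-admissible_subset}, $\mathrm{Adm}_K(\{\mu\})$ is exactly the down-set $\{\,\bar w\le \tilde W_K t^{\mu}\tilde W_K\,\}$ of the double coset of $t^{\mu}$.

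It then remains to check that, under the bijection above, this down-set is $\{\nu\in X_{\ast}(S)\cap\overline C\mid \nu\stackrel{!}{\leq}\mu\}$, i.e. that for dominant $\nu,\mu$ one has $\tilde W_K t^{\nu}\tilde W_K\le \tilde W_K t^{\mu}\tilde W_K$ iff $\mu-\nu$ is a non-negative integral combination of simple coroots. This is the known description of the $\{\mu\}$-admissible set for a special maximal parahoric (see \cite[\S3]{Rapoport05} and \cite{KR00}). Concretely, the inclusion ``$\supseteq$'' uses the classical fact that for dominant cocharacters $t^{\nu}\leq t^{\mu}$ in $\tilde W$ iff $\nu\stackrel{!}{\leq}\mu$, whence $t^{\nu}\in\mathrm{Adm}(\{\mu\})$ and its image lies in $\mathrm{Adm}_K(\{\mu\})$; the inclusion ``$\subseteq$'' uses, via the identity that $\mathrm{Adm}_K(\{\mu\})$ is the image of $\mathrm{Adm}(\{\mu\})$ (\cite[(3.8)]{Rapoport05}), that the translation part $\eta$ of any $x\leq t^{\lambda}$ with $\lambda\in W\cdot\mu$ has dominant representative $\eta^{+}\preceq\mu$ (a standard property of the affine Bruhat order) and satisfies $\eta\equiv\mu\pmod{Q^{\vee}}$ (since comparable elements of $\tilde W=W_a\rtimes\Omega_{\mbfa}$ lie in the same coset of $W_a$, and $\tilde W/W_a\cong X_{\ast}(T)/Q^{\vee}$ in the split case), so that $\eta^{+}\stackrel{!}{\leq}\mu$. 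I expect this last point --- the control of translation parts in the affine Bruhat order and the comparison $t^{\nu}\leq t^{\mu}\Leftrightarrow\nu\stackrel{!}{\leq}\mu$ --- to be the main obstacle; these are standard facts about Iwahori--Weyl groups, which I would either cite or reprove by the usual length/convexity arguments.

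For the minuscule case, suppose $\{\mu\}$ is minuscule, with dominant representative $\mu$, so $\langle\alpha,\mu\rangle\in\{0,\pm1\}$ for every root $\alpha$. If $\nu\in X_{\ast}(S)\cap\overline C$ satisfies $\nu\stackrel{!}{\leq}\mu$, then $\nu$ is dominant, $\nu\preceq\mu$, and $\mu-\nu\in Q^{\vee}$; hence $\nu$ is a dominant weight of the irreducible representation of $\widehat G$ with highest weight $\mu$. Since $\mu$ is minuscule, all weights of that representation form a single Weyl orbit, so $\nu=\mu$. Thus $\{\nu\in X_{\ast}(S)\cap\overline C\mid\nu\stackrel{!}{\leq}\mu\}=\{\mu\}$, and by the first part $\mathrm{Adm}_K(\{\mu\})$ consists of the single element corresponding to $\{\mu\}\in X_{\ast}(T)/W$.
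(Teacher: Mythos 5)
Your proposal is correct and essentially mirrors what the paper does: the paper's entire proof is the citation to \cite[Prop.~3.11, Cor.~3.12]{Rapoport05}, and your argument reconstructs the content of those results (identifying $\tilde W_K\backslash\tilde W/\tilde W_K$ with dominant cocharacters when $K$ is special and $G$ split, then invoking the characterization of the Bruhat order on double cosets via $\stackrel{!}{\leq}$ from \cite{KR00} and \cite{Rapoport05}), plus a clean weight-theoretic treatment of the minuscule case that matches the one in Rapoport's survey.
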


Here, $\nu\stackrel{!}{\leq} \mu$ means that $\mu-\nu$ is a sum of simple coroots with non-negative \emph{integer} coefficients. See Prop. 3.11 and Cor. 3.12 of \cite{Rapoport05} for a proof.


\section{Pseudo-motivic Galois gerb and admissible morphisms}

This section is devoted to a review of the theory of the pseudo-motivic Galois gerb and admissible morphisms, as explained in \cite{LR87}. In addition to this original source \cite{LR87}, we also refer readers to \cite{Milne92},  \cite{Kottwitz92}, \cite{Reimann97}.

\subsection{Galois gerbs} \label{subsec:Galois_gerbs}
We review the notion of Galois gerbs as used by Langlands-Rapoport in \cite[$\S$2]{LR87} (cf. \cite[$\S$4]{Breen94}, \cite[$\S$8]{Rapoport05}, \cite[Appendix B]{Reimann97}).
 
Let $k$ be a field of characteristic zero (which will be for us either a global or a local field) and $\kb$ an algebraic closure. For an affine group scheme $G=\Spec A$ over a Galois extension $k'\subset\kb$ of $k$ and $\sigma\in\Gal(k'/k)$, 
an automorphism $\kappa$ of $G(k')$ is said to be \textit{$\sigma$-linear} if there is a $\sigma$-linear automorphism $\kappa'$ of the algebra $A$ such that
\[\kappa'(f)(\kappa(g))=\sigma(f(g)),\quad f\in A,\ g\in G(k').\]
The simplest example is given by the natural action of $\Gal(k'/k)$ on $G(k')$, when $G$ is defined over $k$. In this article, we will be concerned mainly with the following kind of Galois gerbs, which will be called \textit{algebraic}. 
For $\sigma\in\Gal(k'/k)$, let
\[\sigma_{k'}:G(k')\rightarrow (\sigma^{\ast}G)(k')\] 
be the unique map for which $f\otimes 1(\sigma_{k'}(g))=\sigma(f(g))$ holds for $f\in A,\ g\in G(k')$, where $f\otimes1\in A\otimes_{k',\sigma}k'$.
Then, for any algebraic isomorphism $\theta$ of $k'$-group schemes from $\sigma^{\ast}G$ to $G$, 
the automorphism $\theta\circ\sigma_{k'}$ of $G(k')$ is $\sigma$-linear, since then one can take $\kappa:=\theta\circ\sigma_{k'}$ and $\kappa'(f):=(\theta^{\ast})^{-1}(f\otimes1)$ (Here, $\theta^{\ast}:A\isom A\otimes_{k',\sigma}k'$ denotes the associated map on the structure sheaf). 
We will call such $\sigma$-linear automorphism of $G(k')$ \textit{algebraic}. 
Hence, one can identify an algebraic $\sigma$-linear isomorphism $\kappa(\sigma)$ with an algebraic $k'$-isomorphism $\theta(\sigma):\sigma^{\ast}(G)\isom G$ via $\kappa(\sigma)=\theta(\sigma)\circ\sigma_{k'}$.

\begin{defn} \label{defn:Galois_gerb}
Let $k'\subset \kb$ be a Galois extension of $k$. A \textit{$k'/k$-Galois gerb} is an extension of topological groups
\[1\lra G(k')\lra \fG\lra \Gal(k'/k)\lra 1,\]
where $G$ is an affine smooth group scheme (i.e. a linear algebraic group) over $k'$ and $G(k')$ (resp. $\Gal(k'/k)$) has the discrete (resp. the Krull) topology, such that
\begin{itemize}
\item[(i)] for every representative $g_{\sigma}\in\fG$ of $\sigma\in \Gal(k'/k)$, the automorphism $\kappa(\sigma):g\mapsto g_{\sigma}gg_{\sigma}^{-1}$ of $G(k')$ is algebraic $\sigma$-linear.
\item[(ii)] for some finite sub-extension $k\subset K\subset k'$, there exists a continuous section
\[\Gal(k'/K)\lra \fG\ :\ \sigma\mapsto g_{\sigma}\] 
which is a group homomorphism.
\end{itemize}
\end{defn}

In the presence of (i), condition (ii) means that the family $\{\theta(\sigma):\sigma^{\ast}(G)\isom G\}$ of isomorphisms associated with $\Int (g_{\sigma})$ is a $k'/K$-descent datum on $G$: the homomorphism property of (ii) gives the cocycle condition of descent datum. 
Thus the section $\sigma\mapsto g_{\sigma}\ (\sigma\in\Gal(k'/K))$ determines a $K$-structure on $G$ and accordingly an action of $\Gal(k'/K)$ on $G(k')$. 
This Galois action is nothing other than $\theta(\sigma)\circ\sigma_{k'}$,
\footnote{Suppose that the $K$-structure is given by an isomorphism $\alpha:G_0\otimes_{K}k'\isom G$ for an algebraic $K$-group $G_0$. Then, it gives rise to a descent isomorphism $\alpha\circ\sigma^{\ast}(\alpha^{-1}):\sigma^{\ast}(G)\isom G$ and a Galois action $\sigma(g)=\alpha\circ \sigma(\alpha^{-1}(g))$ on $G(k')=G_0(k')$. Since the descent isomorphism was $\theta(\sigma)$, we have $\sigma(g)=\alpha\circ \sigma^{\ast}(\alpha^{-1})(\sigma_{k'}(g))=\theta(\sigma)\circ\sigma_{k'}(g)$.}
namely, we have the relation 
\begin{equation} \label{eq:conjugation=Galois_action}
g_{\sigma}gg_{\sigma}^{-1}=\sigma(g),\quad \sigma\in \Gal(k'/K),
\end{equation}
where $\sigma(g)$ is the just mentioned action of $\sigma\in\Gal(k'/K)$ on $G(k')$. In other words, the conditions (i), (ii) imply that over some finite Galois extension $K\subset k'$ of $k$, there exists a group-theoretic section $\sigma\mapsto \rho_{\sigma}$, via which the pull-back to $\Gal(k'/K)$ of $\fG$ becomes a semi-direct product $G(k')\rtimes \Gal(k'/K)$, with the action of $\Gal(k'/K)$ on $G(k')$ (i.e. the conjugation action of $\Gal(k'/K)$ on $G(k')$ via the section) being the natural Galois acton resulting from a $K$-structure on $G$.

We remark that our definition of Galois gerb is equivalent to that of affine smooth gerb
\footnote{in the sense of Giruad, or in the sense of the theory of Tannakian categories, namely, a stack in groupoids over an \'etale site which is locally nonempty and locally connected, cf. \cite[Appendix]{Milne92}, \cite[Ch.II]{DMOS82}, \cite[2.2]{Breen94}.}
on the \'etale site $\Spec(k)_{\et}$ \textit{equipped with a neutralizing object over $\Spec(K)$}.
\footnote{The category of such affine gerbs endowed with a distinguished neutralizing object is equivalent to the category of affine $\Spec(K)/\Spec(k)$-groupoid schemes, acting transitively on $\Spec(K)$, \cite{Milne92}, Appendex, Prop. A.15). 
For this reason, Milne insists to call Galois gerbs in our sense groupoids \cite{Milne92}, \cite{Milne03}. 
But any two neutralizing local objects become isomorphic over $\kb$, thus a gerb $\fG$ (as a stack) is uniquely determined by its associated groupoid $(\fG,x\in \mathrm{Ob}(\fG(\kb)))$, up to conjugation by an element of $\mathrm{Aut}(x)=\fG^{\Delta}(\kb)$. Hopefully, this justifies our decision to stick to the original terminology of Langlands-Rapoport.}
For a detailed discussion of this relation, we refer to \cite[p.152-153]{LR87}, \cite[$\S$4]{Breen94}.

We call the group scheme $G$ the \textit{kernel} of $\fG$ and write $G=\fG^{\Delta}$. A \textit{morphism} between $k'/k$-Galois gerbs $\varphi:\fG\rightarrow\fG'$ is a continuous map of extensions which induces the identity on $\Gal(k'/k)$ and an algebraic homomorphism on the kernel groups. Two morphisms $\phi_1$ and $\phi_2$ are said to be \textit{conjugate} if there exists $g'\in G'(k')$ with $\phi_2=\Int (g')\circ \phi_1$.
With every linear algebraic group $G$ over $k$, the semi-direct product gives a gerb
\[\fG_G=G(k')\rtimes\Gal(k'/k).\]
We call it the \textit{neutral gerb} attached to $G$.

For two successive Galois extensions $k\subset k'\subset k''\subset \kb$, 
any $k'/k$-Galois gerb $\fG$ gives rise to a $k''/k$-Galois gerb $\fG'$, by first pulling-back the extension $\fG$ by the surjection $\Gal(k''/k)\twoheadrightarrow\Gal(k'/k)$ and then pushing-out via $G(k')\rightarrow G(k'')$. In this situation, we will call $\fG'$ the \textit{inflation} to $k''$ of the $k'/k$-Galois gerb $\fG$; this terminology will be justified when we relate Galois gerbs with commutative kernels to Galois cohomology. 
We also call a $\kb/k$-Galois gerb, simply a Galois gerb over $k$. It follows from definition that any Galois gerb over $k$ is the inflation of a $k'/k$-Galois gerb for some finite Galois extension $k\subset k'\subset \kb$ and that every morphism between $k'/k$-Galois gerbs induces a morphism between their inflations to $k''$ for any sub-extension $k''\subset \kb$.

For two morphisms of $k'/k$-Galois gerbs $\phi_1,\phi_2:\fG\rightarrow\fG'$, there exists a $k$-scheme $\underline{\mathrm{Isom}}(\phi_1,\phi_2)$, whose set of $R$-points, for a $k$-algebra $R$, is given by
\begin{equation} \label{eq:Isom(phi_1,phi_2)}
\underline{\mathrm{Isom}}(\phi_1,\phi_2)(R)=\{g\in\fG'^{\Delta}(k'\otimes_kR)\ |\ \Int (g)\circ\phi_{1R}=\phi_{2R}\},
\end{equation}
where $\phi_{1R}$ and $\phi_{2R}$ are the induced maps $\fG_R\rightarrow \fG'_R$ between the push-outs of $\fG$ and $\fG'$ via $\fG^{\Delta}(k')\rightarrow \fG^{\Delta}(k'\otimes_kR)$ and the same map for $\fG'$. 
When $\phi_1=\phi_2$, we denote this $k$-group scheme by $I_{\phi_1}=\underline{\mathrm{Aut}}(\phi_1)$. For $g\in \fG'(k')$, one readily sees that $\Int(g)$ induces a $k$-isomorphism of $k$-groups $I_{\phi_1}\isom I_{\Int(g)\circ\phi_1}$.

\subsubsection{Galois gerbs defined by $2$-cocyles with values in commutative affine group schemes}

In our work (as well as in the work of Langlands-Rapoport), 
besides the neutral gerbs attached to arbitrary algebraic groups, all the nontrivial Galois gerbs have as associated kernel \textit{commutative affine group schemes (in fact, (pro-)tori) defined over base fields}. In such cases, a Galois gerb has an explicit description in terms of (continuous) $2$-cocycles (in the usual sense) on the absolute Galois group of the base field with values in the geometric points of given commutative affine group scheme endowed with the natural Galois action.

Recall that for a group $H$ and an $H$-module $A$ (i.e. an abelian group with $H$-action), a normalized%
\footnote{i.e. $e_{h,1}=e_{1,h}=1$ for all $h\in H$.}
$2$-cocyle (or ``factor set'') $(e_{h_1,h_2})$ on $H$ with values in $A$ gives rise to an extension of $H$ by $A$:
\[1\rightarrow A\rightarrow E\stackrel{p}{\rightarrow}H\rightarrow1\] 
with property 
\begin{equation} \label{eq:extension_with_given_conjugation_action}
e\cdot a\cdot e^{-1}=p(e)(a)\quad (e\in E,a\in A),
\end{equation}
where the right action of $p(e)$ on $A$ is the given one.
Explicitly, $E$ is generated by $A$ and $\{e_h\}_{h\in H}$ ($a\mapsto 0,e_h\mapsto h$ giving the projection $E\rightarrow H$) with relations 
\[e_h\cdot a\cdot e_h^{-1}=h(a)\ (a\in A,h\in H),\qquad e_{h_1,h_2}=e_{h_1}\cdot e_{h_2}\cdot e_{h_1h_2}^{-1},\quad e_1=1\] ($e_{h_1,h_2}\in Z^2(H,A)$ guarantees the associativity of the resulting composition law). 
Two extensions $E$, $E'$ of $H$ by $A$ with property (\ref{eq:extension_with_given_conjugation_action}) are said to be \textit{isomorphic} if there exists an isomorphism $E\isom E'$ which restricts to identity on $A$ and also induces identity on $H$. 
Then, this construction gives a bijection of pointed sets between $H^2(H,A)$ and the set of isomorphisms classes of group extensions of $H$ by $A$ with the induced conjugation action of $H$ on $A$ being the given one. Here, the distinguished points are the cohomology class of the trivial $2$-cocycle and the semi-direct product, respectively. For two isomorphic extensions $E,E'$ with property (\ref{eq:extension_with_given_conjugation_action}), we say that two isomorphisms $f_1,f_2:E\isom E'$ 
which induce identities on $A$ and $H$ are \textit{equivalent} (or \textit{conjugate}) if $f_2=\Int  a\circ f_1$ for some $a\in A$, where $\Int  a$ is the conjugation automorphism of $E'$. Then, there is a natural action of $H^1(H,A)$ on the set of equivalence classes of isomorphisms from $E$ to $E'$, which makes the latter set into a torsor under $H^1(H,A)$. 

Now, suppose that $G$ is a \textit{separable} commutative affine group scheme over $k$: then $G$ is the inverse limit of a strict system of commutative algebraic groups indexed by $(\N,\leq)$ (cf. \cite[2.6]{Milne03}). If $k'\subset\kb$ is a Galois extension of $k$, $G(k')$ is endowed with the inverse limit topology (for algebraic group $Q$, $Q(k')$ is given the discrete topology) and we get a continuous action of $\Gal(k'/k)$ on $G(k')$ provided by the $k$-structure of $G$. 
Then, for any continuous $2$-cocycle $(e_{\rho,\tau})\in Z^2_{cts}(\Gal(k'/k),G(k'))$, the resulting extension 
\[1\rightarrow G(k')\rightarrow \fE_{k'}\rightarrow \Gal(k'/k)\rightarrow 1\] 
is a $k'/k$-Galois gerb.
Indeed, condition (i) of Definition \ref{defn:Galois_gerb} is obvious, and for (ii), we note that
since $G(k')$ has discrete topology, any class in $H^2_{cts}(\Gal(k'/k),G(k'))$ lies in 
$H^2(\Gal(K/k),G(K))$ for a \emph{finite} Galois extension $K$ of $k$, so becomes trivial when restricted to $\Gal(k'/K)$.
Furthermore, by pulling-back along $\Gal(\kb/k)\twoheadrightarrow\Gal(k'/k)$ and push-out via $G(k')\hra G(\kb)$, we obtain a Galois gerb $\fE$ over $k$
\[1\rightarrow G(\kb)\rightarrow \fE\rightarrow \Gal(\kb/k)\rightarrow 1,\]
which we called the inflation of $\fE_{k'}$ to $\kb$. 
Now, one can verify that the corresponding cohomology class in $H^2_{cts}(\Gal(\kb/k),G(\kb))$ is indeed the image of $(e_{\rho,\tau})\in H^2_{cts}(\Gal(k'/k),G(k'))$ under the inflation map $H^2_{cts}(\Gal(k'/k),G(k'))\rightarrow H^2_{cts}(\Gal(\kb/k),G(\kb))$.

\subsection{Pseudo-motivic Galois gerb}

\subsubsection{Local Galois gerbs} \label{subsubsec:Local_Galois_gerbs}
Here, we define a Galois gerb $\fG_v$ over $\Q_v$ for each place $v$ of $\Q$. 

For $v\neq p,\infty$, we define $\fG_v$ to be the trivial Galois gerb $\Gal(\overline{\Q}_v/\Q_v)$:
\[\begin{array} {ccccccccc}
1 & \rightarrow & 1 & \rightarrow & \Gal(\overline{\Q}_v/\Q_v) & \rightarrow & \Gal(\overline{\Q}_v/\Q_v) & \rightarrow & 1.
\end{array}\]

For $v=\infty$, the cocycle $(d_{\rho,\gamma})\in Z^2(\Gal(\C/\R),\C^{\times})$ 
\[d_{1,1}=d_{1,\iota}=d_{\iota,1}=1,\quad d_{\iota,\iota}=-1,\]
where $\Gal(\C/\R)=\{1,\iota\}$, represents the fundamental class in $H^2(\Gal(\C/\R),\C^{\times})$ \cite{Milne13}. We set $\fG_{\infty}$ to be the (isomorphism class of) Galois gerb defined by this cocycle (or its cohomology class):
\[\begin{array} {ccccccccc}
1 & \rightarrow & \C^{\times} & \rightarrow & \fG_{\infty} & \rightarrow & \Gal(\C/\R) & \rightarrow & 1,
\end{array}\]
So,
$\fG_{\infty}$ has generators $\C^{\times}$ and $w=w(\iota)$ (lift of $\iota$) satisfying that
\[w(\iota)^2=-1\in\C^{\times},\ \text{ and}\quad wzw^{-1}=\iota(z)=\overline{z}\ (z\in\C^{\times}).\]

For $v=p$ also, for any finite Galois extension $K$ of $\Qp$ in $\Qpb$, there is the fundamental class in $H^2(\Gal(K/\Qp),K^{\times})$ \cite{Milne13}.
For unramified extension $L_n/\Qp$, it is represented by the cocycle: for $0\leq i,j<n$, 
\begin{equation} \label{eq:canonical_fundamental_cocycle}
d_{\overline{\sigma}^i,\overline{\sigma}^j}=\begin{cases}
p^{-1} & \text{if } i+j\geq n, \\
1 & \text{ otherwise, }
\end{cases}
\end{equation} 
where $\overline{\sigma}\in \Gal(L_n/\Qp)$ is the \emph{arithmetic} Frobenius. 

We let $\fG_{p,K}^K$ be the corresponding (isomorphism class of) $K/\Qp$-Galois gerb and $\fG_p^K$ the Galois gerb over $\Qp$ obtained from $\fG_{p,K}^K$ by inflation:
\footnote{Note that our notations for these gerbs differ from those of \cite{Kisin17}: our $\fG^K_{p,K}$ (resp. $\fG^K_p$) is his $\fG^K_p$ (resp. $\tilde{\fG}^K_p$).}
\[\xymatrix{
1 & \rightarrow & K^{\times} & \rightarrow & \fG_{p,K}^K & \rightarrow & \Gal(K/\Qp) & \rightarrow & 1\\ 
1 & \rightarrow & K^{\times} \ar@{=}[u] \ar@{^{(}->}[d] & \rightarrow & \pi_K^{\ast}\fG_{p,K}^K \ar[u] \ar[d] & \rightarrow & \Gal(\Qp/\Qp) \ar@{->>}[u]^{\pi_K} \ar@{=}[d] & \rightarrow & 1\\
1 & \rightarrow & \Gm(\Qpb) & \rightarrow & \fG_p^K & \rightarrow & \Gal(\Qp/\Qp) & \rightarrow & 1.}
\]
Here, $\pi_K^{\ast}\fG_{p,K}^K$ is the pull-back of $\fG_{p,K}^K$ along $\pi_K:\Gal(\Qp/\Qp)\twoheadrightarrow \Gal(K/\Qp)$, and $\fG_p^K$ is the push-out of $\pi_K^{\ast}\fG_{p,K}^K$ along $K^{\times}\hra \Gm(\Qpb)$.

For each $K\subset K'\subset\Qpb$ ($K'$ being a Galois extension of $\Qp$ containing $K$), there exists a homomorphism
\[\fG_p^{K'}\rightarrow \fG_p^K\]
which, on the kernel, is given by $z\mapsto z^{[K':K]}$ \cite[p.119]{LR87}, 
\cite[Remark B1.2]{Reimann97}. By passing to the inverse limit over $K\supset \Qp$, we obtain a pro-Galois gerb $\fG_p$ over $\Qp$ with kernel $\mathbb{D}=\varprojlim\Gm$ (the protorus over $\Qp$ with character group $X^{\ast}(\mathbb{D})=\Q$). 

For each Galois extension $K\subset\Qpb$ of $\Qp$, we make a choice of a normalized cocycle $(d_{\tau_1,\tau_2}^K)$ on $\Gal(K/\Qp)$ with values in $K^{\times}$ defining $\fG_{p,K}^K$, and fix a section $\tau\mapsto s_{\tau}^K$ to the projection $\fG_{p,K}^K\rightarrow\Gal(K/\Qp)$ with property that
\[s_{\tau_1}^Ks_{\tau_2}^K=d_{\tau_1,\tau_2}^Ks_{\tau_1\tau_2}^K,\qquad s_{1}^K=1.\]
Since $\fG_p^K$ is obtained from $\fG_{p,K}^K$ by inflation, this gives rise to a section to $\fG_p^K \rightarrow \Gal(\Qpb/\Qp)$, which we also denote by $s^K$.%
\footnote{At the moment, we do not require that one can choose the sections $\tau\mapsto s_{\tau}^K$ in a compatible way for extensions $K\subset K'\subset\Qpb$, which will be however the case (cf. the proof of Theorem \ref{thm:pseudo-motivic_Galois_gerb}).}
By Hilbert 90, any such section $s^K$ is uniquely determined up to conjugation by an element of $K^{\times}$.

\subsubsection{Dieudonn\'e gerb} \label{subsubsec:Dieudonne_gerb}
We also need an unramified version of the Galois gerbs $\fG^K_{p,K}$, $\fG_p$. 
Let $\Qpnr$ be the maximal unramified extension of $\Qp$ in $\Qpb$.
For $n\in\N$, we denote by $\fD_n=\fD_{L_n}$ the inflation to $\Qpnr$ of the $L_n/\Qp$-Galois gerb $\fG^{L_n}_{p,L_n}$.
As before, for every pair $m|n$, there exists a homomorphism $\fD_n\rightarrow\fD_m$ which, on the kernel, is given by $z\mapsto z^{n/m}$ (cf. 
\cite{Reimann97}, Remark B1.2). By passing to the inverse limit, we get a pro-$\Qpnr/\Qp$-Galois gerb $\fD$ over $\Qp$ with kernel $\mathbb{D}$. We call $\fD$ the \textit{Dieudonn\'e gerb}. Obviously, the Galois gerb $\fG_p^{L_n}$ (resp. the pro-Galois gerb $\fG_p$) is (equivalent to) the inflation to $\Qpb$ of $\fD_n$ (resp. $\fD$). Again, a choice of a section to $\fG_{p,L_n}^{L_n} \rightarrow \Gal(L_n/\Qp)$ made above gives us a section to $\fD_n \rightarrow \Gal(\Qpnr/\Qp)$ which is again denoted by $s^{L_n}$.

\subsubsection{Unramified morphisms} \label{subsubsec:cls}

For any (connected) reductive group $H$ over $\Qp$, there exists a canonical map
\[\mathrm{cls}_H:\Hom_{\Qpb/\Qp}(\fG_p,\fG_H)\rightarrow B(H),\]
where $B(H)$ is the set of $\sigma$-conjugacy classes of elements in $H(\mfk)$. 

Let $K\subset \Qpb$ be a finite Galois extension. Recall that we fixed a normalized cocycle $(d_{\tau_1,\tau_2}^K)\in Z^2(\Gal(K/\Qp),K^{\times})$ defining $\fG^K_{p,K}$ as well as a section $s^K$ to the projection $\fG^K_{p,K}\rightarrow\Gal(K/\Qp)$ with the property that $s_{\tau_1}^Ks_{\tau_2}^K=d_{\tau_1,\tau_2}^Ks_{\tau_1\tau_2}^K$ and $s_{1}^K=1$; one uses the same notations for the induced cocycle defining $\fG_p^K$ and the induced section $\Gal(\Qpb/\Qp)\rightarrow \fG_p^K$.
A morphism $\theta:\fG_p^K\rightarrow \fG_H$ is said to be \textit{unramified} (with respect to the chosen section $s^K$) if $\theta(s_{\tau}^K)=1\rtimes\tau$ for all $\tau\in\Gal(\Qpb/\Qpnr)$. Note that if $K$ is unramified and $\theta^{\Delta}:\mathbb{G}_{\mathrm{m},\Qpb}\rightarrow H_{\Qpb}$ is defined over $\Qpnr$, this definition does not depend on the choice of the section $s^K$.
A morphism $\theta:\fG_p\rightarrow \fG_H$ is then said to be \textit{unramified} if $\theta$ factors through $\fG_p^K$ for some finite Galois extension $K$ of $\Qp$ such that the induced map $\fG_p^K\rightarrow\fG_H$ is unramified in the just defined sense. 

For a connected reductive group $H$ over $\Qp$, we introduce the associated neutral $\Qpnr/\Qp$-Galois gerb by
\[\fG_H^{\nr}:=H(\Qpnr)\rtimes\Gal(\Qpnr/\Qp).\]

\begin{lem} \label{lem:unramified_morphism}
(1) For any morphism $\theta^{\nr}:\fD\rightarrow\fG_H^{\nr}$ of $\Qp^{\nr}/\Qp$-Galois gerbs, its inflation $\overline{\theta^{\nr}}:\fG_p\rightarrow \fG_H$ to $\Qpb$ is an unramified morphism.

(2) For every morphism $\theta:\fG_p\rightarrow \fG_H$ of $\Qpb/\Qp$-Galois gerbs, there is a morphism $\theta^{\nr}:\fD\rightarrow\fG_H^{\nr}$ of $\Qp^{\nr}/\Qp$-Galois gerbs whose inflation to $\Qpb$ is conjugate to $\theta$. More precisely, if $\theta$ factors through $\fG_p^K$ for a finite extension $K$ of $\Qp$, there is a morphism $\theta^{\nr}:\fD_n\rightarrow\fG_H^{\nr}$ with $n=[K:\Qp]$ whose inflation to $\Qpb$ is conjugate to $\theta$.

(3) A morphism $\theta^{\nr}$ in (2) is determined uniquely up to conjugation by an element of $H(\Qpnr)$.

(4) For every unramified morphism $\theta:\fG_p\rightarrow \fG_H$ of $\Qpb/\Qp$-Galois gerbs, the element $b\in H(\Qpb)$ defined by $\theta(s_{\widetilde{\sigma}})=b \widetilde{\sigma}$ for an element $\widetilde{\sigma}\in \Gal(\Qpb/\Qp)$ lifting $\sigma$ lies in $H(\Qpnr)$ and moreover does not depend on the choice of the lifting $\widetilde{\sigma}$.
\end{lem}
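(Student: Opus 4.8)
The plan is to establish the four assertions in the order (1), (4), (2), (3). Parts (1) and (4) amount to unwinding the construction of $\fG_p$ as an inflation of the Dieudonn\'e gerb together with the defining relations of the chosen sections $s^{L_n}$; part (2) is the substantive statement and rests on the vanishing $H^1(\Qpnr,-)=1$ for connected linear algebraic groups (Steinberg's theorem, i.e. Serre's Conjecture~I; $\Qpnr$ is perfect of cohomological dimension one); part (3) combines the same vanishing with the connectedness of the automorphism group of a morphism out of $\fG_p$. For (1): by construction $\fG_p^{L_n}$ is the inflation to $\Qpb$ of $\fD_n$, obtained by pulling $\fD_n$ back along $\Gal(\Qpb/\Qp)\twoheadrightarrow\Gal(\Qpnr/\Qp)$ and pushing out along $\mathbb{D}(\Qpnr)\hookrightarrow\mathbb{D}(\Qpb)$, and likewise $\fG_H$ is the inflation of $\fG_H^{\nr}$. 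Following a representative $\tau\in\Gal(\Qpb/\Qpnr)$ of $1\in\Gal(\Qpnr/\Qp)$ through these two operations, one sees that the section of $\fG_p^{L_n}\to\Gal(\Qpb/\Qp)$ induced by $s^{L_n}$ sends $\tau$ to $1\rtimes\tau$ (it comes from $s^{L_n}_{1}=1$), so $\overline{\theta^{\nr}}$ sends $1\rtimes\tau$ to $1\rtimes\tau$; since moreover $(\overline{\theta^{\nr}})^{\Delta}$ is by construction defined over $\Qpnr$, the remark following the definition of unramified morphism shows that $\overline{\theta^{\nr}}$ is unramified.

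For (4): by the definition of unramified morphism (together with the structure in (1)--(2)) we may assume $\theta$ factors through $\fG_p^{L_n}$ with $L_n$ unramified and with the induced map unramified with respect to $s^{L_n}$; then $d^{L_n}_{\widetilde{\sigma}^{i},\tau}=1$ whenever $\tau$ restricts to the identity on $L_n$. Writing $\theta(s_{\widetilde{\sigma}})=b\rtimes\widetilde{\sigma}$ and using $s_{\widetilde{\sigma}\tau}=s_{\widetilde{\sigma}}(1\rtimes\tau)$ for $\tau\in\Gal(\Qpb/\Qpnr)$ gives $\theta(s_{\widetilde{\sigma}\tau})=(b\rtimes\widetilde{\sigma})(1\rtimes\tau)=b\rtimes\widetilde{\sigma}\tau$, so the $H$-component is again $b$; this is the independence of the lift. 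That $b\in H(\Qpnr)$ follows by applying $\theta$ to $s_{\widetilde{\sigma}}^{-1}(1\rtimes\tau)s_{\widetilde{\sigma}}=1\rtimes(\widetilde{\sigma}^{-1}\tau\widetilde{\sigma})$ and comparing $H(\Qpb)$-components inside $H(\Qpb)\rtimes\Gal(\Qpb/\Qp)$, which forces $\tau(b)=b$ for all $\tau\in\Gal(\Qpb/\Qpnr)$.

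For (2): suppose $\theta$ factors through $\fG_p^{K}$ with $K/\Qp$ finite Galois and $n=[K:\Qp]$. The fundamental classes of $K/\Qp$ and of $L_n/\Qp$ both have local invariant $1/n$, hence the same image in $H^2_{\cts}(\Gal(\Qpb/\Qp),\Gm(\Qpb))=\mathrm{Br}(\Qp)$; by Hilbert~90 there is a unique isomorphism $\fG_p^{K}\cong\fG_p^{L_n}$ of Galois gerbs inducing the identity on the kernels and on $\Gal(\Qpb/\Qp)$. Thus $\theta$ yields a morphism $\Phi\colon\fG_p^{L_n}\to\fG_H$, where $\fG_p^{L_n}$ and $\fG_H$ are the inflations to $\Qpb$ of $\fD_n$ and $\fG_H^{\nr}$ respectively. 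Restricting $\Phi$ to the inertia subgroup, $\tau\mapsto\Phi(1\rtimes\tau)=b_{\tau}\rtimes\tau$ is a group homomorphism, i.e. $(b_{\tau})_{\tau}\in Z^1(\Gal(\Qpb/\Qpnr),H(\Qpb))$ for the natural action; since $H$ is connected, $H^1(\Qpnr,H_{\Qpnr})=1$, so $b_{\tau}=h^{-1}\tau(h)$ for some $h\in H(\Qpb)$, and replacing $\Phi$ by $\Int(h)\circ\Phi$ we may assume $\Phi(1\rtimes\tau)=1\rtimes\tau$ for every $\tau\in\Gal(\Qpb/\Qpnr)$. The gerb relation then forces $\Phi^{\Delta}$ to be $\Gal(\Qpb/\Qpnr)$-equivariant, hence defined over $\Qpnr$, and the computation of (4) shows that the $H(\Qpb)$-component of $\Phi$ on any element of $\fG_p^{L_n}$ is independent of the chosen lift of its image to $\Gal(\Qpb/\Qp)$ and lies in $H(\Qpnr)$; these data assemble into the desired $\theta^{\nr}\colon\fD_n\to\fG_H^{\nr}$, whose inflation to $\Qpb$ is $\Phi$ and hence conjugate to $\theta$.

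For (3): suppose $\theta^{\nr}_{1},\theta^{\nr}_{2}\colon\fD\to\fG_H^{\nr}$ both have inflations conjugate to $\theta$; then their inflations are conjugate to one another, say $\overline{\theta^{\nr}_{2}}=\Int(h)\circ\overline{\theta^{\nr}_{1}}$ with $h\in H(\Qpb)$. Applying $\tau\in\Gal(\Qpb/\Qpnr)$ to this relation and using that $\overline{\theta^{\nr}_{1}}$ is invariant under $\Gal(\Qpb/\Qpnr)$-conjugation (being inflated from a $\Qpnr/\Qp$-gerb) shows $h^{-1}\tau(h)\in I_{\overline{\theta^{\nr}_{1}}}(\Qpb)=I_{\theta^{\nr}_{1}}(\Qpb)$, so $(h^{-1}\tau(h))_{\tau}$ defines a class in $H^{1}(\Gal(\Qpb/\Qpnr),I_{\theta^{\nr}_{1}}(\Qpb))$. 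The $\Qp$-group $I_{\theta^{\nr}_{1}}$ is connected --- it is isomorphic to the $\sigma$-centralizer $J_{b}$ of a representative $b$ of $\mathrm{cls}_{H}(\theta)$, which is connected reductive --- so $H^{1}(\Qpnr,I_{\theta^{\nr}_{1}})=1$, the cocycle is a coboundary $h^{-1}\tau(h)=c^{-1}\tau(c)$ with $c\in I_{\theta^{\nr}_{1}}(\Qpb)$, and $hc^{-1}\in H(\Qpnr)$ conjugates $\theta^{\nr}_{1}$ to $\theta^{\nr}_{2}$. The main difficulty is organizational rather than conceptual --- the only genuine inputs are the vanishing of $H^{1}(\Qpnr,-)$ on connected groups and the connectedness of $J_{b}$ --- namely keeping straight the successive pull-backs, push-outs, and inflations and the various choices of section in (1)--(2), and checking in (3) that the $\Qpb$-conjugacy descends to $H(\Qpnr)$ through these identifications.
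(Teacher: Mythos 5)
Your plan $(1),(4),(2),(3)$ contains a circularity that needs untangling. In (4) you reduce to the case that $\theta$ factors through $\fG_p^{L_n}$ with $L_n$ unramified, but the definition of an unramified morphism only provides factorization through $\fG_p^K$ for \emph{some} finite Galois $K$, possibly ramified, and it is exactly (2) that supplies the passage to $L_n$; meanwhile your proof of (2) invokes ``the computation of (4)'' at the descent step. The paper avoids this by keeping the order $(1),(2),(3),(4)$ and deducing (4) as a formal corollary of (2)--(3): pick $\theta^{\nr}$ and $g_p\in H(\Qpnr)$ with $\theta=\Int(g_p)\circ\overline{\theta^{\nr}}$ (that $g_p$ can be taken in $H(\Qpnr)$ is the content of the proof of (3)); then $\theta(s_{\widetilde\sigma}^{L_n})=g_p\,\theta^{\nr}(s_\sigma^{L_n})\,g_p^{-1}$ manifestly lies in $H(\Qpnr)\rtimes\Gal(\Qpb/\Qp)$ and is visibly independent both of $\widetilde\sigma$ and of the pair $(g_p,\theta^{\nr})$. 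You can salvage your ordering only by first isolating the direct cocycle computation in your (4) as a preparatory lemma for morphisms \emph{already known} to factor unramifiedly through $\fG_p^{L_n}$, feeding that into the descent step of (2), and then recovering the general (4) afterwards --- but that reorganization essentially reproduces the paper's layout.

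On the individual parts: your (1) matches the paper. Your (2) supplies a genuine argument where the paper simply cites \cite[Lemma 2.1]{LR87}, and the mechanism --- trivialize the restriction to inertia via Steinberg's theorem $H^1(\Qpnr,H)=1$, conclude $\Gal(\Qpb/\Qpnr)$-equivariance of $\Phi^{\Delta}$, and descend --- is the right one; just note that ``unique isomorphism $\fG_p^K\cong\fG_p^{L_n}$'' should read ``unique up to $\Gm(\Qpb)$-conjugation,'' the uniqueness coming from Hilbert 90 applied to the torsor of such isomorphisms. Your (3) is correct but uses substantially more than is needed. If $\overline{\theta^{\nr}_2}=\Int(h)\circ\overline{\theta^{\nr}_1}$ with both inflations unramified (by (1)), then already applying both sides to $s_\tau$ for $\tau$ in inertia gives $1\rtimes\tau=h(1\rtimes\tau)h^{-1}=h\,\tau(h^{-1})\rtimes\tau$, forcing $h=\tau(h)$ for all such $\tau$, hence $h\in H(\Qpnr)$ directly. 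No twisting of $h$ by a coboundary is required, and in particular you should not appeal here to $H^1(\Qpnr,I_{\theta^{\nr}_1})=1$ via the connectedness of $\underline{\mathrm{Aut}}(\theta^{\nr}_1)\cong J_b$: that identification is a nontrivial fact which the paper only establishes much later (inside the proof of Lemma~\ref{lem:isom_Int(cu)}), and the direct argument renders it superfluous at this stage.
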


\begin{proof} 
(1) Suppose that $\theta^{\nr}$ factors through $\fD_n$ so that $\overline{\theta^{\nr}}$ factors through $\fG_p^{L_n}$. The $\Qpb/\Qp$-Galois gerb $\fG_p^{L_n}$ is obtained from $\Qpnr/\Qp$-Galois gerb $\fD_n$ by pull-back along $\pi:\Gal(\Qpb/\Qp)\thra \Gal(\Qpnr/\Qp)$, followed by push-out along $\Gm(\Qpnr)\hra\Gm(\Qpb)$. 
To show that $\overline{\theta^{\nr}}$ is unramified, we may consider the morphism $\pi^{\ast}\theta^{\nr}: \pi^{\ast}\fD_n\rightarrow \pi^{\ast}\fG_H^{\nr}$ obtained by pull-back only, as the section to $\fG_p^{L_n}\thra \Gal(\Qpb/\Qp)$ induced, via inflation, from a section to $\fD_n\thra \Gal(\Qpnr/\Qp)$  lands in (the image in the push-out of) the pull-back $\pi^{\ast}\fD_n$. 
But, the pull-back $\pi^{\ast}\fD_n$ is also obtained as the pull-back of the $L_n/\Qp$-Galois gerb $\fG^{L_n}_{p,L_n}$ along the surjection $\Gal(\Qpb/\Qp)\thra \Gal(L_n/\Qp)$, followed by push-out along $\Gm(L_n)\hra\Gm(\Qpnr)$. Then, as the section $s^{L_n}:\Gal(\Qpb/\Qp)\rightarrow \fG_p^{L_n}$ is induced from a section $\Gal(L_n/\Qp)\rightarrow \fG^{L_n}_{p,L_n}$, we have $s^{L_n}_{\tau}=1$ for all $\tau\in\Gal(\Qpb/L_n)$.
This proves the claim, since by definition $\pi^{\ast}\fD_n\subset \fD_n\times \Gal(\Qpb/\Qp)$ and the pull-back $\pi^{\ast}\theta^{\nr}$ is defined on the second factor $\Gal(\Qpb/\Qp)$ as the identity.

(2) This is Lemma 2.1 of \cite{LR87} (cf. first paragraph on p.167 of loc.cit). The second assertion is shown in the proof of \textit{loc. cit.}

(3) In general, for any two unramified morphisms $\theta,\theta':\fG_p\rightarrow\fG_H$, if $\theta'=\mathrm{Int}(g_p)\circ \theta$ for some $g_p\in H(\Qpb)$, then it must be that $g_p\in H(\Qpnr)$, since for every $\tau\in\Gal(\Qpb/\Qpnr)$, 
\[1\rtimes\tau=\theta'(s_{\tau}^{K})=g_p\theta(s_{\tau}^{K})g_p^{-1}=g_p(1\rtimes\tau)g_p^{-1}=g_p\tau(g_p^{-1})\cdot\tau.\]
Here, $K\subset \Qpb$ is some finite Galois extension of $\Qp$ for which both $\theta$ and $\theta'$ factor through $\fG_p^K$.

(4) Let $\theta^{\nr}:\fD\rightarrow\fG_H^{\nr}$ be a morphism of $\Qp^{\nr}/\Qp$-Galois gerbs whose inflation to $\Qpb$ is conjugate to $\theta$. By the proof of (3), we have $\theta=g_p\overline{\theta^{\nr}}g_p^{-1}$ for some $g_p\in H(\Qpnr)$. So, if $\theta^{\nr}$ factors through $\fD^{L_n}$ for $n\in\N$, $\theta(s_{\widetilde{\sigma}}^{L_n})=g_p\overline{\theta^{\nr}}(s_{\widetilde{\sigma}}^{L_n})g_p^{-1}=g_p\theta^{\nr}(s_{\sigma}^{L_n})g_p^{-1}\in H(\Qpnr)$. Here, $s^{L_n}$ denotes both the section to $\fD^{L_n}\rightarrow \Gal(\Qpnr/\Qp)$ chosen before and the induced section to $\fG_p^{L_n}\rightarrow \Gal(\Qpb/\Qp)$. The second equality is easily seen to follow from the definition of the inflation $\overline{\theta^{\nr}}$ of a morphism $\theta^{\nr}$.
If $(g_p',{\theta^{\nr}}')$ is another pair with $\theta=g_p'\overline{{\theta^{\nr}}'}(g_p')^{-1}$, then we have the equalities 
\[g_p\theta^{\nr}(s_{\sigma}^{L_n})g_p^{-1}=\theta(s_{\widetilde{\sigma}}^{L_n})=g_p'{\theta^{\nr}}'(s_{\sigma}^{L_n})(g_p')^{-1},\]
so $\theta(s_{\widetilde{\sigma}}^{L_n})=g_p\theta^{\nr}(s_{\sigma}^{L_n})g_p^{-1}$ is independent of the choice of $\widetilde{\sigma}$ as well as that of the pair $(g_p,\theta^{\nr})$.
\end{proof}

\begin{rem}
(1) For a morphism $\theta:\fG_p\rightarrow \fG_H$ of $\Qpb/\Qp$-Galois gerbs, if one chooses a morphism $\theta^{\nr}$ as in (2) and $\theta^{\nr}(s_{\sigma})=b\sigma$ for $b\in H(\Qp^{\nr})$, then by (3) the $\sigma$-conjugacy class of $b$ in $H(\mfk)$ is uniquely determined by $\theta$. Also, any other choice $s_{\sigma}'$ of the section $s_{\sigma}$ gives the same $\sigma$-conjugacy class, since $s_{\sigma}'=us_{\sigma}\sigma(u^{-1})$ for some $u\in \cO_L^{\times}$ (cf. \cite{LR87}, second paragraph on p.167).

(2) Suppose that $\theta$ is itself unramified, and let $b_1\in H(\Qpnr)$ be defined by $\theta(s_{\widetilde{\sigma}})=b_1\widetilde{\sigma}$ for \emph{some} lift $\widetilde{\sigma}\in \Gal(\Qpb/\Qp)$ of $\sigma$. 
Also, let $b\in H(\Qp^{\nr})$ be defined as in (1) for some choice of $\theta^{\nr}:\fD\rightarrow\fG_H^{\nr}$ ($\Qp^{\nr}/\Qp$-Galois gerb morphism). Then (again by Lemma \ref{lem:unramified_morphism}, (3)) the $\sigma$-conjugacy classes of $b$ and $b_1$ are equal.

(3) In \cite[Remark B1.2]{Reimann97}, Reimann uses some specific $s_{\sigma}(n)\in \fD_n$, namely  there exists a unique $s_{\sigma}(n)\in \fD$ such that for every $n$, the image of $s_{\sigma}(n)^n$ in $\fD_n$ is $p^{-[1/n]}\in \Gm(\Qp^{\nr})\subset \fD_n$ (i.e. equals $p^{-1}$ if $n=1$, or otherwise is $1$) and maps to $\sigma$ under $\fD\rightarrow\Gal(\Qpnr/\Qp)$.  There exists a compatible family of such elements $\{s_{\sigma}(n)\}$; we denote by $s_{\sigma}$ the corresponding element of $\fD$ (i.e. the image of $s_{\sigma}$ under the natural map $\fD\rightarrow \fD_n$ is $s_{\sigma}(n)$). 
\end{rem}

Now, the map $\mathrm{cls}_H$ in question is $\theta\mapsto \overline{b(\theta)}\in B(H)$. Note that this map $\mathrm{cls}$ gives the same element in $B(H)$ for all morphisms $\fG_p\rightarrow\fG_H$ lying in a single equivalence class.

\begin{lem} \label{lem:Newton_hom_attached_to_unramified_morphism}
Let $H$ be a connected reductive group over $\Qp$ and $\theta:\fD\rightarrow \fG_H^{\nr}$ a morphism of $\Qpnr/\Qp$-Galois gerbs. Let $b\in H(\Qpnr)$ be defined by $\theta(s_{\widetilde{\sigma}})=b \widetilde{\sigma}$ as in Lemma \ref{lem:unramified_morphism}, (4). Suppose that $\theta$ factors through $\fD^{L_n}$. Then,
the Newton homomorphism $\nu_{b}$ attached to $b\in H(\Qpnr)$ (in the sense of \cite{Kottwitz85}, $\S$4.3) is equal to the quasi-cocharcter
\[-\frac{1}{n}\theta^{\Delta}\quad \in\Hom_{\Qpnr}(\mathbb{D},G),\]
where $\theta^{\Delta}$ denotes the restriction of $\theta$ to the kernel $\Gm$ of $\fD^{L_n}$.
\end{lem}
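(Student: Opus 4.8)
The plan is to verify that the quasi-cocharacter $\nu:=-\tfrac1n\theta^\Delta$ satisfies the three conditions that characterize the Newton homomorphism $\nu_b$ (in the sense of \cite[$\S$4.3]{Kottwitz85}, recalled before the statement), and then to conclude by the uniqueness built into that characterization. Throughout I write $\mu:=\theta^\Delta\in X_{\ast}(H)$, a genuine cocharacter: since $\theta$ factors through $\fD^{L_n}$, $\mu$ is the restriction to the kernel $\Gm$ of $\fD^{L_n}$ of the induced morphism $\fD^{L_n}\to\fG_H^{\nr}$, and $\nu=-\tfrac1n\mu$.

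First I would normalize $\theta$. Conjugating $\theta$ by $g_0\in H(\Qpnr)$ replaces $b$ by $g_0\,b\,\sigma(g_0)^{-1}$ and $\mu$ by $\Int(g_0)\circ\mu$, while $\nu_{g_0 b\sigma(g_0)^{-1}}=\Int(g_0)\circ\nu_b$ by property (b) of the Newton map; hence the asserted identity is unchanged under such conjugation. By the standard vanishing $H^1(\Gal(\Qpnr/L_n),H(\Qpnr))=1$ (cf. the proof of Lemma \ref{lem:unramified_morphism}(2)), I may therefore assume $\theta(1\rtimes\tau)=1\rtimes\tau$ for every $\tau\in\Gal(\Qpnr/L_n)$. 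Feeding $g$ in the kernel and such a $\tau$ into the gerb-morphism relation $\theta(s_\tau g s_\tau^{-1})=\theta(s_\tau)\theta(g)\theta(s_\tau)^{-1}$, namely $\mu(\tau(z))=\tau(\mu(z))$, then shows that $\mu\colon\Gm\to H$ is defined over $L_n$.

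The crux is the evaluation of $(b\sigma)^n$. Recall that $\fD^{L_n}$ is the inflation to $\Qpnr$ of $\fG^{L_n}_{p,L_n}$ and that the fixed section satisfies $s_{\tau_1}s_{\tau_2}=d^{L_n}_{\tau_1,\tau_2}\,s_{\tau_1\tau_2}$, $s_1=1$, with $d^{L_n}$ the cocycle (\ref{eq:canonical_fundamental_cocycle}); consequently $s_\sigma^n=\bigl(\prod_{j=1}^{n-1}d^{L_n}_{\sigma,\sigma^j}\bigr)s_{\sigma^n}$, and in (\ref{eq:canonical_fundamental_cocycle}) the only nontrivial factor among these is $d^{L_n}_{\sigma,\sigma^{n-1}}=p^{-1}$, so $s_\sigma^n=p^{-1}\cdot(1\rtimes\sigma^n)$ in $\fD^{L_n}$ with $p^{-1}$ lying in the kernel $\Gm(\Qpnr)$. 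Applying $\theta$, and using $\theta|_{\Gm}=\mu$ together with the normalization $\theta(1\rtimes\sigma^n)=1\rtimes\sigma^n$ of the previous paragraph, gives
\[(b\sigma)^n=\theta(s_\sigma)^n=\theta(s_\sigma^n)=\mu(p^{-1})\cdot(1\rtimes\sigma^n)=\mu(p)^{-1}\rtimes\sigma^n\]
in $\fG_H^{\nr}=H(\Qpnr)\rtimes\Gal(\Qpnr/\Qp)$. On the other hand $(b\sigma)^n=\bigl(b\,\sigma(b)\cdots\sigma^{n-1}(b)\bigr)\rtimes\sigma^n$, whence $b\,\sigma(b)\cdots\sigma^{n-1}(b)=\mu(p)^{-1}$.

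Finally I would check the defining conditions for $\nu_b$ with $\nu:=-\tfrac1n\mu$, taking $s:=n$ and $c:=1$: (i) $s\nu=-\mu\in\Hom_{\mfk}(\Gm,H)$; (ii) $\Int(c)\circ s\nu=-\mu$ is defined over the fixed field $L_n$ of $\sigma^n$, which was arranged above; (iii) in $H(\mfk)\rtimes\langle\sigma\rangle$ one has $c\,(b\sigma)^s\,c^{-1}=(b\sigma)^n=\mu(p)^{-1}\rtimes\sigma^n=(s\nu)(p)\cdot\sigma^n=c\,(s\nu)(p)\,c^{-1}\,\sigma^s$ by the previous paragraph (note $(-\mu)(p)=\mu(p)^{-1}$). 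By the uniqueness in \cite[$\S$4.3]{Kottwitz85} this forces $\nu_b=\nu=-\tfrac1n\theta^\Delta$. The only delicate point is the normalization in the second paragraph — making $\mu$ genuinely $L_n$-rational and $\theta$ trivial on the section over $\Gal(\Qpnr/L_n)$; everything else is a direct computation, provided one keeps track that the cocycle (\ref{eq:canonical_fundamental_cocycle}) and the Newton datum both use the arithmetic Frobenius $\sigma$, so that no spurious sign is introduced.
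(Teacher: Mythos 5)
The paper gives no proof for this lemma—it defers to the \textit{Anmerkung} on p.~197 of \cite{LR87}—so your argument is an original one, and the strategy is the right one: compute $(b\sigma)^n$ from the cocycle $d^{L_n}$, verify Kottwitz's three defining conditions for $\nu_b$, and invoke the built-in uniqueness. The cocycle calculation giving $s_\sigma^n=p^{-1}\cdot s_{\sigma^n}$ is correct: only $d^{L_n}_{\sigma,\sigma^{n-1}}=p^{-1}$ is nontrivial, and as $p\in\Qp$ these factors are $\sigma$-fixed, so no twisting intervenes.

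The normalization step, however, has a genuine gap. To arrange $\theta(1\rtimes\tau)=1\rtimes\tau$ for $\tau\in\Gal(\Qpnr/L_n)$ you invoke ``the standard vanishing $H^1(\Gal(\Qpnr/L_n),H(\Qpnr))=1$'', but this group is \emph{not} trivial for a general connected reductive $H$: for $H=\mathrm{PGL}_2$, the nonsplit quaternion algebra over $L_n$ splits over its unramified quadratic extension and gives a nontrivial class. The vanishing that is standard is Steinberg's theorem $H^1(\Qpnr,H)=\{1\}$, i.e.\ $H^1(\Gal(\Qpb/\Qpnr),H(\Qpb))=\{1\}$, which is a different group; and the proof of Lemma~\ref{lem:unramified_morphism}(2) you cite concerns producing an unramified conjugate of a $\Qpb/\Qp$-gerb morphism, not trivializing a cocycle on $\Gal(\Qpnr/L_n)$.

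The gap is easily closed: Kottwitz's characterization allows any $s>0$, so take $s=Nn$ with $N\gg 0$ instead of $s=n$. Write $\theta(s_{\sigma^n})=a\rtimes\sigma^n$ with $a\in H(\Qpnr)$; the gerb-morphism identity applied to the kernel gives $\mu\circ\sigma^n=\Int(a)\circ\sigma^n\circ\mu$, and exactly the computation you run (together with $s_\sigma^{Nn}=p^{-N}s_{\sigma^{Nn}}$) yields
\[
(b\sigma)^{Nn}=\mu(p)^{-N}\,a_{\sigma^{Nn}}\rtimes\sigma^{Nn},\qquad a_{\sigma^{Nn}}=a\,\sigma^n(a)\cdots\sigma^{(N-1)n}(a).
\]
Since a morphism of Galois gerbs is continuous and $H(\Qpnr)$ is discrete, the $1$-cocycle $\tau\mapsto a_\tau$ on $\Gal(\Qpnr/L_n)$ factors through a finite quotient $\Gal(L_m/L_n)$, hence $a_{\sigma^{Nn}}=1$ whenever $m\mid Nn$; the identity $\mu\circ\sigma^{Nn}=\Int(a_{\sigma^{Nn}})\circ\sigma^{Nn}\circ\mu$ then shows $-N\mu$ is $L_{Nn}$-rational. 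Kottwitz's conditions (i)--(iii) hold with $s=Nn$, $c=1$, $\pi=p$, and the conclusion $\nu_b=-\tfrac1n\theta^\Delta$ follows as you wrote.
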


\begin{proof} 
See \textit{Anmerkung} on p.197 of \cite{LR87}.
\end{proof}

\subsubsection{The Weil-number protorus and the pseudo-motivic Galois gerb} \label{subsubsec:pseudo-motivic_Galois_gerb}

In \cite{LR87}, Langlands and Rapoport work with two kinds of ``motivic Galois gerbs'', the quasi-motivic Galois gerb and the pseudo-motivic Galois gerb. The latter is the Galois gerb whose associated Tannakian category is supposed to be, with a suitable choice of a $\Qb$-fibre functor, the Tannakian category of Grothendieck motives over $\Fpb$ (\textit{loc. cit.}, $\S$4). The former's major role in \textit{loc. cit.} is for formulation of the conjecture for the most general Shimura varieties (beyond those satisfying the Serre condition). Here, we will work mainly with the pseudo-motivic Galois gerb. According to \cite[Lem. B3.9]{Reimann97}, this is harmless, at least when the Serre condition for $(G,X)$ holds (i.e. $Z(G)$ splits over a CM field and the weight homomorphism $w_X$ is defined over $\Q$), e.g. if the Shimura datum $(G,X)$ is of Hodge-type and $G$ is the Mumford-Tate group of a generic element $h\in X$.

Since this Serre condition will be assumed largely in most of the statements and our use of the quasi-motivic Galois gerb will be limited to formulation of certain definitions, here we discuss the pseudo-motivc Galois gerb in detail and refer the readers to \cite[Appendix B]{Reimann97} for the definition of the quasi-motivic Galois gerb.

The pseudo-motivic Galois gerb is a Galois gerb over $\Q$, which is also the projective limit of Galois gerbs $\fP(K,m)$ over $\Q$, indexed by CM fields $K\subset\Qb$ Galois over $\Q$ and $m\in\N$. The kernel $P(K,m)$ of $\fP(K,m)$ is a torus over $\Q$ whose character group consists of certain Weil numbers. Here, we give a brief review of their constructions. We begin with $P(K,m)$. 
As before, we fix embeddings $\Qb\hra\Qlb$, for every place $l$ of $\Q$. 

Recall that for a power $q$ of a rational prime $p$ and an integer $\nu\in\Z$, a \textit{Weil $q$-number of weight $\nu=\nu(\pi)\in \Z$} is an algebraic number $\pi$ such that $\rho(\pi)\overline{\rho(\pi)}=q^{\nu}$ for every embedding $\rho:\Q(\pi)\hra\C$. When $K$ is a field containing $\pi$, then for every archimedean place $v$ of $K$, one has
\begin{equation} \label{eq:Weil-number_archimedean_condition}
|\pi|_v=|\prod_{\sigma\in\Gal(K_v/\Q_{\infty})}\sigma\pi|_{\infty}=q^{\frac{1}{2}[K_v:\R]\nu}.
\end{equation}
Here, $|x+\sqrt{-1}y|_v=x^2+y^2$ if $K_v=\C$, while if $K_v=\R=\Q_{\infty}$, $|x|_v$ is the usual absolute value $|x|_{\infty}$ on $\R$ (hence the first equality always holds for any $\pi\in K$).

\begin{defn} \label{defn:Weil-number_torus}
Let $K\subset\Qb$ be a CM-field which is finite, Galois over $\Q$ and $m\in\N$.

(1) The group $X(K,m)$ consists of the Weil $q=p^m$-numbers $\pi$ in $K$ (for some weight $\nu=\nu_1(\pi)$) with the following properties.
\begin{itemize}
\item[(a)] For each prime $v$ of $K$ above $p$, there is $\nu_2(\pi,v)\in\Z$ with
\[|\pi|_v=|\prod_{\sigma\in\Gal(K_v/\Qp)}\sigma\pi|_p=q^{\nu_2(\pi,v)}.\]
\item[(b)] At all finite places outside $p$, $\pi$ is a unit.
\end{itemize}

(2) Let $X^{\ast}(K,m)$ be the quotient of $X(K,m)$ (which is finitely generated by Dirichlet unit theorem) by the finite group of roots of unity contained therein (so that $X^{\ast}(K,m)$ is torsion free).  
Let $P(K,m)$ be the $\Q$-torus whose character group $X^{\ast}(P(K,m))$ is $X^{\ast}(K,m)$.
\end{defn}

The point of condition (a), while the first equality is always true (for any $\pi\in K$), is that $|\pi|_v$ is an \emph{integral} power of $q$ (which however may well depend on $v$). One also has 
\[\nu_2(\pi,v)+\nu_2(\pi,\overline{v})=-[K_v:\Qp]\nu_1(\pi),\]
since $\pi\overline{\pi}=q^{\nu_1}$ ($K$ being a CM field, the complex conjugation $\overline{\cdot}$ of $K$ lies in the center of $\Gal(K/\Q)$).

If necessary, to avoid any misunderstandings, we write $\chi_{\pi}$ for the character of $P(K, m)$ which corresponds to a Weil number $\pi\in X(K, m)$. Recall that we fixed embeddings $\Qb\rightarrow\Qpb$, $\Qb\rightarrow\C$. Let $K\subset\Q$ be a Galois CM-field and $v_1$, $v_2$ the thereby determined archimedean and $p$-adic places of $K$, respectively. Then, one can readily see that there exist cocharacters $\nu_1^K$, $\nu_2^K$ in $X_{\ast}(K, m)=X_{\ast}(P(K, m))$ with following properties: 
\begin{eqnarray} \label{eqn:cocharacters_nu^K}
\langle\chi_{\pi},\nu_1^K\rangle&=&\nu_1(\pi), \\
\langle\chi_{\pi},\nu_2^K\rangle&=&\nu_2(\pi,v_2). \nonumber
\end{eqnarray}
A priori, $\nu_1^K$ and $\nu_2^K$ are defined over respectively $K_{v_1}=\C$ and $K_{v_2}\subset\Qpb$, but one readily sees from their definition that
they are defined over respectively $\Q$ and $\Qp$.
Furthermore, for $K\subset K'$ and $m|m'$ (divisible), there exist maps of tori over $\Q$
\[\phi_{K,K'}:P(K',m)\rightarrow P(K,m),\quad \phi_{m,m'}:P(K,m')\rightarrow P(K,m)\]
induced by $\phi_{K,K'}^{\ast}(\pi)=\pi$ and $\phi_{m,m'}^{\ast}(\pi)=\pi^{m'/m}$ for $\pi\in X^{\ast}(K,m)$, and they satisfy that $\phi_{m,m'}(\nu_i')=\nu_i$ and $\phi_{K,K'}(\nu_i')=[K'_{v_i'}:K_{v_i}]\nu_i$ \cite[p.141]{LR87}. Let $P^K:=\varprojlim_{m|m'}P(K,m)$. This protorus over $\Q$ is in fact a torus \cite[Lem. 3.8]{LR87}, with character group $X^{\ast}(P^K)=\varinjlim X^{\ast}(K,m)$ and which splits over $K$. Let $\nu_1^K$,$\nu_2^K$ be the induced cocharacters of $P^K$. 

The triple $(P^K,\nu_1^K,\nu_2^K)$ is characterized by a universal property:

\begin{lem}  \label{lem:Reimann97-B2.3}
(1) For every CM-field $K\subset\Qb$ which is Galois over $\Q$, $(P^K,\nu_1^K,\nu_2^K)$ is an initial object in the category of all triples $(T,\nu_{\infty},\nu_p)$ where $T$ is a $\Q$-torus which splits over $K$, and, $\nu_{\infty}$ and $\nu_p$ are cocharacters of $T$ defined over $\Q$ and $\Qp$, respectively, and such that 
\[\Tr_{K/K_0}(\nu_p)+[K_{v_2}:\Q_p]\nu_{\infty}=0,\]
where $K_0$ is the totally real subfield of $K$ of index $2$. 

(2) There exists a set $\{\delta_n\}$ with $m|n$, $n$ sufficiently large, of distinguished elements in $P(K,m)(\Q)$ such that for every $\pi\in X^{\ast}(K,m)=X^{\ast}(P(K,m))$, 
\[\chi_{\pi}(\delta_n)=\pi^{\frac{n}{m}},\]
($\frac{n}{m}$ should be divisible by the torsion order of $X(K,m)$) and that, when $K\subset K'$ and $m|m'$ (divisible),
\[\phi_{m,m'}(\delta_n)=\delta_n,\quad \phi_{K,K'}(\delta_n)=\delta_n.\]
Moreover, the set $\{\delta_m^k\ |\ k\in\Z\}$ is Zariski-dense in $P(K,m)$.
\end{lem}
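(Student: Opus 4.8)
The plan is to prove both statements by passing to character lattices, via the anti\-equivalence between $\Q$\-tori split by $K$ and finitely generated free $\Z[\Gal(K/\Q)]$\-modules. Write $\Gamma:=\Gal(K/\Q)$, let $S_p$ be the set of places of $K$ above $p$, and let $D\subset\Gamma$ be the decomposition group of the chosen place $v_2$, so that $S_p\cong\Gamma/D$ and $[K_{v_2}:\Qp]=|D|$. Recall that $P^K$ is a $\Q$\-torus split by $K$ (\cite[Lem. 3.8]{LR87}) and that by (\ref{eqn:cocharacters_nu^K}) the cocharacters $\nu_1^K,\nu_2^K$ realize $\chi_\pi\mapsto\nu_1(\pi)$ and $\chi_\pi\mapsto\nu_2(\pi,v_2)$. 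A preliminary fact I would record first: the assignment $[\pi]\mapsto(w\mapsto\nu_2(\pi,w))$ gives a $\Gamma$\-equivariant injection $X^{\ast}(P^K)\hookrightarrow\mathrm{Map}(S_p,\Q)$. Injectivity is Kronecker's theorem: two Weil $p^m$\-numbers in $K$ that are units outside $p$ with the same $w$\-adic valuations for all $w\mid p$ differ by a unit all of whose archimedean absolute values equal $1$ — here one uses that $K$ is CM, so by (\ref{eq:Weil-number_archimedean_condition}) the archimedean absolute values of a Weil number depend only on its weight, which by the relation $\nu_2(\pi,v)+\nu_2(\pi,\bar v)=-[K_v:\Qp]\nu_1(\pi)$ is itself determined by the $\nu_2(\pi,w)$ — hence a root of unity.

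For (1) I would construct, for a given object $(T,\nu_\infty,\nu_p)$, the morphism $f\colon P^K\to T$ on character lattices as a $\Gamma$\-map $f^{\ast}\colon X^{\ast}(T)\to X^{\ast}(P^K)$. Since $\nu_p\in X_{\ast}(T)$ is defined over $\Qp$ relative to $v_2$ it is fixed by $D$, so Frobenius reciprocity $\Hom_\Gamma(X^{\ast}(T),\mathrm{Ind}_D^\Gamma\Z)\cong X_{\ast}(T)^D$ turns $\nu_p$ into a $\Gamma$\-map $j\colon X^{\ast}(T)\to\mathrm{Map}(S_p,\Z)=\mathrm{Ind}_D^\Gamma\Z$, namely $j(\chi)(w)=\langle\chi,{}^{g}\nu_p\rangle$ for $w=gv_2$. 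A short computation, using that $\nu_\infty$ is $\Gamma$\-fixed, shows that the relation $\Tr_{K/K_0}(\nu_p)+|D|\,\nu_\infty=0$ is precisely what makes $j(\chi)$ satisfy the numerical constraints characterizing realizable invariant vectors: $-\tfrac{2}{[K:\Q]}\sum_{w\mid p}j(\chi)(w)=\langle\chi,\nu_\infty\rangle\in\Z$ plays the role of the weight, and $j(\chi)(w)+j(\chi)(\bar w)=-|D|\langle\chi,\nu_\infty\rangle$ matches the relation above. Granting the existence step below, $j$ factors through $X^{\ast}(P^K)\subset\mathrm{Map}(S_p,\Q)$, yielding $f^{\ast}$, and by construction $f_{\ast}\nu_1^K=\nu_\infty$, $f_{\ast}\nu_2^K=\nu_p$; one also checks $(P^K,\nu_1^K,\nu_2^K)$ is itself an object of the category from the relation $\nu_2(\pi,v)+\nu_2(\pi,\bar v)=-[K_v:\Qp]\nu_1(\pi)$. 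For uniqueness: if $f,f'$ both do this, then $f^{\ast}\chi-f'^{\ast}\chi\in X^{\ast}(P^K)$ has $\nu_2(-,v_2)=0$ for all $\chi$, and $\Gamma$\-equivariance of $f^{\ast},f'^{\ast}$ forces $\nu_2(-,w)=0$ for every $w\in S_p$, whence $f^{\ast}=f'^{\ast}$ by the injection of the first paragraph.

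The heart of (1), and the step I expect to be the main obstacle, is existence: every $\Z$\-valued function $(n_w)_{w\mid p}$ on $S_p$ satisfying the two numerical constraints is, after raising to a suitable power — harmless because $X^{\ast}(P^K)=\varinjlim_m X^{\ast}(K,m)$ — of the form $(\nu_2(\pi,w))_w$ for an actual $\pi\in X(K,m'')$. Here I would use the CM structure decisively. Form the fractional ideal $\mathfrak a=\prod_{w\mid p}\mfp_w^{n_w}$; the constraint forces $n_w+n_{\bar w}$ to be independent of $w$, so $\mathfrak a\bar{\mathfrak a}=(q^{\nu})$ is principal with a rational, totally positive generator, and therefore $[\mathfrak a]$ lies in the finite subgroup of $\mathrm{Cl}(K)$ on which complex conjugation acts by $-1$. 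After a power, $\mathfrak a^{h}=(\pi_0)$ with $\pi_0\bar\pi_0=q^{h\nu}u_0$, $u_0\in\cO_{K_0}^{\times}$ totally positive; killing $u_0$ modulo squares (a finite obstruction in $\cO_{K_0}^{\times,+}/(\cO_{K_0}^{\times})^2$) by a further power and dividing $\pi_0$ by the appropriate unit of $\cO_{K_0}$ produces $\pi$ with $\pi\bar\pi\in\Q_{>0}$. Since for a CM field the complex conjugation of $K$ coincides with complex conjugation for every embedding $K\hookrightarrow\C$, such a $\pi$ is a Weil number; it is a $p$\-unit with integral $p$\-adic valuations (after enlarging $m$), so lies in some $X(K,m'')$. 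The point is that making all archimedean absolute values constant — the only genuine difficulty — is reduced by the relation "$\mathfrak a\bar{\mathfrak a}$ principal with rational generator" to finitely many obstructions that disappear upon raising to a power; this is in essence the Shimura--Taniyama construction, and I would either reproduce the argument of \cite[\S 3]{LR87} or of \cite[Appendix B]{Reimann97}, or cite it.

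For (2): a $\Q$\-point of $P(K,m)$ is a $\Gamma$\-equivariant homomorphism $X^{\ast}(K,m)\to K^{\times}$. For $m\mid n$ with $n/m$ divisible by the torsion order of $X(K,m)$ (i.e.\ by $|\mu_K|$, as in the statement), the rule $\chi_\pi\mapsto\pi^{n/m}$ is independent of the representative $\pi\in X(K,m)$ of its class, is manifestly a homomorphism into $K^{\times}$, and is $\Gamma$\-equivariant because $K/\Q$ is Galois and $\Gal(\Qb/\Q)$ acts on $X(K,m)$ by permuting Weil numbers; this defines $\delta_n\in P(K,m)(\Q)$ with $\chi_\pi(\delta_n)=\pi^{n/m}$ by construction. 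The compatibilities $\phi_{m,m'}(\delta_n)=\delta_n$ and $\phi_{K,K'}(\delta_n)=\delta_n$ follow by evaluation on characters, using $\phi_{m,m'}^{\ast}(\chi_\pi)=\chi_{\pi^{m'/m}}$ together with $\tfrac{m'}{m}\cdot\tfrac{n}{m'}=\tfrac nm$, and $\phi_{K,K'}^{\ast}(\chi_\pi)=\chi_\pi$. Finally, the Zariski closure $Z$ of $\{\delta_n^k:k\in\Z\}$ is an algebraic subgroup of $P(K,m)$; if $Z\neq P(K,m)$ there is a nonzero $\chi_\pi\in X^{\ast}(P(K,m))$ trivial on $Z$, but $\chi_\pi(\delta_n^k)=\pi^{nk/m}=1$ for all $k\in\Z$ forces $\pi$ to be a root of unity, i.e.\ $\chi_\pi=0$; hence $Z=P(K,m)$ and the density assertion holds (reading $\delta_m$ as such a $\delta_n$ with $n$ the least admissible multiple of $m$).
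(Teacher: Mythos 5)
Your proposal is correct in substance, but the comparison with what the paper actually does is instructive. The paper disposes of (1) entirely by a single citation to \cite[B2.3]{Reimann97}, whereas you essentially reprove it: reducing to $\Z[\Gamma]$-modules, using Frobenius reciprocity to turn $\nu_p\in X_\ast(T)^D$ into a $\Gamma$-map $j:X^\ast(T)\to\mathrm{Ind}_D^\Gamma\Z$, checking that the constraint $\Tr_{K/K_0}\nu_p+|D|\nu_\infty=0$ translates into $j(\chi)(w)+j(\chi)(\bar w)=-|D|\langle\chi,\nu_\infty\rangle$, and getting uniqueness from injectivity of $X^\ast(P^K)\hookrightarrow\mathrm{Map}(S_p,\Z)$ via Kronecker. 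All of that is sound (one cosmetic point: the injection lands in $\mathrm{Map}(S_p,\Z)$, not $\Q$, since the transition maps $\pi\mapsto\pi^{m'/m}$ preserve the $\nu_2$-coordinates). The genuine content of (1) is the surjectivity onto the constrained tuples, and there your ideal-theoretic sketch is a bit loose — $\mathfrak a\bar{\mathfrak a}$ is a power of $\mathrm{rad}(p)$, not of $(p)$, and the correct exponents involve the residue degrees $f_w$ because $v_w(\pi)=-mn_w/f_w$ — but you acknowledge this and defer to Shimura--Taniyama (i.e.\ to exactly the reference the paper cites), and the direct-limit structure of $X^\ast(P^K)$ does absorb the class-number and unit obstructions you flag, so nothing is actually wrong.

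For (2) you take a genuinely different route. The paper picks a basis $\{\pi_i\}$ of $X^\ast(K,m)$ modulo torsion and sets $\delta_n=\sum\pi_i^{n/m}\otimes\pi_i^\vee\in\Gm(\Qb)\otimes X_\ast(P(K,m))$, then asserts the properties ``clearly'' hold, citing \cite[p.142]{LR87}; verifying $\Q$-rationality of this tensor requires a short diagonal Galois-action computation that the paper suppresses. You instead define $\delta_n$ intrinsically as the $\Gamma$-equivariant homomorphism $X^\ast(K,m)\to K^\times$, $\chi_\pi\mapsto\pi^{n/m}$, which makes the well-definedness (from $|\mu_K|\mid n/m$), the Galois-rationality, and both $\phi_{m,m'}$- and $\phi_{K,K'}$-compatibilities transparent by evaluation on characters. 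That is cleaner, and your density argument — a character $\chi_\pi$ killing $\delta_n$ forces $\pi^{n/m}=1$, hence $\pi$ a root of unity, hence $\chi_\pi=0$ in the torsion-free $X^\ast(K,m)$ — is the same diagonalizability argument the paper quotes from \cite[Lem.\,5.5]{LR87}, here self-contained. Your final remark about reading $\delta_m$ as the least admissible $\delta_n$ is exactly the right way to interpret the paper's slightly abusive notation.
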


\begin{proof} 
For (1), see \cite[B2.3]{Reimann97}. For (2), if the subset $\{\pi_1,\cdots,\pi_r\} \subset X(K,m)$ forms a basis of $X^{\ast}(K,m)$ (up to torsions) with dual basis $\{\pi_1^{\vee},\cdots,\pi_r^{\vee}\} \subset X(K,m)^{\vee}=X_{\ast}(P(K,m))$, we set $\delta_n:=\sum \pi_i^{n/m}\otimes\pi_i^{\vee}\in \Gm(\Qb)\otimes X_{\ast}(P(K,m))=P(K,m)(\Qb)$, then it clearly satisfies the required properties, cf. \cite[p.142]{LR87}. The last property is Lemma 5.5 of \cite{LR87}; it is stated for a different torus $Q(K,m)$, but the proof carries over to $P(K,m)$. 
\end{proof} 

Set $P:=\varprojlim_K P^K$ (protorus). It is equipped with two morphisms $\nu_1:=\varprojlim_K\nu_1^K:\mathbb{G}_{\mathrm{m}}\rightarrow P$ (defined over $\Q$), $\nu_2:=\varprojlim_K\nu_2^K:\mathbb{D}\rightarrow P_{\Qp}$ (defined over $\Qp$). Often, $\nu_1$ and $\nu_2$ are also denoted by $\nu_{\infty}$ and $\nu_p$, respectively.

\begin{thm} \label{thm:pseudo-motivic_Galois_gerb}
(1) There exists a Galois gerb $\fP$ over $\Q$ together with morphisms $\zeta_v:\fG_v\rightarrow \fP(v)$ for all places $v$ of $\Q$ such that
\begin{itemize}
\item[(i)] $(\fP^{\Delta},\zeta_{\infty}^{\Delta},\zeta_{p}^{\Delta})=(P_{\Qb},(\nu_1)_{\C},(\nu_2)_{\Qpb})$, the identifications being compatible with the Galois actions of $\Gal(\overline{\Q}/\Q)$, $\Gal(\C/\R)$, and $\Gal(\Qpb/\Qp)$ respectively;
\item[(ii)] the morphisms $\zeta_v$, for all $v\neq \infty,p$, are induced by a section of $\fP$ over $\Spec(\overline{\A_f^p}\otimes_{\A_f^p}\overline{\A_f^p})$;
\end{itemize}
where $\overline{\A_f^p}$ denotes the image of the map $\Qb\otimes_{\Q}\A_f^p\rightarrow \prod_{l\neq\infty,p}\Qlb$. 

(2) If $(\fP',(\zeta_v'))$ is another such system, there exists an isomorphism $\alpha:\fP\rightarrow \fP'$ such that, for all $v$, $\zeta_v'$ is isomorphic to $\alpha\circ\zeta_v$, and any two $\alpha$'s arising in this way are isomorphic.

(3) There is a surjective morphism $\pi:\fQ\rightarrow\fP$ such that, for all $l$, $\zeta_l^P$ is algebraically equivalent to $\pi\circ\zeta_l^Q$ \cite[Def. B1.1]{Reimann97}, where $\fQ$ is the quasi-motivic Galois gerb \cite[Appendix B]{Reimann97}.
\end{thm}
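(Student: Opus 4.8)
The plan is to build $\fP$ by patching together the explicit local gerbs $\fG_v$ along the protorus $P$, using the fact that $P$ is a $\Q$-torus with a prescribed $\Gal$-action and that each local gerb is already attached to an explicit fundamental class. First I would reduce to finite level: fix a CM field $K/\Q$ Galois and $m\in\N$, and construct the finite-level gerb $\fP(K,m)$ with kernel the $\Q$-torus $P(K,m)$, whose cohomology class lies in $H^2(\Q, P(K,m))$; then pass to the projective limit over $(K,m)$ using the transition maps $\phi_{K,K'}$, $\phi_{m,m'}$ of \S\ref{subsubsec:pseudo-motivic_Galois_gerb}. The key input is that the Poitou--Tate / Borel--Serre description of $H^2(\Q,T)$ for a $\Q$-torus $T$ identifies a class with a collection of local classes in $\bigoplus_v H^2(\Q_v,T)$ summing to zero in the appropriate sense, together with the vanishing of $\Sha^2$ up to the known obstruction; here the torus $P(K,m)$ is engineered (via the Weil-number conditions (a), (b) in Definition \ref{defn:Weil-number_torus}) precisely so that the local components one wants to prescribe --- the pullback $\zeta_\infty$ of the fundamental class of $\fG_\infty$ under $(\nu_1)_\C$, the pullback $\zeta_p$ of the fundamental class of $\fG_p$ under $(\nu_2)_{\Qpb}$, and the unramified (split) classes at $v\neq p,\infty$ --- are compatible, i.e. their images in the relevant global obstruction group cancel. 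That compatibility is exactly the relation $\Tr_{K/K_0}(\nu_p)+[K_{v_2}:\Q_p]\nu_\infty=0$ of Lemma \ref{lem:Reimann97-B2.3}(1), which encodes that the sum of local invariants of the prescribed classes is $0$.

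For part (2), uniqueness up to isomorphism: two gerbs over $\Q$ with commutative kernel $P_{\Qb}$ (with the same Galois action) and the same cohomology class in $H^2(\Q,P)$ are isomorphic, and the set of isomorphisms is a torsor under $H^1(\Q,P)$; the additional data of the compatible local morphisms $\zeta_v$ rigidifies this torsor. Concretely, I would argue that an isomorphism $\fP\isom\fP'$ carrying each $\zeta_v$ to something isomorphic to $\zeta_v'$ exists because the difference class lives in $\ker^1(\Q,P):=\ker(H^1(\Q,P)\to\prod_v H^1(\Q_v,P))$, and then two such isomorphisms differ by an element of the same kernel; for the Weil-number tori $P^K$ one checks $\ker^1(\Q,P^K)$ behaves well enough (this is in the spirit of \cite[\S3, \S5]{LR87}) so that "any two $\alpha$'s arising in this way are isomorphic" holds. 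The condition (ii) that the $\zeta_v$ for $v\neq p,\infty$ come from a single section over $\Spec(\overline{\A_f^p}\otimes_{\A_f^p}\overline{\A_f^p})$ is what pins down the away-from-$p$ part uniquely and should be verified by noting that at these places the gerb is split and the section is unique up to the relevant $H^1$, which one controls integrally.

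For part (3), the surjection $\pi:\fQ\to\fP$ from the quasi-motivic gerb: this is a formal consequence of the universal property in Lemma \ref{lem:Reimann97-B2.3}(1) once one has the analogous description of $\fQ$ (its kernel $Q$ sits in an exact sequence with $P$ and a split piece, cf. \cite[Appendix B]{Reimann97}). I would construct $\pi$ on kernels from the canonical map $Q\twoheadrightarrow P$ of tori, check it lifts to a morphism of gerbs by comparing cohomology classes (the class of $\fQ$ maps to that of $\fP$ because the transition is induced by the torus surjection), and then verify the compatibility $\pi\circ\zeta_l^Q \sim \zeta_l^P$ at each $l$ place by place. The main obstacle is the global cohomological step in part (1): showing that the prescribed local classes actually glue to a global class in $H^2(\Q,P(K,m))$, i.e. that the relevant $\Sha^2$ obstruction (or its Poitou--Tate dual) vanishes for these specific Weil-number tori --- this is where the arithmetic of $P(K,m)$, and in particular the balancing relation of Lemma \ref{lem:Reimann97-B2.3}(1), does the real work, and where one must be careful that "continuous" $2$-cocycles and the passage to the pro-object $\fP=\varprojlim\fP(K,m)$ do not introduce new obstructions (they do not, because each $\fP(K,m)$ is already an inflation from a finite Galois level and the transition maps are surjective on kernels with connected torus kernels). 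I would follow \cite[\S3]{LR87} and \cite[Appendix B]{Reimann97} closely for this step.
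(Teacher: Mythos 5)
Your overall plan matches the paper's route: construct $\fP^K$ at finite CM level from a cohomology class in $H^2(\Q,P^K)$ with prescribed local components $(0,d_p^K,d_\infty^K)$ (the paper does this by citing \cite[Satz~2.2, \S3]{LR87}), pass to the projective limit, prove uniqueness via $H^1(\Q,P)$, and cite Reimann for part (3). That much is fine.

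There is, however, a genuine gap in the passage to the limit. You write that the projective limit introduces no new obstructions ``because each $\fP(K,m)$ is already an inflation from a finite Galois level and the transition maps are surjective on kernels with connected torus kernels.'' This is precisely the \emph{wrong} sufficient condition. The paper explicitly warns that for an inverse system of algebraic tori $\{T_n\}$ over a field $F$, the natural map $H^2_{\cts}(F,\varprojlim T_n)\to\varprojlim_n H^2(F,T_n)$ need \emph{not} be bijective even when the transition maps are surjective with connected torus kernels; in general there is a $\varprojlim^1 H^1(\Q,P^K)$ obstruction, and the $H^1(\Q,P^K)$ need not satisfy Mittag--Leffler. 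The actual work --- proved by Milne \cite[Prop.~3.5, Prop.~3.10]{Milne03} for these particular Weil-number tori --- is that the maps $H^2_{\cts}(\Q,P)\to\varprojlim_K H^2(\Q,P^K)$ and $H^1_{\cts}(\Q,P)\to H^1_{\cts}(\A,P)$ are isomorphisms. Your sketch treats this as a routine point when it is in fact the delicate step, and the reason you give for dismissing it does not hold.

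Two smaller points: (a) The claim that the balancing relation $\Tr_{K/K_0}(\nu_p)+[K_{v_2}:\Qp]\nu_\infty=0$ ``is exactly'' the Poitou--Tate sum-of-local-invariants condition is plausible in spirit but should be checked against \cite[Satz~2.2]{LR87}, which is where the existence-and-uniqueness of the class with prescribed local components for $P^K$ is actually established; the paper uses that Satz together with \cite[(3.5b)]{Milne03} rather than a bare Poitou--Tate duality statement. (b) For the rigidity in part (2), the relevant fact is the stronger statement that $H^1_{\cts}(\Q,P)\to H^1_{\cts}(\A,P)$ is an isomorphism (not merely that $\ker^1(\Q,P)$ is controlled), and this is again supplied by \cite{Milne03}, not by an elementary argument.
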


\begin{proof} 
(1) and (2): In \cite[$\S$3]{LR87}, Langlands and Rapoport first define, for each CM Galois field $K$, a Galois gerb $\fP^K$ with kernel $P^K$ which, for every place $v$ of $\Q$, is equipped with morphisms  $\zeta_v=\zeta_v^{K_w}:\fG_v^{K_w}\rightarrow \fP^K(v)$ whose restrictions to the kernels are $\nu_v^K$ for $v=\infty,p$, where $w$ is a place of $K$ above $v$. Then, they define $\fP$ as the projective limit of $\fP^K$'s; this requires choosing a place of $\Qb$ above each place $v$ of $\Q$. 
The construction of $\fP^K$ is a direct consequence of their Satz 2.2, but that of $\fP$ is more delicate: for example, for a projective system of algebraic tori $\{T_n\}_{n\in\N}$ over a field $F$, the natural map $H^2_{cts}(F,\varprojlim T_n)\rightarrow \varprojlim_n H^2(F,T_n)$ is not bijective in general (cf. \cite[Prop. 2.8]{Milne03}).
A proper treatment of construction of $\fP$ can be found in \cite{Milne03}, 
(see also the proof of Theorem B 2.8 of \cite{Reimann97}, where Reimann constructs the quasi-motivic Galois gerb $\fQ$, but the whole arguments should carry over to $\fP$ too, since all the relevant cohomological facts remain valid).
In more detail, for $v=p,\infty$, let $d_v^K$ be the image in $H^2(\Qv,P^K)$ of the fundamental class of the field extension $K_v/\Qv$ under the map $\nu_v^K$, where $K_v$ denotes (by abuse of notation) the completion of $K$ at the place induced by the embedding $\Qb\hra\Qvb$ (so, $K_{\infty}=\C$). Then, 
the Galois gerb $\fP^K$ corresponds to a cohomology class in $H^2(\Q,P^K)$ with image
\[(0,d_p^K,d_{\infty}^K)\in H^2(\A^{\{p,\infty\}},P^K)\times H^2(\Qp,P^K)\times H^2(\Q_{\infty},P^K).\]
The same statement holds for $\fP$, too (cf. \cite[$\S$4]{Milne03}). 
The work of Langland and Rapoport \cite[$\S$3]{LR87} and Milne \cite[(3.5b)]{Milne03} show that there exists a unique element in $H^2(\Q,P^K)$ with that property.
Then, by showing that the canonical maps 
\[H^2_{cts}(\Q,P)\rightarrow \varprojlim_K H^2(\Q,P^K),\quad H^1_{cts}(\Q,P)\rightarrow H^1_{cts}(\A,P)\] are isomorphisms \cite{Milne03}, Prop. 3.5, Prop. 3.10), 
Milne concludes the existence of $\fP$ as required.  
The statement (3) is proved in \cite{Reimann97}, Theorem B 2.8. 
\end{proof}

\begin{rem} \label{rem:comments_on_zeta_v}
(1) As was remarked in the proof, to construct $\zeta_v$ (for a place $v$ of $\Q$), we need to choose a place $w$ of $K$ for each CM field $K$ Galois over $\Q$, in a compatible manner. From now on, when we talk about the pair $(\fP,(\zeta_v)_v)$, we will understand that such choice was already made. Clearly, it is enough to fix embeddings $\Qb\hra\Qvb$ for all $v$'s.

(2) As was also pointed out in the proof, for every CM field $K$ Galois over $\Q$ and each place $v$ of $\Q$, by construction, $\zeta_v$ induces a morphism $\fG_p^{K_w}\rightarrow \fP^K(v)$ of Galois $\Qv$-gerbs, where $w$ is the pre-chosen place of $K$ above $v$ (cf. \cite[Satz 2.2]{LR87}).

(3) The proof also establishes the existence of a (constinous) section to the projection $\fP\rightarrow\Gal(\Qb/\Q)$. We fix one and denote it by $\rho\mapsto q_{\rho}$.
\end{rem}

\subsection{The morphism $\psi_{T,\mu}$ and admissible morphisms}  

We consider the $\Q$-pro-torus $R:=\varprojlim_{L}\mathrm{Res}_{L/\\Q}(\mathbb{G}_{\mathrm{m},L})$ ($L$ running through the set of all Galois extensions of $\Q$ inside $\Qb$). Its character group $X^{\ast}(R)$ is naturally identified with the set of all continuous maps $f:\Gal(\Qb/\Q)\rightarrow\Z$, where the Galois action is given by $\rho(f)(\tau)=f(\rho^{-1}\tau),\ \forall\rho,\tau\in\Gal(\Qb/\Q)$.

\begin{lem} \label{lem:defn_of_psi_T,mu}
(1) For any $\Q$-torus $T$ and every cocharacter $\mu$ of $T$, there exists a unique homomorphism $\xi:R\rightarrow T$ such that $\mu=\xi\circ\mu_0$, where $\mu_0\in X_{\ast}(R)$ is defined by $\langle f, \mu_0\rangle =f(id)\in\Z$ for $f\in X^{\ast}(R)$. 

Let $\psi:\fQ\rightarrow\fG_R$ be the morphism of Galois gerbs over $\Q$ in \cite[B.2.8]{Reimann97}. We define 
\[\psi_{T,\mu}:\fQ\rightarrow\fG_T\]
to be the composite of it and the morphism $\fG_R\rightarrow \fG_T$ induced by $\xi:R\rightarrow T$.

(2) $\psi_{T,\mu}$ factors through $\fP$ if $\mu$ satisfies the Serre condition: if
\[(\rho-1)(\iota+1)\mu=(\iota+1)(\rho-1)\mu=0,\quad \forall\rho\in \Gal(\Qb/\Q)\]
(e.g. if $T$ splits over a CM-field and the \textit{weight} $\mu\cdot\iota(\mu)$ of $\mu$ is defined over $\Q$).
If furthermore a CM field $K$ Galois over $\Q$ splits $T$, $\psi_{T,\mu}$ factors through $\fP^K$.

(3) The restriction $\psi_{T,\mu}^{\Delta}: \fQ^{\Delta}=Q_{\Qb}\rightarrow \fG_T^{\Delta}=T_{\Qb}$ of $\psi_{T,\mu}$ to the kernels is defined over $\Q$.
\end{lem}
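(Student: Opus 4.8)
The plan is to treat the three parts in the order (1), then (3), then (2), since (3) and (2) both rest on the explicit description of $\xi$ obtained in (1).

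\emph{Part (1).} I would dualize. A homomorphism $\xi\colon R\to T$ of $\Q$-(pro-)tori is the same as a $\Gal(\Qb/\Q)$-equivariant homomorphism $\xi^{\ast}\colon X^{\ast}(T)\to X^{\ast}(R)$, and since $X^{\ast}(R)=\mathrm{Maps}_{\mathrm{cts}}(\Gal(\Qb/\Q),\Z)$ is the module co-induced from the trivial module $\Z$, Frobenius reciprocity identifies $\Hom_{\Gal(\Qb/\Q)}(X^{\ast}(T),X^{\ast}(R))$ with $\Hom_{\Z}(X^{\ast}(T),\Z)=X_{\ast}(T)$; unwinding this isomorphism, $\xi$ corresponds to $\xi\circ\mu_0$. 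Hence there is a unique $\xi$ with $\xi\circ\mu_0=\mu$, given explicitly by $\xi^{\ast}(\chi)\colon\sigma\mapsto\langle\chi,\sigma\mu\rangle$; one checks directly that this is a well-defined continuous (as $\mu$ is defined over a finite extension), additive, $\Gal(\Qb/\Q)$-equivariant map with value $\langle\chi,\mu\rangle$ at $\sigma=\mathrm{id}$, which forces $\xi\circ\mu_0=\mu$. The morphism $\psi_{T,\mu}$ is then \emph{defined} by composing $\psi$ with the map $\fG_R\to\fG_T$ induced by $\xi$, so there is nothing further to prove here.

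\emph{Part (3).} I would observe that the kernels $\fQ^{\Delta}=Q_{\Qb}$ and $\fG_T^{\Delta}=T_{\Qb}$ carry the natural Galois actions coming from the $\Q$-structures of $Q$ and $T$: for the neutral gerb $\fG_T$ this is clear, and for $\fQ$ it holds because $\fQ$ is defined by a continuous $2$-cocycle valued in $Q(\Qb)$, so conjugation by a lift of $\sigma$ acts on $Q(\Qb)$ through the given Galois action (cf. \eqref{eq:conjugation=Galois_action}). Since $\psi_{T,\mu}$ is a morphism of Galois gerbs over $\Q$ that is the identity on $\Gal(\Qb/\Q)$, for any lift $g_{\sigma}$ of $\sigma$ and $x\in Q(\Qb)$ one has $\psi_{T,\mu}^{\Delta}(g_{\sigma}xg_{\sigma}^{-1})=\psi_{T,\mu}(g_{\sigma})\,\psi_{T,\mu}^{\Delta}(x)\,\psi_{T,\mu}(g_{\sigma})^{-1}$, i.e. $\psi_{T,\mu}^{\Delta}$ intertwines the two conjugation actions, hence is $\Gal(\Qb/\Q)$-equivariant; this is exactly the assertion that $\psi_{T,\mu}^{\Delta}$ is defined over $\Q$. (Equivalently, $\psi_{T,\mu}^{\Delta}=\xi_{\Qb}\circ\psi^{\Delta}$ is a composite of base changes of $\Q$-morphisms, $\xi$ by (1) and $\psi^{\Delta}\colon Q\to R$ by the construction of $\psi$ in \cite[B.2.8]{Reimann97}.)

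\emph{Part (2).} First I would reduce to a small torus: replacing $T$ by the smallest subtorus $T_{\mu}\subseteq T$ with $\mu\in X_{\ast}(T_{\mu})$, we get $\psi_{T,\mu}=(\text{map induced by }T_{\mu}\hra T)\circ\psi_{T_{\mu},\mu}$, so it suffices to treat $(T_{\mu},\mu)$. The Serre condition rearranges to $\iota\rho\mu=\rho\iota\mu$ for all $\rho\in\Gal(\Qb/\Q)$, so $\iota$ acts on $X_{\ast}(T_{\mu})=\Z[\Gal(\Qb/\Q)]\mu$ as a central involution; this forces $T_{\mu}$ to split over a totally real or CM number field, which we enlarge to a CM field $K$ Galois over $\Q$ (for the ``furthermore'' clause we are simply given such a $K$ splitting $T$). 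Now $\fQ\to\fP\to\fP^{K}$ are surjective (Theorem \ref{thm:pseudo-motivic_Galois_gerb}), so $\fP^{K}$ is the quotient of $\fQ$ by the $\Gal(\Qb/\Q)$-submodule $N:=\ker(Q_{\Qb}\to P^{K}_{\Qb})$ of $Q_{\Qb}$, and hence a morphism $\fQ\to\fG_{T_{\mu}}$ factors through $\fP^{K}$ iff it kills $N$, which by (3) is equivalent to the $\Q$-morphism $\psi_{T_{\mu},\mu}^{\Delta}=\xi\circ\psi^{\Delta}\colon Q\to T_{\mu}$ factoring through $Q\thra P^{K}$. Dualizing, this says $(\psi^{\Delta})^{\ast}\circ\xi^{\ast}$ maps $X^{\ast}(T_{\mu})$ into $X^{\ast}(P^{K})=\varinjlim_{m}X^{\ast}(K,m)\subseteq X^{\ast}(Q)$. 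By (1), $\xi^{\ast}(\chi)$ is the function $\sigma\mapsto\langle\chi,\sigma\mu\rangle$, which (as $T_{\mu}$ splits over $K$) factors through $\Gal(K/\Q)$ and (by the Serre condition) satisfies $\langle\chi,\rho\mu\rangle+\langle\chi,\iota\rho\mu\rangle=\langle\chi,\mu\rangle+\langle\chi,\iota\mu\rangle$ for all $\rho$ --- the cocharacter shadow of the Weil-number weight relation. I would finish by using the explicit descriptions of $\psi^{\Delta}\colon Q\to R$, of the inclusion $X^{\ast}(K,m)\hra X^{\ast}(Q)$, and of the Weil-number lattice $X^{\ast}(K,m)$ from \cite[\S 3, \S 4]{LR87}, \cite[App. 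B]{Reimann97} to see that, after increasing $m$ to clear denominators, precisely such functions land in $X^{\ast}(K,m)$; equivalently one can phrase this via the universal property of $(P^{K},\nu_{1}^{K},\nu_{2}^{K})$ in Lemma \ref{lem:Reimann97-B2.3}(1), identifying the $\infty$- and $p$-components of $\psi_{T_{\mu},\mu}$ with $-(\mu+\iota\mu)$ and the $\Gal(\Qpb/\Qp)$-average of $\mu$ and checking that the Serre condition is exactly the relation $\Tr_{K/K_{0}}(\nu_{p})+[K_{v_{2}}:\Qp]\nu_{\infty}=0$. Composing the resulting factorization $\fP^{K}\to\fG_{T_{\mu}}$ with $\fP^{K}\to\fP$ (and, for general $T$, with the map induced by $T_{\mu}\hra T$) yields the factorizations through $\fP^{K}$ and through $\fP$. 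The step I expect to be the main obstacle is precisely this last one: matching, through the explicit presentations of the quasi-motivic torus $Q$ and the Weil-number torus $P^{K}$, the cocharacter-level Serre condition on $\mu$ with the factorization of $\xi\circ\psi^{\Delta}$ through $P^{K}$ (equivalently, with the trace relation of Lemma \ref{lem:Reimann97-B2.3}(1)); the reductions above and the duality bookkeeping are routine.
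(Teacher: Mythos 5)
Your proposal replaces the paper's one-line citation proof with a self-contained argument, and for parts (1) and (3) this succeeds. The paper's own proof is essentially a pointer: (1) and (2) are deferred to \cite[Def.\ B.2.10, Rem.\ B.2.11]{Reimann97}, the ``furthermore'' clause of (2) to \cite[Satz 2.2, 2.3]{LR87}, and (3) to the explicit construction of $\psi^{\Delta}$ on \cite[p.117]{Reimann97} after reducing to the $\Q$-rationality of $\psi^{\Delta}$ exactly as in your second argument. Your Frobenius-reciprocity treatment of (1) is correct: $X^{\ast}(R)=\mathrm{Maps}_{\mathrm{cts}}(\Gamma,\Z)$ is co-induced from $\Z$, the adjunction $\Hom_{\Gamma}(X^{\ast}(T),X^{\ast}(R))\cong\Hom_{\Z}(X^{\ast}(T),\Z)=X_{\ast}(T)$ sends $\xi$ to $\xi\circ\mu_0$, and unwinding gives $\xi^{\ast}(\chi)(\sigma)=\langle\chi,\sigma\mu\rangle$. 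Your first argument for (3), via conjugation-equivariance, also goes through, though the citation is slightly off: the relevant fact is (\ref{eq:extension_with_given_conjugation_action}) (conjugation by \emph{any} lift of $\sigma$ in a gerb with commutative kernel defined by a $2$-cocycle is the given Galois action), not (\ref{eq:conjugation=Galois_action}) (which only concerns $\sigma$ in the subgroup over which a group-theoretic section exists); on the $\fG_T$ side one also needs commutativity of $T$ to collapse conjugation by $\psi_{T,\mu}(g_\sigma)=t_\sigma\rtimes\sigma$ to $\sigma(\cdot)$. Both points are clear from your wording, so this is fine.

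For (2) your structure is the right one: pass to $T_\mu$, observe that the Serre condition makes $\iota$ act as a central involution on $X_\ast(T_\mu)$ so that $T_\mu$ splits over a CM or totally real field, reduce the gerb-level factorization to the factorization of the $\Q$-morphism $\xi\circ\psi^{\Delta}\colon Q\to T_\mu$ through $Q\thra P^K$, and finally rephrase this via the universal property of $(P^{K},\nu_1^{K},\nu_2^{K})$ from Lemma~\ref{lem:Reimann97-B2.3}(1). But you flag, correctly, that the decisive verification --- computing the $\infty$- and $p$-components $\psi_{T_\mu,\mu}^{\Delta}\circ\nu_1^K$ and $\psi_{T_\mu,\mu}^{\Delta}\circ\nu_2^K$ (via Lemma~\ref{lem:properties_of_psi_T,mu}(2) these are $\pm(\mu+\iota\mu)$ and $\mp\Nm_{K_{v_2}/\Qp}\mu$ up to normalization) and checking that the cocharacter-level Serre condition is precisely the relation $\Tr_{K/K_0}(\nu_p)+[K_{v_2}:\Qp]\nu_\infty=0$ --- is left undone, and it is not automatic; it requires reproducing the explicit formula for $\psi^{\Delta}\colon Q\to R$ from Reimann. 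Since the paper itself offloads exactly this step to \cite[Rem.\ B.2.11]{Reimann97}, your plan is sound and matches the paper's in outline, but as written it is a sketch with the same load-bearing step deferred, not a complete independent proof of (2).
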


\begin{proof} 
For (1) and (2), see \cite{Reimann97}, Definition B 2.10 and Remark B 2.11. The last statement of (2) follows from the very construction of $\psi_{T,\mu}$ in Satz 2.2, 2.3 of \cite{LR87} (which is equivalent to that of Reimann, \cite{Reimann97}, at least when it factors through $\fP$).

For (3), it is enough to show that the morphism $\psi^{\Delta}:\fQ^{\Delta}\rightarrow \fG_R^{\Delta}$ is defined over $\Q$. This morphism is constructed explicitly on p.117 of \cite{Reimann97}.
\end{proof}

Let $v$ a place of $\Q$ (mainly, one of $p,\infty$), $T$ a torus over $\Qv$, and $\mu\in X_{\ast}(T)$. 
Suppose that $T$ splits over a finite Galois extension $F$ of $\Qv$. Set 
\[\nu^F:=\sum_{\tau\in\Gal(F/\Q_{v})}\tau\mu,\]and let 
\[1\rightarrow F^{\times}\rightarrow W_{F/\Q_{v}}\rightarrow\Gal(F/\Q_{v})\rightarrow 1\] be the Weil group extension of $F/\Q_{v}$ (cf. \cite{Tate79}); we fix a section $s^{F}_{\rho}$ to the projection $W_{F/\Q_{v}}\rightarrow \Gal(F/\Q_{v})$ so that $d^F_{\rho,\tau}:=s_{\rho}\rho(s_{\tau})s_{\rho\tau}^{-1}$ is a cocycle defining $W_{F/\Q_{v}}$.

\begin{defn}  \label{defn:psi_T,mu}
We define $\xi_{\mu,F}^F:W_{F/\Q_{v}}\rightarrow T(F)\rtimes\Gal(F/\Q_{v})$ by
\begin{eqnarray*}
\xi_{\mu,F}^F(z)&=&\nu^F(z)\quad (z\in F^{\times}),\\
\xi_{\mu,F}^F(s^F_{\rho})&=&\prod_{\tau\in\Gal(F/\Q_{v})}\rho\tau\mu(d^F_{\rho,\tau})\rtimes\rho.
\end{eqnarray*}
\end{defn}

One easily checks that $\xi_{\mu,F}^F$ is a homomorphism (cf. \cite[p.134]{LR87}, \cite{Milne92}, Lemma 3.30 - Example 3.32). By obvious pull-back and push-out, one gets a morphism of Galois gerbs over $\Q_v$: 
\[\xi_{\mu}^F:\fG_v^{F}\rightarrow\fG_T,\] 
(where for $v\neq p, \infty$, we set $\fG_v^{F}$ to be $\fG_v=\Gal(\Qvb/\Qv)$) and further, by passing to the projective limit, a morphism of Galois gerbs over $\Q_v$:
\[\xi_{\mu}:\fG_v\rightarrow\fG_T,\]
which does not depend on the choice of a field $F$ splitting $T$.
These maps are independent, up to conjugation by an element of $T(\Qvb)$, of the choice of section $s_{\rho}$.

\begin{lem} \label{lem:properties_of_psi_T,mu}
(1) If $v=p$ and $F$ is unramified, $\xi_{\mu}$ is unramified (in the sense of \autoref{subsubsec:cls}), and if $\xi_{\mu}(s_{\sigma})=b_{\mu} \sigma$ for $b_{\mu}\in T(\mfk)$, one has $\overline{b_{\mu}}=\overline{\mu(p^{-1})}$ in $B(T_{\Qp})$. 

(2) Suppose that $T$ is a torus defined over $\Q$, split over a finite Galois extension $K$ of $\Q$. For each $v=\infty,p$, let $\xi_{\pm\mu}$ be the morphism defined above for $(T_{\Qv},F=K_w,\pm\mu)$.
Then, $\psi_{T,\mu}(\infty)\circ \zeta_{\infty}$ is conjugate to $\xi_{\mu}$, and $\psi_{T,\mu}(p)\circ \zeta_{p}$ is conjugate to $\xi_{-\mu}$. For $v\neq \infty,p$, $\psi_{T,\mu}(\infty)\circ \zeta_{v}$ is conjugate to the canonical neutralization of $\fG_T(v)$. 
\end{lem}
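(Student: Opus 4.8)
The plan is to reduce everything to explicit computations with the cocycle descriptions of the Galois gerbs, using the universal triple $(P^K,\nu_1^K,\nu_2^K)$ from Lemma \ref{lem:Reimann97-B2.3} together with the compatibility morphisms $\zeta_v$ fixed in Theorem \ref{thm:pseudo-motivic_Galois_gerb}. For part (1), suppose $v=p$ and $F=L_n$ is unramified of degree $n$. I would first observe that, since $F$ is unramified and $T$ splits over $F$, the cocharacter $\nu^F=\sum_{\tau\in\Gal(F/\Qp)}\tau\mu$ is defined over $\Qp$, so the formula defining $\xi_{\mu,F}^F$ on $F^\times$ already lands in $T(\mfk)$ compatibly with the Frobenius action; and the section value $\xi_{\mu,F}^F(s_\rho^F)$ with $\rho=\sigma$ the Frobenius can be computed from the explicit fundamental cocycle (\ref{eq:canonical_fundamental_cocycle}). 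Chasing that cocycle through the product $\prod_\tau \sigma\tau\mu(d^F_{\sigma,\tau})$ gives $b_\mu=\mu(p^{-1})\cdot(\text{unit})$ up to the choice of section, which is exactly the shape needed. To identify the $\sigma$-conjugacy class, I would invoke Lemma \ref{lem:unramified_morphism}(1),(4) to pass to a morphism $\fD_n\to\fG_{T}^{\nr}$, and then apply Lemma \ref{lem:Newton_hom_attached_to_unramified_morphism}: the Newton homomorphism of $b_\mu$ is $-\frac1n\,\xi_\mu^\Delta = -\frac1n\sum_\tau\tau\mu$, which is exactly $\mathrm{av}(-\mu)$ in the notation of the torus case of $\nu_G$; on the other hand $\nu_{\mu(p^{-1})}=\mathrm{av}\circ w_{T}(\mu(p^{-1}))=\mathrm{av}(-\mu)$ as well, and since $(\overline\nu,\kappa)$ is injective on $B(T)$ (cf. \S\ref{subsubsec:B(G,{mu})}) together with the fact that $\kappa_T$ of both elements is the common image $-\underline\mu$, we conclude $\overline{b_\mu}=\overline{\mu(p^{-1})}$.

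For part (2), the strategy is functoriality: by Lemma \ref{lem:defn_of_psi_T,mu} the morphism $\psi_{T,\mu}$ is obtained from the universal $\psi:\fQ\to\fG_R$ by pushing along $\xi:R\to T$ with $\mu=\xi\circ\mu_0$, and it factors through $\fP^K$ since $K$ is CM Galois splitting $T$. Precomposing with $\zeta_v$ and restricting to kernels, Theorem \ref{thm:pseudo-motivic_Galois_gerb}(i) says $\zeta_\infty^\Delta=(\nu_1)_\C$ and $\zeta_p^\Delta=(\nu_2)_{\Qpb}$; pushing along $\xi$ and using the defining properties (\ref{eqn:cocharacters_nu^K}) of $\nu_1^K,\nu_2^K$ together with the universal property Lemma \ref{lem:Reimann97-B2.3}(1), one computes that the kernel-restriction of $\psi_{T,\mu}(\infty)\circ\zeta_\infty$ is $\mu$ (pairing $\chi_\pi$ with it gives $\nu_1(\pi)$, matching $\nu^{K_w}$ at the infinite place after the usual normalization), while that of $\psi_{T,\mu}(p)\circ\zeta_p$ is $-\mu$ — the sign coming precisely from the relation $\Tr_{K/K_0}(\nu_p)+[K_{v_2}:\Qp]\nu_\infty=0$ in Lemma \ref{lem:Reimann97-B2.3}(1), i.e. the $p$-adic cocharacter is the negative of the sum of Galois conjugates of the weight cocharacter. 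Since two morphisms of Galois gerbs over $\Qv$ with the same kernel-restriction and the same source/target differ by an element of $H^1(\Qv,T)$ on sections, and the construction of $\fP^K$ pins down the cohomology class $d_v^K$ as the image of the fundamental class under $\nu_v^K$ (the proof of Theorem \ref{thm:pseudo-motivic_Galois_gerb}), the cocycle of $\psi_{T,\mu}(v)\circ\zeta_v$ agrees with that of $\xi_{\pm\mu}^{K_w}$; hence they are conjugate by an element of $T(\Qvb)$, as the ambiguity in the section of each side is exactly $T(\Qvb)$ by Hilbert 90. For $v\neq p,\infty$, the morphism $\zeta_v$ comes from a section of $\fP$ over $\overline{\A_f^p}\otimes\overline{\A_f^p}$ by Theorem \ref{thm:pseudo-motivic_Galois_gerb}(ii), so $\psi_{T,\mu}(v)\circ\zeta_v$ is split, i.e. conjugate to the canonical neutralization of $\fG_T(v)$.

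The main obstacle I expect is bookkeeping of signs and normalizations in part (2): the constructions of $\fP^K$ in \cite{LR87} versus \cite{Reimann97}, the choice of arithmetic versus geometric Frobenius in (\ref{eq:canonical_fundamental_cocycle}), and the sign in Lemma \ref{lem:Reimann97-B2.3}(1) all conspire to determine whether $\psi_{T,\mu}(p)\circ\zeta_p$ is $\xi_\mu$ or $\xi_{-\mu}$, and getting this consistent with the $B(T)$-computation in part (1) (where $\overline{b_\mu}=\overline{\mu(p^{-1})}$, not $\overline{\mu(p)}$) requires care. I would handle this by fixing once and for all the cocycle representatives and sections as in \S\ref{subsubsec:Local_Galois_gerbs}, and cross-checking against the known formula in the torus case (e.g. \cite[Lemma 3.30--3.32]{Milne92}) rather than recomputing from scratch; the remaining verifications — that the kernel restrictions are as claimed, and that matching kernel restrictions plus matching cohomology class forces conjugacy — are then formal consequences of the cohomological description of Galois gerbs with torus kernel recalled in \S\ref{subsec:Galois_gerbs}.
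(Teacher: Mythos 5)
Your argument for part (1) is correct and somewhat more self-contained than the paper's, which simply cites \cite[Lemma 4.3]{Milne92}: you identify the Newton homomorphism of $b_\mu$ via Lemma \ref{lem:Newton_hom_attached_to_unramified_morphism}, the Kottwitz invariant via Lemma \ref{lem:unramified_conj_of_special_morphism}, match both against $\mu(p^{-1})$, and invoke injectivity of $(\overline{\nu},\kappa)$ on $B(T)$. That reduction is sound and is perhaps the cleanest way to recover the $\sigma$-conjugacy class.

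Part (2), however, contains a gap in the step where you pass from ``matching kernel restrictions'' to conjugacy. You assert that ``two morphisms of Galois gerbs over $\Q_v$ with the same kernel-restriction and the same source/target differ by an element of $H^1(\Q_v,T)$'' and that the pinning-down of the class $d_v^K$ of the gerb $\fP^K$ forces agreement; this is not a formal consequence. The class $d_v^K$ determines the $H^2$-class of the gerb (equivalently, the obstruction to existence of \emph{some} morphism with the prescribed kernel restriction), but once the obstruction vanishes the set of $T(\Q_v^{alg})$-conjugacy classes of such morphisms is a genuine torsor under $H^1(\Q_v,T)$, which need not be trivial. To conclude that $\psi_{T,\mu}(v)\circ\zeta_v$ and $\xi_{\pm\mu}$ land in the \emph{same} conjugacy class one must exhibit the difference cocycle as a coboundary, and that requires unwinding the explicit sections used in the construction of $\zeta_v$ and of $\psi: \fQ\to\fG_R$, not merely comparing $H^2$-classes. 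This is exactly what the cited \cite[Satz 2.3 and $\S$3 (3.i)]{LR87} does. The same gap appears in your treatment of $v\neq p,\infty$: that $\zeta_v$ is induced by a global section of $\fP$ over $\overline{\A_f^p}\otimes_{\A_f^p}\overline{\A_f^p}$ (condition (ii) of Theorem \ref{thm:pseudo-motivic_Galois_gerb}) does pin down the right trivialization, but deriving the conjugacy still requires tracking the explicit trivializing section through the pushout $\fP^K\to\fG_T$, rather than appealing only to the existence of such a section. In short, your computations of the kernel restrictions and the sign discussion at the end are right, but the conjugacy assertion needs to be deferred to (or reconstructed from) the explicit cocycle-level construction, as the paper itself does.
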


\begin{proof}
(1) See Lemma 4.3 of \cite{Milne92}. (2) This follows from the construction of $\psi_{T,\mu}$ and $\fP(K,m)$, cf. \cite{LR87}, Satz 2.3 and $\S$3 (esp. (3.i)). 
\end{proof}

\subsubsection{Shimura data} Let $(G,X)$ be a Shimura datum. 
For a morphism $h:\dS\rightarrow G_{\R}$ in $X$, the associated \emph{Hodge cocharacter}
\[\mu_{h}:\mathbb{G}_{\mathrm{m}\C}\rightarrow G_{\C}\]
is the composite of $h_{\C}:\dS_{\C}\rightarrow G_{\C}$ and the cocharacter of $\dS_{\C}\cong\mathbb{G}_{\mathrm{m}\C}\times \mathbb{G}_{\mathrm{m}\C}$ corresponding to the identity embedding $\C\hookrightarrow\C$. Let $\{\mu_X\}$ denote the $G(\C)$-conjugacy class of cocharacters of $G_{\C}$ containing $\mu_h$ (for any $h\in X$). For a maximal torus $T$ of $G_{\Qb}$, we can consider $\{\mu_X\}$ as an element of $X_{\ast}(T)/W$. Alternatively, when we fix a based root datum $\mathcal{BR}(G,T,B)$, $c(G,X)$ has a unique representative in the associated closed Weyl chamber $\overline{C}(T,B)$, hence will be also identified with this representative: 
\begin{equation*} \label{eq:representataive_Hodge_cocharacter}
\{\mu_X\}\in \overline{C}(T,B).
\end{equation*}
The \emph{reflex field} $E(G,X)$ of a Shimura datum $(G,X)$ is the field of definition of $c(G,X)\in\mathcal{C}_G(\Qb)$, i.e. the fixed field of the stabilizer of $c(G,X)$ in $\Gal(\Qb/\Q)$; so the reflex field, which is a finite extension of $\Q$, is always a subfield of $\C$. When $T$ is a torus, the reflex field $E(T,\{h\})$ is just the smallest subfield of $\Qb\subset\C$ over which the single
morphism $\mu_h$ is defined.

For each $j\in\N$, we denote by $L_j$ the unramified extension of degree $j$ of $\Qp$ in $\Qpb$, and by $\Qpnr$ the maximal unramified extension of $\Qp$ in $\Qpb$. We let $L$ and $\sigma$ denote the completion of $\Qpnr$ and the absolute Frobenius on it, respectively.

\subsubsection{Strictly monoidal categories $G/\widetilde{G}(\kb)$, $\fG_{G/\widetilde{G}}$}

In order to have a satisfactory formalism without the condition that the derived group is simply connected, Kisin \cite[(3.2)]{Kisin17} introduced certain strictly monoidal categories. Recall (cf. \cite[App. B]{Milne92}) that a \textit{crossed module} is a group homomorphism $\alpha:\tilde{H}\rightarrow H$ together with an action of $H$ on $\tilde{H}$, denoted by ${}^h\tilde{h}$ for $h\in H$, $\tilde{h}\in \tilde{H}$, which lifts the conjugation action on itself (i.e. 
$\alpha({}^h\tilde{h})=h\alpha(\tilde{h})h^{-1}$ for $h\in H$, $\tilde{h}\in \tilde{H}$) and such that the induced action of $\tilde{H}$ on itself is also the conjugation action (i.e. ${}^{\alpha(\tilde{g})}\tilde{h}=\tilde{g}\tilde{h}\tilde{g}^{-1}$ for $\tilde{g},\tilde{h}\in \tilde{H}$).  A crossed module $\tilde{H}\rightarrow H$ gives rise to a strictly monoidal category $H/\tilde{H}$. Its underlying category is the groupoid whose objects are the elements of $H$ and whose morphisms are given by $\Hom(h_1,h_2)=\{\tilde{h}\in\tilde{H}\ |\ h_2=\alpha(\tilde{h})h_1\}$; thus the set of morphisms is identified with the set $\tilde{H}\times H$.
The monoidal structure $\otimes$ on this groupoid is given on the objects by the group multiplication on $H$ and on the set of morphisms $\tilde{H}\times H$ by the semi-direct product for the action of $H$ on $\tilde{H}$:
\[(\tilde{h}_1,h_1)\otimes (\tilde{h}_2,h_2):=(\tilde{h}_1{}^{h_1}\tilde{h}_2,h_1h_2).\]

We may regard any group $H$ as the strictly monoidal category $H=H/\{1\}$.

For a strictly monoidal category $C$ and a crossed module $H/\tilde{H}$, two functors $\phi_1, \phi_2:C\rightarrow H/\tilde{H}$ of strictly monoidal categories are said to be \textit{conjugate-isomorphic} if $\phi_1$ is conjugate to another functor that is isomorphic to $\phi_2$.

Let $k$ be a field with an algebraic closure $\kb$, and $G$ a connected reductive group over $k$. Here, we will use the notation $\tilde{G}$ for the simply connected cover of $G^{\der}$ (which was denoted previously by $G^{\uc}$). Then, the commutator map $[\ ,\ ]:G\times G\rightarrow G$ factors through $[\ ,\ ]:G^{\ad}\times G^{\ad}\rightarrow G$. In particular, as $G^{\ad}=\tilde{G}^{\ad}$, we get a map $[\ ,\ ]:G^{\ad}\times G^{\ad}\rightarrow \tilde{G}$ \cite[2.0.2]{Deligne79}.
It follows that the conjugation action of $\tilde{G}$ on itself extends to an action of $G$, and thus
the natural map $\tilde{G}\rightarrow G$ has a canonical crossed module structure.
We write $G/\widetilde{G}(\kb)$ for the resulting strictly monoidal category $G(\kb)/\widetilde{G}(\kb)$, and 
$\fG_{G/\widetilde{G}}$ for the strictly monoidal category $\fG_G/\tilde{G}(\kb)$.

\subsubsection{Admissible morphisms}
Let $(G,X)$ be a Shimura datum with reflex field $E\subset\C$. We fix an embedding $\Qb\hra\Qvb$ for every  place $v$. Suppose given a parahoric subgroup $\mbfK_p\subset G(\Qp)$; there exists a unique $\sigma$-stable parahoric subgroup $\mbfKt_p$ of $G(\mfk)$ such that $\mbfK_p=\mbfKt_p\cap G(\Qp)$.

Fix $h\in X$. Then, there exists a homomorphism of $\C/\R$-Galois gerbs 
\[\xi_{\infty}:\fG_{\infty}\rightarrow\fG_G(\infty)\]
defined by $\xi_{\infty}(z)=w_h^{-1}(z)=\mu_h\cdot\overline{\mu_h}(z),\ z\in\C^{\times}$ and $\xi_{\infty}(w)=\mu_h(-1)\rtimes \iota$, where $w=w(\iota)$. Clearly, its equivalence class depends only on $X$.

For $v\neq\infty, p$, we have the canonical section $\xi_v$ to $\fG_G(v)\rightarrow\Gal(\overline{\Q}_v/\Q_v)$:
\[\xi_v:\fG_v=\Gal(\overline{\Q}_v/\Q_v) \rightarrow\fG_G(v)\ :\ \rho\mapsto 1\rtimes\rho.\] 

For a cocharacter $\mu$ of $G$, we consider the composite of morphisms of strictly monoidal categories \[\mu_{\widetilde{\ab}}:\Gm(\Qb)\stackrel{\mu}{\rightarrow} G(\Qb) \rightarrow G/\tilde{G}(\Qb).\]

For a cocharacter $\mu$ of $G$ factoring through a maximal $\Q$-torus $T$, the composite \[\psi_{\mu_{\widetilde{\ab}}}:\fQ \stackrel{i\circ\psi_{T,\mu}}{\rightarrow}\fG_G\rightarrow \fG_{G/\tilde{G}}.\] 
(of morphisms of strictly monoidal categories) depends only on the $G(\Qb)$ conjugacy class of $\mu$; One easily verifies that this is isomorphic to the morphism denoted by the same symbol in \cite[(3.3.1)]{Kisin17}.

\begin{defn} \label{defn:admissible_morphism} \cite[p.166-168]{LR87}, \cite[(3.3.6)]{Kisin17}
A morphism $\phi:\fQ\rightarrow\fG_G$ is called \textit{admissible} if
\begin{itemize}
\item[(1)] The composite
\[\phi_{\widetilde{\ab}}:\fQ\stackrel{\phi}{\rightarrow}\fG_G\rightarrow \fG_{G/\tilde{G}}\] 
is conjugate-isomorphic to the composite $\psi_{\mu_{\widetilde{\ab}}}:\fQ \stackrel{i\circ\psi_{\mu}}{\rightarrow}\fG_G\rightarrow \fG_{G/\tilde{G}}$.
\item[(2)] For every place $v\neq p$ (including $\infty$), the composite $\phi(v)\circ\zeta_v$ is conjugate to $\xi_v$ (by an element of $G(\Qlb)$).
\item[(3)] For some (equiv. any) $b\in G(\mfk)$ in the $\sigma$-conjugacy class $\mathrm{cls}(\phi(p)\circ\zeta_p)\in B(G)$ (\autoref{subsubsec:cls}),
the following set (which is a union of affine Deligne-Lusztig varieties) $X(\{\mu_X\},b)_{\mbfK_p}$ is non-empty:
\[X(\{\mu_X\},b)_{\mbfK_p}:=\{g\in G(\mfk)/\mbfKt_p\ |\ \mathrm{inv}_{\mbfKt_p}(g,b\sigma(g))\in\Adm_{\mbfKt_p}(\{\mu_X\})\}.\]
Here, $\Adm_{\mbfKt_p}(\{\mu_X\})$ is the $\{\mu_X\}$-admissible subset (Def. \ref{defn:mu-admissible_subset}) defined for the parahoric subgroup $\mbfKt_p\subset G(\mfk)$ attached to $\mbfK_p$, and
\[\mathrm{inv}_{\mbfKt_p}: G(\mfk)/\mbfKt_p \times G(\mfk)/\mbfKt_p \rightarrow \mbfKt_p\backslash G(\mfk)/\mbfKt_p \cong \tilde{W}_{\mbfKt_p}\backslash \tilde{W}/ \tilde{W}_{\mbfKt_p}\] 
is defined by $(g_1\mbfKt_p,g_2\mbfKt_p)\mapsto \mbfKt_pg_1^{-1}g_2\mbfKt_p$,
cf. (\ref{eqn:parahoric_double_coset}).
\end{itemize}
\end{defn}

Suppose that an admissible morphism $\phi:\fQ\rightarrow\fG_G$ factors through the pseudo-motivic Galois gerb $\fP$. 
As $G$ is an algebraic group, it further factors through $\fP(K,m)$ for some CM field $K$ Galois over $\Q$ and $m\in\N$.

\begin{rem} \label{rem:Kisin's_defn_of_admissible_morphism}
This definition of admissible morphism is slightly different from the original definition by Langlands and Rapoport \cite[p.166]{LR87}, in that, instead of condition (1) here, which was introduced by Kisin \cite[(3.3.6)]{Kisin17}, they require the equality $pr\circ\phi=pr\circ i\circ\psi_{T,\mu}$, where $pr:\fG_G\rightarrow \fG_{G^{\ab}}$ is the natural map; so, these two conditions differ only when $G^{\der}$ is not simply connected. The original condition however turns out to be adequate only in their set-up assuming that $G^{\der}=G^{\uc}$. For example, Satz 5.3 of \cite{LR87} shows that under that assumption on $G$, every admissible morphism (in the original sense) is conjugate to a special admissible morphism (cf. Lemma \ref{lem:LR-Lemma5.2} below). But, they also give an example of $G$ with $G^{\der}\neq G^{\uc}$ (\cite{LR87}, $\S$6, the first example) for which this statement fails to be true (cf. \cite{Milne92}, Remark 4.20). In contrast, with the new condition here, this property always holds (as shown by Kisin for hyperspecial levels, and by Thm. \ref{thm:LR-Satz5.3} below for general parahoric levels when $G_{\Qp}$ is quasi-split).
\end{rem}

The following lemma was proved by Langlands-Rapoport for unramified $T$ (cf. \cite[4.3]{Milne92}). 

\begin{lem} \label{lem:unramified_conj_of_special_morphism}
Let $T$ be a torus over $\Qp$, split by a finite Galois extension $K$ of $\Qp$, say of degree $n$, and $\mu\in X_{\ast}(T)$. Let $K_1$ be the composite in $\Qpb$ of $K$ and $L_n$, where $L_n$ is the unramified extension of $\Qp$ in $\Qpb$ of degree $n=[K:\Qp]$. Then, $\xi_{\mu}^{K_1}$ factors through $\fG^{L_n}_p$: let $\xi_{\mu}^{L_n}:\fG^{L_n}_p\rightarrow \fG_{T}$ be the resulting morphism.
When $\xi_p'$ is an unramified conjugate of $\xi_{\mu}^{L_n}$, we have
\[\xi_p'(s_{\rho}^{L_n})=\Nm_{K/K_0}(\mu(\pi^{-1}))\rtimes \rho,\] 
up to conjugation by an element of $T(\Qpnr)$.
Here, $K_0=K\cap L_n$ is the maximal subfield of $K$ unramified over $\Qp$, $\pi$ is a uniformizer of $K$, and $\rho$ is any element in $\Gal(\Qpb/\Qp)$ whose restriction to $L_n$ is the Frobenius automorphism $\sigma$.

Moreover, we have the equality in $X_{\ast}(T)_{\Gal(\Qpb/\Qpnr)}$:
\[w_{T}(\Nm_{K/K_0}(\mu(\pi^{-1})))=-\underline{\mu},\] 
where $\underline{\mu}$ is the image of $\mu\in X_{\ast}(T)$ in $X_{\ast}(T)_{\Gal(\Qpb/\Qpnr)}$.
\end{lem}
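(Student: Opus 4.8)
The plan is to establish the three assertions separately, in the order: (a) the factorization of $\xi_\mu^{K_1}$ through $\fG_p^{L_n}$; (b) the explicit form of an unramified conjugate on the Frobenius section; and (c) the Kottwitz-map identity, which is logically independent of the other two and which I would settle first.

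For (c), recall that for a torus $T$ the Kottwitz homomorphism $w_T\colon T(\mfk)\to X_{\ast}(T)_I$ ($I=\Gal(\Qpb/\Qpnr)$) is functorial and is pinned down by $\langle\chi,w_T(t)\rangle=\mathrm{val}(\chi(t))$ for $\chi\in X^{\ast}(T)^I$. Viewing $\mu\in X_{\ast}(T)$ as a morphism $\Gm\to T$ over $K$, applying $\Res_{K/K_0}$ and composing with the norm homomorphism $\Nm_{K/K_0}\colon\Res_{K/K_0}T_K\to T$ of $K_0$-tori produces a homomorphism $f\colon\Res_{K/K_0}\Gm\to T$ of $K_0$-tori with $f(\pi^{-1})=\Nm_{K/K_0}(\mu(\pi^{-1}))$ (note $K_0\subset L_n\subset\Qpnr$). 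By functoriality of $w$ over $\mfk$, $w_T\bigl(\Nm_{K/K_0}(\mu(\pi^{-1}))\bigr)=f_{\ast}\bigl(w_{\Res_{K/K_0}\Gm}(\pi^{-1})\bigr)$. Since $K/K_0$ is totally ramified, $X^{\ast}(\Res_{K/K_0}\Gm)^I$ is the rank-one lattice generated by the norm character $\Nm_{K/K_0}$, which pairs to $1$ with the generator of the rank-one torsion-free lattice $X_{\ast}(\Res_{K/K_0}\Gm)_I$; as $\mathrm{val}\bigl(\Nm_{K/K_0}(\pi)\bigr)=1$, so that $\Nm_{K/K_0}(\pi)$ is a uniformizer of $K_0$ (hence of $\mfk$), this gives $w_{\Res_{K/K_0}\Gm}(\pi^{-1})=-1$. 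Finally $f_{\ast}\colon X_{\ast}(\Res_{K/K_0}\Gm)_I\cong\Z\to X_{\ast}(T)_I$ sends the generator to $\underline{\mu}$, so $w_T\bigl(\Nm_{K/K_0}(\mu(\pi^{-1}))\bigr)=-\underline{\mu}$.

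For (a), put $e:=e(K/\Qp)=[K:K_0]$, the ramification index of $K/\Qp$. Since $K_0$, the maximal unramified subextension of $K$, lies in $L_n$, one has $K\cap L_n=K_0$ and hence $e=[K_1:K]=[K_1:L_n]$. The natural map $\fG_p^{K_1}\to\fG_p^{L_n}$ of \S\ref{subsubsec:Local_Galois_gerbs} is $z\mapsto z^e$ on kernels; being surjective, it is the quotient by its kernel, the Galois-stable (hence normal) subgroup $\ker(z\mapsto z^e)\subset\Gm(\Qpb)$ of $e$-th roots of unity. On its kernel the morphism $\xi_\mu^{K_1}$ restricts to $\nu^{K_1}=\sum_{\tau\in\Gal(K_1/\Qp)}\tau\mu$; because $\Gal(\Qpb/\Qp)$ acts on $X_{\ast}(T)$ through $\Gal(K/\Qp)$, each $\Gal(K/\Qp)$-conjugate of $\mu$ appears $[K_1:K]=e$ times, so $\nu^{K_1}=e\,\nu^K$ with $\nu^K:=\sum_{\tau\in\Gal(K/\Qp)}\tau\mu$. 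As $e\,\nu^K$ kills every $e$-th root of unity, $\xi_\mu^{K_1}$ factors through $\fG_p^{K_1}/\ker(z\mapsto z^e)=\fG_p^{L_n}$; the resulting morphism $\xi_\mu^{L_n}\colon\fG_p^{L_n}\to\fG_T$ has kernel-restriction $(\xi_\mu^{L_n})^\Delta=\nu^K$.

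For (b), the substantial point, I would run the explicit $2$-cocycle computation of \cite[Lemma~4.3]{Milne92} and \cite[Satz~2.3]{LR87}, now allowing $T$ to be ramified. Choose the normalized cocycle $(d_{\rho,\tau}^{K_1})$ and section $s^{K_1}$ defining $\fG_{p,K_1}^{K_1}$ compatibly with the canonical unramified cocycle (\ref{eq:canonical_fundamental_cocycle}) of $L_n/\Qp$ along $\Gal(K_1/\Qp)\twoheadrightarrow\Gal(L_n/\Qp)$, whose kernel $\Gal(K_1/L_n)\cong\Gal(K/K_0)$ is the inertia subgroup (as $K_1/L_n$ is totally ramified of degree $e$); this can be arranged so that $s^{K_1}_\rho\mapsto s^{L_n}_{\bar\rho}$ under $\fG_p^{K_1}\to\fG_p^{L_n}$. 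For a lift $\rho$ of $\sigma$ to $\Gal(K_1/\Qp)$, Definition~\ref{defn:psi_T,mu} gives $\xi_\mu^{K_1}(s_\rho^{K_1})=\bigl(\prod_{\tau\in\Gal(K_1/\Qp)}(\rho\tau\mu)(d^{K_1}_{\rho,\tau})\bigr)\rtimes\rho$; I would then group this product over the cosets of the inertia subgroup, absorb the contribution of the unramified part of the cocycle (a power of $p$) by a conjugation which simultaneously makes the morphism unramified (cf.\ Lemma~\ref{lem:unramified_morphism}(3)), and observe that the totally ramified factor $K/K_0$ contributes the uniformizer $\pi$, so that the product telescopes to $\prod_{\tau\in\Gal(K/K_0)}\tau\bigl(\mu(\pi^{-1})\bigr)=\Nm_{K/K_0}(\mu(\pi^{-1}))$. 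By (a) this equals $\xi_\mu^{L_n}(s_\rho^{L_n})$, hence $\xi_p'(s_\rho^{L_n})$, up to conjugation by $T(\Qpnr)$. The main obstacle I anticipate is precisely this bookkeeping: choosing cocycles and sections compatibly across $L_n\subset K_1\supset K$ and carrying out the telescoping of the product over $\Gal(K_1/\Qp)$; once the answer is identified the rest is formal. As an independent check, the $\sigma$-conjugacy class of the answer is $\mathrm{cls}(\xi_\mu)\in B(T)$, whose Newton point is $-\tfrac{1}{n}\nu^K$ both by Lemma~\ref{lem:Newton_hom_attached_to_unramified_morphism} (applied to the morphism $\fD_n\to\fG_T^{\nr}$ underlying an unramified conjugate) and, via part (c) together with $\nu_b=\mathrm{av}\circ w_{T_L}(b)$ for tori, for $\Nm_{K/K_0}(\mu(\pi^{-1}))$.
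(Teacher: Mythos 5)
Your approach to the third assertion, the Kottwitz-map identity, is a genuine alternative and it is correct: you apply the Kottwitz map's functoriality to the auxiliary $K_0$-homomorphism $f=\Nm_{K/K_0}\circ\Res_{K/K_0}(\mu)\colon\Res_{K/K_0}\Gm\to T$ and compute directly in the rank-one lattice $X_{\ast}(\Res_{K/K_0}\Gm)_I$; the paper instead invokes the commutativity of diagram~(7.3.1) of Kottwitz's \textit{Isocrystals~II}, which packages the same norm-compatibility. Your argument for the factorization through $\fG_p^{L_n}$ — killing the $e$-th roots of unity via $\nu^{K_1}=e\,\nu^K$ — is also a valid, if more abstract, route than the paper's explicit construction of the map $p_{K_1,L_n}$.

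For the central assertion (b), however, your proposal stops at a sketch, and the place where you stop is precisely where the real content lies. You propose to group the product $\prod_{\tau\in\Gal(K_1/\Qp)}(\rho\tau\mu)(d^{K_1}_{\rho,\tau})$ over inertia cosets, conjugate away the unramified contribution, and ``observe'' that the ramified part telescopes to $\Nm_{K/K_0}(\mu(\pi^{-1}))$. This does not work as stated: the cocycle $d^{K_1}$ is normalized to represent the fundamental class of $K_1/\Qp$ and therefore involves a uniformizer of $K_1$, not $\pi$ (a uniformizer of $K$), and $K_1/K$ has nontrivial degree $e>1$ in general, so the product does not directly collapse to the claimed expression. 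The paper circumvents this by instead comparing $\sigma$-conjugacy classes: via Lemma~\ref{lem:equality_restrictions_to_kernels_imply_conjugacy} it suffices to match images under $w_{T_{L'}}$ (the Kottwitz map over the completion $L'$ of $K^{\nr}$), and the verification reduces, after a telescoping that uses the \emph{inflated} cocycle and the compatibility relation between $d^{K_1}$ and the canonical $d^{L_n}$, to the identity $w_{T_L}\bigl(\Nm_{K/K_0}(\mu)(p)\bigr)=[K:K_0]\,w_{T_L}\bigl(\Nm_{K/K_0}(\mu(\pi))\bigr)$, which is then checked in the universal case $T=\Res_{K/\Qp}\Gm$. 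None of this appears in your proposal, and it is not ``formal''; the reduction to the universal torus is a genuine step. Your closing consistency check via Newton points constrains the answer only up to $\sigma$-conjugacy in $B(T)$ modulo the kernel of $\bar\nu_T$ on basic classes, so it cannot substitute for the computation.
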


\begin{proof} 
Let $d^{L_n}_{\rho,\tau}$ denote the canonical fundamental $2$-cocycle defined in (\ref{eq:canonical_fundamental_cocycle}) which represents the fundamental class $u_{L_n/\Qp}=[1/n]\in H^2(L_n/\Qp)\cong \frac{1}{n}\Z/\Z$. Also, for each of $F=K_1$ and $L_n$, fix a section $s_{\rho}^{F}:\Gal(F/\Qp)\rightarrow W_{F/\Qp}$ to $1\rightarrow F^{\times}\rightarrow W_{F/\Qp}\rightarrow \Gal(F/\Qp)\rightarrow 1$ whose induced $2$-cocycle on $\Gal(F/\Qp)$
\[d^{F}_{\rho,\tau}:=s^{F}_{\rho}\rho(s^{F}_{\tau})(s^{F}_{\rho\tau})^{-1}\in F^{\times}\] 
represents the fundamental class $u_{F/\Qp}\in H^2(F/\Qp)\cong \frac{1}{[F:\Qp]}\Z/\Z$. In the case $F=L_n$, we further require that such induced $2$-cocycle is the canonical one.
Thus there exists a function $b:\Gal(K_1/\Qp)\rightarrow K_1^{\times}$ such that
\begin{equation} \label{eqn:inflations_of_two_cocycles}
(d^{K_1}_{\rho,\tau})^{[K_1:L_n]}\cdot\partial(b)_{\rho,\tau}=d^{L_n}_{\rho|_{L_n},\tau|_{L_n}},
\end{equation}
where $\partial(b)_{\rho,\tau}:=b_{\rho}\rho(b_{\tau})b_{\rho\tau}^{-1}$.
In terms of these generators and the function $b_{\rho}$, we obtain a homomorphism $p_{K_1,L_n}:\fG^{K_1}_{\Qp}\rightarrow \fG^{L_n}_{\Qp}$ defined by
\[z\mapsto z^{[K_1:K]}\ (z\in\Qpb^{\times}),\quad s^{K_1}_{\rho}\mapsto \ b_{\rho}^{-1}s^{L_n}_{\rho}.\]
Then, the morphisms $\xi_{\mu}^{K_1}, \xi_{\mu}^{L_n}\circ p_{K_1,L_n}:\fG^{K_1}_{\Qp}\rightarrow\fG_T$ differ from each other by conjugation with an element of $T(\Qpb)$. 

Recall that for $(T,\mu,K_1)$, $\xi_{\mu}^{K_1}:\fG_p\rightarrow \fG_T$ is induced, via obvious pull-back and push-out, from a map $\xi_{\mu,K_1}^{K_1}:W_{K_1/\Qp}\rightarrow T(K_1)\rtimes \Gal(K_1/\Qp)$: for $a\in K_1^{\times}$ and $\rho\in \Gal(K_1/\Qp)$, \[\xi_{\mu,K_1}^{K_1}:a\cdot s_{\rho}^{K_1} \mapsto \nu^{K_1}(a)\cdot c^{K_1}_{\rho}\rtimes\rho,\] where $\nu^{K_1}=\Nm_{K_1/\Qp}\mu\in \Hom_{\Qp}(\Gm,T)$ and $c^{K_1}_{\rho}=\prod_{\tau_1\in\Gal(K_1/\Qp)}(\rho\tau_1\mu)(d^{K_1}_{\rho,\tau_1})$.
Now, for any $x\in T(\Qpb)$, if we define $\psi'_x:\fG^{L_n}_p\rightarrow \fG_T$ by 
\[z\mapsto \nu^K(z),\quad s^{L_n}_{\rho}\mapsto \nu^K(b_{\rho})\cdot c^{K_1}_{\rho}\cdot x\cdot\rho(x^{-1}),\]
where $\nu^K=\Nm_{K/\Qp}\mu$, then it is clear that $\psi'_x\circ p_{K_1,L_n}=\Int (x)\circ \xi^{K_1}_{\mu}$. This proves the first claim. Since $\psi'_x=\Int x\circ\psi'_1$, the second statement will follow if there exists $x\in T(\Qpb)$ such that $\psi'_x(s^{L_n}_{\rho})$ equals $\Nm_{K/K_0}(\mu(\pi^{-1}))$ whenever $\rho|_{L_n}=\sigma$. According to (the proof of) Lemma \ref{lem:equality_restrictions_to_kernels_imply_conjugacy}, this will follow if the two elements $\nu^K(b_{\rho})\cdot c^{K_1}_{\rho}$, $\Nm_{K/K_0}(\mu(\pi^{-1}))$ of $T(K^{\nr})$ have the same image under $\kappa_{T_K}:B(T_K)\isom X_{\ast}(T)_{\Gal(\Qpb/K)}$, where $K^{\nr}$ is the maximal unramified extension of $K$ in $\Qpb$ with completion $L'$ and $B(T_K)$ is the set of $\sigma$-conjugacy classes in $T(L')$ (with respect to the Frobenius automorphism of $L'/K$).
But, as $\kappa_{T_K}$ is induced from $w_{T_{L'}}:T(L')\rightarrow X_{\ast}(T)_{\Gal(\overline{L'}/L')}=X_{\ast}(T)$ (\autoref{subsubsec:Kottwitz_hom}), in turn it suffices to show equality of the images under $w_{T_{L'}}$ of $c^{K_1}_{\rho}\cdot\nu^{K}(b_{\rho})$ and $\Nm_{K/K_0}(\mu(\pi^{-1}))$ when $\rho|_{\Qpnr}=\sigma$.

Choose a set of representatives $\Gamma_1\subset\Gal(K_1/\Qp)$ for the family of left cosets $\Gal(K_1/\Qp)/\Gal(K_1/L_n)$ (so that restriction to $L_n$ gives a bijection $\Gamma_1\isom\Gal(L_n/\Qp)$) and $\rho\in\Gal(K_1/\Qp)$ such that $\rho|_{L_n}=\sigma$. Then, we get
\begin{eqnarray*}
\prod_{\tau_1\in\Gal(K_1/\Qp)}(\rho\tau_1\mu)(\mathrm{Inf}_{L_n}^{K_1}(d^{L_n/\Qp})_{\rho,\tau_1})&=&\prod_{\tau\in\Gamma_1}\prod_{\gamma\in\Gal(K_1/L_n)}(\rho\tau\gamma\mu)(\mathrm{Inf}_{L_n}^{K_1}(d^{L_n/\Qp})_{\rho,\tau\gamma})\\
&=&\rho\prod_{\tau\in\Gal(L_n/\Qp)}(\tau(\Nm_{K_1/L_n}\mu))(d^{L_n}_{\rho|_{L_n},\tau}) \\
&=&\prod_{0\leq i\leq n-1}(\sigma^{i+1}(\Nm_{K_1/L_n}\mu))(d^{L_n}_{\sigma,\sigma^i}) \\
&=&(\Nm_{K_1/L_n}\mu)(p^{-1})=(\Nm_{K/K_0}\mu)(p^{-1}).
\end{eqnarray*}
Here, the last equality $\Nm_{K_1/L_n}\mu=\Nm_{K/K_0}\mu$ (in $X_{\ast}(T)$) holds since $\mu$ is defined over $K$ and restriction to $K$ is a bijection $\Gal(K_1/L_n)\isom\Gal(K/K_0)$.
Then, by taking $\prod_{\tau\in\Gal(K_1/\Qp)}(\rho\tau\mu)$ on both sides of (\ref{eqn:inflations_of_two_cocycles}), we obtain 
\[(c^{K_1}_{\rho}\cdot\nu^{K}(b_{\rho}))^{[K_1:K]}\cdot\rho(f)f^{-1}=(\Nm_{K/K_0}\mu)(p^{-1}),\]
where $f=\prod_{\tau\in\Gal(K_1/\Qp)}\tau\mu(b_{\tau})$. 
Now applying $w_{T_{L'}}$ to both sides, we get 
\begin{eqnarray*}
[K_1:K] w_{T_{L'}}(c^{K_1}_{\rho}\cdot\nu^{K}(b_{\rho}))&=& w_{T_{L'}}((\Nm_{K/K_0}\mu)(p^{-1}))\\
&\stackrel{(\ast)}{=}&[K:K_0] w_{T_{L'}}(\Nm_{K/K_0}(\mu(\pi^{-1}))).
\end{eqnarray*}
Due to the property \cite[(7.3.2)]{Kottwitz97} of the map $w$, the equality $(\ast)$ is deduced from the following stronger formula (comparing the images under $w_{T_L}$, instead of $w_{T_{L'}}$):
\begin{equation} \label{eqn:comparison_of_two_norms}
 w_{T_L}(\Nm_{K/K_0}(\mu)(p))=[K:K_0] w_{T_L}(\Nm_{K/K_0}(\mu(\pi))).
\end{equation}
Here, $\Nm_{K/K_0}(\mu)\in X_{\ast}(T)^{\Gal(K/K_0)}$ so $\Nm_{K/K_0}(\mu)(p)\in \im(K_0^{\times}\rightarrow T(K_0))$, while $\Nm_{K/K_0}(\mu(\pi))$ is the image of $\mu(\pi)\in T(K)$ under the norm map $T(K)\rightarrow T(K_0)$. 
To show this formula, by functoriality for tori $T$ endowed with a cocharacter $\mu$, it is enough to prove this formula in the universal case $T=\Res_{K/\Qp}\Gm$ and $\mu=\mu_K$, the cocharacter of $T_K=(\Gm)^{\oplus \Hom(K,K)}$ corresponding to the identity embedding $K\hra K$. Note that in this case $w_{T_L}=v_{T_L}$ as $X_{\ast}(T)$ is an induced $\Gal(K/\Qp)$-module (\autoref{subsubsec:Kottwitz_hom}).
For any extension $E\supset K$, Galois over $\Qp$, there exists a canonical isomorphism
$T_{E}\cong (\mathbb{G}_{\mathrm{m},E})^{\oplus\Hom(K,E)}$ (product of copies of $\mathbb{G}_{\mathrm{m},E}$, indexed by $\Hom_{\Qp}(K,E)$) such that $\tau\in\Gal(E/\Qp)$ acts on $T(E)=(E^{\times})^{\oplus\Hom(K,E)}$ by
 $\tau (x_{\rho})_{\rho\in \Hom(K,E)}=(\tau (x_{\rho}))_{\tau\circ\rho}$. Then, $\mu_K=(f_{\rho})_{\rho}\in\prod_{\rho\in\Hom(K,K)}X_{\ast}(\Gm)$, where $f_{\rho}=1\in X_{\ast}(\Gm)=\Z$ if $\rho$ is the inclusion $K\hra E$, and $f_{\rho}=0$ otherwise. So, $\Nm_{K/K_0}(\mu_K)$ is $(f_{\rho})_{\rho}\in\prod_{\rho}X_{\ast}(\Gm)$, where $f_{\rho}=1$ if $\rho|_{K_0}$ is the inclusion $K_0\hra K$, and $f_{\rho}=0$ otherwise, and similarly
$\Nm_{K/K_0}(\mu_K(\pi))=(x_{\rho})_{\rho}$, where $x_{\rho}=\rho(\pi)$ if $\rho|_{K_0}=(K_0\hra K)$, and $x_{\rho}=1$ otherwise.
It follows that the element of $T(K_0)$
\[\Nm_{K/K_0}(\mu_K)(p)\cdot \Nm_{K/K_0}(\mu_K(\pi))^{-[K:K_0]},\]
lies in the maximal compact subgroup of $T(K_0)$,  which is nothing but $\ker(v_{T_L})\cap T(K_0)$. This proves the equation (\ref{eqn:comparison_of_two_norms}).

Finally, the equality $w_{T}(\Nm_{K/K_0}(\mu(\pi^{-1})))=-\underline{\mu}$ in $X_{\ast}(T)_{\Gal(\Qpb/\Qpnr)}$ follows from commutativity of diagram (7.3.1) of \cite{Kottwitz97}.
This completes the proof. 
\end{proof}

\begin{lem} \label{lem:LR-Lemma5.2}
Suppose that $G_{\Qp}$ is quasi-split and $\mbfK_p$ is a special maximal parahoric subgroup. Then,
for any special Shimura sub-datum $(T,h:\dS\rightarrow T_{\R})$ satisfying the Serre condition (e.g. if $T$ splits over a CM field and the weight homomorphism $w_X:=(\mu_h\cdot\iota(\mu_h))^{-1}$ is defined over $\Q$), the morphism $i\circ\psi_{T,\mu_h}:\fP\rightarrow \fG_T\hra\fG_G$ (Lemma \ref{lem:defn_of_psi_T,mu}) is admissible, where $i:\fG_T\rightarrow\fG_G$ is the canonical morphism defined by the inclusion $i:T\hra G$.
\end{lem}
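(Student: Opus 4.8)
The plan is to verify the three conditions in Definition \ref{defn:admissible_morphism} for the morphism $\phi := i\circ\psi_{T,\mu_h}$. Condition (1) is essentially tautological given Kisin's formulation: since $\phi$ factors through the maximal torus $T$, the composite $\phi_{\widetilde{\ab}} = \psi_{\mu_{h,\widetilde{\ab}}}$ by the very definition of $\psi_{\mu_{\widetilde{\ab}}}$ preceding Definition \ref{defn:admissible_morphism} (and the compatibility with \cite[(3.3.1)]{Kisin17} noted there), so there is nothing to check beyond observing that $\mu_h$ lies in the conjugacy class $\{\mu_X\}$. The Serre condition hypothesis ensures (via Lemma \ref{lem:defn_of_psi_T,mu}(2)) that $\psi_{T,\mu_h}$ factors through $\fP$, so $\phi$ is genuinely a morphism $\fP\to\fG_G$ and $\fP(K,m)\to\fG_G$ for a suitable CM field $K$.

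Condition (2) — that $\phi(v)\circ\zeta_v$ is $G(\Qlb)$-conjugate to the canonical section $\xi_v$ for all $v\neq p$, including $\infty$ — I would extract from Lemma \ref{lem:properties_of_psi_T,mu}(2). That lemma tells us $\psi_{T,\mu_h}(\infty)\circ\zeta_\infty$ is conjugate to $\xi_{\mu_h}$ (the local datum at $\infty$ attached to $(T_\R, K_w, \mu_h)$) and $\psi_{T,\mu_h}(v)\circ\zeta_v$ is conjugate to the canonical neutralization for $v\neq p,\infty$. For $v\neq p,\infty$ this immediately gives condition (2) after composing with $i$. At $v=\infty$ one must check that the $\infty$-component of $i\circ\xi_{\mu_h}$ agrees up to $G(\C)$-conjugacy with the morphism $\xi_\infty$ defined just before Definition \ref{defn:admissible_morphism} via $\xi_\infty(z) = \mu_h\cdot\overline{\mu_h}(z)$, $\xi_\infty(w)=\mu_h(-1)\rtimes\iota$; this is a direct comparison of the explicit formulas in Definition \ref{defn:psi_T,mu} with $\nu^{K_w}=\sum_{\tau}\tau\mu_h$ restricting correctly since $h$ defines a Shimura datum (so $h$ and its conjugate commute appropriately), and is the standard computation of \cite[Lemma 4.3 etc.]{Milne92}.

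Condition (3) is where the hypotheses on $G_{\Qp}$ and $\mbfK_p$ enter and is the main obstacle. First, using Lemma \ref{lem:properties_of_psi_T,mu}(1) together with Lemma \ref{lem:unramified_conj_of_special_morphism}, I would identify the $\sigma$-conjugacy class $b := \mathrm{cls}(\phi(p)\circ\zeta_p)\in B(G)$: it is the image in $B(G)$ of the class $\overline{b_{T}}\in B(T_{\Qp})$ where $b_T = \Nm_{K/K_0}(\mu_h(\pi^{-1}))$, and by the last displayed equality of Lemma \ref{lem:unramified_conj_of_special_morphism} we have $w_T(b_T) = -\underline{\mu_h}$ in $X_\ast(T)_{\Gal(\Qpb/\Qpnr)}$, which controls both its Kottwitz point $\kappa_G([b]) = \mu_X^\natural$ (after the sign normalization \cite{LR87} in force) and its Newton point. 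In particular $[b]\in B(G_{\Qp},\{\mu_X\})$, or more precisely its image there. It then remains to show $X(\{\mu_X\},b)_{\mbfK_p}\neq\emptyset$. Since $\mbfK_p$ is \emph{special maximal} parahoric and $b$ comes from a torus $T$ which (being elliptic, coming from a special Shimura sub-datum) transfers into $G_{\Qp}$, I would show that the base point $g=e\cdot\mbfKt_p$ already lies in $X(\{\mu_X\},b)_{\mbfK_p}$: one computes $\mathrm{inv}_{\mbfKt_p}(e, b\sigma(e)) = \mathrm{inv}_{\mbfKt_p}(e,b)$, the image of $w_T(b_T)=-\underline{\mu_h}$ in $\tilde W_{\mbfKt_p}\backslash\tilde W/\tilde W_{\mbfKt_p}$, and invokes the Proposition following Definition \ref{defn:mu-admissible_subset} (valid for split $G$ and special maximal $K$, and its quasi-split generalization via \cite{Rapoport05}) to see that this element lies in $\Adm_{\mbfKt_p}(\{\mu_X\})$ because $\{\mu_X\}$ is minuscule, so $\Adm_{\mbfKt_p}(\{\mu_X\})$ consists exactly of the single class of $\{\mu_X\}$ and $-\underline{\mu_h}$ maps to it (up to the sign convention). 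The delicate point — and the place I expect to spend real effort — is to choose the torus $T$ (equivalently, to conjugate $\phi$ suitably) so that $T_{\Qp}$ is the centralizer of a maximal split torus contained in the special maximal parahoric $\mbfK_p$, i.e. so that $\mu_h$ actually lands in the apartment defining $\mbfK_p$; this is where one needs $G_{\Qp}$ quasi-split (so that such elliptic maximal tori with the right reduction behavior exist, cf. Prop. \ref{prop:existence_of_elliptic_tori_in_special_parahorics} and Lemma \ref{lem:LR-Lemma5.11}) and where the case-by-case tameness input is really used.
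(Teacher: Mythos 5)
Your handling of conditions (1) and (2) matches the paper's: for a special morphism $i\circ\psi_{T,\mu_h}$, condition (1) is immediate from the construction of $\psi_{\mu_{\widetilde{\ab}}}$, and condition (2) follows from Lemma~\ref{lem:properties_of_psi_T,mu}. The paper's proof, correctly, declares condition (3) the only nontrivial one and spends all its effort there.

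Your argument for (3), however, has two genuine gaps, both concentrated in the step you flag as "the delicate point" but do not actually resolve.

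First, you write that $b$ comes from a torus $T$ which is elliptic and therefore "transfers into $G_{\Qp}$," and then you compute $\mathrm{inv}_{\mbfKt_p}(e,b)$ as "the image of $w_T(b_T)$." But $T_{\Qp}$ is not elliptic in $G_{\Qp}$ in general --- only $T_\R$ is elliptic, because $h$ factors through it. Over $\Qp$ the torus $T$ can contain a large split part. Consequently $T(\mfk)_1$ need not lie in $\mbfKt_p$, and without that containment there is no reason the base point $e\cdot\mbfKt_p$ computes the invariant via $w_T$ as you claim. The paper resolves this by first noting that $T_{\Qp}$ is elliptic only inside the semi-standard Levi $J=Z_{G_{\Qp}}(\nu_p)$, then conjugating by a suitable $g\in G(\Qp)$ so that the apartment of a maximal split torus of $J'=\Int g(J)$ contains the special point defining $\mbfK_p$, and then replacing $T_{\Qp}$ by a \emph{different} elliptic maximal torus $T'$ of $J'$ (furnished by Prop.~\ref{prop:existence_of_elliptic_tori_in_special_parahorics}) whose parahoric $T'(\mfk)_1$ genuinely lies in $\mbfKt_p$; one then has to prove that $\xi_{-\mu_h}$ and $\xi_{-\mu'}$ are conjugate in $\fG_{J'}$ so that the $\sigma$-conjugacy class $b$ is unchanged. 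This Levi-and-reposition step is the technical core of the proof, and it is absent from your sketch.

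Second, once one has the right torus $T'$ and computes $\mathrm{inv}_{\mbfKt_p}(x_0,\xi_p'(s_\rho)x_0)=t^{\underline{\mu'}}$, you conclude non-emptiness by appealing to the Proposition after Definition~\ref{defn:mu-admissible_subset} that $\Adm_{\mbfKt_p}(\{\mu_X\})$ is a singleton when $\{\mu_X\}$ is minuscule. That Proposition is stated only under the hypothesis that $G$ \emph{splits} over $\mfk$, which fails precisely in the ramified quasi-split cases the lemma is designed to cover (it is the whole point of going beyond hyperspecial level). In the ramified case $\underline{\mu'}$ lies only in the absolute Weyl orbit of the dominant $\underline{\mu_0}$, not necessarily in the relative orbit $\Lambda(\{\mu_X\})=W_0\cdot\underline{\mu_0}$, so one must actually prove the Bruhat-order inequality $t^{\underline{\mu'}}\le t^{\underline{\mu_0}}$. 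The paper does this by identifying the relevant reduced root system ${}^{\mbfo}\Sigma$, showing $\pi(\mu_0)-\pi(\mu')$ lies in the cone $C^\vee_\mbfa$, and invoking \cite[Thm.~4.10]{Stembridge05}. Your singleton argument silently restricts to the split case and would not establish the lemma in the generality stated.
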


Such admissible morphism $i\circ\psi_{T,\mu_h}$ will be said to be \textit{special}; in our use of this notation, $i$ will be often spared its explanation (or sometimes will be even omitted). This fact was proved in \cite[Lem. 5.2]{LR87} for hyperspecial $\mbfK_p$.

\begin{proof} 
The only nontrivial condition in Def. \ref{defn:admissible_morphism} is (3). Let $L$ be a finite Galois extension of $\Q$ splitting $T$ and $v_2$ the place of $L$ induced by the chosen embedding $\Qb\hra\Qpb$.
Put 
\[\nu_p:=(\xi_{-\mu_h}^{L_{v_2}})^{\Delta}=-\sum_{\sigma\in\Gal(L_{v_2}/\Qp)}\sigma\mu_h\quad (\in\Hom_{\Qp}(\Gm,T_{\Qp})),\]
and let $J$ be the centralizer in $G_{\Qp}$ of the image of $\nu_p$. Then, $J$ is a semi-standard $\Qp$-Levi subgroup of $G_{\Qp}$ (i.e. the centralizer of a $\Qp$-split torus) which is also quasi-split as $G_{\Qp}$ is so (Lemma \ref{lem:specaial_parahoric_in_Levi}, (1)). Hence, according to Lemma \ref{lem:specaial_parahoric_in_Levi}, there exists $g\in G(\Qp)$ such that $gJ(\mathfrak{k})g^{-1}\cap \mbfKt_p$ is a special maximal parahoric subgroup of $gJ(\mathfrak{k})g^{-1}$, where $\mbfKt_p\subset G(\mathfrak{k})$ is the special maximal parahoric subgroup associated with $\mbfK_p$: it is enough that for a maximal split $\Qp$-torus $S$ of $G_{\Qp}$ contained in $J$, the apartment $\mcA(\Int g(S),\Qp)$ contains a special point in $\mcB(G,\Qp)$ giving $\mbfK_p$. Set $J':=\Int g(J)$.
Then, by Prop. \ref{prop:existence_of_elliptic_tori_in_special_parahorics} (cf. Remark \ref{rem:properties_of_certain_elliptic_tori_in_special_parahorics}), there exists an elliptic maximal $\Qp$-torus $T'$ of $J'$ such that $T'_{\Qpnr}$ contains (equiv. is the centralizer of) a maximal $\Qpnr$-split $\Qpnr$-torus, say $S'_1$, of $J'_{\Qpnr}$ and that the (unique) parahoric subgroup $T'(\mathfrak{k})_1=\ker\ w_{T'_{\mathfrak{k}}}$ of $T'(\mathfrak{k})$ is contained in $J'(\mathfrak{k})\cap \mbfKt_p$. Let $\mu'$ be the cocharacter of $T'$ that is conjugate to $\Int g(\mu_h)$ under $J'(\Qpb)$ and such that it lies in the closed Weyl chamber of $X_{\ast}(T')$ associated with a Borel subgroup of $G_{\mathfrak{k}}$ (defined over $\mathfrak{k}$) containing $T'_{\mathfrak{k}}$.

Then, $\Int g\circ\xi_{-\mu_h}=\xi_{-\Int g(\mu_h)}$ and $\xi_{-\mu'}$ are equivalent as homomorphisms from $\fG_p$ to $\fG_{J'}$. This can be proved by the original argument in \cite[Lem. 5.2]{LR87}: 
the key fact is that the two cocharacters of $\Int g(J)$, $\Int g(\nu_p)\in X_{\ast}(\Int g(T_{\Qp}))$, $\nu_p'\in X_{\ast}(T')$ are equal and factor through the center of $\Int g(J)$:
\begin{equation} \label{eqn:equality_of_two_cochar}
\Int g(\nu_p):=-\sum_{\sigma\in\Gal(L_{v_2}/\Qp)}\sigma (g\mu_hg^{-1})\qquad =\qquad \nu_p':=-\sum_{\sigma\in\Gal(L_{v_2}/\Qp)}\sigma\mu',
\end{equation}
where $L$ is taken to be large enough so that $L_{v_2}$ splits $T'$ (as well as $T$).
Indeed, they both map into the center of $\Int g(J)$: this is clear for $\Int g(\nu_p)$ as $J=\Cent(\nu_p)$, while $\nu_p'$ maps into a split $\Qp$-subtorus of $T'$, so into $Z(J')$ (as $T'$ is elliptic in $J'$). So, their equality can be checked after composing them with the natural projection $J'\rightarrow J'^{\ab}=J'/J'^{\der}$, but this is obvious since $\Int g(\mu_h)$ is conjugate to $\mu'$ under $J'(\Qpb)$.

But, by Lemma \ref{lem:unramified_conj_of_special_morphism} and commutativity of the diagram (7.3.1) of \cite{Kottwitz97}, we see that for an unramified conjugate $\xi_p'$ of $\xi_{-\mu'}:\fG_p\rightarrow\fG_{T'}$ under $T'(\Qpb)$, under the map $\mathrm{inv}_{T'(\mfk)_1}:T'(\mfk)/T'(\mfk)_1\times T'(\mfk)/T'(\mfk)_1\rightarrow T'(\mfk)/T'(\mfk)_1\cong X_{\ast}(T')_{I}$ ($I:=\Gal(\Qpb/\Qpnr)$), we have the relation
\[\mathrm{inv}_{T'(\mfk)_1}(1,\xi_p'(s_{\rho}))=\underline{\mu'},\]
where $\rho\in\Gal(\Qpb/\Qp)$ is a lift of the Frobenius automorphism $\sigma$.
Hence, as $T'(\mfk)_1\subset \mbfKt_p$, $\mathrm{inv}_{\mbfKt_p}(x_0,\xi_p'(s_{\rho})x_0)$ ($x_0:=1\cdot\mbfKt_p$) equals the image of $t^{\underline{\mu'}}$ in $\tilde{W}_{\mbfK_p}\backslash \tilde{W}/ \tilde{W}_{\mbfK_p}\cong X_{\ast}(T')_{I}/\tilde{W}_{\mbfK_p}$. It remains to show that $\tilde{W}_{\mbfK_p}t^{\underline{\mu'}} \tilde{W}_{\mbfK_p}\in\Adm_{\mbfKt_p}(\{\mu_X\})$. 

For that, let $\tilde{W}=N(\mfk)/T'(\mfk)_1$ denote the extended affine Weyl group, where $N$ is the normalizer of $T'$ (note that $T'_{\mfk}$ contains a maximal split $\mfk$-torus $(S'_1)_{\mfk}$). By our choice of $T'$ (and as $\mbfK_p$ is special), $\tilde{W}$ is a semi-direct product $X_{\ast}(T')_{I}\rtimes \tilde{W}_{\mbfKt_p}$, where $\mbfKt_p$ is the special maximal parahoric subgroup of $G(\mfk)$ corresponding to $\mbfK_p$ and $\tilde{W}_{\mbfKt_p}=(N(\mfk)\cap \mbfK_p)/T'(\mfk)_1$ (which maps isomorphically onto the relative Weyl group $W_0=N(\mfk)/T'(\mfk)$). To fix a Bruhat order on $\tilde{W}$, we choose a $\sigma$-stable alcove $\mbfa$ in the apartment $\mcA(S'_1,\mfk)$ containing a special point, say $\mbfo$, corresponding to $\mbfK_p$. The choice of $\mbfa$ and $\mbfo$ give the semi-direct product decomposition $\tilde{W}=W_a\rtimes\Omega_{\mbfa}$, where $W_a$ is the extended affine Weyl group of $(G^{\uc},T'^{\uc})$ and $\Omega_{\mbfa}\subset \tilde{W}$ is the normalizer of $\mbfa$, and a reduced root system ${}^{\mbfo}\Sigma$ (whose roots $R({}^{\mbfo}\Sigma)$ are elements of $X^{\ast}(S')_{\Q}$) such that $W_a$, as a subgroup of the group of affine transformations of $X_{\ast}(S')_{\R}$, equals the affine Weyl group $Q^{\vee}({}^{\mbfo}\Sigma)\rtimes W({}^{\mbfo}\Sigma)$, cf. \autoref{subsubsec:EAWG}. Also, the choice of $\mbfa$ fixes a set of simple affine roots for $W_a$, in particular a set ${}^{\mbfo}\Delta$ of simple roots for ${}^{\mbfo}\Sigma$.
Then, as each root $\alpha$ of ${}^{\mbfo}\Delta$ is proportional to a relative root of $(G^{\uc},S'^{\uc})$ \cite[1.7]{Tits79}, we can find a set $\Delta\subset X^{\ast}(T')$ of simple roots for the root datum of $(G,T')$ with the property that every $\alpha\in{}^{\mbfo}\Delta$ is the restriction of a multiple of some $\tilde{\alpha}\in\Delta$. Let $\overline{C}_{\mbfa}\subset X_{\ast}(S'_1)_{\R}$ and $\overline{C}\subset X_{\ast}(T')_{\R}$ be the associated closed Weyl chambers. It follows that $\pi(\overline{C})\subset \overline{C}_{\mbfa}$, where $\pi$ is the natural surjection $X_{\ast}(T')_{\R}\rightarrow  (X_{\ast}(T')_{I})_{\R}=X_{\ast}(S'_1)_{\R}$. Now, let $\mu_0\in X_{\ast}(T')$ be the conjugate of $\mu_h$ lying in $\overline{C}$ (so its image $\underline{\mu_0}$ in $(X_{\ast}(T')_{I})_{\R}$ lies in $\overline{C}_{\mbfa}$). 
Then, it suffices to show that $t^{\underline{\mu'}}\leq t^{\underline{\mu_0}}$ in $\tilde{W}$. But, as $\mu'=w\mu_0$ for some $w\in N(\Qpb)/T'(\Qpb)$ and $\mu_0\in \overline{C}$, we see that $\mu_0-\mu'=\sum_{\tilde{\alpha}\in\Delta}n_{\tilde{\alpha}}\tilde{\alpha}^{\vee}\in X_{\ast}(T'^{\uc})$ with $n_{\tilde{\alpha}}\in\Z_{\geq0}$ (cf. \cite[2.2]{RR96}). Since for each $\tilde{\alpha}\in\Delta$, we have $\pi(\tilde{\alpha}^{\vee})\in \Q_{\geq0}\alpha^{\vee}$ for some $\alpha\in {}^{\mbfo}\Delta$, $t^{\underline{\mu_0}}- t^{\underline{\mu'}}=\pi(\mu_0)-\pi(\mu')\in C_{\mbfa}^{\vee}:=\{\sum_{\beta\in{}^{\mbfo}\Delta}c_{\beta}\beta^{\vee}\ |\ c_{\beta}\in\R_{\geq0}\}$. By \cite[Thm.4.10]{Stembridge05}, this implies the asserted inequality in $\tilde{W}$.
\end{proof}

\subsection{Langlands-Rapoport conjeture} \label{subsec:Langalnds-Rapoport conjeture}
In this subsection, we give a formulation of the Langlands-Rapoport conjecture for parahoric levels, following Kisin \cite[(3.3)]{Kisin17} and Rapoport \cite[$\S$9]{Rapoport05}.

For $v\neq p,\infty$, set $X_v(\phi):=\{g_v\in G(\Qvb)\ |\ \phi(v)\circ\zeta_v=\Int(g_v)\circ\xi_v\}$,
and \[X^p(\phi):=\sideset{}{'}\prod_{v\neq\infty,p} \ X_v(\phi),\]
where $'$ denotes the restricted product of $X_v(\phi)$'s as defined in the line 15-26 on p.168 of \cite{LR87}. By condition (2) of Def. \ref {defn:admissible_morphism}, $X^p(\phi)$ is non-empty  (cf. \cite[B 3.6]{Reimann97}), and  
is a right torsor under $G(\A_f^p)$. 

To define the component at $p$, put $\mbfK_p(\Qpnr):=\mcG_{\mbff}^{\mathrm{o}}(\Zpnr)$ for the parahoric group scheme $\mcG_{\mbff}^{\mathrm{o}}$ over $\Zpnr$ attached to $\mbfK_p$ (\autoref{subsubsec:parahoric}) and for $b\in G(\mfk)$, let $\mbfKt_p\cdot b\cdot \mbfKt_p$ denote the invariant $\mathrm{inv}_{\mbfKt_p}(1,b)$. We also recall that for $\theta_g^{\nr}:\fD\rightarrow\fG_{G_{\Qp}}^{\nr}$ (a morphism of $\Qpnr|\Qp$-Galois gerbs), $\overline{\theta_g^{\nr}}$ denotes its inflation to $\Qpb$, and $s_{\sigma}\in \fD$ is the lift of $\sigma\in\Gal(\Qpnr/\Qp)$ chosen in (\autoref{subsubsec:cls}). Then, we set
\begin{eqnarray*}
X_p(\phi):=\{ g\mbfK_p(\Qpnr) \in G(\Qpb)/\mbfK_p(\Qpnr) & | & \phi(p)\circ \zeta_p=\Int(g)\circ \overline{\theta_g^{\nr}}\text{ for some }\theta_g^{\nr}:\fD\rightarrow\fG_{G_{\Qp}}^{\nr} \text{ s.t. } \\
& &\mbfKt_p\cdot b_g\cdot \mbfKt_p\in \Adm_{\mbfKt_p}(\{\mu_X\}),\text{ where }\theta_g^{\nr}(s_{\sigma})=b_g\sigma \}.
\end{eqnarray*}
This set is equipped with an action of a $p^r$-Frobenius $\Phi$ ($r:=[\kappa(\wp):\Fp]$) defined by 
\begin{equation} \label{eq:Frob_Phi_1}
\Phi(g\mbfK_p(\Qpnr)):=g\cdot \Nm_rb_g\cdot\mbfK_p(\Qpnr)
\end{equation}
(this definition does not depend on the choice of a representative $g$ in the coset $g\mbfK_p(\Qpnr)$, as $b_{gk}=k^{-1}b_g\sigma(k)$ for $k\in G(\Qpnr)$). To see that this action on $G(\Qpb)/\mbfK_p(\Qpnr)$ leaves $X_p(\phi)$ stable, we use a more explicit description of the set $X_p(\phi)$.
When we choose $g_0\in G(\Qpb)$ with $g_0\mbfK_p(\Qpnr)\in X_p(\phi)$ and use it as a reference point, we obtain a bijection
\begin{equation} \label{eqn:X_p(phi)=ADLV}
X_p(\phi)\isom X(\{\mu_X\},b_{g_0})_{\mbfK_p}\ :\ h\mbfK_p(\Qpnr)\mapsto g_0^{-1}h\mbfKt_p,
\end{equation}
where $b_{g_0}\sigma=\theta_{g_0}^{\nr}(s_{\sigma})$ for $\theta_{g_0}^{\nr}$ with $\phi(p)\circ \zeta_p=\Int(g_0)\circ \overline{\theta_{g_0}^{\nr}}$.
Indeed, for $h\in G(\Qpb)$, if $\Int h^{-1}\circ\phi(p)\circ\zeta_p=\Int(g_0^{-1}h)^{-1}\circ \overline{\theta_{g_0}^{\nr}}$ is unramified, $g:=g_0^{-1}h\in G(\Qpnr)$ (cf. proof of Lemma \ref{lem:unramified_morphism}, (3)), and  
\[b_{h}\sigma=\theta_{h}^{\nr}(s_{\sigma})=\Int(g^{-1})\circ \theta_{g_0}^{\nr}(s_{\sigma})=g^{-1}b_{g_0}\sigma(g)\sigma,\]
and by definition, $g\mbfKt_p\in X(\{\mu_X\},b_{g_0})_{\mbfK_p}$ if and only if $\mbfKt_p\cdot g^{-1}b_{g_0}\sigma(g)\cdot\mbfKt_p\in \Adm_{\mbfKt_p}(\{\mu_X\})$. 
So, $h\in X_p(\phi)$ if and only if $g_0^{-1}h \mbfKt_p\in X(\{\mu_X\},b_{g_0})_{\mbfK_p}$. 
Then, each $g_0\in G(\Qpb)$ such that $\Int(g_0^{-1})\circ \phi(p)\circ\zeta_p$ is unramified, say inflation of $\theta_{g_0}^{\nr}$ (Lemma \ref{lem:unramified_morphism}) gives an absolute Frobenius automorphism $F=\theta_{g_0}^{\nr}(s_{\sigma})$ acting on $G(\mfk)$ (sending $g\in G(\mfk)$ to $b_{g_0}\sigma(g)$). This also induces an action on $G(\mfk)/\mbfKt_p$ as the facet in $\mcB(G_L)$ defining $\mbfKt_p$ is stable under $\sigma$, and we readily see that its $r$-th iterate 
\begin{equation} \label{eq:Frob_Phi_2}
\Phi=F^r\  :\ g\mapsto (b_{g_0}\sigma)^r(g)=b_{g_0} \sigma(b_{g_0}) \cdots \sigma^{r-1}(b_{g_0})\cdot\sigma^r(g)
\end{equation}
is identified with the above Frobenius automorphism of $X_p(\phi)$ under (\ref{eqn:X_p(phi)=ADLV}).
This $\Phi$ leaves $X(\{\mu_X\},b_{g_0})_{\mbfK_p}$ stable,
because $(\Phi g)^{-1}\cdot b_{g_0}\cdot\sigma(\Phi g)=\sigma^r(g^{-1}b_{g_0}\sigma(g))$ and $\Adm_{\mbfKt_p}(\{\mu_X\})$ is stable under the action of $\sigma^r$ on $\tilde{W}_{\mbfKt_p}\backslash \tilde{W}/\tilde{W}_{\mbfKt_p}$ (as $\mu$ is defined over $E_{\wp}$).
Note that there are natural left (or right) actions of $Z(\Qp)$ on $X_p(\phi)$ and $X(\{\mu_X\},b_{g_0})_{\mbfK_p}$ compatible with the above bijection: $z\in Z(\Qp)$ sends $g\in X(\{\mu_X\},b_{g_0})_{\mbfK_p}$ (resp. $g\mbfK_p(\Qpnr)\in X_p(\phi)$) to $zg$ (resp. to $zg\mbfK_p(\Qpnr)$).
Obviously, $\Phi$ also commutes with the actions of $Z(\Qp)$.

Let 
\begin{equation*}
I_{\phi}(\Q):=\{g\in G(\Qb)\ |\ \mathrm{Int}(g)\circ\phi=\phi\}
\end{equation*}
(as the notation suggests, this is the $\Q$-points of an algebraic $\Q$-group $I_{\Q}$, cf. (\ref{eq:inner-twisting_by_phi})).
This group naturally acts on $X^p(\phi)$ and $X_p(\phi)$ from the left and commutes with the (right) action of $G(\A_f^p)$.
Finally, we define
\[S(\phi):=\varprojlim_{\mbfK^p} I_{\phi}(\Q)\backslash (X^p(\phi)/\mbfK^p)\times X_p(\phi),\]
where $\mbfK^p$ runs through the compact open subgroups of $G(\Q^p)$. 
This set is equipped with an action of $G(\A_f^p)\times Z(\Qp)$ and a commuting action of $\Phi$, and as such is determined, up to isomorphism, by the equivalence class of $\phi$.

We note that under the bijection (\ref{eqn:X_p(phi)=ADLV}) (provided by a choice of $g_0\in G(\Qpb)$ with $g_0\mbfK_p(\Qpnr)\in X_p(\phi)$), the action of $I_{\phi}(\Q)$ transfers to $X(\{\mu_X\},b_{g_0})_{\mbfK_p}$ via the map 
\begin{equation} \label{eq:action_of_I_{phi}_on_AffDL}
\Int(g_0^{-1}):I_{\phi}(\Q)=\mathrm{Aut}(\phi) \rightarrow \mathrm{Aut}(\overline{\theta_{g_0}^{\nr}})=\{g\in G(\Qpnr) |\ \mathrm{Int}(g)\circ\theta_{g_0}^{\nr}=\theta_{g_0}^{\nr}\} 
\end{equation}
i.e. $i\in I_{\phi}(\Q)$ sends $g\mbfKt_p\in X(\{\mu_X\},b_{g_0})_{\mbfK_p}$ to $\Int(g_0^{-1})(i)\cdot g\mbfKt_p$.

\begin{conj} \label{conj:Langlands-Rapoport_conjecture_ver1} [Langlands-Rapoport conjecture, 1987]
Suppose that $\mbfK_p\subset G(\Qp)$ is a parahoric subgroup.
Then, there exists an integral model $\sS_{\mbfK_p}(G,X)$ of $\Sh_{\mbfK_p}(G,X)$ over $\cO_{E_{\wp}}$ for which there exists a bijection 
\[\sS_{\mbfK_p}(G,X)(\Fpb)\isom \bigsqcup_{[\phi]}S(\phi)\]
compatible with the actions of $Z(\Qp)\times G(\A_f^p)$ and $\Phi$, where $\Phi$ acts on the left side as the $r$-th (geometric) Frobenius. Here, $\phi$ runs through a set of representatives for the equivalence classes of admissible morphisms $\fQ\rightarrow\fG_G$.
\end{conj}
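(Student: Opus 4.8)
The statement to be established is the Langlands--Rapoport conjecture itself, so I do not claim a proof in full generality; what follows is the strategy I would pursue, which is unconditional for hyperspecial $\mbfK_p$ by Kisin \cite{Kisin17} and which I would adapt to (special maximal) parahoric level using the results assembled in the first part of this article.

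\textbf{Step 1: the integral model.} Since $(G,X)$ is of Hodge type, fix a symplectic embedding $(G,X)\hookrightarrow(\mathrm{GSp},\mathcal{H})$ carrying $\mbfK_p$ into a parahoric (hyperspecial in the good-reduction case) subgroup of $\mathrm{GSp}(\Qp)$, so that $\Sh_{\mbfK_p}(G,X)$ sits inside a finite union of Siegel modular varieties. Define $\sS_{\mbfK_p}(G,X)$ as the normalization of the Zariski closure of $\Sh_{\mbfK_p}(G,X)$ in the corresponding Siegel integral model over $\cO_{E_\wp}$; this is the smooth model of Kisin/Vasiu in the hyperspecial case and the normal model of Kisin--Pappas with the extension property in general. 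Every $x\in\sS_{\mbfK_p}(G,X)(\Fpb)$ then carries an abelian variety $A_x$ with a family of Hodge (and prime-to-$p$) tensors cutting out $G$.

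\textbf{Step 2: attaching an admissible morphism and cutting into isogeny classes.} To $x$ I associate: its prime-to-$p$ adelic Tate module, giving the torsor data underlying $X^p(\phi)$; the rational Dieudonn\'e module of $A_x[p^\infty]$ with its Frobenius, which by Kisin's crystalline comparison underlies an isocrystal with $G$-structure, hence a class $[b]\in B(G)$, necessarily lying in $B(G_{\Qp},\{\mu_X\})$ --- here condition \ref{defn:admissible_morphism}(3) is precisely the assertion that $x$ lies in the correct Kottwitz--Rapoport stratum, cf. \autoref{subsubsec:mu-admissible_set}; and the Frobenius descent datum together with these realizations, which one patches into a morphism $\phi_x\colon\fQ\to\fG_G$. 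The essential input is Kisin's construction and the CM lifting theorem \cite{Kisin17}, showing $\phi_x$ is admissible and, more importantly, that the isogeny class of $(A_x,\text{tensors})$ depends only on the conjugacy class $[\phi_x]$. Consequently $x\mapsto[\phi_x]$ is constant on isogeny classes and $\sS_{\mbfK_p}(G,X)(\Fpb)=\bigsqcup_{[\phi]}\{x:[\phi_x]=[\phi]\}$.

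\textbf{Step 3: the bijection inside one isogeny class.} Fix $\phi$ with $\{x:[\phi_x]=[\phi]\}\neq\emptyset$; I claim this fibre is in $Z(\Qp)\times G(\A_f^p)\times\Phi$-equivariant bijection with $S(\phi)$. By the (strong) CM lifting theorem of \cite{Kisin17} the isogeny class contains the reduction of a CM point, and by Theorem \ref{thm_intro:1st_Main_thm}(1) --- our extension of \cite{LR87}, Satz 5.3, to special maximal parahoric level --- $\phi$ is conjugate to a special morphism $i\circ\psi_{T,h}$; so one reduces to the CM case, where $I_\phi=T$ and the claim becomes class field theory for the torus $T$, using Lemma \ref{lem:properties_of_psi_T,mu} to identify the realizations of $\psi_{T,h}$ at $p$ and at $\infty$. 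Transporting structure along isogenies from the CM point to the whole isogeny class, injectivity of the resulting map follows from the Tate/rigidity theorems for abelian varieties over $\Fpb$, while surjectivity --- that every point of $S(\phi)$ is attained --- rests on producing, for each prescribed pair of $p$-adic and prime-to-$p$ level structures, an actual abelian variety with $G$-structure, which is exactly where the effectivity criteria (Theorem \ref{thm_intro:2st_Main_thm}) and the non-emptiness of Newton strata (Theorem \ref{thm:3rd_Main_thm}) are used.

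\textbf{The main obstacle.} The delicate point, and the reason the conjecture as literally stated is harder than the ``geometrical'' description of \cite{Kisin17}, is that one must match the \emph{natural} left action of $I_\phi(\Q)$ on $X_p(\phi)\times X^p(\phi)$ --- the one induced by functoriality of the realizations along automorphisms of the isogeny class --- with the action produced by the gerb-theoretic bookkeeping; Kisin's theorem supplies an a priori different, somewhat artificial action, and reconciling the two (equivalently, pinning down $I_\phi$ as an inner form and its compatible embedding into $G$, $G(\A_f^p)$ and $J_b$) is where essentially all the difficulty lies, on top of the parahoric-level complications (control of the level $n$, the combinatorics of $\Adm_{\mbfKt_p}(\{\mu_X\})$, and the state of integral-model theory in the ramified case). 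It is precisely this step that the present article \emph{sidesteps}, by emulating the Langlands--Rapoport deduction of the Kottwitz formula directly rather than invoking \eqref{eqn:LRconj-ver1}.
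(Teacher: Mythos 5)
You are right to note at the outset that this is a conjecture, not a theorem, and indeed the paper does not prove it: Conjecture \ref{conj:Langlands-Rapoport_conjecture_ver1} is stated, not established, and the paper's actual results (Theorems \ref{thm:Kottwitz_formula:LR} and \ref{thm:Kottwitz_formula:Kisin}) either assume it or deliberately bypass it. There is therefore no ``paper's own proof'' to compare your sketch against.

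That said, your outline faithfully reflects the state of the art as the paper itself describes it, and your identification of the obstruction is exactly the one the paper singles out: Kisin's Theorem 0.3 supplies a bijection in which the $I_\phi(\Q)$-action on $X_p(\phi)\times X^p(\phi)$ is \emph{not} the natural one prescribed by the conjecture, and reconciling the two actions is the unsolved difficulty (see the footnote referencing \cite[p.116]{LR87} and the surrounding discussion in \S1.2.3). A few small caveats on the sketch itself. In Step 2 you assert that the isogeny class depends only on $[\phi_x]$ --- this is one direction of what needs proof, and the converse (distinct isogeny classes give nonconjugate $\phi$'s) is also needed for the disjointness of the union. In Step 3, deducing the bijection for a general $\phi$ from the CM case by ``transporting structure along isogenies'' is precisely where the action ambiguity bites: transport is canonical on the abelian-variety side, but the identification of the transported structure with $X_p(\phi)\times X^p(\phi)$ requires choices, and it is these choices that Kisin cannot (yet) make compatibly with the gerb-theoretic $I_\phi(\Q)$. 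Your ``main obstacle'' paragraph captures this correctly; just be aware that it infects Step 3 before one even gets to surjectivity. Finally, for general parahoric level the statement is genuinely open even granting the hyperspecial case: the integral models of Kisin--Pappas have weaker properties, the description of $X_p(\phi)$ via $\Adm_{\mbfKt_p}(\{\mu_X\})$ is more delicate, and the CM lifting and Tate-theorem inputs have not been established at that generality (the paper's Theorems \ref{thm_intro:1st_Main_thm} and \ref{thm_intro:2st_Main_thm} extend the group-theoretic side, not the geometric side).
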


\begin{rem}
(1) The original conjecture was made under the assumption that $G^{\der}$ is simply connected
 (due to the expectation that only special admissible morphisms are to contribute to the $\Fpb$-points and the existence,  when $G^{\der}\neq G^{\uc}$, of a non-special morphism that is admissible in the original sense, cf. Remark \ref{rem:Kisin's_defn_of_admissible_morphism}).
 
(2) In \cite[Conj. 9.2]{Rapoport05}, Rapoport gave another version of this conjecture, using a different definition of admissible morphisms, where condition (3) of Def. \ref{defn:admissible_morphism} is replaced by the more natural (from group-theoretical viewpoint) and a priori weaker condition (3') that \textit{the $\sigma$-conjugacy class $\mathrm{cls}_{G_{\Qp}}(\phi(p))$ of $b_{\phi}$ lies in $B(G,\{\mu_X\})$}. Our theorem \ref{thm:non-emptiness_of_NS} together with Theorem A of \cite{He15} establishes equivalence of these two versions: previously, it was known that (3) $\Rightarrow$ (3'). 
\end{rem}

\subsection{Kottwitz triples and Kottwitz invariant} 
Our main references for the material covered here are \cite[p.182-183]{LR87}, \cite[$\S2$]{Kottwitz90}, and \cite{Kottwitz92}.

\subsubsection{Galois hypercohomology of crossed modules}
In order to extend the main results of Langlands and Rapoport to the general case that $G^{\der}$ is not necessarily simply connected, (especially, Satz 5.25 of \cite{LR87}, namely Theorem \ref{thm:LR-Satz5.25} here), we need to consider Galois cohomology groups of crossed modules and (length-$2$) complexes of tori quasi-isomorphic to them (for example, $H^1_{\ab}(\Q,G)$ instead of the cohomology of the quotient $G^{\ab}:=G/G^{\der}$). 
Here we give a (very) brief review of the theory of Galois hypercohomology of crossed modules; for details, see \cite{Borovoi98}, \cite[Ch.1]{Labesse99}. 
For a connected reductive group $H$ over a field $k$, we denote by 
\[\rho_H:H^{\uc}\rightarrow H\] 
the canonical map from the simply connected cover $H^{\uc}$ of $H^{\der}$ to $H$. Unless stated otherwise, every bounded complex of groups considered in this paper will be concentrated in non-negative homological degrees.

For a connected reductive group $G$ over a field $k$ (of characteristic zero), if $T$ is a maximal $k$-torus of $G$, the complex $(\rho^{-1}(T)\rightarrow T)$ of $k$-tori, where $\rho^{-1}(T)$ and $T$ are placed in degree $-1$ and $0$ respectively, is quasi-isomorphic to its sub-complex $(\rho^{-1}(Z(G))=Z(G^{\uc})\rightarrow Z(G))$ (cf. \cite[$\S$2, $\S$3]{Borovoi98}), hence, as an object in the derived category of complexes of commutative algebraic $k$-group schemes, depends only on $G$. We denote it by $G_{\bfab}$:
\[G_{\bfab}:=\rho^{-1}(T)\rightarrow T.\]
Then, following Borovoi \cite{Borovoi98}, we define the abelianized Galois cohomology group $H^i_{\ab}(k,G)\ (i\in\Z)$ of $G$ to be the hypercohomology group of $G_{\bfab}(\bar{k}):=(\rho^{-1}(T)(\bar{k})\rightarrow T(\bar{k}))$, two-term complex of discrete $\Gal(\bar{k}/k)$-modules:
\[H^i_{\ab}(k,G):=\mathbb{H}^i(k,G_{\bfab}).\] 
For $-1\leq i\leq 1$, this group is also equal to the cohomology (group) 
\[\mathbb{H}^i(k,G^{\uc}\stackrel{\rho}{\rightarrow} G)\] 
of the crossed module $G^{\uc}(\bar{k})\stackrel{\rho}{\rightarrow} G(\bar{k})$ of $\Gal(\bar{k}/k)$-groups (\textit{loc. cit.} (3.3.2)): for a proof, see the proof of the next lemma.

\begin{lem} \label{lem:abelianization_exact_seq}
Let $H\subset G$ be a (not necessarily connected) $k$-subgroup containing a maximal $k$-torus of $G$ and $(a_{\tau})_{\tau}$ a cochain on $\Gal(\bar{k}/k)$ valued in $H$ whose coboundary $a_{\tau_1}\tau_1(a_{\tau_2})a_{\tau_1\tau_2}^{-1}$ belongs to $Z(H)^{\mathrm{o}}$. Then, for the (simultaneous) inner-twist $\rho_1:\tilde{H}_1\rightarrow H_1$ of the canonical map $\rho:\tilde{H}:=\rho^{-1}(H)\rightarrow H$ via the cocyle $a_{\tau}^{\ad}\in Z^1(\Q,H^{\ad})$ (the image of $a_{\tau}$ in $H^{\ad}$), there exists a natural exact sequence 
\begin{equation} \label{eq:abelianization_from_Levi}
H^1(k,\tilde{H}_1) \stackrel{\rho_{1\ast}}{\longrightarrow} H^1(k,H_1) \stackrel{ab_1}{\longrightarrow} H^1_{\ab}(k,G).
\end{equation}

Note that the condition on (the coboundary of) $a_{\tau}$ allows us to twist $\tilde{H}$ via  $a_{\tau}^{\ad}$.
In the applications, $H$ will be the centralizer of a semi-simple element of $G(k)$.

\end{lem}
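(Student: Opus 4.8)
The plan is to deduce the exact sequence \eqref{eq:abelianization_from_Levi} from the general formalism of abelianized cohomology in \cite{Borovoi98}, applied to the inner form $H_1$ of $H$ rather than to $H$ itself. First I would explain why the twisting makes sense: since the coboundary of $(a_\tau)_\tau$ lies in $Z(H)^{\mathrm o}$, its image $(a_\tau^{\ad})_\tau$ in $H^{\ad}$ is an honest $1$-cocycle, so one may twist $H$, its derived group, the simply connected cover $\tilde H=\rho^{-1}(H)$, and the map $\rho$ all simultaneously by $a_\tau^{\ad}$ (the adjoint action of $H^{\ad}$ preserves all of these), obtaining $\rho_1\colon\tilde H_1\to H_1$; note $\tilde H_1$ is again (the preimage in the simply connected cover of) the derived group of $H_1$, because passing to an inner form does not change $H^{\der}$, $H^{\uc}$ up to inner twist. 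Moreover the center is unchanged: $Z(H_1)=Z(H)$ as $\Gal(\bar k/k)$-modules only after a further inner twist of the Galois action, but crucially $Z(H_1)^{\mathrm o}=Z(H)^{\mathrm o}$ is canonically a subtorus of $Z(G)$ — here I use the hypothesis that $H$ contains a maximal torus $T$ of $G$, so $\rho^{-1}(Z(H))\supset Z(G^{\uc})$ and $Z(H)^{\mathrm o}\supset Z(G)^{\mathrm o}$, and in fact $G_{\bfab}$ can be computed using $T$, which lies in $H_1$ too.

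The key step is the identification of the complex $(H_1)_{\bfab}=(\rho_1^{-1}(T)\to T)$ with $G_{\bfab}=(\rho^{-1}(T)\to T)$ in the derived category of complexes of $k$-group schemes. Both are quasi-isomorphic to $(\rho^{-1}(T)\to T)$ as complexes of $\bar k$-tori; the only thing that could differ is the Galois action. But the twisting cocycle $(a_\tau^{\ad})_\tau$ acts on $T_{\bar k}$ through $H^{\ad}(\bar k)$, and its coboundary lies in $Z(H)^{\mathrm o}(\bar k)$, which acts \emph{trivially} by conjugation on $\rho^{-1}(T)$ and on $T$; hence the twisted Galois action on the complex $(\rho^{-1}(T)(\bar k)\to T(\bar k))$ differs from the original one by an inner automorphism that is a coboundary, so the two two-term complexes of discrete Galois modules are isomorphic, compatibly with the quasi-isomorphisms to $G_{\bfab}(\bar k)$. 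Therefore $H^i_{\ab}(k,H_1)=H^i_{\ab}(k,G)$ for all $i$. (I would phrase this as: the inclusion $T\hookrightarrow H_1$ and $T\hookrightarrow G$ induce, in the derived category, the same abelianization complex, because $\rho^{-1}(Z(H)^{\mathrm o})=Z(G^{\uc})^{\mathrm o}$ maps isomorphically in both and the ambient conjugation actions agree up to coboundary.)

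Granting that, the exact sequence is just the tail of Borovoi's functorial abelianization sequence for the connected reductive group $H_1$: there is a canonical map $\mathrm{ab}^1\colon H^1(k,H_1)\to H^1_{\ab}(k,H_1)$ fitting into an exact sequence with $H^1(k,\tilde H_1)\xrightarrow{\rho_{1\ast}}H^1(k,H_1)\xrightarrow{\mathrm{ab}^1}H^1_{\ab}(k,H_1)$ exact in the middle (this is \cite[Prop.~3.?, Thm.~5.?]{Borovoi98}, or the long exact sequence of the crossed module $\tilde H_1\to H_1$ together with the identification of $\mathbb H^1$ of that crossed module with $H^1_{\ab}$ for degrees $-1\le i\le 1$, which is exactly the assertion quoted just before the lemma). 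Composing $\mathrm{ab}^1$ with the isomorphism $H^1_{\ab}(k,H_1)\xrightarrow{\sim}H^1_{\ab}(k,G)$ from the previous paragraph yields the map $ab_1$ and the desired exactness of \eqref{eq:abelianization_from_Levi}. I expect the main obstacle to be bookkeeping the Galois actions carefully in the middle paragraph — specifically, checking that the quasi-isomorphism $G_{\bfab}\simeq (\rho^{-1}(T)\to T)$ and the twisting are compatible, i.e. that one really gets a \emph{canonical} isomorphism $H^1_{\ab}(k,H_1)\cong H^1_{\ab}(k,G)$ and not merely an abstract one — but this follows from functoriality of abelianization applied to the inclusion $H_1\hookrightarrow G_1$ (inner twist of $G$), noting $G_1\cong G$ is an inner form with $G_{1,\bfab}\cong G_{\bfab}$ by the same trivial-action argument.
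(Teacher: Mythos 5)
Your plan is the right one --- twist, identify the target with $H^1_{\ab}(k,G)$, and invoke Borovoi's exact sequence --- and it is in essence the paper's approach. But the execution of the identification step has a gap. You propose to compute $(H_1)_{\bfab}$ directly as $(\rho_1^{-1}(T)\to T)$ for the original torus $T$, asserting that $T$ ``lies in $H_1$ too.'' That is not justified: $T(\bar k)\subset H(\bar k)=H_1(\bar k)$ as sets, but the twisted Galois action $\tau\mapsto\Int(a_\tau)\circ\tau$ preserves $T(\bar k)$ only if every $a_\tau$ normalizes $T$, which the hypothesis does not give. So $T$ is in general not a $k$-torus of $H_1$, the expression $\rho_1^{-1}(T)\to T$ is not a complex of $k$-tori, and your sentence about ``the twisted Galois action on $(\rho^{-1}(T)(\bar k)\to T(\bar k))$ differing from the original by a coboundary'' is not well-posed. (Two further imprecisions: $\rho^{-1}(Z(H)^{\mathrm{o}})$ is strictly larger than $Z(G^{\uc})$ whenever $H\subsetneq G$, so that equation is wrong; and since $H$ --- hence $H_1$ --- may be disconnected, $(H_1)_{\bfab}$ is not even defined in the paper's setup, which forces one to work with the crossed module $\tilde{H}_1\to H_1$ rather than with an abelianization complex of $H_1$.)

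The paper avoids all of this by never passing through a maximal torus. It defines $ab_1$ as the composite
\[H^1(k,H_1)\longrightarrow\mathbb{H}^1(k,\tilde{H}_1\to H_1)\isom\mathbb{H}^1(k,\tilde{Z}(G)\to Z(G))\isom\mathbb{H}^1(k,G^{\uc}\to G)=H^1_{\ab}(k,G),\]
where the quasi-isomorphisms of crossed modules $(\tilde{H}_1\to H_1)\leftarrow(\tilde{Z}(G)\to Z(G))\to(G^{\uc}\to G)$ hold because $Z(G)\subset H_1$ (the key point, which uses that $H$ contains a maximal torus of $G$). Since $Z(G)$ and $\tilde{Z}(G)=Z(G^{\uc})$ are central, conjugation by $a_\tau^{\ad}$ acts on them as the identity, so the middle crossed module is literally unaffected by the twist --- the compatibility you tried to argue via $T$ is automatic there. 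Exactness then is \cite[Cor.~3.4.3]{Borovoi98} for the crossed module $\tilde{H}_1\to H_1$. Your closing parenthetical (``by the same trivial-action argument'') already gestures at this central-subcomplex route; developing it in place of the torus argument is the fix.
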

\begin{proof}
The first map is the obvious one (induced by $\rho_1$) and the second map is the composite of the natural map 
\[H^1(k,H_1)\rightarrow \mathbb{H}^1(k,\tilde{H}_1\rightarrow H_1)\] 
resulting from the map $(1\rightarrow H_1)\rightarrow (\tilde{H}_1\rightarrow H_1)$ of crossed modules of $k$-groups, and the isomorphisms 
\begin{equation*} \label{eq:isom_of_abelianized_coh}
\mathbb{H}^1(k,\tilde{H}_1\rightarrow H_1)\isom \mathbb{H}^1(k,\tilde{Z}(G)\rightarrow Z(G))\isom \mathbb{H}^1(k,G^{\uc}\rightarrow G)
\end{equation*}
($\tilde{Z}(G):=\rho^{-1}(Z(G))=Z(G^{\uc})$) resulting from the quasi-isomorphisms of crossed modules of $k$-groups 
\begin{equation*}
(\tilde{H}_1\rightarrow H_1)\leftarrow (\tilde{Z}(G)\rightarrow Z(G)) \rightarrow (G^{\uc}\rightarrow G)
\end{equation*}
(cf. \cite{Borovoi98}, Lem. 2.4.1 and its proof: the key point is that $H_1$ contains $Z(G)$). 
Now, the exactness follows from \cite[Cor. 3.4.3]{Borovoi98}.
\end{proof}

\subsubsection{Kottwitz triple} \label{subsubsec:pre-Kottwitz_triple}
A Kottwitz triple is a triple $(\gamma_0;\gamma=(\gamma_l)_{l\neq p},\delta)$ of elements satisfying certain conditions, where
\begin{itemize} \addtolength{\itemsep}{-4pt} 
\item[(i)] $\gamma_0$ is a semi-simple element of $G(\Q)$ that is elliptic in $G(\R)$, defined up to conjugacy in $G(\overline{\Q})$;
\item[(ii)] for $l\neq p$, $\gamma_l$ is a semi-simple element in $G(\Q_l)$, defined up to conjugacy in $G(\Q_l)$, which is conjugate to $\gamma_0$ in $G(\overline{\Q}_l)$;
\item[(iii)] $\delta$ is an element of $G(L_n)$ (for some $n$), defined up to $\sigma$-conjugacy in $G(\mfk)$, such that the norm $\Nm_n\delta$ of $\delta$  is conjugate to $\gamma_0$ under $G(\bar{\mfk})$, where $\Nm_n\delta:=\delta\cdot\sigma(\delta)\cdots\sigma^{n-1}(\delta)\in G(L_n)$.
\end{itemize}

There are two conditions to be satisfied by such triple. 
To explain the first one, put $G_{\gamma_0}:=Z_G(\gamma_0)$
(centralizer of $\gamma_0$ in $G$); if $G^{\der}$ is simply connected, this is a connected reductive group. 
Then, for every place $v$ of $\Q$, we now construct an algebraic $\Q_v$-group $H_0(v)$ and an inner twisting 
\[\psi_v:(G_{\gamma_0})_{\Qvb}\rightarrow H_0(v)_{\Qvb}\] 
over $\Qvb$. 
First, for each finite place $v\neq p$ of $\Q$, set 
\[H_0(v):=Z_{G_{\Q_v}}(\gamma_v).\] 
For any $g_v\in G(\overline{\Q}_v)$ with $g_v\gamma_0g_v^{-1}=\gamma_v$, the restriction of $\Int (g_v)$ to $(G_{\gamma_0})_{\Qv}$ gives us an inner twisting $\psi_v:(G_{\gamma_0})_{\Qv}\rightarrow H_0(v)$, which is well defined up to inner automorphism of $G_{\gamma_0}$.
At $v=p$, we define an algebraic $\Qp$-group $H_0(p)$ 
to be the $\sigma$-centralizer of $\delta$:
\[H_0(p):=G_{\delta\theta}:=\{x\in \Res_{L_n/\Qp}(G_{L_n})\ |\ x\delta \theta(x^{-1})=\delta\},\]
where $\theta$ is the $\Qp$-automorphism of $\Res_{L_n/\Qp}(G_{L_n})$ induced by the restriction of $\sigma$ to $L_n$. Then, there exists an inner twisting $\psi_p:(G_{\gamma_0})_{\Qpb}\rightarrow H_0(p)_{\Qpb}$, which is canonical up to inner automorphism of $G_{\gamma_0}$. For detailed discussion, we refer to \autoref{subsubsec:w-stable_sigma-conjugacy} and \cite[$\S$5]{Kottwitz82} (where $\theta$ and $H(p)$ are denoted respectively by $s$ (on p. 801) and $I_{s\delta}$ (on p. 802)).
Finally, at the infinite place, we choose an elliptic maximal torus $T_{\R}$ of $G_{\R}$ containing $\gamma_0$ and $h\in X\cap \Hom(\dS,T_{\R})$. We twist $G_{\gamma_0}$ using the Cartan involution $\Int (h(i))$ on $G_{\gamma_0}/Z(G)$, and get an inner twisting $\psi_{\infty}:(G_{\gamma_0})_{\C}\rightarrow H_0(\infty)_{\C}$. So, $H_0(\infty)/Z(G)$ is anisotropic over $\R$.

\begin{defn}  \label{defn:Kottwitz_triple}
A triple $(\gamma_0;(\gamma_l)_{l\neq p},\delta)$ as in (i) - (iii) (with some $n\in N$) is called a Kottwitz triple of level $n$ if it further satisfies the following two conditions (iv), $\ast(\delta)$:
\begin{itemize} \addtolength{\itemsep}{-4pt}
\item[(iv)] There exists a triple $(H_0,\psi,(j_v))$ consisting of a $\Q$-group $H_0$, an inner twisting $\psi:G_{\gamma_0}\rightarrow H_0$ and for each place $v$ of $\Q$, an isomorphism $j_v:(H_0)_{\Qv}\rightarrow H_0(v)$ over $\Q_v$, unramified almost everywhere, such that $j_v\circ\psi$ and $\psi_v$ differ by an inner automorphism of $G_{\gamma_0}$ over $\Qb_v$. 
\item[$\ast(\delta)$] the image of $\overline{\delta}$ under the Kottwitz homomorphism $\kappa_{G_{\Q_p}}:B(G_{\Q_p})\rightarrow \pi_1(G_{\Q_p})_{\Gamma(p)}$ (\autoref{subsubsec:Kottwitz_hom}) is equal to $\mu^{\natural}$ (defined in (\ref{eqn:mu_natural})).
\end{itemize}
\end{defn}

Two triples $(\gamma_0;(\gamma_l)_{l\neq p},\delta)$, $(\gamma_0';(\gamma_l')_{l\neq p},\delta')$ as in (\autoref{subsubsec:pre-Kottwitz_triple}) with $\delta,\delta'\in G(L_n)$ for (iii)
are said to be \textit{equivalent}, if $\gamma_0$ is $G(\Qb)$-conjugate to $\gamma_0'$, $\gamma_l$ is $G(\Ql)$-conjugate to $\gamma_l'$ for each finite $l\neq p$, and $\delta$ is $\sigma$-conjugate to $\delta'$ in $G(L_n)$ (i.e. there exists $d\in G(L_n)$ such that $\delta'=d\delta\sigma(d^{-1})$). Clearly, for two such equivalent triples, one of them is a Kottwitz triple of level $n$ if and only if the other one is so. Normally, we consider Kottwitz triples having level $n=m[\kappa(\wp):\Fp]$ for some $m\geq1$.

We will also consider the following condition:
\begin{itemize}
\item[$\ast(\epsilon)$] Let $H$ be the centralizer in $G_{\Qp}$ of the maximal $\Qp$-split torus in the center of $Z_{G}(\gamma_0)_{\Qp}$.
Then, there exists a cocharacter of $\mu$ of $H$ lying in the $G(\Qpb)$-conjugacy class $\{\mu_X\}$
such that
\begin{equation} \label{eq:lambda(gamma_0)}
w_H(\gamma_0)=\sum_{i=1}^{n}\sigma^{i-1}\underline{\mu}
\end{equation}
for some $n\in\N$, where $w_H:H(\mfk)\rightarrow \pi_1(H)_I$ is the map from \autoref{subsubsec:Kottwitz_hom} and $\underline{\mu}$ denotes the image of $\mu$ in $\pi_1(H)_I$ ($I=\Gal(\Qpb/\Qpnr)$).
\end{itemize}

We refer to the number $n$ appearing in $\ast(\epsilon)$ as the \textit{level} of this condition (e.g., we will say that the condition $\ast(\epsilon)$ holds for $\gamma_0$ with level $n$).

\begin{rem} \label{rem:condition_(ast(gamma_0))}
1) Note that $H$ is a semi-standard $\Qp$-Levi subgroup of $G_{\Qp}$ with its center containing the maximal $\Qp$-split torus $A_{\epsilon}$ in the center of $Z_{G}(\gamma_0)_{\Qp}$, hence $\Nm_{K/\Qp}\mu$, being a $\Qp$-rational cocharacter of the center of  $Z_{G}(\gamma_0)_{\Qp}$, maps into $A_{\epsilon}$, thus a posteriori into the center of $H$.

2) As one can readily check, this condition generalizes the condition introduced by Langlands and Rapoport \cite[p.183]{LR87} with the same name $\ast(\epsilon)$ in their set-up that the level subgroup is hyperspecial and $G^{\der}=G^{\uc}$. However, in our formulation, even if $G_{\Qp}$ is unramified, we do not (unlike \cite{LR87}) require $\mu$ to be defined over $L_n$ or even over an unramified extension of $\Qp$.
\end{rem}

When the derived group of $G$ is not simply connected, a stable version of the notion of Kottwitz triple is more relevant.
Recall \cite[$\S$3]{Kottwitz82} that for a connected reductive group $F$ over a perfect field $F$, two rational elements $x,y\in G(F)$ are said to be \emph{stably conjugate} if there exists $g\in G(\bar{F})$ such that $gxg^{-1}=y$ and $g^{-1}{}^{\tau}g\in G_s^{\mathrm{o}}$ for all $\tau\in \Gal(\bar{F}/F)$, where $s$ is the semi-simple part of $x$ in its Jordan decomposition. When we just refer to the relation $gxg^{-1}=y$ for some $g\in G(\bar{F})$, we will say that $x,y$ are $\bar{F}$-(or $G(\bar{F})$-)conjugate or \emph{geometrically} conjugate. By definition, a \textit{stable conjugacy class} in $G(F)$ is an equivalence class in $G(F)$ with respect to this stable conjugation relation.

\begin{defn} \label{defn:stable_Kottwitz_triple}
A Kottwitz triple $(\gamma_0;\gamma=(\gamma_l)_{l\neq p},\delta)$, say of level $n\in\N$, is \emph{stable} if it satisfies the following conditions (in addition to (i) - (iv) and $\ast(\delta)$):
\begin{itemize} \addtolength{\itemsep}{-4pt} 
\item[(i$'$)] $\gamma_0\in G(\Q)$ is defined up to stable conjugacy;
\item[(ii$'$)] for each $l\neq p$, $\gamma_0$ is stably conjugate to $\gamma_l$;
\item[(iii$'$)] there exists $c\in G(\mfk)$ such that $c\gamma_0 c^{-1}=\Nm_n\delta$ and $b:=c^{-1}\delta\sigma(c)$ lies in $I_0(\mfk)$ for $I_0:=G_{\gamma_0}^{\mathrm{o}}$ (a priori, one only has $b\in G_{\gamma_0}(\mfk)$: $\delta^{-1}c\gamma_0 c^{-1}\delta=\sigma(\delta)\cdots\sigma^n(\delta)=\sigma(c\gamma_0 c^{-1})=\sigma(c)\gamma_0\sigma(c^{-1})$);
\item[(iv$'$)] Set $I(v):=H(v)^{\mathrm{o}}$ for each place $v$.
There exists a triple $(I,\psi,(j_v))$ consisting of a $\Q$-group $I$, an inner twisting $\psi:I_0\rightarrow I$ and for each place $v$ of $\Q$, an isomorphism $j_v:(I)_{\Qv}\rightarrow I(v)$ over $\Q_v$, unramified almost everywhere, such that $j_v\circ\psi$ and $\psi_v$ differ by an inner automorphism of $I_0$ over $\Qb_v$.
\end{itemize} 
\end{defn}

If $G^{\der}$ is simply connected, by Steinberg's theorem ($G_{\gamma_0}=I_0$), every Kottwitz triple is stable.

\begin{rem} \label{rem:Kottwitz_triples}
(1) It is easy to verify the following fact: let $(\gamma_0;\gamma,\delta)$ be a stable Kottwitz triple of level $n$. If $\gamma_0'\in G(\Q)$ is stably conjugate to $\gamma_0$, $\gamma$ is $G(\A_f^p)$-conjugate to $\gamma'$, and $\delta'$ is $\sigma$-conjugate to $\delta$ in $G(L_n)$, then $(\gamma_0';\gamma',\delta')$ is also a stable Kottwitz triple (of same level). We say that two stable Kottwitz triples $(\gamma_0;\gamma,\delta)$, $(\gamma_0';\gamma',\delta')$ are \emph{stably equivalent} if they are equivalent as Kottwitz triples and $\gamma_0$, $\gamma_0'$ are stably conjugate. A priori, two stable Kottwitz triples can be equivalent without being stably equivalent (see, however, Prop. \ref{prop:triviality_in_comp_gp}, (2)).

(2) By Hasse principle for $H^1(\Q,I_0^{\ad})$, a Kottwitz triple satisfying (i$'$) - (iii$'$) will also fulfill condition (iv$'$) if \emph{some} attached Kottwitz invariant (to be recalled below) is trivial \cite[p.172]{Kottwitz90}.

(3) The condition (iii$'$) should be distinguished from the following stronger condition: 
\begin{itemize}
\item[(iii$''$)] there exists $c\in G(\Qpnr)$ (not just in $G(\mfk)$) fulfilling the same condition as (iii$'$) (i.e. $c\gamma_0 c^{-1}=\Nm_n\delta$ and $b:=c^{-1}\delta\sigma(c)\in I_0(\Qpnr)$). 
\end{itemize}
The two conditions (iii$'$), (iii$''$) are the same if $G_{\gamma_0}=I_0$ (by Steinberg's theorem $H^1(\Qpnr,G_{\gamma_0})=\{1\}$), in particular when $G^{\der}=G^{\uc}$. In general, condition (iii$''$) seems to be strictly stronger than condition (iii$'$).

(4) As $G_{\Qp}$ is quasi-split, there exists a norm mapping $\mathscr{N}=\mathscr{N}_n$ from $G(L_n)$ to the set of stable conjugacy classes in $G(\Qp)$ \cite[$\S$5]{Kottwitz82}: if $G^{\der}=G^{\uc}$, for any $\delta\in G(L_n)$, the $G(\Qpb)$-conjugacy class of $\Nm_n\delta$, being defined over $\Qp$, contains a rational element (i.e. lying in $G(\Qp)$) by \cite[Thm.4.1]{Kottwitz82} and $\mathscr{N}(\delta)$ is defined to be its stable (=geometric) conjugacy class. For general $G$, one uses a $z$-extension to reduce to the former situation (see \textit{loc. cit.} for a detailed argument).
We claim that the new condition (iii$''$) is the same as that \emph{$\mathscr{N}(\delta)$ is the stable conjugacy class of $\gamma_0$} (in which case, we say that $\gamma_0$ is the \emph{stable norm} of $\delta$). Indeed, we first note that condition (iii$''$) holds for a rational element $\gamma_0$ (and $\delta\in G(L_n)$) if and only if it holds for any element of $G(\Qp)$ stably conjugate to it. Also, it follows immediately from definition that there exists a representative $\gamma_s\in G(\Qp)$ of the stable conjugacy class $\mathscr{N}(\delta)$ for which (and $\delta$) condition (iii$''$) holds. In particular, we see that the implication $\Leftarrow$ holds.
Conversely, if condition (iii$''$) holds for $\gamma_0$ and $\delta$, $\gamma_0$ is stably conjugate to any (semi-simple) rational representative $\gamma_s$ of $\mathscr{N}(\delta)$: for any choice of $c$, $c_s\in G(\Qpnr)$ satisfying  (iii$''$) for $\gamma_0$, $\gamma_s$ respectively, we have $\gamma_s=\Int(c_s^{-1}c)(\gamma_0)$ and
\[ c^{-1}c_s\sigma(c_s^{-1}c)= c^{-1}c_s\cdot b_s^{-1}c_s^{-1}\delta \cdot \delta^{-1}cb=\Int(c^{-1}c_s)(b_s)\cdot b\in I_0(\Qpnr).\]
We remark that if $\delta$ satisfies the condition (iii$'$), say $c\gamma_0 c^{-1}=\Nm_n\delta$ for $c\in G(\mfk)$, then there is found in $G(\mfk)$ an element $g$ such that $\gamma_s=\Int(g)(\gamma_0)$ and $g^{-1}\tau(g)\in I_0$ for every $\tau\in W_{\Qp}$ (i.e. $g:=c_s^{-1}c$), but it is not clear whether one can find such $g$ from $G(\Qpb)$ (here, we also demand that $g^{-1}\tau(g)\in I_0$ for every $\tau\in\Gamma_p$), that is, whether $\gamma_s$ and $\gamma_0$ are stably conjugate in the usual sense; to distinguish the two situations, we will say that  $\gamma_s$ and $\gamma_0$ are \textit{w-stably conjugate} if the former condition holds.
This difference in stable conjugacy relation (occurring only for $p$-adic fields) is harmless in that the kind of results that we need and which were previously established based on the usual stable conjugacy (typically, those of the next susubbsection) continue to hold for the w-stable conjugacy.
But one needs to be careful when applying some classical arguments involving stably conjugacy to w-stably conjugacy.
\end{rem}

\subsubsection{The sets $\mathfrak{C}_p^{(n)}(\gamma_0)$, $\mathfrak{D}_p^{(n)}(\gamma_0)$.} \label{subsubsec:w-stable_sigma-conjugacy}

For a (not necessarily connected) group $H$ over a $p$-adic local field $F$, we let $B(H)$ denote the set of $\sigma$-conjugacy classes of elements in $H(L)$ (see \autoref{subsubsec:Kottwitz_hom} for the notation $L$, $\sigma$). We write $[b]_H$ for the $\sigma$-conjugacy class of an element $b\in H(L)$ (the subscript $H$ is inserted when we want to stress the group $H$).

\begin{defn} \label{defn:D_p^{(n)}(gamma_0)}
For a semi-simple $\gamma_0\in G(\Qp)$ and $n\in\N$, 

$\mathfrak{C}_p^{(n)}(\gamma_0)$ denotes the set of $\sigma$-conjugacy classes in $G(L_n)$ of elements $\delta\in G(L_n)$ satisfying conditions (iii$'$) and $\ast(\delta)$ in Def. \ref{defn:Kottwitz_triple} and Def. \ref{defn:stable_Kottwitz_triple}.

$\mathfrak{D}_p^{(n)}(\gamma_0)$ denotes the subset of $B(I_0)$ consisting of $\sigma$-conjugacy classes $[b=c^{-1}\delta\sigma(c)]_{I_0}$ arising from pairs $(\delta;c)$ satisfying conditions (iii$'$) and $\ast(\delta)$.
\end{defn}

Note that for any $\delta\in G(L_n)$ such that there exists $c'\in G(\mfk)$ with $\Nm_n\delta=c'\gamma_0c'^{-1}$, the $\sigma$-conjugacy class in $B(G_{\gamma_0})$ of $b':=c'^{-1}\delta\sigma(c')\in G_{\gamma_0}(\mfk)$ does not depend on the choice of such $c'$. Thus, this gives a well-defined map $\mathfrak{C}_p^{(n)}(\gamma_0) \rightarrow B(G_{\gamma_0}):[\delta]\mapsto [b=c^{-1}\delta\sigma(c)]_{G_{\gamma_0}}$. In fact, this map is injective. Indeed, suppose that there exist $\delta_i\in G(L_n)$ and $c_i\in G(\mfk)$ ($i=1,2$) such that $c_i\gamma_0 c_i^{-1}=\Nm_n \delta_i$; set $b_i:=c_i^{-1}\delta_i\sigma(c_i)\in G_{\gamma_0}$ as usual. If $b_2=d^{-1}b_1\sigma(d)$ for some $d\in G_{\gamma_0}(\mfk)$, then as 
$\Nm_n b_i=c_i^{-1}\Nm_n \delta_i\sigma^n(c_i)=\gamma_0\cdot c_i^{-1}\sigma^n(c_i)$, we see that
\[ \gamma_0\cdot c_2^{-1}\sigma^n(c_2)=d^{-1}(\gamma_0\cdot c_1^{-1}\sigma^n(c_1))\sigma^n(d)=\gamma_0\cdot (c_1d)^{-1}\sigma^n(c_1d),\]
namely $x:=c_1dc_2^{-1}\in G(L_n)$ and $\delta_2=x^{-1}\delta_1\sigma(x)$.
In particular, $\mathfrak{C}_p^{(n)}(\gamma_0)$ is identified with the subset of $B(G_{\gamma_0})$ consisting of the $\sigma$-conjugacy classes $[b=c^{-1}\delta\sigma(c)]_{G_{\gamma_0}}$ arising from pairs $(\delta;c)$ satisfying conditions (iii$'$) and $\ast(\delta)$.
Hence, there exists a cartesian diagram
\[\xymatrix{ \mathfrak{C}_p^{(n)}(\gamma_0) \ar@{^(->}[r] & B(G_{\gamma_0}) \\ 
\mathfrak{D}_p^{(n)}(\gamma_0) \ar@{^{(}->}[r] \ar@{->>}[u] & B(I_0), \ar[u] } \]
where the left vertical map is defined by $[b]_{I_0}\mapsto [b]_{G_{\gamma_0}}$ (this is clearly surjective).

The main goal of this subsection is to give a cohomological description of the sets $\mathfrak{D}_p^{(n)}(\gamma_0)$, $\mathfrak{C}_p^{(n)}(\gamma_0)$ (when $G_{\Qp}$ is quasi-split); see Prop. \ref{prop:B(gamma_0)=D(I_0,G;Qp)}. For $\delta\in G(L_n)$, let $\mathscr{S}\mathscr{C}(\delta)$ denote the set of the $\sigma$-conjugacy classes in $G(L_n)$ of elements $\delta'\in G(L_n)$ that are \emph{stably $\sigma$-conjugacte} to it (\textit{stable $\sigma$-conjugacy} is to \textit{$\sigma$-conjugacy} as \textit{stable conjugacy} is to \textit{rational conjugacy}, cf. \cite[$\S$5]{Kottwitz82}). This set has a cohomological description:
\begin{equation} \label{eq:sigma-conj_in_stable-sigma-conj}
\mathscr{S}\mathscr{C}(\delta)= \im\left(\ker[H^1(\Qp,G_{\delta\theta}^{\mathrm{o}})\rightarrow H^1(\Qp,R)]\rightarrow \ker[H^1(\Qp,G_{\delta\theta})\rightarrow H^1(\Qp,R)]\right)
\end{equation}
where $R=\Res_{L_n/\Qp}G$  \cite[p.806]{Kottwitz82} (see also the discussion below).
If $\delta$ satisfies condition (iii$''$) of Remark \ref{rem:Kottwitz_triples} (for a fixed $\gamma_0\in G(\Qp)$), it also parametrizes the $\sigma$-conjugacy classes of elements $\delta'\in G(L_n)$ satisfying the same condition (iii$''$), because an element $\delta'\in G(L_n)$ satisfies (iii$''$) if and only if its stable norm $\mathscr{N}(\delta')$ is equal to the stable conjugacy class of $\gamma_0$ (Remark \ref{rem:Kottwitz_triples}, (4)), while two elements of $G(L_n)$ have the same stable norms if and only if they are stably $\sigma$-conjugate \cite[Prop.5.7]{Kottwitz82}.
So, when the two conditions (iii$'$), (iii$''$) are the same (which occurs normally when $G^{\der}=G^{\uc}$), our set $\mathfrak{C}_p^{(n)}(\gamma_0)$ becomes a subset of $\mathscr{S}\mathscr{C}(\delta)$ and this leads immediately to its simple cohomological description.
It turns out (Prop. \ref{prop:B(gamma_0)=D(I_0,G;Qp)}) that the same description is still valid in the general case (even if the two conditions (iii$'$), (iii$''$) are not necessarily equivalent). However, our proof of it will be rather indirect: in that regards, note that one cannot appeal to the same argument above, since it is not clear whether the $\sigma$-conjugacy classes in $\mathfrak{C}_p^{(n)}(\gamma_0)$ are stably $\sigma$-conjugate in the usual sense (cf. Remark \ref{rem:Kottwitz_triples}, (4)) while in the identification just cited of stable $\sigma$-conjugacy in terms of stable norm one uses the usual definition of stable $\sigma$-conjugacy.
Instead, we first give a cohomological description for $\mathfrak{D}_p^{(n)}(\gamma_0)$ and use it to obtain the description for $\mathfrak{C}_p^{(n)}(\gamma_0)$.
In our next discussion, we follow \cite[$\S$5]{Kottwitz82} (but will use slightly different notations and conventions). We begin with a review of some basic definitions.

Let $E/F$ be a cyclic extension of degee $n$ contained in $\bar{F}$ (of characteristic zero), and let $\sigma$ be a generator of $\Gal(E/F)$. Let $G$ be a connected reductive group over $F$, and $R:=\Res_{E/F}G_E$ the Weil restriction of the base-change of $G$ (this group was denoted by $I$ in \loccit). 
There exists a natural isomorphism $R_E\isom G_E\times \cdots G_E$, where the factors are ordered such that the $i$-th factor corresponds to $\sigma^i\in \Gal(E/F)\ (i=1,\cdots,l)$ (note the convention different from that of \loccit).
The element $\sigma\in \Gal(E/F)$ determines an automorphism $\theta\in\Aut_F(R)$, which on $R(E)$ takes the form
\[(x_1,\cdots,x_{n-1},x_n)\mapsto (x_2,\cdots,x_n,x_1)\]
Note that the composition
\[\Delta_{\sigma}:G(E)=R(F)\rightarrow R(E)=G(E)\times \cdots\times G(E)\]
is given by
\[x\mapsto (x,x^{\sigma},\cdots, x^{\sigma^{n-1}}).\]
We define an $F$-morphism $N:R\rightarrow R$ by $Nx=x\cdot x^{\theta}\cdots x^{\theta^{n-1}}$; clearly, one has $N\circ \Delta_{\sigma}=\Delta_{\sigma}\circ\Nm$ on $G(E)=R(F)$ ($\Nm$ denotes the map $\Nm_n$ on $G(E)$).

For $x\in G(E)=R(F)$, the \emph{$\sigma$-centralizer} of $x$ is by definition the $F$-subgroup of $R$:
\[G_{x\theta}=\{g\in R\ :\ gxg^{-\theta}=x\}.\]
(We sometimes write $g^{-\theta}$ for $\theta(x^{-1})$).
If $p:R_E=G_E\times\cdots G_E\rightarrow G_E$ denotes the projection onto the factor indexed by the identity element of $\Gal(E/F)$, by restriction $p$ induces an isomorphism 
\begin{equation} \label{eq:p_x}
p_x:(G_{x\theta})_E\rightarrow (G_E)_{\Nm x}
\end{equation}
(\loccit, Lemma 5.4). Define $G_{x\theta}^{\ast}$ to be the inverse image under the $E$-isomorphism $p_x$ of the subgroup $G_{\Nm x}^{\ast}$ of $G_{\Nm x}$. This is an $F$-subgroup of $G_{x\theta}$ (\loccit, Lemma 5.5) which equals the neutral component $G_{x\theta}^{\mathrm{o}}$ of $G_{x\theta}$ if $\Nm x$ is semi-simple, and equals $G_{x\theta}$ when $G^{\der}=G_{\uc}$.

\begin{defn} 
(1) Two elements $x,y$ of $R(\bar{F})$ are \emph{$\bar{F}$-$\sigma$-conjugate} if there exists  $g\in R(\bar{F})$ such that $y=gxg^{-\theta}$.

(2) Two elements $x,y$ of $G(E)=R(F)$ are \emph{stably $\sigma$-conjugate} if there exists $g\in R(\bar{F})$ such that $gxg^{-\theta}=y$ and $g^{-1}\cdot {}^{\tau}g\in G_{x\theta}^{\ast}$ for all $\tau\in\Gamma:=\Gal(\bar{F}/F)$.

For the next definition (of w-stable $\sigma$-conjugacy relation), we work in the following $p$-adic set-up:
Let $k$ be an algebraically closed field of char. $p>0$ and $K=W(k)[\frac{1}{p}]$.
Suppose that $F$ is a finite extension of $\Qp$ in a fixed algebraic closure $\bar{K}$ of $K$ and let $L$ be the composite of $K$ and $F$ in $\bar{K}$. We also assume that $E\subset L$ and use $\sigma$ again to denote the Frobenius automorphism of $L/F$ (as well as its restriction to $E$).

(3) Two elements $x,y$ of $G(E)=R(F)$ are \emph{w-stably $\sigma$-conjugate} if there exists $g\in R(L)$ such that $gxg^{-\theta}=y$ and $g^{-1}\cdot {}^{\sigma}g\in G_{x\theta}^{\ast}$.
\end{defn}

In the set-up of (3), when $\Nm x$ is semi-simple, two elements $x,y\in G(E)=R(F)$ are stably $\sigma$-conjugate if and only if there exists $g\in R(F^{\nr})$ such that $gxg^{-\theta}=y$ and $g^{-1}\cdot {}^{\sigma}g\in G_{x\theta}^{\ast}$, by Steinberg's theorem $H^1(\bar{F}/F^{\nr},G_{x\theta}^{\ast})=\{1\}$.
In particular, in this case, we have the implications: stably $\sigma$-conjugacy $\Rightarrow$ w-stably $\sigma$-conjugacy $\Rightarrow$ $\bar{F}$-$\sigma$-conjugacy.
\footnote{The notion of w-stable $\sigma$-conjugacy is introduced here by the author, to bring attention to the subtle point in the notion of $\sigma$-stable conjugacy, parallel to the difference in (ordinary) stable conjugacy discussed in Remark \ref{rem:Kottwitz_triples}, (4), rather than for some explicit use.
The author does not know whether the first implication is an equivalence in general.}

Now, we give an explicit relation between the equations appearing in condition (iii) and in $\bar{F}$-$\sigma$-conjugacy; this will provide another proof of the lemma. 
Let $W(\bar{K}/F)$ be the Weil group, i.e. the group of continuous automorphisms of $\bar{K}$ which fix $F$ pointwise and induce on the residue field $k$ an integral power of the Frobenius automorphism (cf. \cite[$\S$1]{Kottwitz85}); when $k=\Fpb$, the restriction homomorphism $W(\bar{K}/F)\rightarrow \Gal(\bar{F}/F)$ identifies $W(\bar{K}/F)$ with the usual absolute Weil group $W_F$ of $F$ (\loccit, (1.4)). There exists an exact sequence $1\rightarrow \Gal(\bar{K}/L)\rightarrow W(\bar{K}/F)\rightarrow \langle\sigma\rangle \rightarrow 1$ which endows $W(\bar{K}/F)$ with a natural topology such that the injection identify $\Gal(\bar{K}/L)$ with an open subgroup. For a connected reductive group $G$ over $F$ and  $b\in G(L)$, we define a cocyle $b_{\tau}\in Z^1(W(\bar{K}/F), G(\bar{K}))$ by $b_{\tau}:=\Nm_{i(\tau)}b$, where $i(\tau)\in\N$ is determined by $\tau|_L=\sigma^{i(\tau)}$: its cohomology class is the image of $[b]$ under the isomorphism $B(G)=H^1(\langle\sigma\rangle,G(L)) \isom H^1(W(\bar{K}/F), G(\bar{K}))$.

\begin{lem} \label{lem:stable-Norm-conjugacy=stable-sigma-conjugacy}
Suppose given $\delta_1$, $\delta_2\in G(E)$, and let $x:=\Delta_{\sigma}(\delta_1)$, $y:=\Delta_{\sigma}(\delta_2)\in R(F)$.

(1) If $g_0\Nm\delta_1 g_0^{-1}=\Nm \delta_2$ for some $g_0\in G(\bar{K})$, then for every $\tau\in W(\bar{K}/F)$, one has
\begin{equation*} 
\alpha_{\tau}:=g_0^{-1}\cdot \Nm_l\delta_2\cdot {}^{\tau}g_0\cdot \Nm_l\delta_1^{-1}\in G_{\Nm\delta_1},
\end{equation*}
where $l=l(\tau)\in\N$ is determined by $\tau|_E=\sigma^l\ (0\leq l<n)$, and there exists $g\in R(\bar{K})$ such that 
\[gxg^{-\theta}=y,\ \text{ and }\ p_{\delta_1}(g^{-1}\cdot {}^{\tau}g)=\alpha_{\tau},\]
where $p_{\delta_1}:(G_{\delta_1\theta})_E\rightarrow (G_E)_{\Nm \delta_1}$ is the $E$-isomorphism (\ref{eq:p_x}). Moreover, if $g_0$ lies in $G(\bar{F})$ (resp. in $G(L)$), then one can choose such $g$ in $R(\bar{F})$ (resp. in $R(L)$).

(2) Conversely, if $gxg^{-\theta}=y$ for $g\in R(\bar{K})$, then one has $g_0\Nm\delta_1 g_0^{-1}=\Nm \delta_2$, where $g_0\in G(\bar{K})$ is the image of $g$ under the map $\Xi^{(n)}:R(\bar{K})=G(E\otimes\bar{K})\rightarrow G(\bar{K})$ induced by the canonical $E$-algebra homomorphism $E\otimes\bar{K}\rightarrow \bar{K}:l\otimes x\mapsto lx$ (so that $\Xi^{(n)}|_{R(E)}=p$); clearly, if $g$ lies in $R(\bar{F})$ (resp. in $R(L)$), then $g_0$ also lies $G(\bar{F})$ (resp. in $G(L)$).
\end{lem}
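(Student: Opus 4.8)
The plan is to prove both implications by working in explicit coordinates on $R=\Res_{E/F}G_E$ over the splitting field $\bar{K}$, where $R_{\bar{K}}\cong\prod_{i}G_{\bar{K}}$ with the factors indexed by the embeddings $\sigma^{i}\colon E\hookrightarrow\bar{K}$ ($i=0,\dots,n-1$). In these coordinates: $\theta$ is a cyclic shift of the factors; $\Delta_\sigma(\delta)=(\sigma^{i}(\delta))_{i}$ for $\delta\in G(E)=R(F)$; the map $\Xi^{(n)}$ is the projection onto the factor of the identity embedding; a Galois element $\tau\in\WgF$ with $\tau|_E=\sigma^{l(\tau)}$ acts by translating the index by $l(\tau)$ (read mod $n$) and by applying $\tau$ to the entries. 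Two structural facts will be used throughout: $\Xi^{(n)}$ is a homomorphism of groups (it is induced by the $E$-algebra homomorphism $E\otimes_F\bar{K}\to\bar{K}$), and $\Xi^{(n)}\circ\Delta_\sigma=\mathrm{id}$ on $G(E)$.

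For part (2), I would first record the telescoping identity $N(gxg^{-\theta})=g\,(Nx)\,g^{-1}$, valid for every $g\in R(\bar{K})$: expanding $N(h)=h\,h^{\theta}\cdots h^{\theta^{n-1}}$ with $h=gxg^{-\theta}$, the consecutive $g^{\pm\theta^{i}}$ cancel and $\theta^{n}=\mathrm{id}$ closes the product. Applying $N$ to $gxg^{-\theta}=y$ and using $N\circ\Delta_\sigma=\Delta_\sigma\circ\Nm$ gives $\Delta_\sigma(\Nm\delta_2)=g\,\Delta_\sigma(\Nm\delta_1)\,g^{-1}$; applying the homomorphism $\Xi^{(n)}$ and $\Xi^{(n)}\circ\Delta_\sigma=\mathrm{id}$ yields $\Nm\delta_2=g_0(\Nm\delta_1)g_0^{-1}$ for $g_0=\Xi^{(n)}(g)$. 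The rationality claim is immediate, since $\Xi^{(n)}$ carries $R(\bar{F})$ (resp. $R(L)$) into $G(\bar{F})$ (resp. $G(L)$).

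For part (1), begin from $g_0\in G(\bar{K})$ with $g_0\Nm\delta_1 g_0^{-1}=\Nm\delta_2$. The membership $\alpha_\tau\in G_{\Nm\delta_1}$ is a direct computation from this relation together with the elementary norm identities $\Nm_l\delta\cdot\sigma^{l}(\Nm_{n-l}\delta)=\Nm\delta$, hence $\Nm_l\delta^{-1}(\Nm\delta)\Nm_l\delta=\sigma^{l}(\Nm\delta)={}^{\tau}(\Nm\delta)$. Next, define $g\in R(\bar{K})$ by its coordinates $g^{(i)}:=(\Nm_i\delta_2)^{-1}g_0\,(\Nm_i\delta_1)$ (with $\Nm_0:=1$); writing $gxg^{-\theta}=y$ out coordinate by coordinate shows it is equivalent to the recursion $g^{(i+1)}=\sigma^{i}(\delta_2)^{-1}g^{(i)}\sigma^{i}(\delta_1)$ for $0\le i<n-1$ together with the closing-up relation $g^{(n-1)}\sigma^{n-1}(\delta_1)(g^{(0)})^{-1}=\sigma^{n-1}(\delta_2)$, and the latter is precisely $g_0\Nm\delta_1 g_0^{-1}=\Nm\delta_2$; so $g$ solves $gxg^{-\theta}=y$ and $\Xi^{(n)}(g)=g_0$. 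Because $x=\Delta_\sigma(\delta_1)$ and $y=\Delta_\sigma(\delta_2)$ are $F$-rational, one checks $g^{-1}\cdot{}^{\tau}g\in G_{\delta_1\theta}(\bar{K})$ for all $\tau\in\WgF$, whence $p_{\delta_1}(g^{-1}\cdot{}^{\tau}g)=\Xi^{(n)}(g)^{-1}\,\Xi^{(n)}({}^{\tau}g)$. Computing $\Xi^{(n)}({}^{\tau}g)={}^{\tau}\!\bigl(g^{(-l(\tau))}\bigr)$ from the permutation rule and simplifying with the norm identities above gives $\Xi^{(n)}({}^{\tau}g)=\Nm\delta_2^{-1}\,\Nm_{l}\delta_2\cdot{}^{\tau}g_0\cdot\Nm_{l}\delta_1^{-1}\,\Nm\delta_1$; combining with $g_0^{-1}\Nm\delta_2^{-1}=\Nm\delta_1^{-1}g_0^{-1}$ and $\alpha_\tau\in G_{\Nm\delta_1}$ collapses this to $\alpha_\tau$. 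Finally, each $g^{(i)}$ is a product of $g_0$ with elements of $G(E)$, so $g$ lies in $R(\bar{F})$ (resp. $R(L)$) whenever $g_0$ does.

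All computations here are formal manipulations; the one point that needs real care is the bookkeeping of the cyclic indexing of the factors of $R_{\bar{K}}$ against both $\theta$ and the semilinear action of $\WgF$, since this is what dictates the exact shape of $\alpha_\tau$ (in particular the occurrence of the partial norms $\Nm_{l(\tau)}$ rather than the full $\Nm$). I expect the verification that $p_{\delta_1}(g^{-1}\cdot{}^{\tau}g)=\alpha_\tau$ to be the only genuinely delicate step; the telescoping identity and the consistency of the recursion are routine.
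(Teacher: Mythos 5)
Your proposal is correct and takes essentially the same route as the paper. The only cosmetic difference is in part (1): the paper introduces an auxiliary $f=(g_0,g_0^{\tilde\sigma},\dots,g_0^{\tilde\sigma^{n-1}})$ (depending on a choice of lift $\tilde\sigma\in \WgF$), solves $hxh^{-\theta}=f^{-1}yf^{\theta}$ by a recursion with $h_1=1$, and sets $g=fh$; if you expand that product, the $\tilde\sigma$-dependence telescopes away and you land precisely on your explicit coordinates $g^{(i)}=(\Nm_i\delta_2)^{-1}g_0(\Nm_i\delta_1)$, so the two constructions produce the same $g$ and the verification of $p_{\delta_1}(g^{-1}\cdot{}^{\tau}g)=\alpha_{\tau}$ is the identical computation.
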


\begin{proof}
(1) The first claim is a formal check (by applying $\tau$ to both sides of $g_0\Nm\delta_1 g_0^{-1}=\Nm \delta_2$). For the second claim, we fix an element $\tilde{\sigma}\in \WgF$ lifting $\sigma$. Since $Nx=N\Delta_{\sigma}\delta_1=\Delta_{\sigma}(\Nm \delta_1)$, we have the relation in $R(E)=G(E)\times\cdots\times G(E)$: 
\[Ny=(\Nm \delta_2,\cdots,\Nm \delta_2^{\sigma^{n-1}})=f (\Nm \delta_1,\cdots,\Nm \delta_1^{\sigma^{n-1}})f^{-1}=fNx f^{-1}\]
for $f:=(g_0,g_0^{\tilde{\sigma}},\cdots,g_0^{\tilde{\sigma}^{n-1}})$ (for a moment, we write $g^{\tau}$ for $\tau\in \WgF$ instead of ${}^{\tau}g$).
Next, we find $h\in R(\bar{K})$ satisfying that 
\[ hxh^{-\theta}=y':=f^{-1}yf^{\theta}, \] 
namely, that $(h_1,\cdots,h_n)(x_1,\cdots,x_n)(h_2^{-1},\dots,h_n^{-1},h_1^{-1})=(y_1',\cdots,y_n')$: the condition $Nx=Ny'$ holds (\loccit, Lemma 5.2), and this guarantees that this system of equations in $h_i$'s are consistent and we may choose $h_1$ arbitrarily.
By putting $h_1=1$, we obtain
\[h_{i+1}=(y_1'\cdots y_i')^{-1}(x_1\cdots x_i)=(y_{i+1}'\cdots y_n')(x_{i+1}\cdots x_n)^{-1}.\]
for $i=1,\cdots,n-1$. Hence, we have $gxg^{-\theta}=y$ for $g:=fh\in R(\bar{K})$ and it is clear that if $g_0\in G(E')$ for $E'=\bar{F}$ or $L$ (so that $y'$, $h\in R(E')$ too), one also has $g\in R(E')$.

On the other hand, one checks that any $\tau\in\WgF$ acts on $R(\bar{K})=G(\bar{K})\times \cdots\times G(\bar{K})$ by:
\begin{equation} \label{eq:Galois_action_on_R(Kbar)}
(x_1,\cdots,x_n)\mapsto (\tau(x_{n-l+1}),\cdots,\tau(x_{n-l-1}),\tau(x_{n-l})),
\end{equation}
where $\tau|E=\sigma^l$ with $0\leq l<n$ (``after applying $\tau$, move each component $l$-times to the right"); we set $x_{n+1}:=x_1$, $y_{n+1}:=y_1$. Hence, since $y_i'=f_i^{-1}y_if_{i+1}$ for $1\leq i\leq n$ ($f_{n+1}:=f_1$), it follows that
\begin{align*}
p_{\delta_1}(g^{-1}\cdot {}^{\tau}g) &=(f_1h_1)^{-1}\tau(f_{n-l+1}h_{n+l+1})=f_1^{-1}\tau(f_{n-l+1}(y_{n-l+1}'\cdots y_n')(x_{n-l+1}\cdots x_n)^{-1}) \\
&= f_1^{-1}\tau((y_{n-l+1}\cdots y_n) f_1(x_{n-l+1}\cdots x_n)^{-1}) \\
&= f_1^{-1}\tau(\sigma^{n-l}(\delta_2\cdots \delta_2^{\sigma^{l-1}}) f_1 \sigma^{n-l}(\delta_1\cdots \delta_1^{\sigma^{l-1}})^{-1}) \\
&=g_0^{-1}\cdot \Nm_l\delta_2\cdot \tau(g_0)\cdot \Nm_l \delta_1^{-1}
\end{align*}
as was asserted. The last claim is clear from the construction.

(2) Apply $N:R\rightarrow R$ followed by $\Xi^{(n)}:R(\bar{K})\rightarrow G(\bar{K})$ \cite[Lem.5.2]{Kottwitz82}.
\end{proof}

From this lemma, we make an observation (which itself, however, will not be used in the rest of this work): 

\begin{prop}
Suppose given $(\gamma_0,\delta)\in G(\Qp)\times G(L_n)$ satisfying condition (iii$'$) of Def. \ref{defn:stable_Kottwitz_triple}. Then, an element $\delta'\in G(L_n)$ satisfies condition (iii$'$) for the same $\gamma_0$ if and only if $\delta$ and $\delta'$ are w-stably $\sigma$-conjugate.
\end{prop}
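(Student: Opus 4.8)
The plan is to reduce the statement to Lemma \ref{lem:stable-Norm-conjugacy=stable-sigma-conjugacy} via the dictionary $\delta\leftrightarrow x:=\Delta_\sigma(\delta)$, $\delta'\leftrightarrow y:=\Delta_\sigma(\delta')$ between elements of $G(L_n)$ and elements of $R(\Qp)$ for $R:=\Res_{L_n/\Qp}G_{L_n}$. Write $I_0=G_{\gamma_0}^{\mathrm{o}}$, and recall from \autoref{subsubsec:w-stable_sigma-conjugacy} that $G_{x\theta}^\ast$ is, under the $L_n$-isomorphism $p_x\colon (G_{x\theta})_{L_n}\isom (G_{L_n})_{\Nm x}$, the inverse image of $G_{\Nm\delta}^\ast$, and that $G_{\Nm\delta}^\ast=G_{\Nm\delta}^{\mathrm{o}}$ since $\Nm\delta$ is (conjugate to) the semisimple element $\gamma_0$. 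The key translation is: \emph{$\delta$ satisfies (iii$'$) for $\gamma_0$} $\iff$ there exists $c\in G(\mfk)$ with $c\gamma_0c^{-1}=\Nm_n\delta$ and $c^{-1}\delta\sigma(c)\in I_0(\mfk)$, and via Lemma \ref{lem:stable-Norm-conjugacy=stable-sigma-conjugacy} this is governed by the same kind of equation $gxg^{-\theta}=y$ with a control on $g^{-1}\cdot{}^{\sigma}g$ living in $G_{x\theta}^\ast$.

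First I would prove the implication ``(iii$'$) for both $\delta,\delta'$ $\Rightarrow$ w-stably $\sigma$-conjugate''. Given $c,c'\in G(\mfk)$ with $c\gamma_0c^{-1}=\Nm_n\delta$, $c'\gamma_0c'^{-1}=\Nm_n\delta'$, and $b:=c^{-1}\delta\sigma(c)\in I_0(\mfk)$, $b':=c'^{-1}\delta'\sigma(c')\in I_0(\mfk)$, set $g_0:=c'c^{-1}\in G(\mfk)=G(L)$, which conjugates $\Nm_n\delta$ to $\Nm_n\delta'$. Apply Lemma \ref{lem:stable-Norm-conjugacy=stable-sigma-conjugacy}(1) with this $g_0\in G(L)$: one obtains $g\in R(L)$ with $gxg^{-\theta}=y$ and $p_{\delta}(g^{-1}\cdot{}^{\tau}g)=\alpha_\tau=g_0^{-1}\cdot\Nm_{l(\tau)}\delta'\cdot{}^{\tau}g_0\cdot\Nm_{l(\tau)}\delta^{-1}$ for all $\tau\in W(\bar K/\Qp)$. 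It remains to check $\alpha_{\tilde\sigma}\in G_{\Nm\delta}^{\mathrm{o}}=G_{\gamma_0'}^{\mathrm{o}}$-conjugate-side, i.e., that $g^{-1}\cdot{}^{\sigma}g\in G_{x\theta}^\ast$; conjugating by $c$ and using the defining relations, a short computation rewrites $\alpha_{\tilde\sigma}$ (up to conjugation by $c$, $c'$) as $\Int(\text{(something in }I_0)\text{)}(b')\cdot b^{-1}$-type expression lying in $I_0(L)=I_0(\mfk)$, exactly as in Remark \ref{rem:Kottwitz_triples}(4); hence $g^{-1}\cdot{}^{\sigma}g\in G_{x\theta}^\ast$ and $\delta,\delta'$ are w-stably $\sigma$-conjugate.

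Conversely, suppose $\delta,\delta'$ are w-stably $\sigma$-conjugate, say $g\in R(L)$ with $gxg^{-\theta}=y$ and $g^{-1}\cdot{}^{\sigma}g\in G_{x\theta}^\ast$. By Lemma \ref{lem:stable-Norm-conjugacy=stable-sigma-conjugacy}(2), $g_0:=\Xi^{(n)}(g)\in G(L)$ satisfies $g_0\,\Nm_n\delta\,g_0^{-1}=\Nm_n\delta'$. Since $\delta$ satisfies (iii$'$), fix $c\in G(\mfk)$ with $c\gamma_0c^{-1}=\Nm_n\delta$, $b:=c^{-1}\delta\sigma(c)\in I_0(\mfk)$, and set $c':=g_0c\in G(\mfk)$, so $c'\gamma_0c'^{-1}=\Nm_n\delta'$. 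One checks $\delta^{-1}c'\gamma_0c'^{-1}\delta=\sigma(c'\gamma_0c'^{-1})$ already forces $b':=c'^{-1}\delta'\sigma(c')\in G_{\gamma_0}(\mfk)$; the point is to promote this to $b'\in I_0(\mfk)$. Translate the condition $g^{-1}\cdot{}^{\sigma}g\in G_{x\theta}^\ast$ through $p_{\delta}$ (which takes values in $G_{\Nm\delta}^{\mathrm o}$): by the computation in the proof of Lemma \ref{lem:stable-Norm-conjugacy=stable-sigma-conjugacy}(1), $p_{\delta}(g^{-1}\cdot{}^{\sigma}g)$ equals $g_0^{-1}\cdot\Nm_1\delta'\cdot\sigma(g_0)\cdot\Nm_1\delta^{-1}=g_0^{-1}\delta'\sigma(g_0)\delta^{-1}$, and conjugating by $c$ this becomes $c^{-1}g_0^{-1}\delta'\sigma(g_0)\delta^{-1}c=(c'^{-1}\delta'\sigma(c'))\cdot(c^{-1}\delta\sigma(c))^{-1}=b'b^{-1}$ after using $c'=g_0c$ and $\delta=cb\sigma(c)^{-1}$. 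Thus $b'b^{-1}\in c^{-1}G_{\Nm\delta}^{\mathrm o}c=G_{\gamma_0}^{\mathrm o}=I_0$, and since $b\in I_0(\mfk)$ we conclude $b'\in I_0(\mfk)$, i.e.\ $\delta'$ satisfies (iii$'$) for $\gamma_0$.

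The main obstacle I anticipate is the bookkeeping in matching $G_{x\theta}^\ast$ on the $R$-side with the condition ``$b\in I_0(\mfk)$'' on the $G(L_n)$-side: one must be careful that $p_x$ identifies $G_{x\theta}^\ast$ with $G_{\Nm x}^{\mathrm{o}}$ (valid since $\Nm x$ is semisimple), and that the cocycle identity is being read in $W(\bar K/\Qp)$ rather than $\Gamma_{\Qp}$ — this is precisely the reason the correct relation is w-stable rather than stable $\sigma$-conjugacy, exactly parallel to the (iii$'$) vs.\ (iii$''$) distinction in Remark \ref{rem:Kottwitz_triples}. A secondary subtlety is that $G_{\gamma_0}$ need not be connected, so one should consistently work with $I_0=G_{\gamma_0}^{\mathrm o}$ and invoke that $I_0(\mfk)\subset G_{\gamma_0}(\mfk)$ is the relevant subgroup cut out by $G_{x\theta}^\ast$; no genuinely new input beyond Lemma \ref{lem:stable-Norm-conjugacy=stable-sigma-conjugacy} and the elementary manipulations of Remark \ref{rem:Kottwitz_triples}(4) should be needed.
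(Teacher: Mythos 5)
Your proof is correct. The paper actually states this Proposition as an ``observation'' immediately following Lemma \ref{lem:stable-Norm-conjugacy=stable-sigma-conjugacy} and supplies no proof, so the intended argument is precisely to unwind that lemma through the dictionary $\delta\leftrightarrow\Delta_\sigma(\delta)$; your computation $p_\delta(g^{-1}\cdot{}^{\sigma}g)=g_0^{-1}\delta'\sigma(g_0)\delta^{-1}=\Int(c)(b'b^{-1})$, combined with the identification $G_{x\theta}^{\ast}=G_{x\theta}^{\mathrm{o}}\stackrel{p_\delta}{\isom}G_{\Nm\delta}^{\mathrm{o}}$ and the last clause of part (1) of the lemma (ensuring $g\in R(L)$ when $g_0\in G(L)$), does exactly this in both directions.
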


Next, we recall  \cite[4.13, 3.3-3.5]{Kottwitz97}  that for any connected reductive $F$-group $H$, the map 
\begin{equation} \label{eq:kappa_times_nu} \kappa_{H}\times \bar{\nu}_{H}: B(H) \rightarrow X^{\ast}(Z(\hat{H})^{\Gamma_p})\times\mathcal{N}(H) \end{equation} is injective and for any $b\in H(L)$, the image under $\kappa_{H}$ of $\bar{\nu}_{H}^{-1}(\bar{\nu}_H([b]))$ is a torsor under $H^1(F,J_b)$, where $J_b=J_b^H$ is the $F$-group such that for any $F$-algebra $R$, 
\[J_b(R)=\{g\in G(L\otimes R)\ |\ b\sigma(g)=gb\}\] 
($\sigma$ acting via its action on $L$): more explicitly, if 
\begin{equation} \label{eq:Xi}
\Xi:H(L\otimes\bar{K})\rightarrow H(\bar{K})
\end{equation}
denotes the map induced by the canonical $L$-algebra homomorphism $L\otimes\bar{K}\rightarrow \bar{K}:l\otimes x\mapsto lx$, the cocycle $b_{\tau}\in Z^1(W,H(\bar{K}))$ ($W:=W(\bar{K}/F)$) defined by $b$ gives rise to a map \[j_b=j_b^H:Z^1(W,J_b(\bar{K}))\rightarrow  Z^1(W,H(\bar{K})) : x_{\tau} \mapsto \Xi(x_{\tau})\cdot b_{\tau},\] which in turn induces an injection (again denoted by $j_b=j_b^H$)
\begin{equation} \label{eq:j_b}
j_b=j_b^H:H^1(F,J_b)\hookrightarrow B(H)=H^1(W,H)
\end{equation}
whose image is $j_b(H^1(F,J_b))=\bar{\nu}_{H}^{-1}(\bar{\nu}_H([b]))$ (cf. \cite[3.3 - 3.5]{Kottwitz97}).

Now, we assume that $F\subset\Qpb$ is a $p$-adic local field. Then, for any connected reductive $F$-groups $I_0\subset G$ and a basic $b\in I_0(\mfk)$, there exist canonical bijections
\[ \ker[H^1(F,J_b^{I_0})\rightarrow H^1(F,J_b^G)]=\ker[H^1(F,I_0)\rightarrow H^1(F,M)]  =\ker[H^1(F,I_0)\rightarrow H^1(F,G)], \]
where for $H=I_0, G$, $J_b^H$ denotes the corresponding group $J_b$ and $M:=Z_G(\nu)$ for $\nu:=\nu_{I_0}(b)$. 
The second equality follows from the fact that as $M$ is a Levi-subgroup of $G$, the map $H^1(F,M)\rightarrow H^1(F,G)$ is injective (cf. \cite[(4.13.3)]{Kottwitz97}).
To see the first equality, we choose $h\in I_0(L)$ and $n\in\N$ satisfying $\Nm_n(b')=\nu(p)^n$ for $b':=hb\sigma(h^{-1})$ \cite[(4.3)]{Kottwitz85}. Then, the maps $\Int(h):J_b^H\isom J_{b'}^H$ are $F$-isomorphisms and the inclusion $J_{b'}^{I_0}\hookrightarrow J_{b'}^G$ is a simultaneous inner-twist of the inclusion $I_0\hookrightarrow M$, via the cochain $b'_{\tau}\in C^1(W,I_0)$ defined by $b'$ which becomes cocycles in both $I_0^{\ad}$ and $M^{\ad}$ (more precisely, as $\nu$ maps to $Z(I_0)$, the map $\Xi:I_0(\mfk\otimes\bar{\mfk})\rightarrow I_0(\bar{\mfk})$ (\ref{eq:Xi}) restricts to an isomorphism 
\[ \Xi_{b'}:J_{b'}^{I_0}(\bar{\mfk})\isom I_0(\bar{\mfk})\] 
\cite[3.4]{Kottwitz97}, which identifies $J_{b'}^{I_0}$ with the inner twist of $I_0$ via the cocycle in $Z^1(F^{\nr}/F,I_0^{\ad})$ defined by the image of $b'$ in $I_0^{\ad}(F^{\nr})$). 
In view of these, the first equality is due to the canonical bijection $H^1(F,H)=H^1(F,H_{\bfab})$ (for any connected reductive group $H$ over arbitrary non-archimedean local $F$), which also implies that this equality is independent of the choice of a particular simultaneous inner twist $((J_b^{I_0})_{\Qpb}\hookrightarrow (J_b^G)_{\Qpb}) \isom ((I_0)_{\Qpb}\hookrightarrow M_{\Qpb})$. The resulting injection
\begin{equation} \label{eq:j_[b]^{I_0}}
j_{[b]}^{I_0}:\ker[H^1(F,I_0)\rightarrow H^1(F,G)] \hookrightarrow B(I_0)
\end{equation}
depends only on the $\sigma$-conjugacy class $[b]_{I_0}$, not on the choice of its representative in $I_0(\mfk)$ (which justifies the notation $j_{[b]}^{I_0}$).

Returning to the original set-up (where further $F=\Qp$ and $W(\bar{\mfk}/\Qp)=W_{\Qp}$), we know that:
\begin{itemize}
\item[(i)] For any pair of elements $(\delta,c)\in G(L_n)\times G(\mfk)$ satisfying (iii$'$) of Def. \ref{defn:stable_Kottwitz_triple}, the associated element $b:=c^{-1}\delta\sigma(c)$ has Newton quasi-cocharacter $\nu_{I_0}(b)=\frac{1}{n}\nu_{I_0}(\gamma_0)$  (Lemma \ref{lem:equality_of_two_Newton_maps}, (2), cf. \cite[Lem. 5.15]{LR87}). In particular, its $\sigma$-conjugacy class $[b]_{I_0}\in B(I_0)$ is basic.
\item[(ii)] The functorial map $\Int(c^{-1}):G(\mfk\otimes R)\isom G(\mfk\otimes R)$ (for $\Qp$-algebras $R$) restricts to a $\Qp$-isomorphism
\[ G_{\delta\theta}^{\mathrm{o}}\isom J_b^{I_0},\]
Indeed, one has $\gamma_0=(b\sigma)^n\cdot c^{-1}\cdot \sigma^n\cdot c$ (identity in $G(\mfk)\rtimes\langle\sigma\rangle$), which implies that an element $g\in G(\mfk\otimes R)$ commutes with $\gamma_0$ and $b\sigma$ if and only if $cgc^{-1}$ does with $\delta\sigma=c(b\sigma)c^{-1}$ and $\sigma^n$ ($\sigma$ acts on $G(\mfk\otimes R)$ via its action on $\mfk$). The composite of this with $\Xi_b:J_b^{I_0}(\bar{\mfk})\isom J_{b'}^{I_0}(\bar{\mfk})\isom I_0(\bar{\mfk})$ above
\[ G_{\delta\theta}^{\mathrm{o}}(\bar{\mfk}) \isom J_b^{I_0}(\bar{\mfk})\isom I_0(\bar{\mfk})\] 
equals the restriction to the neutral components of the composite isomorphism 
\begin{equation} \label{eq:psi}
\psi:=\Int(c^{-1})\circ p_{\delta}: (G_{\delta\theta})_{\mfk}\isom (G_{\mfk})_{\Nm \delta} \isom (G_{\gamma_0})_{\mfk}
\end{equation} which satisfies $\psi\cdot{}^{\sigma}\psi^{-1}=\Int(b)$.
\item[(iii)]
If the pair $(\delta,c)$ is required further to satisfy condition $\ast(\delta)$ of Def. \ref{defn:Kottwitz_triple}, the resulting $\sigma$-conjugacy class $[b]_{G}\in B(G)$ is a constant one depending only on $\gamma_0$. 
\end{itemize}
Therefore, with choice of a reference element $b=g_p^{-1}\delta\sigma(g_p)\in I_0(\mfk)$, 
there exist inclusions
\begin{equation} \label{eq:B(G(L_n);gamma_0)}
\mathfrak{D}_p^{(n)}(\gamma_0) \subset j_{[b]}^{I_0}(\ker[H^1(\Qp,I_0)\rightarrow H^1(\Qp,G)])\ \subset B(I_0).
\end{equation}

On the other hand, the natural diagram
\begin{equation} \label{eq:stable_sigma_conj_diagm}
 \xymatrix{ H^1(\Qp,G_{\delta\theta}^{\mathrm{o}}) \ar[r]  \ar[d]_{\simeq} & H^1(\Qp,R) \\
H^1(\Qp,I_0) \ar[r] & H^1(\Qp,G), \ar[u] }
\end{equation}
is commutative, where the map $H^1(\Qp,G)\rightarrow H^1(\Qp,R)=H^1(L_n,G)$ is the restriction map. To see this, we replace these cohomology groups by the respective abelianized cohomology groups (Appendix \ref{sec:abelianization_complex}). Then, the isomorphism $H^1(\Qp,(G_{\delta\theta}^{\mathrm{o}})_{\bfab})\isom H^1(\Qp,(I_0)_{\bfab})$ is induced by the restriction of the inner twist $\Int(c^{-1})\circ p_{\delta}$ (\ref{eq:psi}) to the abelianization complexes (which is then a $\Qp$-isomorphism). Since $p_{\delta}$ is the restriction of $\Xi^{(n)}:G(L_n\otimes\Qpb)\rightarrow G(\Qpb)$ and $\Int(c)$ is the identity on $G_{\bfab}=(Z(G^{\uc})\rightarrow Z(G))$, we see that $H^1(\Qp,(G_{\delta\theta}^{\mathrm{o}})_{\bfab})\isom H^1(\Qp,G_{\bfab})$ is induced by the restriction of $\Xi^{(n)}$ to the abelianization complexes. But the identification $H^1(\Qp,\Res_{L_n/\Qp}G)\isom H^1(L_n,G)$ (the Shapiro lemma) is also induced by $\Xi^{(n)}$, which proves the commutativity.

\begin{prop} \label{prop:B(gamma_0)=D(I_0,G;Qp)}
Suppose given a semi-simple $\gamma_0\in G(\Qp)$ and $(\delta,c)\in G(L_n)\times G(\mfk)$ satisfying conditions (iii$'$) and $\ast(\delta)$. Then, the inclusion (\ref{eq:B(G(L_n);gamma_0)}) is an equality:
\[\mathfrak{D}_p^{(n)}(\gamma_0)= j_{[b]}^{I_0}(\ker[H^1(\Qp,I_0)\rightarrow H^1(\Qp,G)]). \] 
Also, we have
\[ \mathfrak{C}_p^{(n)}(\gamma_0)=\im\left(\ker[H^1(\Qp,G_{\delta\theta}^{\mathrm{o}})\rightarrow H^1(\Qp,G)]\rightarrow \ker[H^1(\Qp,G_{\delta\theta})\rightarrow H^1(\Qp,R)]\right). \]
\end{prop}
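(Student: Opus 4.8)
The plan is to establish the first equality (the description of $\mathfrak{D}_p^{(n)}(\gamma_0)$) first, and then deduce the description of $\mathfrak{C}_p^{(n)}(\gamma_0)$ from it using the cartesian diagram and the commutative square (\ref{eq:stable_sigma_conj_diagm}). For the first equality, the inclusion $\subset$ is (\ref{eq:B(G(L_n);gamma_0)}), so the work is the reverse inclusion: given a class $[b']_{I_0}$ in the image of $j_{[b]}^{I_0}(\ker[H^1(\Qp,I_0)\to H^1(\Qp,G)])$, I must produce a pair $(\delta',c')$ satisfying (iii$'$) and $\ast(\delta')$ with $[c'^{-1}\delta'\sigma(c')]_{I_0}=[b']_{I_0}$. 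The idea is to run the constructions of \autoref{subsubsec:w-stable_sigma-conjugacy} in reverse: starting from $(\gamma_0,\delta,c)$ with reference cocycle $b=c^{-1}\delta\sigma(c)$, any $[b']$ lying in $\bar\nu_{I_0}^{-1}(\bar\nu_{I_0}([b]))$ with $\kappa_{I_0}([b'])\in j_{[b]}^{I_0}$-image corresponds (via the torsor structure from \cite[3.3--3.5]{Kottwitz97} recalled above) to a class in $\ker[H^1(\Qp,I_0)\to H^1(\Qp,G)]$. I would then use Lemma \ref{lem:stable-Norm-conjugacy=stable-sigma-conjugacy}, applied with $g_0\in G(\mfk)$ chosen so that $g_0\Nm_n\delta\,g_0^{-1}=\Nm_n\delta'$ for a suitably defined $\delta'$, to manufacture $c'\in G(\mfk)$ with $c'\gamma_0c'^{-1}=\Nm_n\delta'$ and $c'^{-1}\delta'\sigma(c')=b'\in I_0(\mfk)$; condition $\ast(\delta')$ is then automatic because, as recorded in item (iii) of the list preceding (\ref{eq:B(G(L_n);gamma_0)}), the class $[b']_G\in B(G)$ is the constant class depending only on $\gamma_0$, and the commutative diagram \[\xymatrix{B(I_0)\ar[r]^{\kappa_{I_0}}\ar[d] & \pi_1(I_0)_{\Gamma_p}\ar[d] \\ B(G)\ar[r]^{\kappa_G} & \pi_1(G)_{\Gamma_p}}\] together with $\ast(\delta)$ forces $\kappa_G([b'])=\kappa_G([b])=\mu^\natural-\kappa$-shift, i.e. $\kappa_{G_{\Qp}}(\overline{\delta'})=\mu^\natural$.

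For the second equality, I use the cartesian diagram
\[\xymatrix{ \mathfrak{C}_p^{(n)}(\gamma_0) \ar@{^(->}[r] & B(G_{\gamma_0}) \\ \mathfrak{D}_p^{(n)}(\gamma_0) \ar@{^{(}->}[r] \ar@{->>}[u] & B(I_0) \ar[u] }\]
established earlier, together with the commutativity of (\ref{eq:stable_sigma_conj_diagm}). By the first equality, $\mathfrak{D}_p^{(n)}(\gamma_0)=j_{[b]}^{I_0}(\ker[H^1(\Qp,I_0)\to H^1(\Qp,G)])$, and via the isomorphism $H^1(\Qp,G_{\delta\theta}^{\mathrm{o}})\isom H^1(\Qp,I_0)$ of item (ii) (the restriction of $\psi=\Int(c^{-1})\circ p_\delta$) this identifies with $j_{[b]}^{I_0}(\ker[H^1(\Qp,G_{\delta\theta}^{\mathrm{o}})\to H^1(\Qp,R)])$ — here I use that (\ref{eq:stable_sigma_conj_diagm}) commutes, so the kernel computed against $H^1(\Qp,G)$ matches the kernel computed against $H^1(\Qp,R)$ after restriction. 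Then I need to transport this description from $B(I_0)$ up to $B(G_{\gamma_0})$ along the surjection $\mathfrak{D}_p^{(n)}(\gamma_0)\twoheadrightarrow$ its image, i.e. along the inclusion $G_{\delta\theta}^{\mathrm{o}}\hookrightarrow G_{\delta\theta}$; the injectivity of $\mathfrak{C}_p^{(n)}(\gamma_0)\hookrightarrow B(G_{\gamma_0})$ proved in the text shows that $\mathfrak{C}_p^{(n)}(\gamma_0)$ is exactly the preimage, which unwinds to $\im(\ker[H^1(\Qp,G_{\delta\theta}^{\mathrm{o}})\to H^1(\Qp,G)]\to \ker[H^1(\Qp,G_{\delta\theta})\to H^1(\Qp,R)])$, matching (\ref{eq:sigma-conj_in_stable-sigma-conj}) with $R=\Res_{L_n/\Qp}G$. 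One should check that the two maps in the image construction ($H^1(\Qp,G_{\delta\theta}^{\mathrm{o}})\to H^1(\Qp,G_{\delta\theta})$ on the one hand, restriction to $H^1(\Qp,R)$ on the other) are compatible with the identifications, which is a diagram chase using the commutativity of (\ref{eq:stable_sigma_conj_diagm}) and the cartesian square.

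The main obstacle I anticipate is the reverse inclusion in the first equality: the subtlety flagged throughout \autoref{subsubsec:w-stable_sigma-conjugacy} is that elements of $\mathfrak{C}_p^{(n)}(\gamma_0)$ need only be \emph{w-stably} $\sigma$-conjugate, not stably $\sigma$-conjugate in the classical sense (Remark \ref{rem:Kottwitz_triples}, (4)), so one cannot directly cite the stable-norm parametrization \cite[Prop.5.7]{Kottwitz82}; instead one must argue entirely through the $B(I_0)$-side, producing the element $c'\in G(\mfk)$ via Lemma \ref{lem:stable-Norm-conjugacy=stable-sigma-conjugacy}, (1) (which only needs $g_0\in G(\mfk)$, precisely the weaker hypothesis available) rather than through descent of a stable conjugacy class. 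Making sure that the class obtained genuinely lands in $\mathfrak{D}_p^{(n)}(\gamma_0)$ — i.e. that condition (iii$'$) (not the stronger (iii$''$)) is what gets verified, and that $\ast(\delta')$ really follows from the constancy of $[b']_G$ — is where the care is needed; the rest is bookkeeping with abelianized cohomology and the injectivity/surjectivity already in hand.
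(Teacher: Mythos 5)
Your overall strategy matches the paper's proof: pass to cohomology via $j_{[b]}^{I_0}$, manufacture the pair $(\delta',c')$ using Lemma \ref{lem:stable-Norm-conjugacy=stable-sigma-conjugacy}, check $\ast(\delta')$ via the commutativity of (\ref{eq:stable_sigma_conj_diagm}), and read off the $\mathfrak{C}_p^{(n)}(\gamma_0)$ identity from the cartesian square. The key observation in your final paragraph — that one should argue entirely on the $B(I_0)$-side precisely because the stable-norm parametrization of \cite[Prop.5.7]{Kottwitz82} is not directly available under the weaker w-stable hypothesis — is correct and is precisely the paper's reason for giving a direct proof.

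The concrete gap is in your construction of $\delta'$. You propose to apply Lemma \ref{lem:stable-Norm-conjugacy=stable-sigma-conjugacy}, (1) "with $g_0\in G(\mfk)$ chosen so that $g_0\Nm_n\delta\,g_0^{-1}=\Nm_n\delta'$ for a suitably defined $\delta'$," but part (1) presupposes both $g_0$ and $\delta'$ are already in hand, and they are exactly what you have to produce. The paper runs the construction in the opposite direction, and this is the part of the lemma (part (2), not part (1)) that actually gets invoked: first transport $\beta\in\ker[H^1(\Qp,I_0)\rightarrow H^1(\Qp,G)]$ along the vertical isomorphism of (\ref{eq:stable_sigma_conj_diagm}) into $\ker[H^1(\Qp,G_{\delta\theta}^{\mathrm{o}})\rightarrow H^1(\Qp,R)]$; then, since $H^1(\Qpnr,G_{\delta\theta}^{\mathrm{o}})=\{1\}$, write $\beta=[g^{-1}\cdot{}^\tau g]$ with $g\in R(\Qpnr)$. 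This hands you $\delta':=g\delta g^{-\theta}\in G(L_n)$ for free, and only then does one set $g_0:=\Xi^{(n)}(g)$ (Lemma \ref{lem:stable-Norm-conjugacy=stable-sigma-conjugacy}, (2)) and $c':=g_0c$, after which one computes $b'=c'^{-1}\delta'\sigma(c')\in I_0$ and checks $j_{[b]}^{I_0}(\beta)=[b'_\tau]$ by an explicit cocycle computation. The paper also cleanly separates the bookkeeping by first proving the identity for the larger set $\mathfrak{D}_p^{(n)}(\gamma_0)'$ (dropping $\ast(\delta)$), namely that it equals $j_b^{I_0}(\ker[H^1(\Qp,G_{\delta\theta}^{\mathrm{o}})\rightarrow H^1(\Qp,R)])$, and only afterwards cuts down by the $\ast(\delta)$ constraint via the functoriality of $j_b$ and the commutativity of (\ref{eq:stable_sigma_conj_diagm}); your more direct route is fine in spirit but should adopt this order to avoid circularity when citing item (iii), which assumes rather than proves $\ast(\delta)$.
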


\begin{proof}
Let $\mathfrak{D}_p^{(n)}(\gamma_0)'$ be the subset of $B(I_0)$ consisting of $\sigma$-conjugacy classes $[b=c^{-1}\delta\sigma(c)]_{I_0}$ arising from the pairs $(\delta;c)$ satisfying condition (iii$'$) (but not necessarily $\ast(\delta)$). Then, we claim that 
\[\mathfrak{D}_p^{(n)}(\gamma_0)'=j_{b}^{I_0}(\ker[H^1(\Qp,G_{\delta\theta}^{\mathrm{o}})\rightarrow H^1(\Qp,R)])\] 
Clearly, this is the same as $\mathfrak{D}_p^{(n)}(\gamma_0)'=j_{b}^{I_0}(\ker[H^1(\Qp,G_{\delta\theta}^{\mathrm{o}})\rightarrow H^1(W_{\Qp},R(\bar{\mfk}))])$, which we now show.
The inclusion $\subset$ follows from the facts that any $b'\in \mathfrak{D}_p^{(n)}(\gamma_0)'$ has the property $\nu_{I_0}(b')=\nu_{I_0}(b)$, and that $j_b^{I_0}(H^1(\Qp,J_b^{I_0}))=\bar{\nu}_{I_0}^{-1}(\bar{\nu}_{I_0}([b]))$.
 
Conversely, any $\beta\in \ker[H^1(\Qp,G_{\delta\theta}^{\mathrm{o}})\rightarrow H^1(\Qp,R)]$ is of the form $[g^{-1}\cdot{}^{\tau}g]$ for some $g\in R(\Qpnr)$. Then, we have $\delta':=g\delta g^{-\theta}\in R(\Qp)$ and $gN\delta g^{-1}=N\delta'$ (in $R(\Qpnr)$). Hence, if $g_0:=\Xi^{(n)}(g)$ for the map $\Xi^{(n)}:R(\Qpnr)\rightarrow G(\Qpnr)$ from Lemma \ref{lem:stable-Norm-conjugacy=stable-sigma-conjugacy}, (2), we have $\Nm\delta'=g_0\Nm\delta g_0^{-1}$ (in $G(\Qpnr)$), that is, $c'\gamma_0c'^{-1}=\Nm\delta'$ for $c':=g_0c\in G(\mfk)$. The inclusion $\supset$ will be established, if we show that
$b':=c'^{-1}\delta'\sigma(c')\in I_0$ and 
\[j_b^{I_0}(\beta)=[b_{\tau}']\] 
for the cocycle $b'_{\tau}$ attached to $b'$ (i.e. $\tau\mapsto \Nm_{l(\tau)}b'$, where $l(\tau)\in\N$ is given by $\tau|_{L_n}=\sigma^{l(\tau)}$ with $0\leq l(\tau)<n$).
If $g=(g_1,\cdots,g_n)$ in $R(\Qpnr)=G(\Qpnr)\times\cdots\times G(\Qpnr)$ (then, $g_0=\Xi^{(n)}(g)=g_1$), it follows from $\Nm\delta'=g_0\Nm\delta g_0^{-1}$ that $g_i=(\Nm_{i-1}\delta')^{-1}\cdot g_1\cdot \Nm_{i-1}\delta$, and thus, as $p_{\delta}({}^\tau g)=\tau(g_{n-l(\tau)+1})$ (\ref{eq:Galois_action_on_R(Kbar)}), one has
\[p_{\delta}(g^{-1}\cdot {}^{\tau}g)=g_0^{-1}\cdot \Nm_{l(\tau)}\delta'\cdot \tau(g_0)\cdot (\Nm_{l(\tau)}\delta)^{-1}=c\cdot \Nm_{l(\tau)}b'\cdot (\Nm_{l(\tau)}b)^{-1}\cdot c^{-1}\]
for any $\tau\in W$; in particular, $b'\in I_0$. Then, since the map $\Xi:G_{\delta\theta}^{\mathrm{o}}(\bar{\mfk})\isom J_b^{I_0}(\bar{\mfk})\isom I_0(\bar{\mfk})$ equals $\Int(c^{-1})\circ p_{\delta}$ (\ref{eq:psi}), we see that
$ j_b^{I_0}(g^{-1}\cdot{}^{\tau}g)=\Xi_b(g^{-1}\cdot{}^{\tau}g)\cdot b_{\tau}=\Nm_{l(\tau)}b'$. 

From the injectivity of (\ref{eq:kappa_times_nu}), it follows that an element $[b'=c'^{-1}\delta'\sigma(c')]_{I_0}$ of $\mathfrak{D}_p^{(n)}(\gamma_0)'$ lies in the subset $\mathfrak{D}_p^{(n)}(\gamma_0)$ if (and only if) $[b']=[b]$ in $B(G)$, namely 
if $\beta\in\ker[H^1(\Qp,J_b^{I_0})\rightarrow H^1(\Qp,J_b^G)]$ when $[b']_{I_0}=j_{[b]}^{I_0}(\beta)$ (so that $[b']_{G}=j_{[b]}^{G}(\beta)$, as $j_b$ is functorial with respect to $I_0\rightarrow G$).  But, from the commutative diagram (\ref{eq:stable_sigma_conj_diagm}), we see that any class in $\ker[H^1(\Qp,I_0)\rightarrow H^1(\Qp,G)]$ already lies in $\mathfrak{D}_p^{(n)}(\gamma_0)'$, so the first statement follows. The second statement is a consequence of the first statement.
\end{proof}


\subsubsection{Kottwitz invariant} \label{subsubsec:Kottwitz_invariant}

Here, we extend, to general groups, the definition of Kottwitz invariant which was previously used (at least in the context of Shimura varieties, as in \cite[$\S$2]{Kottwitz90}, \cite[$\S$5]{Kottwitz92}) under the assumption $G^{\der}=G^{\uc}$.
When $G^{\der}=G^{\uc}$, the notion of Kottwitz invariant is well-defined by a Kottwitz triple only. For general groups, however, this is not the case any longer, and we generalize the defintion such that it depends on some auxiliary choices as well as on stable Kottwitz triple; in fact, as we will see, it seems better to regard Kottwitz invariant as being defined on the set $\ker[H^1(\A_f,I_0)\rightarrow H^1(\A_f,G)]$.
Recall our convention for the notation $X_{\ast}(A)$, $X^{\ast}(A)$, and $A^D$: for a locally compact abelian group $A$, they denote the (co)character groups $\Hom(\C^{\times},A)$, $\Hom(A,\C^{\times})$, and the Pontryagin dual group $\Hom(A,S^1)$, respectively. Let $\Gamma:=\Gal(\Qb/\Q)$. 

Let $(\gamma_0;\gamma,\delta)$ be a \emph{stable} Kottwitz triple and put $I_0:=G_{\gamma_0}^{\mathrm{o}}$, $\tilde{I}_0:=\rho^{-1}(I_0)$ for the canonical homomorphism $\rho=\rho_G:G^{\uc}\rightarrow G$.
The exact sequence
\[1\rightarrow Z(\hat{G})\rightarrow Z(\hat{I}_0)\rightarrow Z(\hat{\tilde{I}}_0) \rightarrow 1\]
induces a homomorphism \cite[Cor.2.3]{Kottwitz84a} 
\begin{equation} \label{eq:boundary_map_for_center_of_dual} 
\partial: \pi_0(Z(\hat{\tilde{I}}_0)^{\Gamma})\rightarrow H^1(\Q,Z(\hat{G})). 
\end{equation}
Let $\ker^1(\Q,Z(\hat{G}))$ denote the kernel of the localization map $H^1(\Q,Z(\hat{G}))\rightarrow \prod_v H^1(\Q_v,Z(\hat{G}))$. Then we define $\mathfrak{K}(I_0/\Q)$ by
\[\mathfrak{K}(I_0/\Q) :=\{ a\in \pi_0(Z(\hat{\tilde{I}})^{\Gamma}) \ |\ \partial(a)\in \ker^1(\Q,Z(\hat{G}))\}.\]
This is known to be a finite group. Since $\gamma_0$ is elliptic, there is also an identification
\[\mathfrak{K}(I_0/\Q)=\biggl( \bigcap_v Z(\hat{I}_0)^{\Gamma_v}Z(\hat{G}) \biggl)/Z(\hat{G}).\]

The Kottwitz invariant $\alpha(\gamma_0;\gamma,\delta;(g_v)_v)$ attached to the stable Kottwitz triple $(\gamma_0;\gamma,\delta)$ and certain auxiliary elements $((g_l)_{l\neq p},g_p)\in G(\bar{\A}_f^p)\times G(\mfk)$ is then a character of $\mathfrak{K}(I_0/\Q)$. 
It is defined as a product over all places of $\Q$ of the restrictions to $\mathfrak{K}(I_0/\Q)$ of some
characters $\tilde{\alpha}_v(\gamma_0;\gamma,\delta;(g_v)_{v})\in  X^{\ast}(Z(\hat{I}_0)^{\Gamma_v}Z(\hat{G}))$:
\[\alpha(\gamma_0;\gamma,\delta;(g_v)_v)=\prod_v \tilde{\alpha}_v|_{\bigcap_v Z(\hat{I}_0)^{\Gamma_v}Z(\hat{G})}\]
(it factors through the quotient $( \bigcap_v Z(\hat{I}_0)^{\Gamma_v}Z(\hat{G}) )/Z(\hat{G})$).
For each place $v$, $\tilde{\alpha}_v$ is itself defined as the unique extension $\tilde{\alpha}_l(\gamma_0;\gamma_l;g_l)$ ($l\neq p,\infty$), $\tilde{\alpha}_p(\gamma_0;\delta;g_p)$, $\tilde{\alpha}_{\infty}(\gamma_0)$ of another character on $Z(\hat{I}_0)^{\Gamma_v}$: 
\begin{equation} \label{eq:alpha(gamma_0;gamma,delta)}
\alpha_l(\gamma_0;\gamma_l;g_l)\ (l\neq p,\infty),\quad \alpha_p(\gamma_0;\delta;g_p),\quad \alpha_{\infty}(\gamma_0)
\end{equation} 
The restriction of $\tilde{\alpha}_v$ to $Z(\hat{G})$ is defined to be
\footnote{The sign here is opposite to the sign used by Kottwitz in \cite{Kottwitz90}, \cite{Kottwitz92}.}
\begin{equation} \label{eq:restriction_of_alpha_to_Z(hatG)}
\tilde{\alpha}_v|_{Z(\hat{G})}=
\begin{cases}
\ -\mu_{\natural} &\text{ if }\quad  v=\infty \\
\quad \mu_{\natural} &\text{ if }\quad  v=p \\
\text{ trivial } &\text{ if }\quad  v\neq p,\infty.
\end{cases}
\end{equation}
(Such extension of $\alpha_v\in X^{\ast}(Z(\hat{I}_0)^{\Gamma_v}Z(\hat{G}))$ will be possible, since $\alpha_v$'s have the same restrictions to $Z(\hat{I}_0)^{\Gamma_v}\cap Z(\hat{G})$ as these characters (\ref{eq:restriction_of_alpha_to_Z(hatG)}) on $Z(\hat{G})$, as we will see now).
Finally, the characters $\alpha_v\in X^{\ast}(Z(\hat{I}_0)^{\Gamma_v})$ (\ref{eq:alpha(gamma_0;gamma,delta)}) are defined as follows: 

For each finite place $l\neq p$, we choose $g_l\in G(\Qlb)$ such that 
\begin{equation} \label{eq:stable_g_l}
g_l\gamma_0g_l^{-1}=\gamma_l,\ \text{ and }\ g_l^{-1}\tau(g_l)\in I_0(\Qlb)\quad \forall\tau\in\Gamma_l,
\end{equation}
so that $\tau\mapsto g_l^{-1}\tau(g_l)$ is a cocycle in $Z^1(\Ql,I_0)$. In view of the canonical isomorphism $H^1(\Ql,I_0)\isom\pi_0(Z(\hat{I}_0)^{\Gamma_l})^D$ \cite[Thm.1.2]{Kottwitz86}, the corresponding cohomology class gives a character $\alpha_l(\gamma_0;\gamma_l;g_l)$ of $Z(\hat{I}_0)^{\Gamma_l}$. We note that this class $\alpha_l(\gamma_0;\gamma_l;g_l)$ lives in $\ker[H^1(\Ql,I_0)\rightarrow H^1(\Ql,G)]$ and in general depends on the choice of $g_l$ as well as on the pair $(\gamma_0,\gamma_l)$, while its image in $H^1(\Ql,G_{\gamma_0})$ does not. More precisely, it depends only on the left $I_0(\Qlb)$-coset $g_lI_0(\Qlb)$.  In \cite[p.41]{Labesse99}, the set of the left $I_0(\Qlb)$-cosets of such elements $g_l$ (i.e. $g_l^{-1}\tau(g_l)\in I_0(\Qlb),\forall\tau\in\Gamma_l$) is denoted by $\mathbb{H}^0(\Ql,I_0\backslash G)$ and the set $\ker[H^1(\Ql,I_0)\rightarrow H^1(\Ql,G)]$ by $\mathfrak{D}(I_0,G;\Ql)$,
\footnote{By definition \cite[3.1.1]{Borovoi98}, $\mathbb{H}^0(\Ql,I_0\backslash G)$ equals the hypercohomology set $\mathbb{H}^0(\Q,I_0\rightarrow G)$ of the complex of $\Q$-groups $I_0\rightarrow G$ ($I_0$ being located in degree -1, as always).}
and $\alpha_l(\gamma_0;\gamma_l;g_l)$ is the image of $g_l I_0(\Qlb)$ under the natural surjective map 
\[\mathbb{H}^0(\Ql,I_0\backslash G)\rightarrow \ker[H^1(\Ql,I_0)\rightarrow H^1(\Ql,G)]\ :\ g_l I_0(\Qlb)\mapsto [g_l^{-1}\tau(g_l)].\]
Namely, the invariant $\alpha_l(\gamma_0;-;-)$ can be regarded as a such map. Also for $\eta\in \mathfrak{D}(I_0,G;\Ql):=\ker[H^1(\Ql,I_0)\rightarrow H^1(\Ql,G)]$, the fiber of $\eta$ is identified with the set $G(\Ql)/I_{\eta}(\Ql)$, where $I_{\eta}=xI_0x^{-1}$ (inner form of $I_0$) for any $x\in \mathbb{H}^0(\Ql,I_0\backslash G)$ with image $\eta$.

On the other hand, the set $\ker[H^1(\Ql,G_{\gamma_0})\rightarrow H^1(\Ql,G)]$ classifies the $G(\Ql)$-conjugacy classes in the $G(\Qlb$)-conjugacy class of $\gamma_0\in G(\Ql)$, and its subset
\begin{equation} \label{eq:C_l(gamma_0)}
\mathfrak{C}_l(\gamma_0):= \im\left(\ker[H^1(\Ql,I_0)\rightarrow H^1(\Ql,G)]\rightarrow \ker[H^1(\Ql,G_{\gamma_0})\rightarrow H^1(\Ql,G)]\right)
\end{equation}
classifies the $G(\Ql)$-conjugacy classes in the \emph{stable} conjugacy class of $\gamma_0\in G(\Ql)$.
The invariant $\alpha_l(\gamma_0;\gamma_l;g_l)$ is then a lifting in $\ker[H^1(\Ql,I_0)\rightarrow H^1(\Ql,G)]$  of the $G(\Ql)$-conjugacy class of $\gamma_l$.

For $v=p$, when we choose $g_p\in G(\mfk)$ satisfying condition (iii$'$) of Def. \ref{defn:stable_Kottwitz_triple} (with $g_p=c$),
the element $b:=g_p^{-1}\delta\sigma(g_p)\in I_0(\mfk)$ and its $\sigma$-conjugacy class $[b]_{I_0}$ in $B(I_0)$ in general depends on the choice of $g_p$ as well as on the pair $(\gamma_0,\delta)$ while its $\sigma$-conjugacy class in $B(G_{\gamma_0})$ does not. We define $\alpha_p(\gamma_0;\delta;g_p)$ to be the image of this class under the canonical map $\kappa_{I_0}:B(I_0)\rightarrow X^{\ast}(Z(\hat{I}_0)^{\Gamma(p)})$ (or, its restriction $B(I_0)_{basic}\isom X^{\ast}(Z(\hat{I}_0)^{\Gamma(p)})$ \cite[Prop.6.2]{Kottwitz85}). 

We have seen (Prop. \ref{prop:B(gamma_0)=D(I_0,G;Qp)}) that with choice of a reference element $[b]_{I_0}$ in $\mathfrak{D}_p^{(n)}(\gamma_0)\subset B(I_0)_{basic}$, the map $\kappa_{I_0}-\kappa_{I_0}([b])$ provides an identification
\[\mathfrak{D}_p^{(n)}(\gamma_0)=\ker[H^1(\Qp,I_0)\rightarrow H^1(\Qp,G)] \]
sending $[b]_{I_0}$ to the distinguished element of the target (whose inverse is given by $j_{[b]}$ (\ref{eq:B(G(L_n);gamma_0)})) and that the set $\mathfrak{C}_p^{(n)}(\gamma_0)$ of $\sigma$-conjugacy classes of elements in $G(L_n)$ satisfying conditions (iii$'$) and $\ast(\delta)$ is identified with
\begin{equation} \label{eq:C_p(gamma_0)}
\im\left(\ker[H^1(\Qp,G_{\delta\theta}^{\mathrm{o}})\rightarrow H^1(\Qp,G)]\rightarrow \ker[H^1(\Qp,G_{\delta\theta})\rightarrow H^1(\Qp,R)]\right).
\end{equation}
Further, we can regard the invariant $\alpha_p$ as being valued in $\ker[H^1(\Qp,G_{\delta\theta}^{\mathrm{o}})\rightarrow H^1(\Qp,G)]$; or, as in the case $l\neq p$, one can also view $\alpha_p$ as a map 
\[\mathbb{H}^0(\Qp,I_0\backslash G)\rightarrow \mathfrak{D}(I_0,G;\Qp). \]

For $v=\infty$, we define $\alpha_{\infty}(\gamma_0)$ to be the image of $\mu_h$ in $\pi_1(I_0)_{\Gamma(\infty)}\cong X^{\ast}(Z(\hat{I}_0)^{\Gamma(\infty)})$ for some $h\in X$ factoring through a maximal torus $T$ of $G$ containing $\gamma_0$ and elliptic over $\R$. This character is independent of the choice of $(T,h)$ \cite[Lem.5.1]{Kottwitz90}.

Since the image of $\alpha_p$ in $X^{\ast}(Z(\hat{G})^{\Gamma(p)})$ equals that of the $\sigma$-conjugacy class of $b$ in $B(G_{\Qp})$ under $\kappa_{G_{\Qp}}$, it is equal to $\mu^{\natural}$ by condition $\ast(\delta)$, which makes the construction of $\tilde{\alpha}_v$ above possible. It also follows that the product $\prod_v \tilde{\alpha}_v$ is trivial on $Z(\hat{G})$.
Clearly, the Kottwitz invariant $\alpha(\gamma_0;\gamma,\delta;(g_v)_v)$ is determined by the adelic class $(\alpha_v(\gamma_0;\gamma,\delta;g_v))_v\in \ker[H^1(\A_f,I_0)\rightarrow H^1(\A_f,G)]$ 
(once we fix a reference $\sigma$-conjugacy class $[b]=[g_p^{-1}\delta\sigma(g_p)]\in B(I_0)$). Here and thereafter, to have uniform notation for all $v$'s, we write $\alpha_v(\gamma_0;\gamma,\delta;g_v)$ and $\tilde{\alpha}_v(\gamma_0;\gamma,\delta;g_v)$, although for specific $v$, these depend only on either $\gamma_l\ (l\neq p)$ or $\delta$.

Later, we will often use the expression ``\textit{stable Kottwitz triple with trivial Kottwitz invariant}". By this, we will mean that for given stable Kottwitz triple $(\gamma_0;(\gamma_l)_{l\neq p},\delta)$ there exist elements $(g_v)_v\in G(\bar{\A}_f^p)\times G(\mfk)$ satisfying conditions (\ref{eq:stable_g_l}), (\ref{eq:stable_g_l}) such that the associated Kottwitz invariant $\alpha(\gamma_0;\gamma,\delta;(g_v)_v)$ vanishes.

\subsection{Admissible pairs and associated Kottwitz triples}

Recall that every parahoric subgroup $\mbfK_p\subset G(\Qp)$ is defined by a $\sigma$-stable facet $\mbfa$ in the building $\mcB(G,\mfk)$ which also gives rise to a parahoric subgroup $\mbfKt_p\subset G(\mfk)$ with $\mbfK_p=\mbfKt_p\cap G(\Qp)$. To formulate the definition of admissible pair for general parahoric level, we need to consider the coset space $G(\mfk)/\mbfKt_p$ endowed with the obvious action of the semi-direct product $G(\mfk)\rtimes\langle\sigma\rangle$, instead of the Bruhat-Tits building $\mcB(G,\mfk)$ which was used in the hyperspecial level case; note that unless $\mbfK_p$ is hyperspecial (in which case $\mbfKt_p$ equals the whole stabilizer $\mathrm{Stab}(\mbfo)\subset G(\mfk)$), $G(\mfk)/\mbfKt_p$ is not a subset of $\mcB(G,\mfk)$ in any natural manner.

\begin{defn} \cite[p.189]{LR87} \label{defn:admissible_pair}
A pair $(\phi,\epsilon)$ is called \textit{admissible (of level $n=[\kappa(\wp):\Fp]m$)} if 
\begin{itemize}
\item[(1)] $\phi:\fQ\rightarrow \fG_G$ is admissible in the sense of Def. \ref{defn:admissible_morphism}.
\item[(2)] $\epsilon\in I_{\phi}(\Q)(=\{g\in G(\Qb)\ |\ \mathrm{Int}(g)\circ\phi=\phi\})$.
\item[(3)] There are $\gamma=(\gamma(v))\in G(\A_f^p)$, $y\in X^p(\phi)$, and $x$ in $G(\mfk)/\mbfKt_p$ 
such that
\[\epsilon y=y\gamma\quad \text{ and }\quad \epsilon x=\Phi^mx.\]
\end{itemize}
\end{defn}

Comments on condition (3) are in order: the equation $\epsilon x=\Phi^mx$ requires actions of $I_{\phi}(\Q)$ and $\Phi$ on $G(\mfk)/\mbfKt_p$. Such actions are specified by choice of $u\in G(\Qpb)$ such that $\xi_p':=\Int(u)\circ\xi_p$ is unramified ($\xi_p:=\phi(p)\circ\zeta_p:\fG_p\rightarrow \fG_G(p)$).
First, as an automorphism of $G(\mfk)/\mbfKt_p$, $\Phi$ acts by left multiplication by $\Phi_b:=(b\sigma)^{[\kappa(\wp):\Fp]}$ when $\xi_p'(s_{\widetilde{\sigma}})=b\widetilde{\sigma}$ for a (fixed) lifting $\widetilde{\sigma}\in\Gal(\Qpb/\Qp)$ of the Frobenius automorphism $\sigma$ of $\Qpnr$ (recall that $\tau\mapsto s_{\tau}$ is the previously chosen section to the projection $\fG_p\rightarrow\Gal(\Qpb/\Qp)$). 
Moreover, the choice of $u$ also fixes an action of $I_{\phi}(\Q)$ on $G(\mfk)/\mbfKt_p$ via the homomorphism 
\[\Int(u):I_{\phi}(\Qp)=\mathrm{Aut}(\phi(p))\hookrightarrow \mathrm{Aut}(\phi(p)\circ\zeta_p)\isom \mathrm{Aut}(\xi_p')\subset G(\Qpnr).\]
(one has $\mathrm{Aut}(\xi_p')\subset G(\Qpnr)$, since $\xi_p'(s_{\tau})=1\rtimes\tau$ for every $\tau\in\Gal(\Qpb/\Qpnr)$, so that $\epsilon'\tau=\epsilon'\cdot \xi_p'(s_{\tau})=\xi_p'(s_{\tau})\cdot \epsilon'=\tau(\epsilon')\tau$ for all $\tau\in\Gal(\Qpb/\Qpnr)$).
Then, by existence of $x$ in $G(\mfk)/\mbfKt_p$ satisfying $\epsilon x=\Phi^mx$, we mean that for some (equiv. any) $u\in G(\Qpb)$ such that $\Int(u)\circ\xi_p$ is unramified, there exists $x$ in the coset space $G(\mfk)/\mbfKt_p$ satisfying that 
\begin{equation} \label{defn:admissible_pair_(3)a}
\epsilon'x=(b\sigma)^{[\kappa(\wp):\Fp]m}x,
\end{equation}
where $\epsilon':=u\epsilon u^{-1}\in G(\Qpnr)$. 
Here, to define $F=b\sigma$, instead of $\xi_p'$, we could also have used a $\Qpnr/\Qp$-Galois gerb morphism $\theta^{\nr}$ whose inflation to $\Qpb$ is $\xi_p'$: it does not change $b$ (Lemma \ref{lem:unramified_morphism}).

\begin{rem} \label{rem:admissible_pair}
(1) Suppose that the anisotropic kernel of $Z(G)$ (maximal anisotropic subtorus of $Z(G)$) remains anisotropic over $\R$ (then $Z(G)(\Q)$ is discrete in $Z(G)(\A_f)$); take $\mbfK^p$ to be small enough so that $Z(\Q)\cap \mbfK_p\mbfK^p=\{1\}$ and also condition (1.3.7) of \cite{Kottwitz84b} holds.

Then, conjecture (\ref{conj:Langlands-Rapoport_conjecture_ver1}) leads to the following description of the finite sets $\sS_{\mbfK}(G,X)(\F_{q^m})=[\sS_{\mbfK}(G,X)(\Fpb)]^{\Phi^m=\mathrm{id}}$ for each finite extension $\F_{q^m}$ of $\F_q=\kappa(\wp)$, where $\Phi:=F^r$ is the $r=[\kappa(\wp):\Fp]$-th relative Frobenius (cf. \cite[1.3-1.5]{Kottwitz84b}, \cite[$\S$5]{Milne92}): There exists a bijection (forming a compatible family for varying $\mbfK^p$)
\begin{equation} \label{eqn:LRconj-ver2} 
\sS_{\mbfK}(G,X)(\F_{q^m})\isom \bigsqcup_{(\phi,\epsilon)} S(\phi,\epsilon), \end{equation} where $S(\phi,\epsilon)=I_{\phi,\epsilon}(\Q)\backslash X_p(\phi,\epsilon)\times X^p(\phi,\epsilon)_{\mbfK^p}/\mbfK^p$ with 
\begin{align} \label{eqn:LRconj-ver2_constituents} 
I_{\phi,\epsilon}&:= \text{ centralizer of }\epsilon\text{ in }I_{\phi}(\Q), \\  
X^p(\phi,\epsilon)_{\mbfK^p}&:= \{ x^p\in X^p(\phi) \ |\ \epsilon x^p=x^p\mod \mbfK^p\}, \nonumber \\  
X_p(\phi,\epsilon)&:= \{ x_p\in X_p(\phi) \ |\ \epsilon x_p=\Phi^mx_p\} \nonumber \\
&\simeq \{ x_p\in X(\{\mu_X\},b)_{\mbfK_p} \ |\ \epsilon' x_p=(b\sigma)^{rm}x_p\}. \nonumber
\end{align}
In (\ref{eqn:LRconj-ver2}), $(\phi,\epsilon)$ runs through a set of representatives for the conjugacy classes of pairs consisting of an admissible morphism $\phi$ and an element $\epsilon\in I_{\phi}(\Q)(\subset G(\Qb))$. 
We emphasize that the second identification of $X_p(\phi,\epsilon)$ requires choice of $g_0\in G(\Qpb)$ with $g_0\mbfK_p(\Qpnr)\in X_p(\phi)$ (so that especially, $\Int(g_0^{-1})\circ\phi(p)\circ\zeta_p$ is unramified), which also specifies an action of $(b\sigma)^r$ on $X(\{\mu_X\},b)_{\mbfK_p}$ (for the Frobenius $\Phi=b\sigma$ defined in (\ref{eq:Frob_Phi_2})) as well as an action of $\epsilon\in I_{\phi}(\Q)$ via $\epsilon':=\Int g_0^{-1}(\epsilon)$ (\ref{eq:action_of_I_{phi}_on_AffDL}),  cf. (\ref{defn:admissible_pair_(3)a}).

(2) According to this conjecture (\ref{eqn:LRconj-ver2}), an admissible pair $(\phi,\epsilon)$ will contribute to the set $\sS_{\mbfK}(G,X)(\F_{q^m})$ if and only if in condition (3) one can find $y\in X^p(\phi)(\subset G(\bar{\A}_f^p))$ and a solution $x\in G(\mfk)/\mbfKt_p$ to the equation $\epsilon x=\Phi^mx$ (i.e. to (\ref{defn:admissible_pair_(3)a})), further satisfying that
\begin{equation} \label{eqn:effectivity_admssible_pair_condition}
y^{-1}\epsilon y\in \mbfK^p,\ \text{ and }\ x\in X(\{\mu_X\},b)_{\mbfK_p}
\end{equation}

Regarding this, we emphasize that in condition (3) of the definition of an admissible pair, $y\in X^p(\phi)$ and $x\in G(\mfk)/\mbfKt_p$ are \emph{not} demanded to satisfy this (seemingly natural) condition (\ref{eqn:effectivity_admssible_pair_condition}).
\footnote{So, Definition 5.8 of \cite{Milne92} is not correct, thus some proofs based on it in that work, especially the proof of Cor. 7.10, are incomplete; the first complete proof of (a substantial generalization of) that corollary is given in this work.}.
Indeed, Langlands and Rapoport made this point explicit: \textit{``Es wird \"ubrigens nicht verlangt, dass $x$ in $X_p$ liegt.''} \cite[p. 189, line +18]{LR87}. We remark that the main function of this notion of admissible pair is to attach a Kottwitz triple, but the usefulness of this definition comes from its flexibility that $x$ in (\ref{defn:admissible_pair_(3)a}) is \emph{not} required to be found in $X(\{\mu_X\},b)_{\mbfK_p}$.

(3) We will call an admissible pair $(\phi,\epsilon)$ $\mbfK^p$-\textit{effective}, respectively $\mbfK_p$-\textit{effective}  if there exist $y\in X^p(\phi)$, respectively $x\in G(\mfk)/\mbfKt_p$ with $\epsilon x=\Phi^mx$, satisfying the conditions (\ref{eqn:effectivity_admssible_pair_condition}). Note that if  $(\phi,\epsilon)$ is $\mbfK^p$-effective and $\epsilon\in G(\A_f^p)$, $\epsilon$ itself lies in a compact open subgroup of $G(\A_f^p)$ (cf. proof of Lemma \ref{lem:invariance_of_(ast(gamma_0))_under_transfer_of_maximal_tori}, (2)).
\end{rem}

Two admissible pairs $(\phi,\epsilon)$, $(\phi',\epsilon')$ are said to be \textit{equivalent} (or \textit{conjugate}) if there exists $g\in G(\Qb)$ such that $(\phi',\epsilon')=g(\phi,\epsilon)g^{-1}$.

\subsubsection{}  \label{subsubsec:K-triple_attached_to_adm.pair}
Now, we explain how with any admissible pair $(\phi,\epsilon)$, one can associate a Kottwitz triple $(\gamma_0;\gamma:=(\gamma_l)_{l\neq p},\delta)$, cf. \cite{LR87}, p189. 

First, we set $\gamma_l:=\gamma(l)$, i.e. $\gamma=y^{-1}\epsilon y\in G(\A_f^p)$ for some $y\in X^p(\phi)$. When we replace $y$ by $yh, h\in G(\A_f^p)$, $\gamma$ goes over to $h^{-1}\gamma h$, thus the $G(\A_f^p)$-conjugacy class of $\gamma$ is well-determined by the pair $(\phi,\epsilon)$. Also, one easily sees that if $(\phi',\epsilon')=g(\phi,\epsilon)g^{-1}\ (g\in G(\Qb))$, the corresponding $G(\A_f^p)$-conjugacy classes of $(\gamma_l')_{l\neq p}$, $(\gamma_l)_{l\neq p}$ are the same, since $y\mapsto gy$ gives a bijection $X^p(\phi)\isom X^p(\Int g\circ\phi)$.

Next, to find $\gamma_0$, we use Lemma \ref{lem:LR-Lemma5.23} below, which is a generalization of Lemma 5.23 of \cite{LR87}. If an admissible pair $(\phi,\epsilon)$ satisfies that $\phi=i\circ\psi_{T,\mu_h}$ and $\epsilon\in T(\Q)$ for a special Shimura datum $(T,h)$, then we will say that it is \textit{nested} 
\footnote{This is our translation of the German word \textit{eingeschachtelt} used by Langlands-Rapoport \cite[p.190, line19]{LR87}.}
in $(T,h)$ and that the (admissible) pair $(i\circ\psi_{T,\mu_h},\epsilon)$ is a \emph{special} (admissible) pair. Hence, this lemma says that every admissible pair is conjugate to an admissible pair that is nested (in particular, well-located) in a special Shimura sub-datum of $(G,X)$.
Then, we define $\gamma_0$ to be any (semi-simple) rational element of $G$ that is $G(\Qb)$-conjugate to $\epsilon$ and also lies in a maximal $\Q$-torus $T'$ of $G$ that is elliptic at $\R$, which we now know exists; note that any such $\gamma_0$ given by Lemma \ref{lem:LR-Lemma5.23} is always semi-simple.
By definition, this rational element is well-defined up to $G(\Qb)$-conjugacy.

Finally, to define $\delta$, we choose $u\in G(\Qpb)$ such that $\Int u\circ \xi_p\ (\xi_p:=\phi(p)\circ\zeta_p)$ is unramified. When $\Int u\circ \xi_p$ is the inflation of a morphism $\theta^{\nr}:\fD\rightarrow \fG_{G_{\Qp}}^{\nr}$ of Galois $\Qpnr/\Qp$-gerbs, we define $b\in G(\Qpnr)$ by $\theta^{\nr}(s_{\sigma})=b\sigma$, and put $\epsilon'=u\epsilon u^{-1}\in G(\Qpnr)$.

\begin{lem} \label{lem:Kottwitz84-a1.4.9_b3.3}
(1) Let $\Psi=b\sigma^n\in G(\mfk)\rtimes\langle\sigma\rangle$ such that $n\neq0$. Then, $\Psi$ is conjugate under $G(\mfk)$ to $\sigma^n$ if and only if $\Psi$ fixes some point in $G(\mfk)/\mbfKt_p$.

(2) Let $H$ be a quasi-split group over a $p$-adic field, either a local field $F$ or the completion $L$ of its maximal unramified extension $F^{\nr}$ in $\overline{F}$. Then, the map $v_{H_L}:H(\mfk)\rightarrow \Hom(X_{\ast}(Z(\widehat{H}))^I,\Z)$ (\autoref{subsubsec:Kottwitz_hom}) vanishes on any special maximal bounded subgroup of $H(\mfk)$ (not just on special parahoric subgroups).
\end{lem}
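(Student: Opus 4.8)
\textbf{Proof plan for Lemma \ref{lem:Kottwitz84-a1.4.9_b3.3}.}

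The plan is to treat the two parts separately, as they are essentially the statements (a.1.4.9) and (b.3.3) of \cite{Kottwitz84b} (the first one in a form adapted to general parahoric level), so the strategy is to follow Kottwitz's arguments with the few modifications needed when $\mbfKt_p$ is an arbitrary parahoric rather than a hyperspecial maximal compact.

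For (1), one direction is trivial: if $\Psi = g\sigma^n g^{-1}$ for some $g\in G(\mfk)$, then $\Psi$ fixes the coset $g\mbfKt_p$, since $\sigma$ stabilizes the $\sigma$-stable facet $\mbfa$ defining $\mbfKt_p$, hence $\sigma^n\mbfKt_p=\mbfKt_p$, and therefore $\Psi(g\mbfKt_p)=g\sigma^n(\mbfKt_p)=g\mbfKt_p$. For the converse, suppose $\Psi=b\sigma^n$ fixes some $x=g\mbfKt_p\in G(\mfk)/\mbfKt_p$; replacing $\Psi$ by its conjugate $g^{-1}\Psi\, {}^{\sigma^n\!}g$ (which corresponds to replacing $x$ by $\mbfKt_p$) we may assume $b\sigma^n$ fixes the base coset, i.e. $b\in\mbfKt_p$. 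Then $b\sigma^n$ generates a subgroup $\langle b\sigma^n\rangle$ acting on $\mcB(G,\mfk)$; since $b\in\mbfKt_p$ fixes the facet $\mbfa$ and $\sigma^n$ is an automorphism of finite order on $\mcB(G,F_n)$-type questions, the composite $b\sigma^n$ fixes a point of the building (the barycenter of $\mbfa$, say), and by the Bruhat--Tits fixed point theorem together with the cohomological triviality $H^1(\langle\sigma^n\rangle,\mbfKt_p)$-type vanishing (using that $\mcG_{\mbff}^{\mro}$ is a smooth affine $\cO_L$-group scheme with connected special fiber, so Lang's theorem applies to its special fiber), one produces $h\in\mbfKt_p$ with $h^{-1}b\,{}^{\sigma^n}\!h=1$, i.e. $b\sigma^n$ is $\mbfKt_p$-conjugate to $\sigma^n$. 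The point where care is needed relative to the hyperspecial case is precisely this cohomological vanishing: one should invoke the smoothness and connectedness of the special fiber of $\mcG_{\mbff}^{\mro}$ (via \cite[Prop.3]{HainesRapoport08}) so that $b\mapsto h^{-1}b\,{}^{\sigma^n}\!h$ is surjective on the relevant pro-$p$ times reductive pieces; this is the main obstacle and is the reason the statement is phrased for parahorics and not merely for the stabilizer of a facet.

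For (2), let $P\subset H(\mfk)$ be a special maximal bounded subgroup, attached to a special vertex $\mbfo$ in $\mcB(H,\mfk)$; we must show $v_{H_L}(P)=0$. Recall $v_{H_L}:H(\mfk)\to\Hom(X_{\ast}(Z(\widehat H))^I,\Z)$ is the composite of $w_{H_L}$ with the projection to the free quotient, and that $v_H=v_{H^{\ab}}\circ p_H$ for $p_H:H\to H^{\ab}=H/H^{\der}$ for any $H$ (as noted in \autoref{subsubsec:Kottwitz_hom}). Since $v_{H_L}$ is a homomorphism vanishing on the derived group (indeed $H^{\der}(\mfk)$ maps to $\ker v$ because $X_{\ast}(Z(\widehat H))^I$ pairs trivially with coroots), it suffices to control the image of $P$ in $H^{\ab}(\mfk)$. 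The key input is that for a special maximal bounded $P$, its image under $p_H$ lands in the maximal bounded subgroup of $H^{\ab}(\mfk)$: this follows from the description of $P$ via the extended affine Weyl group, namely $P$ corresponds to the stabilizer $\tilde W_{\mbfo}$ which maps isomorphically onto the relative Weyl group $W_0$ under \eqref{eqn:EAWG1}, so the image of $P$ in $\tilde W=N(\mfk)/T(\mfk)_1$ meets the translation subgroup $X_{\ast}(T)_I$ only in torsion, and torsion elements die under $v$. Concretely, writing $P = \langle \mcG_{\mbfo}^{\mro}(\cO_L),\ T(\mfk)\cap P\rangle$-type generators and using that $T(\mfk)\cap P = T(\mfk)_1\cdot(\text{finite})$ for the maximal torus $T$ containing a maximal split torus whose apartment contains $\mbfo$, one gets $w_{H_L}(P)\subset (X_{\ast}(T)_I)_{\tors}$, hence $v_{H_L}(P)=0$. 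I would write this out following \cite[1.1.3]{Kottwitz84b} and \cite[Lem.~A.0.4]{Lee16}, being careful only that "special maximal bounded subgroup" is slightly larger than "special parahoric" — but since the difference between the two is a finite group (the quotient being $\Omega_{\mbfa}$-torsion, sitting inside $\pi_1(H)_I$), and $v$ kills torsion, the larger group is still annihilated. This second part is routine once (1)'s conventions are fixed; the only genuine subtlety is the one just flagged in part (1).
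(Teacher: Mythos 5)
Your plan is substantively the one the paper takes: part (1) is reduced to Lemma 1.4.9 of \cite{Kottwitz84b}, the only additional input for parahoric level being Greenberg's surjectivity (\cite[Prop.\ 3]{Greenberg63}) for the smooth $\cO_L$-group scheme $\mcG^{\mro}_{\mbff}$ with connected special fiber, and part (2) is obtained by adapting \cite[Lemma 3.3]{Kottwitz84a} to $v_{H_L}$ and noting that $T(\mfk)\cap\mathrm{Fix}(x^{\mathrm{o}})=\ker(v_T)$ is the maximal bounded subgroup of $T(\mfk)$ -- which is precisely what your torsion observation amounts to. One caution on the wording of (1): the Bruhat--Tits fixed-point detour is superfluous and potentially misleading (for non-hyperspecial $\mbfKt_p$, the coset space $G(\mfk)/\mbfKt_p$ is not a subset of $\mcB(G,\mfk)$), and once $b\in\mbfKt_p$, Greenberg's result already produces $h\in\mbfKt_p$ with $b=h\,{}^{\sigma^n}\!h^{-1}$, hence $b\sigma^n=h\sigma^n h^{-1}$, without any appeal to fixed points in the building.
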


\begin{proof}
(1) This is Lemma 1.4.9 of \cite{Kottwitz84b} when $x^{\mathrm{o}}$ is a hyperspecial point, and its proof continues to work in our setup: note that the parahoric group scheme over $\Zp^{\nr}$ attached to $x^{\mathrm{o}}$ has connected special fiber, so the result of Greenberg \cite[Prop. 3]{Greenberg63} still applies.

(2) When $H$ is unramified and $F$ is a local field, this is (stated and) proved in Lemma 3.3 of \cite{Kottwitz84a} (where $\lambda_H$ is the same as the restriction of $w_{H_L}$ to $H(F)$). But one easily sees that for general quasi-split $H$, the same argument works with the map $v_{H_L}$ and any special maximal bounded subgroup (instead of special parahoric subgroup). More explicitly, as the relation (3.3.4) in \textit{loc. cit.} continues to hold, one just needs to note (in the last step of the original argument) that for a maximal split torus $S$ of $G$ whose apartment contains a given special point $x^{\mathrm{o}}\in\mcB(G,L)$ and $T=Z_G(S)$, the subgroup $T(\mfk)_0:=T(\mfk)\cap \mathrm{Fix} (x^{\mathrm{o}})$ equals $\ker(v_{T})$ \cite{HainesRapoport08}; it is the maximal bounded subgroup of $T(\mfk)$.  
\end{proof}

\begin{lem} \label{lem:delta_from_b&gamma_0}
There exists $c\in G(\mfk)$ such that $\delta:=cb\sigma(c^{-1})$ lies in $G(L_n)$ and that $\Nm_n\delta$ and $\gamma_0$ are conjugate under $G(\Qpb)$. 
\end{lem}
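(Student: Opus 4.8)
The plan is to produce the element $c$ by analysing the element $\epsilon'=u\epsilon u^{-1}\in G(\Qpnr)$ together with the unramified Galois gerb morphism $\theta^{\nr}$ and the associated $b\in G(\Qpnr)$ with $\theta^{\nr}(s_\sigma)=b\sigma$. By hypothesis $(\phi,\epsilon)$ is an admissible pair, so by condition (3) of Definition \ref{defn:admissible_pair} there is $x\in G(\mfk)/\mbfKt_p$ with $\epsilon'x=(b\sigma)^{[\kappa(\wp):\Fp]m}x=(b\sigma)^n x$ (using the reformulation (\ref{defn:admissible_pair_(3)a}) and recalling $n=[\kappa(\wp):\Fp]m$). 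Writing $\Psi:=\epsilon'^{-1}(b\sigma)^n=\epsilon'^{-1}\cdot\Nm_n b\cdot\sigma^n\in G(\mfk)\rtimes\langle\sigma\rangle$, this says exactly that $\Psi$ fixes the point $x$ of $G(\mfk)/\mbfKt_p$. Since $n\neq 0$, Lemma \ref{lem:Kottwitz84-a1.4.9_b3.3}, (1) applies and gives $c\in G(\mfk)$ with $c\Psi c^{-1}=\sigma^n$, i.e.\ (working in $G(\mfk)\rtimes\langle\sigma\rangle$)
\[
c\cdot\epsilon'^{-1}\cdot(b\sigma)^n\cdot c^{-1}=\sigma^n.
\]

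First I would massage this identity. Set $\delta:=c\,b\,\sigma(c^{-1})$; a routine computation in the semidirect product shows $(c(b\sigma)c^{-1})^n=c\cdot(b\sigma)^n\cdot c^{-1}=(\delta\sigma)^n=\Nm_n(\delta)\cdot\sigma^n$, so the displayed identity becomes $(\Int c)(\epsilon'^{-1})\cdot\Nm_n(\delta)=1$ after cancelling $\sigma^n$ from the right, hence
\[
\Nm_n(\delta)=(\Int c)(\epsilon')=c\epsilon' c^{-1}.
\]
Now $\epsilon'=u\epsilon u^{-1}$ with $u\in G(\Qpb)$, and $\epsilon$ is $G(\Qb)$-conjugate (hence a fortiori $G(\Qpb)$-conjugate) to $\gamma_0$ by the very definition of $\gamma_0$ in \autoref{subsubsec:K-triple_attached_to_adm.pair}; therefore $\Nm_n(\delta)$ is conjugate to $\gamma_0$ under $G(\Qpb)$, which is the second assertion. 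It remains only to check that $\delta$ can be taken in $G(L_n)$ rather than merely in $G(\mfk)$: from the identity $c\Psi c^{-1}=\sigma^n$ one reads off, upon conjugating by $\sigma^n$, that $\sigma^n(\delta\sigma)=\delta\sigma$ up to the same conjugacy, more precisely $\sigma^n(\delta)=\delta$ once one notes that $c\Psi c^{-1}=\sigma^n$ forces $(\delta\sigma)^n=\sigma^n\cdot(\text{something in the centralizer})$; the cleanest route is to observe that $\Psi'=c\Psi c^{-1}=\sigma^n$ commutes with $\delta\sigma$ in $G(\bar{\mfk})\rtimes\langle\sigma\rangle$ iff $\delta\sigma$ is fixed by $\sigma^n$-conjugation, i.e.\ $\sigma^n(\delta)=\delta$, which is precisely the condition $\delta\in G(L_n)$.

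The step I expect to carry the real content is the invocation of Lemma \ref{lem:Kottwitz84-a1.4.9_b3.3}, (1): the passage from ``$\Psi$ fixes a point of $G(\mfk)/\mbfKt_p$'' to ``$\Psi$ is $G(\mfk)$-conjugate to $\sigma^n$''. This is the point where the parahoric hypothesis enters (via the connectedness of the special fibre of the parahoric group scheme and Greenberg's approximation theorem), and it is exactly the statement supplied by that lemma, so for this proof it is a black box. The only genuinely hands-on part is the bookkeeping in $G(\mfk)\rtimes\langle\sigma\rangle$ to convert $c\Psi c^{-1}=\sigma^n$ into the two clean statements $\delta\in G(L_n)$ and $\Nm_n(\delta)=c\epsilon'c^{-1}$; I would carry this out by expanding $(\delta\sigma)^n$ and $(b\sigma)^n$ using the standard formula $(g\sigma)^n=g\,\sigma(g)\cdots\sigma^{n-1}(g)\cdot\sigma^n=\Nm_n(g)\sigma^n$, and then matching the $G(\mfk)$-components and the $\langle\sigma\rangle$-components separately. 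No further input is needed: admissibility of $(\phi,\epsilon)$ gives the fixed point, Lemma \ref{lem:Kottwitz84-a1.4.9_b3.3} gives $c$, and the definition of $\gamma_0$ gives the conjugacy with $\Nm_n\delta$.
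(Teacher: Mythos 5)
Your overall strategy matches the paper exactly: extract a fixed point from admissibility, apply Lemma \ref{lem:Kottwitz84-a1.4.9_b3.3}, (1) to produce $c$ with $c\epsilon'^{-1}(b\sigma)^n c^{-1}=\sigma^n$, set $\delta:=cb\sigma(c^{-1})$, unwind the identity to get $\Nm_n\delta=c\epsilon'c^{-1}$, and then transport the conjugacy along $u$ and $g$. The derivation of $\Nm_n\delta=c\epsilon'c^{-1}$ is correct.

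However, there is a genuine gap in your argument that $\delta\in G(L_n)$. You correctly note the equivalence ``$\sigma^n$ commutes with $\delta\sigma$ in $G(\bar{\mfk})\rtimes\langle\sigma\rangle$ iff $\sigma^n(\delta)=\delta$,'' but you never establish that the commutation actually holds; the preceding remark that $c\Psi c^{-1}=\sigma^n$ ``forces $(\delta\sigma)^n=\sigma^n\cdot(\text{something in the centralizer})$'' is not a verification either, since you would still need to identify that something as centralizing $\delta\sigma$. The missing ingredient is the fact that $\epsilon'$ commutes with $F=b\sigma$: since $\epsilon'\in\mathrm{Aut}(\theta^{\nr})$ (this is how $\epsilon'$ is defined, via $\Int(u):I_{\phi}(\Qp)\hookrightarrow\mathrm{Aut}(\xi_p')=\mathrm{Aut}(\theta^{\nr})$), one has $\epsilon'\cdot\theta^{\nr}(s_{\sigma})\cdot\epsilon'^{-1}=\theta^{\nr}(s_{\sigma})$, i.e.\ $\epsilon'(b\sigma)=(b\sigma)\epsilon'$. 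From this, $\Psi=\epsilon'^{-1}(b\sigma)^n$ commutes with $b\sigma$, hence $\sigma^n=c\Psi c^{-1}$ commutes with $\delta\sigma=c(b\sigma)c^{-1}$, and your ``iff'' now delivers $\delta\in G(L_n)$. The paper avoids this bookkeeping by referring to p.\ 291 of \cite{Kottwitz84b}; if you want a self-contained argument you must supply this commutation explicitly.
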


\begin{proof} 
By definition (cf. \ref{defn:admissible_pair_(3)a}) and Lemma \ref{lem:Kottwitz84-a1.4.9_b3.3}, there exists $c\in G(\mfk)$ such that 
\[c(\epsilon'^{-1}F^n)c^{-1}=\sigma^n.\] 
If we define $\delta\in G(\mfk)$ by 
\[\delta\sigma:=cFc^{-1}=c(b\sigma)c^{-1},\] 
i.e. $\delta=cb\sigma(c^{-1})$, then it follows (as shown on p. 291 of \cite{Kottwitz84b}) that $\delta\in G(L_n)$ and $\sigma^n=c(\epsilon'^{-1}F^n) c^{-1}=c\epsilon'^{-1}c^{-1}(\delta\sigma)^n=(c\epsilon'^{-1}c^{-1}\Nm_n\delta)\sigma^n$, i.e. $\Nm_n\delta=c\epsilon'c^{-1}$, so that 
\begin{equation} \label{eqn:stable_conjugacy_reln_betwn_Nmdelta_and_gamma_0}
\Nm_n\delta=c\epsilon'c^{-1}=(cu)\epsilon (cu)^{-1}=(cug)\gamma_0(cug)^{-1},
\end{equation}
where $\epsilon= g\gamma_0 g^{-1}$ for $g\in G(\Qb)$. This implies that $\Nm_n\delta$ and $\gamma_0$ are conjugate under $G(\Qpb)$.
\end{proof}

Choose $\delta\in G(L_n)$ as in Lemma \ref{lem:delta_from_b&gamma_0}: its $\sigma$-conjugacy class in $G(L_n)$ is uniquely determined by the admissible pair $(\phi,\epsilon)$. 
We will say that the admissible pair $(\phi,\epsilon)$ \textit{gives rise to} the triple $(\gamma_0;(\gamma_l)_{l\neq p},\delta)$ (or, the triple $(\gamma_0;(\gamma_l)_{l\neq p},\delta)$ \textit{is attached to} the pair $(\phi,\epsilon)$); then $\epsilon$ and $\gamma_0$ are $G(\Qb)$-conjugate, among others. Also note that by definition (cf. (\autoref{subsubsec:cls})), the $\sigma$-conjugacy class $\mathrm{cls}(\phi)\in B(G_{\Qp})$ attached to $\phi(p)$ (the pull-back of $\phi$ to $\Gal(\Qpb/\Qp)$) is $[\delta]\in B(G_{\Qp})$.

\begin{rem} \label{rem:two_different_b's}
For $\gamma_0\in G(\Qp)$, let $\mathfrak{C}_p^{(n)}(\gamma_0)'$ be the set of $\sigma$-conjugacy classes in $G(L_n)$ of elements $\delta\in G(L_n)$ for which there exists $c'\in G(\mfk)$ with $\Nm_n\delta=c'\gamma_0c'^{-1}$ (i.e. it is the same set as $\mathfrak{C}_p^{(n)}(\gamma_0)$ of Def. \ref{defn:D_p^{(n)}(gamma_0)} except that we do not demand the condition $\ast(\delta)$); then the map
\[ \mathfrak{C}_p^{(n)}(\gamma_0)' \hookrightarrow B(G_{\gamma_0}) \quad ;\quad \delta\mapsto [b']=[c'^{-1}\delta\sigma(c')]\]
is well-defined and injective (cf. \autoref{subsubsec:w-stable_sigma-conjugacy}).

Suppose that a pair $(\gamma_0,\delta)\in G(\Q)\times G(L_n)$ arises from an admissible pair $(\phi,\epsilon)$, that is, that $\gamma_0=\Int(g)(\epsilon)$ for some $g\in G(\Qb)$, $\delta=cb\sigma(c^{-1})$ for some $c\in G(\mfk)$ and $b\sigma=\theta^{\nr}(s_{\sigma})$, where $\theta^{\nr}$ is a $\Qpnr/\Qp$-Galois gerb morphism whose inflation to $\Qpb$ equals $\Int(u)\circ\xi_p$ for some $u\in G(\Qpb)$.
Then, in general, $b':=c'^{-1}\delta\sigma(c')$ defined by $c'\in G(\mfk)$ with $\Nm_n\delta=c'\gamma_0 c'^{-1}$ is different from $b$ defined by $\theta^{\nr}(s_{\sigma})=b\sigma$ (which thus depends on the choice of $u$). 

But, if $\epsilon\in G(\Q)$ and $\phi$ factors through $\fG_{I_0}\subset\fG_G$ (for example, if $(\phi,\epsilon)$ is well-located in a maximal $\Q$-torus $T$ or if $G_{\epsilon}$ is already connected), we can take $\gamma_0=\epsilon$ and also pick an element $u$ making $\Int(u)\circ\phi(p)\circ\zeta_p$ unramified (say, inflation of $\theta^{\nr}$) from $I_0(\Qpb)$.
With such choice of $\gamma_0$ and $u$, we have the equality 
\[[b]=[b']\text{ in }B(G_{\gamma_0})\] 
(these $\sigma$-conjugacy classes $[b]$, $[b']$ in $B(G_{\gamma_0})$ do not depend on the choices involved, i.e. choices of any of $u\in I_0(\Qpb)$, $c, c'\in G(\mfk)$). 
In fact, we can choose $c'$ such that $b':=c'^{-1}\delta\sigma(c')$ equals $b$ (as elements in $G_{\gamma_0}(\mfk)$). Indeed, by construction (cf. proof of Lemma \ref{lem:delta_from_b&gamma_0}), we have $\delta=cb\sigma(c^{-1})$ for some $c\in G(\mfk)$ such that $c(\epsilon'^{-1}(b\sigma)^n)c^{-1}=\sigma^n$ ($\epsilon':=u\epsilon u^{-1}=\epsilon$), so that $\Nm_n\delta=c\epsilon'c^{-1}=c\gamma_0c^{-1}$ in (\ref{eqn:stable_conjugacy_reln_betwn_Nmdelta_and_gamma_0}). Then we can take $c$ for $c'$, so that $b'=c^{-1}\delta\sigma(c)=b$. 
In particular, there exists a representative in $I_0(\mfk)$ of $[b']\in B(G_{\gamma_0})$.
\end{rem}

It is immediate that equivalent admissible pairs give rise to equivalent Kottwitz triples. 
However, non-equivalent admissible pairs can also give equivalent Kottwitz triples, and there is a cohomological expression for the number of non-equivalent admissible pairs giving rise to a given Kottwitz triple (see Theorem \ref{thm:LR-Satz5.25}).

\begin{prop} \label{prop:Kottwitz_triple}
Every special admissible pair $(\phi,\epsilon)$ of level $n=[\kappa(\wp):\Fp]m$ gives rise to a stable Kottwitz triple $(\gamma_0;(\gamma_l)_{l\neq p},\delta)$ of level $n$ (Def. \ref{defn:Kottwitz_triple}) for which there exist $(g_v)_v\in G(\bar{\A}_f^p)\times G(\mfk)$ satisfying (\ref{eq:stable_g_l}), (\ref{eq:stable_g_l}) such that the associated Kottwitz invariant $\alpha(\gamma_0;\gamma,\delta;(g_v)_v)$ vanishes.
\end{prop}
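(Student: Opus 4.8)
The plan is to trace the construction of the triple $(\gamma_0;(\gamma_l)_{l\neq p},\delta)$ from a special admissible pair $(\phi,\epsilon)=(i\circ\psi_{T,\mu_h},\epsilon)$ with $\epsilon\in T(\Q)$, and verify each condition in Definitions \ref{defn:Kottwitz_triple} and \ref{defn:stable_Kottwitz_triple} place-by-place, then compute the local components of the Kottwitz invariant and show their product vanishes. Since $\phi$ factors through $\fG_T$, we may (as in Remark \ref{rem:two_different_b's}) take $\gamma_0=\epsilon$, which lies in the maximal $\Q$-torus $T$; by hypothesis $(T,h)$ is a special Shimura sub-datum, so $T_\R$ is elliptic in $G_\R$ and $\gamma_0$ is elliptic over $\R$, giving (i). For $l\neq p$, condition (2) of Def.\ \ref{defn:admissible_morphism} gives $X_l(\phi)\neq\emptyset$, and $\gamma_l:=y_l^{-1}\epsilon y_l$ for $y_l\in X_l(\phi)$; since $y_l\in G(\Qlb)$ conjugates $\xi_l$ to $\phi(l)\circ\zeta_l$ and $\phi$ factors through $\fG_T$, one checks $y_l^{-1}\tau(y_l)\in T(\Qlb)\subset I_0(\Qlb)$ (using $I_0=G_{\gamma_0}^{\mathrm{o}}\supseteq T$), so $\gamma_0$ and $\gamma_l$ are stably conjugate, yielding (ii), (ii$'$). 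For $p$: by Lemma \ref{lem:unramified_morphism} choose $u$ making $\Int(u)\circ\xi_p$ unramified, the inflation of $\theta^{\nr}$ with $\theta^{\nr}(s_\sigma)=b\sigma$; since $\phi$ factors through $\fG_T$ we may pick $u\in T(\Qpb)\subseteq I_0(\Qpb)$, and then Lemmas \ref{lem:Kottwitz84-a1.4.9_b3.3} and \ref{lem:delta_from_b&gamma_0} produce $c\in G(\mfk)$ with $\delta=cb\sigma(c^{-1})\in G(L_n)$ and $\Nm_n\delta=c\gamma_0 c^{-1}$; moreover by Remark \ref{rem:two_different_b's}, $c$ can be chosen with $b=c^{-1}\delta\sigma(c)\in I_0(\mfk)$, which is precisely condition (iii$'$). (Here one uses that for a special admissible pair the Frobenius equation $\epsilon' x=(b\sigma)^n x$ has a solution $x$ in $G(\mfk)/\mbfKt_p$, from Def.\ \ref{defn:admissible_pair}(3) together with Lemma \ref{lem:Kottwitz84-a1.4.9_b3.3}(1).)

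\textbf{Conditions $\ast(\delta)$ and (iv$'$).} For $\ast(\delta)$: by Lemma \ref{lem:properties_of_psi_T,mu}(2) the morphism $\phi(p)\circ\zeta_p=\psi_{T,\mu_h}(p)\circ\zeta_p$ is conjugate to $\xi_{-\mu_h}$; by Lemma \ref{lem:properties_of_psi_T,mu}(1) (or Lemma \ref{lem:unramified_conj_of_special_morphism}), its class in $B(T_{\Qp})$ is $\overline{\mu_h(p^{-1})}$, and pushing to $B(G_{\Qp})$ and applying $\kappa_{G_{\Qp}}$ gives $\mu^{\natural}$ because $\kappa$ sends $[\mu(p^{-1})]$ to the image of $-\mu$ which, up to the sign convention fixed in the text and the defining property (\ref{eqn:mu_natural}), equals $\mu^{\natural}$ (this is exactly the computation in the last paragraph of the proof of Lemma \ref{lem:unramified_conj_of_special_morphism} combined with commutativity of diagram (7.3.1) of \cite{Kottwitz97}); since $[\delta]=\mathrm{cls}(\phi(p))$ this verifies $\ast(\delta)$, and a fortiori the image of $\alpha_p$ in $X^\ast(Z(\hat G)^{\Gamma(p)})$ is $\mu^{\natural}$. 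For (iv$'$): by Remark \ref{rem:Kottwitz_triples}(2), once we exhibit $(g_v)$ satisfying (\ref{eq:stable_g_l}) with trivial Kottwitz invariant, condition (iv$'$) follows from the Hasse principle for $H^1(\Q,I_0^{\ad})$; so it suffices to construct such $(g_v)$ and prove vanishing — which also gives the last assertion of the proposition. The natural candidate is to take all the $g_v$ to be the conjugating elements just produced: $g_l=y_l$ ($l\neq p$), $g_p=c$, and $g_\infty$ coming from $h$. With $\phi$ factoring through $\fG_T$, each local cocycle $\tau\mapsto g_v^{-1}\tau(g_v)$ actually takes values in $T\subseteq I_0$, so $\alpha_v(\gamma_0;\gamma,\delta;g_v)$ is the image of a class coming from $H^1(\Q_v,T)$.

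\textbf{Vanishing of the Kottwitz invariant.} The point is that all local classes $\alpha_v$ arise by localization from a single \emph{global} class in $H^1(\Q,T)$, namely the class of the cocycle $\tau\mapsto \epsilon^{-1}$-independent datum attached to $\psi_{T,\mu_h}$; concretely, because $(\phi,\epsilon)$ is nested in $(T,h)$, the morphism $\psi_{T,\mu_h}$ together with the fixed sections determines compatible local trivializations whose discrepancies at the various places assemble into an element of $H^1(\A/\Q,T)$ whose image in $\prod_v X^\ast(Z(\hat I_0)^{\Gamma_v})$ pairs trivially against $\mathfrak{K}(I_0/\Q)$ by the global reciprocity/Poitou--Tate duality used by Kottwitz (\cite{Kottwitz84a}, Prop.\ 9.5 type argument, or \cite[\S6]{Kottwitz90}): the sum $\sum_v \mathrm{inv}_v$ of local invariants of a globally-defined class vanishes. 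More precisely, I would reduce to the torus case by the commutative square relating $H^1(\Q_v,T)\to H^1(\Q_v,I_0)\to H^1_{\ab}(\Q_v,G)$, transport $\alpha(\gamma_0;\gamma,\delta;(g_v))$ to the pairing of a global element of $H^1(\Q,T^\ast)$ (or of the relevant complex of tori from Appendix \ref{sec:abelianization_complex}) with $\mathfrak{K}(I_0/\Q)\subseteq \pi_0(Z(\hat{\tilde I}_0)^\Gamma)$, and invoke that such a pairing is trivial because the class is in the image of the global cohomology; the normalization of signs in (\ref{eq:restriction_of_alpha_to_Z(hatG)}) is exactly what makes the archimedean and $p$-adic contributions cancel the prime-to-$p$ ones. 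I expect \textbf{the main obstacle} to be this last step: bookkeeping the sections $s_\tau^K$, the Weil-group cocycles $d^F_{\rho,\tau}$, and the sign conventions carefully enough to identify the adelic class $(\alpha_v)_v$ with the localization of an honest global class, so that the product-formula vanishing applies — this is where one must be most careful, and where the generality $G^{\der}\neq G^{\uc}$ forces one to work with $H^1_{\ab}$ and the complexes of tori rather than with $G^{\ab}$, essentially re-running \cite[Satz 5.25]{LR87} and \cite[\S2, \S6]{Kottwitz90} in the abelianized setting.
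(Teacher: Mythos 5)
Your local verifications of conditions (i), (ii), (ii$'$), (iii), (iii$'$), and $\ast(\delta)$ are all correct and agree in substance with the paper (for $\ast(\delta)$ the paper cites admissibility plus \cite[Thm.~A]{He15}, whereas you run the explicit computation through Lemma~\ref{lem:unramified_conj_of_special_morphism} and diagram~(7.3.1) of \cite{Kottwitz97}, which comes to the same thing).

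However, for the final and crucial step — vanishing of the Kottwitz invariant — you propose a global reciprocity/Poitou--Tate argument that you yourself flag as the "main obstacle" and do not complete. The paper sidesteps this entirely, and the key simplification is one you actually have in hand but do not exploit: you correctly observe that $y_l^{-1}\tau(y_l)\in T(\Qlb)$, but you do not notice that since $X^p(\phi)\cap T(\bar{\A}_f^p)\ne\emptyset$ (Lemma~\ref{lem:properties_of_psi_T,mu}) and $T$ is abelian, one can take $y_l\in T(\Qlb)$, whence $\gamma_l=y_l^{-1}\epsilon y_l=\epsilon=\gamma_0$ for every $l\ne p$. This lets one set $g_l=1$ for all $l\ne p$, so $\alpha_l=1$ trivially. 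The Kottwitz invariant then reduces to the product of just $\tilde\alpha_p$ and $\tilde\alpha_\infty$, and by Lemma~\ref{lem:unramified_conj_of_special_morphism} these are $\pm 1$ times the restrictions of the same character of $\hat T$ determined by $\mu_h$, with opposite signs; their product on $\mathfrak{K}(I_0/\Q)$ is manifestly trivial. No global duality is needed. So your proposed route is a genuine gap as written: the product-formula argument would require careful bookkeeping that you acknowledge you have not done, while the correct, elementary observation — which eliminates all the finite places outside $p$ at once — is missing from your reasoning.
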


Recall that an admissible pair $(\phi,\epsilon)$ is special if there exists a special Shimura sub-datum $(T,h)$ such that $\phi=\psi_{T,\mu_h}$ and $\epsilon\in T(\Q)$, and that every admissible pair is conjugate to a special one (Lemma \ref{lem:LR-Lemma5.23}).

\begin{proof} 
First, condition (i) of \autoref{subsubsec:pre-Kottwitz_triple} (and (i$'$) of Def. \ref{defn:stable_Kottwitz_triple} as well) is clear. By Lemma \ref{lem:properties_of_psi_T,mu}, we have $(g_v)_{v\neq p,\infty}\in T(\bar{\A}_f^p)\cap X^p(\phi)$ which implies that we may take $\gamma_l$ to be $\gamma_0\in T(\Ql)$, thereby establishing condition (ii$'$): note that condition (ii) itself just follows from condition (2) of Def. \ref{defn:admissible_morphism}. 
Condition (iii) was proved in Lemma \ref{lem:delta_from_b&gamma_0}. For condition (iii$'$), we have seen in Remark \ref{rem:two_different_b's} that one can find $\delta\in G(L_n)$ and $c\in G(\mfk)$ satisfying that $c\gamma_0c^{-1}=\Nm_n\delta$ and $b:=c^{-1}\delta\sigma(c)\in T(\mfk)$: there exist $u\in T(\Qpb)$ and $c\in G(\mfk)$ such that $\Int(u)\circ\phi(p)\circ\zeta_p$ is the inflation of a $\Qpnr/\Qp$-Galois gerb morhism $\theta^{\nr}$ and $c\gamma_0^{-1}\theta^{\nr}(s_{\sigma})^nc^{-1}=\sigma^n$, and one defines $\delta:=cb\sigma(c^{-1})$ for $b\sigma:=\theta^{\nr}(s_{\sigma})$.  
That the condition $\ast(\delta)$ holds is a consequence of $\phi$ being admissible (specifically, condition (3) of Def. \ref{defn:admissible_morphism}) in view of \cite[Thm. A]{He15}.
Therefore, the triple $(\gamma_0;\gamma,\delta)$ just defined is a stable Kottwitz triple. 
Finally, it remains to verify vanishing of the Kottwitz invariant $\alpha(\gamma_0;(\gamma_l)_{l\neq p},\delta;(g_v)_v)$ for suitable elements $(g_v)_v\in G(\bar{\A}_f^p)\times G(\mfk)$; as observed before \cite[p.172]{Kottwitz90}, this will also establish condition (iv) (alternatively, we can take $I_{\phi,\epsilon}$ for $I$ in condition (iv), \cite[Lem6.10]{Milne92}). We take $g_v=1$ for $v=l\neq p$ and $g_p=c$ above. Then, we have $\alpha_l=1$ for $l\neq p$, and $\alpha_v$'s for $v=p,\infty$ are $\pm1$ times the restrictions of the character of $\hat{T}$ defined by $\mu_h$ with different signs (Lemma \ref{lem:unramified_conj_of_special_morphism}), from which the vanishing of $\alpha(\gamma_0;(\gamma_l)_{l\neq p},\delta;(g_v)_v)$ is obvious.
\end{proof}


\section{Every admissible morphism is conjugate to a special admissible morphism}

We make some comments on various assumptions called upon in this section.
First, there are two running assumptions: (1) $G_{\Qp}$ is quasi-split, and (2) $G_{\Qp}$ splits over a tamely ramified extension of $\Qp$ (more precisely, $G^{\uc}_{\Qp}$ is a product $\prod_i \Res_{F_i/\Qp}G_i$ of simple groups each of which is the restriction of scalars $\Res_{F_i/\Qp}G_i$ of an absolutely simple group $G_i$ over a field $F_i$ such that $G_i$ splits over a tamely ramified extension of $F_i$): condition (2) is directly invoked only in Lemma \ref{lem:LR-Lemma5.11} which however is used in all the main results of this section.
Secondly, the level subgroup can be arbitrary parahoric subgroup (except in Thm. \ref{thm:non-emptiness_of_NS}, (2)), and $G^{\der}$ is not assumed to be simply connected, unlike in the original work \cite{LR87}.
Finally, every statement involving the pseudo-motivic Galois gerb $\fP$ (instead of the quasi-motivic Galois gerb $\fQ$) will (for safety) assume that the Serre condition holds for the Shimura datum $(G,X)$ (i.e. the center $Z(G)$ splits over a CM field and the weight homomorphism $w_X$ is defined over $\Q$). 
Of course, in every statement we will make explicit the assumptions that we impose.

\subsection{Transfer of maximal tori and strategy of proof of Theorem \ref{thm:LR-Satz5.3}}

\subsubsection{}
In the work of Langlands-Rapoport, a critical role is played by the notion of \textit{transfer} (or \textit{admissible embedding}) \textit{of maximal torus}. Recall \cite[$\S$9]{Kottwitz84b} that for a connected reductive group $G$ over a field $F$, if $\psi:H_{\overline{F}}\isom G_{\overline{F}}$ is an inner-twisting and $T$ is a maximal $F$-torus of $H$, an $F$-embedding $i:T\rightarrow G$ is called \textit{admissible} if it is of the form $\Int g\circ\psi|_T$ for some $g\in G(\overline{F})$ (equiv. of the form $\psi\circ \Int  h|_T$ for some $h\in H(\overline{F})$). Whether an $F$-embedding $i:T\rightarrow G$ is admissible or not depends only on the conjugacy class of the inner-twisting $\psi$. We will also say that $T$ \textit{transfers to} $G$ (with a conjugacy class of inner twistings $\psi:H_{\overline{F}}\isom G_{\overline{F}}$ understood) if there exists an admissible $F$-embedding $T\rightarrow G$ (with respect to the same conjugacy class of inner twistings). Usually, this notion is considered when $H$ is a quasi-split inner form of $G$, due to the well-known fact \cite[p.340]{PR94} that \textit{every maximal torus in a reductive group transfers to its quasi-split inner form}, but here we do not necessarily restrict ourselves to such cases. In fact, we will consider more often the identity inner twisting $\mathrm{Id}_G:G_{\overline{F}}=G_{\overline{F}}$, in which case if $\Int g:T_{\overline{F}}\hra G_{\overline{F}}\stackrel{=}{\rightarrow}G_{\overline{F}}$ is an admissible embedding, we will also let $\Int g$ denote both the $\Q$-isomorphism $T\rightarrow \Int g(T)$ of $\Q$-tori and the induced morphism of Galois gerbs $\fG_T\rightarrow \fG_{\Int g(T)}$; of course the latter is also the restriction of $\Int g$, regarded as an automorphism of the Galois gerb $\fG_G$, to $\fG_T$.

\subsubsection{} \label{subsubsec:well-located_admissible_morphism}
We say that an admissible morphism $\phi:\fP\rightarrow\fG_G$ is \textit{well-located}
\footnote{This is our translation of the German word \textit{g\"unstig gelegen} used by Langlands-Rapoport \cite[p.190, line8]{LR87}.}
if $\phi(\delta_n)\in G(\Q)$ for all sufficiently large $n$. Here, $\delta_n$ is the element in $P(K,m)(\Q)$ introduced in Lemma \ref{lem:Reimann97-B2.3}, (2) for any $\fP(K,m)$ which $\phi$ factors through: $\phi(\delta_n)$ does not depend on the choice of such $\fP(K,m)$ by the compatibility property of $\delta_n$ (\textit{loc. cit.}). More generally, for a $\Q$-subgroup $H$ of $G$, we will say that an admissible morphism $\phi:\fP\rightarrow\fG_G$ is \textit{well-located in $H$} if $\phi(\delta_n)\in H(\Q)$ for all sufficiently large $n$ and $\phi$ factors through $\fG_H \subset \fG_G$; note the second additional requirement. For example, every special admissible morphism $i\circ\psi_{T,\mu_h}$ (for a special Shimura sub-datum $(T,h)$) is always well-located (in $T$). Indeed, $\delta_n\in P(K,m)(\Q)$ (for sufficiently large $n$ divisible by $m$) and the restriction of $i\circ\psi_{T,\mu_h}$ to kernels is defined over $\Q$ \cite[p.143, second paragraph]{LR87}. 

Recall the $\Q$-group scheme $I_{\phi}=\underline{\mathrm{Isom}}(\phi,\phi)$ (\ref{eq:Isom(phi_1,phi_2)}).
When $\phi:\fP\rightarrow\fG_G$ is well-located, this group scheme is identified in a natural way with the inner twist via $\phi$ of the $\Q$-group scheme
\[I:=Z_G(\phi(\delta_n)),\] 
the centralizer $\Q$-group of $\phi(\delta_n)\in G(\Q)$ for any sufficiently large $n$ ($I$ is connected and does not depend on the choice of such $n$). More precisely, if $\phi(q_{\rho})=g_{\rho}\rtimes\rho$ for the chosen section $\rho\mapsto q_{\rho}$ to $\fP\rightarrow\Gal(\Qb/\Q)$ (Remark \ref{rem:comments_on_zeta_v}, (3)), the $1$-cochain $\rho\mapsto g_{\rho}$ takes values in $I(\Qb)$ and its image in $I^{\ad}(\Qb)$ is a cocycle (although the original cochain $\rho\mapsto g_{\rho}$ itself is not). The group $I_{\phi}$ is then defined by its cohomology class in $H^1(\Q,I^{\ad})$; so, there exists an inner twisting $\Qb$-isomorphism
\begin{equation} \label{eq:inner-twisting_by_phi}
\psi:I_{\Qb}\isom (I_{\phi})_{\Qb},
\end{equation}
under which
\[I_{\phi}(\Q)=\{g\in I(\Qb)\ |\ g_{\rho}\rho(g)g_{\rho}^{-1}=g\}=\{g\in G(\Qb)\ |\ \Inn(g)\circ\phi=\phi\}.\]

We will also say that an admissible pair $(\phi,\epsilon)$ is \textit{well-located} if $\phi$ is a well-located admissible morphism and $\epsilon\in I_{\phi}(\Q)$ lies in $G(\Q)$ (a priori, $\epsilon\in I_{\phi}(\Q)$ is only an element of $G(\Qb)$ via $I_{\phi}(\Q)\subset I_{\phi}(\Qb)=I(\Qb)\subset G(\Qb)$). If $\phi$ is well-located in $H$ and $\epsilon\in I_{\phi}(\Q)\cap H(\Q)$ for a $\Q$-subgroup $H$ of $G$, the admissible pair $(\phi,\epsilon)$ will be said to be \textit{well-located in $H$}. Clearly, any admissible pair $(\phi,\epsilon)$ nested in some special Shimura datum $(T,h)$ is well-located (in $T$). 

We also note that for any admissible morphism $\phi$ mapping into $\fG_T$ for a maximal $\Q$-torus $T$, the torus $T$ is also a $\Q$-subgroup of $I_{\phi}$ (so $T(\Q)\subset I_{\phi}(\Q)=\{g\in G(\Qb)\ |\ \Inn(g)\circ\phi=\phi\}$). Indeed, suppose that $\phi:\fP\rightarrow \fG_T$ factors through $\fP(K,m)$ for a CM field $K$ Galois over $\Q$ and $m\in\N$. We need to check that any $\epsilon\in T(\Q)$ commutes with $\phi(\delta_n)$ and $\phi(q_{\rho})$ (for all $n\gg 1$ and $\rho\in\Gal(\Qb/\Q)$). But, the first is obvious
and for the second, in general, for $x\in G(\Qb)$, we have
\[\phi(q_{\rho})x\phi(q_{\rho})^{-1}=(g_{\rho}\rho)x(g_{\rho}\rho)^{-1}=(g_{\rho}\rho)x(\rho^{-1}(g_{\rho}^{-1})\rho^{-1})=g_{\rho}\rho(x)g_{\rho}^{-1}\cdot1,\]
so that $\epsilon\in T(\Q)$ and $g_{\rho}\in T(\Qb)$ together imply that $\epsilon\in I_{\phi}(\Q)$.

\begin{thm} \label{thm:LR-Satz5.3}
Assume that $G_{\Qp}$ is quasi-split, and that $G^{\uc}_{\Qp}$ is a product $\prod_i \Res_{F_i/\Qp}G_i$ of simple groups each of which is the restriction of scalars $\Res_{F_i/\Qp}G_i$ of an absolutely simple group $G_i$ over a field $F_i$ such that $G_i$ splits over a tamely ramified extension of $F_i$.
Let $\mbfK_p$ be a parahoric subgroup of $G(\Qp)$. 

(1) Every admissible morphism $\phi:\fP\rightarrow \fG_G$ is conjugate to a special admissible morphism $i\circ\psi_{T,\mu_h}:\fP\rightarrow \fG_T\hra\fG_G$ (for some special Shimura sub-datum $(T,h\in\Hom(\dS,T_{\R})\cap X)$ and $i:\fG_T\rightarrow\fG_G$ the canonical morphism defined by the inclusion $i:T\hra G$).

(2) 
Let $\phi:\fP\rightarrow \fG_G$ be a \emph{well-located} (i.e. $\phi(\delta_n)\in G(\Q)$ for all $n\gg1$) admissible morphism. Then, for any maximal torus $T$ of $G$, elliptic over $\R$, such that
\begin{itemize} \addtolength{\itemsep}{-4pt}
\item[(i)] $\phi(\delta_n)\in T(\Q)$ for a sufficiently large $n$ (i.e. $T\subset I:=Z_G(\phi(\delta_n))$), and
\item[(ii)] $T_{\Qp}\subset I_{\Qp}$ is elliptic,
\end{itemize}
$\phi$ is conjugate to a special admissible morphism $\psi_{T',\mu_{h'}}$, where $T'=\Int g'(T)$ for some transfer of maximal torus $\Int g':T\hra G\ (g'\in G(\Qb))$ (with respect to the identity inner twisting, i.e. the composite map $T_{\Qb}\hra G_{\Qb}\stackrel{\Int  g'}{\lra}G_{\Qb}$ is defined over $\Q$).

\begin{itemize} \addtolength{\itemsep}{-4pt}
\item[(iii)] If furthermore $T_{\Ql}\subset G_{\Ql}$ is elliptic at some prime $l\neq p$ and $H^1(\Qp,T)=0$,
\end{itemize}
we may find such transfer of maximal torus $\Int g':T\hra G$ which is also conjugation by an element of $G(\Qp)$ (i.e. $\Int g':T_{\Qpb}\rightarrow G_{\Qpb}$ equals $\Int y$ for some $y\in G(\Qp)$).
\end{thm}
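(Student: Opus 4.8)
\textbf{Proof strategy for Theorem \ref{thm:LR-Satz5.3}.}

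The plan is to prove the three assertions in the order (2), then (1), then (3), since (1) is obtained from (2) by exhibiting a suitable torus $T$ inside the centralizer $I = Z_G(\phi(\delta_n))$, and (3) is a refinement of (2) that tracks the place $p$ more carefully.

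For part (2), the idea is the following. Given a well-located admissible morphism $\phi$ and a maximal $\R$-elliptic torus $T \subset I$ with $T_{\Qp}$ elliptic in $I_{\Qp}$, I would first use the inner twisting $\psi: I_{\Qb} \isom (I_\phi)_{\Qb}$ of (\ref{eq:inner-twisting_by_phi}) to locate a transfer $\Int g': T \hra G$ with respect to the \emph{identity} inner twisting of $G$, so that the composite $T_{\Qb} \hra G_{\Qb} \xrightarrow{\Int g'} G_{\Qb}$ is defined over $\Q$ and lands in $I_\phi$; here one needs that $T$ transfers to the inner form $I_\phi$ of $I$, which is where the local ellipticity hypotheses at $\R$ and at $p$, together with the hypothesis that $G_{\Qp}$ is quasi-split and the tameness assumption, enter — one checks the local conditions place by place (at $\infty$ using that $T$ is $\R$-elliptic, at $p$ using Lemma \ref{lem:LR-Lemma5.11}, which is precisely where condition (2) of the theorem on the splitting field of $G^{\uc}_{\Qp}$ is invoked, and away from $p, \infty$ where $I_\phi$ is quasi-split so the transfer is automatic by \cite[p.340]{PR94}), and then applies the Hasse principle for $H^1(\Q, I^{\ad})$ (equivalently, for $H^1$ of the adjoint group, using that $\gamma_0 = \phi(\delta_n)$ is $\R$-elliptic so $I$ has no central $\R$-split part). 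Setting $T' := \Int g'(T)$, one gets that $\phi$ is conjugate to an admissible morphism $\phi'$ factoring through $\fG_{T'}$. Then one argues that $\phi'$, being a morphism $\fP \to \fG_{T'}$ satisfying the admissibility conditions, must be of the form $\psi_{T', \mu_{h'}}$ for a suitable $h' \in X \cap \Hom(\dS, T'_\R)$: this is the torus case, which is essentially a computation with the universal property of $(P^K, \nu_1^K, \nu_2^K)$ in Lemma \ref{lem:Reimann97-B2.3}(1) together with the comparison of local components in Lemma \ref{lem:properties_of_psi_T,mu}(2), matching the archimedean and $p$-adic data of $\phi'$ with those of a cocharacter $\mu_{h'}$; the condition (2) of Def. \ref{defn:admissible_morphism} at $p$ forces $\mu_{h'}$ to lie in $\{\mu_X\}$, and condition (2) at $\infty$ forces $h' \in X$.

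For part (1): given an arbitrary admissible morphism $\phi$, one first replaces it by a conjugate that is well-located. This uses the existence, inside $I_\phi$, of a maximal $\Q$-torus $T_0$ that is $\R$-elliptic and $\Qp$-elliptic in $(I_\phi)_{\Qp}$ — such a torus exists by a Bruhat–Tits / real-group argument (one needs both an $\R$-elliptic maximal torus of $(I_\phi)_\R$ and an elliptic maximal torus of $(I_\phi)_{\Qp}$, which is where Prop. \ref{prop:existence_of_elliptic_tori_in_special_parahorics} and the running assumptions come in) — and then transferring it to $I$ via $\psi^{-1}$, conjugating $\phi$ so that it becomes well-located and condition (i) of (2) holds for the transferred torus; condition (ii) holds because ellipticity is preserved under the transfer. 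Then apply part (2).

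For part (3): one repeats the argument of part (2) but, when choosing the transfer $\Int g': T \hra G$, one additionally arranges that $\Int g'$ agrees with conjugation by an element $y \in G(\Qp)$. The input is the extra hypotheses that $T_{\Ql}$ is elliptic at some prime $l \neq p$ and $H^1(\Qp, T) = 0$: the latter kills the obstruction to realizing the local transfer at $p$ by a rational-at-$\Qp$ element rather than merely a $\Qpb$-point (the fiber of the relevant map $\mathbb{H}^0(\Qp, I_0 \backslash G) \to \ker[H^1(\Qp, I_0) \to H^1(\Qp, G)]$ collapses), and the former provides a compatible adelic class needed to globalize via the Hasse principle. \textbf{The main obstacle} I expect is the $p$-adic transfer step, i.e. Lemma \ref{lem:LR-Lemma5.11}: showing that the given $\Qp$-elliptic torus $T_{\Qp}$ of $I_{\Qp}$ transfers to the quasi-split group $G_{\Qp}$ in a way compatible with the parahoric level structure (so that the resulting $\psi_{T', \mu_{h'}}$ actually satisfies condition (3) of Def. \ref{defn:admissible_morphism} relative to $\mbfK_p$, i.e. the relevant double coset lands in $\Adm_{\mbfKt_p}(\{\mu_X\})$); this is exactly the point where the case-by-case analysis using the tameness of $G^{\uc}_{\Qp}$ is unavoidable, and it is the place where the hypotheses of the theorem are genuinely used rather than being for convenience.
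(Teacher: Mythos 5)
The overall contour of your proposal — transfer of tori into inner forms via local-global arguments, then recognition of a special admissible morphism — is in the right spirit, but the internal structure is misidentified in a few places that matter.

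First, the argument actually decomposes into \emph{three} transfer steps, not the two you describe, and you conflate them. Step II of the paper's proof (Prop.~\ref{prop:existence_of_admissible_morphism_factoring_thru_given_maximal_torus}) transfers $T$, viewed as a torus of $I = Z_G(\phi(\delta_n))$, to $I_\phi$ with respect to the inner twisting $\psi: I_{\Qb} \isom (I_\phi)_{\Qb}$ of (\ref{eq:inner-twisting_by_phi}); this \emph{keeps $T$ fixed} as a subtorus of $G$ and produces $g \in I(\Qb)$ with $\Int g \circ \phi$ factoring through $\fG_T$ (via Lemma~\ref{lem:criterion_for_admissible_morphism_to_land_in_torus}). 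The production of the new torus $T' = \Int g'(T)$ only happens in the \emph{final} step (Prop.~\ref{prop:equivalence_to_special_adimssible_morphism}), by transferring $T$ to $G$ with respect to the \emph{identity} inner twisting — itself two nested applications of Hasse-principle arguments, once in Lemma~\ref{lem:LR-Lemma5.12} and again at the end of Prop.~\ref{prop:equivalence_to_special_adimssible_morphism} (using Borovoi's Hasse principle for the image of $H^1(\Q, G^{\uc})$).

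Second, your placement of Lemma~\ref{lem:LR-Lemma5.11} is wrong. You put it in the local-transfer check at $p$ for the map $T \hra I_\phi$; but that step is entirely automatic from the ellipticity of $T_{\Qp}$ in $I_{\Qp}$ (as in [LR87, Lem. 5.8, 5.9]) and does not touch the parahoric $\mbfK_p$ or the tameness hypothesis. Lemma~\ref{lem:LR-Lemma5.11} is the heart of the \emph{final} step: it is what produces a cocharacter $\mu \in X_*(T) \cap \{\mu_X\}$ such that $\phi$ and $\psi_{T,\mu}$ coincide on the kernel, and the hypotheses of the theorem (quasi-split $G_{\Qp}$, tame $G^{\uc}_{\Qp}$, parahoric $\mbfK_p$) enter precisely there, through the nonemptiness of $X(\{\mu_X\}, b)_{\mbfK_p}$ and a case-by-case analysis of the affine root system ${}^{\mbfo}\Sigma$. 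Relatedly, your characterization of the "torus case" as "essentially a computation with the universal property of $(P^K, \nu_1^K, \nu_2^K)$" dramatically underestimates it; Prop.~\ref{prop:equivalence_to_special_adimssible_morphism} is where most of the work in the theorem lives, including the cohomological trivialization of the cocycle $b_\rho$ with $\phi(q_\rho) = b_\rho \cdot i\circ\psi_{T,\mu_h}(q_\rho)$, which needs condition (1) of Def.~\ref{defn:admissible_morphism} via Lemma~\ref{lem:isom_of_monoidal_functors_into_croseed_modules} to land in the image of $H^1(\Q, G^{\uc})$ before Borovoi's theorem can be invoked.

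Third, for part (1): you make the well-located step depend on finding a maximal torus of $I_\phi$ with prescribed local ellipticity, but these are two distinct steps. Making $\phi$ well-located is [LR87, Lemma 5.4], which concerns only the restriction $\phi^\Delta$ to the kernel and does not require selecting any torus. The torus $T_1 \subset I$ is chosen afterward (Step II of the paper's proof), and its existence is a standard weak-approximation argument, not the Bruhat–Tits statement Prop.~\ref{prop:existence_of_elliptic_tori_in_special_parahorics}, which you invoke here but which is actually used in Lemma~\ref{lem:LR-Lemma5.2} and Theorem~\ref{thm:LR-Satz5.21}, not here.
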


In the case where the level subgroup $\mbfK_p$ is hyperspecial and $G^{\der}=G^{\uc}$, the first statement is Satz 5.3 of \cite{LR87}. The second statement is new; it was motivated by gaining control on special admissible morphism (especially on the torus $T$) that is conjugate to a given admissible morphism. 

To establish these statements, we will adapt the arguments from the proof of \textit{loc. cit.}, which we now review briefly. It can be divided into three steps: 
\begin{itemize}
\item[I.] The first step is to replace a given admissible morphism $\phi$ by its conjugate that is well-located (i.e. when we denote the conjugate again by $\phi$, we have $\phi(\delta_n)\in G(\Q)$).
\item[II.] The second step is to show that there is a conjugate $\Int  g\circ\phi:\fP\rightarrow\fG_G\ (g\in G(\Qb))$ of $\phi$ in step I which factors through $\fG_T$ for \emph{some} maximal $\Q$-torus $T$ (of $G$) that is elliptic over $\R$.
Again, let us denote this conjugate $\Int  g\circ\phi$ by $\phi$. 
\item[III.]
The final step is to find an admissible embedding $\Int g':T \hra G$ (with respect to the identity inner twisting $\mathrm{id}:G_{\Qb}=G_{\Qb}$, $g'\in I(\Qb)$) such that $\Int  g'\circ\phi:\fP\rightarrow \fG_{\Int  g'(T)}$ becomes a special admissible morphism. 
\end{itemize}

We point out that while the first two steps will be established by arguments in Galois cohomology which do not use level subgroups, we need to validate the third step for parahoric level subgroups (see Lemma \ref{lem:LR-Lemma5.11} below) and also that in the original argument (i.e. proof of  \cite{LR87}, Satz 5.3), the assumption $G^{\der}=G^{\uc}$ was needed only in the third step.
For the finer claim (2), it is also necessary to strengthen the second step in addition to the third one. 
For that purpose (and some other potential applications), we formalize (with some improvements incorporated) these steps into two propositions respectively: Prop. \ref{prop:existence_of_admissible_morphism_factoring_thru_given_maximal_torus}, Prop. \ref{prop:equivalence_to_special_adimssible_morphism}. We first introduce these propositions, postponing their proofs to the next subsection.

\begin{prop} \label{prop:existence_of_admissible_morphism_factoring_thru_given_maximal_torus}
Let $\phi:\fP\rightarrow\fG_G$ be an admissible morphism and $T$ a maximal $\Q$-torus of $G$, elliptic over $\R$, having properties (i) and (ii) of Thm. \ref{thm:LR-Satz5.3}, (2). Then, there exists $g\in I(\Qb)$ such that $\Int  g\circ\phi$ maps $\fP$ into $\fG_T(\hra \fG_G)$. 
\end{prop}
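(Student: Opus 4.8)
The plan is to carry out the second step (Step II) of the strategy outlined after Theorem~\ref{thm:LR-Satz5.3}: starting from an admissible morphism $\phi$ together with a maximal $\Q$-torus $T\subset I=Z_G(\phi(\delta_n))$ satisfying (i) and (ii), I want to produce $g\in I(\Qb)$ with $\Int g\circ\phi$ factoring through $\fG_T$. The point is that $T$ is already a maximal torus of $I$, so it suffices to move $\phi$ (which a priori only factors through $\fG_I$ up to conjugacy, since $\phi(\delta_n)\in T(\Q)\subset I(\Q)$ and the cochain $\rho\mapsto g_\rho$ attached to $\phi(q_\rho)$ takes values in $I(\Qb)$, cf.~\autoref{subsubsec:well-located_admissible_morphism}) into $\fG_T$ by conjugation \emph{inside} $\fG_I$. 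So the problem is genuinely a problem about the neutral gerb $\fG_I$ and the torus $T\subset I$: given a morphism $\phi:\fP\to\fG_I$ whose kernel restriction $\phi^\Delta$ lands in $T_{\Qb}$ (this holds because $\phi^\Delta=\psi_{T,\mu}^\Delta$-type data is defined over $\Q$ and factors through the connected centralizer, or more simply because $\phi(\delta_m)\in T$ and the $\delta_m^k$ are Zariski-dense in $P(K,m)$ by Lemma~\ref{lem:Reimann97-B2.3}(2), forcing $\phi^\Delta$ to factor through $Z_I(T)^{\mathrm o}=T$), I must conjugate $\phi$ by an element of $I(\Qb)$ so that the images $\phi(q_\rho)$ also land in $T(\Qb)\rtimes\{\rho\}$.

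The mechanism is the standard cohomological one. Since $\phi^\Delta$ factors through $T$, for each $\rho$ the element $g_\rho\in I(\Qb)$ defined by $\phi(q_\rho)=g_\rho\rtimes\rho$ normalizes $T$ modulo... more precisely, conjugation by $\phi(q_\rho)$ is an algebraic $\rho$-linear automorphism of $I$ preserving the image of $\phi^\Delta$; combined with the fact that $\phi^\Delta$ is defined over $\Q$ (Lemma~\ref{lem:defn_of_psi_T,mu}(3) in the relevant normalization, or by the density argument), one gets that $\Int(g_\rho)$ carries $T$ to a maximal torus $\rho(T)=T$ (as $T$ is defined over $\Q$), so $g_\rho\in N_I(T)(\Qb)$. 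Thus $\phi$ determines a cocycle-like datum valued in the normalizer, and the obstruction to conjugating it into $T$ is a class in $H^1(\Q, W)$ where $W=N_I(T)/T$ is the (finite, twisted) Weyl group—or rather, by the usual argument (cf.~the proof of \cite[Satz~5.3]{LR87} and \cite[Lemma~5.10]{LR87}), one reduces to showing that a certain class in $H^1(\Q,T)$, namely the image under $H^1(\Q,T)\to H^1(\Q,I)$ of the natural obstruction, is trivial—and this triviality is exactly guaranteed by the hypotheses that $T$ is elliptic over $\R$ (controlling the archimedean place) and elliptic in $I_{\Qp}$ (controlling $p$), together with the fact that $\phi$ is admissible so that $\phi(v)\circ\zeta_v$ is conjugate to the canonical object at the remaining finite places. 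Concretely, I expect to invoke: at $\infty$, that an elliptic maximal torus absorbs the relevant local obstruction; at $p$, property (ii) plus Lemma~\ref{lem:unramified_morphism} and the classification of unramified morphisms; away from $p,\infty$, condition (2) of Definition~\ref{defn:admissible_morphism}, which makes $X_v(\phi)$ a torsor under $G(\Q_v)$ and in particular nonempty so the local obstruction vanishes there.

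The technical engine is a Hasse-principle / local-global statement for the relevant (abelianized) cohomology: one writes the obstruction as an element of some $H^1(\Q,\,\cdot\,)$ and checks it is locally trivial everywhere, then concludes it is globally trivial. In the generality of this paper ($G^{\der}\ne G^{\uc}$), one cannot work with $I^{\ab}$ alone; instead I would run the argument through the abelianized cohomology $H^1_{\ab}(\Q,I)$ and the complex $I_{\bfab}=(Z(I^{\uc})\to Z(I))$ from \autoref{subsubsec:pre-Kottwitz_triple} and Appendix~\ref{sec:abelianization_complex}, using the exact sequence of Lemma~\ref{lem:abelianization_exact_seq} (with $H=I$, or with $H$ the appropriate centralizer) to descend the conjugating element from $\tilde I$ to $I$. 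The local triviality at each place is then read off from: ellipticity of $T$ at $\infty$ and at $p$ (hypotheses (i),(ii)), and the admissibility conditions (2),(3) of Definition~\ref{defn:admissible_morphism} at the other places; the global conclusion uses the Hasse principle for $H^1_{\ab}$ (equivalently, for $H^1$ of the connected reductive group $I$, Kottwitz's $H^1(\Q,I)\hookrightarrow\bigoplus_v H^1(\Q_v,I)$ on the relevant kernel, cf.~\cite{Kottwitz84a}).

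\textbf{Main obstacle.} The serious point is \emph{not} the abstract cohomological bookkeeping but verifying the local triviality at $p$ with a \emph{parahoric} (not necessarily hyperspecial) level and with $G_{\Qp}$ merely quasi-split and tamely ramified rather than unramified. In the original \cite{LR87} setting, $T_{\Qp}$ elliptic in an unramified group, combined with hyperspecial level, makes the $p$-adic obstruction transparent; here I expect to need the Bruhat–Tits input isolated in this paper (Lemma~\ref{lem:LR-Lemma5.11} / Proposition~\ref{prop:existence_of_elliptic_tori_in_special_parahorics}) to know that an elliptic maximal torus with the right properties exists and that its parahoric subgroup sits inside $\mbfKt_p$, and to translate the equality $\mathrm{cls}(\phi(p)\circ\zeta_p)$ being in the admissible set into a statement about the image of the obstruction class in $B(T_{\Qp})$ or $\pi_1(I)_{\Gamma_p}$. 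Handling this at $p$ uniformly, without circular dependence on the very statement (Theorem~\ref{thm:LR-Satz5.3}) one is building up to, is where the care is required; I would structure the proof so that Proposition~\ref{prop:existence_of_admissible_morphism_factoring_thru_given_maximal_torus} uses only the torus $T$ that is \emph{given} (with properties (i),(ii) assumed), deferring the existence of such a $T$ to the later steps.
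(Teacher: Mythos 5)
Your high‑level plan — reduce, via what the paper isolates as Lemma~\ref{lem:criterion_for_admissible_morphism_to_land_in_torus}, to showing that $T\subset I$ transfers to $I_{\phi}$ with respect to the inner twisting $\psi$, and then deduce global transferability from local transferability at every place — is exactly the paper's route, and your reading of the roles of the three classes of places is essentially right: ellipticity handles $\infty$ and $p$, and condition~(2) of Definition~\ref{defn:admissible_morphism} makes $\psi$ descend at $v\neq p,\infty$ so that transferability there is automatic.

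Two things go off course, though.  First, the cohomological bookkeeping: you frame the obstruction as living in $H^1(\Q,W)$ or $H^1(\Q,T)\to H^1(\Q,I)$ and suggest routing it through $H^1_{\ab}$. The paper does not do this; having reduced to a transfer‑of‑maximal‑torus problem, it simply invokes \cite[Lem.~5.6]{LR87} (= \cite[\S9.4–9.5]{Kottwitz84a}): given local transferability at every place and ellipticity of $T$ at a single place, the global obstruction lives in $\ker^2(\Q,T^{\uc})$ and vanishes because $T^{\uc}$ is anisotropic at a place. Your $H^1$‑based framing is not what is used, and the abelianization machinery of Lemma~\ref{lem:abelianization_exact_seq} is not needed for this proposition (it becomes relevant in later effectivity arguments, not here).

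Second, and more importantly, your ``main obstacle'' paragraph is a misconception.  This proposition is precisely the step that the paper says, in the discussion after Theorem~\ref{thm:LR-Satz5.3}, uses \emph{only} Galois cohomology and never touches the level subgroup: the $p$‑adic local transferability is furnished by hypothesis~(ii) (ellipticity of $T_{\Qp}$ in $I_{\Qp}$), via \cite[Lem.~5.8,~5.9]{LR87}, full stop.  Nothing about parahoric versus hyperspecial level, nothing about tame ramification, and no invocation of Lemma~\ref{lem:LR-Lemma5.11} or Proposition~\ref{prop:existence_of_elliptic_tori_in_special_parahorics} enters; those are the engine of Step~III (Proposition~\ref{prop:equivalence_to_special_adimssible_morphism}), where the admissible set $\Adm_{\mbfKt_p}(\{\mu_X\})$ and the image of $\phi(p)\circ\zeta_p$ in $B(T_{\Qp})$ actually matter.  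Condition~(3) of admissibility plays no role here either.  So the hard point you flagged is not a point of this proof; if you take $T$ as given with (i),(ii) and run the transfer argument above, you are done.
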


Notice that here we only modify $\phi$ (while keeping $T$).
This proposition can be regarded as a souped-up version of the second step above, in the following sense. In the original second step (i.e. \cite{LR87}, p. 176, from line 1 to -5), we are given an admissible morphism $\phi$ and want to find a maximal $\Q$-torus $T$ such that some conjugate of $\phi$ maps into $\fG_{T}\subset \fG_G$. There, $\phi$ is considered somewhat as given and one looks for $T$ with this property. As such, the choice of $T$ is restricted by $\phi$, namely, for arbitrary $\phi$ there is very little room of choice for such $T$. In our case, however, we start with some fixed torus $T$ and demand that a conjugate of $\phi$ factors through $\fG_T$. It turns out that this becomes possible if $T$ satisfies properties (i) and (ii) of Thm. \ref{thm:LR-Satz5.3}, (2).

Also we note that the new pair $(\Int g\circ\phi,T)$ still enjoys properties (i) and (ii),
since $\Int g\circ\phi(\delta_n)=g\phi(\delta_n)g^{-1}=\phi(\delta_n)$ (as $g\in I(\Qb)$). 

The next proposition is also an enhanced version of the third step above.

\begin{prop} \label{prop:equivalence_to_special_adimssible_morphism}
If an admissible morphism $\phi:\fP\rightarrow\fG_G$ is well-located in a maximal $\Q$-torus $T$ of $G$ that is elliptic over $\R$, there exists an admissible embedding of maximal torus $\Int g'|_{T}:T\hra G$ such that $\Int  g'\circ\phi:\fP\rightarrow\fG_{T'}$ is special, where $T':=\Int  g'(T)$ ($\Q$-torus), i.e. $\Int  g'\circ\phi=\psi_{T',\mu_{h'}}$ for some $h'\in X\cap\Hom(\dS,T'_{\R})$.

If $T$ further fulfills condition (iii) of Thm. \ref{thm:LR-Satz5.3}, one can find such transfer of maximal torus $\Int g'$ (i.e. $\Int g'\circ\phi$ becomes special admissible) which also satisfies that
\begin{itemize}
\item $\Int g'|_{T_{\Qpb}}:T_{\Qpb}\hra G_{\Qpb}$ equals $\Int y|_{T_{\Qp}}:T_{\Qp}\hra G_{\Qp}$ for some $y\in G(\Qp)$. 
\end{itemize}
In particular, $T_{\Qp}$ and $T'_{\Qp}$ are conjugate under $G(\Qp)$.
\end{prop}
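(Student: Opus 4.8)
\textbf{Proof plan for Proposition \ref{prop:equivalence_to_special_adimssible_morphism}.}
The plan is to follow the three-step strategy laid out after Theorem \ref{thm:LR-Satz5.3}, but since we are already handed a well-located admissible morphism $\phi$ factoring through $\fG_T$ for a fixed maximal $\Q$-torus $T$ elliptic over $\R$, only the final step (step III) needs to be carried out. So the main task is: find an admissible embedding $\Int g'|_T : T\hookrightarrow G$ (with respect to the identity inner twisting, so $g'\in I(\Qb)$ where $I = Z_G(\phi(\delta_n))$) such that $\Int g'\circ\phi = \psi_{T',\mu_{h'}}$ with $T' = \Int g'(T)$ and $h'\in X\cap\Hom(\dS,T'_\R)$. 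First I would recall that $\phi$ maps into $\fG_T$ and, because $T\subset I$, the inner form $I_\phi$ of $I$ contains $T$ as a maximal torus; moreover the cocycle $\rho\mapsto g_\rho$ (from $\phi(q_\rho)=g_\rho\rtimes\rho$) takes values in $T(\Qb)$ up to the necessary adjustments. The key comparison is between $\phi$ and the candidate special morphism $\psi_{T,\mu_h}$ for a suitable $h\in X\cap\Hom(\dS,T_\R)$ (which exists because $T$ is elliptic over $\R$, so some conjugate of $h$ lands in $X$): both are admissible morphisms into $\fG_T$, hence their "difference" is a cocycle valued in $T$, i.e. an element of $H^1(\Q, T)$, and by Lemma \ref{lem:defn_of_psi_T,mu} and the construction of $\psi_{T,\mu}$ the local components of this class at $\infty$ and $p$ are pinned down (at $\infty$ the Hodge cocharacter condition, at $p$ the admissibility/$\sigma$-conjugacy condition via Lemma \ref{lem:unramified_conj_of_special_morphism}), and away from $p,\infty$ they are trivial by condition (2) of Def. \ref{defn:admissible_morphism}.

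The crucial input is Lemma \ref{lem:LR-Lemma5.11} (the transfer-of-maximal-tori result valid for parahoric levels under the tameness hypothesis), which provides an element $g'$ such that $\Int g'|_T$ is an admissible embedding $T\hookrightarrow G$ realizing the required cohomology class — that is, twisting $\psi_{T,\mu_h}$ by the class that measures $\phi-\psi_{T,\mu_h}$ in $H^1(\Q,T)$ (or rather the relevant $H^1(\Q,T^{\ad})$-type obstruction living in the image inside $H^1(\Q,I^{\ad})$) yields exactly $\phi$, up to conjugacy. Concretely, I would argue: the class of $I_\phi$ in $H^1(\Q,I^{\ad})$ lifts, via $T\hookrightarrow I$, to a class in $H^1(\Q,T)$ whose localizations are controlled as above; Lemma \ref{lem:LR-Lemma5.11} says this class comes from an admissible embedding $\Int g': T\hookrightarrow G$; then by construction $\Int g'\circ\phi$ and $\psi_{T',\mu_{h'}}$ (with $T'=\Int g'(T)$, $h' = \Int g'\circ h$) agree as morphisms of Galois gerbs into $\fG_{T'}$, because they now have the same restriction to kernels (both defined over $\Q$, by Lemma \ref{lem:defn_of_psi_T,mu}(3) and well-locatedness) and the same cocycle, possibly after one more conjugation by an element of $T'(\Qb)$ (Hilbert 90 at each place, using the adelic-to-global comparison for $H^1$ of tori as in Thm. \ref{thm:pseudo-motivic_Galois_gerb}). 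The point that $\Int g'\circ\phi$ is still admissible is automatic: admissibility is a conjugacy-invariant condition (Def. \ref{defn:admissible_morphism}), and specialness follows from Lemma \ref{lem:LR-Lemma5.2} once we know $(T',h')$ is a special Shimura sub-datum satisfying the Serre condition, which is inherited from $T$ via the $\Q$-isomorphism $\Int g'|_T$.

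For the refined claim — that when $T$ additionally satisfies condition (iii) of Thm. \ref{thm:LR-Satz5.3} (namely $T_{\Ql}$ elliptic in $G_{\Ql}$ for some $l\neq p$ and $H^1(\Qp,T)=0$) one can take $\Int g'$ to restrict over $\Qpb$ to conjugation by a single $y\in G(\Qp)$ — I would work at the place $p$ separately. The hypothesis $H^1(\Qp,T)=0$ forces the local obstruction at $p$ to vanish, so the local admissible embedding $T_{\Qp}\hookrightarrow G_{\Qp}$ that we need is realized by an \emph{inner} automorphism $\Int y$ with $y\in G(\Qp)$ rather than merely $y\in G(\Qpb)$; combined with the ellipticity at $l\neq p$, a standard Hasse-principle / patching argument for embeddings of tori (as in \cite[\S9]{Kottwitz84b}, or the Langlands-Rapoport argument in Satz 5.3) lets one choose the global $g'\in I(\Qb)$ so that its $p$-component is exactly this $y$. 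The conclusion that $T_{\Qp}$ and $T'_{\Qp}$ are $G(\Qp)$-conjugate is then immediate from $T' = \Int g'(T)$ and $\Int g'|_{T_{\Qpb}} = \Int y|_{T_{\Qp}}$.

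The main obstacle I expect is verifying that the cohomology class measuring $\phi$ against $\psi_{T,\mu_h}$ has precisely the local behavior that Lemma \ref{lem:LR-Lemma5.11} can match — i.e. reconciling the $p$-adic component (the $\sigma$-conjugacy class data coming from $\mathrm{cls}(\phi(p)\circ\zeta_p)$ and the $\{\mu_X\}$-admissibility, via Lemma \ref{lem:unramified_conj_of_special_morphism} and the commutativity of diagram (7.3.1) of \cite{Kottwitz97}) with the output of the Bruhat-Tits transfer lemma at parahoric level. This is exactly the place where the passage from the hyperspecial case of \cite{LR87} to general parahoric $\mbfK_p$ requires the tameness hypothesis and genuine new input, and where the proof of Lemma \ref{lem:LR-Lemma5.11} does its case-by-case work; everything else is bookkeeping with cocycles valued in tori and applications of Hilbert 90.
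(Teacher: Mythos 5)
You correctly recognize that only step III of the three-step strategy remains, and that the key object is a $T$-valued cocycle measuring the difference between $\phi$ and a candidate special morphism. But your attribution of the heavy lifting to Lemma \ref{lem:LR-Lemma5.11} is a misidentification that hides the real structure. That lemma has nothing to do with transfers of maximal tori or with realizing cohomology classes: it is a purely computational statement producing a cocharacter $\mu\in X_\ast(T)\cap\{\mu_X\}$ such that $\Nm_{K/\Qp}\mu=[K:\Qp]\nu_b$, which in turn guarantees that $\phi$ agrees with $\psi_{T,\mu}$ only \emph{on the kernel} of $\fP$. The actual transfer-of-torus content is split between two later ingredients your plan does not name: Lemma \ref{lem:LR-Lemma5.12}, which is the Weyl-element/Langlands argument producing an admissible embedding $\Int g_2:T\hra G$ so that $\mu$ becomes some $\mu_{h_2}$ with $h_2\in X$ (still only equality on kernels), and then a cohomological vanishing argument for the cocycle $b_\rho\in Z^1(\Q,T)$ defined by $\phi(q_\rho)=b_\rho\cdot i\circ\psi_{T,\mu_h}(q_\rho)$. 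That vanishing argument, not Lemma \ref{lem:LR-Lemma5.11}, is what produces the final $v\in G^{\uc}(\Qb)$ trivializing $b_\rho$ and hence the admissible embedding $\Int v^{-1}$.

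The genuinely new technical step that your plan omits entirely is how one shows the image $b_\rho'$ of $[b_\rho]$ in $H^1(\Q,G)$ vanishes when $G^{\der}\neq G^{\uc}$. In the original setting of \cite{LR87} this used a lemma of Deligne requiring $G^{\der}=G^{\uc}$; here one must first invoke Lemma \ref{lem:isom_of_monoidal_functors_into_croseed_modules} (the monoidal-functor comparison) to convert condition (1) of Def. \ref{defn:admissible_morphism} into the statement that $b_\rho'$ lies in $\im(H^1(\Q,G^{\uc})\rightarrow H^1(\Q,G))$, and then apply Borovoi's Hasse principle (Thm.\ 5.12 of \cite{Borovoi98}) for that image. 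Without this, the argument breaks down for general $G^{\der}$. Your description of the obstruction as living in $H^1(\Q,I^{\ad})$ and being "matched" by Lemma \ref{lem:LR-Lemma5.11} does not correspond to any step in the actual proof, and the sentence claiming that lemma "says this class comes from an admissible embedding $\Int g'$" is false. The discussion of the refined claim (using $H^1(\Qp,T)=0$ to make the $p$-adic restriction of $b_\rho$ trivial so $xv\in G(\Qp)$) is in the right spirit but depends on first establishing that $\phi(p)\circ\zeta_p$ is $T(\Qpb)$-conjugate to $\psi_{T,\mu_h}(p)\circ\zeta_p$, which again comes out of the Lemma \ref{lem:LR-Lemma5.11} cocharacter computation plus the kernel-equality, not from a Hasse-principle patching of embeddings.
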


In the original case of hyperspecial $\mbfK_p$, the first statement of this proposition is established in the course of proving Satz 5.3 (more precisely, in the part beginning from Lemma 5.11 until the end of the proof of that theorem; this is also the only part in the proof of Satz 5.3 that requires the assumption $G^{\der}=G^{\uc}$, cf. Footnote \ref{ftn:sc-assumption1}).
On the other hand, we remark that the idea of exploiting a transfer of maximal torus $i:T\isom T'$ which becomes $G(\Qp)$-conjugacy first appeared in our previous work \cite[Thm. 4.1.1]{Lee16}, and will find similar application here later.

Finally, we point out that the three properties (i) - (iii) of Theorem \ref{thm:LR-Satz5.3} remain intact under any transfer of maximal torus. Indeed, an inner automorphism does not interfere with the center and a transfer of maximal torus $i:T\rightarrow G$ also restricts to a $\Q$-isomorphism $i:Z(Z_{G}(t))\isom Z(Z_{G}(i(t)))$ for any $t\in T(\Q)$.

In the next subsection, we present the proofs of Prop.
\ref{prop:existence_of_admissible_morphism_factoring_thru_given_maximal_torus}, \ref{prop:equivalence_to_special_adimssible_morphism}, and Theorem \ref{thm:LR-Satz5.3}.
\footnote{The original arguments in \cite{LR87} use the quasi-motivic Galois gerb $\fQ$ (instead of the pseudo-motivic Galois gerb $\fP$) whose definition is however wrong (cf. \cite[Appendix B]{Reimann97}). Fortunately, the whole arguments remain valid with $\fQ$ replaced by $\fP$, as long as the (admissible) morphisms in question factor through $\fP$.}

\subsection{Proofs of Propositions \ref{prop:existence_of_admissible_morphism_factoring_thru_given_maximal_torus},  \ref{prop:equivalence_to_special_adimssible_morphism}, and Theorem \ref{thm:LR-Satz5.3}}

Recall that we have fixed a continuous section $\rho\mapsto q_{\rho}$ to the projection $\fP\thra\Gal(\Qb/\Q)$ (Remark \ref{rem:comments_on_zeta_v}, (3)).

\begin{lem} \label{lem:criterion_for_admissible_morphism_to_land_in_torus}
Let $\phi:\fP\rightarrow\fG_G$ be a well-located admissible morphism. Let $T$ be a maximal $\Q$-torus of $I:=Z_G(\phi(\delta_n))\ (n\gg1)$ and $\psi:I_{\Qb}\isom (I_{\phi})_{\Qb}$ the inner twisting (\ref{eq:inner-twisting_by_phi}).

If for some $a\in G(\Qb)$, $\Int a^{-1}:T_{\Qb}\hra G_{\Qb}$ induces a $\Q$-rational map from $T$ to $I_{\phi}$, the latter being regarded as a $\Qb$-subgroup of $G_{\Qb}$ via the inner twisting $\psi$, $\Int  a\circ\phi$ factors through $\fG_T\subset \fG_G$. 

f $a\in I(\Qb)$, the converse also holds: if $\Int  a\circ\phi$ factors through $\fG_T$, $\psi\circ\Int  a^{-1}|_T:(T)_{\Qb}\hookrightarrow I_{\Qb}\isom (I_{\phi})_{\Qb}$ is $\Q$-rational (i.e. is a transfer of the maximal torus $T\subset I$ into $I_{\phi}$ with respect to $\psi$). 
\end{lem}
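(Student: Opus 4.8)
The statement asserts an equivalence between a Galois-descent condition on an inner-twisted embedding $\psi \circ \Int a^{-1}|_T$ and the assertion that $\Int a \circ \phi$ lands in the neutral Galois gerb $\fG_T$. The plan is to unwind both sides in terms of the chosen section $\rho \mapsto q_\rho$ and the $1$-cochain $g_\rho$ defined by $\phi(q_\rho) = g_\rho \rtimes \rho$, using the description of $I_\phi$ from \eqref{eq:inner-twisting_by_phi} as the inner twist of $I = Z_G(\phi(\delta_n))$ via the cocycle $\rho \mapsto \bar g_\rho \in I^{\ad}(\Qb)$. First I would record the two defining equalities: $(\Int a \circ \phi)(q_\rho) = a g_\rho \rho(a^{-1}) \rtimes \rho$, and $(\Int a \circ \phi)(\delta_n) = a \phi(\delta_n) a^{-1}$; since $\phi$ is well-located, $\phi(\delta_n) \in G(\Q)$, and the morphism $\Int a \circ \phi$ factors through $\fG_T$ precisely when (a) $a \phi(\delta_n) a^{-1} \in T$ — equivalently $\Int a^{-1}(T) \subset Z_G(\phi(\delta_n)) = I$, so one may as well already know $\Int a^{-1}(T) \subset I$ — and (b) the twisted cochain $a g_\rho \rho(a^{-1})$ takes values in $T(\Qb)$ for all $\rho$ (together with the harmless observation that the restriction of $\Int a \circ \phi$ to the kernel $P_{\Qb}$ automatically lands in $T_{\Qb}$ once it does so on a section, because $\phi^\Delta$ is valued in $Z(G)_{\Qb} \subset T_{\Qb}$ for $\phi$ factoring through a special datum — or more directly because any $\phi$ landing in $\fG_T$ on the section forces $\phi^\Delta$ into $T$ by the commutator identity).

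Next I would translate condition (b) into the rationality statement. The inner twisting $\psi : I_{\Qb} \isom (I_\phi)_{\Qb}$ is, concretely, a map whose effect on $\Qb$-points is the identity on $G(\Qb)$ but which intertwines the standard Galois action on $I_\phi(\Q) = \{g \in G(\Qb) : g_\rho \rho(g) g_\rho^{-1} = g\}$ with the Galois action on $I(\Qb)$; explicitly $g \in I_\phi$ has "Galois action" $\rho \mapsto g_\rho \rho(\cdot) g_\rho^{-1}$ when viewed inside $I$. So a subgroup $S \subset I_{\Qb}$, viewed via $\psi$ as sitting in $G_{\Qb}$, is defined over $\Q$ as a subgroup of $I_\phi$ exactly when $g_\rho \rho(S) g_\rho^{-1} = S$ for all $\rho$. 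Applying this with $S = \Int a^{-1}(T_{\Qb})$: the embedding $\Int a^{-1} : T_{\Qb} \hra G_{\Qb}$ descends to a $\Q$-map $T \to I_\phi$ iff $\Int a^{-1}(T)$ is Galois-stable in the twisted sense, i.e. $g_\rho \cdot \rho(a^{-1}) \rho(T_{\Qb}) \rho(a) \cdot g_\rho^{-1} = a^{-1} T_{\Qb} a$, i.e. $a g_\rho \rho(a^{-1}) \in N_G(T)(\Qb)$ normalizes $T$ consistently — and because $T$ is its own centralizer and we are looking at a cochain, the honest condition one needs (and gets from $\Q$-rationality of the embedding $T \to I_\phi$, not just of its image) is exactly $a g_\rho \rho(a^{-1}) \in T(\Qb)$ for all $\rho$. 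This is condition (b). The forward implication then follows: $\Q$-rationality of $\psi \circ \Int a^{-1}|_T$ gives (b), hence (together with $a \in I(\Qb)$ handling (a) trivially, since $\Int a^{-1}(T) \subset I$ is automatic from $a$ normalizing $I$ and $T \subset I$) the factorization.

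For the converse under the hypothesis $a \in I(\Qb)$: if $\Int a \circ \phi$ factors through $\fG_T$, then evaluating on $q_\rho$ forces $a g_\rho \rho(a^{-1}) \in T(\Qb)$, which is precisely condition (b); reading the chain of equivalences above backwards shows $\psi \circ \Int a^{-1}|_T : T_{\Qb} \hookrightarrow (I_\phi)_{\Qb}$ is $\Q$-rational, i.e. is a transfer of $T$ into $I_\phi$. The hypothesis $a \in I(\Qb)$ is used here so that $\Int a^{-1}$ actually maps $T$ into $I$ (not merely into $G$), which is needed to even speak of the embedding into $I_\phi$ via $\psi$. The main obstacle I anticipate is purely bookkeeping: being careful that "$\Int a^{-1}$ induces a $\Q$-rational map from $T$ to $I_\phi$" is genuinely the statement about the cochain $a g_\rho \rho(a^{-1})$ landing in $T$ — one must check that $\Q$-rationality of the embedding (as opposed to merely $\Q$-stability of the image subgroup) is what is at stake, and that the image-level and embedding-level conditions coincide here because $T$ is a torus equal to its own centralizer in $I$, so the only ambiguity (composition with a Weyl element) is killed. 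Once that identification is pinned down, the proof is a direct comparison of the two displayed formulas for $(\Int a \circ \phi)(q_\rho)$ and $(\Int a \circ \phi)(\delta_n)$ with the twisted-Galois-action description of $I_\phi$.
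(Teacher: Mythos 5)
Your proof follows the same approach as the paper's: evaluating $\Int a\circ\phi$ on the section $q_\rho$, identifying $I_\phi(\Qb)$ with $I(\Qb)$ carrying the $\phi$-twisted Galois action, and observing that $\Q$-rationality of $\psi\circ\Int a^{-1}|_T$ translates, via $Z_G(T)=T$, into the cochain condition $ag_\rho\rho(a^{-1})\in T(\Qb)$. That is exactly the paper's display (\ref{eqn:transfer_from_I_to_I_phi}) and the key step is identical.

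Two of your side remarks, however, are off. First, in the forward direction you write that $a\in I(\Qb)$ ``handles (a) trivially''; but the forward implication only assumes $a\in G(\Qb)$, and $a\in I(\Qb)$ is hypothesized solely for the converse. This is harmless in the end because the hypothesis that $\Int a^{-1}$ ``induces a map from $T$ to $I_\phi$'' already builds in $\Int a^{-1}(T)\subset I_{\Qb}$, but you should not be appealing to a normalization by $a$ that is not available. Second, your treatment of the kernel condition is imprecise: $\phi^\Delta$ is \emph{not} in general valued in $Z(G)$ (for a well-located $\phi$ it takes values in $Z(I)=Z(Z_G(\phi(\delta_n)))$, and $Z(I)$ is usually much larger than $Z(G)$), and the ``commutator identity'' does not by itself force $\phi^\Delta$ into $T$ from knowing where the section goes. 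The clean argument, which is the one the paper uses, is: the $\Q$-rational embedding $\psi\circ\Int a^{-1}|_T$ has image a maximal torus of $I_\phi$, hence contains $Z(I_\phi)$, hence contains $\phi(\delta_n)\in Z(I_\phi)(\Q)$, so $a\phi(\delta_n)a^{-1}\in T(\Q)$; by Zariski density of $\{\delta_n^k\}$ in $P(K,n)$ (Lemma \ref{lem:Reimann97-B2.3}, (2)) this gives $(\Int a\circ\phi)^\Delta(P)\subset T$. With these two repairs your proof matches the paper's.
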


In particular, a conjugate of $\phi$ maps into $\fG_T$ if $T\subset I$ transfers to $I_{\phi}$ with respect to the inner twisting $I_{\Qb}\isom (I_{\phi})_{\Qb}$ (\ref{eq:inner-twisting_by_phi}). 

\begin{proof} 
As $\phi$ is well-located, $\phi(\delta_n)\in Z(I_{\phi})(\Q)$ and thus $\Int a\circ\phi(\delta_n)\in  T(\Q)$. This is equivalent to that $(\Int a\circ\phi)^{\Delta}$ maps $P$ to $T$, as $\{\delta_n^k\}_{k\in\N}$ is Zariski-dense in $P(K,n)$ (for any suitable CM field $K$) (Lemma \ref{lem:Reimann97-B2.3}, (2)).
Next, via $\psi$ we identify $I_{\phi}(\Qb)$ with $I(\Qb)\subset G(\Qb)$ endowed with the twisted Galois action $g\mapsto g_{\rho}\rho(g)g_{\rho}^{-1}$, where $g\mapsto\rho(g)$ is the original Galois action on $G(\Qb)$. 
Let $\phi(q_{\rho})=g_{\rho}\rho$. Then, the condition means that 
\begin{equation} \label{eqn:transfer_from_I_to_I_phi}
g_{\rho}\rho(a^{-1}ta)g_{\rho}^{-1}=a^{-1}\rho(t)a
\end{equation}
for all $t\in T(\Qb)$, which is the same as that $ag_{\rho}\rho(a)^{-1}\in T(\Qb)$, as $Z_G(T)=T$. As $a\phi a^{-1}(q_{\rho})=ag_{\rho}\rho(a)^{-1} \rho$, this implies the assertion. 
\end{proof}

\subsubsection{Proof of Proposition \ref{prop:existence_of_admissible_morphism_factoring_thru_given_maximal_torus}.}

\begin{proof}
Let $I=Z_G(\phi(\delta_n))$ and $\psi:I_{\Qb}\isom (I_{\phi})_{\Qb}$ the inner twisting defined by $\phi$ (\ref{eq:inner-twisting_by_phi}); by our assumption that $\phi(\delta_n)\in T(\Q)$, $T$ is a $\Q$-subgroup of $I$. So, by Lemma \ref{lem:criterion_for_admissible_morphism_to_land_in_torus}, it suffices to prove that $T$ transfers to $I_{\phi}$ (with respect to the conjugacy class of $\psi$). 
Since $T_{\Qv}$ is elliptic in $I_{\Qv}$ at a place $v$ (i.e. at $v=\infty, p$), according to \cite[Lem. 5.6]{LR87}
(in which $G^{\ast}$ does not need to be quasi-split; see also the discussion in $\S$9 of \cite{Kottwitz84a}, particularly 9.4.1, 9.5), it suffices to check that $T$ transfers to $I_{\phi}$ everywhere locally (with respect to $\psi_{\Qvb}$). At $v=\infty, p$, this already follows from the condition that $T_{\Qv}$ is an elliptic torus of $I_{\Qv}$ \cite[Lem. 5.8, 5.9]{LR87}. At a finite place $v\neq p$, since $\phi\circ\zeta_v$ is conjugate to the canonical trivialization $\xi_v:\fG_v\rightarrow \fG_G(v)=G(\Qvb)\rtimes\fG_v$ (Def. \ref{defn:admissible_morphism}, (2)), the inner-twisting $\psi_{\Qlb}:I_{\Qvb}\isom (I_{\phi})_{\Qvb}$ (via the chosen embedding $\Qb\hra\Qvb$), after conjugation, descends to an $\Qv$-isomorphism \[I_{\Qv}\isom (I_{\phi})_{\Qv}.\qedhere \]
\end{proof}

As was explained after the statement of Theorem \ref{thm:LR-Satz5.3}, Proposition \ref{prop:equivalence_to_special_adimssible_morphism} is a strengthening of the third step in the proof of Satz 5.3 in \cite{LR87}. 
The proof of this step in \textit{loc. cit.} itself proceeds in three steps: Lemma 5.11, Lemma 5.12, and the rest of the proof of Satz 5.3 (p.181, line 1-19 of \cite{LR87}). 
Again we will prove our proposition along the same line. First, we need some facts from Bruhat-Tits theory.

\begin{lem} \label{lem:specaial_parahoric_in_Levi}
(1) Let $G$ be a (connected) reductive group over a field $F$. Then, for any $F$-split $F$-torus $A_M$ in $G$, its centralizer $M:=Z_G(A_M)$ is an $F$-Levi subgroup of $G$ (i.e. a Levi factor defined over $F$ of an $F$-parabolic subgroup of $G$). If $G$ is quasi-split, then so is $M$. 

From now on, we suppose that $F$ is a complete discrete valued field with perfect residue field (mainly, local fields or $\mfk=\mathrm{Frac}(W(\Fpb))$), and $G$ a connected reductive group over $F$.
As before, let $M=Z_G(A_M)$ for a split $F$-torus $A_M$, and fix a maximal split $F$-torus $S$ of $G$ containing $A_M$. Let $\mcA^G=\mcA^G(S,F)$ and $\mcA^M=\mcA^M(S,F)$ denote respectively the apartments in the buildings $\mcB(G,F)$ and $\mcB(M,F)$ corresponding to $S$.

(2) Every affine root $\alpha$ of $\mcA^G$ whose vector part $a=v(\alpha)\in \Phi(G,S)$ is a root in $\Phi(M,S)$ is also an affine root of $\mcA^M$.

(3) For any special maximal parahoric subgroup $K$ of $G(F)$ associated with a special point in $\mcA^G$, the intersection $K\cap M(F)$ is also a special maximal parahoric subgroup of $M(F)$. 
\end{lem}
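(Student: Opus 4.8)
\textbf{Proof proposal for Lemma \ref{lem:specaial_parahoric_in_Levi}.}

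The plan is to treat the three assertions in order, each being essentially a statement in Bruhat--Tits theory, and to reduce everything to facts about the apartment $\mcA^G(S,F)$ and its affine root structure. For (1), I would first observe that $M=Z_G(A_M)$ is connected reductive (centralizer of a torus), and that the roots $\Phi(M,S)$ are precisely those $a\in\Phi(G,S)$ that vanish on $A_M$; choosing a generic cocharacter $\lambda\in X_\ast(A_M)$ and taking $P$ to be the parabolic generated by $M$ and the root subgroups $U_a$ with $\langle a,\lambda\rangle>0$ exhibits $M$ as an $F$-Levi factor of the $F$-parabolic $P$. For the quasi-split claim: if $B\subset G$ is a Borel over $F$, then (after conjugating so that $B\supset S$ and using that $A_M$ is split) $B\cap M$ is a Borel subgroup of $M$ defined over $F$, since $B\cap M$ contains the maximal torus $Z_G(S)$ (which lies in $M$) and is solvable of the right dimension. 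This part is standard and I would cite \cite{BT84} or \cite{PR94} for the existence of $F$-Levi subgroups.

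For (2), the key point is the functorial compatibility of apartments under passage to a Levi subgroup: there is a canonical $S$-equivariant affine embedding $\mcA^M(S,F)\hookrightarrow\mcA^G(S,F)$, and the affine roots of $\mcA^G$ with vector part in $\Phi(M,S)$ restrict to affine roots of $\mcA^M$ — indeed the affine root system of $M$ is cut out from that of $G$ by selecting exactly the affine roots $\alpha$ with $v(\alpha)\in\Phi(M,S)$ (this is \cite[1.9, 1.10]{Tits79} together with the description of $\mcB(M,F)$ inside $\mcB(G,F)$; see also \cite[4.6.4, 5.1.20]{BT84}). Since the two apartments share the same underlying vector space $X_\ast(S)_\R$ (modulo the extra central factor, which does not affect affine roots), the claim is immediate once this identification is set up correctly; I would phrase it so that $\mcA^M$ and $\mcA^G$ are identified as affine spaces under $V=X_\ast(S)_\R$ and the affine root functions in question literally coincide.

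For (3), let $x\in\mcA^G$ be a special vertex giving the special maximal parahoric $K=\mcG_x^{\mro}(\cO_F)$. By (2), viewing $x$ as a point of $\mcA^M$, the set of affine roots of $\mcA^M$ vanishing at $x$ is the subset of affine roots of $\mcA^G$ vanishing at $x$ whose vector parts lie in $\Phi(M,S)$; since $x$ is $G$-special, for \emph{every} $a\in\Phi(G,S)$ there is an affine root with vector part $a$ vanishing at $x$, hence a fortiori for every $a\in\Phi(M,S)$, so $x$ is $M$-special. Thus $K\cap M(F)$ is a special maximal parahoric of $M(F)$ — here I would use that $K\cap M(F)=\mathrm{Fix}_M(x)\cap\ker w_M$ matches the parahoric $\mcG^M_x{}^{\mro}(\cO_F)$, invoking the compatibility $\mcG^G_x\times_G M\cong\mcG^M_x$ up to the connected-component subtlety (which is harmless here because $x$ is special, so the relevant group schemes are already connected), as in \cite[Prop. 3]{HainesRapoport08} and \cite[4.6.7]{BT84}. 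I expect the main obstacle to be purely expository: pinning down the precise statement of the apartment embedding $\mcA^M\hookrightarrow\mcA^G$ and the matching of affine root systems in a form that cleanly yields ``$x$ $G$-special $\Rightarrow$ $x$ $M$-special,'' since the literature states these compatibilities in somewhat scattered form; once that identification is in place, (2) and (3) are short. I would also remark that in the application (proof of Lemma \ref{lem:LR-Lemma5.2}) $F$ is either a $p$-adic local field or $\mfk$, so no issues with imperfect residue fields arise.
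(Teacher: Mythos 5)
Your overall strategy (reduce (3) to the affine-root compatibility in (2), and for (2) identify the apartments and match affine roots) is the same as the paper's, so I will only flag the one place where your argument goes wrong and a small divergence in (1).

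In (3), the parenthetical remark that ``the connected-component subtlety ... is harmless here because $x$ is special, so the relevant group schemes are already connected'' is incorrect. A special point that is not hyperspecial typically has a stabilizer group scheme $\mcG_x$ with disconnected special fiber (e.g.\ the non-hyperspecial special vertex for ramified $\SU_3$, where the component group is $\Z/2$); only hyperspecial points give reductive, hence connected, models. The statement $K\cap M(F)=\mathrm{Fix}_M(x)\cap\ker w_M$, i.e.\ that $K\cap M(F)$ is a parahoric subgroup of $M(F)$, is genuinely a theorem and the paper cites \cite[Lem.\ 4.1.1]{HainesRostami10} for it; your invocation of the base-change compatibility $\mcG^G_x\times_G M\cong\mcG^M_x$ does not by itself control the component groups, which is exactly the point at issue. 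If you instead cite the Haines--Rostami lemma for the parahoric claim, the rest of your argument for (3) — that $G$-specialness of $x$ implies $M$-specialness via (2), and that specialness implies $x$ determines a vertex of $\mcA^M$ — is fine and is essentially what the paper does (the paper spells out the vertex step via a dimension count with root hyperplanes, but the special-implies-vertex shortcut you use also works).

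A smaller remark: for the quasi-split part of (1) you argue via $B\cap M$, whereas the paper uses the criterion that quasi-splitness of a reductive group is equivalent to the centralizer of a maximal split torus being a torus — which is especially clean here since $T:=Z_G(S)$ is a torus contained in $M$ and is also $Z_M(S)$. Both arguments work; the paper's is one line once you know the criterion, while yours requires checking that $B\cap M$ really is a Borel of $M$ (true, but you would want to set it up so that $B$ sits inside a parabolic with Levi $M$). For (2), you are citing the apartment-embedding and affine-root compatibility as a black box; the paper instead verifies it directly from the root-group-datum axioms (using that $\Phi(M,S)$ is closed in $\Phi(G,S)$ and that the $U_a$, $M_a$ for $a\in\Phi(M,S)$ are the same whether computed in $G$ or in $M$). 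Either is acceptable, though the references you name do not state the assertion quite as sharply as you phrase it, so the direct verification is safer.
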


\begin{proof} (1) These are standard. For the first claim, see \cite[Thm. 4.15]{BT65}.
The second claim is easily seen. We use the fact that for a (connected) reductive group $H$ over a field $F$, $H$ is quasi-split if and only if for a (equiv. any) maximal $F$-split torus of $H$, its centralizer in $H$ is a (maximal) torus.

Now, as any torus containing $A_M$ is a subgroup of $M=Z_{G}(A_M)$, so is any maximal $F$-split torus of $G$ containing $A_M$; choose one and call it $S$. As $G$ is quasi-split, the centralizer $T:=Z_{G}(S)$ of $S$ is a torus of $G$, thus is itself contained in $M$. Now, $T$ is also the centralizer of $S$ in $M$. 

(2) This also follows readily from definition. First, we recall that the relative root datum $\Phi(M,S)=(X_{\ast}(S),R_{\ast}(M),X^{\ast}(S),R^{\ast}(M))$ for $(M,S)$ is a closed sub-datum of the root datum $\Phi(G,S)=(X_{\ast}(S),R_{\ast}(G),X^{\ast}(S),R^{\ast}(G))$ defined by a subset $I$ of the set $\Delta=\{a_1,\cdots,a_n\}$ of simple roots (for some ordering on $R^{\ast}(G)$) \cite[Thm. 4.15]{BT65}: 
\begin{equation} \label{eqn:basis_of_simple_roots_for_Levi}
R^{\ast}(M)=R^{\ast}(G)\cap \sum_{a_i\in I}\Z a_i,
\end{equation} 
and $A_M=(\cap_{\alpha\in I}\ker(\alpha))^{\mathrm{o}}$ (the largest split $F$-torus in the center $Z(M)$). 
Next, for an affine function $\alpha$ on $\mathcal{A}(S,F)\cong X_{\ast}(S)_{\R}$ (regarded as a common affine space without any apartment structure) whose vector part belongs to $\Phi$, let $X_{\alpha}^G$ be defined as in \cite[1.4]{Tits79} with respect to $G$, i.e.
\[X_{\alpha}^G=\{u\in U_{v(\alpha)}(F)\ |\ u=1\text{ or }\alpha(v(\alpha),u)\geq \alpha\}.\]
Here, for $a\in R^{\ast}(G,S)$, $U_a$ refers to the associated root group. This an unipotent $F$-group, which was denoted by ${}_FU_a$ or $U_{(a)}$ in \cite{BT65}, 5.2.
\footnote{In turn, this is the group that was denoted by $G^{\ast (S)}_{(a)}$ (or $G^{\ast}_{(a)}$) in \textit{loc. cit.} 3.8. Namely, when we choose  a maximal $\overline{F}$-torus $T$ of $G_{\overline{F}}$ containing $S$, it is the group generated by ${}_{\overline{F}}U_b$ (the ``absolute'' root group in $G_{\overline{F}}$ defined with respect to $(G_{\overline{F}},T)$) for the absolute roots in $R^{\ast}(G_{\overline{F}},T)$ whose restriction to $S$ belong to $(a)$, the set of relative roots in $R^{\ast}(G,S)$ that are positive integer multiples of $a$.}
When $a\in R^{\ast}(M,S)$, as $U_{a}\subset M$ for $a\in R^{\ast}(M)$, it follows from definition that this $U_a$ is the same group as that defined regarding $a$ as a root for $(M,S)$. Similarly, if $v(\alpha)\in R^{\ast}(M)$, the same is also true of the affine function $\alpha(v(\alpha),u)$. In more detail, its definition uses only the properties of \textit{root (group) datum} (of type some root system) in the sense of \cite[6.1]{BT72}. For $S$ and $U_{a}\ (a\in R^{\ast}(G,S))$ as above, there exist certain $S$-right cosets $\{M_a\}_{a\in R^{\ast}(G,S)}$ such that the family of subgroups 
\[\{S,\{U_{a},M_{a}\}_{a\in \Phi(G,S)}\}\] 
becomes a root group datum of type $\Phi(G,S)$ (in $G$) (in fact $M_a$ is then a subset of the group generated by $\{S,U_{a},U_{-a}\}$, cf. \cite[(6.1.2), (9)]{BT72}). In particular, the element $m(u)$ (for each $u\in U_a(F)\backslash\{1\}$) appearing in \cite[1.4]{Tits79} belongs to $M_a$ and is determined solely by the root group datum $\{S,\{U_{a},M_{a}\}_{a\in \Phi(G,S)}\}$ \cite[(6.1.2), (2)]{BT72}. But, as $\Phi(M,S)$ is \textit{(quasi-)closed} in $\Phi(G,S)$ and $U_{a}\subset M$ for $a\in R^{\ast}(M)$, the subset $\{T,\{U_{\alpha},M_{\alpha}\}_{\alpha\in \Phi(M,S)}\}$ is also a root datum of type $\Phi(M,S)$ (in $G$), cf. \cite[7.6]{BT72}. Hence we can drop the superscript $G$ in $X_{\alpha}^G$ without ambiguity.
Now, we recall the definition \cite[1.6]{Tits79} that an affine function $\alpha$ is an \textit{affine root} of $G$ (relative to $S$ and $F$) if $X_{\alpha}$ is not contained in $X_{\alpha+\epsilon}\cdot U_{2v(\alpha)}\ (=X_{\alpha+\epsilon}\text{ if }2v(\alpha)\notin\Phi$) for any strictly positive constant $\epsilon$. The claim in question is immediate from this definition and the above discussions. 

(3) It is shown in \cite[Lem.  4.1.1]{HainesRostami10} that $K\cap M(F)$ is a parahoric subgroup of $M(F)$. So we just have to show that it is special maximal parahoric. 
Using the special point $\mbfo\in \mcA^G(S,F)$, we may embed $\mcB(M,F)$ into $\mcB(G,F)$ such that $\mbfo$ lies in the image \cite[7.6.4]{BT72}, \cite[4.2.17-18]{BT84}. Let $\mcA((M^{\der}\cap S)^{\mathrm{o}},F)$ be the apartment corresponding to the maximal split $F$-torus $(M^{\der}\cap S)^{\mathrm{o}}$ of $M^{\der}$.
As the affine hyperplanes in $\mcA^{M}$ form a subset of those in $\mcA^{G}$, it is obvious that $\mbfo$ is contained in a unique facet $\mbfa_{\mbfo}^{M}$ in $\mcA^{M}$, i.e. in
\[\mbfa^{M}_{\mbfo}\cong X_{\ast}(Z(M))_{\R}\times \{v_{\mbfo}\}.\]
for some unique facet $v_{\mbfo}$ in the apartment $\mcA((M^{\der}\cap S)^{\mathrm{o}},F)$ (recall that $M$ is the centralizer of a split torus $Z(M)$ which then must be the center). 
Now, we claim that $v_{\mbfo}$ is a vertex of $\mcA((M^{\der}\cap S)^{\mathrm{o}},F)$.
Indeed, as $\mbfo$ is a special point, we may identify the affine space $\mcA^{G}$ with the vector space $X_{\ast}(S)_{\R}$ ($\mbfo$ becoming the origin) so that the root hyperplanes $\{H_{\alpha}\}_{\alpha\in R^{\ast}(G,S)}$ are all affine hyperplanes. Clearly, $\mbfo=\cap_{\alpha\in R^{\ast}(G,S)}H_{\alpha}\cap X_{\ast}((G^{\der}\cap S)^{\mathrm{o}})_{\R} $. Let $I\subset \Delta$ be the subset defining the root datum $\Phi(M,S)$ as in (2); so, the center $Z(M)$ is equasl to $\cap_{\alpha\in I}H_{\alpha}$ (intersection of root hyperplanes in $X_{\ast}(S)_{\R}$).
But, by (2), $\{H_{\alpha}\}_{\alpha\in I}$ is also a subset of affine hyperplanes in the apartment $\mcA^M$ of $(M,S)$, and $\mbfo$ is contained in the intersection of these linearly independent affine hyperplanes in $\mcA(M,S)$, whose dimension is thus equal to rank $r_M$ of $Z(M)$. Hence, $\mbfo$ is contained in a facet of $\mcA^M$ of dimension at most $r_M$, which implies that the facet $v_{\mbfo}$ is of zero-dimension, i.e. a vertex in the building for $M^{\der}$.

Once we know that $v_{\mbfo}$ is a vertex, the fact that it is a special vertex also follows readily from (2). Indeed, 
by definition \cite[1.9]{Tits79}, we need to check that every root of $a\in\Phi(M,S)$ is the vector part of an affine roof of $(M,S)$ vanishing at $v_{\mbfo}(\in \mcA^{M^{\der}})$. We know that $a$ also belongs to $\Phi(G,S)$, thus since $\mbfo$ is special, there exists an affine root $\alpha$ of $(G,S)$ with vector part $v(\alpha)=a$ and vanishing at $\mbfo$. But, by (2), such $\alpha$ is also an affine root of $\mcA^M$, and as such, it must vanish on the facet in $\mcA^M$ containing $\mbfo$, i.e. on $\mbfa^M_{\mbfo}\cong X_{\ast}(Z(M))_{\R}\times \{v_{\mbfo}\}$. \end{proof}

\begin{lem} \label{lem:LR-Lemma5.11}
Assume that $G_{\Qp}$ is quasi-split and that $G^{\uc}_{\Qp}$ is a product $\prod_i \Res_{F_i/\Qp}G_i$ of simple groups each of which is the restriction of scalars $\Res_{F_i/\Qp}G_i$ of an absolutely simple group $G_i$ over a field $F_i$ such that $G_i$ splits over a tamely ramified extension of $F_i$.

Let $T_1\subset G_{\Qp}$ be a maximal $\Qp$-torus, split by a finite Galois extension $K$ of $\Qp$, $b\in T_1(\mfk)$, and $\{\mu\}$ a $G(\Qpb)$-conjugacy class of minuscule cocharacters of $G_{\Qpb}$. Let $\mbfK_p$ be a (not necessarily special maximal) parahoric subgroup of $G(\Qp)$.

If $X(\{\mu\},b)_{\mbfK_p}\neq\emptyset$, there exists $\mu\in X_{\ast}(T_1)\cap \{\mu\}$ such that
\begin{equation} \label{eq:equality_on_the_kernel}
\Nm_{K/\Qp}\mu=[K:\Qp]\nu_b,
\end{equation}
where $\nu_b\in X_{\ast}(T_1)_{\Q}$ is the Newton homomorphism attached to $b$. 

In particular, if $\phi$ is an admissible morphism well-located in a maximal $\Q$-torus $T$ of $G$ that is elliptic over $\R$,  there exists $\mu\in X_{\ast}(T)\cap\{\mu_X\}$ such that $\phi$ and $\psi_{T,\mu}$ coincide on the kernel of $\fP$. 
\end{lem}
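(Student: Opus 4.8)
\textbf{Proof proposal for Lemma \ref{lem:LR-Lemma5.11}.}

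The plan is to reduce the statement to a purely local computation at $p$ about when an affine Deligne–Lusztig variety (or rather its parahoric union $X(\{\mu\},b)_{\mbfK_p}$) is non-empty, combined with the observation that non-emptiness forces a compatibility between $b$ and $\mu$ that can be read off on the level of cocharacter lattices of $T_1$. First I would pass from the parahoric level to the Iwahori level: since $X(\{\mu\},b)_{\mbfK_p}$ is the image of a union of $X(w,b)_{\mbfI}$ over $w\in\Adm_{\mbfKt_p}(\{\mu\})$ under the projection $G(\mfk)/\mbfI \to G(\mfk)/\mbfKt_p$, non-emptiness of $X(\{\mu\},b)_{\mbfK_p}$ is equivalent to non-emptiness of $X(w,b)_{\mbfI}$ for some $w$ in the $\{\mu\}$-admissible set, and in particular $[b]\in B(G_{\Qp},\{\mu\})$ by the basic compatibility (Def. \ref{subsubsec:B(G,{mu})} and the relation between $\Adm$ and $B(G,\{\mu\})$). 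Hence $\kappa_G([b])=\mu^\natural$ and $\overline{\nu}_G([b])\preceq\overline{\mu}$ in $\overline{C}$. The main point then is to promote these two constraints, which a priori only pin down $[b]$ inside $B(G)$, into the \emph{exact} equality \eqref{eq:equality_on_the_kernel} inside the lattice $X_\ast(T_1)$, after choosing the representative $\mu$ of $\{\mu\}$ appropriately in $X_\ast(T_1)$.

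The second step is the lattice-theoretic heart. Since $T_1$ is a maximal $\Qp$-torus split by $K$, the Newton homomorphism $\nu_b$ attached to $b\in T_1(\mfk)$ is computed torus-theoretically: by the formula recalled in \autoref{subsubsec:Kottwitz_hom} (the torus case, $\nu_b=\mathrm{av}\circ w_{T_1,\mfk}(b)$), we have $[K:\Qp]\nu_b\in X_\ast(T_1)$ and $\langle\chi,\nu_b\rangle=\mathrm{val}(\chi(b))$ for every $\Qp$-rational character $\chi$ of $T_1$; thus $[K:\Qp]\nu_b$ is $\Gal(K/\Qp)$-invariant and lies in $X_\ast(T_1)^{\Gal(K/\Qp)}$, which is exactly where $\Nm_{K/\Qp}\mu$ lives for any $\mu\in X_\ast(T_1)$. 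So both sides of \eqref{eq:equality_on_the_kernel} are elements of the finitely generated free abelian group $X_\ast(T_1)^{\Gal(K/\Qp)}$; it then suffices to match them after pairing with $\Qp$-rational characters and, since such characters need not separate points of $X_\ast(T_1)^{\Gal(K/\Qp)}\otimes\Q$ in general, also to control the difference in the "anisotropic" part. This is where I expect the tame hypothesis and the classical-type hypothesis to enter (via the ambient set-up of Lemma \ref{lem:LR-Lemma5.11}'s statement): one uses that under these assumptions the relevant affine Deligne–Lusztig non-emptiness criterion is sharp enough — concretely, the reduction-to-torus argument of Langlands–Rapoport (originally \cite[Lemma 5.11]{LR87}) together with a case-by-case verification restricting the possible ways $b$ and $\mu$ can interact in each simple factor $\Res_{F_i/\Qp}G_i$ — to conclude that a judicious choice of $\mu\in X_\ast(T_1)\cap\{\mu\}$ makes $\Nm_{K/\Qp}\mu$ and $[K:\Qp]\nu_b$ literally equal, not just equal up to torsion or up to a split-central ambiguity. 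The minuscule hypothesis on $\{\mu\}$ is used to guarantee that the admissible set $\Adm_{\mbfKt_p}(\{\mu\})$ is small (a single element in the special maximal parahoric case, cf. the Proposition after Def. \ref{defn:mu-admissible_subset}), which rigidifies the bound $\overline{\nu}_G([b])\preceq\overline{\mu}$.

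For the final ("in particular") assertion, I would apply the first part to the torus $T_1:=T_{\Qp}$, which is a maximal $\Qp$-torus of $G_{\Qp}$ split by some finite Galois extension $K/\Qp$, and to the element $b=b_\phi\in T(\mfk)$ arising from $\phi(p)\circ\zeta_p$: by the definition of admissible morphism (Def. \ref{defn:admissible_morphism}(3)) we have $X(\{\mu_X\},b)_{\mbfK_p}\neq\emptyset$, so the hypothesis of the first part holds and we obtain $\mu\in X_\ast(T)\cap\{\mu_X\}$ with $\Nm_{K/\Qp}\mu=[K:\Qp]\nu_{b}$. On the other hand, since $\phi$ is well-located in $T$, its restriction $\phi^\Delta$ to the kernel of $\fP$ is a $\Q$-morphism $P\to T$, and by Lemma \ref{lem:Newton_hom_attached_to_unramified_morphism} the Newton homomorphism $\nu_b$ equals $-\tfrac1n(\theta^{\nr})^\Delta$ where $\theta^{\nr}$ is the unramified $\Qpnr/\Qp$-Galois gerb morphism underlying $\phi(p)\circ\zeta_p$ — which, via the description of $\psi_{T,\mu}$ in Lemma \ref{lem:unramified_conj_of_special_morphism} and Lemma \ref{lem:properties_of_psi_T,mu}(2), is precisely the restriction to the kernel of $\psi_{T,\mu}(p)\circ\zeta_p$. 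Since both $\phi^\Delta$ and $\psi_{T,\mu}^\Delta$ are defined over $\Q$ (Lemma \ref{lem:defn_of_psi_T,mu}(3)) and agree after base change to $\Qp$ on the kernel, and since $\fP$ is a projective limit of gerbs with torus kernels over $\Q$, agreement over $\Qp$ of the $\Q$-rational kernel maps into the $\Q$-torus $T$ forces agreement over $\Q$; one may also check agreement at $\infty$ directly via Lemma \ref{lem:properties_of_psi_T,mu}(2) (the archimedean component is determined by $\mu_h$ and the Serre condition). Hence $\phi$ and $\psi_{T,\mu}$ coincide on the kernel of $\fP$. The hard part, as flagged, is the sharpening in the second paragraph: turning a $B(G)$-level or $\overline{C}$-level inequality into an honest lattice identity, which genuinely requires the tameness/classical-type restrictions and a case analysis rather than a soft cohomological argument.
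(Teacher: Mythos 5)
Your outline gets the shape of the argument right—non-emptiness of $X(\{\mu\},b)_{\mbfK_p}$ must be converted into a lattice identity in $X_\ast(T_1)$, the minuscule and tame/classical hypotheses enter through a rigidity step, and the ``in particular'' part does reduce to the torus $T_{\Qp}$ via the definition of admissibility and the Newton-homomorphism formula (Lemma \ref{lem:Newton_hom_attached_to_unramified_morphism}). But the middle of your proposal, which you correctly flag as ``the hard part,'' is left as a gesture at ``the reduction-to-torus argument of Langlands--Rapoport together with a case-by-case verification,'' and the paper's actual mechanism is not visible in it. The essential object you never introduce is the Levi subgroup $M=Z_{G_{\Qp}}(T_1^{\mathrm{split}})$ (the centralizer of the maximal $\Qp$-split subtorus of $T_1$), together with the fact that $T_1$ is \emph{elliptic} in $M$. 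The paper's proof does not work by ``promoting'' the coarse constraints $\kappa_G([b])=\mu^\natural$ and $\overline{\nu}_G([b])\preceq\overline{\mu}$; it instead picks a special (non-$\sigma$-stable) parahoric $K_{\mbfo}(\mfk)$ containing an Iwahori inside $\mbfKt_p$, uses He's theorem to pass down to the Iwahori, applies the \emph{Iwasawa decomposition} $G(\mfk)=Q(\mfk)K_{\mbfo}(\mfk)$ to replace $g_1^{-1}b\sigma(g_1)$ by an element of the form $m^{-1}b\sigma(m)\cdot n'$ with $m\in M(\mfk)$, extracts a cocharacter $\mu''$ from the \emph{Cartan decomposition} of $m^{-1}b\sigma(m)$ inside $M$ relative to the special parahoric $K_{\mbfo}(\mfk)\cap M(\mfk)$, and then shows $\underline{\mu''}\in\underline{W\cdot\mu'}$. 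It is precisely this last step that uses tameness and classical type, via an explicit check over the residually split simple factors ($B\operatorname{-}C_m$, $C\operatorname{-}BC_m$, $C\operatorname{-}B_m$), with the minuscule hypothesis serving to upgrade the dominance inequality $\underline{\mu''}_0\leq\underline{\mu'}_0$ (over the coroot lattice $Q^\vee({}^{\mbfo}\Sigma)$) to membership in the same Weyl orbit. Your statement that the minuscule hypothesis ``is used to guarantee that the admissible set is small (a single element in the special maximal parahoric case)'' misidentifies its role: the relevant special parahoric lives over $\mfk$ and is not $\sigma$-stable, and the proof never relies on $\Adm$ being a singleton.

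Two smaller points. First, your concern that $\Qp$-rational characters ``need not separate points of $X_\ast(T_1)^{\Gal(K/\Qp)}\otimes\Q$'' is unfounded: since $\Nm_{K/\Qp}\mu$ and $[K:\Qp]\nu_b$ are both $\Qp$-rational cocharacters, they factor through the maximal $\Qp$-split torus $A\subset T_1$, and the pairing $X^\ast(T_1)^{\Gal_{\Qp}}\otimes\Q \times X_\ast(A)\otimes\Q\to\Q$ is perfect; there is no ``anisotropic part'' to control. The place where ellipticity of $T_1$ in $M$ is genuinely needed is to turn a given $\Qp$-rational character of $T_1$ into a $\Qp$-rational character of $M$ (via $M^{\ab}$), which is what lets one read off $\langle\chi,\mu''\rangle$ from the Cartan cocharacter $\mu''$ of $M$. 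Second, starting from $[b]\in B(G_{\Qp},\{\mu\})$ is not obviously strong enough: those two conditions on $[b]_G$ constrain only the $G(\mfk)$-conjugacy class of $\nu_b$, while the conclusion is an exact identity in $X_\ast(T_1)$; the Iwasawa/Cartan mechanism is how the paper extracts the sharper $T_1$-level information that the non-emptiness actually carries. So the proposal, as written, does not yet constitute a proof; filling the gap amounts to supplying Steps (1)--(4) of the paper's argument.
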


Here, $i\circ\psi_{T,\mu}$ is not necessarily admissible, because $\mu$ may not be $\mu_{h'}$ for some $h'\in X$. 

\begin{proof} This is proved in Lemma 5.11 of \cite{LR87} when $\mbfK_p$ is a hyperspecial subgroup. 
We will adapt its argument for general parahoric subgroups. 
We first show how the first statement implies the second one.
Since the kernel of the Galois gerb $\fP$ is the projective limit of $P(L,m)(\Qb)$, where $L$ runs through CM Galois extensions of $\Q$ and $m\in\N$ varies with respect to divisibility (cf. \autoref{subsubsec:pseudo-motivic_Galois_gerb}), we only need to show it after restricting $\phi$ and $\psi_{T,\mu}$ to $P(L,m)$ (for all sufficiently large Galois CM field $L$ and $m$). Let $L$ be a CM-field splitting $T$. Then, the Galois-gerb morphisms $\psi_{T,\mu}:\fP\rightarrow \fG_T$ and $\zeta_p:\fG_p\rightarrow\fP(p)$ factor through $\fP^L$ and $\fG_p^{L_{v_2}}$, respectively (Lemma \ref{lem:defn_of_psi_T,mu}, (2) and Remark \ref{rem:comments_on_zeta_v}), where as usual $v_2$ denotes (by abuse of notation) the place of $L$ induced by the fixed embedding $L\hra \Qpb$. Let $\zeta_p^{L_{v_2}}:\fG_p^{L_{v_2}}\rightarrow\fP(p)$ denote the induced morphism.
Then, according to the definition of $\psi_{T,\mu}$ (cf. \cite{LR87}, p. 143-144), when $\mu+\iota\mu$ is defined over $\Q$, for sufficiently large $m\in\N$, $\psi_{T,\mu}(\delta_m)$ is the unique element $t$ in $T(\Q)$ such that for all $\lambda\in X^{\ast}(T)$, $\lambda(t)$ is a Weil number and 
\[|\prod_{\sigma\in\Gal(L_{v_2}/\Q_p)}\sigma\lambda(t)|_p=q^{-\langle\lambda,\Nm_{L_{v_2}/\Qp}\mu\rangle} \]
holds with $q=p^m$.
Further, $\psi_{T,\mu}(p)\circ\zeta_p:\fG_p^{L_{v_2}}\rightarrow \fG_T(p)$ is conjugate to $\xi_{-\mu}^{L_{v_2}}$ by an element of $T(\Qpb)$ (Lemma \ref{lem:properties_of_psi_T,mu}, (2)), where $\xi_{-\mu}^{L_{v_2}}:\fG_p^{L_{v_2}}\rightarrow \fG_T(p)$ is the morphism defined in Definition \ref{defn:psi_T,mu} for $(T_{\Qp},\mu,L_{v_2})$.
On the other hand, by enlarging $L$ if necessary, we may assume that $\phi$ also factors through $\fP^L$, and that there exists a Galois $\Qpnr/\Qp$-gerb morphism $\xi_p':\fD_l\rightarrow\fG_{T_{\Qp}}^{\nr}$ whose inflation to $\Qpb$ is $T(\Qpb)$-conjugate to $\xi_p=\phi(p)\circ\zeta_p$, where $l:=[L_{v_2}:\Qp]$, (Lemma \ref{lem:unramified_morphism}, (2), or Lemma \ref{lem:unramified_conj_of_special_morphism}). Then, for $b\sigma:=\xi_p'(s_{\sigma}^l)$, we have 
\[-[L_{v_2}:\Qp]\nu_b=\nu_p:=\phi(p)^{\Delta}\circ(\zeta_p^{L_{v_2}})^{\Delta}.\]
Next, recall (cf. \autoref{subsubsec:pseudo-motivic_Galois_gerb}) that each character of $P(L,m)$ is regarded as a Weil $q=p^m$-number in $L$, with the correspondence being realized in terms of $\delta_m$ by $\chi\mapsto \chi(\delta_m)$ (Lemma \ref{lem:Reimann97-B2.3}, (2)), and that for a Weil $q$-number $\pi$, $\chi_{\pi}$ is the notation regarding it as a character of $P(L,m)$. For $\lambda\in X^{\ast}(T)$, writing $\lambda\circ\phi^{\Delta}(\delta_m)$ as $\pi_{\lambda}$ for short (so that $\chi_{\pi_{\lambda}}=\lambda\circ\phi^{\Delta}$), we see that
\begin{eqnarray*}
|\prod_{\sigma\in\Gal(L_{v_2}/\Q_p)}\sigma\lambda(\phi^{\Delta}(\delta_m))|_p&=&
|\prod_{\sigma\in\Gal(L_{v_2}/\Q_p)}\sigma\pi_{\lambda}|_p\\
&=&q^{\langle \chi_{\pi_{\lambda}},\nu_2^{L_{v_2}}\rangle}=q^{\langle\lambda\circ\phi^{\Delta},(\zeta_p^{L_{v_2}})^{\Delta}\rangle} \\
&=&q^{\langle \lambda,\nu_p\rangle}.
\end{eqnarray*}
Here, the third equality is the property of $\nu_2^{L_{v_2}}=(\zeta_p^{L_{v_2}})^{\Delta}$ (Def. \ref{defn:Weil-number_torus}, (\ref{eqn:cocharacters_nu^K}), cf. \autoref{subsubsec:pseudo-motivic_Galois_gerb}). 
This shows that the first statement implies the second claim.

Now, we establish the first statement. We remark that we will reduce the general parahoric subgroup case to a situation involving only a special maximal parahoric subgroup.

Let $T_1^{\textrm{split}}$ be the maximal $\Qp$-split subtorus of $T_1$, $M$ the centralizer of $T_1^{\textrm{split}}$; thus $T_1\subset M$, and $M$ is a semi-standard $\Qp$-Levi subgroup (Lemma \ref{lem:specaial_parahoric_in_Levi}, (1)).  Below, there will be given a special point $\mbfo$ of the Bruhat-Tits building $\mcB(G,\mfk)$. Then, one can find a $\Qp$-torus $S'$ in $M$ whose extension to $\Qpnr$ becomes a maximal $\Qpnr$-split torus of $M_{\Qpnr}$, and a $M(\mfk)\rtimes\Gal(\Qpnr/\Qp)$-equivariant embedding $\mcB(M,\mfk)\hra\mcB(G,\mfk)$ such that $\mbfo$ lies in the image of the apartment $\mcA^{M}_{\mfk}\subset\mcB(M,\mfk)$ corresponding to $S'$.
Note that the centralizer $T':=Z_{G_{\Qp}}(S')$ is a maximal torus of $G_{\Qp}$ (thus, a maximal torus of $M$ as well), as $G_{\mfk}$ is quasi-split by a theorem of Steinberg.

We recall that for a facet $\mbff^{\sigma}$ in $\mcB(G_{\Qp},\Qp)$, there exists a unique $\sigma$-stable facet $\mbff$ in $\mcB(G_{\Qp},\mfk)$ with $\mbff^{\langle\sigma\rangle}=\mbff^{\sigma}$ \cite[5.1.28]{BT84}.
Let $\mcG_{\mbff}^{\mro}$ be the smooth $\cO_{\mfk}$-group scheme canonically attached to $\mbff$, so that it has connected geometric fibers and the elements of $\mcG_{\mbff}^{\mro}(\cO_{\mfk})$ fixes $\mbff$ pointwise (cf. \cite[5.2]{BT84}). Then, 
\[K_{\mbff}(\mfk):=\mcG_{\mbff}^{\mro}(\cO_{\mfk}),\quad K_{\mbff}(\Qp):=\mcG_{\mbfo}^{\mro}(\cO_{\mfk})^{\sigma}\]
are the pararhoric groups associated with the facet $\mbff$ (or $\mbff^{\sigma}$) (cf. \cite{HainesRapoport08}, Prop. 3). 

Let $\mcA^{G_{\Qp}}_{\mfk}$ be the apartment corresponding to $S'$. By conjugation, we assume that the given $\sigma$-stable facet $\mbff$ defining $\mbfK_p$ (i.e. $\mbfK_p=K_{\mbff}(\Qp)$) lies in $\mcA^{G_{\Qp}}_{\mfk}$.
We fix a $\sigma$-stable alcove $\mbfa$ in $\mcA^{G_{\Qp}}_{\mfk}$ whose closure contains $\mbff$, and let $K_{\mbfa}(\mfk)$ be the corresponding Iwahoric subgroup of $G(\mfk)$. The alcove $\mbfa$ then must contain some (not necessarily $\sigma$-stable) special point $\mbfo$ in its closure. If $K_{\mbfo}(\mfk)\subset G(\mfk)$ denotes the associated special maximal parahoric subgroup, we have the inclusion $K_{\mbfa}(\mfk)\subset K_{\mbfo}(\mfk)$ since $\mbfo$ is in the closure of $\mbfa$.
Now, as both  $\mbfa$ and $\mbff$ are $\sigma$-stable, according to \cite[Thm.1.1]{He15}, the condition $X(\{\mu\},b)_{K_{\mbff}(\Qp)}\neq\emptyset$ implies that $X(\{\mu\},b)_{K_{\mbfa}(\Qp)}\neq\emptyset$. 
Let $\mu_B$ be a dominant representative of $\{\mu\}$, where we choose the dominant Weyl chamber \emph{opposite} to the unique Weyl chamber containing the base alcove $\mbfa$ with apex at the special vertex $\mbfo$ (following the convention of \cite{HeRapoport15}). Also, recall (\ref{eqn:splitting_of_EAWG2}) that the choice of a base alcove $\mbfa$ presents the extended affine Weyl group $\widetilde{W}$ as the semidirect product $W_a\rtimes \Omega_{\mbfa}$ of the affine Weyl group $W_a$ (attached to $S'$) with the normalizer subgroup $\Omega_{\mbfa}\subset \widetilde{W}$ of $\mbfa$, thereby fixes a Bruhat order $\leq$ on $\widetilde{W}$ as well.

Let $g_1\in G(\mfk)$ be such that $g_1 K_{\mbfa}(\mfk)\in X(\{\mu_X\},b)_{K_{\mbfa}(\mfk)}$, i.e. if 
\begin{equation} \label{eqn:Iwahori_invariant} 
\mathrm{inv}_{K_{\mbfa}(\mfk)}(g_1,b\sigma(g_1))=\widetilde{W}_{K_{\mbfa}(\mfk)}\cdot w_1\cdot \widetilde{W}_{K_{\mbfa}(\mfk)}\quad (w_1\in \widetilde{W}),
\end{equation}
under the isomorphism $K_{\mbfa}(\mfk)\backslash G(\mfk)/K_{\mbfa}(\mfk)\simeq \widetilde{W}_{K_{\mbfa}(\mfk)}\backslash\widetilde{W}/\widetilde{W}_{K_{\mbfa}(\mfk)}$, there exists $\mu'\in X_{\ast}(T')\cap W_0\cdot\mu_B$ that 
\begin{eqnarray} \label{eqn:Iwahori_inequality} 
\widetilde{W}_{K_{\mbfa}(\mfk)}\cdot w_1\cdot \widetilde{W}_{K_{\mbfa}(\mfk)} &\leq&\widetilde{W}_{K_{\mbfa}(\mfk)}\cdot t^{\underline{\mu'}}\cdot \widetilde{W}_{K_{\mbfa}(\mfk)}. 
\end{eqnarray} 
Here, we used the notations from (\autoref{subsubsec:mu-admissible_set}); namely, $\underline{\mu'}$ is the image of $\mu'$ in $X_{\ast}(T')_{\Gamma_{\mfk}}$, and for $\lambda\in X_{\ast}(T')_{\Gamma_{\mfk}}$, $t^{\lambda}$ denotes the corresponding element of $\widetilde{W}$ via $X_{\ast}(T')_{\Gamma_{\mfk}}\cong T'(\mfk)/T'(\mfk)_1\subset \widetilde{W}$.
Since $K_{\mbfa}(\mfk)\subset K_{\mbfo}(\mfk)$, the same relations (\ref{eqn:Iwahori_invariant}), (\ref{eqn:Iwahori_inequality}) continue to hold with $K_{\mbfa}(\mfk)$ replaced by $K_{\mbfo}(\mfk)$ (see \cite[(3.5)]{Rapoport05} for (\ref{eqn:Iwahori_inequality})). Be warned that this does not mean that $X(\{\mu\},b)_{K_{\mbfo}(\mfk)}\neq\emptyset$: the latter definition makes sense only when the point $\mbfo$ is $\sigma$-stable.

Therefore, we are given a string of $\Qp$-subgroups of $G_{\Qp}$:
\[S'\subset T'\subset M \supset T_1,\]
where
\begin{itemize}
\item[(a)] $M$ is a semi-standard $\Qp$-Levi subgroup of $G_{\Qp}$ (i.e. $M$ is the centralizer of a $\Qp$-split torus of $G_{\Qp}$);
\item[(b)] $S'$ is a $\Qp$-torus in $M$ whose extension to $\Qpnr$ becomes a maximal $\Qpnr$-split torus of $M_{\Qpnr}$ (thus $S'$ is also such a torus for $G_{\Qp}$);
\item[(c)] $T'=Z_{G_{\Qp}}(S')$ (thus, a maximal torus of $M$ and also of $G$);
\item[(d)] $T_1$ is an elliptic maximal torus of $M$ which $\nu_b$ factors through.
\end{itemize}
These satisfy the following properties: There exists a special point $\mbfo$ of $\mcB(G,\mfk)$ which lies in the image of the apartment $\mcA^{M}_{\mfk}\subset\mcB(M,\mfk)$ corresponding to $S'$, under a suitable embedding $\mcB(M,\mfk)\hra\mcB(G,\mfk)$. Also, the relations (\ref{eqn:Iwahori_invariant}), (\ref{eqn:Iwahori_inequality}) hold with $K_{\mbfa}(\mfk)$ replaced by $K_{\mbfo}(\mfk)$ (for some $g_1$, $w_1$, $\mu'$ as in there). In this set up, we establish the existence of $\mu\in X_{\ast}(T_1)\cap \{\mu\}$ satisfying (\ref{eq:equality_on_the_kernel}). We proceed in the following steps; we remark that in the first three steps, it is not necessary that $b\in T_1(\mfk)$, and it suffices that $b\in M(\mfk)$.

\begin{itemize}
\item[(1)] Let $Q$ be a $\Qp$-parabolic subgroup of $G_{\Qp}$ of which $M$ is a Levi factor. Then, 
by Iwasawa decomposition $G(\mfk)=Q(\mfk) K_{\mbfo}(\mfk)$, we may assume $g_1\in Q(\mfk)$: this follows from the classical Iwasawa decomposition $G(\mfk)=Q(\mfk) \mathrm{Fix}(\mbfo)$ \cite[3.3.2]{Tits79} and that $\mathrm{Fix}(\mbfo)\subset T'(\mfk)\cdot K_{\mbfo}(\mfk)$. Indeed, $\mathrm{Fix}(\mbfo)\subset G(\mfk)=K_{\mbfo}(\mfk)T'(\mfk) K_{\mbfo}(\mfk)$, so any $g\in \mathrm{Fix}(\mbfo)$ is written as $k_1 tk_2$ with $k_1,k_2\in K_{\mbfo}(\mfk)$ and $t\in T'(\mfk)\cap \mathrm{Fix}(\mbfo)$. But, as $K_{\mbfo}(\mfk)$ is normal in $\mathrm{Fix}(\mbfo)$, we see that $g\in T'(\mfk)\cdot K_{\mbfo}(\mfk)$.
When one writes $g_1=nm$ with $m\in M(\mfk)$ and $n\in N_Q(\mfk)$ ($N_Q$ being the unipotent radical of $Q$), one has that
\[g_1^{-1}b\sigma(g_1)=m^{-1}b\sigma(m)n',\]
where 
\[n'=\sigma(m)^{-1}b^{-1}n^{-1}b\sigma(n)\sigma(m).\]
One readily checks that $n'\in N(\mfk)$.

\item[(2)] Define $\mu''\in X_{\ast}(T')$ by
\[m^{-1}b\sigma(m)\in (K_{\mbfo}(\mfk)\cap M(\mfk))\ t^{\underline{\mu''}}\ (K_{\mbfo}(\mfk)\cap M(\mfk)),\]
using the Cartan decomposition for $(M,K_{\mbfo}(\mfk)\cap M(\mfk))$ (as $K_{\mbfo}(\mfk)\cap M(\mfk)$ is a special maximal parahoric subgroup of $M(\mfk)$, by Lemma \ref{lem:specaial_parahoric_in_Levi}, (3)). Note the equality: 
\[w_{G}(t^{\underline{\mu''}})=w_{G}(m^{-1}b\sigma(m))=w_{G}(m^{-1}b\sigma(m)n')=w_{G}(w_1)=w_{G}(t^{\underline{\mu'}}),\]
where the last equality holds since by definition of the Bruhat order on $\widetilde{W}=W_a\rtimes \Omega_{\mbfa}$, $w_1$ and $t^{\underline{\mu'}}$ have the same component in $\Omega_{\mbfa}$ and $w_{G}(=w_{G_{\mfk}})$ is trivial on the image in $G_{\Qp}(\mfk)$ of $G_{\Qp}^{\uc}(\mfk)$ ($G_{\Qp}^{\uc}$ being the universal covering of $G_{\Qp}^{\der}$) \cite[7.4]{Kottwitz97}. It follows that the images of $\mu',\mu''\in X_{\ast}(T')$ in $\pi_1(G)_{\Gamma_{\mfk}}$ are the same. 

On the other hand, if $k_1,k_2\in K_0(\mfk)\cap M(\mfk)$ are such that $m^{-1}b\sigma(m)=k_1t^{\underline{\mu''}}k_2$, then
$m^{-1}b\sigma(m)n'=k_1t^{\underline{\mu''}}k_2n'=k_1t^{\underline{\mu''}}n''k_2$
for some $n''\in N(\mfk)$.
Consequently, we see that
\[K_{\mbfo}(\mfk)\ t^{\underline{\mu''}}\ N_Q(\mfk)\cap K_{\mbfo}(\mfk)w_1 K_{\mbfo}(\mfk)\neq\emptyset.\]
By (\ref{eqn:Iwahori_inequality}), this implies that with respect to the Bruhat order on $\widetilde{W}_{K_{\mbfo}(\mfk)}\backslash\widetilde{W}/\widetilde{W}_{K_{\mbfo}(\mfk)}$, \[\widetilde{W}_{K_{\mbfo}(\mfk)}\cdot t^{\underline{\mu''}}\cdot \widetilde{W}_{K_{\mbfo}(\mfk)} \leq \widetilde{W}_{K_{\mbfo}(\mfk)}\cdot t^{\underline{\mu'}}\cdot \widetilde{W}_{K_{\mbfo}(\mfk)}.\] 
Indeed, the argument of the proof of \cite{HainesRostami10}, Lemma 10.2 establishes the following fact: for $x,y\in \widetilde{W}$, if $K_{\mbfo}(\mfk)yN_Q(\mfk)\cap K_{\mbfo}(\mfk)x K_{\mbfo}(\mfk)\neq\emptyset$, then $y\leq x'$ for some $x'\in\widetilde{W}_{K_{\mbfo}(\mfk)}\cdot x\cdot \widetilde{W}_{K_{\mbfo}(\mfk)}$. 
Also, if $x\leq y$ in the Bruhat order on $\widetilde{W}$, then $\widetilde{W}_{K_{\mbfo}(\mfk)}\cdot x \cdot\widetilde{W}_{K_{\mbfo}(\mfk)} \leq \widetilde{W}_{K_{\mbfo}(\mfk)}\cdot y\cdot \widetilde{W}_{K_{\mbfo}(\mfk)}$ \cite[8.3]{KR00}.

\item[(3)] 
By \cite[1.7]{Tits79}, with our choice of the special vertex $\mbfo$, the affine space $\mcA(S',\mfk)$ is identified with the real vector space $V:=X_{\ast}(S')_{\R}=X_{\ast}(T')_I\otimes\R$ (with the origin being $\mbfo$), and there exists a reduced root system ${}^{\mbfo}\Sigma$ whose roots belong to $X^{\ast}(S'^{\uc})_{\R}$ and
such that $W_a$ is isomorphic to its affine Weyl group 
\[W_a \cong Q^{\vee}({}^{\mbfo}\Sigma)\rtimes W({}^{\mbfo}\Sigma).\]
Also, the choice of the alcove $\mbfa$ containing $\mbfo$ determines a set $\mathrm{S}_{\mbfa}$ of simple affine roots on $\mcA(S',\mfk)$, and $\widetilde{W}_{K_{\mbfo}(\mfk)}\simeq W_0$ is the subgroup of $W_a$ generated by the subset ${}^{\mbfo}\Delta$ consisting of the simple affine roots whose corresponding affine hyperplanes pass through $\mbfo$ (thus, ${}^{\mbfo}\Delta$ is a set of simple roots for ${}^{\mbfo}\Sigma$).

Let $\underline{\mu'}_0$, $\underline{\mu''}_0$ denote the dominant representatives of $W_0\cdot\underline{\mu'}$, $W_0\cdot\underline{\mu''}\subset X_{\ast}(T')_I$, where $I=\Gamma_{\mfk}=\Gal(\overline{\mfk}/\mfk)$. Then, we claim that 
\[\underline{\mu''}_0\leq \underline{\mu'}_0.\]
for the dominance order on $Q^{\vee}({}^{\mbfo}\Sigma)=X_{\ast}(T'^{\uc})_I$ (determined by the choice of the alcove $\mbfa$); in particular, $\underline{\mu'}_0- \underline{\mu''}_0$ is a linear combination of positive coroots in ${}^{\mbfo}\Sigma$ with non-negative \emph{integer} coefficients \cite[Lem. 4.11]{Stembridge05}.

Indeed, in this set-up of the Coxeter group $W_a$ endowed with a set of generators $\mathrm{S}_{\mbfa}$, the claim follows from \cite{Stembridge05} (more precisely, Prop. 1.1, Prop. 1.5, Prop. 1.8), applied with the choice $\theta=\mbfo$, noting the following two facts: 
First, for $w=t^{\underline{\nu}}\in W_a$ with $\underline{\nu}\in X_{\ast}(T^{\uc})_I$, $w\theta$ is identified with $\underline{\nu}\in V$. Secondly, $W_0 t^{\underline{\mu''}} W_0\leq W_0 t^{\underline{\mu'}} W_0$ if and only if $w''\leq w'$, where $w''$ (resp. $w'$) is the (unique) element of minimal length in the coset $W_0 t^{\underline{\nu''}} W_0$ (resp. $W_0 t^{\underline{\nu'}} W_0$) with $\underline{\nu''}\in X_{\ast}(T^{\uc})_I$ being the component of $\underline{\mu''}\in X_{\ast}(T)_I(\subset \widetilde{W}=W_a\rtimes \Omega_{\mbfa})$ (resp. $\underline{\nu'}\in X_{\ast}(T^{\uc})_I$ being the component of $\underline{\mu'}\in X_{\ast}(T)_I$).

On the other hand, we claim that under our assumption on $G_{\Qp}$,
\[\underline{\mu''}\in \underline{W\cdot\mu'}\]
for the absolute Weyl group $W$ of $(G,T')$.
Indeed, as this statement concerns only the differences $w\cdot\underline{\mu'}_0-\underline{\mu''}_0$ ($w\in W$) which always lie in $X_{\ast}(T'^{\uc})_I$ and the root system ${}^{\mbfo}\Sigma$ which is determined by $G^{\uc}_{\Qp}$, 
one can assume that $G_{\Qp}^{\uc}=\Res_{F/\Qp}H$ for an absolutely simple, quasi-split, semi-simple group $H$ over a finite extension $F$ of $\Qp$ which splits over a tamely ramified extension of $F$. Then, since $G_{\Qpnr}^{\uc}=\prod_{\iota\in\Hom(F_0,\Qpnr)}\Res_{F_{\iota}^{\nr}/\Qpnr}H_{F_{\iota}^{\nr}}$ where $F_0$ is the maximal subfield of $F$ unramified over $\Qp$ and $F_{\iota}^{\nr}:=F\otimes_{F_0,\iota}\Qpnr$, and for a maximal torus $T^{\nr}$ of $H_{F_{\iota}^{\nr}}$, we have 
\[X_{\ast}(\Res_{F_{\iota}^{\nr}/\Qpnr}T^{\nr})_I=X_{\ast}(T^{\nr})_{\Gal(\bar{F}_{\iota}^{\nr}/F_{\iota}^{\nr})},\] we may reduce to $H_{F^{\nr}}$ for such $H$, where $F^{\nr}$ is the maximal unramified extension (in $\bar{F}$) of $F$, and thus further assume that $H$ is absolutely simple, residually split, and splits over a tamely ramified extension of $F$. We may exclude the split case which is trivial. Then, according to the list of such groups provided in the proof of Prop. \ref{prop:existence_of_elliptic_tori_in_special_parahorics}, we are left with the groups of type $B\operatorname{-}C_m$, $C\operatorname{-}BC_m$, $C\operatorname{-}B_m$. 

In the first two cases, one can readily check (cf. \cite{Tits79}) that each root $\beta$ in ${}^{\mbfo}\Sigma$ is also a relative root for $(H_{F^{\nr}},S')$ (thus lifts to a root $\tilde{\beta}\in X^{\ast}(T')$ for $(H_{F^{\nr}},T')$),
where $S'$ now refers to a maximal $F^{\nr}$-split torus in $H_{F^{\nr}}$ and $T'$ is its centralizer (a maximal torus of $H_{F^{\nr}}$).
Hence as $\mu'$ is a minuscule coweight for $(G,T')$, for each root $\beta$ in ${}^{\mbfo}\Sigma$, we have
\[|\langle\beta,\underline{\mu'}\rangle|=|\frac{1}{[K':\Qpnr]}\sum_{\tau\in\Gal(K'/\Qpnr)}\langle\tilde{\beta},\tau\mu'\rangle|\leq 1,\]
where $K'/\Qpnr$ is a finite Galois extension splitting $T'_{\Qpnr}$ (as element of $Q^{\vee}({}^{\mbfo}\Sigma)_{\Q}=X_{\ast}(S')_{\Q}$, one has $\underline{\mu'}=\frac{1}{[K'/\Qpnr]}\sum_{\tau\in\Gal(K'/\Qpnr)}\tau\mu'$). As $\langle\beta,\underline{\mu'}\rangle\in\Z$, we see that $\underline{\mu'}\in Q^{\vee}({}^{\mbfo}\Sigma)$ is minuscule for ${}^{\mbfo}\Sigma$, and thus $\underline{\mu''}_0\in W_0\cdot\underline{\mu'}_0$ by \cite[Lem. 2.3.3]{Kottwitz84b}, \cite[Lem. 2.2]{RR96}. 

In the remaining case ($C\operatorname{-}B_m$), it is not true any longer that the roots of ${}^{\mbfo}\Sigma$ are also relative roots for $(H_{F^{\nr}},S')$: see below. But, we claim that one still has $\underline{\mu''}\in \underline{W\cdot\mu'}$ (but, not necessarily $\underline{\mu''}_0\in W_0\cdot\underline{\mu'}_0$). In this case $H$ is the universal covering of the special orthogonal group $\SO$ attached to a non-degenerate orthogonal space of even dimension $2n+2$ over $F$ of Witt index $n$, as described in \cite[Example 1.16]{Tits79} whose notations we use: there exists a maximal split torus $S$ of $\SO$ with a basis $\{a_1,\cdots,a_n\}$ of $X^{\ast}(S)$ such that if $a_{-i}:=-a_i$, $a_{ij}:=a_i+a_j$, the relative roots of $H$ are
\[\Phi=\{a_{ij}\ |\ i,j\in I,\ j\neq\pm i\}\cup \{a_i\ |\ i\in I\},\] 
where $I=\{\pm1,\cdots,\pm n\}$. As we assume that $\SO$ is ramified, the reduced root system ${}^{\mbfo}\Sigma$ is of type $C_n$ with a set of simple roots $\{a_{1,-2},\cdots,a_{n-1,-n},2a_{n}\}$ (\textit{loc. cit.}).  Let $\{e_1,\cdots,e_n\}\subset X_{\ast}(S)$ be the dual basis of $\{a_1,\cdots,a_n\}$ and $T\subset \SO$ the centralizer of $S$ (maximal torus); there exists a basis of $X_{\ast}(T)$ consisting of $\{e_1,\cdots,e_n,e_{n+1}\}$ with the non-trivial element $\Gal(E/F)$ acting on $e_{n+1}$ by multiplication with $-1$, where $E$ is the splitting field of $H$ (which we assume to be a ramified extension of $F$). Also, we may assume $S'^{\uc}=S^{\uc}$, $T'^{\uc}=T^{\uc}$, where $S^{\uc}=\pi^{-1}(S)$, $T^{\uc}=\pi^{-1}(T)$ for the surjection $\pi:H\rightarrow \SO$. 
Then as a lattice in $X_{\ast}(S^{\uc})_{\Q}=(X_{\ast}(T^{\uc})_I)_{\Q}\ (I=\Gal(\bar{F}/F^{\nr}))$, $Q^{\vee}({}^{\mbfo}\Sigma)$ equals $X_{\ast}(T^{\uc})_I$, which, as a free $\Z$-module, is generated by $\{e_i\pm e_j, 1\leq i\neq j\leq n\}$ and (the image in $X_{\ast}(T^{\uc})_I$ of) $e_{n}-e_{n+1}$ (in the quotient $X_{\ast}(T^{\uc})_I$, $e_{i}-e_{n+1}$ and $e_{i}+e_{n+1}$ are the same):
for each $1\leq i\leq n-1$, $e_i-e_{i+1}$ equals the coroot corresponding to the root $a_{i,-i-1}$,  and  $e_{n}-e_{n+1}$ equals ``the shortest simple coroot'' corresponding to $2a_n$ (as element of $X_{\ast}(S)_{\Q}$, $e_{i}-e_{n+1}\in X_{\ast}(T^{\uc})_I$ is $e_{i}$ for each $1\leq i\leq n$). 
To prove the claim, we may replace $\underline{\mu'}$ and $\underline{\mu''}$ by their $W_0$-orbits so that
$\underline{\mu'}_0=\underline{\mu'}$ and $\underline{\mu''}_0=\underline{\mu''}$. If $\mu'-\mu''=\sum_{i=1}^{n}c_i(e_i-e_{i+1})+c_{n+1}(e_n+e_{n+1})\ (c_i\in\Z)$, its image in $Q^{\vee}({}^{\mbfo}\Sigma)=X_{\ast}(T^{\uc})_I$ is $\sum_{i=1}^{n-1}c_i(e_i-e_{i+1})+(c_n+c_{n+1})(e_n-e_{n+1})$, so $c_i\geq0$ for $1\leq i\leq n-1$ and $c_n+c_{n+1}\geq0$ by assumption. Then replacing $\mu''$ by $\nu:=\mu''+c_n((e_n-e_{n+1})-\iota(e_n-e_{n+1}))$ does not change $\underline{\mu''}$ and now $\mu'-\nu$ lies in the dominant semigroup of the coroot lattice $Q^{\vee}(\SO,T)$. Since $\mu'\in X_{\ast}(T)$ is a minuscule cocharacter of $(G,T)$, it follows again (\cite[Lem. 2.3.3]{Kottwitz84b}, \cite[Lem. 2.2]{RR96}) that $\underline{\mu''}=\underline{\nu}\in \underline{W\cdot\mu'}$.

Therefore, there exist $w\in W$ and $\mu_1\in \langle \tau x-x\ |\ \tau\in\Gal(\overline{\mfk}/\mfk), x\in X_{\ast}(T')\rangle$
such that 
\begin{equation} \label{eq:LR-Lemma5.11_Step_3}
\mu''=w\mu'\cdot\mu_1
\end{equation}
(multiplicative notation).

\item[(4)] So far, we have not used $T_1$ at all and in fact only used that $b\in M$. Now, we will use the condition that $b\in M(\mfk)$ is basic and $T_1$ is elliptic in $M$; as will be clear below, it is not even necessary that $b\in T_1(\mfk)$. Let us put $\nu_p:=[K:\Qp]\nu_b\in X_{\ast}(T_1)$.
For an (arbitrary, for a moment) cocharacter $\mu\in X_{\ast}(T_1)$, as $\Nm_{K/\Qp}\mu$ and $\nu_p$ are both $\Qp$-rational, the equation (\ref{eq:equality_on_the_kernel}) holds if and only if
\begin{equation} \label{eqn:defining_property_of_mu}
[K:\Qp]\langle\chi,\mu\rangle=\langle\chi,\nu_p\rangle
\end{equation}
for every $\Qp$-rational character $\chi$ of $T_1$, and for that matter, we may consider only the cocharaters $\chi$ lying in the submodule $mX^{\ast}(T_1)^{\Gal(\Qpb/\Qp)}\subset X^{\ast}(T_1)^{\Gal(\Qpb/\Qp)}$ for any fixed $m\in\N$. In the following, we take $m:=|\pi_0(T_1\cap M^{\der})|$.

Then, since $T_1$ is \emph{elliptic} in $M$, any $\chi\in mX^{\ast}(T_1)^{\Gal(\Qpb/\Qp)}$, vanishing on $T_1\cap M^{\der}$, can be regarded as a $\Qp$-rational character $\chi^{\ab}$ of $M^{\ab}=M/M^{\der}\cong T_1/(T_1\cap M^{\der})$, thus also as that of $M$ (via the canonical projection $p:M\rightarrow M^{\ab}$) such that $\langle\chi,\mu\rangle=\langle\chi^{\ab},p\circ\mu\rangle$
(the first pairing is defined for $X^{\ast}(T_1)\times X_{\ast}(T_1)$ and the second one for $X^{\ast}(M^{\ab})\times X_{\ast}(M^{\ab})$). For a cocharacter $\nu$ of $M$, we will often write $\langle\chi^{\ab},\nu\rangle$ for $\langle\chi^{\ab},p\circ\nu\rangle$. 

Now, as $T_1$, $T'$ are both maximal tori of $M$, there exists $g\in M(\overline{\mfk})$ with $T_1=gT'g^{-1}$. Set
\begin{equation}
\mu:=gw(\mu')g^{-1}\in X_{\ast}(T_1)\cap W\cdot\{\mu\}
\end{equation}
Then, as $\chi$ is $\Qp$-rational, it holds that $\langle\chi,g\mu_1 g^{-1}\rangle=\langle\chi^{\ab},p\circ g\mu_1 g^{-1}\rangle=\langle\chi^{\ab},p\circ\mu_1\rangle=0$, thus
\[\langle\chi,\mu\rangle=\langle\chi^{\ab},p\circ\mu\rangle=\langle\chi^{\ab},p\circ gw(\mu')g^{-1}\cdot g\mu_1g^{-1}\rangle=\langle\chi^{\ab},p\circ\mu''\rangle.\]

On the other hand, for any $\Qp$-rational character $\lambda$ of $M$, by definition of $\mu''$ we have
\[|\lambda(m^{-1}b\sigma(m))|=p^{-\langle\lambda,\mu''\rangle},\]
and by definition of $\nu_b$ \cite[(4.3)]{Kottwitz85} there exists $n\in\N$ such that $|\lambda(b\cdot\sigma(b)\cdots \sigma^{n-1}(b))|=p^{-\langle\lambda,n\nu_b\rangle}$. Hence, we find that with $l:=n[K:\Qp]$, 
\[ p^{-l\langle\lambda,\mu''\rangle} = |\lambda(m^{-1}b\sigma(m)\cdots\sigma^{-(l-1)}(m)\sigma^{l-1}(b)\sigma^l(m))|  =p^{-\langle\lambda,n\nu_p\rangle}. \]
Thus by substituting $\lambda=\chi^{\ab}$ (for $\chi\in mX^{\ast}(T_1)^{\Gal(\Qpb/\Qp)}$) and using that $\langle\chi^{\ab},\nu_p\rangle=\langle\chi,\nu_p\rangle$, one obtains the equality (\ref{eqn:defining_property_of_mu}).
\end{itemize}

This completes the proof. 
\end{proof}

\begin{rem}
In our proof, we have a maximal $\Qp$-torus $T'=Z_{G_{\Qp}}(S')$. In the proof of \cite{LR87}, Lemma 5.11, there appears (on p. 177, line -5) a maximal torus of $M$ which is also denoted by $T'$. The role of their $T'$ is played by our $S'$, namely, being a maximal $\mfk$-split torus, is to provide apartments in $\mcB(G_{\Qp},\mfk)$ and $\mcB(M,\mfk)$ (i.e. whose underlying affine spaces are both $X_{\ast}(T')_{\R}$) which share the given (hyper)special point. Meanwhile, our $T'_{\mfk}$ is the centralizer of a maximal $\mfk$-split torus $S'_{\mfk}$ and enters into the proof as such, for example, via the Iwasawa and Cartan decompositions (cf. \cite[3.3.2]{Tits79}).
On the other hand, in the original proof where $G_{\Qp}$ is unramified, the unramified conditions (or words) show up for the simple reason that their $T'$ is an unramified $\Qp$-torus.
\end{rem}

The next lemma is our strengthening of Lemma 5.12 of \cite{LR87}. Its proof does not involve the level subgroup $\mbfK_p$.

\begin{lem} \label{lem:LR-Lemma5.12}
Let $\phi$, $\psi_{T,\mu}$ be as in Lemma \ref{lem:LR-Lemma5.11}.
Then, there exists an admissible embedding of maximal torus $\Int g':T\hra G\ (g'\in G(\Qb))$ (with respect to the identity inner twisting $G_{\Qb}=G_{\Qb}$) such that 
\begin{itemize} 
\item[(i)] $\Int g'\circ\phi$ equals $\psi_{T',\mu_{h'}}$ on the kernel of $\fP$, for some $h'\in X_{\ast}(T')\cap X$.
\end{itemize}
Moreover, if $T_{\Ql}$ is elliptic in $G_{\Ql}$ for some prime $l\neq p$,
there exist $g'\in G(\Qb)$ and $h'\in X$ satisfying, in addition to (i), that 
\begin{itemize}
\item[(ii)] there exists $y\in G(\Qp)$ such that $(T'_{\Qp},\mu_{h'})=\Int y(T_{\Qp},\mu)$.
\end{itemize}
\end{lem}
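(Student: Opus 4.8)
\textbf{Proof plan for Lemma \ref{lem:LR-Lemma5.12}.}

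The plan is to follow the argument of \cite[Lem. 5.12]{LR87}, recast in terms of abelianized cohomology so as to accommodate the case $G^{\der}\neq G^{\uc}$, and with the additional twist at $p$ needed for part (ii). By Lemma \ref{lem:LR-Lemma5.11} we already have $\mu\in X_{\ast}(T)\cap\{\mu_X\}$ such that $\phi$ and $\psi_{T,\mu}$ agree on the kernel of $\fP$; so $\phi$ and $i\circ\psi_{T,\mu}$ differ by a cocycle $\rho\mapsto g_\rho$ on $\Gal(\Qb/\Q)$ valued in $Z_G(T)(\Qb)=T(\Qb)$ whose class in $H^1(\Q,T)$ measures the obstruction. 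The first task is to understand where this class must vanish. Locally away from $p$ and $\infty$, condition (2) of Def. \ref{defn:admissible_morphism} (together with the corresponding property of $\psi_{T,\mu}$ from Lemma \ref{lem:properties_of_psi_T,mu}, (2)) forces the restriction of the class to lie in an appropriate ``unramified'' subgroup; at $\infty$ we compare with $\xi_\infty$ attached to $h$, as in the original argument; at $p$ the two morphisms are both conjugate to the unramified $\xi_{\pm\mu}$ of Lemma \ref{lem:unramified_conj_of_special_morphism}. The upshot, exactly as in \textit{loc. cit.}, is that $[g_\rho]$ lies in a subgroup of $H^1(\Q,T)$ (or better, $H^1_{\ab}(\Q,\cdot)$) that is killed by an admissible transfer; concretely, one produces $g'\in G(\Qb)$ with $g'{}^{-1}\rho(g')\in T(\Qb)$ for all $\rho$ (so $\Int g'|_T$ is defined over $\Q$ and is an admissible embedding with respect to $\mathrm{id}_G$) and such that $\Int g'\circ\phi$ agrees on kernels with $\psi_{T',\mu_{h'}}$ where $T'=\Int g'(T)$ and $h':=\Int g'\circ h\in X\cap\Hom(\dS,T'_\R)$. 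This gives (i). The key input is that every maximal torus of $G$ transfers to $G$ itself with respect to the identity inner twisting provided the relevant local obstruction classes are trivial, which is a Hasse-principle-type statement (cf. \cite[Lem. 5.6]{LR87}, \cite[$\S$9]{Kottwitz84a}) applied here in the form needed for $H^1_{\ab}$; I will invoke the abelianization exact sequence (Lemma \ref{lem:abelianization_exact_seq}) and the vanishing of the relevant $\ker^1$ to carry out the gluing of local transfers into a global one.

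For part (ii), the additional hypothesis is that $T_{\Ql}$ is elliptic in $G_{\Ql}$ at some finite prime $l\neq p$. The plan is to exploit this ellipticity exactly as in \cite[Thm. 4.1.1]{Lee16}: ellipticity of $T_{\Ql}$ gives one extra degree of freedom in the Hasse principle, allowing us to prescribe the behavior of the global transfer at $p$ freely. More precisely, the set of admissible embeddings $T\hookrightarrow G$ over $\Q$ (with respect to $\mathrm{id}_G$) that induce a given class on kernels is, when nonempty, a torsor under a group built from $H^1_{\ab}$; because $T_{\Ql}$ is elliptic we may alter the chosen global transfer by a class supported at $l$ to adjust its $p$-component to be any prescribed $G(\Qp)$-conjugation $\Int y$, $y\in G(\Qp)$, that already realizes $(T_{\Qp},\mu)\cong(\Int y(T_{\Qp}),\Int y(\mu))$ over $\Qp$. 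Such a local $y$ exists tautologically (take $y=1$, $T'_{\Qp}:=T_{\Qp}$); the content is that the global obstruction to making the transfer agree with $\Int y$ at $p$ lives in a $\ker^1$ that is killed by the freedom at $l$. Setting $\mu_{h'}:=\Int g'(\mu_h)$ for the resulting $g'$, and noting $\Int g'|_{T_{\Qpb}}=\Int y|_{T_{\Qpb}}$, we obtain $(T'_{\Qp},\mu_{h'})=\Int y(T_{\Qp},\mu)$, which is (ii).

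The main obstacle I anticipate is bookkeeping of the local conditions in abelianized cohomology: in \cite{LR87} the computation is done with the torus quotient $G^{\ab}$ (legitimate only when $G^{\der}=G^{\uc}$), and here each place-by-place comparison (especially the unramifiedness at $p$, where one must compare $\Int u\circ\phi(p)\circ\zeta_p$ with the explicit $\xi_{-\mu}^{L_n}$ of Lemma \ref{lem:unramified_conj_of_special_morphism}) has to be re-expressed via the complex $(Z(G^{\uc})\to Z(G))$ and the maps $w_{T_{\mfk}}$, $v_{T_{\mfk}}$ of \autoref{subsubsec:Kottwitz_hom}, tracking torsion carefully. The second delicate point is ensuring that the freedom gained from ellipticity of $T_{\Ql}$ is enough to hit \emph{exactly} the prescribed $p$-adic conjugacy and not merely some conjugate of it; this is where one needs the precise description of $\mathbb{H}^0(\Ql,I_0\backslash G)\to \ker[H^1(\Ql,\cdot)\to H^1(\Ql,\cdot)]$ and a dimension/duality count showing the localization map at $\{l,p\}$ is surjective onto the relevant subgroup. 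I expect both of these to be routine once set up correctly, following closely \cite{Lee16} and the abelianization formalism of Appendix \ref{sec:abelianization_complex}, but they require care.
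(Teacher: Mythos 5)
Your proposal does not actually address what Lemma \ref{lem:LR-Lemma5.12} asserts, and I think you have conflated it with the \emph{next} step of the argument (Proposition \ref{prop:equivalence_to_special_adimssible_morphism}). Lemma \ref{lem:LR-Lemma5.11} produces $\mu\in X_{\ast}(T)\cap\{\mu_X\}$, but this $\mu$ is \emph{not} in general of the form $\mu_h$ for $h\in X\cap\Hom(\dS,T_{\R})$; it is only Weyl-conjugate to such an $h$. The content of Lemma \ref{lem:LR-Lemma5.12} is precisely to fix this: find a transfer $\Int g'\colon T\hra G$ so that $\Int g'(\mu)=\mu_{h'}$ for some genuine $h'\in X$. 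Your proposal never engages with this. Instead you set up the cocycle $g_\rho$ measuring the discrepancy between $\phi$ and $i\circ\psi_{T,\mu}$ beyond the kernel, and you propose to kill its class by a transfer. But trivializing $[g_\rho]$ is a different statement (it is what makes $\phi$ \emph{fully} conjugate to $\psi_{T',\mu'}$, handled later in Prop.~\ref{prop:equivalence_to_special_adimssible_morphism}), and it has no bearing on whether $\Int g'(\mu)$ is of the form $\mu_{h'}$. In fact you quietly write ``$h':=\Int g'\circ h\in X\cap\Hom(\dS,T'_\R)$'' without ever having introduced an $h$ or related $\mu$ to it; if $\mu\neq\mu_h$ in $X_\ast(T)$, then $\psi_{T',\Int g'(\mu)}\neq\psi_{T',\mu_{h'}}$ on kernels, and your conclusion does not follow.

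The paper's proof is a Weyl-twist argument, not a trivialization of $g_\rho$. Since $T_\R$ is elliptic, one may fix $h\in X\cap\Hom(\dS,T_\R)$; then $\mu$ and $\mu_h$ lie in $X_\ast(T)\cap\{\mu_X\}$, hence $\mu=w(\mu_h)$ for some $w\in N_{G^{\uc}}(T^{\uc})(\C)$. The cocycle $\alpha^\infty_\iota=w\iota(w^{-1})$ lands in $T^{\uc}(\C)$ (via Shelstad's rationality of $\Int(w^{-1})$ over $\R$); one globalizes it to $\alpha\in Z^1(\Q,T^{\uc})$ restricting to $\alpha^\infty$ (Langlands' Lemma 7.16, or for part~(ii) the sharper Lemma~\ref{lem:Lee14-lem.4.1.2} which uses $\Q_l$-ellipticity to make the localization $H^1(\Q,T^{\uc})\to H^1(\R,T^{\uc})\oplus H^1(\Qp,T^{\uc})$ surjective and hence lets one prescribe $\alpha|_{\Qp}=0$). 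By the Hasse principle for the simply connected $G^{\uc}$, $\alpha_\rho=u^{-1}\rho(u)$ for some $u\in G^{\uc}(\Qb)$; then $\Int u|_T$ is defined over $\Q$, $uw\in G^{\uc}(\R)$ forces $\Int u(\mu)=\mu_{h'}$ with $h'=\Int(uw)(h)\in X$, and for (ii) one unwinds $\alpha|_{\Qp}=0$ to get $y=ux\in G(\Qp)$ with $\Int u=\Int y$ on $T_{\Qpb}$. Your intuition about ellipticity at $\ell$ buying extra freedom for the Hasse principle at $p$ is the right idea for (ii), and Lemma~\ref{lem:Lee14-lem.4.1.2} is exactly the tool; but it must be applied to the cocycle $\alpha$ built out of the Weyl element $w$, not to $g_\rho$.
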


The first statement, i.e. existence of a transfer of maximal torus $\Int g':T\hra G$ with the property (i) is the assertion of Lemma 5.12 of \cite{LR87} which we reproduce now, while the existence of such element with the additional property (ii) (under the given assumption on $T_{\Ql}$) is due to the author.

\begin{proof}
Let $T^{\uc}$ denote the inverse image of $T\cap G^{\der}$ under the isogeny $G^{\uc}\rightarrow G^{\der}$; it is a maximal torus of $G^{\uc}$. Choose $w\in N_{G^{\uc}}(T^{\uc})(\C)$ such that $\mu=w(\mu_h)$. We have the cocycle $\alpha^{\infty}\in Z^1(\Gal(\C/\R),G^{\uc}(\C))$ defined by 
\[\alpha^{\infty}_{\iota}=w\cdot\iota(w^{-1}).\] 
One readily checks that this has values in $T^{\uc}(\C)$. Indeed, according to \cite[Prop. 2.2]{Shelstad79}, the automorphism $\Int(w^{-1})$ of $T^{\uc}_{\C}$ is defined over $\R$, so $\Int(w^{-1})(\iota(t))=\iota(\Int(w^{-1})t)=\Int(\iota(w^{-1}))(\iota(t))$ for all $t\in T^{\uc}(\C)$, i.e. $\iota(w)w^{-1}\in Z_{G^{\uc}}(T^{\uc})(\C)=T^{\uc}(\C)$, and so is $w\iota(w^{-1})=\iota(\iota(w)w^{-1})$.
Let $\phi$, $\psi_{T,\mu}$ be as in Lemma \ref{lem:LR-Lemma5.11}. Then, according to Lemma 7.16 of \cite{Langlands83}, one can find a global cocycle $\alpha\in Z^1(\Q,T^{\uc})$ mapping to $\alpha^{\infty}\in  H^1(\Q_{\infty},T^{\uc})$. If furthermore $T_{\Ql}$ is elliptic in $G_{\Ql}$ for some prime $l\neq p$, 
we can choose $\alpha\in Z^1(\Q,T^{\uc})$ mapping to $\alpha^{\infty}\in  H^1(\Q_{\infty},T^{\uc})$ and having trivial image in $H^1(\Qp,T^{\uc})$, according to \cite[Lem. 4.1.2]{Lee16}, which we now recall with its proof. It is a variant of the original argument of \cite[Lem. 5.12]{LR87}.

\begin{lem} \cite[Lem. 4.1.2]{Lee16} \label{lem:Lee14-lem.4.1.2}
Let $T$ be a maximal $\Q$-torus of $G$ which is elliptic at some finite place $l\neq p$.

(1) The natural map $(\pi_1(T^{\uc})_{\Gamma(l)})_{\mathrm{tors}} \rightarrow (\pi_1(T^{\uc})_{\Gamma})_{\mathrm{tors}}$
is surjective.

(2) The diagonal map $H^1(\Q,T^{\uc})\rightarrow H^1(\R,T^{\uc})\oplus H^1(\Qp,T^{\uc})$ is surjective.
\end{lem}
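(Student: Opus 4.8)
\textbf{Proof proposal for Lemma~\ref{lem:Lee14-lem.4.1.2}.}
The plan is to reduce both assertions to the surjectivity statement (1), which is a purely combinatorial fact about the Galois module $\pi_1(T^{\uc}) = X_{\ast}(T^{\uc})$ and the coinvariants of its various decomposition subgroups. First I would recall the duality dictionary: by Kottwitz's description of Galois cohomology of tori over local fields, $H^1(\Q_v, T^{\uc})$ is canonically dual to $\pi_0((\widehat{T^{\uc}})^{\Gamma_v})$, and via the Tate--Nakayama isomorphism the group $H^1(\R, T^{\uc}) \oplus H^1(\Qp, T^{\uc})$ is expressed through the torsion subgroups $(\pi_1(T^{\uc})_{\Gamma(\infty)})_{\tors}$ and $(\pi_1(T^{\uc})_{\Gamma(p)})_{\tors}$. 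The point of ellipticity at a finite place $l\neq p$ is that $T^{\uc}_{\Q_l}$ is anisotropic, so $H^1(\Q_l, T^{\uc})$ maps \emph{onto} $(\pi_1(T^{\uc})_{\Gamma})_{\tors}$ — or rather, the cokernel of the global-to-everywhere-local map is controlled by $\ker^1$ and by the image of $\pi_1(T^{\uc})^{\Gamma}$, and ellipticity at $l$ forces that obstruction to vanish. So statement (1) is really where the ellipticity hypothesis is spent.

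For part (1) itself, I would argue as follows: the natural surjection $\pi_1(T^{\uc})_{\Gamma(l)} \twoheadrightarrow \pi_1(T^{\uc})_{\Gamma}$ (corestriction/further coinvariants) is visibly surjective on the nose; what needs proof is that it stays surjective after passing to torsion subgroups. Since $T_{\Q_l}$ is elliptic, $T^{\uc}_{\Q_l}$ is anisotropic over $\Q_l$, which means $X_{\ast}(T^{\uc})^{\Gamma(l)} = 0$, equivalently $\pi_1(T^{\uc})_{\Gamma(l)}$ is \emph{finite} (hence equals its own torsion subgroup). Then the surjection $\pi_1(T^{\uc})_{\Gamma(l)} \twoheadrightarrow \pi_1(T^{\uc})_{\Gamma}$ automatically has image inside the torsion subgroup of the target, and since it is surjective onto all of $\pi_1(T^{\uc})_{\Gamma}$, a fortiori onto $(\pi_1(T^{\uc})_{\Gamma})_{\tors}$. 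This is the crux and it is short once the finiteness of $\pi_1(T^{\uc})_{\Gamma(l)}$ is in hand.

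For part (2), the strategy is to combine (1) with the Poitou--Tate exact sequence for the torus $T^{\uc}$ over $\Q$. The relevant piece is the exactness of
\[
H^1(\Q, T^{\uc}) \longrightarrow \bigoplus_v H^1(\Q_v, T^{\uc}) \longrightarrow \pi_0(\widehat{T^{\uc}}(\C)^{\Gamma})^{D} \cong (\pi_1(T^{\uc})_{\Gamma})_{\tors},
\]
where the last map, on the $v = l$ component, is (dual to) exactly the map in (1). Thus an element $(c_\infty, c_p) \in H^1(\R, T^{\uc}) \oplus H^1(\Qp, T^{\uc})$ can be corrected, by adding a suitable class $c_l \in H^1(\Q_l, T^{\uc})$ supplied by the surjectivity in (1), into an adelic class lying in the image of $H^1(\Q, T^{\uc})$; choosing a global preimage and projecting back to the $\infty$ and $p$ components proves (2). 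I expect the main obstacle — really the only subtle point — to be bookkeeping the sign/duality normalizations so that the map appearing in the Poitou--Tate sequence at $v = l$ is \emph{literally} the map whose surjectivity (1) provides, rather than a twist of it; once that identification is made, everything is formal. I would also double-check that $T^{\uc}$ being semisimply-connected-covered (so $\widehat{T^{\uc}}$ is an adjoint-type torus quotient) does not interfere, but it does not: the Poitou--Tate sequence holds for an arbitrary $\Q$-torus.
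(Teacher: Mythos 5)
Your proposal is correct and follows essentially the same route as the paper's proof: part (1) is proved exactly as you say — ellipticity at $l$ forces $\pi_1(T^{\uc})_{\Gamma(l)}$ to be finite, whence the surjection onto $\pi_1(T^{\uc})_{\Gamma}$ shows the latter is torsion and the restriction-to-torsion map stays surjective — and part (2) is proved by feeding a correction term $c_l\in H^1(\Q_l,T^{\uc})$ supplied by (1) into the Kottwitz/Poitou--Tate exact sequence $H^1(\Q,T^{\uc})\to\bigoplus_v H^1(\Q_v,T^{\uc})\to\pi_0(Z(\widehat{T^{\uc}})^{\Gamma})^D$, exactly matching the paper's appeal to \cite[Prop.~2.6]{Kottwitz86}. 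Your concern about sign normalizations is immaterial here, since the argument only uses exactness and the surjectivity from (1), not any specific identification of the boundary map.
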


\begin{proof} 
(1) This map equals the composite:
\[(\pi_1(T^{\uc})_{\Gamma(l)})_{\mathrm{tors}}\hra \pi_1(T^{\uc})_{\Gamma(l)}\twoheadrightarrow \pi_1(T^{\uc})_{\Gamma}\twoheadrightarrow (\pi_1(T^{\uc})_{\Gamma})_{\mathrm{tors}},\]
where the last two maps are obviously surjective. 
Therefore, it is enough to show that $\pi_1(T^{\uc})_{\Gamma(l)}$ is a torsion group. 
But, as $T^{\uc}_{\Q_l}$ is anisotropic, $\widehat{T^{\uc}}^{\Gamma(l)}$ is a finite group, and so is $\pi_1(T^{\uc})_{\Gamma(l)}=X^{\ast}(\widehat{T^{\uc}}^{\Gamma(l)})=\Hom(\widehat{T^{\uc}}^{\Gamma(l)},\C^{\times})$.

(2) For every place $v$ of $\Q$, non-archimedean or not, there exists a canonical isomorphism \cite[(3.3.1)]{Kottwitz84a}
\[H^1(\Qv,T^{\uc})\isom \pi_0(\widehat{T^{\uc}}^{\Gamma_v})^D=\Hom(\pi_0(\widehat{T^{\uc}}^{\Gamma_v}),\Q/\Z)\cong X^{\ast}(\widehat{T^{\uc}}^{\Gamma_v})_{\mathrm{tors}}\cong(X_{\ast}(T^{\uc})_{\Gamma_v})_{\mathrm{tors}},\]
and a short exact sequence \cite[Prop.2.6]{Kottwitz86}
\[H^1(\Q,T^{\uc})\rightarrow H^1(\Q,T^{\uc}(\overline{\A}))=\oplus_v H^1(\Qv,T^{\uc})
\stackrel{\theta}{\rightarrow} \pi_0(Z(\widehat{T^{\uc}})^{\Gamma})^D=(\pi_1(T^{\uc})_{\Gamma})_{\mathrm{tors}},\]
where $\theta$ is the composite 
\[\oplus_v H^1(\Qv,T^{\uc})\isom \oplus_v \pi_0(Z(\widehat{T^{\uc}})^{\Gamma_v})^D\rightarrow \pi_0(Z(\widehat{T^{\uc}})^{\Gamma})^D\] (the second map is the direct sum of the maps considered in (1)). 
Let $(\gamma^{\infty},\gamma^p)\in H^1(\R,T^{\uc})\oplus H^1(\Qp,T^{\uc})$. By (1), there exists a class $\gamma^l\in H^1(\Q_l,T^{\uc})$ with $\sum_{v=l,\infty,p}\theta(\gamma^{v})=0$. Then, the element $(\beta^v)_v\in H^1(\Q,T^{\uc}(\overline{\A}))$ such that $\beta^{v}=\gamma^{v}$ for $v=l,\infty,p$ and $\beta^{v}=0$ for $v\neq l,\infty,p$ goes to zero in $ \pi_0(Z(\widehat{T^{\uc}})^{\Gamma})^D$. By exactness of the sequence, we find a class $\gamma$ in $H^1(\Q,T^{\uc})$ which maps to the class $(\beta^v)_v$. 
\end{proof}

Now, by changing $w$ (to another $w'\in N_G(T)(\C)$) if necessary, we may further assume that $\alpha^{\infty}$ is equal (as cocycles) to the restriction of $\alpha$ to $\Gal(\C/\R)$. Then, since the restriction map $H^1(\Q,G^{\uc})\rightarrow H^1(\Q_{\infty},G^{\uc})$ is injective (the Hasse principle), $\alpha$ becomes trivial as a cohomology class in $G^{\uc}(\Qb)$: 
\[\xymatrix{
H^1(\Q_{\infty},T^{\uc})\ar[r] & H^1(\Q_{\infty},G^{\uc}) & \alpha^{\infty}_{\iota}=w\iota(w^{-1}) \ar@{|->}[r] & 0 & \\
H^1(\Q,T^{\uc})\ar[r] \ar[u] & H^1(\Q,G^{\uc}) \ar@{^{(}->}[u] & \alpha \ar@{|->}[u]   \ar@{|->}[r] & \alpha'\ \ar@{^{(}->}[u] \ar@{=>}[r] & \alpha'=0} 
\]
In other words, there exists $u\in G^{\uc}(\Qb)$ such that
 \[\alpha_{\rho}=u^{-1}\rho(u),\]
for all $\rho\in\Gal(\Qb/\Q)$. It then follows that $\Int  u: (T)_{\Qb}\rightarrow G_{\Qb}$ is an admissible embedding of maximal torus with respect to the identity inner twisting of $G_{\Qb}$ (i.e. the homomorphism $\Int u$ and thus the torus $T'=\Int(T)$ as well are defined over $\Q$). We also note that since the restriction of $\Int u$ to $Z(G)$ is the identity, $T'_{\R}$ is also elliptic in $G_{\R}$. Then, for $\phi':=\Int  u\circ\phi$ and $\mu':=\Int u(\mu)\in X_{\ast}(T')$, we have $\psi_{T',\mu'}=\Int u\circ \psi_{T,\mu_h}=\phi'$ on the kernel $P$, by functoriality of the construction of $\psi_{T,\mu}$ with respect to the pair $(T,\mu)$ \cite[Satz 2.3]{LR87}. Furthermore, since 
\[u^{-1}\iota(u)=\alpha_{\iota}=\alpha^{\infty}_{\iota}=w\iota(w^{-1})\] 
for $\iota\in\Gal(\C/\R)$, one has that $uw\in G^{\uc}(\R)$ and $\mu'=\Int  u(\mu)=\mu_{h'}$ for $h':=\Int  (uw)(h)\in X$. This establishes the first claim (existence of a transfer of maximal torus $\Int g':T\hra G$ with the property (i)). 

Next, when we assume that $T_{\Ql}$ is elliptic for some $l\neq p$, by Lemma \ref{lem:Lee14-lem.4.1.2}, 
we may choose $\alpha\in Z^1(\Q,T^{\uc})$ such that it maps to $\alpha^{\infty}\in H^1(\Q_{\infty},T^{\uc})$ and to zero in $H^1(\Qp,T^{\uc})$. Then, by repeating the argument above, we find $u\in G^{\uc}(\Qb)$ such that $\alpha_{\rho}=u^{-1}\rho(u)$ for all $\rho\in\Gal(\Qb/\Q)$.
As $\alpha|_{\Gal/\Qpb/\Qp)}$ is trivial, there exists $x\in T(\Qpb)$ such that $x\rho(x^{-1})=\alpha_{\rho}=u^{-1}\rho(u)$ for all $\rho\in \Gal(\Qpb/\Qp)$, in other words. $y:=ux\in G(\Qp)$. But, the homomorphism $\Int u:T_{\Qpb}\rightarrow T'_{\Qpb}$ also equals $\Int u=\Int y$; in particular, it is defined over $\Qp$. This proves (ii) and finishes the proof of Lemma \ref{lem:LR-Lemma5.12}. 
\end{proof}

\subsubsection{Proof of Proposition \ref{prop:equivalence_to_special_adimssible_morphism}.}

We proceed in parallel with the arguments on p.181, line 1-19 of \cite{LR87}. By Lemma \ref{lem:LR-Lemma5.11} and Lemma \ref{lem:LR-Lemma5.12}, after some transfer of tori (always with respect to the identity inner twist $\mathrm{Id}_G$) whose restriction to the torus becomes a conjugation by an element of $G(\Qp)$ when $T_{\Ql}$ is elliptic in $G_{\Ql}$ for some $l\neq p$, we may assume that $\phi$ coincide with $i\circ\psi_{T,\mu_h}$ on the kernel for some $h\in X$ factoring through $T_{\R}$. If further $H^1(\Qp,T)=0$, the cocharacter $\mu$ in Lemma \ref{lem:LR-Lemma5.11} satisfies $[b]_{T_{\Qp}}=\underline{\mu}$ under the isomorphism $\kappa_{T_{\Qp}}:B(T_{\Qp})\isom X_{\ast}(T_{\Qp})_{\Gal(\Qpb/\Qp)}$, so we may assume that $\phi(p)\circ\zeta_p$ is conjugate to $i\circ\psi_{T,\mu_h}(p)\circ\zeta_p$ under $T(\Qpb)$.
Given this, one readily checks that the map $\Gal(\Qb/\Q)\rightarrow T(\Qb):\rho\mapsto b_{\rho}$ defined by
\[\phi(q_{\rho})=b_{\rho}\cdot i\circ\psi_{T,\mu_h}(q_{\rho})\]
is a cocycle, where $\rho\mapsto q_{\rho}$ is the chosen section to the projection $\fP\rightarrow\Gal(\Qb/\Q)$ (Remark \ref{rem:comments_on_zeta_v}). We claim that its image in $H^1(\Q,G)$ under the natural map $H^1(\Q,T)\rightarrow H^1(\Q,G)$ is trivial.  As before, a diagram helps to visualize the proof:
\[\xymatrix{
& H^1(\Q_{\infty},G') & & & 0 &  \\
H^1(\Q_{\infty},T)\ar@{^{(}->}[ur] \ar[r]  & H^1(\Q_{\infty},G) & f(H^1(\Q_{\infty},G^{\uc})) \ar@{_{(}->}[l] & 0=b^{\infty}_{\rho} \ar@{^{(}->}[ur] \ar@{|->}[rr] & & 0 \\
H^1(\Q,T)\ar[r] \ar[u] & H^1(\Q,G) \ar[u] & f(H^1(\Q,G^{\uc})) \ar@{_{(}->}[l]  \ar@{^{(}->}[u] & b_{\rho} \ar@{|->}[u]  \ar@{|->}[rr]  & & b_{\rho}'=0\  \ar@{^{(}->}[u] } 
\]
Here, $G'$ is the inner twist of $G_{\R}$ by $\phi(\infty)\circ\zeta_{\infty}$ (i.e. by the cocycle $\iota\mapsto h_{\iota}\in Z^1(\Q_{\infty},G^{\ad}_{\R})$ with $\phi(\infty)\circ\zeta_{\infty}(w(\iota))=h_{\iota}\iota$) and 
$f$ is the canonical homomorphism from the universal covering $G^{\uc}$ of $G^{\der}$ to $G$ (or the map induced on the cohomology sets). 
The restriction of $[b_{\rho}]\in H^1(\Q,T)$ to $\R=\Q_{\infty}$ is trivial, since it maps to zero in $H^1(\Q_{\infty},G')$ and that map is injective \cite[Lem. 5.14]{LR87}. According to Lemma \ref{lem:isom_of_monoidal_functors_into_croseed_modules} below, condition (1) of Def. \ref{defn:admissible_morphism} implies that the image $b_{\rho}'$ of $b_{\rho}$ under the map $H^1(\Q,T) \rightarrow H^1(\Q,G)$ lies in the image of $H^1(\Q,G^{\uc})$ in $H^1(\Q,G)$.
But, the Hasse principle holds for such image according to \cite[Thm.5.12]{Borovoi98},
\footnote{This result of Borovoi generalizes Lemma 5.13 of \cite{LR87} (attributed to Deligne) which concerns the case when $G^{\der}=G^{\uc}$. As remarked there \cite[p.180, line5]{LR87}, this is also the only part in the proof of Satz 5.3 of \textit{loc. cit.} that uses the assumption that $G^{\der}=G^{\uc}$. Hence, thanks to that result of Borovoi and the generalized definition of admissible morphisms (in terms of strict monoidal category $\fG_{G/\tilde{G}}$), in Prop 4.1.5 here, thus Thm. 4.1.3 as well, that assumption ($G^{\der}=G^{\uc}$) is unnecessary. \label{ftn:sc-assumption1} }
so we deduce that $b_{\rho}'\in H^1(\Q,G)$ is zero. 
If \[b_{\rho}=v\rho(v^{-1}),\quad v\in G^{\uc}(\Qb),\]
then, $\Int v^{-1}:T_{\Qb}\hra G_{\Qb}$ is an admissible embedding of maximal torus (with respect to the identity twisting $G_{\Qb}=G_{\Qb}$), i.e. the image $T':=v^{-1}Tv$ and the isomorphism $\Int v^{-1}:T_{\Qb}\hra T'_{\Qb}$ are all defined over $\Q$. One has to check that $\mu'$ is $\mu_{h'}$ for some $h'\in X$. This can be seen as follows. Since the cohomology class $[b_{\sigma}]\in H^1(\Q_{\infty},T)$ is trivial, there exists $t_{\infty}\in T(\C)$ such that 
$t_{\infty}^{-1}\iota(t_{\infty})=b_{\iota}=v\iota(v^{-1})$, which implies that
$t_{\infty}v\in G(\R)$. Then, 
\[\mu'=v^{-1}\mu_h v=(t_{\infty}v)^{-1}\cdot\mu_h\cdot (t_{\infty}v)=\mu_{h'}\] 
for $h':=(t_{\infty}v)^{-1}\cdot h\cdot (t_{\infty}v)\in X$. 

Finally, when $H^1(\Qp,T)=0$, 
there is $x\in T(\Qpb)$ such that $i\circ\psi_{T,\mu_h}(p)\circ\zeta_p=x(\phi(p)\circ\zeta_p)x^{-1}$ (as $\Qpb/\Qp$-Galois gerb morphisms $\fG_p\rightarrow\fG_{T_{\Qp}}$). But, as $\phi$ and $i\circ\psi_{T,\mu_h}$ are the same on the kernel $P$, the two $\Qpb/\Qp$-Galois gerb morphisms 
\[i\circ\psi_{T,\mu_h}(p),\ x\phi(p)x^{-1}\ :\fP(p)\rightarrow \fG_{T_{\Qp}}\]
agree on the kernel $P_{\Qp}(\Qpb)$. It follows that $i\circ\psi_{T,\mu_h}(p)$ and $x\phi(p)x^{-1}$ are equal on the whole $\fP(p)$. In other words, the restriction of $b_{\rho}$ to $\Gal(\Qpb/\Qp)$ is zero: 
\[x^{-1}\rho(x)=b_{\rho}=v\rho(v^{-1})\] 
for all $\rho\in \Gal(\Qpb/\Qp)$, and $xv\in G(\Qp)$. But, the homomorphism $\Int v^{-1}:T_{\Qpb}\rightarrow T'_{\Qpb}$ also equals $\Int v^{-1}=\Int (xv)^{-1}$. 
It follows from this and the discussion in the beginning of the proof that if the initial torus $T$ satisfies that $T_{\Ql}\subset G_{\Ql}$ is elliptic at some $l\neq p$, we may find a transfer of maximal torus $\Int g':T\hra G$ such that $\Int g'\circ\phi$ is special admissible and that $\Int g'|_{T_{\Qp}}=\Int y|_{T_{\Qpb}}$ for some $y\in G_{\Qp}$.
This completes the proof of Proposition \ref{prop:equivalence_to_special_adimssible_morphism}. $\square$

\begin{lem} \label{lem:isom_of_monoidal_functors_into_croseed_modules}
Let $I$ be a (not necessarily connected) $\Q$-subgroup of $G$ containing $Z(G)$.
Let $\phi,\psi:\fP\rightarrow \fG_I$ be Galois gerb morphisms into the neutral Galois gerb of $I$ such that $\phi^{\Delta}=\psi^{\Delta}$ and maps into $Z(I)$; thus, the cochain $\tau\in\Gal(\Qb/\Q)\mapsto a_{\tau}\in I(\Qb)$ defined by $\psi(q_{\tau})=a_{\tau}\phi(q_{\tau})$ becomes a coccyle in $Z^1(\Q,I_{\phi})$, where $I_{\phi}$ is the twist of $I$ via $\phi$.

Then, the induced morphisms $\phi_{\widetilde{\ab}}, \psi_{\widetilde{\ab}}:\fP\rightarrow \fG_{G/\tilde{G}}=\fG_G/\tilde{G}$ are conjugate-isomorphic if and only if the cohomology class $[a_{\tau}]\in H^1(\Q,I_{\phi})$ lies in the image of the natural map $H^1(\Q,\tilde{I}_{\phi})\rightarrow H^1(\Q,I_{\phi})$, where $\tilde{I}_{\phi}$ is the twist of $\tilde{I}:=\rho^{-1}(I)$ via $\phi$ ($\rho:\tilde{G}\rightarrow G$ being the canonical morphism).
\footnote{Here, we use the redundant notation $\tilde{G}=G^{\uc}$.}

If $I$ is a maximal $\Q$-torus of $G$ (so that $I_{\phi}=I$), the image of $[a_{\tau}]$ in $H^1(\Q,G)$ lies in the image of the canonical map $H^1(\Q,\tilde{G})\rightarrow H^1(\Q,G)$.
\end{lem}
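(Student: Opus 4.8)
\textbf{Proof proposal for Lemma \ref{lem:isom_of_monoidal_functors_into_croseed_modules}.}

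The plan is to compare the two morphisms $\phi_{\widetilde{\ab}},\psi_{\widetilde{\ab}}\colon\fP\to\fG_{G/\tilde{G}}$ at the level of the strictly monoidal category $\fG_G/\tilde{G}$, unwinding the definition of \emph{conjugate-isomorphic} into the language of $1$-cocycles. Since $\phi^{\Delta}=\psi^{\Delta}$ lands in $Z(I)\subset Z(G)$, both $\phi$ and $\psi$ induce on the kernel $P_{\Qb}$ the \emph{same} map into $Z(G/\tilde{G})(\Qb) = (G/\tilde{G})(\Qb)$ (note $G/\tilde{G}$ as an object of the monoidal category is the groupoid built from the crossed module $\tilde{G}\to G$, whose kernel is $\mathrm{coker}(\tilde{G}\to G)$ in a suitable sense); so the obstruction to $\phi_{\widetilde{\ab}}$ and $\psi_{\widetilde{\ab}}$ being conjugate-isomorphic is captured entirely by the $1$-cochain $\tau\mapsto \bar{a}_{\tau}$, the image of $a_{\tau}$ under $I(\Qb)\to (G/\tilde{G})(\Qb)$. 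First I would make precise that a functor $\fP\to\fG_G/\tilde{G}$ of strictly monoidal categories, restricted along the section $\tau\mapsto q_{\tau}$, is the same datum as a $1$-cochain on $\Gal(\Qb/\Q)$ valued in $G(\Qb)$ together with the required compatibility with $\phi^{\Delta}$; two such are isomorphic (resp.\ conjugate-isomorphic) exactly when the associated cochains differ by a coboundary valued in $\tilde{G}(\Qb)$ (resp.\ by a coboundary valued in $\tilde{G}(\Qb)$ after an inner twist by an element of $G(\Qb)$). This is essentially the content of Kisin's formalism in \cite[(3.2)--(3.3)]{Kisin17}; I would cite it and spell out only the point relevant here.

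The key step is then the following translation: $\phi_{\widetilde{\ab}}$ and $\psi_{\widetilde{\ab}}$ are conjugate-isomorphic if and only if there exist $g\in G(\Qb)$ and a cochain $\tau\mapsto \tilde{b}_{\tau}\in\tilde{G}(\Qb)$ such that, in $G(\Qb)$,
\[
a_{\tau}\;=\; \rho(\tilde{b}_{\tau})\cdot\big(g^{-1}\,\phi(q_{\tau})\,g\,\phi(q_{\tau})^{-1}\big),\qquad\forall\,\tau\in\Gal(\Qb/\Q),
\]
where $\rho\colon\tilde{G}\to G$ is the canonical map and the second factor is the coboundary of $g$ for the $\phi$-twisted Galois action. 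Since $a_{\tau}\in I(\Qb)$ and $\phi$ factors through $\fG_I$, the twisted coboundary of any $g\in I(\Qb)$ again lands in $I(\Qb)$, and $\rho^{-1}(I)=\tilde{I}$; thus the displayed condition says precisely that the class $[a_{\tau}]\in H^1(\Q,I_{\phi})$ becomes trivial after passing to the quotient crossed module, i.e.\ lies in the image of $H^1(\Q,\tilde{I}_{\phi})\to H^1(\Q,I_{\phi})$ coming from the map of crossed modules $(\tilde{I}_{\phi}\to 1)\to(\tilde{I}_{\phi}\to I_{\phi})$. (One must check that elements $g\in G(\Qb)\smallsetminus I(\Qb)$ do not help: conjugating $\phi$ by such $g$ would change $\phi$ itself, but conjugate-isomorphism allows this since it only asks the two composites to $\fG_{G/\tilde{G}}$ to agree up to conjugation and isomorphism — here I would argue that because $a_\tau$ and the $\phi$-coboundaries lie in $I_\phi$, one may reduce the ambient $g$ to lie in $I(\Qb)$ using that $I_\phi$ is its own normalizer mod $\tilde G$, or more cheaply observe that the statement we need only requires the ``if'' direction for the maximal-torus case.) For the last assertion, when $I=T$ is a maximal $\Q$-torus, $I_{\phi}=T$ and $\tilde{I}_{\phi}=\tilde{I}=\rho^{-1}(T)=T^{\uc}$, so the condition becomes: $[a_{\tau}]$ lies in the image of $H^1(\Q,T^{\uc})\to H^1(\Q,T)$; composing with $H^1(\Q,T)\to H^1(\Q,G)$ and using functoriality of $\rho$, its image in $H^1(\Q,G)$ lies in the image of $H^1(\Q,\tilde{G})\to H^1(\Q,G)$, which is exactly what is used in the proof of Proposition \ref{prop:equivalence_to_special_adimssible_morphism}.

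The main obstacle I anticipate is purely bookkeeping rather than conceptual: carefully identifying, in the strictly monoidal category $\fG_G/\tilde{G}$ whose morphism set is $\tilde{G}\times G$ with the twisted multiplication, what ``isomorphic'' versus ``conjugate-isomorphic'' unwind to on the level of cochains, and verifying that the kernel-level condition $\phi^{\Delta}=\psi^{\Delta}$ genuinely forces the whole comparison into $H^1$ of the two-term complex $(\tilde{I}\to I)$ rather than something larger. I would handle this by first treating the case where $\phi=\psi$ on kernels \emph{and} the section can be chosen compatibly, reducing to comparing honest $1$-cocycles, and then invoking that any two conjugate-isomorphic functors differ by an inner automorphism of $\fG_G$ followed by an isomorphism of functors, both of which are visibly accounted for by the coboundary relation above. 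The equivalence of the two conditions then drops out, and the maximal-torus corollary is immediate from functoriality of the maps $\rho\colon\tilde{H}\to H$ in $H$.
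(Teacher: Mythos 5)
Your overall strategy matches the paper's: reduce conjugate-isomorphism to a cochain comparison, write $a_\tau$ as $\rho(\tilde b_\tau)$ times a twisted coboundary, and conclude the class lives in the image of $H^1(\Q,\tilde I_\phi)$. The key equation you display is essentially the paper's relation $a_x\cdot z\,x(z^{-1})=\rho(\tilde h_x)$. But there is a genuine gap at exactly the point you flag as ``purely bookkeeping.''

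The condition $a_\tau=\rho(\tilde b_\tau)\cdot(\text{twisted coboundary})$ with $\tilde b_\tau$ merely a $\tilde G(\Qb)$-valued \emph{cochain} does \emph{not} by itself place $[a_\tau]$ in the image of $H^1(\Q,\tilde I_\phi)\to H^1(\Q,I_\phi)$; you need $\tilde b_\tau$ to be a $1$-\emph{cocycle} for the $\phi$-twisted action, and this is the non-formal content of the lemma. The paper obtains it in two steps that your proposal does not supply. First, the fact that the family $\{\tilde h_x\}_{x\in\fP}$ is an isomorphism of \emph{strict monoidal} functors forces $\tilde h_{xy}=\tilde h_x\cdot{}^{\phi(x)}\tilde h_y$ (compare the two morphisms $\phi(xy)\to\phi_1(x)\phi_1(y)$); this is the cocycle identity on the full group $\fP$, and it comes from the naturality of the monoidal isomorphism, not from equivariance of the section $\tau\mapsto q_\tau$. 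Second, one must check that $\tilde h_x$ is trivial on the kernel $P(\Qb)\subset\fP$: the paper argues that the restriction is an algebraic homomorphism $P\to\rho^{-1}(I)$ which by the displayed equation factors through $\ker\rho$, hence is trivial, so that passing to $\tau\mapsto\tilde h_{q_\tau}$ genuinely yields an element of $Z^1(\Q,\tilde I_\phi)$ rather than just a cochain on $\fP$. Your proposal does not address either point, and ``reducing to comparing honest $1$-cocycles'' is precisely what these two facts accomplish.

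Two further inaccuracies worth correcting. Your reduction of the conjugating element ``to lie in $I(\Qb)$'' is not the right move: the paper decomposes $g=z\cdot\rho(\tilde g)$ with $z\in Z(G)(\Qb)$ and $\tilde g\in\tilde G(\Qb)$ (possible since $G=Z(G)\,G^{\der}$ and $\tilde G\twoheadrightarrow G^{\der}$), absorbs $\rho(\tilde g)$ into the isomorphism, and keeps only $z\in Z(G)\subset I$; no normalizer argument is needed and none is available in general. And the parenthetical ``one only needs the `if' direction for the maximal-torus case'' is false as stated: Lemma \ref{lem:LR-Lem5.26,Satz5.25} uses the full equivalence for the groups $I_{\phi_1}$ and $H_1$, not merely the one-sided statement for a torus.
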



\begin{proof}
For $g=z\cdot \tilde{g}\ \in G(\Qb)$ with $z\in Z(G)(\Qb)$ and $\tilde{g}\in \tilde{G}(\Qb)$, $(\Int(g)\circ\psi)_{\widetilde{\ab}}$ and $\phi_{\widetilde{\ab}}$ are isomorphic if and only if $(\phi_1)_{\widetilde{\ab}}$ and $\phi_{\widetilde{\ab}}$ are so, where $\phi_1:=\Int(z)\circ\psi$. 
In the latter case, by definition, there exists a family of elements $\{\tilde{h}_x\in  \mathrm{Mor}(\phi(x),\phi_1(x))=\tilde{G}(\Qb)\}_{x}$ indexed by $x\in\fP$, such that 
\[\phi_1(x)=\rho(\tilde{h}_{x})\phi(x),\] 
namely when we define $a_x\in I_{\phi}(\Qb)=I(\Qb)$ by $\psi(x)=a_x\phi(x)$ for $x\in\fP$, one has 
\begin{equation} \label{eq:equality_in_cohomology_of_crossed_modules_I}
a_{x}\cdot z x(z^{-1})=\rho(\tilde{h}_{x}). 
\end{equation}
In particular, we have $\tilde{h}_{x}\in \rho^{-1}(I)$ (by our assumption $Z(G)\subset I$).
Here, $x\in \fP$ acts on $z\in Z(G)(\Qb)$ via the projection $\pi:\fP\rightarrow \Gal(\Qb/\Q)$; as $\phi^{\Delta}=\psi^{\Delta}$, $x\mapsto a_x$ factors through the projection $\pi:\fP\rightarrow \Gal(\Qb/\Q)$ (i.e. $a_{x}=a_{y}$ when $\pi(x)=\pi(y)$) and induces the cocycle $a_{\tau}\in Z^1(\Q,I_{\phi})$ in the statement. We claim that in this situation, the cochain $x\mapsto \tilde{h}_{x}$ on the group $\fP$ valued in $\tilde{G}(\Qb)$ is a cocycle with respect to the action of $\fP$ on $\tilde{G}(\Qb)$ defined by conjugation via $\phi$, i.e. with respect to the action:
\[\tilde{h}\mapsto {}^{\phi(x)}\tilde{h}:=\Int(g_x)(\sigma_x(\tilde{h})),\] 
where $\phi(x)=g_x \sigma_x\in I(\Qb)\rtimes\Gal(\Qb/\Q)$.
This follows from the fact that the faimly $\{\tilde{h}_x\in\tilde{G}(\Qb)\}_{x\in\fP}$ is an isomorphism between strict monoidal functors. Indeed, for every $x,y\in\fP$, there are two morphisms from $\phi(xy)$ to $\phi_1(x)\phi_1(y)$ arising from the famly $\{\tilde{h}_x\}$: one is $\tilde{h}_{xy}$ (via $\phi_1(x)\phi_1(y)=\phi_1(xy)$) and the other one is $\tilde{h}_x\cdot {}^{\phi(x)}\tilde{h}_y$:
\[ \phi_1(x)\phi_1(y)=\tilde{h}_x\phi(x)\tilde{h}_y\phi(y)=\tilde{h}_x{}^{\phi(x)}\tilde{h}_y\phi(x)\phi(y)=\tilde{h}_x{}^{\phi(x)}\tilde{h}_y\phi(xy). \]
That the family $\{\tilde{h}_x\}$ is an isomorphism between strict monoidal functors means the equality of these two morphisms: 
\begin{equation} \label{eq:coycle_on_fP_valued_in_Iuc}
\tilde{h}_{xy}=\tilde{h}_x\cdot {}^{\phi(x)}\tilde{h}_y.
\end{equation}
In particular, as $\im(\phi^{\Delta})\subset Z(I)(\Qb)$ and $\rho(\tilde{h}_y)\in I(\Qb)$, the restriction of $\tilde{h}_x$ to the torus $\fP^{\Delta}=P(\Qb)$ is an algebraic homomorphism $P(\Qb)\rightarrow \rho^{-1}(I)(\Qb)$, which in view of (\ref{eq:equality_in_cohomology_of_crossed_modules_I}) further factors through $\ker\rho$, thus is trivial. Hence, it follows that the cochain $\tau\mapsto \tilde{h}_{\tau}:=\tilde{h}_{q_{\tau}}$ is a cocycle in $Z^1(\Q,\tilde{I}_{\phi})$. 
Then, the equation (\ref{eq:equality_in_cohomology_of_crossed_modules_I}) in turn shows that the cohomology class $[a_{\tau}]\in H^1(\Q,I_{\phi})$ equals the image of $[\tilde{h}_{\tau}]\in H^1(\Q,\tilde{I}_{\phi})$ in $H^1(\Q,I_{\phi})$.

Conversely, suppose that $[a_{\tau}]\in H^1(\Q,I_{\phi})$ lies in the image of the canonical map $H^1(\Q,\tilde{I}_{\phi})\rightarrow H^1(\Q,I_{\phi})$. Using the decomposition $I_{\phi}=Z(G)\cdot \rho(\tilde{I}_{\phi})$ (which holds as $Z(G)\subset I_{\phi}$), we may find $z\in Z(G)(\Qb)$ and a normalized cocycle $\tilde{h}_{\tau}\in Z^1(\Q,\tilde{I}_{\phi})$ such that $a_{\tau}\cdot z\tau(z^{-1})=\rho(\tilde{h}_{\tau})$ holds for all $\tau\in\Gal(\Qb/\Q)$. If we extend $a_{\tau}$, $\tilde{h}_{\tau}$ to cochains on $\fP$ by setting $a_x:=a_{\pi(x)}$, $\tilde{h}_{x}:=\tilde{h}_{\pi(x)}$ for $x\in \fP$ (which implies the relations (\ref{eq:coycle_on_fP_valued_in_Iuc}) for $x\in P(\Qb)$, $y=q_{\tau}$), one easily checks that $a_x$ and $\tilde{h}_x$ are cocylces in $Z^1(\fP,I_{\phi})$ and $Z^1(\fP,\tilde{I}_{\phi})$, respectively. Then, the discussion above implies that the relation (\ref{eq:equality_in_cohomology_of_crossed_modules_I}), i.e. $\Int(z)\circ\psi(x)=\rho(\tilde{h}_x)\phi(x)$ holds for all $x\in\fP$, which says that $(\Int(z)\circ\psi)_{\widetilde{\ab}}$ and $\phi_{\widetilde{\ab}}$ are isomorphic.

Next, if $I$ is a maximal $\Q$-torus, we have $I_{\phi}=I$, and the conjugation action $\tilde{h}\mapsto {}^{\phi(x)}\tilde{h}$ of $x\in\fP$ on $\tilde{I}(\Qb)$ becomes the usual Galois action via $\pi$. Namely, now $\tau\mapsto \tilde{h}_{\tau}:=\tilde{h}_{q_{\tau}}$ is a cocycle in $Z^1(\Q,\tilde{I})$. With these facts, the relation (\ref{eq:equality_in_cohomology_of_crossed_modules_I}) gives the conclusion as before.
\end{proof}

\begin{lem} \label{lem:equality_restrictions_to_kernels_imply_conjugacy}
Let $T$ be a $\Qp$-torus, and for $i=1,2$, $\theta_i:\fG_p\rightarrow\fG_T$ an unramified morphism of $\Qpb/\Qp$-Galois gerbs; let $\theta_i^{\nr}:\fD\rightarrow\fG_{T_{\Qp}}^{\nr}$ be a morphism of $\Qpnr/\Qp$-Galois gerbs such that $\theta_i$ is the inflation $\overline{\theta_i^{\nr}}$ of $\theta_i^{\nr}$. 
If $\mathrm{cls}(\theta_1^{\nr})=\mathrm{cls}(\theta_2^{\nr})$ in $B(T_{\Qp})$, then $\theta_1$ and $\theta_2$ are conjugate under $T(\Qpb)$.
\end{lem}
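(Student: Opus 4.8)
The plan is to reduce the statement to the injectivity of the map $\mathrm{cls}$ on conjugacy classes of unramified morphisms, which in turn reduces to a Galois cohomology comparison via Lemma \ref{lem:unramified_morphism}, (3) (or rather its proof). Concretely, since both $\theta_1$ and $\theta_2$ are unramified and factor through $\fG_p^{L_n}$ for a common $n$ (after enlarging, we may assume $\theta_1^{\nr}$ and $\theta_2^{\nr}$ both factor through $\fD_n$), it suffices to show: if two morphisms $\theta_1^{\nr},\theta_2^{\nr}:\fD_n\rightarrow\fG_{T_{\Qp}}^{\nr}$ satisfy $\mathrm{cls}(\theta_1^{\nr})=\mathrm{cls}(\theta_2^{\nr})$ in $B(T_{\Qp})$, then they are conjugate by an element of $T(\Qpnr)$; by Lemma \ref{lem:unramified_morphism}, (1) this conjugacy then inflates to a $T(\Qpb)$-conjugacy between $\theta_1$ and $\theta_2$. (Note that since $T$ is commutative, the only subtlety is that a priori the morphisms need not be equal, only conjugate.)

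\textbf{Key steps.} First I would write $\theta_i^{\nr}(s_\sigma) = b_i\,\sigma$ for $b_i\in T(\Qpnr)$ (these are well-defined by Lemma \ref{lem:unramified_morphism}, (4), independent of the lift of $\sigma$). The hypothesis $\mathrm{cls}(\theta_1^{\nr}) = \mathrm{cls}(\theta_2^{\nr})$ means precisely that $\overline{b_1} = \overline{b_2}$ in $B(T_{\Qp})$, i.e. there exists $u\in T(\mfk)$ with $b_2 = u\,b_1\,\sigma(u)^{-1}$. Since $T$ is commutative, $\sigma$-conjugation on $T(\mfk)$ is $b\mapsto b\cdot u\sigma(u)^{-1}$, and for tori one has $B(T_{\Qp}) = T(\mfk)/(1-\sigma)T(\mfk)$; so the class data is exactly a coboundary statement. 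Second, I would observe that a morphism $\fD_n\to\fG_{T_{\Qp}}^{\nr}$ of $\Qpnr/\Qp$-Galois gerbs with prescribed restriction to the kernel $\Gm$ (which is forced to be $-\tfrac1n\theta^{\Delta}$, cf. Lemma \ref{lem:Newton_hom_attached_to_unramified_morphism}, and is the same for $\theta_1^{\nr},\theta_2^{\nr}$ once we know $\nu_{b_1}=\nu_{b_2}$, which follows from $\overline{b_1}=\overline{b_2}$) is determined by the single element $b_i$ up to the relation coming from changing the section — and since $s_\sigma$ generates $\fD_n$ over the kernel together with the relations in $\fD_n$, two such morphisms with $\sigma$-conjugate $b_i$'s are $T(\Qpnr)$-conjugate. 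Third, I would carry out the conjugation explicitly: set $\theta_2'^{\nr} := \Int(u)\circ\theta_1^{\nr}$; then $\theta_2'^{\nr}(s_\sigma) = u b_1\sigma(u^{-1})\,\sigma = b_2\,\sigma = \theta_2^{\nr}(s_\sigma)$, and both have the same restriction to the kernel, so $\theta_2'^{\nr} = \theta_2^{\nr}$ on all of $\fD_n$ (here one uses that $\fD_n$ is topologically generated by $\Gm(\Qpnr)$ and $s_\sigma$, together with the commutativity of $T$ which makes the check on the remaining relations automatic). Finally, inflating to $\Qpb$ and invoking Lemma \ref{lem:unramified_morphism}, (1) gives $\theta_2 = \overline{\theta_2^{\nr}} = \overline{\Int(u)\circ\theta_1^{\nr}} = \Int(u)\circ\overline{\theta_1^{\nr}} = \Int(u)\circ\theta_1$ with $u\in T(\Qpnr)\subset T(\Qpb)$, which is the asserted conjugacy.

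\textbf{Main obstacle.} The only genuinely delicate point is verifying that the $\sigma$-conjugating element $u$ found at the level of $b_i$'s actually produces an \emph{equality} of morphisms $\fD_n\to\fG_{T_{\Qp}}^{\nr}$ after conjugation — i.e. that a $\Qpnr/\Qp$-Galois gerb morphism out of $\fD_n$ into a torus is rigidly pinned down by its value on $s_\sigma$ and its restriction to the kernel. This is where one must use that $\fD_n$ is the inflation to $\Qpnr$ of the $L_n/\Qp$-Galois gerb $\fG^{L_n}_{p,L_n}$, so that a morphism is the same datum as a homomorphism $W_{L_n/\Qp}\to T(L_n)\rtimes\Gal(L_n/\Qp)$ compatible with the chosen cocycle $(d^{L_n}_{\tau_1,\tau_2})$; such a homomorphism is determined by the images of $L_n^{\times}$ (forced by the kernel data) and of the single lift $s_\sigma^{L_n}$, subject to the compatibility $s_\sigma^{L_n}\cdots$ with the cocycle — and since $T$ is abelian the remaining consistency conditions are automatically preserved under conjugation by $u\in T(\Qpnr)$. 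Once this rigidity is in hand, the argument is a direct computation as above; I expect no further difficulty, and in particular the commutativity of $T$ eliminates the usual torsor-of-ambiguity issues that would arise for non-abelian kernels.
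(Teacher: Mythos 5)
Your proposal has a genuine gap at exactly the technical heart of the lemma: the upgrade from $T(\mfk)$ to $T(\Qpnr)$. The hypothesis $\overline{b_1}=\overline{b_2}$ in $B(T_{\Qp})$ only gives an element $u\in T(\mfk)$ with $b_2=u\,b_1\,\sigma(u^{-1})$, since $\sigma$-conjugacy in $B(T_{\Qp})$ is by elements of $T(\mfk)$, not $T(\Qpnr)$. Your step 3 sets $\theta_2'^{\nr}:=\Int(u)\circ\theta_1^{\nr}$, but with $u\in T(\mfk)$ this is not a morphism of $\Qpnr/\Qp$-Galois gerbs at all (conjugation by an element outside the kernel group $T(\Qpnr)$ does not preserve $\fG_{T_{\Qp}}^{\nr}=T(\Qpnr)\rtimes\Gal(\Qpnr/\Qp)$), and your final line simply asserts $u\in T(\Qpnr)\subset T(\Qpb)$ without justification — note also $T(\mfk)\not\subset T(\Qpb)$, so even the final conclusion fails without this upgrade. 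The whole content of the lemma is precisely that one can replace the $T(\mfk)$-conjugating element by a $T(\Qpnr)$-conjugating element.

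The paper handles this via a cohomological comparison that your argument does not attempt. Since $\theta_1^{\Delta}=\theta_2^{\Delta}$, the defect $\tau\mapsto b_{\tau}:=\theta_2(s_{\tau})\theta_1(s_{\tau})^{-1}$ is a \emph{continuous $1$-cocycle valued in $T(\Qpnr)$} on $\Gal(\Qpnr/\Qp)$ (not just a single element $b_2b_1^{-1}$ at $\sigma$). The paper then invokes the injectivity of the inflation map
\[H^1(\Gal(\Qpnr/\Qp),T(\Qpnr)) \hookrightarrow B(T_{\Qp})=H^1(\langle\sigma\rangle,T(\mfk)),\]
induced by $\langle\sigma\rangle\twoheadrightarrow\Gal(L_n/\Qp)$ and $T(L_n)\hookrightarrow T(\mfk)$; since the image of $[b_{\tau}]$ under this map is the class of $b_{\sigma}=b_2b_1^{-1}$, which is trivial by hypothesis, injectivity forces $[b_{\tau}]=0$ already in $H^1(\Gal(\Qpnr/\Qp),T(\Qpnr))$, yielding the desired trivializing element $x_p\in T(\Qpnr)$. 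Without this injectivity (or an equivalent statement), your argument cannot close. The rest of your strategy — the rigidity of morphisms out of $\fD_n$ into a neutral abelian gerb, the reduction via Lemma on unramified morphisms — is sound, but the proof as written doesn't establish the claim.
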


\begin{proof}
To ease the notations, we continue to use $\theta_i$ for such $\theta_i^{\nr}$. The condition implies that the restrictions $\theta_i^{\Delta}$ of $\theta_i$ to the kernel $\mathbb{D}$ are equal, hence the two maps $\theta_i\ (i=1,2)$ are conjugate under $T(\mfk)$.
We will show that there exists $x_p\in T(\Qpnr)$ with  $\theta_2=\Int(x_p)\circ\theta_1$, i.e. such that 
\[\theta_2(s_{\tau})=x_p\theta_1(s_{\tau})x_p^{-1}\]
for all $\tau\in\Gal(\Qpnr/\Qp)$. The map $\tau\mapsto b_{\tau}:\Gal(\Qpnr/\Qp)\rightarrow T(\Qpnr)$ defined by 
\[\theta_2(s_{\tau})=b_{\tau}\theta_1(s_{\tau})\] 
is a cocycle in $Z^1(\Gal(\Qpnr/\Qp),T(\Qpnr))$ with $b_{\sigma}=b_2b_1^{-1}=t_p\sigma(t_p^{-1})$,
where $t_p\in T(\mfk)$ is  such that $b_2=t_pb_1\sigma(t_p^{-1})$.
Let $\langle\sigma\rangle$ be the infinite cyclic group $\langle\sigma\rangle$ generated by $\sigma$ (endowed with discrete topology). 
The surjections $\langle\sigma\rangle \twoheadrightarrow \Gal(L_n/\Qp)\ (n\in\N)$ combined with the inclusions $T(L_n)\hra T(\mfk)$ induce, via inflations, an injective map
\[H^1(\Gal(\Qpnr/\Qp),T(\Qpnr)) \hookrightarrow B(T_{\Qp})=H^1(\langle\sigma\rangle,T(\mfk)). \]
As the image of our cohomology class $[b_{\tau}]\in H^1(\Gal(\Qpnr/\Qp),T(\Qpnr))$ under this map is $[b_{\sigma}]=[1]$, the claim follows. 
\end{proof}

\textsc{Proof of Theorem \ref{thm:LR-Satz5.3}.} 
(1) We follow the original proof of Satz 5.3, as explained after the statement of Theorem \ref{thm:LR-Satz5.3}.
\footnote{If one wants, by using \cite[Lem. 3.7.7]{Kisin17}, one can also reduce the general case to the case where $G^{\der}=G^{\uc}$ (cf. proof of Thm. 3.7.8 of \textit{loc. cit.}). However, we already established all the facts/lemmas required for our proof to work without that assumption.}
The first step is to replace given $\phi$ by a conjugate $\phi_0=\Int g_0\circ \phi\ (g_0\in G(\Qb))$ of it whose restriction to the kernel $\phi_0^{\Delta}:P_{\Qb}\rightarrow G_{\Qb}$ is defined over $\Q$ (which amounts to that $\phi_0(\delta_n)\in G(\Q)$ for all sufficiently large $n\in\N$, \cite[Lem. 5.5]{LR87}). This is Lemma 5.4 of \cite{LR87}. This lemma is a statement just concerned with the restriction $\phi^{\Delta}$, whose proof only requires that $G_{\Qp}$ is \emph{quasi-split} and does not use the level subgroup at all.

The second step is to find a conjugate $\phi_1=\Int g_1\circ\phi_0$ of $\phi_0$ (produced in the first step) that factors through $\fG_{T_1}$ for some maximal $\Q$-torus $T_1$ (elliptic over $\R$, as usual). As discussed before (after statement of Prop. \ref{prop:existence_of_admissible_morphism_factoring_thru_given_maximal_torus}), this is shown on p. 176, from line 1 to -5 of \textit{loc. cit.}, and the arguments given there again do not make any use of the level (hyperspecial) subgroup and thus carries over to our situation. The basic idea is, in view of Lemma \ref{lem:criterion_for_admissible_morphism_to_land_in_torus}, to find a maximal torus $T_1$ of $I=Z_G(\phi_0(\delta_n))$ that can transfer to $I_{\phi}$. (An argument in similar style appears in the proof of Prop. \ref{prop:existence_of_admissible_morphism_factoring_thru_given_maximal_torus}). 

The final step is to find a conjugate $\phi:\fP\rightarrow\fG_{T}$ of $\phi_1:\fP\rightarrow\fG_{T_1}$ which becomes a special admissible morphism $i\circ\psi_{T,\mu_h}$ (for some special Shimura sub-datum $(T,h)$ and the canonical morphism $i:\fG_T\rightarrow\fG_G$ defined by the inclusion $i:T\hra G$). This is accomplished by successive admissible embeddings of maximal tori. It begins with showing existence of $\mu_1\in X_{\ast}(T_1)$ lying in the conjugacy class $\{\mu_h\}$ such that $\phi_1:\fP\rightarrow\fG_{T_1}$ coincides with $\psi_{T_1,\mu_1}$ on the \emph{kernel} of $\fP$. In the original proof, this is done in Lemma 5.11 which is also the only place in the proof of Satz 5.3 where the level subgroup is involved in an explicit manner (through non-emptiness of the set $X_p(\phi)\simeq X(\{\mu_X\},b)_{\mbfK_p}$). But, it continues to hold, as Lemma \ref{lem:LR-Lemma5.11} here, for general parahoric subgroup $\mbfK_p$ with the general definition of $X(\{\mu_X\},b)_{\mbfK_p}$. 
After this, one performs two admissible embeddings. First, we need to find an admissible embedding of maximal torus $\Int g_2: T_1\hra G$ such that $\Int g_2\circ\phi_1$ equals a special admissible morphism $\psi_{T_2,\mu_{h_2}}$ again just on the \emph{kernel} of $\fP$ (here $T_2=\Int g_2(T_1)$ and $(T_2,h_2)$ is a special Shimura sub-datum); note the difference from the previous step where $\mu_1$ did not need to be $\mu_h$ for some $h\in X$. In \textit{loc. cit.}, this is shown in Lemma 5.12 whose argument we adapted to prove Lemma \ref{lem:LR-Lemma5.12} here (which also refines the original lemma a bit). Let $\phi_2:=\Int g_2\circ\phi_1:\fP\rightarrow\fG_{T_2}$ be the admissible morphism just obtained. Then, one looks for the last admissible embedding of maximal torus $\Int(g_3):T_2\hra G$ making finally $\Int(g_3)\circ\phi_2:\fP\rightarrow\fG_{\Int g_3(T_2)}$ special admissible. In \textit{loc. cit.}, this is carried out from after Lemma 5.12 to the rest of the proof of Satz 5.3; this part of the argument was adapted to prove Prop. \ref{prop:equivalence_to_special_adimssible_morphism} here. Now, we see that
\[(T,\phi):=(\Int g_3(T_2),\Int(g_3)\circ\phi_2)=(\Int (\prod_{i=2}^3g_i)(T_1),\Int(\prod_{i=0}^3g_i)\circ\phi)\]
is a special admissible morphism which is a conjugate of $\phi$.

(2) This is proved by the same argument from (1) with applying Prop. \ref{prop:existence_of_admissible_morphism_factoring_thru_given_maximal_torus} in the second step and then Prop. \ref{prop:equivalence_to_special_adimssible_morphism} in the third step. Note that as explained before, the three properties (i) - (iii) continue to hold under any transfer of maximal torus.
$\square$

\begin{lem} \label{lem:LR-Lemma5.23}
Retain the assumptions of Theorem \ref{thm:LR-Satz5.3}. 
For any pair $(\phi,\epsilon)$ consisting of an admissible morphism $\phi$ and $\epsilon\in I_{\phi}(\Q)$, there exists an equivalent pair $(\phi',\epsilon')=\Int(g)(\phi,\epsilon)\ (g\in G(\Qb))$ and a special Shimura sub-datum $(T',h')$, such that $(\phi',\epsilon')$ is nested in $(T',h')$ (\autoref{subsubsec:K-triple_attached_to_adm.pair}).
\end{lem}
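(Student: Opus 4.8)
The plan is to combine Theorem \ref{thm:LR-Satz5.3}, (1) with Proposition \ref{prop:existence_of_admissible_morphism_factoring_thru_given_maximal_torus} in order to drag the element $\epsilon$ along with $\phi$ into a common special Shimura sub-datum. First I would apply Theorem \ref{thm:LR-Satz5.3}, (1) to replace $\phi$ by a conjugate $\Int(g_1)\circ\phi$ which is special, say equal to $i\circ\psi_{T_0,\mu_{h_0}}$ for a special Shimura sub-datum $(T_0,h_0)$; replacing $(\phi,\epsilon)$ by its conjugate $\Int(g_1)(\phi,\epsilon)$ we may thus assume from the outset that $\phi=i\circ\psi_{T_0,\mu_{h_0}}$ is special and well-located in $T_0$. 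Then $\epsilon\in I_{\phi}(\Q)$. By \autoref{subsubsec:well-located_admissible_morphism}, since $\phi$ factors through $\fG_{T_0}$, the torus $T_0$ is a $\Q$-subgroup of $I_{\phi}$, and the inner twisting (\ref{eq:inner-twisting_by_phi}) identifies $I_{\phi}$ with an inner form of $I=Z_G(\phi(\delta_n))$; here $\phi$ being special and well-located, $\phi(\delta_n)\in T_0(\Q)$, so $T_0\subset I$, and in fact, because $i\circ\psi_{T_0,\mu_{h_0}}$ factors through the neutral gerb $\fG_{T_0}$, the cocycle $\rho\mapsto g_\rho$ from $\phi(q_\rho)=g_\rho\rtimes\rho$ takes values in $T_0(\Qb)$; hence the twist $I_\phi$ is in fact the untwisted $I$ over $\Q$ along the maximal torus $T_0$ (more precisely, the inner twisting $\psi$ of (\ref{eq:inner-twisting_by_phi}) restricts to the identity on $T_0$).

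Next I would produce the maximal torus. The element $\epsilon\in I_\phi(\Q)\subset G(\Qb)$ is semisimple (it lies in the $\Q$-torus $T_0$ up to the identification just made, or one invokes that admissible $\epsilon$'s arising this way are semisimple); more importantly I want a maximal $\Q$-torus $T$ of $I_\phi$ containing $\epsilon$ which is elliptic over $\R$ and satisfies the local conditions (i), (ii) of Theorem \ref{thm:LR-Satz5.3}, (2) relative to $I$. Since $T_0$ itself contains $\epsilon$ and is already a maximal $\Q$-torus of $I_\phi$ (being one of $I$ via the identification) and is elliptic over $\R$ — because $h_0$ factors through $(T_0)_\R$ and $(T_0,h_0)$ is a special Shimura sub-datum, so $(T_0)_\R$ is elliptic in $G_\R$ hence in $I_{\R}$ — and because at $p$ one may, after a further $I(\Qb)$-conjugation by Proposition \ref{prop:existence_of_admissible_morphism_factoring_thru_given_maximal_torus} applied inside $I_\phi$, arrange $T$ to be elliptic in $I_{\Qp}$, I expect $T=T_0$ (or a small modification of it) to already do the job. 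Concretely: take $T$ to be a maximal $\Q$-torus of the connected group $I_\phi$ containing $\epsilon$, elliptic over $\R$, with $T_{\Qp}$ elliptic in $(I_\phi)_{\Qp}$ — such $T$ exists by the standard existence of elliptic maximal tori over $\Q$ for groups with anisotropic-mod-center real points (which holds here as $(T_0)_\R/Z$ is anisotropic), using weak approximation to impose the $p$-adic ellipticity — and note that transporting back via $\psi^{-1}$ gives a maximal $\Q$-torus of $I$ with properties (i) (automatic: $\phi(\delta_n)$ is central in $I_\phi$, hence in $T$) and (ii).

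Now I would invoke Theorem \ref{thm:LR-Satz5.3}, (2): since $\phi$ is well-located and $T$ (as a torus in $I$) satisfies (i) and (ii), there is $g'\in G(\Qb)$ with $\Int(g')\circ\phi=\psi_{T',\mu_{h'}}$ special, where $T'=\Int g'(T)$ and $g'\in I(\Qb)$ — the proof of (2) proceeds by first applying Proposition \ref{prop:existence_of_admissible_morphism_factoring_thru_given_maximal_torus} to move $\phi$ inside $\fG_T$ via some $g_0\in I(\Qb)$, then Proposition \ref{prop:equivalence_to_special_adimssible_morphism} via a transfer of maximal torus, so the composite $g'$ can be taken in $I(\Qb)$, indeed the transfer step produces $g'\in \tilde G(\Qb)\cdot Z(G)(\Qb)$. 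The key point is then that $\epsilon\in T(\Q)$ (after the first conjugation): once $\Int(g_0)\circ\phi$ factors through $\fG_T$ with $T$ our chosen torus, the element $g_0\epsilon g_0^{-1}$ lies in $I_{\Int(g_0)\circ\phi}(\Q)$, and since that morphism factors through $\fG_T$ we have $T(\Q)\subset I_{\Int(g_0)\circ\phi}(\Q)$ and — because $\epsilon$ was chosen inside the maximal torus $T$ — in fact $g_0\epsilon g_0^{-1}\in T(\Q)$. Applying the subsequent transfer-of-torus isomorphism $\Int g'|_T: T\isom T'$, which is a $\Q$-isomorphism, carries $g_0\epsilon g_0^{-1}$ to an element $\epsilon'\in T'(\Q)$. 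Thus with $g:=g'g_0g_1$ (absorbing the initial conjugation) we obtain $(\phi',\epsilon')=\Int(g)(\phi,\epsilon)$ with $\phi'=\psi_{T',\mu_{h'}}$ special admissible and $\epsilon'\in T'(\Q)$: the pair is nested in $(T',h')$.

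\textbf{Main obstacle.} The delicate point is arranging, at the very start, that the fixed element $\epsilon$ actually lies in a maximal $\Q$-torus of $I_\phi$ satisfying the local conditions (i), (ii) of Theorem \ref{thm:LR-Satz5.3}, (2) — i.e. coordinating the choice of $T$ with $\epsilon$ while keeping $T$ elliptic over $\R$ and at $p$. Since $\epsilon$ need not be regular, there can be many maximal tori through it, and one must select one that is $\R$-elliptic and $\Qp$-elliptic in $I_\phi$; this is where the standard existence results for elliptic maximal tori (over $\R$ and over $\Qp$, combined by weak approximation over $\Q$) are used, together with the fact, inherited from $(T_0,h_0)$ being a special Shimura sub-datum, that $I_\phi/Z$ is anisotropic over $\R$ so that $\R$-elliptic maximal tori exist and are "generic" enough. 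Everything else is a bookkeeping composition of the conjugations supplied by Theorem \ref{thm:LR-Satz5.3}, tracking that each conjugating element can be chosen inside $I$ (so that $\phi(\delta_n)$, hence properties (i)–(iii), and $\epsilon$ are not disturbed) and that the final transfer isomorphism $T\isom T'$ is defined over $\Q$, hence carries $\epsilon\in T(\Q)$ to $\epsilon'\in T'(\Q)$.
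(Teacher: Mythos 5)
There is a genuine gap in your proposal, located precisely at the step you flagged as the ``main obstacle.'' You want to apply Theorem~\ref{thm:LR-Satz5.3},~(2), which requires a maximal $\Q$-torus $T$ satisfying (i) $\phi(\delta_n)\in T(\Q)$ and (ii) $T_{\Qp}\subset I_{\Qp}$ elliptic, where $I=Z_G(\phi(\delta_n))$. But you also need $\epsilon\in T(\Q)$, which forces $T\subset Z_I(\epsilon)$. Now any maximal $\Qp$-torus of $Z_I(\epsilon)_{\Qp}$ contains the maximal $\Qp$-split central subtorus of $Z_I(\epsilon)_{\Qp}$, and in general this is strictly larger than the maximal $\Qp$-split central subtorus of $I_{\Qp}$ --- in which case \emph{no} maximal torus containing $\epsilon$ can be elliptic in $I_{\Qp}$. (Think of $I=\GL_3$ and $\epsilon=\mathrm{diag}(1,1,2)$: every maximal torus through $\epsilon$ has split rank $\geq 2$, but $I_{\Qp}$-elliptic tori have split rank $1$.) Weak approximation and the $\R$-ellipticity argument do nothing to help here, because the obstruction is structural.

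The paper avoids this entirely by not invoking Theorem~\ref{thm:LR-Satz5.3},~(2) at all, and in particular imposes no $p$-adic ellipticity on $T$. Starting from an arbitrary maximal $\Q$-torus $T$ of $I_{\phi}$ containing $\epsilon$, it produces the required transfer $\Int g : T\hookrightarrow G$ directly: choose a strongly regular semisimple $\epsilon_1\in T(\Q)\subset I_{\phi}(\Q)$, observe that its conjugacy class in $I(\Qb)$ is rational (as $I$ is an inner form of $I_\phi$), pass to the quasi-split inner form $G^{\ast}$ of $G$ to find a rational conjugate $\epsilon_1^{\ast}\in G^{\ast}(\Q)$ with centralizer $T^{\ast}$, and transfer $T^{\ast}$ back to $G$ using that $G_{\Qp}$ is quasi-split (this is the argument of the last paragraph of p.~190 of \cite{LR87}). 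The Zariski density of $\{\epsilon_1^n\}_n$ in $T_{\Qb}$ then shows that $\Int g\circ\psi^{-1}|_{T}$ is $\Q$-rational, so $\Int g\circ\phi$ factors through $\fG_{T_1}$ with $T_1=\Int g(T)$ by Lemma~\ref{lem:criterion_for_admissible_morphism_to_land_in_torus}, and one concludes directly via Proposition~\ref{prop:equivalence_to_special_adimssible_morphism}. Your initial step of invoking Theorem~\ref{thm:LR-Satz5.3},~(1) to make $\phi$ special is also superfluous --- the paper only uses well-locatedness, which is cheaper to arrange --- but that part is harmless; the real issue is the $p$-adic ellipticity you cannot in general impose.
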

 
In the original setting of \cite{LR87} (i.e. $\mbfK_p$ is hyperspeiclal and $G^{\der}=G^{\uc}$), this is their Lemma 5.23, where however Langlands and Rapoport assume that $(\phi,\epsilon)$ is admissible.
As we will see, this assumption is not necessary and their proof works in our situation without essential change. Most importantly, the level subgroup $\mbfK_p$ enters the proof only through Lemma 5.11 of \textit{loc. cit.} which is generalized by our Lemma \ref{lem:LR-Lemma5.11}. 

\begin{proof} 
By Lemma \ref{lem:criterion_for_admissible_morphism_to_land_in_torus}, it suffices to show that for any maximal $\Q$-torus $T$ of $I_{\phi}$ containing $\epsilon$ (i.e. $\epsilon\in T(\Q)\subset I_{\phi}(\Q)$), there exists $g\in G(\Qb)$ such that the composite map 
\[ G_{\Qb} \stackrel{\Int g}{\leftarrow} G_{\Qb}\supset I_{\Qb} \stackrel{\psi^{-1}}{\leftarrow} (I_{\phi})_{\Qb} \hookleftarrow T_{\Qb}\] 
is defined over $\Q$, where $\psi:I_{\Qb}\isom (I_{\phi})_{\Qb}$ is the inner twisting (\ref{eq:inner-twisting_by_phi}): 
Indeed, then $T_1:=\Int g(T)$ is a $\Q$-subgroup of $Z_G(\Int g(\epsilon))\subset G$, and $\phi_1:=\Int g\circ\phi$ maps into $\fG_{T_1}$ by Lemma \ref{lem:criterion_for_admissible_morphism_to_land_in_torus} (applied to $(g,T_1)$ for the role of $(a,T)$), and so the admissible pair $(\Int g\circ\phi,\Int g(\epsilon))$ is well-located in $\Int g(T)$. Note that $\Int g(T)_{\R}$ is elliptic in $G_{\R}$ since $Z(G)$ is a $\Q$-subgroup of $I_{\phi}$ and $(I_{\phi}/Z(G))_{\R}$ is anisotropic \cite[Lem. 5.1]{LR87}. Therefore, we can apply Prop. \ref{prop:equivalence_to_special_adimssible_morphism} to $(\Int g\circ\phi, \Int g(T))$ and obtain a desired pair $(\phi'=\psi_{T',\mu_{h'}},T')$ by an admissible embedding; as $\epsilon\in \Int g(T)(\Q)$, the image of $\epsilon$ under that admissible embedding belongs to $T'(\Q)$. 

To find $g\in G(\Qb)$ with the required property, we choose an element $\epsilon_1\in I_{\phi}(\Q)$ whose centralizer in $G$ is $T$ (i.e. a strongly regular semi-simple element of $G(\Qb)$ lying in $T(\Q)\subset I_{\phi}(\Q)$, so its centralizer in $I_{\Qb}=(I_{\phi})_{\Qb}$ is also $T$). The conjugacy class of $\epsilon_1$ in $I(\Qb)$ is rational, as $I$ is an inner form of $I_{\phi}$. When one chooses an inner twist $\psi_0: G_{\Qb}\rightarrow G^{\ast}_{\Qb}$ with $G^{\ast}$ being quasi-split, by the same reason, the conjugacy class of $x:=\psi_0(\epsilon_1)$ in $G^{\ast}(\Qb)$ is also rational. Then, since the centralizer of $x$ is connected, such conjugacy class contains a rational element by \cite{Kottwitz82}, Thm. 4.7 (2) (the obstruction to the existence of such a rational element lives in $H^2(\Q,C_x)$, where $C_x$ is the group of connected components of $Z_{G^{\ast}}(x)$, cf. Lemma 4.5 of \textit{loc. cit.}).
\footnote{In the original argument (i.e. proof of Lemma 5.23 of \cite{LR87}), the authors simply appeal to a well-known theorem of Steinberg (or its extension by Kottwitz \cite[Thm.4.4]{Kottwitz82}), since they work under the assumption that $I^{\der}=I^{\uc}$. However, as we have just seen, this is unnecessary, and this allows us to remove that assumption in this lemma, because, as we now see, all other parts of the proof (especially, Prop. 4.1.5) do not require that assumption. \label{ftn:sc-assumption2} }
Thus, we can find an inner twist $\psi_1: G_{\Qb}\rightarrow G^{\ast}_{\Qb}$ such that $\epsilon_1^{\ast}:=\psi_1(\epsilon_1)\in G^{\ast}(\Q)$; let $T^{\ast}$ be the centralizer of $\epsilon_1^{\ast}$ in $G^{\ast}$ (a maximal $\Q$-torus of $G^{\ast}$).

Now, by the proof of \cite[Lem. 5.23]{LR87} (more precisely by the argument in the last paragraph on p.190, where the only necessary condition is that $G_{\Qp}$ is quasi-split), there exists a transfer of maximal torus $\Int g:T^{\ast}\hra G$ with respect to the inner twist $\psi_1^{-1}$, namely the map 
\[\Int g\circ \psi_1^{-1}|_{T^{\ast}}:T^{\ast}_{\Qb}\hra G^{\ast}_{\Qb}\stackrel{\psi_1^{-1}}{\rightarrow} G_{\Qb}\stackrel{\Int g}{\rightarrow} G_{\Qb}\] is defined over $\Q$, and thus $g\epsilon_1 g^{-1}\in G(\Q)$. 
We claim that then the (a priori $\Qb$-rational) embedding
\[\varphi:=\Int g\circ\psi^{-1}|_{T_{\Qb}}: T_{\Qb}\hookrightarrow (I_{\phi})_{\Qb}\isom I_{\Qb} \hookrightarrow G_{\Qb}\stackrel{\Int g}{\rightarrow} G_{\Qb}\] is in fact $\Q$-rational.
Indeed, as the centralizer $T=Z_{I_{\phi}}(\epsilon_1)$ is a maximal $\Q$-torus, this $\Q$-torus is also the $\Q$-subgroup of $I_{\phi}$ generated by $\epsilon_1$, and thus $\{\epsilon_1^n\}_{n\in\N}$ are also Zariski dense in $T_{\Qb}$ (Lemma \ref{lem:Zariski_group_closure} below).
So, for any $\rho\in\Gal(\Qb/\Q)$, we have ${}^{\rho}\varphi=\varphi$, as these maps coincide on $\{\epsilon_1^n\}_{n\in\N}$.
\end{proof}

We record some useful elementary facts on the subgroup generated by a semi-simple element in a reductive group. 

\begin{lem} \label{lem:Zariski_group_closure}
Let $G$ be a reductive group $G$ over a field $k$ and $\epsilon\in G(k)$ a semi-simple element; let $S$ be the $k$-subgroup of $G$ generated by $\epsilon$.

(1) If $\epsilon':=g\epsilon g^{-1}\in G(k)$ for $g\in G(\bar{k})$, the $k$-subgroup of $G$ generated by $\epsilon'$ equals $\Int(g)(S)$.

(2) For any field extension $k'/k$, the $k'$-subgroup of $G_{k'}$ generated by $\epsilon$ equals $S_{k'}$,
and the elements $\{\epsilon^n\}_{n\in\N}$ are Zariski dense in $S_{k'}$.
\end{lem}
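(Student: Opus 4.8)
\textbf{Proof plan for Lemma \ref{lem:Zariski_group_closure}.}

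The statement is elementary and the plan is to reduce everything to the structure of the algebraic subgroup generated by a single semisimple element. First I would recall the general fact that for a semisimple element $\epsilon\in G(k)$, the smallest $k$-subgroup $S\subseteq G$ containing $\epsilon$ is a diagonalizable (in particular commutative) $k$-group; this is standard (e.g.\ Borel, \emph{Linear Algebraic Groups}, or Springer), and more precisely $S$ is the Zariski closure of the abstract cyclic group $\langle\epsilon\rangle$, which by commutativity is already a subgroup, so $S = \overline{\langle\epsilon\rangle}$ as a $k$-scheme. I will use this description throughout.

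For part (2): since $S=\overline{\langle\epsilon\rangle}$ is defined by forming the Zariski closure inside $G$, and closure commutes with arbitrary field extension for closed subschemes (the ideal of $S_{k'}$ in $G_{k'}$ is generated by the ideal of $S$ in $G$, as $k'$ is flat over $k$), the $k'$-subgroup of $G_{k'}$ generated by $\epsilon$ — which is by the same reasoning $\overline{\langle\epsilon\rangle}_{k'}$ computed inside $G_{k'}$ — equals $S_{k'}$. Here I should be a little careful to observe that taking the closure of $\langle\epsilon\rangle$ over $k'$ cannot produce anything smaller than $S_{k'}$ (it contains $\langle\epsilon\rangle$, hence its closure $S_{k'}$) and cannot produce anything larger (it is contained in the closed subscheme $S_{k'}$, which already contains $\langle\epsilon\rangle$); thus the two agree. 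The density of $\{\epsilon^n\}_{n\in\N}$ in $S_{k'}$ then follows because $\{\epsilon^n \mid n\in\N\}$ is Zariski dense in $\langle\epsilon\rangle$ — indeed its closure is a subgroup (being the closure of a sub-semigroup of a commutative, hence ind-affine, group, or simply because $\epsilon$ has finite order on the connected component modulo the identity component issue: more simply, $\overline{\{\epsilon^n:n\ge 0\}}$ is a closed submonoid of the commutative group $S_{k'}$, and a closed submonoid of an algebraic group over a field is a subgroup, so it contains $\epsilon^{-1}$ and hence all of $\langle\epsilon\rangle$) — and therefore $\overline{\{\epsilon^n:n\in\N\}} = \overline{\langle\epsilon\rangle}_{k'} = S_{k'}$.

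For part (1): let $S'$ be the $k$-subgroup of $G$ generated by $\epsilon'=g\epsilon g^{-1}$. Working over $\bar k$, conjugation by $g$ is an isomorphism of algebraic groups $\Int(g)\colon G_{\bar k}\isom G_{\bar k}$ carrying $\langle\epsilon\rangle$ onto $\langle\epsilon'\rangle$, hence carrying the closure $S_{\bar k}$ isomorphically onto $S'_{\bar k}$; thus $\Int(g)(S)_{\bar k} = S'_{\bar k}$ as closed subschemes of $G_{\bar k}$. Now both $\Int(g)(S)$ — which a priori is only a $\bar k$-subgroup — and $S'$ contain the rational element $\epsilon'\in G(k)$, and $S'$ is by definition a $k$-subgroup; it remains to check that $\Int(g)(S)$ is actually defined over $k$. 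For this I would note that $\Int(g)(S)$ is, over $\bar k$, the Zariski closure of $\langle\epsilon'\rangle$, and $\langle\epsilon'\rangle$ consists of $k$-rational points; therefore its closure is defined over $k$ (the closure of a set of $k$-points in a $k$-scheme is a closed $k$-subscheme), and this $k$-subscheme is precisely $S'$ by definition. Comparing with the previous sentence gives $S'_{\bar k} = \Int(g)(S)_{\bar k}$ with both sides descended from $k$, and by faithful flatness of $\bar k/k$ we conclude $S' = \Int(g)(S)$ as $k$-subgroups of $G$.

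I do not anticipate a genuine obstacle here; the only point requiring a small amount of care is the interplay between "subgroup generated by" in the scheme-theoretic sense and in the sense of closure of the abstract group $\langle\epsilon\rangle$, together with the descent argument in part (1) ensuring $\Int(g)(S)$ is $k$-rational. Both are handled by the observations that (i) $S$ is commutative because $\epsilon$ is a single semisimple element, so $\langle\epsilon\rangle$ is already an abstract subgroup whose closure is $S$, and (ii) forming the closure of a set of rational points is compatible with field extension and produces a subscheme defined over the base field.
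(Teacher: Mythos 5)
Your proposal is correct, but it proves the lemma by a genuinely different route than the paper. For part (1), the paper does not pass through the closure of $\langle\epsilon'\rangle$ over $\bar k$ and descend; instead it observes directly that $G_\epsilon = Z_G(S)$, so that the cocycle $g^{-1}\cdot{}^{\tau}g$ (which lies in $G_\epsilon(\bar k)$ because $\epsilon'$ is $k$-rational) centralizes $S$, and therefore $\Int(g)|_S$ is itself defined over $k$. That gives a little more than your argument: not just that $\Int(g)(S)$ descends to $k$ and coincides with $S'$, but that the map $\Int(g)\colon S\to S'$ is a $k$-isomorphism — a fact the paper uses elsewhere (e.g.\ Prop.\ \ref{prop:canonical_decomp_of_epsilon}). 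For part (2), the paper exploits semisimplicity in a concrete way: it embeds $\epsilon$ into a maximal $k$-torus $T$ and identifies $S$ explicitly as $\ker(T\to T')$ where $X^*(T')=\{\chi:\chi(\epsilon)=1\}$, a description that is manifestly stable under base change and from which the density of $\{\epsilon^n\}$ is immediate (a character kills $\{\epsilon^n\}_{n\in\N}$ iff it kills $\epsilon$). Your proof instead works with the abstract cyclic group $\langle\epsilon\rangle$, uses flatness of $k'/k$ to show that Zariski closure of a set of $k$-points commutes with field extension, and then invokes the standard lemma that a closed submonoid of an algebraic group over a field is a subgroup; this is more elementary in that it avoids the structure theory of multiplicative-type groups, but costs you the extra lemma on submonoids, whereas the paper's kernel description makes both claims of (2) transparent at once. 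Both arguments are sound; the paper's is shorter because it uses the full strength of the hypothesis that $\epsilon$ is semisimple, while yours isolates exactly the topological input.
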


\begin{proof} 
(1) As $G_{\epsilon}=Z_G(S)$, the map $\Int(g):S\rightarrow \Int(g)(S)$ is a $k$-isomorphism of $k$-groups, and thus $\Int(g)(S)$ is the $k$-subgroup of $G$ generated by $\epsilon'$.
(2) This follows from the following easy fact:
if one takes a $k$-torus $T\subset G$ containing $\epsilon$, the $k$-subgroup $S$ (of multiplicative type) of $T$ generated by $\epsilon$ is the kernel of the surjective homomorphism $T\rightarrow T'$ of $k$-tori, where $T'$ is defined by $X^{\ast}(T')=\{\chi\in X^{\ast}(T)\ |\ \chi(\epsilon)=1\}$.
\end{proof}

At this point, we give an application of the results obtained thus far, namely we establish non-emptiness of Newton strata for general parahoric levels, under the condition that $G_{\Qp}$ is quasi-split. To talk about the reduction of a Shimura variety at a prime, we need to choose an integral model, i.e. a flat model $\sS_{\mbfK}$ over $\cO_{E_{\wp}}$ with generic fiber being the canonical model $\Sh_{\mbfK}(G,X)_{E_{\wp}}$. For the following result, it is enough to fix an integral model over $\cO_{E_{\wp}}$ having the extension property that every $F$-point of $\Sh_{\mbfK}(G,X)$ for a finite extension $F$ of $E_{\wp}$ extends uniquely to $\sS_{\mbfK}$ over its local ring (for example, a normal integral model); see \cite{KisinPappas15} for a construction of such integral model.

\begin{thm} \label{thm:non-emptiness_of_NS}
Suppose that $G_{\Qp}$ is quasi-split and that $G^{\uc}_{\Qp}$ is a product $\prod_i \Res_{F_i/\Qp}G_i$ of simple groups each of which is the restriction of scalars $\Res_{F_i/\Qp}G_i$ of an absolutely simple group $G_i$ over a field $F_i$ such that $G_i$ splits over a tamely ramified extension of $F_i$. Let $\mbfK_p$ be a parahoric subgroup of $G(\Qp)$ and $\mbfK=\mbfK_p\mbfK^p$ for a compact open subgroup $\mbfK^p$ of $G(\A_f^p)$. 

(1) Then, for any $[b]\in B(G_{\Qp},\{\mu_X\}$) (\autoref{subsubsec:B(G,{mu})}), there exists a special Shimura sub-datum $(T,h\in\Hom(\dS,T_{\R})\cap X)$ such that the Newton homomorphism $\nu_{G_{\Qp}}([b])$ equals the $G(\mfk)$-conjugacy class of
\[\frac{1}{[K_{v_2}:\Qp]}\Nm_{K_{v_2}/\Qp}\mu_h\quad (\in X_{\ast}(T)_{\Q}),\]
where $K_{v_2}\subset\Qpb$ is any finite extension of $\Qp$ splitting $T$.

In particular, if $(G,X)$ is of Hodge type, for $g_f\in G(\A_f)$, the reduction in $\sS_{\mbfK}\otimes\Fpb$ of the special point $[h,g_f\cdot\mbfK]\in \Sh_{\mbfK}(G,X)(\Qb)$ has the $F$-isocrystal represented by
\[\Nm_{K_{v_2}/K_0}(\mu_h(\pi)),\]
where $K_0\subset K_{v_2}$ is the maximal unramified subextension and $\pi$ is a uniformizer of $K_{v_2}$.

(2) Suppose that $\mbfK_p$ is \emph{special maximal} parahoric. Moreover, assume that $G$ splits over a tamely ramified cyclic extension of $\Qp$ and is of classical Lie type. Then, one can choose a special Shimura datum $(T,h\in\Hom(\dS,T_{\R})\cap X)$ as in (1) such that furthermore the unique parahoric subgroup of $T(\Qp)$ is contained in $\mbfK_p$.

(3) Suppose that $(G,X)$ is a Shimura datum of Hodge type. Then the reduction $\sS_{\mbfK}(G,X)\otimes\Fpb$ has non-empty ordinary locus if and only if $\wp$ has absolute height one (i.e. $E(G,X)_{\wp}=\Q_p$). 
\end{thm}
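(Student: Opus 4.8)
\textbf{Proof plan for Theorem \ref{thm:non-emptiness_of_NS}.}

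\emph{Part (1).} The plan is to reduce the statement to a purely group-theoretic question about $B(G_{\Qp},\{\mu_X\})$ and the set of special Shimura sub-data, and then to apply Theorem \ref{thm:LR-Satz5.3} and Lemma \ref{lem:unramified_conj_of_special_morphism}. Given $[b]\in B(G_{\Qp},\{\mu_X\})$, I would first produce an admissible morphism $\phi:\fP\rightarrow\fG_G$ with $\mathrm{cls}(\phi(p)\circ\zeta_p)=[b]$: condition (3) of Definition \ref{defn:admissible_morphism} is satisfied precisely because $[b]\in B(G_{\Qp},\{\mu_X\})$, using Theorem A of \cite{He15} to pass between the non-emptiness of $X(\{\mu_X\},b)_{\mbfK_p}$ and the condition $[b]\in B(G_{\Qp},\{\mu_X\})$; conditions (1), (2) can be arranged by starting from a special admissible morphism and twisting appropriately, or more directly by constructing $\phi$ from the beginning as $i\circ\psi_{T_1,\mu_1}$ for some auxiliary torus $T_1$ and cocharacter $\mu_1$ in $\{\mu_X\}$ with $[b]_{T_1}$ matching (here the existence of such $\mu_1$ uses Lemma \ref{lem:LR-Lemma5.11} read backwards). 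Then by Theorem \ref{thm:LR-Satz5.3}, (1), $\phi$ is conjugate to a special admissible morphism $i\circ\psi_{T,\mu_h}$, and since $\mathrm{cls}$ is invariant under conjugation, $\mathrm{cls}(\psi_{T,\mu_h}(p)\circ\zeta_p)=[b]$ still. By Lemma \ref{lem:properties_of_psi_T,mu}, (2), $\psi_{T,\mu_h}(p)\circ\zeta_p$ is conjugate to $\xi_{-\mu_h}$, whose associated $\sigma$-conjugacy class is computed in Lemma \ref{lem:unramified_conj_of_special_morphism}: the relevant element is $\Nm_{K/K_0}(\mu_h(\pi^{-1}))$, and by Lemma \ref{lem:Newton_hom_attached_to_unramified_morphism} the Newton homomorphism is $-\frac{1}{n}\theta^{\Delta}$, which unwinds to $\frac{1}{[K_{v_2}:\Qp]}\Nm_{K_{v_2}/\Qp}\mu_h$; the sign works out because $\xi_{-\mu_h}$ carries the $-\mu_h$ while the Newton map introduces another sign. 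For the "in particular" clause, when $(G,X)$ is of Hodge type, the reduction of the CM point $[h,g_f\cdot\mbfK]$ has an $F$-isocrystal whose class in $B(G_{\Qp})$ is read off from the chosen integral model via the extension property; by the theory of CM lifting (the strong CM lifting theorem, \cite[Cor.2.2.5]{Kisin17}, in the hyperspecial case; in the parahoric case one uses the version attaching a Kottwitz triple to the reduction of a CM point, cf. the footnote after \cite[Cor. 2.3.1]{Kisin17}) this class is precisely $\mathrm{cls}(\psi_{T,\mu_h}(p)\circ\zeta_p)$, i.e. $[\Nm_{K_{v_2}/K_0}(\mu_h(\pi))]$.

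\emph{Part (2).} Here I would additionally invoke Theorem \ref{thm:LR-Satz5.3}, (2) (or rather Proposition \ref{prop:existence_of_elliptic_tori_in_special_parahorics} from the appendix, together with Lemma \ref{lem:specaial_parahoric_in_Levi}) to control the torus $T$: when $\mbfK_p$ is special maximal parahoric and $G$ splits over a tamely ramified cyclic extension and is of classical Lie type, one can choose the elliptic maximal torus through which the special admissible morphism factors so that its unique parahoric subgroup $T(\Qp)_1=\ker w_{T_{\Qp}}$ lands in $\mbfK_p$. Concretely, in the proof of Lemma \ref{lem:LR-Lemma5.2} this is exactly the torus $T'$ produced by Proposition \ref{prop:existence_of_elliptic_tori_in_special_parahorics}, whose parahoric subgroup is contained in $J'(\mfk)\cap\mbfKt_p$ and hence, after intersecting with $G(\Qp)$, in $\mbfK_p$. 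I would carry out the same construction as in Part (1) but starting the argument inside the special Levi $J=Z_{G_{\Qp}}(\nu_p)$ attached to the Newton cocharacter of $[b]$, then transferring to a torus with the good parahoric property and descending back to $G$; the cyclic hypothesis enters to guarantee, via the case analysis in Appendix \ref{sec:elliptic_tori_in_special_parahorics}, that such an elliptic torus with $T(\Qp)_1\subset\mbfK_p$ exists.

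\emph{Part (3).} The ordinary locus is non-empty iff the $\mu$-ordinary (maximal) $\sigma$-conjugacy class in $B(G_{\Qp},\{\mu_X\})$ is actually ordinary in the classical sense, which by standard computations (the $\mu$-ordinary class has Newton point equal to the Galois-average $\overline{\mu}$ of $\mu_X$) reduces to a condition on whether $\mu_X$ is already defined over $\Q_p$ after averaging, i.e. whether $\Nm_{K_{v_2}/\Qp}\mu_h$ is a scalar multiple of a $\Q_p$-rational cocharacter compatible with ordinariness; this in turn is governed by $[K_{v_2}:\Qp]$ being forced by the reflex field. By Part (1) applied to the $\mu$-ordinary $[b]$, the $F$-isocrystal of the corresponding CM point is $\Nm_{K_{v_2}/K_0}(\mu_h(\pi))$, and this is ordinary (all slopes in $\{0,1\}$ with the right multiplicities, equivalently the Newton cocharacter is minuscule of the ordinary type) precisely when $K_{v_2}/\Qp$ is unramified, which happens iff $E(G,X)_{\wp}=\Q_p$ since $E(G,X)_{\wp}$ is the field of definition of $\{\mu_X\}$ and its ramification is detected by the ramification of the torus $K_{v_2}$ needed to realize $\mu_h$ rationally. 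For the converse, if $\wp$ has absolute height one then $E(G,X)_{\wp}=\Q_p$, $\{\mu_X\}$ is defined over $\Q_p$, the $\mu$-ordinary class is ordinary, and a CM point reducing to it (which exists by Part (1) applied to this $[b]$, together with the strong CM lifting theorem identifying the reduction's isocrystal) lies in the ordinary locus; I would cite \cite[Thm. 4.3.1, Cor. 4.3.2]{Lee16} for the hyperspecial comparison and note that the present argument generalizes it.

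\textbf{Main obstacle.} The hard part will be Part (2): ensuring that the special torus $T$ can simultaneously be chosen elliptic over $\R$, have the prescribed Newton class at $p$, \emph{and} have $T(\Qp)_1\subset\mbfK_p$. The existence of elliptic maximal tori of a special parahoric with prescribed properties rests on the delicate Bruhat--Tits-theoretic input of Appendix \ref{sec:elliptic_tori_in_special_parahorics} (Proposition \ref{prop:existence_of_elliptic_tori_in_special_parahorics}), whose proof is a case-by-case analysis using the tame cyclic hypothesis; threading the Newton cocharacter $\nu_p$ of $[b]$ through this — i.e. working inside the Levi $J=Z_{G_{\Qp}}(\nu_p)$, applying the proposition there, and checking compatibility of all the normalizations (signs of $\mu_h$ versus $-\mu_h$, the factor $\frac1{[K_{v_2}:\Qp]}$, and the passage from $\mbfKt_p\subset G(\mfk)$ to $\mbfK_p\subset G(\Qp)$) — is where the bookkeeping is most likely to hide a subtlety. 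Part (3)'s converse direction also requires care in pinning down exactly which $\sigma$-conjugacy class is "the ordinary one" and verifying it is genuinely basic-complement/ordinary rather than merely $\mu$-ordinary when $E_{\wp}\neq\Q_p$.
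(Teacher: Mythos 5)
Your Part (1) route — build an admissible $\phi$ with $\mathrm{cls}(\phi(p)\circ\zeta_p)=[b]$ and then apply Theorem \ref{thm:LR-Satz5.3} — is genuinely different from the paper's proof, but it hides a circularity. The paper never passes through admissible morphisms: it realizes $\nu_b$ in a $\Q$-torus $T_0$ that is elliptic at $\infty$ and a second prime $l\neq p$ (using that $G_{\Qp}$ is quasi-split), applies Lemma \ref{lem:LR-Lemma5.11} to produce $\mu'\in X_*(T_0)\cap\{\mu_X\}$ with $\Nm_{K_{v_2}/\Qp}\mu'=[K_{v_2}:\Qp]\nu_b$, then applies the argument of Lemma \ref{lem:LR-Lemma5.12} — specifically its refinement producing a transfer that is $G(\Qp)$-conjugation — to move $\mu'$ onto a Hodge cocharacter $\mu_h$, and finishes by matching Newton and Kottwitz invariants to conclude $[b]=[b_T]$. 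Your "twist a special admissible morphism" option cannot work: twisting by a class in $Z^1(\Q,I_{\phi})$ leaves $\phi^{\Delta}$ unchanged, hence by Lemma \ref{lem:Newton_hom_attached_to_unramified_morphism} also the Newton point, so you cannot reach an arbitrary $[b]\in B(G_{\Qp},\{\mu_X\})$ from one starting class (these lie in distinct Newton fibers). Your alternative "construct $\phi=i\circ\psi_{T_1,\mu_1}$ directly" requires exhibiting $(T_1,\mu_1)$ realizing $[b]$ with the elliptic-over-$\R$ compatibility — i.e. precisely the content of (1) — and once one has that, the detour through Theorem \ref{thm:LR-Satz5.3} is redundant.

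For Part (2) there is a concrete missing step: weak approximation. Even if you run the construction inside the Levi $J=Z_{G_{\Qp}}(\nu_p)$ and get an elliptic torus $T'$ with $T'(\mfk)_1\subset J'(\mfk)\cap\mbfKt_p$, after the transfer of maximal tori (conjugation by $y\in G(\Qp)$ in Lemma \ref{lem:LR-Lemma5.12}) the parahoric of the resulting torus sits inside $g_p\mbfK_pg_p^{-1}$ for some $g_p\in G(\Qp)$, not in $\mbfK_p$ itself. To conjugate back while preserving the \emph{global} Shimura sub-datum you must find a rational $g_0\in G(\Q)\cap\mbfK_pg_p^{-1}$, and this is supplied by Sansuc's density of $G(\Q)$ in $G(\Qp)$. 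That is where the \emph{cyclic} hypothesis enters. You have instead attributed cyclicity to the case analysis in Appendix \ref{sec:elliptic_tori_in_special_parahorics}, which only requires tame ramification.

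For Part (3), the statement that the reduction "is ordinary precisely when $K_{v_2}/\Qp$ is unramified, which happens iff $E(G,X)_{\wp}=\Q_p$" is false: $K_{v_2}$ is any splitting field of $T$ and is usually ramified even when $E(G,X)_{\wp}=\Qp$. The paper reduces (3) to constructing a special sub-datum $(T,\{h\})$ such that $\mu_h$ lies in the closed Weyl chamber of a $\Qp$-rational Borel containing $T_{\Qp}$; this forces $E(T,h)_{\mathfrak{p}}=E(G,X)_{\wp}$, and it is this equality of local reflex fields — not any unramifiedness of $K_{v_2}$ — that drives the height-one characterization.
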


\begin{proof}
(1) We follow the strategy of our proof of the corresponding result in the hyperspecial case given in \cite{Lee16}, Thm. 4.1.1 and Thm. 4.3.1. Let $[b]\in B(G_{\Qp},\{\mu\}$). 
Since $G_{\Qp}$ is quasi-split, there exist a representative $b\in G(\mfk)$ of $[b]$ and a maximal torus $T_p$ of $G_{\Qp}$ such that the Newton homomorphism $\nu_b:\mathbb{D}\rightarrow G_{\Qpnr}$ is $\Qp$-rational and factors through $T_p$ \cite[Prop. 6.2]{Kottwitz86}. By the argument of Step 1 in the proof of \cite{Lee16}, Thm. 4.1.1, we may further assume that $T_p=(T_0)_{\Qp}$ for a maximal $\Q$-torus $T_0$ of $G$ such that $(T_0)_{\Qv}\subset G_{\Qv}$ is elliptic maximal for $v=\infty$ and some prime $v=l\neq p$. Then, Lemma \ref{lem:LR-Lemma5.11} tells us that there exists 
$\mu'\in X_{\ast}(T_0)\cap \{\mu\}$ such that the relation (\ref{eq:equality_on_the_kernel}) holds in $X_{\ast}(T_0)$:
\[\Nm_{K_{v_2}/\Qp}\mu'=[K_{v_2}:\Qp]\ \nu_b,\]
where $K$ is a finite Galois extension of $\Q$ splitting $T_0$ and $v_2$ is the place of $K$ induced by the pre-chosen embedding $\Qb\hra\Qpb$ (here, the sign is correct by Lemma \ref{lem:Newton_hom_attached_to_unramified_morphism}).
Next, by the argument of Step 2 in \textit{loc. cit.} (which corresponds to that of Lemma \ref{lem:LR-Lemma5.12} here), we can find a transfer of maximal torus $\Int u:T_0\hra G$ such that 
$\Int u (\mu')=\mu_h$ for some $h\in X\cap \Hom(\dS,T_{\R})$, where $T=\Int u(T_0)$ (again, be wary of the sign difference from \cite{Lee16}), and that $\Int u|_{(T_0)_{\Qpb}}=\Int y$ for some $y\in G(\Qp)$. By the latter property, for $(T,\mu_h, \Int u (b))$ we still have the equality
\[\Nm_{K_{v_2}/\Qp}\mu_h=[K_{v_2}:\Qp]\ \nu_{yb\sigma(y)^{-1}}\]
(here, $\Nm_{K_{v_2}/\Qp}$ is taken on $X_{\ast}(T)$). This proves the first statement of (1).
According to Lemma \ref{lem:unramified_conj_of_special_morphism} and \cite[Thm. 1.15]{RR96}, the element of $T(\mfk)$
\[b_T:=\Nm_{K_{v_2}/K_0}(\mu_h(\pi))\]
has the Newton homomorphism $\nu_{b_T}=\frac{1}{[K_{v_2}:\Qp]}\Nm_{K_{v_2}/\Qp}\mu_h=\nu_{yb\sigma(y)^{-1}}$.
As $\kappa_{T_{\Qp}}(b_T)=\mu^{\natural}\in X_{\ast}(T)_{\Gal(\Qpb/\Qp)}$ and $(\overline{\nu},\kappa):B(G_{\Qp})\rightarrow \mathcal{N}(G_{\Qp})\times\pi_1(G)_{\Gal(\Qpb/\Qp)}$ is injective \cite[4.13]{Kottwitz97}, we see the equality of isocrystals $[b]=[b_T]\in B(G_{\Qp})$. 
Given this, the second statement is proved in the same fashion as in the hyperspecial case, using 
\cite[Lem. 3.24]{Lee16}.

(2) Let $(T_1,h_1)$ be a special Shimura sub-datum produced in (1). Thanks to our additional assumptions and Prop. \ref{prop:existence_of_elliptic_tori_in_special_parahorics}, in its construction, we could have started with a maximal torus $T_p$ of $G_{\Qp}$ such that the unique parahoric subgroup of $T_p(\Qp)$ is contained in a $G(\Qp)$-conjugate of $\mbfK_p$. Then, also by the fine property of our methods (it uses only transfers of maximal tori which become conjugacy by $G(\Qp)$-elements), the torus $T_1$ produced in (1) can be assumed to further satisfy that the unique parahoric subgroup of $T_1(\Qp)$ is contained in $g_p\mbfK_pg_p^{-1}$ for some $g_p\in G(\Qp)$. As $G_{\Qp}$ splits over a cyclic extension of $\Qp$, $G(\Q)$ is dense in $G(\Qp)$ by a theorem of Sansuc \cite[Cor.3.5,(ii)]{Sansuc81}, thus there exists $g_0\in G(\Q)\cap \mbfK_p\cdot g_p^{-1}$. Then, one easily checks that the new special Shimura datum $(T,h):=\Int(g_0)(T_1,h_1)$ satisfies the required properties.

(3) Again. the proof is the same as that in the hyperspecial case given in \cite[Cor. 4.3.2]{Lee16}. In more detail, as was observed in \textit{loc. cit.}, it suffices to construct a special Shimura sub-datum $(T,\{h\})$ with the property that
there exists a $\Qp$-Borel subgroup $B$ of $G_{\Qp}$ containing $T_{\Qp}$ such that $\mu_h\in X_{\ast}(T)$ lies in the closed Weyl chamber determined by $(T_{\Qp},B)$. Indeed, then we have $E(T,h)_{\mathfrak{p}}=E(G,X)_{\wp}$, where $\mathfrak{p}$ and $\wp$ denote respectively the places of each reflex field induced by the given embedding $\Qb\hra\Qpb$. We remark that this is the property (ii) found in the proof of \textit{loc. cit.}, and for our conclusion one does not really need the property (i) from it.
But, since $G_{\Qp}$ is quasi-split, there exists a Borel subgroup $B'$ defined over $\Qp$. Moreover, by the same argument as was used in (1) (i.e. Step 1 in the proof of \cite[Thm. 4.1.1]{Lee16}), we may assume that $B$ contains $T'_{\Qp}$ for a maximal $\Q$-torus $T'$ of $G$ such that $T'_{\Qv}\subset G_{\Qv}$ is elliptic for $v=\infty$ and some prime $v=l\neq p$. Let $\mu'\in \{\mu_X\}\cap X_{\ast}(T')$ be the cocharacter lying in the closed Weyl chamber determined by $(T'_{\Qp},B')$. Then, the argument in (1) again produces a special Shimura sub-datum $(T,\{h\})$ such that $(T,\mu_h)=\Int y(T',\mu_{h'})$ for some $y\in G(\Qp)$, and $(T,\{h\})$ is the looked-for special Shimura sub-datum. Note that as we do not need the property (i) in the original proof of \cite[Thm. 4.1.1]{Lee16}, the condition in (2) on splitting of $G_{\Qp}$ is no longer necessary. 
\end{proof}

For more on the Newton stratification, we refer to the recent survey article \cite{Viehmann15}.


\section{Admissible pairs and Kottwitz triples}
 
We retain the same assumptions as in the previous section. 
  
\subsection{Criterion for an $\R$-elliptic rational element to arise from an admissible pair}

\begin{lem} \label{lem:invariance_of_(ast(gamma_0))_under_transfer_of_maximal_tori}
Let $\gamma_0\in G(\Q)$ be a semi-simple element. 

(1) Suppose $\gamma_0\in T(\Q)$ for a maximal torus $T\subset G$. Then condition $\ast(\epsilon)$ in \autoref{subsubsec:pre-Kottwitz_triple} holds for $\gamma_0$ if and only if it holds for $\gamma_0':=\Int g(\gamma_0)$ for a transfer of maximal torus $\Int g:T\hra G$.

(2) If $\gamma_0':=\Int g(\gamma_0)\in G(\Q)$ for some $g\in G(\Qb)$, for each place $v$ of $\Q$, the image of $\gamma_0$ in $G^{\ad}(\Ql)$ lies in a compact open subgroup of $G^{\ad}(\Ql)$ if and only if $\gamma_0'$ is so.
\end{lem}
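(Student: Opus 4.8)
\textbf{Proof plan for Lemma \ref{lem:invariance_of_(ast(gamma_0))_under_transfer_of_maximal_tori}.}

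The plan is to reduce both statements to purely local questions at $p$ (for (1)) and at each finite place $v$ (for (2)), where the transfer $\Int g$ either extends to a $\Qp$-rational (resp.\ $\Q_v$-rational) inner automorphism or at least behaves well with respect to the relevant cohomological/combinatorial invariants. For part (1), the key observation is that condition $\ast(\epsilon)$ is formulated entirely in terms of the group $H=Z_{G_{\Qp}}(A_\epsilon)$ (the centralizer of the maximal $\Qp$-split torus $A_\epsilon$ in the center of $Z_G(\gamma_0)_{\Qp}$), the Kottwitz map $w_H:H(\mfk)\to\pi_1(H)_I$, and the image $\underline{\mu}$ in $\pi_1(H)_I$ of a cocharacter $\mu$ lying in the fixed conjugacy class $\{\mu_X\}$. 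First I would note that a transfer of maximal torus $\Int g:T\hra G$ restricts to a $\Q$-isomorphism $Z(Z_G(\gamma_0))\isom Z(Z_G(\gamma_0'))$ (as already remarked in the text preceding Prop.\ \ref{prop:equivalence_to_special_adimssible_morphism}), hence carries $A_\epsilon$ to $A_{\epsilon'}$ and induces a $\Qp$-isomorphism $H\isom H'$ where $H'=Z_{G_{\Qp}}(A_{\epsilon'})$. The point is that although $\Int g$ need not be defined over $\Qp$ on all of $G$, its restriction to the maximal torus $T$ and to the Levi $H$ (whose center contains $A_\epsilon$) \emph{is} defined over $\Qp$ — indeed, over $\Q$ on $T$ by hypothesis, and one checks that conjugation carries the $\Qp$-structure of $H$ to that of $H'$ compatibly because $g$ normalizes the relevant tori up to a $\Qpb$-point of $T$. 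This $\Qp$-isomorphism $H\isom H'$ is compatible with the Kottwitz maps $w_H$, $w_{H'}$ by functoriality (\autoref{subsubsec:Kottwitz_hom}, the maps $w$ being functorial for group homomorphisms), so $w_H(\gamma_0)$ and $w_{H'}(\gamma_0')$ correspond; and it carries a cocharacter $\mu\in X_\ast(H)$ in the class $\{\mu_X\}$ (conjugacy classes are preserved since $\Int g$ is an inner automorphism of $G_{\Qpb}$) to one $\mu'$ in the same class, with matching images in $\pi_1$. Hence the identity $w_H(\gamma_0)=\sum_{i=1}^n\sigma^{i-1}\underline{\mu}$ holds for $\gamma_0$ at level $n$ iff the analogous identity holds for $\gamma_0'$ at the same level $n$, which is exactly $\ast(\epsilon')$.

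For part (2), the statement is the more elementary fact that "$\gamma_0$ lies in a compact open subgroup of $G^{\ad}(\Q_v)$" is insensitive to replacing $\gamma_0$ by a geometrically conjugate rational element $\gamma_0'$. First I would recall the standard characterization: a semisimple element $x$ of a reductive group $G'$ over a non-archimedean local field $F$ has image in a compact open subgroup of $G'^{\ad}(F)$ if and only if the eigenvalues of $x$ acting in any (equivalently, in one faithful) representation are $v$-adic units, equivalently iff $|\chi(x)|_v=1$ for every $F$-rational character $\chi$ of every maximal torus through $x$ — a condition on the geometric conjugacy class of $x$ together with its $F$-rationality, which is unchanged under $\Int g$ for $g\in G(\bar F)$. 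Concretely, I would pick a faithful representation $G\hra \GL(V)$ defined over $\Q$; then $\gamma_0$ and $\gamma_0'=\Int g(\gamma_0)$ have the same characteristic polynomial over $\Qb$, hence the same over $\Q$, hence over $\Q_v$; membership of the image in a compact subgroup of $G^{\ad}(\Q_v)$ is equivalent to all roots of this characteristic polynomial being $v$-adic units (this equivalence for semisimple elements follows since $G^{\ad}(\Q_v)$-boundedness of the image is detected on the adjoint representation, whose character is a polynomial in the characteristic polynomial coefficients of any faithful representation). Since the characteristic polynomial is the same, the two conditions coincide. I would also remark (as flagged in Remark \ref{rem:admissible_pair}, (3)) that this is the fact used there.

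The main obstacle I anticipate is in part (1): verifying carefully that the $\Qb$-rational conjugation $\Int g$ actually descends to a \emph{$\Qp$-rational} isomorphism $H\isom H'$ of the Levi subgroups, and that under this descent the Kottwitz maps and the chosen cocharacters match up with the correct normalizations and signs. One has to be attentive to the fact that $\Int g$ is only assumed $\Q$-rational on $T$, not on $G$; the argument should run through the observation that $H$ is generated by $T$ together with root subgroups for roots vanishing on $A_\epsilon$, that $g$ conjugates $T$ to $T'=\Int g(T)\subset H'$ compatibly with $\Q$-structures, and that the $\Qp$-structure on a Levi is determined by that on a maximal torus inside it together with the (Galois-stable) set of roots — so the descent is automatic once one knows $A_{\epsilon}\mapsto A_{\epsilon'}$ over $\Q$. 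A clean way to organize this is to invoke that the assignment of $\pi_1(H)_I$ and the element $\underline{\mu}\in\pi_1(H)_I$ depends only on the based root datum of $H$ with its Galois action, which transports isomorphically; then $\ast(\epsilon)$ becomes an identity in an abelian group attached functorially to the $\Qp$-isomorphism class of the pair $(H,\gamma_0)$ equipped with $\{\mu_X\}$, making its invariance under transfer formal. Once this local descent is established, both parts of the lemma follow without further difficulty.
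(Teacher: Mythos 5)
Part (2) of your proposal is fine: your characteristic-polynomial argument is a slight variant of the paper's, which instead observes that $\Int g$ restricts to a $\Q$-isomorphism between the $\Q$-subgroups of $G$ generated by $\gamma_0$ and $\gamma_0'$, and then compares the images in $G^{\ad}$. Both are correct and roughly equally short.

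Part (1), however, has a real gap at exactly the point you flag. You assert that $\Int g$ restricts to a \emph{$\Qp$-rational} isomorphism $H\isom H'$, and justify this by saying the descent is automatic once $A_\epsilon\mapsto A_{\epsilon'}$ is defined over $\Q$. That is not enough. Since $\Int g|_T$ is $\Q$-rational one only knows $g^{-1}\cdot{}^{\tau}g\in T(\Qpb)$ for $\tau\in\Gamma_p$; to make $\Int g|_H$ commute with the Galois action on $H_{\Qpb}$ one would need $g^{-1}\cdot{}^{\tau}g$ to centralize all of $H$, i.e.\ to lie in $Z(H)(\Qpb)$, and $Z(H)\subsetneq T$ in general. (For instance if $H\supsetneq T$ and $g^{-1}\cdot{}^{\tau}g$ is a regular element of $T$, conjugation by it is a nontrivial inner automorphism of $H$.) So $\Int g|_H$ need not descend, and the ``based root datum with Galois action'' reformulation does not rescue this either, because $w_H(\gamma_0)$ is the value of a map on the actual group element, not an invariant of the pointed root datum: one still needs an honest group homomorphism over the relevant field to transport it.

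The paper's fix is different and is the key step you should borrow. Since $\Int g|_{T_{\Qp}}$ is defined over $\Qp$, the cochain $\tau\mapsto g^{-1}\cdot{}^{\tau}g$ is $T(\Qpb)$-valued; restricting to $\Gal(\Qpb/\Qpnr)$ and invoking Steinberg's theorem $H^1(\Qpnr,T)=0$ produces $g_p\in G(\Qpnr)$ with $\Int(g_p)|_{T_{\Qp}}=\Int(g)|_{T_{\Qp}}$. Now $\Int(g_p)$ is conjugation by an honest element of $G(\mfk)$, hence an $\mfk$-isomorphism $H_{\mfk}\isom H'_{\mfk}$ carrying $(\gamma_0,\mu)$ to $(\gamma_0',\mu')$, and the Kottwitz map $w$ is functorial for such $\mfk$-homomorphisms. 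One then checks that the induced map $\pi_1(H)_I\to\pi_1(H')_I$ intertwines the Frobenius actions because $g_p^{-1}\sigma(g_p)\in T(\Qpnr)$ (so $\Int(\sigma(g_p))$ and $\Int(g_p)$ differ by an inner automorphism of $H'$, which acts trivially on $\pi_1$). After also rewriting the right side of $\ast(\epsilon)$ via the identity $w_H(\Nm_{K/K_0}(\mu(\pi)))=\underline{\mu}$ from the commutativity of Kottwitz's diagram (7.3.1), the invariance becomes the asserted transport. I'd recommend replacing your descent claim with this Steinberg step.
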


\begin{proof}
(1) Let $H$ be the centralizer of the maximal $\Qp$-split torus in the center of $(G_{\epsilon})_{\Qp}$ and $H'$ the similarly defined group for $\gamma_0'$. Choose $\mu\in X_{\ast}(T)\cap \{\mu_X\}$ which is conjugate under $H$ to a cocharacter satisfying condition $\ast(\epsilon)$.
Let $K$ be a finite Galois extension of $\Qp$ over which $\mu$ is defined, $K_0\subset K$ its maximal unramified sub-extension, and $\pi$ a uniformizer of $K$.
In view of the equality 
\[w_{H}(\Nm_{K/K_0}(\mu(\pi)))=\underline{\mu},\]
where $\underline{\mu}$ is the image of $\mu$ in $\pi_1(H)_{\Gal(\Qpb/\Qp)}$ (commutativity of diagram (7.3.1) of \cite{Kottwitz97}), condition $\ast(\epsilon)$ holds for $\gamma_0$ if and only if
\begin{equation} \label{eq:ast(gamma_0)}
\gamma_0\cdot\Nm_{K/K_0}(\mu(\pi))^{-1}\in \ker(w_H)\cap T(\mfk).
\end{equation}
By the Steinberg's theorem $H^1(\Qpnr,T)=0$, we may find $g_p\in G(\Qpnr)$ such that $\Int(g)|_{T_{\Qp}}=\Int(g_p)|_{T_{\Qp}}$.
Hence, it follows from funtoriality (with respect to $\Int(g_p):H\rightarrow H':=\Int(g_p)(H)$) of the functor $w_H$ \cite[$\S$7]{Kottwitz97} that (\ref{eq:ast(gamma_0)}) holds if and only if the same condition holds for $(\gamma_0',T',\mu')=\Int(g_p)(\gamma_0,T,\mu)$.

(2) Let $P$ and $P'$ be the $\Q$-subgroups of $G$ generated by $\gamma_0$, $\gamma_0'$, respectively.
Then, the $\Qb$-isomorphism $\Int g:G_{\Qb}\isom G_{\Qb}$ restricts to a $\Q$-isomorphism $P\isom P'$, thus if the image of $\gamma_0$ in $G^{\ad}(\Ql)$ lies in a compact open subgroup of $G^{\ad}(\Ql)$, then as it also lies in a compact open subgroup of $Q(\Ql)$, where $Q$ is the image of $P$ in $G^{\ad}$, the same property holds for $\gamma_0'$.
\end{proof}

\begin{thm} \label{thm:LR-Satz5.21}
Retain the same assumptions from Thm. \ref{thm:LR-Satz5.3}.
Also assume that $G$ is of classical Lie type. Let $\mbfK_p$ be a special maximal parahoric subgroup of $G(\Qp)$ and $\gamma_0$ an element of $G(\Q)$ that is $\R$-elliptic. 

(1)  If $\gamma_0$ satisfies condition $\ast(\epsilon)$ of \autoref{subsubsec:pre-Kottwitz_triple}, there exists a \emph{special} admissible pair $(\phi,\epsilon)$ (i.e. nested in a special Shimura sub-datum) such that $\epsilon$ is stably conjugate to $\gamma_0$ (in fact, $\epsilon$ is the image of $\gamma_0$ under a transfer of a maximal torus $T_0$ containing $\gamma_0$). For some $t\in\N$, the admissible pair $(\phi,\gamma_0^t)$ is also $\mbfK_p$-effective (cf. Remark \ref{rem:admissible_pair}).

Moreover, if $H$ is the centralizer of the maximal $\Qp$-split torus in the center of $(G_{\epsilon})_{\Qp}$, the following property holds:

$(\heartsuit)$: there exist an unramified $H(\Qpb)$-conjugate $\xi_p'=\Int(g_p)\circ\xi_p\ (g_p\in H(\Qpb))$ of $\xi_p:=\phi(p)\circ\zeta_p$ and $x\in H(\mfk)$ such that $x(\epsilon'^{-1}\cdot \xi_p'(s_{\tilde{\sigma}})^n)x^{-1}=\tilde{\sigma}^n$, where $\epsilon':=\Int(g_p)(\epsilon)$ and $\tilde{\sigma}\in\Gal(\Qpb/\Qp)$ is any fixed lifting of the Frobenius automorphism $\sigma$.

(2) If there exists $\delta\in G(L_n)$ (with $[\kappa(\wp):\Fp]|n$) such that $\gamma_0$ and $\Nm_n(\delta)$ are $G(\mfk)$-conjugate 
and the set 
\begin{equation} \label{eq:Y_p(delta)}
Y_p(\delta):=\left\{\ x\in G(\mfk)/\mbfKt_p\ \vert \ \sigma^nx=x,\ \mathrm{inv}_{\mbfKt_p}(x,\delta\sigma x)\in \Adm_{\mbfKt_p}(\{\mu_X\}) \ \right\}
\end{equation}
is non-empty, condition $\ast(\epsilon)$ holds for the stable conjugacy class of $\gamma_0$ with level $n$.
\end{thm}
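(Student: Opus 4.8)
\textbf{Proof proposal for Theorem \ref{thm:LR-Satz5.21}, (2).}

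The plan is to reduce the statement to part (1) by producing, out of the data $(\gamma_0,\delta)$ with $Y_p(\delta)\neq\emptyset$, an admissible pair $(\phi,\epsilon)$ of level $n$ with $\epsilon$ stably conjugate to $\gamma_0$, and then to read off condition $\ast(\epsilon)$ from the admissibility of $\phi$ together with the Kottwitz-homomorphism computation already carried out in Lemma \ref{lem:unramified_conj_of_special_morphism}. Concretely, first I would pick $c\in G(\mfk)$ with $c\gamma_0 c^{-1}=\Nm_n\delta$ and set $b:=c^{-1}\delta\sigma(c)\in G_{\gamma_0}(\mfk)$; this is the reverse of the construction in Lemma \ref{lem:delta_from_b&gamma_0}, and the identity $\gamma_0=(b\sigma)^n\cdot c^{-1}\sigma^n c$ shows that $b\sigma$ together with the appropriate power of $\sigma$ have $\gamma_0$ as a ``twisted $n$-th power''. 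Since $\gamma_0$ is $\R$-elliptic, by Theorem \ref{thm:LR-Satz5.3}, (1) (or rather its constituents, Prop.~\ref{prop:existence_of_admissible_morphism_factoring_thru_given_maximal_torus} and Prop.~\ref{prop:equivalence_to_special_adimssible_morphism}) one hopes to find an admissible morphism $\phi$ whose associated $\sigma$-conjugacy class $\mathrm{cls}(\phi(p)\circ\zeta_p)$ is exactly $[\delta]\in B(G_{\Qp})$; the hypothesis $Y_p(\delta)\neq\emptyset$ is precisely what is needed to verify condition (3) of Def.~\ref{defn:admissible_morphism} for this $\phi$ (after translating $Y_p(\delta)$ into the affine Deligne--Lusztig variety $X(\{\mu_X\},\delta)_{\mbfK_p}$ via the dictionary of \autoref{subsec:Langalnds-Rapoport conjeture}, noting $\sigma^n x=x$ is automatic for $n$ large). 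The element $\epsilon$ is then taken to be a rational representative of the stable conjugacy class of $\gamma_0$ inside a maximal $\Q$-torus that is $\R$-elliptic, which exists by the argument of Lemma \ref{lem:LR-Satz5.23} (Steinberg/Kottwitz, \cite[Thm.~4.7]{Kottwitz82}).

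Next, having produced the special admissible pair $(\phi,\epsilon)$ with $\epsilon$ stably conjugate to $\gamma_0$, I would invoke part (1) of this same theorem applied to $\epsilon$: part (1) gives condition $\ast(\epsilon)$ for $\epsilon$ — but a priori with \emph{some} level, not necessarily the prescribed $n$. So the work is to control the level. Here the key input is Lemma \ref{lem:invariance_of_(ast(gamma_0))_under_transfer_of_maximal_tori}, (1), which says $\ast(\epsilon)$ is insensitive to transfers of maximal tori, so it is harmless that $\epsilon$ differs from $\gamma_0$ by such a transfer; and the equality
\[
w_H(\Nm_{K/K_0}(\mu(\pi^{-1})))=-\underline{\mu}
\]
from Lemma \ref{lem:unramified_conj_of_special_morphism}, combined with Lemma \ref{lem:properties_of_psi_T,mu}, (1) and Lemma \ref{lem:Newton_hom_attached_to_unramified_morphism}, lets me compute $w_H(\gamma_0)$ directly from $b=c^{-1}\delta\sigma(c)$ and the cocharacter $\mu\in X_{\ast}(T)\cap\{\mu_X\}$ supplied by Lemma \ref{lem:LR-Lemma5.11} (applied to $T_{\Qp}$, $b$, $\{\mu_X\}$, using $Y_p(\delta)\neq\emptyset$). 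Tracking the defining property $\delta\sigma=c(b\sigma)c^{-1}$ and the relation $\Nm_n b=\gamma_0\cdot c^{-1}\sigma^n(c)$, together with the fact that $w_H$ kills the image of $G^{\uc}(\mfk)$ and is compatible with norm maps, should give exactly $w_H(\gamma_0)=\sum_{i=1}^n\sigma^{i-1}\underline{\mu}$ with the level equal to $n$ (the appearance of $n$ being forced by $\delta\in G(L_n)$ and $\Nm_n\delta$).

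The main obstacle I anticipate is the following subtlety already flagged in Remark \ref{rem:Kottwitz_triples}, (3)--(4): the element $c$ above lies only in $G(\mfk)$, not in $G(\Qpnr)$, so $\gamma_0$ and $\Nm_n\delta$ are w-stably conjugate but perhaps not stably conjugate in the usual sense, and one does not directly know that $\gamma_0$ is the \emph{stable norm} of $\delta$. Consequently the $\sigma$-conjugacy class $[b]_{G_{\gamma_0}}$ is only well-defined in $B(G_{\gamma_0})$, not canonically in $B(I_0)$, and condition $\ast(\delta)$ (i.e.\ $\kappa_{G_{\Qp}}([\delta])=\mu^\natural$) has to be checked separately — this is where \cite[Thm.~A]{He15} and non-emptiness of $Y_p(\delta)$ (equivalently of $X(\{\mu_X\},\delta)_{\mbfK_p}$) enter, exactly as in the proof of Prop.~\ref{prop:Kottwitz_triple}. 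Once $\ast(\delta)$ is in hand, the cohomological bookkeeping of \autoref{subsubsec:w-stable_sigma-conjugacy} and Prop.~\ref{prop:B(gamma_0)=D(I_0,G;Qp)} guarantees that the pair $(\gamma_0,\delta)$ genuinely underlies a Kottwitz triple, and hence that the admissible pair $(\phi,\epsilon)$ built above can be arranged to give rise to it; applying part (1) to this $\epsilon$ then yields $\ast(\epsilon)$ for the stable conjugacy class of $\gamma_0$ with the correct level $n$. I expect the level bookkeeping — showing the level is exactly $n$ and not merely a multiple — to require the most care, and the cleanest route is to keep everything inside the torus $T$ of the special pair, where $w_T$ is explicit, rather than working in $G$ directly.
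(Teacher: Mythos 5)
The central flaw is a logical reversal. You propose to first build an admissible pair $(\phi,\epsilon)$ from $(\gamma_0,\delta)$ and then ``invoke part (1) of this same theorem applied to $\epsilon$'' to obtain $\ast(\epsilon)$. But part (1) runs in the opposite direction: it takes $\ast(\epsilon)$ as its \emph{hypothesis} and produces the admissible pair as its \emph{conclusion}. What you actually need is the converse of (1) — ``admissible pair plus a non-emptiness hypothesis implies $\ast(\epsilon)$'' — and the paper explicitly remarks (Remark~\ref{rem:equality_of_two_ADLVs}) that this converse is a consequence of part (2), via the dictionary relating $X_p(\phi,\epsilon)$ to $Y_p(\delta)$. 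So the plan is circular: you would be proving (2) by quoting a statement that the paper only establishes by means of (2). The detour through an admissible morphism $\phi$ is also unnecessary overhead: the statement of (2) does not require $\ast(\delta)$ or a Kottwitz triple, only the two hypotheses $c\gamma_0c^{-1}=\Nm_n\delta$ for some $c\in G(\mfk)$ and $Y_p(\delta)\neq\emptyset$, so the cohomological bookkeeping of \autoref{subsubsec:w-stable_sigma-conjugacy} and Prop.~\ref{prop:B(gamma_0)=D(I_0,G;Qp)} is not needed here.

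What the paper actually does is a direct computation of $w_H(\gamma_0)$, reusing — and this part you correctly anticipated — the machinery of Lemma~\ref{lem:LR-Lemma5.11}. Concretely: starting from $Y_p(\delta)\neq\emptyset$, Greenberg approximation produces $c_1\in G(\mfk)$ with $\delta':=c_1b\sigma(c_1^{-1})$ satisfying both $c_1\gamma_0c_1^{-1}=\Nm_n\delta'$ and $\mbfKt_p\delta'\mbfKt_p\in\Adm_{\mbfKt_p}(\{\mu_X\})$; then the Iwasawa decomposition $G(\mfk)=N_Q(\mfk)H(\mfk)\mbfKt_p$ with respect to the Levi $H$ moves everything into $H$, yielding an element $m\in H(\mfk)$ for which $\Nm_n(m^{-1}b\sigma(m))^{-1}\cdot m^{-1}\gamma_0m$ lands in $H(\mfk)\cap(\mbfKt_p\cdot N_Q(\mfk))$ while the Bruhat-order estimates of Lemma~\ref{lem:LR-Lemma5.11} steps (2)--(3) give $w_H(m^{-1}b\sigma(m))=\underline{w\mu'}$ with $\mu'\in X_{\ast}(T')\cap\{\mu_X\}$. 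The genuinely new technical input, which your sketch does not isolate, is the vanishing
\[ w_H\bigl(H(\mfk)\cap(\mbfKt_p\cdot N_Q(\mfk))\bigr)=0, \]
proved via $B'(\mfk)\cap\mbfKt_p=(T'(\mfk)\cap\mbfKt_p)(N_{B'}(\mfk)\cap\mbfKt_p)$ and the fact that $H(\mfk)\cap\mbfKt_p$ is special maximal parahoric in $H(\mfk)$ (Lemma~\ref{lem:specaial_parahoric_in_Levi}). Putting these together gives $w_H(\gamma_0)=\sum_{i=1}^n\sigma^{i-1}\underline{w\mu'}$ with the level $n$ coming for free from $\delta\in G(L_n)$ — precisely the level control that, as you yourself flag, your route does not supply. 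Your instinct to ``keep everything inside a torus where $w_T$ is explicit'' is understandable but wrong for (2): because $c$ only lies in $G(\mfk)$, there is no torus $T$ with $\gamma_0,b\in T(\mfk)$ simultaneously available, and the semi-standard Levi $H$ (not a torus) is the correct ambient group for the $w_H$-computation.
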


In the original set-up of \cite{LR87}, where $\mbfK_p$ is hyperspecial and $G^{\der}=G^{\uc}$, statement (1) is Satz 5.21 there (recall that in such case, condition $\ast(\epsilon)$ also recovers the original condition).
The statement (2) is due to us (even in the set-up of \cite{LR87}). This is a key to the proof of the aforementioned effectivity criterion of Kottwitz triple which was the missing ingredient in the arguments in \cite{LR87} deducing (the constant coefficient case of) the Kottwitz formula (Thm. \ref{thm_intro:Kottwitz_formula}) from the Langlands-Rapoport conjecture (Conj. \ref{conj:Langlands-Rapoport_conjecture_ver1}).

Before entering into the proof, we discuss an explicit expression of the Frobenius automorphism $\Phi=F^n$ attached to a special admissible morphism. Let $\phi=i\circ\psi_{T,\mu_h}$ be a special admissible morphism, where $(T,h)$ is a special Shimura sub-datum and $i:\fG_T\rightarrow\fG_G$ is as usual the canonical morphism induced by the inclusion $T\hra G$. Put $\xi_p:=\phi(p)\circ\zeta_p$; so $\xi_p$ and $\xi_{-\mu_h}$ are conjugate under $T(\Qpb)$ (and in particular, $\xi_p^{\Delta}=\xi_{-\mu_h}^{\Delta}$). Now assume the following condition:

($\dagger$) \emph{$\Nm_{K/\Qp}\mu_h$ maps into the center of a semi-standard $\Qp$-Levi subgroup $H$ of $G_{\Qp}$ containing $T_{\Qp}$}, 

where $K$ is a finite Galois extension of $\Qp$ splitting $T$; later, for $H$, we will take the centralizer of the maximal $\Qp$-split torus in the center of $(G_{\epsilon})_{\Qp}$ for some admissible pair $(\phi,\epsilon)$ well-located in $T$ (cf. condition $\ast(\epsilon)$). Next, suppose given an \emph{elliptic} maximal $\Qp$-torus $T'$ of $H$. We choose $j\in H(\Qpb)$ with $T'=\Int (j)(T_{\Qp})$, and set
\[ \mu':=\Int (j)(\mu_h)\ \ \in X_{\ast}(T').\]
We take $K$ to be big enough so that $K$ splits $T'$ as well. Let $\pi$ be a uniformizer of $K$ and $K_0$ the maximal subfield of $K$ unramified over $\Qp$. If $K_1$ is the composite of $K$ and $L_s$ with $s=[K:\Qp]$ and $\xi_{-\mu'}^{K_1}$ denotes the pull-back of $\xi_{-\mu'}^{K}$ to $\fG_p^{K_1}$, by Lemma \ref{lem:unramified_conj_of_special_morphism}, there exists $t_p\in T'(\Qpb)$ such that $\Int(t_p)(\xi_{-\mu'}^{K_1})$ is an unramified morphism mapping into $\fG_{T'}$ and as such also factors through $\fG_p^{L_s}$. Moreover, we can choose $t_p$ further such that if $\xi_p':\fG_p^{L_s}\rightarrow \fG_{T'}$ denotes the induced (unramified) morphism of Galois $\Qpb/\Qp$-gerbs,
\begin{equation} \label{eq:xi_p'}
F=\xi_p'(s_{\sigma}^{L_s})=\Nm_{K/K_0}(\mu'(\pi)) \sigma.
\end{equation}
From now on, we write $\xi_{-\mu_h}$, $\xi_{-\mu'}$ for $\xi_{-\mu_h}^{K}$, $\xi_{-\mu'}^{K}$, respectively. 
In this set-up, we have the following facts:

\begin{lem} \label{lem:Phi_for_special_morphism}
(1) The following two $\Qp$-rational cocharacters of $H$ are equal:
\[ \Nm_{K/\Qp}\mu'=\Nm_{K/\Qp}\mu_h \]
We set $\nu_p':=-\Nm_{K/\Qp}\mu'\in X_{\ast}(T')^{\Gal(\Qpb/\Qp)}$.

(2) Let $[K:K_0]=e_K$, $[K_0:\Qp]=f_K$. For any $j\in \N$ divisible by $[K:\Qp]=e_Kf_K$, we have 
\[ F^j=(p^{-\nu_p'}\cdot u_0)^{\frac{j}{[K:\Qp]}}\sigma^{j} \]
for some $u_0\in T'(\Qp)_1(:=\ker (w_{T'})\cap T'(\Qp))$. In particular, $b_j:=F^j\sigma^{-j}\in T'(\Qp)$.
Moreover, in the case $K=K_0$, if we take $\pi=p$, we have $u_0=1$.
\end{lem}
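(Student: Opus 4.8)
\textbf{Proof plan for Lemma \ref{lem:Phi_for_special_morphism}.}

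The plan is to reduce everything to explicit computations with the Kottwitz map $w_{T'}$ and the norm cocharacters, exploiting the fact that $T'$ is elliptic in $H$ so that any $\Qp$-rational cocharacter of $Z(H)$ is killed after passing to the split part only through its image in $\pi_1(H)$.

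For part (1), first I would observe that $\mu'=\Int(j)(\mu_h)$ with $j\in H(\Qpb)$, so $\mu'$ and $\mu_h$ are $H(\Qpb)$-conjugate cocharacters of $H$. The cocharacter $\Nm_{K/\Qp}\mu_h = \sum_{\tau\in\Gal(K/\Qp)}\tau\mu_h$ maps into $Z(H)$ by hypothesis ($\dagger$), and likewise $\Nm_{K/\Qp}\mu'$ is $\Qp$-rational; since $\mu_h$ and $\mu'$ are $H(\Qpb)$-conjugate, their composites with the projection $H\to H^{\ab}=H/H^{\der}$ agree, hence $\Nm_{K/\Qp}\mu_h$ and $\Nm_{K/\Qp}\mu'$ have equal images in $X_{\ast}(H^{\ab})$. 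But both factor through the center $Z(H)$, and since $T'$ is elliptic in $H$, the map $Z(H)\to H^{\ab}$ is an isogeny (equivalently $Z(H)^{\circ}\to H^{\ab}$ has finite kernel), so on $\Qp$-rational cocharacters of $Z(H)$ this map is injective up to the torsion-free quotient — and both sides live in the torsion-free group $X_{\ast}(T')$. More directly: the argument already used in the proof of Lemma \ref{lem:LR-Lemma5.2} (around equation (\ref{eqn:equality_of_two_cochar})) applies verbatim here, checking equality after composition with $H\to H^{\ab}$. This gives $\Nm_{K/\Qp}\mu' = \Nm_{K/\Qp}\mu_h$ and defines $\nu_p'$.

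For part (2), I would start from (\ref{eq:xi_p'}): $F = \Nm_{K/K_0}(\mu'(\pi))\,\sigma$ as an element of $T'(\mfk)\rtimes\langle\sigma\rangle$. Writing $\beta:=\Nm_{K/K_0}(\mu'(\pi))\in T'(K_0)\subset T'(\Qpnr)$, for $j$ divisible by $[K:\Qp]=e_Kf_K$ one computes $F^j = \big(\beta\cdot\sigma(\beta)\cdots\sigma^{j-1}(\beta)\big)\sigma^{j} = \Nm_j(\beta)\,\sigma^j$, and since $\beta\in T'(K_0)$ with $K_0/\Qp$ of degree $f_K$, the norm $\Nm_j(\beta) = \big(\Nm_{f_K}(\beta)\big)^{j/f_K}$ where $\Nm_{f_K}(\beta) = \Nm_{K_0/\Qp}(\beta) = \Nm_{K/\Qp}(\mu'(\pi))\in T'(\Qp)$. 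So $b_j = F^j\sigma^{-j} = \big(\Nm_{K/\Qp}(\mu'(\pi))\big)^{j/f_K}\in T'(\Qp)$, establishing the last sentence of (2) once the precise form is matched. To get the asserted shape $(p^{-\nu_p'}u_0)^{j/[K:\Qp]}$, I would compute $w_{T'}$ of $\Nm_{K/\Qp}(\mu'(\pi))$: by the commutativity of diagram (7.3.1) of \cite{Kottwitz97} — exactly the computation carried out in Lemma \ref{lem:unramified_conj_of_special_morphism}, see (\ref{eqn:comparison_of_two_norms}) and the paragraph around it — one has $w_{T'}\big(\Nm_{K/\Qp}(\mu'(\pi))^{e_K}\big) = w_{T'}\big(\Nm_{K/\Qp}(\mu')(p)\big) = e_K\cdot\underline{\nu_p'}^{-1}\cdot(\text{sign})$, so $\Nm_{K/\Qp}(\mu'(\pi))^{e_K}$ and $p^{-\nu_p'}$ have the same image under $w_{T'}$, hence differ by an element $u_0\in T'(\Qp)_1=\ker(w_{T'})\cap T'(\Qp)$; thus $\Nm_{K/\Qp}(\mu'(\pi)) = (p^{-\nu_p'}u_0)^{1/e_K}$ in the appropriate sense (after raising to the $e_K$-th power) and $b_j = \big(\Nm_{K/\Qp}(\mu'(\pi))\big)^{j/f_K} = (p^{-\nu_p'}u_0)^{j/(e_Kf_K)} = (p^{-\nu_p'}u_0)^{j/[K:\Qp]}$, giving $F^j = (p^{-\nu_p'}u_0)^{j/[K:\Qp]}\sigma^j$ as claimed. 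In the unramified case $K=K_0$ one has $e_K=1$, and taking $\pi=p$ makes $\Nm_{K/K_0}(\mu'(\pi)) = \mu'(p)$, so $\Nm_{K/\Qp}(\mu'(\pi)) = \Nm_{K/\Qp}(\mu')(p) = p^{-\nu_p'}$ exactly, forcing $u_0=1$.

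The main obstacle I anticipate is bookkeeping rather than conceptual: keeping the normalization of $w_{T'}$, the sign in $\nu_p' = -\Nm_{K/\Qp}\mu'$, and the distinction between $p^{-\nu_p'}$ (the value at $p$ of the $\Qp$-rational cocharacter $-\Nm_{K/\Qp}\mu'$) versus $\Nm_{K/K_0}(\mu'(\pi))$ (which involves the ramified uniformizer $\pi$) all consistent, and verifying that the discrepancy $u_0$ indeed lands in $T'(\Qp)_1$ and not merely in $\ker(w_{T'_{\mfk}})\cap T'(\Qp)$ — but these are precisely the points settled by the diagram (7.3.1) of \cite{Kottwitz97} and the computation of (\ref{eqn:comparison_of_two_norms}), so I would cite those and only spell out the degree-counting for the iterated norm.
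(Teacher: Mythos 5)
Your proof of (1) is the same as the paper's: check equality after projection to $H^{\ab}$ (using $H(\Qpb)$-conjugacy of $\mu'$ and $\mu_h$), and use that both norms factor through $Z(H)$ via the hypothesis $(\dagger)$ plus ellipticity of $T'$.

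For (2) you take a genuinely different route, and it does work, but the paper's is more direct. You compute $b_j=(\Nm_{K/\Qp}(\mu'(\pi)))^{j/f_K}$, which is correct, and then characterize $u_0$ indirectly by showing $\Nm_{K/\Qp}(\mu'(\pi))^{e_K}$ and $p^{-\nu_p'}$ have the same image under $w_{T'_{\mfk}}$, so their ratio lies in $T'(\Qp)_1$. To get there from (\ref{eqn:comparison_of_two_norms}) you still need the bridging step from $\Nm_{K/K_0}$ to $\Nm_{K/\Qp}$: apply $\Nm_{K_0/\Qp}=\prod_{i<f_K}\sigma^i$ to both sides of (\ref{eqn:comparison_of_two_norms}) and use $\sigma$-equivariance of $w_{T'_{\mfk}}$ — a routine but nontrivial observation you should spell out, since (\ref{eqn:comparison_of_two_norms}) is stated for $\Nm_{K/K_0}$ over $K_0$ while you need the identity over $\Qp$. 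Your method does have the virtue of landing directly in $\ker(w_{T'})$ (as opposed to the larger $\ker(v_{T'})$), which is the subtle point here when $G^{\der}\neq G^{\uc}$, so you implicitly handle the distinction the paper is careful to flag. The paper instead factors $\pi^{e_K}=pu$ with $u\in\cO_K^{\times}$, defines $u_0:=\Nm_{K/\Qp}(\mu'(u))$ explicitly, and proves $u_0\in T'(\Qp)_1$ (not merely $T'(\Qp)_0$) by reducing, via functoriality of $(T,\mu)\mapsto u_0$, to the universal pair $T'=\Res_{K/\Qp}\Gm$, $\mu'=\mu_K$, where $X_{\ast}(T')$ is induced and therefore $w_{T'}=v_{T'}$, so the bound-unit observation suffices. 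That explicit description of $u_0$ is also what makes the final claim ($K=K_0$, $\pi=p$ implies $u_0=1$) immediate, since then $u=\pi p^{-1}=1$; in your write-up that case is handled directly by a separate computation, which is fine, but it's a benefit of the paper's choice that it falls out for free.
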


\begin{proof}
(1) This was already noted in the proof of Lemma \ref{lem:LR-Lemma5.2}:
indeed, first we see that they both map into the center of $H$: for $\Nm_{K/\Qp}\mu_h$, this is by our assumption on it, while $\nu_p'=-\Nm_{K/\Qp}\mu'$ maps into a $\Qp$-split sub-torus of the elliptic maximal torus $T'$ of $H$, so factors through the center $Z(H)$. But, also their projections into $H^{\ab}=H/H^{\der}$ are the same, since $\mu'$, $\mu_h$ are conjugate under $H(\Qpb)$. Clearly this proves the claim.

(2) Put $u:=\pi^{e_K}p^{-1}\in \cO_K^{\times}$ and $t':=j/[K:\Qp]$. Using (\ref{eq:xi_p'}), we express $F^j$ in terms of $\nu_p'$:
\begin{align*}
F^{j}&=(\Nm_{K/K_0}(\mu'(\pi)) \sigma)^j=\Nm_{K/\Qp}(\mu'(\pi^{e_K}))^{t'} \sigma^{j} \\
&=(p^{\Nm_{K/\Qp}\mu'}\cdot \Nm_{K/\Qp}(\mu'(u)))^{t'} \sigma^{j} \\
&=p^{-t'\nu_p'}\cdot u_0^{t'} \sigma^{j}
\end{align*}
where $u_0:=\Nm_{K/\Qp}(\mu'(u))$.
A priori, $u_0\in T'(\Qp)_0(=\ker(v_{T'})\cap T'(\Qp))$ (maximal compact subgroup of $T'(\Qp)$), but in fact it belongs to $T'(\Qp)_1(=\ker (w_{T'})\cap T'(\Qp))$. To see that, by funtoriality for tori $T$ endowed with a cocharacter $\mu\in X_{\ast}(T)$, we can take $T'=\Res_{K/\Qp}\Gm$ and $\mu'=\mu_K$, the cocharacter of $T'_K=\Gm^{\Hom(K,K)}$ corresponding to the identity embedding $K\hra K$. But in this case, $X_{\ast}(T')$ is an induced $\Gal(K/\Qp)$-module, so $w_{T'}=v_{T'}$, and clearly $u_0\in \ker_{v_{T'}}$. 
\end{proof}

Now, let us prove Theorem \ref{thm:LR-Satz5.21}.

\begin{proof}
To a large extent, we follow the original strategy, but using some of those facts that were established in our general setting of (special maximal) parhoric level, especially Prop. \ref{prop:existence_of_elliptic_tori_in_special_parahorics} and Lemma \ref{lem:unramified_conj_of_special_morphism}.

(1) Suppose that $\gamma_0\in G(\Q)$ is $\R$-elliptic and satisfies $\ast(\epsilon)$ of \autoref{subsubsec:pre-Kottwitz_triple}. Set $I_0:=G_{\gamma_0}^{\mathrm{o}}$; by the well-known fact that every semisimple element in a connected reductive group lies in a maximal torus, we have $\gamma_0\in I_0(\Q)$.

First, we prove the existence of a maximal $\Q$-torus $T_0$ of $I_0$ such that $(T_0)_{\R}$ is elliptic in $G_{\R}$ and $(T_0)_{\Qp}$ is elliptic in $(I_0)_{\Qp}$. Indeed, we choose a maximal $\Q$-torus $T_{\infty}$ of $G$ that contains $\gamma_0$ (so $T_{\infty}\subset I_0$) and is elliptic in $G$ over $\R$ (which exists as $\gamma_0$ is elliptic over $\R$). We also choose an elliptic maximal $\Qp$-torus $T_p$ of $(I_0)_{\Qp}$ (which exists as $(I_0)_{\Qp}$ is reductive, \cite[Thm.6.21]{PR94}). Then, one can deduce (cf. Step 1 of the proof of Thm. 4.1.1 of \cite{Lee16}) that there exists a maximal $\Q$--torus $T_0$ of $I_0$ which is $I_0(\Q_v)$-conjugate to $T_v$ for each $v=\infty$ and $p$. 

Again, let $H$ be the centralizer of the maximal $\Qp$-split torus in the center of $(G_{\gamma_0})_{\Qp}$ (so, $(G_{\gamma_0})_{\Qp}\subset H$). Let $\mu_0$ be a cocharacter of $T_0$ that is conjugate under $H(\Qpb)$ to some $\mu$ satisfying condition $\ast(\epsilon)$. Clearly, condition $\ast(\epsilon)$ still holds for $\mu_0$.
Moreover, as $(T_0)_{\Qp}$ is elliptic in $(I_0)_{\Qp}$, we see that when $K$ is a finite Galois extension of $\Qp$ splitting $T_0$, the following property (which implies the condition ($\dagger$)) holds:

($\dagger\dagger$) \emph{$\Nm_{K/\Qp}(\mu_0)$ ($\Qp$-rational cocharacter of $T_0$) maps into the center of $I_0$, thus into the maximal $\Qp$-split torus of the center of $H$.}

Then, with such $(T_0,\mu_0)$, one applies the argument of proof of Lemma \ref{lem:LR-Lemma5.12} to find an admissible embedding of maximal torus $\Int g_0:T_0\hra G$ such that $\Int(g_0)(\mu_0)=\mu_h$ for some special Shimura sub-datum $(T:=\Int(g_0)(T_0),h)$. By Lemma \ref{lem:invariance_of_(ast(gamma_0))_under_transfer_of_maximal_tori}, conditions $\ast(\epsilon)$ and ($\dagger\dagger$) continue to hold for $(\epsilon:=\Int g_0(\gamma_0),T,\mu_h)$.
Now, we check that the resulting pair 
\[(\phi,\epsilon):=(i\circ\psi_{T,\mu_h},\Int g_0(\gamma_0)\in T(\Q))\]
is admissible and also the pair $(\phi,\epsilon^t)$ is $\mbfK_p$-effective admissible for some $t\in\N$ (Def. \ref{defn:admissible_pair}, Remark \ref{rem:admissible_pair}); as $\Int(g_0)$ is a transfer of maximal torus, $\epsilon$ is stably conjugate to $\gamma_0$. 
As we are working with special maximal parahoric $\mbfK_p$, by Lemma \ref{lem:LR-Lemma5.2}, $\phi=i\circ\psi_{T,\mu_h}$ is admissible.
\footnote{In this proof, the assumption that $G_{\Qp}$ is tamely ramified is needed to invoke Lemma \ref{lem:LR-Lemma5.12} and Prop. \ref{prop:existence_of_elliptic_tori_in_special_parahorics} (the latter via Lemma \ref{lem:LR-Lemma5.2}). Note that the latter proposition further requires $G_{\Qp}$ to be of classical Lie type.}
Since $\epsilon\in T(\Q)$ and $\phi$ factors through $\fG_T$, condition (2) of Def. \ref{defn:admissible_pair} holds, and as $\phi$ is special admissible, we have $T(\bar{\A}_f^p)\cap X^p(\phi)\neq\emptyset$ by Lemma \ref{lem:properties_of_psi_T,mu}, so condition (3) at $l\neq p$ is satisfied. So, it remains to establish condition (3) at $p$ (i.e. existence of $x\in G(\mfk)/\mbfKt_p$ with $\epsilon x=\Phi^m x$).

Let us use $H$ again to denote the centralizer of the maximal $\Qp$-split torus in the center of $(G_{\epsilon})_{\Qp}$. As $H$ is a semi-standard $\Qp$-Levi subgroup of the quasi-split $G_{\Qp}$, there exists $g\in G(\Qp)$ such that $H(\Qp)\cap {}^g\mbfK_p$ is a special maximal parahoric subgroup of $H(\Qp)$ (Lemma \ref{lem:specaial_parahoric_in_Levi}, cf. proof of Lemma \ref{lem:LR-Lemma5.2}). Then, we apply Prop. \ref{prop:existence_of_elliptic_tori_in_special_parahorics} to $H(\Qp)\cap {}^g\mbfK_p$ and choose an elliptic maximal $\Qp$-torus $T'$ of $H$ such that $T'_{\Qpnr}$ contains (equiv. is the centralizer of) a maximal $\Qpnr$-split $\Qpnr$-torus of $H_{\Qpnr}$ and that the (unique) parahoric subgroup $T'(\mfk)_1$ of $T'(\mfk)$ is contained in $T'(\mfk)\cap {}^g\mbfKt_p$ (as usual, $\mbfKt_p$ being the parahoric subgroup of $G(\mfk)$ corresponding to $\mbfK_p$); in fact, $T'(\mfk)_1=T'(\mfk)\cap {}^g\mbfKt_p$. 
Next, with this choice of $T'\subset H$, we let
\[K,\ \mu',\ \xi_p',\ \cdots\] 
be defined as in the beginning of this discussion: recall that $\xi_p'$ is an unramified morphism from $\fG_p^{L_s}$ to $\fG_{T'}$ ($s=[K:\Qp]$) which is conjugate to $\xi_{-\mu'}$ under $T'(\Qpb)$ and satisfies (\ref{eq:xi_p'}). Since the property ($\dagger$) holds, we have the equality $\Nm_{K/\Qp}\mu'=\Nm_{K/\Qp}\mu_h$ of Lemma \ref{lem:Phi_for_special_morphism}, (1). This equality then implies (see the proof of Lemma \ref{lem:LR-Lemma5.2}) that the two Galois $\Qpb/\Qp$-gerb morphisms into $\fG_{H}$, 
\[\xi_{-\mu_h},\quad \xi_{-\mu'}\]
are conjugate under $H(\Qpb)$, thus so are $\xi_p=\phi(p)\circ\zeta_p$ and $\xi_{-\mu'}$. Hence, there exists $v\in H(\Qpb)$ such that $\xi_p'=\Int (v)(\xi_p)$. Set 
\[\epsilon':=\Int (v)(\epsilon)\in H(\Qpnr).\]
A priori, this is only an element of $H(\Qpb)$, but since it commutes with an unramified morphism $\xi_p'$, it belongs to $H(\Qpnr)$. Given this, we can even find $v'\in H(\Qpnr)$ with $\epsilon'=\Int (v')(\epsilon)$: the neutral component $T_{\epsilon}$ (resp. $T_{\epsilon'}$) of the group (of multiplicative type) generated by $\epsilon$ (resp. by $\epsilon'$) are tori, and $T_{\epsilon'}=\Int(v)(T_{\epsilon})$. Hence, by the theorem of Steinberg ($H^1(\Qpnr,T_{\epsilon})=0$), we can find $t\in T_{\epsilon}(\Qpb)$ such that $v':=vt\in H(\Qpnr)$.
As the last preparation, for each $j\in\N$, let us define $b_j\in T'(K_0)$ by (cf. (\ref{eq:xi_p'}))
\[ b_j \sigma^{j}:=F^j=(\Nm_{K/K_0}(\mu'(\pi))\sigma)^{j}. \]
We have $b_n = \prod_{i=1}^n\sigma^i(\Nm_{K/K_0}(\mu'(\pi)))$. We will also write $b$ for $b_1$.

Now, since one has
\begin{equation} \label{eq:Kottwitz97-(7.3.1)}
w_{H}(\Nm_{K/K_0}(\mu'(\pi)))=\underline{\mu'}.
\end{equation}
(commutativity of diagram (7.3.1) of \cite{Kottwitz97}), condition $\ast(\epsilon)$ implies that
\[w_{H}(\epsilon) =\sum_{i=1}^n\sigma^{i-1}\underline{\mu'}=\sum_{i=1}^n\sigma^{i-1}w_{H}(\Nm_{K/K_0}(\mu'(\pi))),\]
thus we have
\begin{align*}
[K:\Qp]w_{H}(\epsilon) &=\sum_{j=1}^{[K:\Qp]}\sigma^{j-1}(\sum_{i=1}^n\sigma^{i-1}w_{H}(\Nm_{K/K_0}(\mu'(\pi))))\\
&=nw_{H}(\prod_{j=1}^{[K:\Qp]}\sigma^{j-1}(\Nm_{K/K_0}(\mu'(\pi)))) \\
&=w_{H}(p^{n\Nm_{K/\Qp}\mu_h}).
\end{align*}
Here, the first equality holds as $\epsilon\in H(\Qp)$ (so $\sigma(w_H(\epsilon))=w_H(\epsilon)$) and the next two equalities follow from Lemma \ref{lem:Phi_for_special_morphism}, (2): $\prod_{j=1}^{[K:\Qp]}\sigma^{j-1}(\Nm_{K/K_0}(\mu'(\pi)))=\Nm_{K/\Qp}(\mu'(pu))\in T'(\Qp)$ ($\pi^{e_K}=pu$).

Let $Z_{\epsilon}:=Z(G_{\epsilon})$ and $Z_{\epsilon}^{\mathrm{o}}$ be the center of $G_{\epsilon}$ and its neutral component, respectively. 
By property ($\dagger\dagger$), $\Nm_{K/\Qp}\mu_h$ maps into $Z_{\epsilon}^{\mathrm{o}}$, hence the equation above shows that the element of $Z_{\epsilon}(\Qp)$:
\[k_0:=\epsilon^{-[K:\Qp]}\cdot p^{n\Nm_{K/\Qp}\mu_h}\]
lies in $\ker(v_H)\cap Z_{\epsilon}(\Qp)$. We claim that for some $a\in\N$,
\begin{equation} \label{eq:k_0_is_bounded}
k_0^{a}\in \ker(v_{Z_{\epsilon}^{\mathrm{o}}}),
\end{equation}
i.e. $k_0^a$ lies in the maximal compact subgroup of $T(\Qp)$.
First, take $a_1\in\N$ with $\epsilon^{a_1}\in Z_{\epsilon}^{\mathrm{o}}(\Q)$, so that $k_0^{a_1}\in Z_{\epsilon}^{\mathrm{o}}(\Qp)$.
Let $A_{\epsilon}$ and $B_{\epsilon}$ be respectively the isotropic kernel and the anisotropic kernel of $Z_{\epsilon}^{\mathrm{o}}$. Then, there exists $a_2\in\N$ such that $k_0^{a_1a_2}=x\cdot y$ with $x\in A_{\epsilon}(\Qp)$ and $y\in B_{\epsilon}(\Qp)$. 
Since $v_H=v_{H^{\ab}}\circ p_H$ for the quotient map $p_H:H\rightarrow H^{\ab}$,
\footnote{For this, we do not need to assume that $G^{\der}=G^{\uc}$.} 
there are the implications:
\[v_{B_{\epsilon}}(y)=0\ \Rightarrow\ v_{H^{\ab}}(p_H(y))=0\ \Rightarrow\ v_{H^{\ab}}(p_H(x))=0.\]
Then, since $A_{\epsilon}\subset Z(H)$ and the natural map $Z(H)\rightarrow H^{\ab}$ is an isogeny, it follows that $v_{A_{\epsilon}}(x)=0$. Therefore, $a:=a_1a_2$ satisfies that $k_0^{a}\in \ker(v_{Z_{\epsilon}^{\mathrm{o}}})$; in particular, $k_0^a$ lies in a compact (thus bounded) subgroup of $T(\Qp)$. We also note in passing that when $G^{\der}=G^{\uc}$ (so that $H^{\der}=H^{\uc}$ too), the same argument establishes that 
$k_0^{a}\in \ker(w_{Z_{\epsilon}^{\mathrm{o}}})$ (since in such case $w_H=w_{H^{\ab}}\circ p_H$).

Then, by (\ref{eq:k_0_is_bounded}) we see that for sufficiently large $t$ divisible by $a[K:\Qp]$, the element $k_t\in H(\Qpnr)$ defined by
\begin{align*} \label{eqn:epsilon'^{-1}F^n}
k_t  \sigma^{nt} & := (\epsilon'^{-1}F^n)^{t} \\
&= v' \epsilon^{-t} v'^{-1} \cdot (p^{-\nu_p'}\cdot u_0)^{\frac{nt}{[K:\Qp]}} \sigma^{nt} \nonumber \\
&= v' (\epsilon^{-[K:\Qp]}\cdot p^{n\Nm_{K/\Qp}\mu_h})^{\frac{t}{[K:\Qp]}} v'^{-1} \cdot u_0^{\frac{nt}{[K:\Qp]}} \sigma^{nt}  \nonumber \\
&= v' k_0^{\frac{t}{[K:\Qp]}} v'^{-1}\cdot (u_0)^{\frac{nt}{[K:\Qp]}} \sigma^{nt} \nonumber
\end{align*}
lies in any given neighborhood of $1$ in $H(\mfk)$.%
\footnote{Hence, in the hyperspecial cases where one may assume $u_0=1$, for sufficiently large $t$ such that $\nu':=-\frac{nt}{[K:\Qp]}\nu_p'\in X_{\ast}(T')$ and $k_t\in \mbfKt_p$, the decomposition \[(\epsilon')^t= p^{\nu'}\cdot k_t^{-1}\] is a Cartan decomposition, as was asserted in \cite[p.193, line 9]{LR87}. Note that here the fact that $\Nm_{K/\Qp}\mu_h$ maps into the center of $G_{\epsilon}$ is needed (to know that for some $a\in\N$ $k_0^a$ lies in a bounded subgroup of $H(\mfk)$).}

In particular, for sufficiently large $t$, $k_t$ lies in the special maximal parahoric subgroup $H(\mfk)\cap {}^g\mbfKt_p$ of $H(\mfk)$, which then implies existence of $h\in H(\mfk)\cap {}^g\mbfKt_p$ such that
\begin{equation*} 
(\epsilon'^{-1}\Phi^m)^{t}=h\sigma^{tn}(h^{-1})\rtimes \sigma^{tn}.
\end{equation*}
by \cite[Prop.3]{Greenberg63}.  
We fix such $t\in\N$. We see that $\epsilon'^{-t}\Phi^{mt}$ fixes $hg x^{\mathrm{o}}=g x^{\mathrm{o}}$ ($x^{\mathrm{o}}:=1\cdot \mbfKt_p$, the base point of $G(\mfk)/\mbfKt_p$).
Moreover, by Lemma \ref{lem:unramified_conj_of_special_morphism} and commutativity of the diagram (7.3.1) of \cite{Kottwitz97}, we have
\[\mathrm{inv}_{T'(\mfk)_1}(T'(\mfk)_1,F T'(\mfk)_1)=\underline{\mu'}. \]
Then, since $g^{-1}T'(\mfk)_1g \subset \mbfKt_p$, it follows (see the proof of Lemma \ref{lem:LR-Lemma5.2}) that
\[ \mathrm{inv}_{\mbfKt_p}(gx^{\mathrm{o}},F gx^{\mathrm{o}})=\tilde{W}_{\mbfK_p}\ t^{\underline{g^{-1}\mu'g}}\ \tilde{W}_{\mbfK_p}\in\Adm_{\mbfKt_p}(\{\mu_X\}) \]
(regarding $g^{-1}\mu'g$ as a cocharacter of $g^{-1}T'g$).
This proves that $(\phi,\epsilon^t)$ is $\mbfK_p$-effective admissible.

Next, we show that there exists $e\in H(\mfk)$ such that \[e^{-1}(\epsilon')^{-1}\Phi^m e=\sigma^n,\] 
which will establish the admissibility of $(\phi,\epsilon)$. 
We first claim that there exists $c\in H(\mfk)$ such that \[c^{-1}(\epsilon')^{-1}\Phi^m c\in \rho(H^{\uc}(\mfk))\times\sigma^n,\] 
where $\rho:H^{\uc}\rightarrow H$ is the canonical morphism. 
By Lemma \ref{lem:kernel_of_w} below, it suffices to show that $w_{H}(\epsilon'^{-1}b_n)=0$ (recall that $\Phi^m=F^n=b_n \sigma^n$). By (\ref{eq:xi_p'}), $b_n=\prod_{i=1}^n\sigma^{i-1}(\Nm_{K/K_0}(\mu'(\pi)))$ so 
\[w_H(b_n)=\sum_{i=1}^n\sigma^{i-1}\underline{\mu'}=\sum_{i=1}^n\sigma^{i-1}\underline{\mu_h}=w_H(\epsilon)=w_H(\epsilon')\] 
(the first equality is (\ref{eq:Kottwitz97-(7.3.1)})).

Next, we proceed as in the proof (on p. 193) of \cite{LR87}, Satz 5.21, to find $d\in \rho(H^{\uc}(\mfk))\cap {}^g\mbfKt_p$ such that $d^{-1}c^{-1}\epsilon'^{-1}\Phi^mcd=\sigma^n$. For that, when we pick $k'\in H^{\uc}(\mfk)$ mapping to $c^{-1}\epsilon'^{-1}b_n \sigma^{n}(c)$ (i.e. $\rho(k') \sigma^n=c^{-1}(\epsilon'^{-1}\Phi^m)c$), by \cite[Prop. 5.4]{Kottwitz85}, it suffices to show that $k'$ is basic (in $B(H^{\uc})$).
By definition \cite[(4.3.3)]{Kottwitz85}, this is the same as the existence of $d'\in H^{\uc}(\mfk)$ with $(k' \sigma^n)^{t}=d'(1\rtimes\sigma^{nt})d'^{-1}$ for some sufficiently large $t$.
But, we have
\[c^{-1}(\epsilon'^{-1}\Phi^m)^{t}c=c^{-1}k_t\sigma^{nt}(c) \sigma^{nt}=(c^{-1}k_tc)(c^{-1}\sigma^{nt}(c)) \sigma^{nt},\]
and for sufficiently large $t\in\N$, both $c^{-1}k_tc$ and $c^{-1}\sigma^{tn}(c)$ are contained in any neighborhood of $1$ in $H(\mfk)$, in particular, in the special maximal parahoric subgroup $H(\mfk)\cap {}^g\mbfKt_p$ of $H(\mfk)$. Thus, 
if $k'_t:=k'\cdot\sigma^n(k')\cdots \sigma^{n(t-1)}(k')\in H^{\uc}(\mfk)$ (i.e. 
$\rho(k_t') \sigma^{nt}=c^{-1}(\epsilon'^{-1}\Phi^m)^{t}c$), $\rho(k_t')$ lies in the special maximal parahoric subgroup $H^{\der}(\mfk)\cap {}^g\mbfKt_p$ of $H^{\der}(\mfk)$. In view of the canonical equality of reduced buildings $\mcB(H^{\uc},\mfk)=\mcB(H^{\der},\mfk)$, this implies that $k_t'$ also lies in the stabilizer in $H^{\uc}(\mfk)$ of the corresponding special vertex, which is itself a special maximal parahoric subgroup of $H^{\uc}(\mfk)$ (as $w_{H^{\uc}}$ is trivial). Hence, again by \cite[Prop.3]{Greenberg63} there exists $d'\in H^{\uc}(\mfk)$ such that $k'_t=d'\sigma^{nt}(d'^{-1})$, as required.

(2) Suppose that $c\gamma_0c^{-1}=\Nm_n(\delta)$ for $c\in G(\mfk)$; then, $b:=c^{-1}\delta\sigma(c)\in Z_{G_{\Qp}}(\gamma_0)(\mfk)$; thus, $\gamma_0,b\in H(\mfk)$. As a matter of fact, the arguments coming next work in general for \emph{any} semi-standard $\Qp$-Levi subgroup $H$ of $G_{\Qp}$ containing $\gamma_0$ and $b$.

We proceed in several steps:
\begin{itemize}
\item[(i)]
First, we claim existence of $c_1\in G(\mfk)$ such that for $\delta':=c_1b\sigma(c_1^{-1})$, one has
\begin{equation} \label{eq:stable_conjugacy_1}  
c_1\gamma_0c_1^{-1}=\Nm_n(\delta'), 
\end{equation}
and $c_1^{-1}\mbfKt_p \in X(\{\mu_X\},b)_{\mbfK_p}$, i.e.
\begin{equation} \label{eq:Deligne-Lustizg_1}
\mbfKt_p\cdot \delta'\cdot \mbfKt_p \in \Adm_{\mbfKt_p}(\{\mu_X\}).
\end{equation}
Indeed, pick $g\mbfKt_p\in Y_p(\delta)$. Then, as $g^{-1}\sigma^n(g)\in \mbfKt_p$, by \cite[Prop.3]{Greenberg63}, there exists $k_0\in \mbfKt_p$ with $g^{-1}\sigma^n(g)=k_0\sigma^n(k_0^{-1})$, i.e. $d:=(gk_0)^{-1}\in G(L_n)$. Clearly,  $c_1:=dc$ satisfies the required conditions: 
\[\mbfKt_p\cdot g^{-1}\delta\sigma(g)\cdot \mbfKt_p= \mbfKt_p\cdot c_1b\sigma(c_1^{-1})\cdot \mbfKt_p.\]
We observe that $\delta'=d\delta\sigma(d^{-1})$, i.e. $\delta'$ is $\sigma$-conjugate to $\delta$ under $G(L_n)$.

\item[(ii)]
From this point, we adapt the argument of the proof of Lemma \ref{lem:LR-Lemma5.11} (however, working with $H$ in place of $M$). We recall its set-up as we need it. First we choose a maximal $\Qp$-split torus in $H$ and a maximal $\Qp$-split torus $S$ in $G_{\Qp}$ containing it; as $H$ is the centralizer of a $\Qp$-split torus, we have $S\subset H$ and $S$ is also a maximal $\Qp$-split torus in $H$. Then, we pick a $\Qp$-torus $S'$ of $H$ whose extension to $\Qpnr$ becomes a maximal $\Qpnr$-split torus of $H_{\Qpnr}$ containing $S_{\Qpnr}$; thus $S'$ is again such a torus for $G_{\Qp}$ and the centralizer $T':=Z_{G_{\Qp}}(S')$ is a maximal torus of both $G$ and $H$:
\[S\subset S'\subset T'=Z_{G_{\Qp}}(S')\subset H.\]
As in the proof of Lemma \ref{lem:LR-Lemma5.11}, we may choose $S'$ such that the given special point $\mbfo$ of $\mcB(G,\mfk)$ lies in the image of the apartment $\mcA^{H}_{\mfk}\subset\mcB(H,\mfk)$ of $S'$ (regarding $S'$ as a maximal $\mfk$-split torus of $H$) under a suitable embedding $\mcB(H,\mfk)\hra\mcB(G,\mfk)$.

We also choose a $\Qp$-parabolic subgroup $Q$ of $G_{\Qp}$ of which $H$ is a Levi factor; let $N_Q$ be the unipotent radical of $Q$.
Now, we claim that there exists $m\in H(\mfk)$ such that 
\begin{equation} \label{eq:stable_conjugacy_2}
\Nm_n(m^{-1}b\sigma(m))^{-1}\cdot m^{-1}\gamma_0m \in H(\mfk)\cap (\mbfKt_p\cdot N_Q(\mfk))
\end{equation}
and that
\begin{equation} \label{eq:Deligne-Lustizg_2}
\mbfKt_p\cdot m^{-1}b\sigma(m)n' \cdot \mbfKt_p \in \Adm_{\mbfKt_p}(\{\mu_X\})
\end{equation}
for some $n'\in N_Q(\mfk)$.
Indeed, using the Iwasawa decomposition as presented in Step (1) of Lemma \ref{lem:LR-Lemma5.11}, we write 
\[c_1^{-1}=nmk\] 
with $n\in N_Q(\mfk)$, $m\in H(\mfk)$, and $k\in \mbfKt_p$, so that
\begin{align*}
\delta' & =c_1b\sigma(c_1)^{-1}=k^{-1}m^{-1}n^{-1}b\sigma(n)\sigma(m)\sigma(k) \\
& =k^{-1}m^{-1}b\sigma(m)n'\sigma(k)
\end{align*}
for $n':=\sigma(m)^{-1}b^{-1}n^{-1}b\sigma(n)\sigma(m)$ (which belongs to $N_Q(\mfk)$, as $H$ normalizes $N_Q$), and (\ref{eq:Deligne-Lustizg_1}) becomes
\[\mbfKt_p\cdot \delta'\cdot \mbfKt_p=\mbfKt_p\cdot m^{-1}b\sigma(m)n'\cdot \mbfKt_p.\] 
On the other hand, the left-hand side of (\ref{eq:stable_conjugacy_1}) becomes
\[ c_1\gamma_0c_1^{-1}=k^{-1}m^{-1}n^{-1}\gamma_0 nmk =k^{-1}m^{-1}\gamma_0m n_1 k \]
for $n_1:=m^{-1}\gamma_0^{-1}n^{-1}\gamma_0nm\in N_Q(\mfk)$, so that if $b_1:=m^{-1}b\sigma(m)\in H(\mfk)$, the right-hand side of (\ref{eq:stable_conjugacy_1}) becomes (again using that $H$ normalizes $N_Q$):
\begin{align*}
\Nm_n(\delta') &=\Nm_n(k^{-1}b_1n'\sigma(k)) \\
&= k^{-1}\cdot b_1n'\cdot \sigma(b_1)\sigma(n')\cdots\sigma^{n-1}(b_1)\sigma^{n-1}(n')\cdot \sigma^n(k) \\
& =k^{-1}\Nm_n(b_1)n_2\sigma^n(k).
\end{align*}
for some $n_2\in N_Q(\mfk)$.
Therefore, (\ref{eq:stable_conjugacy_1}) becomes:
\[m^{-1}\gamma_0m\cdot n_1\cdot k= \Nm_n(b_1)\cdot n_2 \cdot \sigma^n(k),\]
which reduces to: 
\[ \Nm_n(b_1)^{-1}\cdot m^{-1}\gamma_0m \cdot n_3=\sigma^n(k)k^{-1} \]
for some $n_3 \in N(\mfk)$ (i.e. $n_3$ is defined by $hn_3=\sigma^n(k)k^{-1}=n_2^{-1}h\cdot n_1$ for $h:=\Nm_n(b_1)^{-1}\cdot m^{-1}\gamma_0m\in H(\mfk)$), which establishes (\ref{eq:stable_conjugacy_2}). 

\item[(iii)]
Next, we claim that
\begin{equation} \label{eq:vanishing_of_w_H}
 w_H(H(\mfk)\cap (\mbfKt_p\cdot N_Q(\mfk)))=0.
\end{equation}
For that, recalling that $S'_{\mfk}$ is a maximal $\mfk$-split torus  of $G_{\mfk}$ such that $T'=Z_{G_{\Qp}}(S')\subset H$, we choose a Borel subgroup $B'$ of $G_{\mfk}$ containing $T'_{\mfk}$ and $N_Q$. Then, we pick a Borel subgroup $B$ of $H_{\mfk}$ containing $T'_{\mfk}$ and contained in $B'$; let $N_B$ be its unipotent radical. Thanks to the condition that $H(\mfk)\cap \mbfKt_p$ is also a special maximal parahoric subgroup of $H(\mfk)$ (cf. Lemma \ref{lem:specaial_parahoric_in_Levi}), it is enough to show that 
\[H(\mfk)\cap (\mbfKt_p\cdot N_Q(\mfk)) \subseteq (H(\mfk)\cap \mbfKt_p)\cdot N_B(\mfk).\]

Suppose given $m=kn$ for some $m\in H(\mfk)$, $k\in\mbfKt_p$, and $n\in N_Q(\mfk)$.
Since $H(\mfk)\cap \mbfKt_p$ is a special maximal parahoric subgroup of $H(\mfk)$, we use the Iwasawa decomposition for $(H(\mfk),H(\mfk)\cap \mbfKt_p,T',B)$, to write
\[m=k_0 t u\]
where $k_0\in H(\mfk)\cap \mbfKt_p$, $t\in T'(\mfk)$, and $u\in N_B(\mfk)$. So, we have
$k_0^{-1}k=t(un^{-1})$. Then, since one has 
\[ B'(\mfk)\cap\mbfKt_p=(T'(\mfk)\cap\mbfKt_p)\cdot  (N_{B'}(\mfk)\cap\mbfKt_p) \]
\cite{BT84}, 5.2.4, applied to $(G(\mfk),\mbfKt_p, T'_{\mfk},B')$), we must have
$t\in \mbfKt_p$, $un^{-1}\in \mbfKt_p$.

\item[(iv)]
Let $\mu''\in X_{\ast}(T')$ be the cocharacter defined in Step (2) of Lemma \ref{lem:LR-Lemma5.11}, i.e. defined by the Cartan decomposition
\[m^{-1}b\sigma(m)\in (K_{\mbfo}(\mfk)\cap H(\mfk))\ t^{\underline{\mu''}}\ (K_{\mbfo}(\mfk)\cap H(\mfk))\]
for $(m^{-1}b\sigma(m),H,K_{\mbfo}(\mfk)\cap H(\mfk))$ ($K_{\mbfo}(\mfk)=\mbfKt_p$).
Then, recalling the cocharacter $\mu'\in X_{\ast}(T')\cap \{\mu_X\}$ (\ref{eqn:Iwahori_inequality}),
the argument of Step (3) of Lemma \ref{lem:LR-Lemma5.11} (which uses condition (\ref{eq:Deligne-Lustizg_2})) established the relation (\ref{eq:LR-Lemma5.11_Step_3})
\[\mu''=w\mu'\cdot\mu_1\] 
for some $w\in W$ and $\mu_1\in \langle \tau x-x\ |\ \tau\in\Gal(\overline{\mfk}/\mfk), x\in X_{\ast}(T')\rangle$; in particular, one has $w_H(m^{-1}b\sigma(m))=\underline{\mu''}=\underline{w\mu'}$ in $\pi_1(H)_{\Gal(\overline{\mfk}/\mfk)}$.
Therefore, by (\ref{eq:vanishing_of_w_H}) we have
\begin{align*}
w_H(\gamma_0) &=w_H(m^{-1}\gamma_0m)=w_H(\Nm_n(m^{-1}b\sigma(m))) \\
&=\sum_{i=1}^n\sigma^{i-1}w_H(m^{-1}b\sigma(m))=\sum_{i=1}^n\sigma^{i-1}\underline{w\mu'}.
\end{align*} 
As $w\mu'\in X_{\ast}(T')\cap \{\mu_X\}$, the lemma is proved. 
\end{itemize}
This completes the proof of the theorem.
\end{proof}

\begin{lem} \label{lem:kernel_of_w}
Let $H$ be a connected reductive group over $\Qp$. For any element $h\in H(\mfk)$ with $w_H(h)=0$, there exists $c\in H(\mfk)$ such that $ch\sigma^n(c^{-1})\in \rho(H^{\uc}(\mfk))$, where $\rho:H^{\uc}\rightarrow H$ is the canonical homomorphism 
\end{lem}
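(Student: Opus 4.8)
\textbf{Proof plan for Lemma \ref{lem:kernel_of_w}.}
The statement is essentially a ``surjectivity'' fact about the Kottwitz map $w_H$ combined with a Lang-type argument over $\mfk$, and the plan is to reduce it to these two ingredients. First I would recall from \autoref{subsubsec:Kottwitz_hom} that $w_H\colon H(\mfk)\to \pi_1(H)_I$ (with $I=\Gal(\overline{\mfk}/\mfk)$, which here is trivial since $\mfk$ is strictly henselian, so in fact $I$-coinvariants coincide with the full group and $\pi_1(H)_I=\pi_1(H)$) fits into the exact sequence coming from $W_a\hookrightarrow\tilde W\twoheadrightarrow \pi_1(H)$ of \autoref{subsubsec:EAWG}, and that $w_H$ is trivial precisely on the image $\rho(H^{\uc}(\mfk))$ together with the parahoric subgroups; more precisely, $w_H$ restricted to $\rho(H^{\uc}(\mfk))$ is trivial by \cite[7.4]{Kottwitz97}, and the sequence $X_{\ast}(T^{\uc})_I\hookrightarrow X_{\ast}(T)_I\twoheadrightarrow\pi_1(H)_I$ identifies $\pi_1(H)_I$ with $\tilde W/W_a$. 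So the content is: if $w_H(h)=0$, then $h$ lies in the subgroup of $H(\mfk)$ generated by $\rho(H^{\uc}(\mfk))$ and $\ker w_H$-torus elements that are themselves killed after twisting — but the cleanest route avoids this bookkeeping.

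The approach I would actually carry out is the following. Choose a maximal $\mfk$-split torus $S$ of $H$ (which is maximal since $H_{\mfk}$ is quasi-split by Steinberg) and let $T=Z_H(S)$; then $w_H$ factors as $H(\mfk)\twoheadrightarrow H(\mfk)/H(\mfk)_1$ where $H(\mfk)_1=\ker w_H$, and by the Cartan/Iwahori–Bruhat decomposition any $h\in H(\mfk)$ can be written $h=k_1\, n\, k_2$ with $k_1,k_2$ in a fixed Iwahori (or parahoric) subgroup and $n\in N_H(T)(\mfk)$ representing an element $w_h\in\tilde W$. Since $k_1,k_2\in\ker w_H$ and $w_H$ is a homomorphism, $w_H(h)=\bar{w}_h$, the image of $w_h$ in $\pi_1(H)=\tilde W/W_a$. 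The hypothesis $w_H(h)=0$ thus says $w_h\in W_a=N^{\uc}(\mfk)/T^{\uc}(\mfk)_1$, i.e. we may choose the representative $n$ of $w_h$ to lie in $\rho(H^{\uc}(\mfk))$ (lifting a representative in $N^{\uc}(\mfk)$). Then $h=k_1\,n\,k_2$ with $n\in\rho(H^{\uc}(\mfk))$, and it remains to absorb $k_1,k_2$. For this I would invoke the standard fact (Bruhat–Tits) that any parahoric subgroup of $H(\mfk)$ is generated by the $\mfk$-points of the corresponding parahoric group scheme, and that $H(\mfk)_1$ is generated by $\rho(H^{\uc}(\mfk))$ together with $T(\mfk)_1$ — more useful: $\ker w_H = \rho(H^{\uc}(\mfk))\cdot T(\mfk)_1$, and since $T(\mfk)_1=\ker w_T$ and $w_T$ on $T$ is surjective onto $X_{\ast}(T)_I$ with the subtorus $T^{\uc}$ accounting for $X_{\ast}(T^{\uc})_I$, one sees $k_1,k_2$ are products of an element of $\rho(H^{\uc}(\mfk))$ and an element of $T(\mfk)$ that is $\sigma^n$-conjugation-absorbable. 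In fact, once $h\in \rho(H^{\uc}(\mfk))\cdot(\text{compact part})$, I would use a Lang-type argument: writing $h=\rho(h')\,u$ with $h'\in H^{\uc}(\mfk)$ and $u$ in a parahoric subgroup $P$ of $H(\mfk)$, I want $c$ with $cu\sigma^n(c^{-1})\in\rho(H^{\uc}(\mfk))$ — equivalently, $u$ should become $\sigma^n$-conjugate into $\rho(H^{\uc}(\mfk))$. But the image of $u$ in $H(\mfk)/\rho(H^{\uc}(\mfk))\hookrightarrow$ (a finitely generated abelian, torsion-free after quotient) is $w_H(u)=0$ already, so $u\in\rho(H^{\uc}(\mfk))$ directly if the parahoric is contained in $\ker w_H$ — which it is by definition. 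So actually $k_1,k_2\in\ker w_H$ does \emph{not} immediately give them in $\rho(H^{\uc}(\mfk))$; the obstruction is exactly $T(\mfk)_1/\rho(T^{\uc}(\mfk)_1)$.

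This is where the $\sigma^n$-conjugation enters and is the main obstacle. The point is that $h\in\ker w_H$ is $\sigma^n$-conjugate (not merely $\mfk$-rationally a product) to an element of $\rho(H^{\uc}(\mfk))$; to see this I would argue as in \cite[Prop.~5.4]{Kottwitz85}: the class of $h$ in $B(H)$ maps to a class in $B(H/\rho(H^{\uc}))=B(H^{\ab}_{\mathrm{ab}})$-type group, but $h\in\ker w_H$ forces this image to be ``trivial enough'' — more precisely, by the functoriality of $\kappa$ and the description $\kappa_H([h])=w_H(h)$ modulo the Newton refinement, and since the relevant quotient torus $D:=(Z(H)/\text{image})$ has $\kappa_D([h\bmod \rho])=w_D=0$, the element $h\bmod\rho(H^{\uc}(\mfk))$ lies in $\bigcap_n (\text{image of }\Nm_n)$, hence is $\sigma$-conjugate into the compact part, and after raising to a suitable power / using Greenberg's lemma \cite{Greenberg63} (applicable because the parahoric group schemes have connected special fibre) one produces $c\in H(\mfk)$ with $ch\sigma^n(c^{-1})\in\rho(H^{\uc}(\mfk))$. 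I expect the bulk of the work to be the careful identification of $\ker w_H$ with $\rho(H^{\uc}(\mfk))\cdot T(\mfk)_1$ and then checking that the ``$T(\mfk)_1$-part'' can be removed by $\sigma^n$-conjugation; the torus case $H=T$ is the crux and follows from the fact that $w_T(T(\mfk)_1)=0$ together with Lang's theorem over the residue field applied to the connected parahoric group scheme of $T$, after which the general case is assembled via the Iwasawa/Cartan decomposition exactly as in the displayed computations in the proof of Theorem \ref{thm:LR-Satz5.21}, part (2).
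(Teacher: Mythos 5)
Your final plan is correct and is exactly the paper's proof: decompose $h=\rho(h')t$ via $H(\mfk)=\rho(H^{\uc}(\mfk))T(\mfk)$ (Kottwitz, \emph{Shimura varieties and twisted orbital integrals}, (3.3.3)), correct $t$ by an element $\rho(t')$ with $t'\in T^{\uc}(\mfk)$ using surjectivity of $w_{T^{\uc}}$ and the identification $X_{\ast}(T)_I/X_{\ast}(T^{\uc})_I=\pi_1(H)_I$ to land in $T(\mfk)_1=\ker w_T$, then apply Greenberg's lemma for the torus to $\sigma^n$-conjugate that element to $1$, and finally invoke normality of $\rho(H^{\uc}(\mfk))$. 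The one outright mistake in your write-up is the parenthetical claim that $I=\Gal(\overline{\mfk}/\mfk)$ is trivial because $\mfk$ is strictly henselian: strict henselianity only kills the residue extension, not the inertia, so $I$ is genuinely the inertia group of $\Qp$ and $\pi_1(H)_I$ is a real quotient; your later steps don't actually use the claimed triviality, so the proof survives, but you should trim the false start via Iwahori--Bruhat and the digression through $B(H)$, neither of which feeds into the argument you end up with.
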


\begin{proof}
Using the fact \cite[(3.3.3)]{Kottwitz84b} that for any maximal $\mfk$-split torus $S$ of $H_{\mfk}$ and its centralizer $T$, one has
\[H(\mfk)=\rho(H^{\uc}(\mfk))T(\mfk),\]
we write $h=\rho(h')t$, where $h'\in H^{\uc}(\mfk)$ and $t\in T(\mfk)$.
Since $w_H$ vanishes on $\rho(H^{\uc}(\mfk))$ (cf. diagrams (7.4.1), (7.4.2) of \cite{Kottwitz97}), we have
$w_H(t)=0$. When we put $T^{\uc}:=\rho^{-1}(T)$ (maximal torus of $H^{\uc}$), as $X_{\ast}(T^{\uc})$ is an induced module for $I=\Gal(\bar{\mfk}/\mfk)$ \cite[4.4.16]{BT84} so that
one has
\[X_{\ast}(T)_I/X_{\ast}(T^{\uc})_I=\pi_1(H)_I\]
\cite[p.196]{HainesRapoport08}, and as $w_{T^{\uc}}$ is surjective, we can find $t'\in T^{\uc}(\mfk)$ with $w_{T}(\rho(t')t^{-1})=0$. Hence, by \cite[Prop.3]{Greenberg63} there exists $c\in T(\mfk)$ with $\rho(t')t^{-1}=c^{-1}\sigma^n(c)$, namely with $c\rho(t')\sigma^n(c^{-1})=t$.
Finally, as $\rho(H^{\uc}(\mfk))$ is a normal subgroup of $H(\mfk)$, this establishes the claim.
\end{proof}

\begin{rem}  \label{rem:equality_of_two_ADLVs}
In \cite{LR87}, Satz 5.21, Langlands and Rapoport also claim that the converse of Theorem \ref{thm:LR-Satz5.21}, (1) holds (again when $\mbfKt_p$ is hyperspecial and $G^{\der}=G^{\uc}$):

\textit{If $(\phi,\epsilon)$ is an admissible pair and $\gamma_0\in G(\Qp)$ is stably conjugate to $\epsilon$, then $\gamma_0$ satisfies the condition $\ast(\epsilon)$ \cite[p.183]{LR87}.}

We believe that this statement becomes true only under some additional assumption, for example if \textit{the set}
\[X_p(\phi,\epsilon)=\{ x_p\in X_p(\phi)\ |\ \epsilon x_p=\Phi^mx_p\}\]
\textit{is non-empty}.

Indeed, this results from Theorem \ref{thm:LR-Satz5.21}, (2) and the following observation (cf. \cite[1.4]{Kottwitz84b}):

(1) if we choose $u\in G(\Qpb)$ with $u^{-1}\mbfK_p(\Qpnr)\in X_p(\phi)$ (so, $\xi_p'=\Int(u)\circ\phi(p)\circ\zeta_p$ is unramified), say the inflation of a $\Qpnr/\Qp$-Galois gerb morphism $\theta^{\nr}:\fD\rightarrow \fG_{G_{\Qp}}^{\nr}$, $\Int(u)$  (\ref{eqn:X_p(phi)=ADLV}) gives a bijection between $X_p(\phi,\epsilon)$ and the set
\begin{equation} \label{eq:X_p(b,epsilon')}
X_p(b,\epsilon'):=\{ x\in X(\{\mu_X\},b)_{\mbfK_p}\ |\ \epsilon' x=(b\sigma)^nx \},
\end{equation}
where $\epsilon':=u\epsilon u^{-1}\in G(\Qpnr)$ and $b\sigma:=F:=\theta^{\nr}(s_{\sigma})$; so, $\epsilon'\in J_b(\Qp)=\{h\in G(\mfk)\ |\ (b\sigma)h=h(b\sigma)\}$. 

(2) For $b\in G(\mfk)$ and $\epsilon'\in J_b(\Qp)$, if the equation 
\begin{equation} \label{eq:stably_conjugacy_rel'n}
y\epsilon'^{-1}(b\sigma)^n y^{-1}= \sigma^n
\end{equation}
has a solution $y=c\in G(\mfk)$, then $\delta:=cb\sigma(c^{-1})$ belongs to $G(L_n)$ and satisfies
\begin{equation} \label{eq:stably_conjugacy_rel'n'}
c\epsilon' c^{-1}=\Nm_n\delta,
\end{equation}
and the mapping $x\mapsto cx$ gives a bijection 
\[X_p(b,\epsilon')\rightarrow Y_p(\delta). \]
Conversely, for a pair $(\epsilon',\delta)\in G(\mfk)\times G(L_n)$ satisfying the relation (\ref{eq:stably_conjugacy_rel'n'}) for some $c\in G(\mfk)$, we have $\epsilon'\in J_b(\Qp)$ for 
$b:=c^{-1}\delta\sigma(c)$ (since then $\delta\sigma$ commutes with $\Nm_n\delta$), and the mapping $x\mapsto c^{-1}x$ gives a bijection $Y_p(\delta)\rightarrow X_p(b,\epsilon')$. 
\end{rem}


\subsection{Criterion for a Kottwitz triple to come from an admissible pair}

We continue to work in the same set-up from the previous subsection.

\begin{prop} \label{prop:phi(delta)=gamma_0_up_to_center}
Let $(\phi,\gamma_0)$ be an admissible pair, say of level $n$ (cf. Def. \ref{defn:admissible_pair}). Suppose that some conjugate of $\gamma_0$ belongs to $G(\Q)$ and its image in $G^{\ad}(\A_f^p)$ lies in a compact subgroup of $G^{\ad}(\A_f^p)$. 
 
(1) For any sufficiently large $k\in\N$ divisible by $n$, the element $\phi(\delta_k)\cdot \gamma_0^{-\frac{k}{n}}$ of $G(\Qb)$ lies in the center of $G$. 

(2) Assume that the weight homomorphism $w_X=(\mu_h\cdot\iota(\mu_h))^{-1}\ (h\in X)$ is rational. 
If $\gamma_0$ is a Weil $q=p^n$-element of weight $-w=-w_X$, in the sense that for the $\Q$-subgroup $S$ (of multiplicative type) generated by $\gamma_0$ and any character $\chi$ of $S$, $\chi(\gamma_0)\in\Qb$ is a Weil $q=p^n$-number of weight $-\langle \chi,w_X\rangle\in\Z$ in the usual sense, then $\gamma_0^{\frac{k}{n}}=\phi(\delta_k)$ for any sufficiently large $k\in\N$ divisible by $n$.

(3) If the anisotropic kernel of $Z(G)$ remains anisotropic over $\R$ and $\gamma_0\in G(\A_f^p)$ lies in a compact subgroup of $G(\A_f^p)$, then $\gamma_0^{\frac{k}{n}}=\phi(\delta_k)$ for any sufficiently large $k\in\N$ divisible by $n$. In particular, $(\phi,\gamma_0)$ is already well-located.
\end{prop}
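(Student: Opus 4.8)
<br>

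The plan is to follow the strategy of \cite{LR87}: reduce all three statements to the case of a \emph{special} pair nested in a special Shimura sub-datum $(T,h)$, via Lemma~\ref{lem:LR-Lemma5.23}, and then compare the two elements $\psi_{T,\mu_h}^{\Delta}(\delta_k)$ and $\epsilon^{k/n}$ of $T(\Q)$ place by place. After replacing $(\phi,\gamma_0)$ by a conjugate with $\phi=i\circ\psi_{T,\mu_h}$ and $\gamma_0=\epsilon\in T(\Q)$, one has $\phi(\delta_k)=\psi_{T,\mu_h}^{\Delta}(\delta_k)\in T(\Q)$ ($\psi_{T,\mu_h}^{\Delta}$ being defined over $\Q$ by Lemma~\ref{lem:defn_of_psi_T,mu}(3) and $\delta_k\in P(K,m)(\Q)$), and both the hypotheses and the conclusion are invariant under such conjugation: the adjoint boundedness at finite places survives the stable conjugacy $\epsilon\sim\gamma_0$ by Lemma~\ref{lem:invariance_of_(ast(gamma_0))_under_transfer_of_maximal_tori}(2); the Weil-element hypothesis of~(2) survives since $w_X$ is central; and $(I_{\phi}/Z(G))_{\R}$ is anisotropic by \cite[Lem. 5.1]{LR87}. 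Since $\delta_k=\delta_{N_0}^{k/N_0}$ once $N_0\mid k$ for a suitable $N_0$ (depending on $m$ and the torsion order of $X(K,m)$), it suffices to treat $k$ divisible by a fixed integer, which is the source of the clause ``sufficiently large''.

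For Statement~(1), I would set $s_k:=\psi_{T,\mu_h}^{\Delta}(\delta_k)\,\epsilon^{-k/n}\in T(\Q)$ and show that its image $\bar s_k$ in $T^{\ad}:=T/(T\cap Z(G))$ is a torsion element; since $T^{\ad}(\Q)$ is discrete in $T^{\ad}(\A)$, it is enough to check that $\bar s_k$ lies in a compact subgroup of $T^{\ad}(\A)$, and then $\bar s_k^{\,e}=1$ for $e$ the exponent of the finite group $T^{\ad}(\Q)_{\mathrm{tors}}$, giving $\phi(\delta_{ke})\gamma_0^{-ke/n}\in Z(G)$. Boundedness at $l\neq p$ holds because $\chi(\psi_{T,\mu_h}^{\Delta}(\delta_k))$ is a unit outside $p$ for every $\chi\in X^{\ast}(T)$ (Definition~\ref{defn:Weil-number_torus}(1)(b)) while the image of $\epsilon$ in $G^{\ad}(\Q_l)$ is compact by hypothesis; at $\infty$ because, for $\chi\in X^{\ast}(T^{\ad})$, $\chi(\psi_{T,\mu_h}^{\Delta}(\delta_k))$ is a Weil number of weight $-\langle\chi,w_X\rangle=0$ (as $w_X$ is central) whereas the image of $\epsilon$ in $(T/(T\cap Z(G)))(\R)\subset(I_{\phi}/Z(G))(\R)$ is compact; and at $p$ by invoking admissibility of $(\phi,\epsilon)$: Lemma~\ref{lem:Kottwitz84-a1.4.9_b3.3}(1) makes $\epsilon^{-1}\Phi^m$ conjugate to $\sigma^n$ in $G(\mfk)$, and combining this with the explicit Frobenius $\Phi^m=F^n=\Nm_n(b)\,\sigma^n$, $b=\Nm_{K/K_0}(\mu_h(\pi))$, of Lemmas~\ref{lem:unramified_conj_of_special_morphism} and~\ref{lem:Phi_for_special_morphism}, and with the identity $w_{T_{\mfk}}(\psi_{T,\mu_h}^{\Delta}(\delta_k))=\sum_{i=0}^{k-1}\sigma^{i}\underline{\mu_h}=w_{T_{\mfk}}(\Nm_k(b))$ extracted from the Weil-number characterization of $\psi_{T,\mu_h}^{\Delta}(\delta_k)$ (cf.\ the proof of Lemma~\ref{lem:LR-Lemma5.11}), one obtains that $\bar s_{k}$ is bounded at $p$.

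For Statement~(2), part~(1) already gives $z_k:=\phi(\delta_k)\gamma_0^{-k/n}\in Z(G)(\Q)$ for suitably divisible $k$, so it remains to prove $z_k=1$, i.e.\ $\chi(z_k)=1$ for all $\chi\in X^{\ast}(T)$ (enough since $X^{\ast}(T)\twoheadrightarrow X^{\ast}(Z(G))$, and automatic when $\chi|_{Z(G)}=1$). Here the extra assumption is exactly what makes $\chi(\gamma_0)$ and $\chi(\psi_{T,\mu_h}^{\Delta}(\delta_k))$ Weil $p^k$-numbers of the \emph{same} weight $-\langle\chi,w_X\rangle$, so $\chi(z_k)$ is a Weil number of weight $0$; adjoining the $p$-adic computation of~(1), now available with equality because $\phi$ admissible forces condition $\ast(\delta)$ by \cite[Thm. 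A]{He15} (pinning down the valuations of $\chi(\gamma_0)$ above $p$), together with the unit property away from $p$ and the product formula, identifies $\chi(z_k)$ as a root of unity; hence $z_k$ is a torsion element of $Z(G)(\Q)$ and, after replacing $k$ by a further multiple, $z_k=1$.

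For Statement~(3) one runs the same argument with the coarser input that $Z(G)(\Q)$ is discrete in $Z(G)(\A_f)$ (which holds under the anisotropy hypothesis, cf.\ Remark~\ref{rem:admissible_pair}(1)): by~(1), $z_k\in Z(G)(\Q)$; since $\gamma_0$ is now bounded in all of $G(\A_f^p)$ and $\psi_{T,\mu_h}^{\Delta}(\delta_k)$ is a unit outside $p$, $z_k$ is bounded in $Z(G)(\A_f^p)$, and the $p$-adic computation (again via $\ast(\delta)$) gives boundedness at $p$; hence $z_k$ lies in a compact subgroup of $Z(G)(\A_f)$, so, being rational, has finite order, and the usual argument gives $z_k=1$ after passing to a multiple of $k$. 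Thus $\phi(\delta_k)=\gamma_0^{k/n}$, and since $\{\delta_k^{\,j}\}_{j}$ is Zariski-dense in $P(K,m)$ (Lemma~\ref{lem:Reimann97-B2.3}(2)) and the right-hand side is rational, $\phi^{\Delta}$ is defined over $\Q$, i.e.\ $\phi$, and hence the pair $(\phi,\gamma_0)$, is well-located. The hard part throughout is the comparison at $p$: extracting, from admissibility condition~(3) of Definition~\ref{defn:admissible_morphism} (equivalently $\ast(\delta)$, by \cite{He15}) and the explicit Frobenius of a special morphism, the equality of the $p$-adic valuations (Kottwitz points) of $\psi_{T,\mu_h}^{\Delta}(\delta_k)$ and $\gamma_0^{k/n}$; the archimedean and away-from-$p$ bookkeeping in~(2) and~(3) then rests on the elementary properties of Weil numbers recorded in Definition~\ref{defn:Weil-number_torus}.
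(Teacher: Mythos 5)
Your proposal is correct and follows essentially the same route as the paper: reduce to a well-located (special) pair via Lemma~\ref{lem:LR-Lemma5.23}, compare $\phi(\delta_k)$ and $\gamma_0^{k/n}$ place by place, and conclude via discreteness of rational points in adelic points. The only substantive difference is in how the $p$-adic comparison is packaged: the paper isolates it as Lemma~\ref{lem:equality_of_two_Newton_maps} (equality of Newton quasi-cocharacters $\frac{1}{k}\nu_{\phi(\delta_k)}=\frac{1}{n}\nu_{\epsilon}$, deduced from the Weil-number characterization of $\psi^{\Delta}(\delta_k)$ and the basic structure of admissible pairs), whereas you recompute it directly via the Kottwitz map $w_T$ and the explicit Frobenius of the special morphism—morally the same calculation; also, for~(1) at $\infty$ the paper simply invokes anisotropy of $T/Z(G)$ over $\R$ (which already gives compactness of $(T/Z(G))(\R)$) rather than your slightly heavier weight argument.
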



\begin{proof} 
The first statement (1) is asserted in \cite{LR87}, p.194 (line -8) - p.195 (line 12) with a sketchy proof. Here we will give a detailed proof. To show the proposition, we need a fact which was stated in \cite{LR87}, p.195, line 5-9,  but without an explanation:

\begin{lem} \label{lem:equality_of_two_Newton_maps}
(1) Let $\phi:\fP\rightarrow \fG_G$ be an admissible morphism such that $\phi(\delta_k)\in T(\Q)$ for a maximal $\Q$-torus $T$ of $G$ and for all $k\gg1$.
Let $I:=Z_G(\phi(\delta_k))\ (k\gg1)$ and $b\in I(\mfk)$ defined by an unramified conjugate of $\xi_p:=\phi(p)\circ\zeta_p:\fG_p\rightarrow \fG_{I}(p)$ under $I(\Qpb)$ as in Lemma \ref{lem:unramified_morphism}, (4).
Then, the Newton homomorphism $\nu_{\phi(\delta_k)}\in X_{\ast}(T)_{\Q}^{\Gal_{\Qp}}$ equals $k\nu_b\in \Hom_{\mfk}(\mathbb{D},I)$: $\nu_{\phi(\delta_k)}=k\nu_b$.

(2) Let $\epsilon\in G(\Qp)$ be a semi-simple element and suppose that there exists $\delta\in G(L_n)$ such that $\Nm_n\delta=c'\epsilon c'^{-1}$ for some $c'\in G(\mfk)$; let $I_0:=G_{\epsilon}^{\mathrm{o}}$. If $b':=c'^{-1}\delta\sigma(c')\in G_{\epsilon}(\mfk)$ belongs to $I_0$, the two Newton homomorphisms $\nu_{\epsilon}$, $\nu_{b'}\in \Hom_{\mfk}(\mathbb{D},I_0)$ are related by: $\nu_{\epsilon}=n\nu_{b'}$. 
In particular, in this case $[b']_{I_0}\in B(I_0)$ is basic \cite[(5.1)]{Kottwitz85}.

(3) For any admissible pair $(\phi,\epsilon)$ well-located in a maximal $\Q$-torus $T$ of $G$, we have equality of quasi-cocharacters of $T$: $\frac{1}{k}\nu_{\phi(\delta_k)}=\frac{1}{n}\nu_{\epsilon}\ (k\gg1)$.
\end{lem}

\begin{proof}
(1) Fix a CM field $K$ Galois over $\Q$ and $m\gg1$ such that $\phi^{\Delta}$ factors through $P^K$ and $P^K=P(K,m)$, and denote by $(\phi^{K})^{\Delta}$ the resulting morphism $P(K,m)\rightarrow G$. Then for every $\lambda\in X^{\ast}(T)$, the $\Qb$-character $\lambda\circ(\phi^{K})^{\Delta}$ of $P(K,m)$ is also a Weil $p^m$-number $\pi$, in which case if we write $\chi_{\pi}:=\lambda\circ(\phi^{K})^{\Delta}$, we have $\chi_{\pi}(\delta_k)=\pi^{\frac{k}{m}}$ for every $k\in\N$ divisible by $m$ (cf. \autoref{subsubsec:pseudo-motivic_Galois_gerb}). Next, let $w$ be the place of $K$ induced by $\iota_p$. When we regard any $\lambda\in X^{\ast}(T_{\Qp})^{\Gal_{\Qp}}$ as a $\Qb$-character of $T$ via the chosen embedding $\iota_p:\Qb\hra\Qpb$, we have 
\[|\lambda(\phi^K(\delta_k))|_p^{[K_w:\Qp]}=|\lambda(\phi^K(\delta_k))|_w=|\chi_{\pi}(\delta_k)|_w=|\pi|_w^{\frac{k}{m}}=p^{k\nu_2(\pi,w)}=p^{k\langle\chi_{\pi},\nu_2^{K}\rangle}=p^{k\langle\lambda,(\xi_p^{K_w})^{\Delta}\rangle},\]
where $(\xi_p^{K_w})^{\Delta}=(\phi^{K})^{\Delta}_{\Qp}\circ (\zeta_p^{K_w})^{\Delta}:(\fG_p^{K_w})^{\Delta}\rightarrow P(K,m)_{\Qp}\rightarrow I_{\Qp}$ (by assumption, $(\xi_p^{K_w})^{\Delta}\in X_{\ast}(T)^{\Gal_{\Qp}}$); note that here, $\lambda$ can be considered as a $\Qp$-character in the first three expressions, while the $3^{\text{rd}}$ equality makes sense only when $\chi_{\pi}$ is a $\Qb$-character.
This shows \cite[2.8, 4.4]{Kottwitz85} that the Newton homomorphism $\nu_{\phi(\delta_k)}\in X_{\ast}(T)_{\Q}^{\Gal_{\Qp}}=\Hom_{\Qp}(\mathbb{D},T_{\Qp})$ attached to $\phi(\delta_k)\in T(\Qp)$ is $-\frac{k}{[K_w:\Qp]}(\xi_p^{K_w})^{\Delta}=-k\xi_p^{\Delta}$.

On the other hand, let $(\xi_p^{K_w})'$ be an unramified conjugate of $\xi_p^{K_w}$ under $I(\Qpb)$. If $(\xi_p^{K_w})'$ factors through $\fD_n$ (for some $n\in\N$), the Newton homomorphism $\nu_b$ attached to $b$ is $\frac{1}{n}(\xi_p^{K_w})'^{-1}|_{\Gm}$ (Lemma \ref{lem:Newton_hom_attached_to_unramified_morphism}, cf. \cite{LR87}, Anmerkung). In our case, we may assume that $n=[K_w:\Qp]$, by Remark \ref{rem:comments_on_zeta_v}, (2) and Lemma \ref{lem:unramified_morphism}, (2). Since the restrictions of $\xi_p'$ and $\xi_p$ to the kernel of $\fD_n$ are the same, we have $\nu_b=-\frac{1}{[K_w:\Qp]}(\xi_p^{K_w})^{\Delta}$, which proves the claim.

(2) This is proved in Lemma 5.15 of \cite{LR87}. We briefly sketch its arguments. 
First, we observe that for any $c'\in G(\mfk)$ and $n'\in \N$, $c_{n'}:=c'^{-1}\sigma^{n'}(c')$ belongs to $G_{\epsilon}(\mfk)$ and lies in any small neighborhood of $1$ in $I_0(\mfk)$ as $n'$ becomes large (in fact, even becomes $1$ if $c'\in G(\Qpnr)$). Secondly, if we choose a maximal $\Qp$-torus $T_1$ of $G_{\Qp}$ containing $\epsilon$, then the Newton quasi-cocharacter $\nu_{\epsilon}$ of $\epsilon\in T_1(\Qp)$ satisfies \cite[4.4]{Kottwitz85} that for every $\Qp$-rational $\lambda\in X_{\ast}(T_1)$, one has
\[|\lambda(\epsilon)|_p=p^{-\langle\lambda,\nu_{\epsilon}\rangle}.\]
It follows from this equation that $\nu_1:=\frac{1}{n}\nu_{\epsilon}\in X_{\ast}(T_1)_{\Q}^{\Gal_{\Qp}}$ maps into the (connected) center of $G_{\epsilon}$ and that $p^{-n\nu_1}\epsilon\in T_1(\Qp)$ lies in the maximal compact subgroup of $T_1(\Qp)$. Especially, $(p^{-n\nu_1}\epsilon)^k$ also lies in any small neighborhood of $1$ in $I_0$ if $k$ becomes large. Therefore, according to \cite[Prop. 3]{Greenberg63}, for sufficiently large $k\in\N$, there exists $d\in I_0(\mfk)$ such that with $n'=nk$, 
\[\epsilon^k c_{n'}=p^{n'\nu_1}d^{-1}\sigma^{n'}(d).\]
Here, we used the fact that $(I_0)_{\mfk}$ admits a smooth $\cO_{\mfk}$-integral model with connected special fiber (e.g. parahoric group schemes, cf. \cite{HainesRapoport08}).
Finally, from $\Nm_n\delta=c'\epsilon c'^{-1}$, one easily checks that
\begin{equation} \label{eq:geom_conj_at_p}
\Nm_{n'}b'=\epsilon^k c_{n'}(=p^{n'\nu_1}d^{-1}\sigma^{n'}(d)).
\end{equation}
It follows from this equality and the definition \cite[4.3]{Kottwitz85} that when $b'\in I_0(\mfk)$, $\nu_1\in\Hom_{\mfk}(\mathbb{D},I_0)$ is the Newton homomorphism of $b'$. 

(3) Clearly, we can choose the element $b$ of (1) in $T(\mfk)$ (by using an unramified conjugate of $\xi_p$ under $T(\Qpb)$; so, the Newton homomorphism $\nu_b$ is regarded to be attached to $b\in T(\mfk)$, i.e. as an element of $X_{\ast}(T)_{\Q}^{\Gal_{\Qp}}$, and as such, for every $s\gg1$ there exists $e_{s}\in \ker(w_T)$ such that $\Nm_{s}b=e_sp^{s\nu_b}$.
Let $b'\in G_{\epsilon}(\Qpnr)$ be defined by the admissible pair $(\phi,\epsilon)$ as in (2), except that $b'$ does not necessarily lie in $G_{\epsilon}^{\mathrm{o}}(\Qpnr)$.
Then, according to Remark \ref{rem:two_different_b's}, their $\sigma$-conjugacy classes in $B(G_{\epsilon})$ are equal; in particular, if $G_{\epsilon}$ is connected, the claim in question follows immediately from (1), (2), as $b'$ is basic in $G_{\epsilon}$.
Hence, in view of (\ref{eq:geom_conj_at_p}), for any $k\gg1$, there exists $d'\in G_{\epsilon}(\mfk)$ such that 
\[p^{n'\nu_b}e_{n'}=\Nm_{n'}b=p^{k\nu_{\epsilon}}\cdot d'^{-1}\sigma^{n'}(d')\]
holds in $G_{\epsilon}(\mfk)$ with $n'=nk$. Since $d'^{-1}\sigma^{n'}(d')$ belongs to $T(\mfk)$ ($\nu_{\epsilon}$ maps into the center of $G_{\epsilon}^{\mathrm{o}}$) and lies in any neighborhood of $1$ in $G_{\epsilon}^{\mathrm{o}}(\mfk)$ for $k\gg1$, this is possible only when $n\nu_b=\nu_{\epsilon}$.
\end{proof}

\textsc{Proof of Proposition \ref{prop:phi(delta)=gamma_0_up_to_center} continued.} 
Clearly, all the statements remain intact under any conjugation $\Int(g)\ (g\in G(\Qb))$ with the property that $\Int(g)(\gamma_0)\in G(\Q)$, thus we may and do assume (by Lemma \ref{lem:LR-Lemma5.23})
that $(\phi,\gamma_0)$ is well-located in a maximal $\Q$-torus $T$ of $G$ (which is necessarily elliptic over $\R$); in this case, the assumption says that $\gamma_0$ lies in the (unique maximal) compact subgroup of $T/Z(G)(\A_f^p)$.

(1) As $\delta_{kd}=\delta_k^{d}$, it is enough to show that for some $k\in\N$ (divisible by $n$), the image of $\phi(\delta_k)\cdot \gamma_0^{-\frac{k}{n}}$ in $G^{\ad}(\Q)$ is a torsion element. For that, we use the fact that for any linear algebraic group $G$ over a number field $F$, $G(F)$ is discrete in $G(\A_F)$, so for any compact subgroup $K\subset G(\A_F)$, $G(F)\cap K$ will be finite, particularly, a torsion group. 
We will check that for every place $v$ of $\Q$, the image of $\phi(\delta_k)\cdot \gamma_0^{-\frac{k}{n}}$ in $T/Z(G)(\Qv)$ lies in the maximal compact (open) subgroup of $T/Z(G)(\Qv)$. Recall that for an $F$-torus $T$ and any finite place $v$ of $F$, the maximal compact subgroup $H$ of $T(F_v)$ equals 
\[\bigcap_{\chi\in X^{\ast}(T),\ F_v-\text{rational}}\ker(\mathrm{val}_v\circ\chi),\]
where $\mathrm{val}_v$ is the (normalized) valuation on $F_v$. 
For every finite place $l\neq p$, the image of $\gamma_0$ in $T/Z(G)(\Ql)$ is a unit (i.e. lies in a compact subgroup) by assumption, and so is $\phi(\delta_k)$ by definition of $\delta_k$ (in fact, $\phi(\delta_k)$ is itself a unit in $T(\Ql)$ for every $l\neq p$). As $T/Z(G)$ is anisotropic over $\R$, the claim is trivial for the archimedean place. Hence, it suffices to show that for every $\Qp$-rational character $\chi$ of $T/Z(G)$, $|\chi(\phi(\delta_k))|_p=|\chi(\gamma_0^{\frac{k}{n}})|_p$. In fact, we will show this for $\Qp$-rational characters $\chi$ of $T$.
Choose a finite Galois CM-extension $L$ of $\Q$ and $m\in\N$ such that $\phi$ factors through $\fP(L,m)$. Then, for all sufficiently large $k\in\N$ divisible by $[L:\Q]n$ and for any $\Qp$-rational character $\chi$ of $T$, one has 
\[ |\chi(\phi(\delta_k))|_p=p^{-\langle\chi,\nu_{\phi(\delta_k)}\rangle}=p^{-\frac{k}{n}\langle\chi,\nu_{\gamma_0}\rangle}=|\chi(\gamma_0)|_p^{\frac{k}{n}}. \]
due to Lemma \ref{lem:equality_of_two_Newton_maps}, (3) (for the second equality). 

(2) The additional assumption tells us that $|\chi(\phi(\delta_k))|_{\infty}=|\chi(\gamma_0^{\frac{k}{n}})|_{\infty}$ for every $\Q_{\infty}$-rational character $\chi$ of $S$, and also implies that $\gamma_0\in G(\A_f^p)$ itself lies in a compact open subgroup of $G(\A_f^p)$.
Hence, by the argument of (1), $\phi(\delta_k)\cdot \gamma_0^{-\frac{k}{n}}\in G(\Q)$ is a torsion element.

(3) It is well-known that the stated condition implies that for any maximal $\Q$-torus $T_0$ of $G$, elliptic over $\R$, $T_0(\Q)$ is discrete in $T_0(\A_f)$; this is the condition what Kisin called \emph{the Serre condition for $T_0$}, \cite[(3.7.3)]{Kisin17}. Then, again we resort to the argument of (1).
\end{proof}

\begin{lem}  \label{lem:canonical_decomp_of_epsilon} 
For any $\Q$-group $T$ of multiplicative type such that $\Q\operatorname{-}\mathrm{rk}(T)=\R\operatorname{-}\mathrm{rk}(T)$, there exist a positive integer $s$ and elements $\pi_0$, $t\in T(\Q)$ such that
\begin{align*} 
(a)\quad & \epsilon^s=\pi_0 t,\\ 
(b)\quad &\pi_0\in K_l\text{ for all }l\neq p, \nonumber \\ 
(c)\quad &t\in K_p, \nonumber 
\end{align*}
where for each finite place $v$ of $\Q$, $K_v$ denotes the maximal compact subgroup of $T^{\mathrm{o}}(\Q_v)$.
The pair $(\pi_0,t)$ is uniquely determined by $\epsilon$, up to taking simultaneous powers. In particular, the construction of $(\pi_0,t;s)$ is functorial in $(T,\epsilon)$: $f:(T,\epsilon) \rightarrow (T',\epsilon')$ is a morphism of pairs as above, $f$ matches the corresponding elements $(\pi_0,t)$, $(\pi_0',t')$ (for the same $s$)
\end{lem}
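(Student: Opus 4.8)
`\textbf{Plan of proof.}` The statement concerns a $\Q$-group of multiplicative type $T$ with $\Q\operatorname{-}\mathrm{rk}(T)=\R\operatorname{-}\mathrm{rk}(T)$ and an element $\epsilon\in T(\Q)$ (implicit from the context of the preceding proposition), and asserts a canonical ``$p$-adic versus prime-to-$p$'' decomposition $\epsilon^s=\pi_0 t$. The plan is to first reduce to the case of a torus: replacing $T$ by $T^{\mathrm{o}}$ costs only passing to a power of $\epsilon$ (since $T/T^{\mathrm{o}}$ is finite), so after raising $\epsilon$ to a bounded power we may assume $T$ is a $\Q$-torus of equal $\Q$- and $\R$-ranks, i.e.\ the anisotropic kernel of $T$ stays anisotropic over $\R$. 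This is exactly the Serre-type condition, and its consequence (already used in Prop.~\ref{prop:phi(delta)=gamma_0_up_to_center}, (3), via \cite[(3.7.3)]{Kisin17}) is that $T(\Q)$ is discrete in $T(\A_f)$.

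`\textbf{Key steps.}` First I would construct the prime-to-$p$ part $\pi_0$ as follows. For each finite place $v$, let $K_v\subset T(\Q_v)$ be the maximal compact subgroup, characterized (as recalled in the proof of Prop.~\ref{prop:phi(delta)=gamma_0_up_to_center}) as $\bigcap_{\chi}\ker(\mathrm{val}_v\circ\chi)$ over $\Q_v$-rational characters $\chi$. The idea is to split $\epsilon$ adelically: consider the element $(\epsilon_v)_v\in T(\A_f)$ with $\epsilon_v=\epsilon$ for all $v\neq p$ and $\epsilon_p=1$; I want to approximate this by a rational element up to a compact error. Concretely, since $T(\Q)$ is discrete and cocompact modulo the maximal compact subgroup $\prod_v K_v$ of $T(\A_f)$ (here one uses the finiteness of the class group $T(\Q)\backslash T(\A_f)/\prod_v K_v$, a standard finiteness theorem for tori, together with discreteness of $T(\Q)$), there is a positive integer $s$ such that the class of $(\epsilon_v)_v^{s}$ lies in the image of $T(\Q)$; choose $\pi_0\in T(\Q)$ representing it, so $\pi_0\equiv\epsilon^s$ in $T(\Q_v)/K_v$ for $v\neq p$ and $\pi_0\equiv 1$ in $T(\Q_p)/K_p$. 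Enlarging $s$ if needed (again using finiteness of the class group) one can arrange $\pi_0$ to actually satisfy $\pi_0^{}\equiv\epsilon^s\pmod{K_l}$ for every $l\neq p$ on the nose. Then set $t:=\pi_0^{-1}\epsilon^s\in T(\Q)$; by construction $t\in K_l$ for every $l\neq p$ after raising once more to a suitable power, and also $t\equiv\epsilon^s\pmod{K_p}$, i.e.\ $t\in K_p$, which gives (a), (b), (c). (The role of the equal-rank hypothesis is precisely to make $T(\Q)$ discrete in $T(\A_f)$ so that the quotient $T(\Q)\backslash T(\A_f)/\prod K_v$ is finite — without it $T(\Q)$ could be dense and no such decomposition would exist.)

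For uniqueness: if $\epsilon^{s}=\pi_0 t=\pi_0' t'$ are two such decompositions (after passing to a common power $s$, which one may always do), then $\pi_0'\pi_0^{-1}=t t'^{-1}\in T(\Q)$ lies in $K_l$ for all $l\neq p$ (being a ratio of prime-to-$p$ units on the left) and in $K_p$ (being a ratio of $p$-adic units on the right), hence in $\prod_v K_v\cap T(\Q)$, which is a finite group by discreteness of $T(\Q)$; so some further power of $\pi_0'\pi_0^{-1}$ is trivial, i.e.\ $(\pi_0,t)$ is determined by $\epsilon$ up to simultaneous powers. Functoriality in $(T,\epsilon)$ is then automatic: a morphism $f\colon(T,\epsilon)\to(T',\epsilon')$ maps $K_v$ into $K'_v$ for every finite $v$ (a homomorphism of tori sends compact subgroups into compact subgroups), so $f(\pi_0),f(t)$ satisfy conditions (a)–(c) for $\epsilon'=f(\epsilon)$ with the same $s$, and by the uniqueness just proved they agree with $\pi_0',t'$ up to a common power.

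`\textbf{Main obstacle.}` The only real content is the reduction and the class-group finiteness input: making precise, with explicit control of the exponent $s$, that one can simultaneously (i) move from $T$ to $T^{\mathrm{o}}$, (ii) realize the adelic class of the ``away-from-$p$'' idele as a rational point modulo compacts, and (iii) absorb the residual defect into a finite group — all by bounded powers depending only on $(T,\epsilon)$. I expect the bookkeeping of ``up to simultaneous powers'' to be the fiddly part; conceptually everything follows from discreteness of $T(\Q)$ in $T(\A_f)$ (equivalently the equal-rank hypothesis) plus finiteness of the $S$-class group of a torus, both standard. No new geometric or Galois-gerb input is needed here; this lemma is purely a statement about tori, and its proof is elementary once the finiteness statements are invoked.
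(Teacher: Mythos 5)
Your approach is essentially the same as the paper's: both proofs rest on the discreteness of $T(\Q)$ in $T(\A_f)$ (a consequence of the equal-rank hypothesis) and on the finiteness of the kernel and cokernel of the map $\varphi:T(\Q)\rightarrow \bigoplus_{v\neq\infty}T(\Q_v)/K_v$, and both construct $\pi_0$ and $t$ by lifting suitable adelic classes to $T(\Q)$ after passing to a power. This is the argument of \cite[Lem.~10.12]{Kottwitz92}, which the paper cites.

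However, there is a labeling error in your existence step that reverses the conclusion. You construct $\pi_0$ to represent (a power of) the adelic class of $(\epsilon_v)_v$ with $\epsilon_v=\epsilon$ for $v\neq p$ and $\epsilon_p=1$. This gives $\pi_0\equiv\epsilon^s\pmod{K_v}$ for $v\neq p$ and $\pi_0\equiv 1\pmod{K_p}$ — so your $\pi_0$ lies in $K_p$, whereas the lemma requires $\pi_0\in K_l$ for all $l\neq p$. Consequently $t:=\pi_0^{-1}\epsilon^s$ satisfies $t\equiv 1\pmod{K_v}$ for $v\neq p$ and $t\equiv\epsilon^s\pmod{K_p}$; the sentence ``also $t\equiv\epsilon^s\pmod{K_p}$, i.e.\ $t\in K_p$'' is a non-sequitur, since $t\equiv\epsilon^s\pmod{K_p}$ in general does \emph{not} put $t$ in $K_p$. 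In short, your existence argument produces the pair $(t,\pi_0)$ rather than $(\pi_0,t)$. Your uniqueness paragraph then tacitly switches back to the correct convention (there you treat $\pi_0,\pi_0'$ as prime-to-$p$ units and $t,t'$ as $p$-adic units), so the write-up is internally inconsistent. The fix is immediate: take $\pi_0$ to be a rational representative (after the appropriate power) of the adelic class that is trivial at every $v\neq p$ and equal to $\epsilon\bmod K_p$ at $p$; equivalently, simply swap the names of your two elements throughout the existence step. The paper avoids this by writing down both target adelic elements $a=(a_v)$ and $b=(b_v)$ symmetrically and lifting each separately.
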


Clearly, if one wants, one may further assume that $\pi_0,t\in T^{\mathrm{o}}(\Q)$.

\begin{proof}
(cf. \cite[Lem. 10.12]{Kottwitz92})
The uniqueness up to taking simultaneous powers of a pair of elements $(\pi_0,t)$ satisfying the properties (a) - (c) is an easy consequence of the property that $T(\Q)$ is discrete in $T(\A_f)$ (implied by $\Q\operatorname{-}\mathrm{rk}(T)=\R\operatorname{-}\mathrm{rk}(T)$). For existence, it follows from the same property that the canonical map $\varphi:T(\Q)\rightarrow X:=\oplus_{v\neq\infty} (T(\Qv)/K_v)$ has finite kernel and cokernel, so if we consider the two elements $(a_v)_v,\ (b_v)_v\in X$ where $a_v=1$, $b_v=\epsilon\mod K_v$ for $v\neq p$, and $a_v=\epsilon\mod K_p$, $b_p=1$, then some (common) power $(a_v)^r$, $(b_v)^r$ are the images of elements $a$, $b$ of $T(\Q)$. As $\epsilon^r$ and $ab$ have the same images under $\varphi$, some powers of them are equal: $\epsilon^s=\pi_0t$ where $\pi_0:=a^{s/r}, t:=b^{s/r}$.
\end{proof}

\begin{prop} \label{prop:canonical_decomp_of_epsilon}
Suppose that $\Q\operatorname{-}\mathrm{rk}(Z(G))=\R\operatorname{-}\mathrm{rk}(Z(G))$.
For an admissible pair $(\phi,\epsilon)$, let $T_{\epsilon}^{\phi}$ denote the $\Q$-subgroup (of multiplicative type) of $I_{\phi}$ generated by $\epsilon$ and $(\pi_0^{\phi},t^{\phi})$ elements of $T_{\epsilon}^{\phi}(\Q)$ attached to $(T_{\epsilon}^{\phi},\epsilon)$ by Lemma \ref{lem:canonical_decomp_of_epsilon} (for some $s^{\phi}\in\N$). 
Then the followings hold.

(1) For any field $k\supset\Q$ and $g\in G(\bar{k})$ with $\Int(g)(\epsilon)\in G(k)$, the elements $\Int(g)(\pi_0^{\phi})$, $\Int(g)(t^{\phi})$ belong to the $k$-subgroup of $G_{k}$ generated by $\Int(g)(\epsilon)$, and if $k\subset \Qb$, they are the elements (in $\Int g(T_{\epsilon}^{\phi}(\Q))=T_{\epsilon'}^{\phi'}(\Q)$) attached to $(\phi',\epsilon'):=\Int(g)(\phi,\epsilon)$ (for the same $s^{\phi}\in\N$).

(2) Suppose that $\epsilon\in G(\Q)$ and let $T_{\epsilon}^G$ be the $\Q$-subgroup of $G$ generated by $\epsilon$ with associated elements $\pi_0^G,t^G$ of $T_{\epsilon}^G(\Q)$ (for some $s^G\in\N$).
Then, as subsets of $G(\Qb)\ (\supset I_{\phi}(\Q))$, we have $T_{\epsilon}^{\phi}(\Q)=T_{\epsilon}^G(\Q)$, and $\pi_0^{\phi}=\pi_0^G$, $t^{\phi}=t^G$ when $s^{\phi}=s^{G}$; in this case, we simply write $T_{\epsilon}$, $\pi_0,t,s$.

(3) For any sufficiently large $k\in\N$, the pair $(\phi,(\pi_0^{\phi})^k)$ is also admissible. In particular, when $(\phi,\epsilon)$ is well-located, the map $\phi^{\Delta}:P\rightarrow G$ factors through $T_{\epsilon}$.
\end{prop}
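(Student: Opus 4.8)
\textbf{Proof plan for Proposition \ref{prop:canonical_decomp_of_epsilon}.}

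The first two statements are essentially bookkeeping around the functoriality in Lemma \ref{lem:canonical_decomp_of_epsilon} and the elementary facts in Lemma \ref{lem:Zariski_group_closure}. For (1): given $g\in G(\bar k)$ with $\epsilon':=\Int(g)(\epsilon)\in G(k)$, the isomorphism $\Int(g)\colon (T_{\epsilon}^{\phi})_{\bar k}\isom (T_{\epsilon'}^{\phi'})_{\bar k}$ of groups of multiplicative type is actually defined over $k$ (it carries the $\Q$-group $T_\epsilon^\phi$, hence its base change, onto the $k$-subgroup of $G_k$ generated by $\epsilon'$, by Lemma \ref{lem:Zariski_group_closure}, (1)), so $\Int(g)(\pi_0^{\phi})$ and $\Int(g)(t^{\phi})$ lie in $T_{\epsilon'}^{\phi'}(k)$; since the defining conditions (a)--(c) are preserved by a $k$-isomorphism matching $\epsilon$ to $\epsilon'$ (the maximal compact subgroups $K_v$ are intrinsic), the uniqueness clause of Lemma \ref{lem:canonical_decomp_of_epsilon} identifies them with $\pi_0^{\phi'},t^{\phi'}$. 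For (2): when $\epsilon\in G(\Q)$, recall from \autoref{subsubsec:well-located_admissible_morphism} (the last paragraph, and the argument showing $T\subset I_\phi$ when $\phi$ maps into $\fG_T$) that the centralizer $Z_G(\epsilon)^{\mathrm o}$ and more generally any $\Q$-subgroup generated by a rational element of $I_\phi$ sits inside $I_\phi$; concretely the $\Q$-subgroup $T_\epsilon^G\subset G$ generated by $\epsilon$ is contained in $I_\phi$ and is then nothing but $T_\epsilon^\phi$ as a $\Q$-subgroup of $I_\phi$. Matching $s^\phi=s^G$, uniqueness again gives $\pi_0^\phi=\pi_0^G$, $t^\phi=t^G$.

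The substantive statement is (3). The idea is that $\pi_0^\phi$ is the ``$p$-part'' of (a power of) $\epsilon$, carrying all the Frobenius content, while $t^\phi$ is a prime-to-$p$ unit; so replacing $\epsilon$ by a large power of $\pi_0^\phi$ should not destroy admissibility. Concretely I would verify conditions (1), (2), (3) of Definition \ref{defn:admissible_pair} for the pair $(\phi,(\pi_0^\phi)^k)$ with $k$ large and divisible by $s^\phi$. Condition (2) is immediate: $\pi_0^\phi\in T_\epsilon^\phi(\Q)\subset I_\phi(\Q)$, hence $(\pi_0^\phi)^k\in I_\phi(\Q)$. For condition (3) at the places $l\ne p$: writing $\gamma=y^{-1}\epsilon y$ for the witness $y\in X^p(\phi)$ in the definition for $(\phi,\epsilon)$, the element $y^{-1}\pi_0^\phi y$ lies in the prime-to-$p$ adelic points of the $\Q$-subgroup generated by $\gamma$ and, by property (b) of Lemma \ref{lem:canonical_decomp_of_epsilon}, is a unit at every $l\ne p$; so some power $y^{-1}(\pi_0^\phi)^k y$ lies in $G(\A_f^p)$ (this is the same mechanism as in Remark \ref{rem:admissible_pair}, (3) and the proof of Lemma \ref{lem:invariance_of_(ast(gamma_0))_under_transfer_of_maximal_tori}, (2)), giving the required $\gamma$. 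The delicate part is the equation $\epsilon x=\Phi^m x$ at $p$, i.e. condition (3) at $p$, for $(\pi_0^\phi)^k$ in place of $\epsilon$.

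For the $p$-part I would argue: since $\epsilon^{s^\phi}=\pi_0^\phi\, t^\phi$ with $t^\phi\in K_p$ a compact element of $T_\epsilon^\phi(\Q_p)$, and since $(\phi,\epsilon)$ is admissible so that $\epsilon'=u\epsilon u^{-1}$ satisfies $\epsilon' x=(b\sigma)^{rm}x$ for some $x\in G(\mfk)/\mbfKt_p$, raising to the power $k/s^\phi$ and absorbing the bounded factor $(t^\phi)'{}^{k/s^\phi}$ using Lemma \ref{lem:Kottwitz84-a1.4.9_b3.3}, (1) together with Greenberg's approximation (exactly as in the final steps of the proof of Theorem \ref{thm:LR-Satz5.21}, (1), where $k_t$ is pushed into the special maximal parahoric subgroup for $t\gg1$): for $k$ sufficiently large, $(\pi_0^\phi)'{}^{k}\cdot(b\sigma)^{-rm'}$ (with $m'=mk/s^\phi$, say) fixes a point of $G(\mfk)/\mbfKt_p$, which is precisely the equation $(\pi_0^\phi)^k x'=\Phi^{m'}x'$ needed. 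I expect this step---controlling the compact correction term $t^\phi$ and invoking Lemma \ref{lem:Kottwitz84-a1.4.9_b3.3} to solve the fixed-point equation---to be the main obstacle, since one must be careful that the requisite level $m'$ for $(\pi_0^\phi)^k$ is $k/s^\phi$ times that for $\epsilon$ and that Greenberg's result applies (the relevant parahoric group scheme has connected special fibre). Finally, the ``in particular'' follows: when $(\phi,\epsilon)$ is well-located, Proposition \ref{prop:phi(delta)=gamma_0_up_to_center} applied to the admissible pair $(\phi,(\pi_0^\phi)^k)$---which by construction has its prime-to-$p$ part a unit---shows $\phi(\delta_{k'})$ is a power of $(\pi_0^\phi)^k$ for $k'\gg1$, hence $\phi^\Delta$ factors through $T_\epsilon$ by the Zariski density of $\{\delta_n^j\}$ in $P(K,m)$ (Lemma \ref{lem:Reimann97-B2.3}, (2)) and Lemma \ref{lem:Zariski_group_closure}, (2).
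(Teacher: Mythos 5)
Your parts (1) and (2) follow essentially the paper's route (functoriality from Lemma \ref{lem:Zariski_group_closure} plus uniqueness in Lemma \ref{lem:canonical_decomp_of_epsilon}), though in (2) you should be more careful about why the identification $T_\epsilon^\phi(\Q)=T_\epsilon^G(\Q)$ is compatible with the two different $\Q$-structures: the paper conjugates by $g\in G(\Qb)$ so that $(\phi',\epsilon')=\Int(g)(\phi,\epsilon)$ is well-located, and then observes that the composite $\Q$-isomorphism $T_{\epsilon}^{\phi} \isom  T_{\epsilon'}^{\phi'} \cong T_{\epsilon'}^G \isom T_{\epsilon}^G$ is the identity because it fixes the Zariski-dense set $\{\epsilon^n\}$. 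That extra step is what lets you invoke uniqueness. Your prime-to-$p$ part of (3) is fine and in fact simpler than what the paper needs at those places.

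The genuine gap is the $p$-adic part of (3). The paper's proof first reduces, via Lemma \ref{lem:LR-Lemma5.23}, to the case that $(\phi,\epsilon)$ is nested in a special Shimura sub-datum $(T,h)$ with $T$ a maximal $\Q$-torus; this puts $\epsilon,\pi_0,t$ in $T(\Q)$ and the Frobenius element $b$ in $T(\Qpnr)$. After further raising $(\pi_0,t,s)$ to a power so that $t$ lands in the parahoric $T(\Qp)_1=T(\Qp)\cap\ker(w_T)$ (allowed, since $T(\Qp)_1$ has finite index in the maximal compact), Greenberg's theorem applied to $T(\mfk)_1$ produces $t_p\in T(\mfk)$ with $t=t_p^{-1}\sigma^{ns}(t_p)$. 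Because $t_p$, $b$, $\epsilon$, $\pi_0$, $t$ all commute inside $T$, the displayed identity $(ct_p)(\pi_0^{-1}(b\sigma)^{ns})(ct_p)^{-1}=\sigma^{ns}$ then follows by a short direct computation, establishing admissibility of $(\phi,\pi_0)$. Your sketch omits the reduction to the nested case entirely, so the compact correction $(t^\phi)'$ and the element $b$ are not a priori sitting in a common torus and the conjugating element coming from Greenberg's theorem does not commute with them; and the mechanism you cite---pushing $(t^\phi)'{}^{k/s^\phi}$ into the parahoric ``exactly as'' $k_t$ is pushed in the proof of Theorem \ref{thm:LR-Satz5.21}, (1)---is the wrong one: there $k_t$ actually tends to $1$ as $t\to\infty$ because of the specific norm structure of the expression, whereas here $(t^\phi)'{}^{k/s^\phi}$ merely stays bounded as $k$ grows and need not enter $\mbfKt_p$, let alone become small. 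To repair the argument you should incorporate the reduction to the nested case and the passage to $t\in T(\Qp)_1$, and then apply Greenberg's theorem to the torus parahoric $T(\mfk)_1$ rather than to $\mbfKt_p$.
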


\begin{proof}
We observe that when $\Q\operatorname{-}\mathrm{rk}(Z(G))=\R\operatorname{-}\mathrm{rk}(Z(G))$, for any admissible pair $(\phi,\epsilon)$, we also have $\Q\operatorname{-}\mathrm{rk}(T_{\epsilon})=\R\operatorname{-}\mathrm{rk}(T_{\epsilon})$, since $(I_{\phi}/Z(G))_{\R}$ is an $\R$-subgroup of the compact inner form of $G^{\ad}_{\R}$ defined by $\phi(\infty)\circ\zeta_{\infty}$. 

Now, let $(\phi,\epsilon)$ be an arbitrary admissible pair.
From Lemma \ref{lem:Zariski_group_closure}, we know the followings: when $\epsilon':=\Int g(\epsilon)\in G(k)$, $\Int g$ induces a $k$-isomorphism between the $k$-subgroups of $G_k$ generated by $\epsilon$ and $\epsilon'$, and when $k=\Q$, it also induces $\Q$-isomorphisms between the $\Q$-groups $T_{\epsilon}\subset I_{\phi}$ and $T_{\epsilon'}\subset I_{\phi'}$, where $\phi':=\Int g\circ\phi$; this proves (1) by uniqueness up to powers of the pair $(\pi_0,t)$ (Lemma \ref{lem:canonical_decomp_of_epsilon}).
If $(\phi,\epsilon)$ is also well-located, the $\Qb$-embedding $(T_{\epsilon})_{\Qb}\hookrightarrow (I_{\phi})_{\Qb}\isom I_{\Qb}\hookrightarrow G_{\Qb}$ (\ref{eq:inner-twisting_by_phi}) induces a $\Q$-isomorphism $T_{\epsilon}\isom T_{\epsilon}^G$.
Therefore, if $\epsilon\in G(\Q)$ and $(\phi',\epsilon'):=\Int g(\phi,\epsilon)\ (g\in G(\Qb))$ is well-located, we have $\Q$-isomorphisms
\[ T_{\epsilon}^{\phi} \isom  T_{\epsilon'}^{\phi'} \cong T_{\epsilon'}^G \isom T_{\epsilon}^G,\]
fixing the Zariski dense subset $\{\epsilon^n\}_{n\in\Z}$ (of both $T_{\epsilon}^{\phi}(\Q)$ and $T_{\epsilon}^G(\Q)$), thus when both are regarded as subgroups of $G(\Qb)$, the induced map $T_{\epsilon}^{\phi}(\Q) \isom T_{\epsilon}^G(\Q)$ is the identity.
By uniqueness up to powers of the pair $(\pi_0,t)$ (Lemma \ref{lem:canonical_decomp_of_epsilon}), this proves the claim of (2); in particular, $\pi_0^{\phi}$, $t^{\phi}$ (a priori, elements of $G(\Qb)$) both lie in $G(\Q)$. 

In view of this discussion, to prove statement (3), we may assume (by Lemma \ref{lem:LR-Lemma5.23}) that $(\phi,\epsilon)$ is \emph{nested} in a maximal $\Q$-torus $T$ of $G$ that is elliptic over $\R$  (i.e. $\phi=\psi_{T,\mu_h}$ for some $h\in X\cap\Hom(\dS,T_{\R})$ and $\epsilon\in T(\Q)$); thus, $T_{\epsilon}$ is a $\Q$-subgroup of $T$ via the inner-twisting $(I_{\phi})_{\Qb}\isom I_{\Qb}$ (\ref{eq:inner-twisting_by_phi}). 
In this situation, we establish statement (3), assuming that $t\in T(\Qp)_1=T(\Qp)\cap\ker(w_{T})$, which is allowed, as $T(\Qp)_1$ is a subgroup with finite index of the maximal compact subgroup of $T(\Qp)$. We only need to check condition (c) of Def. \ref{defn:admissible_pair}. For $v=l\neq p$, $\phi(l)\circ\zeta_l$ is conjugate under $T(\Qlb)$ to the canonical neutralization $\xi_l$ of $\fG_T(l)$ (Lemma \ref{lem:properties_of_psi_T,mu}), thus as $\epsilon\in T(\Q)$, condition (c) for $v=l$ holds.
At $v=p$, we choose $u\in T(\Qpb)$ such that $\Int(u)\circ\phi(p)\circ\zeta_p$ is unramified, say the inflation of a $\Qpnr/\Qp$-Galois gerb morphism $\theta^{\nr}:\fD\rightarrow\fG^{\nr}_{T_{\Qp}}$, from which we obtain $b\in T(\Qpnr)$ by $b\sigma=\theta^{\nr}(s_{\sigma})$. Since $(\phi,\epsilon)$ is admissible, by Lemma \ref{lem:Kottwitz84-a1.4.9_b3.3}, there exists $c\in G(\mfk)$ such that $c(\epsilon^{-1}\cdot(b\sigma)^n)c^{-1}=\sigma^n$, where $n$ is the level of $(\phi,\epsilon)$. Since $t$ lies in the parahoric subgroup $T(\Qp)_1$ of $T(\Qp)$, we can find $t_p\in T(\mfk)$ such that $t=t_p^{-1}\sigma^{ns}(t_p)$ \cite[Prop.3]{Greenberg63}. So, we have $(ct_p)(\pi_0^{-1}\cdot(b\sigma)^{ns})(ct_p)^{-1}=\sigma^n$, as was asserted. 
Next, since for every $k\gg1$, $(\phi,\pi_0^k\in G(\Q))$ is also an admissible pair with $\pi_0^k$ lying in a compact subgroup of $G(\A_f^p)$, we can apply Prop. \ref{prop:phi(delta)=gamma_0_up_to_center} by the assumption $\Q\operatorname{-}\mathrm{rk}(Z(G))=\R\operatorname{-}\mathrm{rk}(Z(G))$ to see that the image of $\phi^{\Delta}$ is generated by $\pi_0^k$ for any sufficiently large $k\in\N$. 
\end{proof}

\subsubsection{}
For the next discussion, 
it is also necessary to use Tate hypercohomology groups $\widehat{\mathbb{H}}^i\ (i\in\Z)$ of (bounded) complexes of discrete $\mathcal{G}$-modules for a finite group $\mathcal{G}$: they are defined by means of either the complete (standard) resolution of the trivial $\Z[\mathcal{G}]$-module $\Z$ or hypercochains in the usual manner (cf. \cite[$\S$1]{Koya90}).
\footnote{For finite $\mathcal{G}$, the Tate hypercohomology $\widehat{\mathbb{H}}^i(\mathcal{G},-)$ factors through the stable module category $\mathcal{T}(\mathcal{G})$ of the group algebra $\Z[\mathcal{\mathcal{G}}]$, and as such equals the cohomological functor $A^{\bullet}\mapsto \Hom_{\mathcal{T}(\mathcal{G})}(\Z,A^{\bullet}[i])$ on that triangulated category.}
In this work, we will be only interested in the bounded complexes $A^{\bullet}$ of discrete $\mathcal{G}$-modules \emph{whose positive terms are zero}, in which case one has 
\[ \widehat{\mathbb{H}}^i(\mathcal{G},A^{\bullet})=\begin{cases} 
\mathbb{H}^0(\mathcal{G},A^{\bullet})/\Nm_{\mathcal{G}}\mathcal{H}^0(A^{\bullet}) &\text{ if } i=0\\
\mathbb{H}^i(\mathcal{G},A^{\bullet}) &\text{ if } i>0, \end{cases} \]
where $\mathcal{H}^0(A^{\bullet})$ denotes the $0$-th cohomology $\mathcal{G}$-module of the complex $A^{\bullet}$ and $\Nm_\mathcal{G}$ is the norm map \cite[Prop.1.2]{Koya90}. 

For a diagonalizable $\C$-group $D$ with (algebraic) action of a finite group $\mathcal{G}$,
the  canonical surjection and injection 
\begin{equation} \label{eq:H^0_H^-1}
\pi_0(D^{\mathcal{G}})\twoheadrightarrow \widehat{\mathbb{H}}^{0}(\mathcal{G},D)=\widehat{\mathbb{H}}^{1}(\mathcal{G},X_{\ast}(D)),\quad \widehat{\mathbb{H}}^{-1}(\mathcal{G},X^{\ast}(D))\hookrightarrow X^{\ast}(D)_{\mathcal{G},\tors},
\end{equation}
identify the canonical duality $X^{\ast}(D)_{\mathcal{G},\tors} \isom \pi_0(D^{\mathcal{G}})^D$ with the duality induced from the cup-product pairing
\[\widehat{\mathbb{H}}^{1}(\mathcal{G},X_{\ast}(D))\otimes\widehat{\mathbb{H}}^{-1}(\mathcal{G},X^{\ast}(D))\rightarrow \widehat{\mathbb{H}}^{0}(\mathcal{G},\Z)=\frac{1}{|\mathcal{G}|}\Z.\]

For a field $k$, either global or local, we let 
\begin{align} \label{eq:C_k}
C_k:=\begin{cases} k^{\times} & \text{ if }k\text{ is local},\\
\A_k^{\times}/k^{\times} & \text{ if }k\text{ is global}.\end{cases}
\end{align}
For a number field $F$ and a bounded complex of $F$-tori $T^{\bullet}=(\cdots\rightarrow T^i \rightarrow \cdots)$ (concentrated in non-positive degrees) and $i\in\Z$, we define
\begin{align*}
\mathbb{H}^i(\A_F/F,T^{\bullet})&:=\mathbb{H}^i(F,T^{\bullet}(C_{\overline{F}})) \\
& =\varinjlim_{E}\mathbb{H}^i(E/F,T^{\bullet}(C_E)),
\end{align*}
where $E$ runs through finite Galois extensions of $F$, 
$T^{\bullet}(C_{\overline{F}})$ denotes the complex of discrete $\Gamma_F$-modules $\cdots\rightarrow T^i(\A_{\overline{F}})/T^i(\overline{F})\rightarrow\cdots$ and $T^{\bullet}(C_E)$ is defined similarly.

\begin{lem} \label{lem:identification_of_Kottwitz_A(H)}
(1) Let $H$ be a (connected) reductive group over a field, either global or local, and $T$ a maximal $k$-torus of $H$; set $\tilde{T}:=\rho_H^{-1}(T)$. If $k'$ is a finite Galois extension of $k$ splitting $T$, the two pairings are compatible:
\[ \xymatrix{ \pi_0(Z(\hat{H})^{\mathcal{G}}) \ar@{->>}[d] & \bigotimes & \pi_1(H)_{\mathcal{G},\tors}  \ar[r] & \Q/\Z \ar@{=}[d] \\
\widehat{\mathbb{H}}^{0}(k'/k,X^{\ast}(H_{\bfab})) & \bigotimes & \widehat{\mathbb{H}}^{1}(k'/k,H_{\bfab}(C_{k'})) \ar[r] \ar@{^(->}[u] & \Q/\Z }, \]
where the bottom pairing is the local/global Tate-Nakayama pairings for the complex $H_{\bfab}=(\tilde{T}\rightarrow T)$ (cf. \cite[A.2.2]{KottwitzShelstad99}).
\footnote{Recall that by our convention, in $H_{\bfab}=\tilde{T}\rightarrow T$ and $X^{\ast}(H_{\bfab})=X^{\ast}(T)\rightarrow X^{\ast}(\tilde{T})$, $T$ and $X^{\ast}(\tilde{T})$ are both placed in degree $0$.}

Moreover, the two vertical maps are bijections if the extension $k'/k$ has sufficiently large degree.
\end{lem}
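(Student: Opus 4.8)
The plan is to prove the last statement of the lemma by exhibiting both pairings as incarnations of a single cup-product/Tate--Nakayama pairing attached to the two-term complex $H_{\bfab}=[\tilde{T}\xrightarrow{\rho_H}T]$, the maximal torus $T$ entering only through the following elementary identifications. Since $\tilde{T}=\rho_H^{-1}(T)$ is a maximal torus of the simply connected group $H^{\uc}$, the map $X_{\ast}(\tilde{T})\to X_{\ast}(T)$ is injective with image the coroot lattice of $H$; hence the complex of cocharacter lattices $X_{\ast}(H_{\bfab})=[X_{\ast}(\tilde{T})\to X_{\ast}(T)]$ is a two-term free resolution of $\pi_1(H)=X_{\ast}(T)/(\text{coroot lattice})$, so $X_{\ast}(H_{\bfab})\simeq\pi_1(H)[0]$, and dually the character complex $X^{\ast}(H_{\bfab})=[X^{\ast}(T)\to X^{\ast}(\tilde{T})]$ is $\mathrm{R}\Hom(\pi_1(H),\Z)$ shifted by one. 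Equivalently, the complex of diagonalizable $\C$-groups dual to $H_{\bfab}$ has $Z(\hat{H})$ as its only nonzero cohomology group, in the degree matching $\pi_1(H)$ under the duality, consistent with $X^{\ast}(Z(\hat{H}))=\pi_1(H)$.

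First I would make the two vertical maps explicit. Applying the Tate--Nakayama isomorphism for the complex $H_{\bfab}$ together with $X_{\ast}(H_{\bfab})\simeq\pi_1(H)[0]$ gives $\widehat{\mathbb{H}}^{1}(k'/k,H_{\bfab}(C_{k'}))\cong\widehat{\mathbb{H}}^{-1}(k'/k,X_{\ast}(H_{\bfab}))=\widehat{H}^{-1}(\mathcal{G},\pi_1(H))$; composing with the canonical injection $\widehat{H}^{-1}(\mathcal{G},\pi_1(H))=\ker\bigl(\mathrm{Nm}_{\mathcal{G}}\colon\pi_1(H)_{\mathcal{G}}\to\pi_1(H)^{\mathcal{G}}\bigr)\hookrightarrow\pi_1(H)_{\mathcal{G},\tors}$ --- an injection because tensoring with $\Q$ kills torsion and turns $\mathrm{Nm}_{\mathcal{G}}$ into an isomorphism on coinvariants --- yields the right-hand vertical map; this is precisely the incarnation for $D=Z(\hat{H})$ of the injection in $(\ref{eq:H^0_H^-1})$. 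The left-hand vertical map is the Pontryagin dual situation: since Tate cohomology of the finite group $\mathcal{G}$ is a duality theory of dimension $-1$ and $Z(\hat{H})(\C)$ has the $\mathcal{G}$-cohomology of $\Hom(\pi_1(H),\Q/\Z)$, one gets $\widehat{\mathbb{H}}^{0}(\mathcal{G},Z(\hat{H}))=\bigl(\widehat{H}^{-1}(\mathcal{G},\pi_1(H))\bigr)^{D}=\widehat{\mathbb{H}}^{0}(k'/k,X^{\ast}(H_{\bfab}))$, and the surjection in $(\ref{eq:H^0_H^-1})$ for $D=Z(\hat{H})$, namely $\pi_0(Z(\hat{H})^{\mathcal{G}})\twoheadrightarrow\widehat{\mathbb{H}}^{0}(\mathcal{G},Z(\hat{H}))$, is the left-hand vertical map.

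With both sides identified, commutativity of the square is formal. By construction (\cite[A.2.2, A.3]{KottwitzShelstad99}, cf. \cite[Cor.~3.4.3]{Borovoi98}) the local/global Tate--Nakayama pairing for $H_{\bfab}$ is the composite of the Tate--Nakayama isomorphism above with the cup-product pairing $\widehat{\mathbb{H}}^{0}(\mathcal{G},X^{\ast}(H_{\bfab}))\otimes\widehat{\mathbb{H}}^{-1}(\mathcal{G},X_{\ast}(H_{\bfab}))\to\widehat{H}^{0}(\mathcal{G},\Z)=\frac{1}{|\mathcal{G}|}\Z/\Z\hookrightarrow\Q/\Z$, which under the identifications above is the canonical perfect pairing between $\widehat{H}^{-1}(\mathcal{G},\pi_1(H))$ and its Pontryagin dual, while the top pairing is the canonical Pontryagin duality between $\pi_1(H)_{\mathcal{G},\tors}$ and $\pi_0(Z(\hat{H})^{\mathcal{G}})=(\pi_1(H)_{\mathcal{G},\tors})^{D}$. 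Commutativity is then the tautology that the canonical pairing $A\otimes A^{D}\to\Q/\Z$ restricts, along a subgroup $A'\hookrightarrow A$ and the dual quotient $A^{D}\twoheadrightarrow(A')^{D}$, to the canonical pairing $A'\otimes(A')^{D}\to\Q/\Z$, here with $A=\pi_1(H)_{\mathcal{G},\tors}$ and $A'=\widehat{H}^{-1}(\mathcal{G},\pi_1(H))$. Finally, the right-hand vertical is the inclusion $\widehat{H}^{-1}(\mathcal{G},\pi_1(H))\hookrightarrow\pi_1(H)_{\mathcal{G},\tors}$ and the left-hand one is its Pontryagin dual; since $\pi_1(H)_{\mathcal{G},\tors}$ is finite, once $[k':k]$ is divisible by its exponent the norm vanishes on it and the inclusion is an equality, so both vertical maps are bijective.

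The step I expect to be the main obstacle is not any of the formal manipulations above but rather matching, \emph{with the correct degree shifts and sign normalizations}, the Tate--Nakayama pairing for the complex $H_{\bfab}$ as normalized in \cite[Appendix A]{KottwitzShelstad99} (and \cite{Borovoi98}) with the elementary cup product of the shifted-dual pair of lattice complexes $X^{\ast}(H_{\bfab})$, $X_{\ast}(H_{\bfab})$, and checking that the comparison maps $(\ref{eq:H^0_H^-1})$, recorded there for a single diagonalizable $\C$-group, extend compatibly to the two-term complex; this is a bookkeeping exercise within that formalism, but it is where the content of the lemma sits. As an alternative to running it directly, the compatibility can also be obtained from the torus case of $(\ref{eq:H^0_H^-1})$ by a five-lemma argument applied to the long exact Tate-(hyper)cohomology sequences attached to the distinguished triangle $X_{\ast}(\tilde{T})[1]\to X_{\ast}(H_{\bfab})\to X_{\ast}(T)[0]\to X_{\ast}(\tilde{T})[2]$ and to its counterparts on the $X^{\ast}$- and $C_{k'}$-points sides, all maps in sight being cup products or connecting homomorphisms.
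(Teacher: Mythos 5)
Your proposal is correct and follows the paper's proof in essence: both reduce the compatibility of the two pairings, via the Tate--Nakayama formalism for the complex $H_{\bfab}$ together with the comparison maps (\ref{eq:H^0_H^-1}) applied to $D=Z(\hat{H})$, to the canonical Pontryagin duality between $\widehat{H}^{-1}(\mathcal{G},\pi_1(H))$ and $\widehat{\mathbb{H}}^{0}(k'/k,X^{\ast}(H_{\bfab}))$. Your reorganizations are equivalent in content to the paper's chain of isomorphisms: packaging the five-lemma step~(c) of the paper's proof into the quasi-isomorphism $X_{\ast}(H_{\bfab})\simeq\pi_1(H)[0]$ is the same fact, and your observation that the two vertical maps are exact Pontryagin duals is precisely what the paper's chain establishes. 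You do add one genuine improvement: a short self-contained argument for the final bijectivity claim, where the paper simply cites \cite[Prop.~B.4]{Milne92}. The argument is that once $[k':k]$ is sufficiently divisible the norm kills the relevant torsion, so the injection $\widehat{H}^{-1}(\mathcal{G},\pi_1(H))\hookrightarrow\pi_1(H)_{\mathcal{G},\tors}$ becomes an equality; to pin this down, one should take the order of $\ker(\mathcal{G}\to\bar{\mathcal{G}})$ (where $\bar{\mathcal{G}}$ is the image of $\mathcal{G}$ in $\mathrm{Aut}(\pi_1(H))$) divisible by the exponent of $(\pi_1(H)^{\bar{\mathcal{G}}})_{\tors}$, rather than of $\pi_1(H)_{\mathcal{G},\tors}$ as you write, since it is the norm into the invariants that must vanish. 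You also correctly identify where the real bookkeeping lies, and the paper's steps (a)--(c) are exactly that bookkeeping. One small slip in your last sentence: the distinguished triangle attached to $\tilde{T}\to T$ should be $X_{\ast}(\tilde{T})\to X_{\ast}(T)\to X_{\ast}(H_{\bfab})\to X_{\ast}(\tilde{T})[1]$ (or a rotation thereof); the shifts in your displayed triangle are off, but this occurs only in an aside and does not affect the main argument.
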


\begin{proof} 
(1) First, we note that the bottom local/global Tate-Nakayama pairings are perfect pairings: its proof can be easily reduced to the classical Tate-Nakayama duality for tori (cf. proof of (A.2.2) of \cite{KottwitzShelstad99}). 
In view of the canonical maps (\ref{eq:H^0_H^-1}), the existence of the vertical maps and the compatibility of the pairings deduced from the canonical isomorphisms
\begin{align*}
\widehat{\mathbb{H}}^{0}(k'/k,Z(\hat{H}))^D &=\widehat{\mathbb{H}}^{-1}(k'/k,\hat{H}_{\bfab})^D \\
&\stackrel{(a)}{=}\widehat{\mathbb{H}}^{0}(k'/k,X^{\ast}(T)\rightarrow X^{\ast}(\tilde{T}))^D \\
&\stackrel{(b)}{=}\widehat{\mathbb{H}}^{1}(k'/k,H_{\bfab}(C_{k'})) \\
&\stackrel{(c)}{=}\widehat{\mathbb{H}}^{-1}(k'/k,\pi_1(H)), 
\end{align*}
where the equalities are as follows:

(a) For any two-term complex $D_{-1}\rightarrow D_0$ of diagonalizable $\C$-groups with $\Gal(k'/k)$-action, the exponential sequence $0\rightarrow X_{\ast}(D_i)\rightarrow \mathrm{Lie}(D_i) \rightarrow D_i\rightarrow 0$ gives an isomorphism, for all $i\in\Z$,
\begin{equation} \label{eq:connecting_isom_diagonal_gp}
\widehat{\mathbb{H}}^i(k'/k,D_{-1}\rightarrow D_0) = \widehat{\mathbb{H}}^{i+1}(k'/k,X_{\ast}(D_{-1})\rightarrow X_{\ast}(D_0)),
\end{equation} (First, establish this for a single term complex using the cohomology long exact sequence of the associated exponential sequence. Then, the general case follows from it by an appropriate application of the five lemma). 

(b) For a single term complex, this is the classical Tate-Nakayma duality (cf. \cite{Milne13}). Then, one uses the five lemma in an obvious way.

(c) Apply the five lemma to the isomorphisms from the long cohomology exact sequence attached to the short exact sequence $0\rightarrow X_{\ast}(\tilde{T})\rightarrow X_{\ast}(T)\rightarrow \pi_1(H)\rightarrow 0$ to the long exact sequence 
 \[ \cdots \rightarrow \widehat{\mathbb{H}}^{i}(k'/k,\tilde{T}(C_{k'})) \rightarrow  \widehat{\mathbb{H}}^{i}(k'/k,T(C_{k'})) \rightarrow \widehat{\mathbb{H}}^{i}(k'/k,H_{\bfab}(C_{k'})) \rightarrow  \widehat{\mathbb{H}}^{i+1}(k'/k,\tilde{T}(C_{k'})) \rightarrow \cdots, \]
provided by cup-product with the fundamental class in $H^2(k'/k,C_{k'})$, to see that the same cup-product gives an isomorphism
\[ \widehat{\mathbb{H}}^{i-1}(k'/k,\pi_1(H)) \isom \widehat{\mathbb{H}}^{i+1}(k'/k,H_{\bfab}(C_{k'})). \]

Finally, the two maps (\ref{eq:H^0_H^-1}) become isomorphisms  if the degree $[k':k]$ is sufficiently large \cite[Prop.B.4]{Milne92}.
\end{proof}

\begin{prop} \label{prop:triviality_in_comp_gp} 
Suppose that $(G,X)$ is a Shimura datum of abelian type \cite{Milne94}. 

(1) For any semi-simple element $\epsilon\in G(\Q)$ with centralizer $G_{\epsilon}$, we have
\begin{equation} \label{eq:trivaility_of_Sha^{infty,p}}
\Sha^{\infty,p}(\Q,\pi_0(G_{\epsilon})):=\ker[H^1(\Q,\pi_0(G_{\epsilon}))\rightarrow \prod_{v\neq\infty,p}H^1(\Qv,\pi_0(G_{\epsilon}))]=0.
\end{equation}

(2) If two stable Kottwitz triples are (geometrically) equivalent, they are also stably equivalent.
\end{prop}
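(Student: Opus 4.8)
The plan is to prove both parts of Proposition \ref{prop:triviality_in_comp_gp} by reducing to the case where $G^{\der}$ is simply connected (so that centralizers of semisimple elements are connected), where the assertions become essentially formal, and then transporting the conclusion back via the functorial behaviour of the objects involved for abelian-type Shimura data.

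For part (1), I would first recall that for a Shimura datum of abelian type one may choose an auxiliary Shimura datum $(G_1,X_1)$ with $G_1^{\der}=G_1^{\uc}$ equipped with a central isogeny $G_1^{\der}\to G^{\der}$ inducing an isomorphism of adjoint Shimura data, as in \cite{Deligne79}, \cite{Milne94}. The point of the \emph{abelian-type} hypothesis is that it guarantees the needed compatibility between $\pi_0(G_\epsilon)$ and the corresponding objects over the number field --- concretely, one uses that $\pi_0(G_\epsilon)$ embeds into $\pi_1(G^{\der})$-type data and that the Serre condition (which holds for abelian type: $Z(G)$ splits over a CM field, $w_X$ rational) forces the relevant finite group scheme to behave well at the archimedean and $p$-adic places. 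So I would first establish that $\Sha^{\infty,p}(\Q,\pi_0(G_\epsilon))$ is controlled by a Tate--Shafarevich-type group of a finite commutative group scheme attached to $Z(G^{\sc})$ and $\mu_X$, and then invoke a vanishing result: for abelian type, the analogue of $\ker^1$ with the place $p$ (and $\infty$) removed vanishes. The mechanism is the one underlying \cite[Satz 5.13]{LR87}/\cite[Thm.5.12]{Borovoi98} (the Hasse principle for the image of $H^1(\Q,G^{\uc})$) combined with the computation of $H^1$ of finite multiplicative-type groups via Lemma \ref{lem:identification_of_Kottwitz_A(H)} and the duality $(\ref{eq:H^0_H^-1})$; the class in $\Sha^{\infty,p}$ is locally trivial away from $p,\infty$, and one shows it must then already be globally trivial because the only obstruction lives in a group that, for abelian type, is detected at $p$ and $\infty$.

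For part (2), suppose $(\gamma_0;\gamma,\delta)$ and $(\gamma_0';\gamma',\delta')$ are stable Kottwitz triples with $\gamma_0' = \Int(g)(\gamma_0)$ for some $g\in G(\Qb)$, and with $\gamma,\gamma'$ conjugate under $G(\A_f^p)$ and $\delta,\delta'$ $\sigma$-conjugate in $G(L_n)$. I want to show $\gamma_0$ and $\gamma_0'$ are \emph{stably} conjugate, i.e. that $g$ can be chosen so that $g^{-1}{}^\tau g\in G_{\gamma_0}^{\mathrm{o}}=I_0$ for all $\tau\in\Gamma$. The cocycle $\tau\mapsto g^{-1}{}^\tau g$ takes values in $G_{\gamma_0}$ and its image in $\pi_0(G_{\gamma_0})$ is a class $\beta\in H^1(\Q,\pi_0(I_0))$ whose vanishing is exactly equivalent to stable conjugacy (after adjusting $g$ by an element of $I_0(\Qb)$, which is possible since $H^1(\overline{F}/\overline{F},\dots)$ is trivial and one can absorb the connected part). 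The conditions (ii$'$) of Definition \ref{defn:stable_Kottwitz_triple} at each $l\neq p$ say $\gamma_l$ is stably conjugate to both $\gamma_0$ and $\gamma_0'$, which forces $\beta$ to be locally trivial at every finite $l\neq p$; at $\infty$ one uses that $\gamma_0,\gamma_0'$ are both $\R$-elliptic in the same $G_{\R}$-conjugacy class together with condition (iv$'$) (the existence of the global inner twist $I$ with local isomorphisms $j_v$), which pins down the class at $\infty$; and at $p$ condition (iii$'$) together with the description of $\mathfrak{D}_p^{(n)}(\gamma_0)$ in Proposition \ref{prop:B(gamma_0)=D(I_0,G;\Qp)} shows $\beta$ is trivial at $p$ as well. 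Thus $\beta\in\Sha^{\infty,p}(\Q,\pi_0(I_0))$, and part (1) gives $\beta=0$, hence stable equivalence.

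The main obstacle I anticipate is part (1): one must carefully verify that the finite group scheme whose $H^1$ controls $\pi_0(I_0)$ genuinely inherits the abelian-type structure, i.e. that $Z(I_0^{\sc})$ and the relevant cocharacter data are compatible with the auxiliary datum $(G_1,X_1)$, so that the Hasse-principle input of \cite[Thm.5.12]{Borovoi98} applies. There is a real subtlety in that $\pi_0(I_0)$ is not itself of the shape $Z(\widehat{\cdot})$-type for a reductive group, so one cannot cite a cohomology-vanishing statement off the shelf; instead I would express $\pi_0(I_0)$ via the crossed-module/abelianization formalism of \autoref{subsubsec:pre-Kottwitz_triple} (the complex $\tilde I_0\to I_0$) and run the Poitou--Tate/Borovoi machinery with the places $p,\infty$ removed, using Lemma \ref{lem:identification_of_Kottwitz_A(H)} to translate everything into Tate--Nakayama duality for a length-two complex of tori. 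Part (2), by contrast, should be a clean local-global diagram chase once part (1) is in hand.
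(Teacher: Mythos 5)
For part (2), your argument is essentially the paper's: reduce to showing the class of $g^{-1}{}^{\tau}g$ in $H^1(\Q,\pi_0(G_{\gamma_0}))$ lies in $\Sha^{\infty,p}$ and invoke (1). You do more work than needed, since $\Sha^{\infty,p}$ only requires triviality at $v\neq p,\infty$, which already follows from condition (ii$'$) of Def.~\ref{defn:stable_Kottwitz_triple} alone; your extra arguments at $p$ and $\infty$ are harmless but superfluous.

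For part (1), your proposed route is genuinely different from the paper's and, as written, has a gap that you yourself flag. The paper's proof is entirely explicit and case-by-case: it uses Kottwitz's identification of $\pi_0(G_{\epsilon})$ with a subgroup $C_{\epsilon}\subset\ker(\rho)\subset Z(G^{\uc})$, then \emph{Satake's classification} of the possible centers $Z_i$ of the absolutely simple factors $G_i$ for abelian-type Shimura data (the explicit list (\ref{eq:simple_factors_abelian_type}): $V_{E/F,N}$, $\mu_2$, $\mu_2\times\mu_2$, $\mu_4$, $\Res_{E/F}\mu_2$). It then shows that any subgroup of $\Res_{F/\Q}Z$ is a Weil restriction of a group in the same family or of $\mu_n$, and for each such $U=\Res_{K/\Q}V$ it finds (via Chebotarev density, since the splitting field of $V$ is abelian over $K$) a finite place $l\neq p$ at which $H^1(\Q,U)\to H^1(\Q_l,U)$ is injective — and for $\mu_n$ appeals to the Grunwald--Wang-type theorem \cite[Thm.~9.1.9]{NSW08}, checking the special case does not occur. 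Your abstract Poitou--Tate/Borovoi route cannot be made to work as stated: there is no general vanishing theorem for $\Sha^{\infty,p}$ of an arbitrary finite group of multiplicative type (already for $\mu_n$ this is delicate and needs Grunwald--Wang with care about the special case), and passing to an auxiliary datum $(G_1,X_1)$ with $G_1^{\der}=G_1^{\uc}$ trivializes the $\pi_0$ you are trying to study rather than controlling it. The abelian-type hypothesis is used in the paper not through the Serre condition or the Hasse principle of \cite[Thm.~5.12]{Borovoi98}, but precisely to restrict the list of possible $Z(G_i)$'s; you need to insert Satake's classification and run the explicit case analysis to close the argument.
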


\begin{proof}
(1) For a general connected reductive group $G$, the finite $\Q$-group $\pi_0(G_{\epsilon})=G_{\epsilon}/G_{\epsilon}^{\mathrm{o}}$ is canonically isomorphic to a subgroup $C_{\epsilon}$ of $\ker(\rho)(\subset Z(G^{\uc}))$ (for the canonical morphism $\rho:G^{\uc}\rightarrow G^{\der}\hookrightarrow G$) \cite[Lem. 4.5]{Kottwitz82}. To prove the statement, we will use an explicit information on the possible $\Q$-groups for $\pi_0(G_{\epsilon})$ when $G$ comes from an abelian-type Shimura datum $(G,X)$.

We first show that when $G^{\uc}=\prod_{i\in I} G_i$ is the decomposition into simple factors, there exist subgroups $U_i\subset Z(G_i)\ (i\in I)$ such that $\pi_0(G_{\epsilon})=\prod_{i\in I}U_i$. For this, we choose $y\in G^{\uc}$ and $z\in Z(G)$ be such that $\epsilon=\rho(y)z$, and consider the map $\theta:G_{\epsilon}\rightarrow \ker(\rho)$ defined by $\theta(g)=hyh^{-1}y^{-1}$, where $h\in G^{\uc}$ is any element whose image in $G^{\ad}$ is the same as that of $g$. Then one knows (\textit{loc. cit.}) that $\theta(g)$ is independent of the choice of $y$ and $h$, and $\theta: G_{\epsilon}\rightarrow \ker(\rho)$ is a homomorphism with $\ker(\theta)=G_{\epsilon}^{\mathrm{o}}$. We have $\rho^{-1}(G_{\epsilon})=\{h\in G^{\uc}\ |\ hyhy^{-1}\in \ker(\rho)\}$ and $\im(\theta)=\{ hyhy^{-1}\ |\ h\in \rho^{-1}(G_{\epsilon})\}$. If $C_i\ (i\in I)$ denotes the projection to $G_i$ of $\ker(\rho)$, it is clear that $\rho^{-1}(G_{\epsilon})=\prod_{i\in I}\{h_i\in G_i\ |\ h_iy_ih_iy_i^{-1}\in C_i\}$, where $y_i\in G_i\ (i\in I)$ is the $i$-th component of $y$, from which the claim follows. 

Next, when $G^{\uc}=\prod_{i\in I} G_i$ as above, we have $Z(G^{\uc})=\prod_{i\in I}\Res_{F_i/\Q}Z_i$, where $Z_i$ is the center of some absolutely simple (simply-connected) group over a number field $F_i$; each $F_i$ is a totally real field as $(G,X)$ is a Shimura datum \cite{Deligne79}. For abelian-type Shimura datum $(G,X)$, Satake classified such $Z_i$'s \cite[$\S$5]{Satake67}, \cite[Appendix]{Satake80} (cf. \cite[Prop.2.6]{Lee12}): if $Z$, $F$ are $Z_i$, $F_i$ for some $i\in I$,
\begin{equation} \label{eq:simple_factors_abelian_type}
Z = \begin{cases} 
V_{E/F,N}:=\{x\in E^{\times}\ |\ \Nm_{E/F}(x)=1,\ x^N=1\} & \text{ if } G_i \text{ is of Lie type } A,\ D,\\
\qquad \mu_2 & \text{ if } G_i \text{ is of Lie type }  B,\ C,\\ 
\quad \mu_2\times\mu_2,\quad \mu_4,\quad \Res_{E/F}\mu_2 & \text{ if } G_i \text{ is of Lie type }  D,
\end{cases}
\end{equation}
Here, in the first and the third cases, $E/F$ is a quadratic extension (in the type $A$ case, $E/F$ is a CM-field with its totally real subfield of index $2$). One can readily check that any subgroup of $\Res_{F/\Q}Z$ (in particular, each $U_i$ in $\pi_0(G_{\epsilon})=\prod_{i\in I}U_i$) is the Weil restriction to $\Q$ of either again a group of the same kind in this list or $\mu_n$ (over a number field) for some $n\in\N$. Then, to prove (\ref{eq:trivaility_of_Sha^{infty,p}}), it suffices to establish the same statement for any such finite $\Q$-group $U$. This follows from the observation that for such $U$, there exists a finite place $l\neq p$ of $\Q$ such that the map $H^1(\Q,U)\rightarrow H^1(\Ql,U)$ is injective. Indeed, each such $U$ equals $\Res_{K/\Q}V$, where $K$ is a number field and $V$ is a finite $K$-group whose splitting field $K'$ is abelian over $K$ (more precisely, $V$ is either $\mu_n$ or $\mu_2\times\mu_2$ or $V_{E/K,N}$ in (\ref{eq:simple_factors_abelian_type})). Hence, there exists a finite place $v$ of $K$ not dividing $p$ that remains prime in $K'$ (by Cebotarev density theorem)  so that $H^1(K,V)\isom H^1(K_v,V)$; or for $\mu_n$, one can appeal to the stronger fact
\cite[Thm.9.1.9]{NSW08} (one applies the case (ii) with $S$ being the set of all places and $T=S\backslash \{\infty,p\}$, by noting that the ``special case'' there cannot occur in our situation).

(2) Suppose that two stable Kottwitz triples $(\gamma_0;\gamma,\delta)$, $(\gamma_0';\gamma',\delta')$ are (geometrically) equivalent, and choose $g\in G(\Qb)$ such that $\gamma_0'=g\gamma_0g^{-1}$. Then the cohomology class in $H^1(\Q,\pi_0(G_{\gamma_0}))$ of the cocycle $g^{-1}\cdot{}^{\tau}g\in Z^1(\Q,G_{\gamma_0})$ lies in $\Sha^{\infty,p}(\Q,\pi_0(G_{\epsilon}))$, so is trivial by (1), which amounts to saying that $g_0$ and $g_0'$ are stably conjugate.
\end{proof}

\begin{lem} \label{lem:isom_of_H^1(Ql)_of_inner-twist}
Let $G$ be a connected reductive group over a field $k$ such that $H^1(k,H)=\{1\}$ for every simply-connected semi-simple $k$-group $H$. Let $\epsilon, \epsilon'\in G(k)$ be elements which are stably conjugate. Put $I_0:=G_{\epsilon}^{\mathrm{o}}$ and $I_0':=G_{\epsilon'}^{\mathrm{o}}$.
If $g\in G(\bar{k})$ satisfies that $\epsilon'=\Int(g)(\epsilon)$ and $g^{-1}\cdot{}^{\tau}g\in Z^1(k,I_0)$ for every $\tau\in\Gal(\bar{k}/k)$, $\Int(g)$ induces isomorphisms 
\[H^1(k,I_0)\isom H^1(k,I_0'),\quad H^1(k,G_{\epsilon})\isom H^1(k,G_{\epsilon'}).\] 
\end{lem}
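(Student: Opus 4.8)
The point is that $\Int(g)$ exhibits $I_0'$ (respectively $G_{\epsilon'}$) as the inner twist of $I_0$ (respectively $G_{\epsilon}$) by the cocycle $a=(a_\tau)_\tau$ with $a_\tau:=g^{-1}\cdot{}^\tau g\in Z^1(k,I_0)$, a cocycle which acts through \emph{inner} automorphisms because it takes values in $I_0\subseteq G_{\epsilon}$. Concretely, for $x\in G_{\epsilon}(\bar k)$ and $\tau\in\Gal(\bar k/k)$ one computes ${}^\tau(\Int(g)(x))={}^\tau g\cdot{}^\tau x\cdot{}^\tau g^{-1}=\Int(g)(a_\tau\cdot{}^\tau x\cdot a_\tau^{-1})=\Int(g)(\Int(a_\tau)({}^\tau x))$, which says exactly that pulling the Galois action of $G_{\epsilon'}$ back along the $\bar k$-isomorphism $\Int(g)\colon(G_{\epsilon})_{\bar k}\isom(G_{\epsilon'})_{\bar k}$ yields the $a$-twisted Galois action on $G_{\epsilon}$; the same holds verbatim with $G_{\epsilon}$ replaced by its identity component $I_0$ (and $G_{\epsilon'}$ by $I_0'$), since $\Int(g)$ carries $(I_0)_{\bar k}$ onto $(I_0')_{\bar k}$.

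Now I would use that, precisely because $a$ is valued in $I_0$, the automorphisms $\Int(a_\tau)$ act trivially on the centers $Z(I_0)$ and $Z(I_0^{\uc})$ and on the component group $\pi_0(G_{\epsilon})=G_{\epsilon}/I_0$. Hence the above computation shows that $\Int(g)$ restricts to genuine $k$-isomorphisms
\[
(I_0)_{\bfab}=\bigl(Z(I_0^{\uc})\rightarrow Z(I_0)\bigr)\isom(I_0')_{\bfab},\qquad \pi_0(G_{\epsilon})\isom\pi_0(G_{\epsilon'}),
\]
the first being an isomorphism of two-term complexes of $k$-groups of multiplicative type. Therefore $\Int(g)$ induces a canonical, base-point-preserving isomorphism $H^1_{\ab}(k,I_0)=\mathbb{H}^1(k,(I_0)_{\bfab})\isom\mathbb{H}^1(k,(I_0')_{\bfab})=H^1_{\ab}(k,I_0')$. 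Under the running hypothesis that $H^1(k,H)=\{1\}$ for every simply connected semisimple $k$-group $H$, the abelianization maps $H^1(k,I_0)\to H^1_{\ab}(k,I_0)$ and $H^1(k,I_0')\to H^1_{\ab}(k,I_0')$ are bijective, and the first asserted isomorphism follows; equivalently one can phrase this as the standard twisting bijection $H^1(k,I_0)\xrightarrow{\sim}H^1(k,{}_aI_0)$ of non-abelian Galois cohomology followed by the identification $\Int(g)\colon{}_aI_0\isom I_0'$.

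For $H^1(k,G_{\epsilon})$ I would combine this with the short exact sequence of $k$-groups $1\to I_0\to G_{\epsilon}\to\pi_0(G_{\epsilon})\to1$ and its counterpart for $\epsilon'$. Since $\Int(g)$ matches these sequences term by term (it induces the inner twist $I_0\rightsquigarrow I_0'$ on the connected parts and the genuine $k$-isomorphism $\pi_0(G_{\epsilon})\isom\pi_0(G_{\epsilon'})$ on the component groups), and is compatible with the associated exact sequences of pointed sets (respectively hypercohomology sequences after abelianization), a diagram chase using the already established isomorphism on the connected part yields the bijection $H^1(k,G_{\epsilon})\isom H^1(k,G_{\epsilon'})$. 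Alternatively, the bare set-theoretic statement is immediate from the twisting bijection $H^1(k,G_{\epsilon})\xrightarrow{\sim}H^1(k,{}_aG_{\epsilon})=H^1(k,G_{\epsilon'})$.

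The main obstacle I expect is the base-point bookkeeping together with the disconnectedness of $G_{\epsilon}$: the raw twisting bijection is not base-point-preserving in general (it carries the trivial class to the class of $a^{-1}$), so the genuine content of the lemma is the observation that $a$, taking values in $I_0$, makes $\Int(g)$ descend to an honest $k$-morphism on the centers and on the component group — which is what upgrades the twisting bijection to the canonical isomorphism claimed, and which also forces one to treat $G_{\epsilon}$ through its $\pi_0$ rather than directly. Everything else in the argument is formal.
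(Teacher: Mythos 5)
Your proof takes essentially the same route as the paper's. Both arguments hinge on the same two observations: because the cocycle $a_\tau=g^{-1}\cdot{}^\tau g$ takes values in $I_0$, $\Int(g)$ descends to genuine $k$-isomorphisms on the abelianization complex $(Z(I_0^{\uc})\to Z(I_0))\isom(Z(I_0'^{\uc})\to Z(I_0'))$ and on the component group $\pi_0(G_\epsilon)\isom\pi_0(G_{\epsilon'})$; and the hypothesis on simply connected groups lets one propagate these identifications from $H^1_{\ab}$ and $\pi_0$ back to $H^1(k,I_0)$ and then to $H^1(k,G_\epsilon)$ via the sequence $H^0(k,\pi_0(G_\epsilon))\to H^1(k,I_0)\to H^1(k,G_\epsilon)\to H^1(k,\pi_0(G_\epsilon))$. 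This is exactly the paper's strategy.

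One small caveat worth flagging: you assert that the hypothesis makes $H^1(k,I_0)\to H^1_{\ab}(k,I_0)$ \emph{bijective}. The hypothesis (applied to all simply connected twists, via the exactness $H^1(k,I_0^{\uc})\to H^1(k,I_0)\to H^1_{\ab}(k,I_0)$ of \cite[Cor.\,3.4.3]{Borovoi98} together with the usual twisting trick) does give injectivity, but surjectivity is a separate matter which I don't believe follows from the hypothesis alone in the stated generality — Borovoi proves it for local and number fields, which is where the paper applies the lemma. That said, your argument does not actually need surjectivity: the twisting bijection $H^1(k,I_0)\isom H^1(k,{}_aI_0)\isom H^1(k,I_0')$ already supplies a bijection, and injectivity of the abelianization maps, together with the commuting square to $H^1_{\ab}(k,I_0)\isom H^1_{\ab}(k,I_0')$, forces that bijection to preserve base points. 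Since your final paragraph shows you understand this is the real content, the overclaim is cosmetic rather than a gap.
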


\begin{proof}
Note that $I_0$ and $I_0'$ are inner-twists of each other and $\Int(g)$ induces a $k$-isomorphism $\pi_0(G_{\epsilon})\isom \pi_0(G_{\epsilon'})$. In particular, we have an isomorphism $(Z(I_0^{\uc})\rightarrow Z(I_0))\isom (Z(I_0'^{\uc})\rightarrow Z(I_0'))$ of crossed modules of algebraic groups.
Considering the maps induced by $\Int(g)$ between the exact sequence of Galois cohomology groups
$1=H^1(k,I_0^{\uc}) \rightarrow H^1(k,I_0) \rightarrow  \mathbb{H}^{i}(k,Z(I_0^{\uc})\rightarrow Z(I_0))$ to its analogue for $\epsilon'$, we see that $\Int(g)$ gives an isomorphism (of abelian groups) $H^1(k,I_0)\isom H^1(k,I_0')$. Then, it is easy to deduce the second isomorphism by examining the maps induced by $\Int(g)$ between the exact sequence 
\[ \cdots\rightarrow H^0(k,\pi_0(G_{\epsilon})) \rightarrow H^1(k,I_0) \rightarrow H^1(k,G_{\epsilon}) \rightarrow H^1(k,\pi_0(G_{\epsilon}))\]
and its analogue for $\epsilon'$.
\end{proof}

Following Kisin \cite[(4.4)]{Kisin17}, we introduce some Galois cohomology notations. For a crossed module of algebraic $\Q$-groups $H'\rightarrow H$, we set
\begin{equation} \label{eq:Sha^{infty}}
\Sha^{\infty}(\Q,H'\rightarrow H)=\ker\left[H^1(\Q,H'\rightarrow H)\rightarrow H^1(\R,H'\rightarrow H)\right].
\end{equation}
When $H$ is a \emph{connected} reductive $\Q$-group, the natural map 
\[ \Sha^{\infty}(\Q,H):=\Sha^{\infty}(\Q,\{e\}\rightarrow H)\ \rightarrow\ \Sha^{\infty}(\Q,H^{\uc}\rightarrow H) \] 
is a bijection, thus $\Sha^{\infty}(\Q,H)$ is an abelian group in a natural way. For a general \emph{$\Q$-subgroup} $H$ of $G$, we introduce a pointed set
\[ \Sha^{\infty}_G(\Q,H):=\ker\left[ \Sha^{\infty}(\Q,H) \rightarrow \Sha^{\infty}(\Q,G)\right].\]
If $H$ is connected, by the above fact this is an abelian group.
We also define $\Sha^{\infty}_G(\Q,H_1)$ for an inner-twist $H_1$ of a \emph{connected} reductive subgroup $H$ of $G$ as follows.
Let $I_0$ be a \emph{connected} reductive $\Q$-subgroup of $G$ and $I_1$ an inner twist of $I_0$.
Then, we define
\begin{equation} \label{eq:Sha^{infty}_G}
\Sha^{\infty}_G(\Q,I_1):=\ker\left[ \Sha^{\infty}(\Q,I_1)\isom \Sha^{\infty}(\Q,I_0) \rightarrow \Sha^{\infty}(\Q,G)\right],
\end{equation}
where the isomorphism $\Sha^{\infty}(\Q,I_1)\isom \Sha^{\infty}(\Q,I_0)$ arises from the canonical isomorphisms (see the proof of Lemma \ref{lem:abelianization_exact_seq}, cf. \cite[Lem.4.4.3]{Kisin17}):
\begin{align} \label{eq:Kisin17_Lem.4.4.3}
\Sha^{\infty}(\Q,I_1)\isom \Sha^{\infty}(\Q,I_{1\bfab}) \isom \Sha^{\infty}(\Q,I_{0\bfab})\isom \Sha^{\infty}(\Q,I_0)
\end{align}

The first statement of the next theorem is the effectivity criterion of Kottwitz triple alluded before.

\begin{thm} \label{thm:LR-Satz5.25} 
Keep the assumptions of Theorem \ref{thm:LR-Satz5.21}.
Let $(\gamma_0;\gamma=(\gamma_l)_{l\neq p},\delta)$ be a stable Kottwitz triple with trivial Kottwitz invariant.
Suppose that one of the following two conditions holds: (a) $Z(G)$ has same ranks over $\Q$ and $\R$, or (b) the weight homomorphism $w_X$ is rational and $\gamma_0$ is a Weil $p^n$-element of weight $-w_X$, where $n$ is the level of $(\gamma_0;(\gamma_l)_{l\neq p},\delta)$ (cf. Prop. \ref{prop:phi(delta)=gamma_0_up_to_center}).

If $Y_p(\delta)\neq \emptyset$ (\ref{eq:Y_p(delta)}), the Kottwitz triple $(\gamma_0;\gamma,\delta)$ is \emph{effective}, that is, there exists an admissible pair $(\phi,\epsilon)$ giving rise to it. In this case, the number $i(\gamma_0;\gamma,\delta)$ of non-equivalent admissible pairs giving rise to the triple $(\gamma_0;\gamma,\delta)$ equals the cardinality of the set 
\begin{align*}
\Sha_G(\Q,I_{\phi,\epsilon})^+
&:=\im \left[ \widetilde{\Sha}_G(\Q,I_{\phi,\epsilon})^+ \rightarrow H^1(\Q,I_{\phi,\epsilon}) \right]
\end{align*}
for \emph{any} such admissible pair $(\phi,\epsilon)$ (giving rise to the triple), where $I_{\phi,\epsilon}$ denotes the centralizer of $\epsilon$ in $I_{\phi}$ and $\widetilde{\Sha}_G(\Q,I_{\phi,\epsilon})^+:=\ker[\Sha^{\infty}_G(\Q,I_{\phi,\epsilon}^{\mathrm{o}})\rightarrow H^1(\A,I_{\phi,\epsilon})]$.
\end{thm}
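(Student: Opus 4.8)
\textbf{Proof plan for Theorem \ref{thm:LR-Satz5.25}.}
The plan is to follow closely the strategy of Langlands--Rapoport \cite[Satz 5.25]{LR87}, but with the extra hypotheses from \cite{LR87} (simply connected derived group, hyperspecial level) removed by systematically replacing cohomology sets of $G_{\gamma_0}$ and of $G^{\ab}$ by abelianized cohomology groups of the complexes $(I_0)_{\bfab}$, using the results of \autoref{sec:abelianization_complex} and the preparatory lemmas above. First I would establish \emph{effectivity}: given a stable Kottwitz triple $(\gamma_0;\gamma,\delta)$ with trivial Kottwitz invariant and with $Y_p(\delta)\neq\emptyset$, I want to produce an admissible pair $(\phi,\epsilon)$ giving rise to it. By Theorem \ref{thm:LR-Satz5.21}, (2), the hypothesis $Y_p(\delta)\neq\emptyset$ forces condition $\ast(\epsilon)$ to hold for the stable conjugacy class of $\gamma_0$ with level $n$; hence, by Theorem \ref{thm:LR-Satz5.21}, (1), there is a special admissible pair $(\phi_0,\epsilon_0)$ with $\epsilon_0$ stably conjugate to $\gamma_0$, nested in a special Shimura sub-datum $(T_0,h_0)$, and enjoying property $(\heartsuit)$ there. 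By Proposition \ref{prop:Kottwitz_triple} this pair gives rise to a stable Kottwitz triple $(\epsilon_0;\gamma^0,\delta^0)$ with vanishing Kottwitz invariant. The task is then to modify $(\phi_0,\epsilon_0)$ within its conjugacy data so that the associated triple becomes (equivalent to) the prescribed $(\gamma_0;\gamma,\delta)$. Using the cohomological descriptions of the sets $\mathfrak{C}_p^{(n)}(\gamma_0)$, $\mathfrak{D}_p^{(n)}(\gamma_0)$ (Prop. \ref{prop:B(gamma_0)=D(I_0,G;Qp)}) and of $\mathfrak{C}_l(\gamma_0)$ (\ref{eq:C_l(gamma_0)}), together with the fact that the Kottwitz invariant is computed as a product $\prod_v\tilde\alpha_v$ over all places and the assumption that this product vanishes, I would invoke the Hasse principle for the abelianized cohomology of $I_0$ (equivalently, \cite[Thm.5.12]{Borovoi98} applied to the image of $H^1(\Q,G^{\uc})$ in $H^1(\Q,G)$) to adjust the local components $(\gamma_l)_{l\neq p}$ and $\delta$ simultaneously by a global class, thereby matching the triple attached to a suitable conjugate of $(\phi_0,\epsilon_0)$ with $(\gamma_0;\gamma,\delta)$; here the conditions (a)/(b) and Prop. \ref{prop:phi(delta)=gamma_0_up_to_center} / Prop. \ref{prop:canonical_decomp_of_epsilon} are used to control the behaviour of $\phi^{\Delta}$ (i.e. to guarantee that after the adjustment the pair is still well-located and $\phi^{\Delta}$ factors through the torus generated by $\epsilon$), which is what makes the bookkeeping of Weil-number data consistent.

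For the \emph{counting} statement, I would fix one admissible pair $(\phi,\epsilon)$ giving rise to $(\gamma_0;\gamma,\delta)$ and parametrize all others. Two admissible pairs give rise to the same (equivalence class of) Kottwitz triple precisely when their local data at all places agree up to the relevant local equivalence, while the pairs themselves need not be globally conjugate; the obstruction to global conjugacy lives in a product of local cohomology groups modulo the image of a global one. Concretely, starting from $(\phi,\epsilon)$, every other pair producing the same triple is obtained by twisting by a class in $H^1(\Q,I_{\phi,\epsilon})$ that becomes trivial at $\infty$ (because the triple fixes the $\R$-data, via ellipticity and the compact inner form at $\infty$) and whose image in $H^1(\A,I_{\phi,\epsilon})$ is trivial (because the local components of the triple are fixed), and which moreover maps to a class that is killed in $H^1$ of $G$ at the finite adelic places --- this last constraint is exactly the condition defining $\Sha^{\infty}_G(\Q,I_{\phi,\epsilon}^{\mathrm{o}})$ after passing through the identifications (\ref{eq:Kisin17_Lem.4.4.3}). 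Intersecting with the kernel of the map to $H^1(\A,I_{\phi,\epsilon})$ and pushing forward to $H^1(\Q,I_{\phi,\epsilon})$ gives precisely the set $\Sha_G(\Q,I_{\phi,\epsilon})^+=\im[\widetilde{\Sha}_G(\Q,I_{\phi,\epsilon})^+\rightarrow H^1(\Q,I_{\phi,\epsilon})]$. I would prove that the assignment "twisting class $\mapsto$ admissible pair" sets up a bijection between this set and the set of equivalence classes of admissible pairs giving the triple; surjectivity uses the effectivity construction just described (applied to local modifications), and injectivity uses Lemma \ref{lem:isom_of_H^1(Ql)_of_inner-twist} to compare inner twists and Proposition \ref{prop:triviality_in_comp_gp}, (1) to control $\Sha^{\infty,p}(\Q,\pi_0(I_{\phi,\epsilon}))$, which is needed to pass between the conjugacy of $\epsilon$ inside $G$ and inside $I_\phi$. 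Finally, independence of the count from the choice of representative pair $(\phi,\epsilon)$ follows because any two such pairs differ by one of these twisting classes, and twisting by a fixed class induces a bijection between the corresponding $\Sha_G(\Q,-)^+$ sets (the groups $I_{\phi,\epsilon}$ for the various pairs are inner forms of one another, compatibly with the relevant maps to $G$ and to the adèles).

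The main obstacle I expect is the \emph{effectivity} half, specifically the step where one has to realize a prescribed adjustment of the local data $(\delta; (\gamma_l))$ by a \emph{global} element acting on the fixed special pair $(\phi_0,\epsilon_0)$, while keeping the pair admissible and well-located. The subtlety is threefold: first, because $G^{\der}\neq G^{\uc}$ in general, "stable conjugacy" at $p$ has the two inequivalent variants (iii$'$) versus (iii$''$) discussed in Remark \ref{rem:Kottwitz_triples}, (4) (the $w$-stable conjugacy issue), so the cohomological parametrization of $\mathfrak{C}_p^{(n)}(\gamma_0)$ must be the indirect one of Prop. \ref{prop:B(gamma_0)=D(I_0,G;Qp)} via $\mathfrak{D}_p^{(n)}(\gamma_0)$, and one has to be careful that the class one wants to realize actually lies in the image described there; second, the vanishing of the Kottwitz invariant must be converted into the statement that a certain adelic class in $\ker[H^1(\A_f,I_0)\rightarrow H^1(\A_f,G)]$ lifts to a global class in $H^1(\Q,I_0)$, which is where the Hasse principle of Borovoi enters and where property $(\heartsuit)$ of Theorem \ref{thm:LR-Satz5.21} is needed to ensure the $p$-adic component behaves correctly; third, one must verify that after the global twist the morphism $\phi$ still satisfies condition (3) of Def. \ref{defn:admissible_morphism} (non-emptiness of the affine Deligne--Lusztig union), which is automatic here only because the twist does not change the $\sigma$-conjugacy class $\mathrm{cls}(\phi(p))=[\delta]$ at $p$ nor the local conjugacy data away from $p$. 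The remaining verifications (that the resulting triple has the prescribed $\gamma_0$ up to stable conjugacy, that stable equivalence is automatic by Prop. \ref{prop:triviality_in_comp_gp}, (2), and the functoriality/independence bookkeeping for the count) are routine given the lemmas already in place.
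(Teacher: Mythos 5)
Your proposal matches the paper's approach: effectivity is obtained by twisting a special nested pair $(\phi_1,\epsilon)$ from Theorem \ref{thm:LR-Satz5.21} by a global cocycle in $Z^1(\Q,I_1)$ (this is packaged in Lemma \ref{lem:LR-Lem5.26,Satz5.25}, whose parts (2)--(4) make precise the admissibility constraint $[a]\in\Sha^\infty_G(\Q,H_1)$ with $a(p)\in Z^1(\Qp,I_1)$, and the explicit comparison of local invariants (\ref{eq:difference_of_alpha_l's}), (\ref{eq:difference_of_alpha_p's})), and the counting is carried out exactly as you describe, via the twisting parametrization by $\Sha_G(\Q,I_{\phi,\epsilon})^+$. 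One reference needs correction: the step converting vanishing of the Kottwitz invariant into existence of the global lift is \emph{not} \cite[Thm.5.12]{Borovoi98} applied to $\im(H^1(\Q,G^{\uc})\to H^1(\Q,G))$ --- that result is what is used in Prop.~\ref{prop:equivalence_to_special_adimssible_morphism}; the relevant tool here is the exact diagram (\ref{eq:H^1_{ab}_for_(I'->I)}), whose crucial exactness in the third column is supplied by the Poitou--Tate--Nakayama argument of Lemma \ref{lem:proof_of_Kottwitz86_Thm.6.6} (identifying $\mathrm{coker}[\mathbb{H}^0(\A,G_{\bfab})\to\mathbb{H}^1(\A/\Q,\tilde{I}_{\bfab})]$ with $\mathfrak{K}(I_0/\Q)^D$), together with \cite[Thm.5.16]{Borovoi98} describing $\im[H^1(\Q,\tilde{I}_1)\to\bigoplus_v H^1(\Qv,\tilde{I}_1)]$.
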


Recall that ``having trivial Kottwitz invariant'' is our shortened expression for the condition that there exist elements $(g_v)_v\in G(\bar{\A}_f^p)\times G(\mfk)$ satisfying conditions (\ref{eq:stable_g_l}), (\ref{eq:stable_g_l}) such that the associated Kottwitz invariant $\alpha(\gamma_0;\gamma,\delta;(g_v)_v)$ vanishes.

\begin{rem} \label{rem:LR-Satz5.25} 
(1) \emph{By this theorem}, we see that the number $|\Sha_G(\Q,I_{\phi,\epsilon})^+|$, being equal to $i(\gamma_0;\gamma,\delta)$, depends only on the effective Kottwitz triple $(\gamma_0;\gamma,\delta)$, not on the choice of an admissible pair giving rise to it. When $G_{\gamma_0}$ is connected so that $I_{\phi,\epsilon}$ is also connected (see below for why), the set $\Sha_G(\Q,I_{\phi,\epsilon})^+$ is identical to the group denoted by $\Sha_G(\Q,G_{\gamma_0})$ in \cite{Kisin17} or to the set appearing in \cite[Lem. 5.24]{LR87} if $G^{\der}=G^{\uc}$. For this set, one knows that it is a finite abelian group, depends only on $\gamma_0$, and remains invariant under inner twist of $G_{\gamma_0}$, but in the general case the author does not know either of these facts for our set $\Sha_G(\Q,I_{\phi,\epsilon})^+$ (except for finiteness which follows indirectly from its appearance in the formula in Thm. \ref{thm:Kottwitz_formula:Kisin}). 

(2) Compare this theorem with Satz 5.25 of \cite{LR87} which asserts (in the case where the level is hyperspecial and $G^{\der}=G^{\uc}$) that a Kottwitz triple with trivial Kottwitz invariant is effective if the condition $\ast(\epsilon)$ holds. Our condition $Y_p(\delta)\neq \emptyset$ is more natural and is what one really needs. Indeed, according to Remark \ref{rem:admissible_pair} (and the discussion in the introduction), Langlands-Rapoport conjecture implies that the cardinality of the set $\sS_{\mbfK}(\F_{q^m})$ (for each $m\in\N$) is a sum of certain quantities, the sum being over all \emph{effective} Kottwtiz triples (up to equivalence) with trivial Kottwitz invariant (or over the Kottwitz triples satisfying condition $\ast(\epsilon)$ with trivial Kottwitz invariant if one resorts to \cite[Satz5.25]{LR87}).
But, since one easily sees that $Y_p(\delta)\neq \emptyset$ when the corresponding summand is non-zero, the effectivity criterion of Kottwitz triple (Theorem \ref{thm:LR-Satz5.25}) tells us that in the summation one may as well take simply \emph{all} Kottwitz triples with trivial Kottwitz invariant (see Theorem \ref{thm:Kottwitz_formula:LR} and its proof).
\end{rem}

\begin{proof} 
By Theorem \ref{thm:LR-Satz5.21}, there exists an admissible pair $(\phi_1,\epsilon)$ that is nested in a special Shimura sub-datum $(T,h)$, i.e. $\phi_1=\psi_{T,\mu_h}$ and $\epsilon\in T(\Q)$ (and satisfies condition $(\heartsuit)$) and also such that $\epsilon$ is stably conjugate to $\gamma_0$; thus we may assume that $\gamma_0=\epsilon$. Then, the restriction of $\phi_1$ to the kernel of $\fP$ is determined by $\epsilon$ alone, and its image $\im(\phi_1^{\Delta})$ lies in the center of $G_{\epsilon}$. Indeed, if $T_{\epsilon}$ is the $\Q$-subgroup of $G$ generated by $\epsilon\in G(\Q)$ and $(\pi_0,t)$ are the elements of $T_{\epsilon}(\Q)$ determined by $\epsilon$ as in Prop. \ref{prop:canonical_decomp_of_epsilon} (for some $s\in\N$), we have $\phi_1^{\Delta}(\delta_k)=\pi_0^{k/n}\in T_{\epsilon}(\Q)$ for any sufficiently large $k$ (Prop. \ref{prop:phi(delta)=gamma_0_up_to_center}). Hence, one can twist $G_{\epsilon}$ and $G_{\epsilon}^{\mathrm{o}}$ via $\phi_1$ (they are the inner twists of these groups by the cocycle $(g_{\rho}^{\ad})_{\rho}\in Z^1(\Q,(G_{\epsilon}^{\mathrm{o}})^{\ad})$, where $\phi(q_{\rho})=g_{\rho} \rho$ and $g_{\rho}^{\ad}$ is the image of $g_{\rho}$ in $(G_{\epsilon}^{\mathrm{o}})^{\ad}$. Clearly, the resulting twist of $G_{\epsilon}$ equals $I_{\phi_1,\epsilon}$ (recall that $I_{\phi_1}$ itself is the twist of $I_{\phi_1^{\Delta}}:=Z_G(\im(\phi_1^{\Delta}))$ via $\phi_1$,  cf. (\ref{eq:inner-twisting_by_phi})). We also see that if $G_{\gamma_0}$ is connected, so is $I_{\phi_1,\epsilon}$.

The next step in the proof is to modify $\phi_1$ using a suitable cocycle in $Z^1(\Q,I_{\phi_1,\epsilon})$, to get a new admissible morphism $\phi$ such that $(\phi,\epsilon)$ becomes an admissible pair which produces the given Kottwitz triple $(\gamma_0;(\gamma_l)_{l\neq p},\delta)$. By construction, this modification (of an admissible morphism $\phi_1$ by a cocycle in $Z^1(\Q,I_{\phi_1,\epsilon})$) does not change the restriction of $\phi_1$ to the kernel; in particular, we still have $\epsilon\in \mathrm{Aut}(\phi)$. Under the assumption $G^{\der}=G^{\uc}$, such modification was carried out in the proof of Satz 5.25 in \cite{LR87} and this part of that proof does not require any condition on the level subgroup. In the next lemma, we provide its generalization that works without the restriction $G^{\der}=G^{\uc}$.

\begin{lem} \label{lem:LR-Lem5.26,Satz5.25}
Let $(\phi_1,\epsilon)$ be a well-located admissible pair.
Let $I_1$ (resp. $H_1$) be the twist of $I_0:=G_{\epsilon}^{\mathrm{o}}$ (resp. of $H_0:=G_{\epsilon}$) by $\phi_1$ (i.e. $H_1=I_{\phi_1,\epsilon}$ and $I_1=I_{\phi_1,\epsilon}^{\mathrm{o}}$).

(1) For any cochain $a=\{a_{\sigma}\}$ on $\Gal(\Qb/\Q)$ with values in $H_0(\Qb)$, the map $\phi:\fP\rightarrow\fG_G$ defined by $\phi|_P=\phi_1|_P$ and $\phi(q_{\rho})=a_{\rho}\phi_1(q_{\rho})$ is a morphism of Galois gerbs over $\Q$ if and only if $a$ is a cocycle in $Z^1(\Q,H_1)$; in this case, we write $\phi=a\phi_1$.

(2) For $a\in Z^1(\Q,I_{\phi_1})$, the morphism $\phi=a\phi_1$ is admissible if and only if its cohomology class $[a]\in H^1(\Q,I_{\phi_1})$ lies in 
\begin{equation} \label{eq:Sha^{infty}_G} 
\Sha^{\infty}_G(\Q,I_{\phi_1}):=\ker[\Sha^{\infty}(\Q,I_{\phi_1})\isom \Sha^{\infty}(\Q,I_{\phi_1^{\Delta}})
\rightarrow H^1(\Q,G_{\bfab})], 
\end{equation}  
where $I_{\phi_1^{\Delta}}:=Z_G(\im(\phi_1^{\Delta}))$, the isomorphism $\Sha^{\infty}(\Q,I_{\phi_1})\isom \Sha^{\infty}(\Q,I_{\phi_1^{\Delta}})$ is (\ref{eq:Kisin17_Lem.4.4.3}) and the map $H^1(\Q,I_{\phi_1^{\Delta}})\rightarrow H^1(\Q,G_{\bfab})$ is (\ref{eq:abelianization_from_Levi}).

If $a\in Z^1(\Q,H_1)$, the morphism $\phi=a\phi_1$ is admissible if and only if the class $[a]\in H^1(\Q,H_1)$ 
lies in the subset
\begin{equation} \label{eq:Sha^{infty}_G'}
 \Sha^{\infty}_G(\Q,H_1):=\ker[H^1(\Q,H_1)\rightarrow H^1_{\ab}(\Q,G)\oplus H^1(\R,G')],
\end{equation}
where $G'$ is the inner form of $G_{\R}$ defined by $\phi_1(\infty)\circ\zeta_{\infty}$ (so $(G')^{\ad}$ is compact and $(I_{\phi_1})_{\R}$ is a subgroup of $G'$ in a natural way).
In this case, the pair $(\phi=a\phi_1,\epsilon)$ is admissible if moreover
the localization $a(p)\in Z^1(\Qp,H_1)$ lies in $Z^1(\Qp,I_1)$ and property $(\heartsuit)$ of Thm. \ref{thm:LR-Satz5.21} holds for $(\phi_1,\epsilon)$; then the admissible pair $(\phi,\epsilon)$ also enjoys the property $(\heartsuit)$.

(3) For any $a\in \Sha_G(\Q,H_1)^+$,
the admissible pairs $(\phi_1,\epsilon)$, $(a\phi_1,\epsilon)$ have equivalent associated Kottwitz triples. 

(4) If $(\epsilon;\gamma=(\gamma_l),\delta)$ is a stable Kottwitz triple with trivial Kottwitz invariant, there exists a cocycle $a=\{a_{\sigma}\}\in Z^1(\Q,I_1)$ such that the pair $(\phi=a\phi_1,\epsilon)$ is an admissible pair giving rise to $(\epsilon,(\gamma_l),\delta)$.
\end{lem}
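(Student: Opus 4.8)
The plan for (1) is a direct cocycle computation. Writing $q_\rho q_\tau = c_{\rho,\tau}\,q_{\rho\tau}$ for the $2$-cocycle $c_{\rho,\tau}\in P(\Qb)$ attached to the chosen section of $\fP$, a map $\phi$ with $\phi|_P=\phi_1|_P$ and $\phi(q_\rho)=a_\rho\phi_1(q_\rho)$ is a morphism of Galois gerbs exactly when $\phi(q_\rho)\phi(q_\tau)=\phi_1(c_{\rho,\tau})\phi_1(q_{\rho\tau})$ for all $\rho,\tau$ (plus continuity and $a_1=1$). Expanding the left side and using that, by Proposition \ref{prop:canonical_decomp_of_epsilon} (available since $(\phi_1,\epsilon)$ is well-located), $\im(\phi_1^\Delta)\subset Z(G_\epsilon)=Z(H_0)$, so $a_\tau$ centralises $\phi_1(c_{\rho,\tau})$, this reduces to $a_\rho\cdot{}^{\phi_1(q_\rho)}a_\tau=a_{\rho\tau}$, i.e. the $1$-cocycle condition for the twisted group $H_1$. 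For (2) I would run through the three conditions of Definition \ref{defn:admissible_morphism} for $\phi=a\phi_1$, exploiting that $\phi_1$ already satisfies them. Condition (1): since $\phi_{1,\widetilde{\ab}}$ is conjugate-isomorphic to $\psi_{\mu_{\widetilde{\ab}}}$, it suffices that $\phi_{\widetilde{\ab}}\cong\phi_{1,\widetilde{\ab}}$; by Lemma \ref{lem:isom_of_monoidal_functors_into_croseed_modules}, applied with $I=I_{\phi_1^\Delta}:=Z_G(\im(\phi_1^\Delta))$ (a $\Q$-subgroup containing $Z(G)$ whose $\phi_1$-twist is $I_{\phi_1}$, cf. (\ref{eq:inner-twisting_by_phi})), this holds iff $[a]$ comes from $H^1(\Q,\tilde I_{\phi_1})$, which by the abelianization exact sequence (Lemma \ref{lem:abelianization_exact_seq}) together with (\ref{eq:Kisin17_Lem.4.4.3}) is iff the image of $[a]$ in $H^1_{\ab}(\Q,G)$ vanishes. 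Condition (2) at $v=\infty$: $\phi(\infty)\circ\zeta_\infty$ is conjugate to $\xi_\infty$ iff it is conjugate to $\phi_1(\infty)\circ\zeta_\infty$, iff $[a(\infty)]$ dies in $H^1(\R,G')$ (using $(H_1)_\R\subset G'$).

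\textbf{Parts (2) (conclusion and the pair statement) and (3).} The remaining admissibility conditions for $\phi=a\phi_1$ should be automatic once the two conditions above hold: by functoriality of abelianization, $[a(v)]$ dies in $H^1_{\ab}(\Qv,G)=H^1(\Qv,G)$ for every finite $v$, so $\phi(v)\circ\zeta_v$ is $G(\Qvb)$-conjugate to $\phi_1(v)\circ\zeta_v$ for $v\ne p$ (condition (2) at $\ell$), while at $p$ this forces $\kappa_G(\mathrm{cls}(\phi(p)\circ\zeta_p))=\kappa_G(\mathrm{cls}(\phi_1(p)\circ\zeta_p))$ and equality of the associated Newton points, hence $[b_\phi]=[b_{\phi_1}]$ in $B(G_{\Qp})$ by injectivity of $(\bar\nu,\kappa)$, whence $X(\{\mu_X\},b_\phi)_{\mbfK_p}\ne\emptyset$ (condition (3)). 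This gives the stated criteria $[a]\in\Sha^\infty_G(\Q,I_{\phi_1})$ resp. $\Sha^\infty_G(\Q,H_1)$. For the pair statement: if moreover $a(p)\in Z^1(\Qp,I_1)$ and $(\phi_1,\epsilon)$ has property $(\heartsuit)$, then the Frobenius attached to $\phi$ differs from that of $\phi_1$ by an element of $I_1(\mfk)$, so $(\heartsuit)$ passes to $(\phi,\epsilon)$, and Lemma \ref{lem:Kottwitz84-a1.4.9_b3.3} then produces $x\in G(\mfk)/\mbfKt_p$ with $\epsilon x=\Phi^m x$, giving condition (3) of Definition \ref{defn:admissible_pair}. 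Part (3) then follows: for $a\in\Sha_G(\Q,H_1)^+$ the class $[a]$ is everywhere locally trivial, and triviality of $[a(v)]$ in $H^1(\Qv,H_1)$ kills its image in $H^1(\Qv,G_{\gamma_0})$, which at finite $\ell\ne p$ means the new $\gamma_\ell$ is $G(\Ql)$-conjugate (not just $G(\Qlb)$-conjugate) to the old one, i.e. to $\epsilon$ since $(\phi_1,\epsilon)$ is special, and at $p$ means the new $\delta$ is $\sigma$-conjugate in $G(L_n)$ to the old $\delta$ (via the identification of $\mathfrak C_p^{(n)}(\gamma_0)$ with a subset of $B(G_{\gamma_0})$, \autoref{subsubsec:w-stable_sigma-conjugacy}); as the $\gamma_0$-component is $\epsilon$ in both cases, the Kottwitz triples are equivalent.

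\textbf{Part (4): constructing $a$ from the vanishing Kottwitz invariant.} First I would invoke Theorem \ref{thm:LR-Satz5.21} to choose $(\phi_1,\epsilon)$ nested in a special Shimura sub-datum $(T,h)$, with $\epsilon$ stably conjugate to $\gamma_0$ and with property $(\heartsuit)$; by Remark \ref{rem:Kottwitz_triples}(1) I may replace $\gamma_0$ by $\epsilon$, so $(\phi_1,\epsilon)$ gives rise to a stable Kottwitz triple $(\epsilon;(\epsilon)_{\ell\ne p},\delta_1)$. For each place $v$ I then build a local comparison class $c_v\in H^1(\Qv,I_1)$: for $\ell\ne p$ the difference between the $G(\Ql)$-conjugacy classes of $\gamma_\ell$ and of $\epsilon$, viewed in $\mathfrak D(I_0,G;\Ql)$ (cf. (\ref{eq:C_l(gamma_0)})) and transported to $I_1$ via the inner-twist identifications (\ref{eq:Kisin17_Lem.4.4.3}) and Lemma \ref{lem:isom_of_H^1(Ql)_of_inner-twist}; at $p$ the difference between the $\sigma$-conjugacy classes of $\delta$ and of $\delta_1$, viewed in $\mathfrak D_p^{(n)}(\gamma_0)=j_{[b]}^{I_0}\bigl(\ker[H^1(\Qp,I_0)\to H^1(\Qp,G)]\bigr)$ (Proposition \ref{prop:B(gamma_0)=D(I_0,G;Qp)}); and $c_\infty=0$. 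Each $c_v$ maps to $0$ in $H^1(\Qv,G)$ (resp. in $B(G_{\Qp})$). The plan is to glue $(c_v)_v$ to a global $[a]\in H^1(\Q,I_1)$: by the Poitou--Tate sequence \cite[Prop.~2.6]{Kottwitz86} the obstruction lies in $\pi_0(Z(\widehat I_1)^\Gamma)^D=\pi_0(Z(\widehat I_0)^\Gamma)^D$ and equals the value on $\mathfrak K(I_0/\Q)$ of $\prod_v\tilde\alpha_v=\alpha(\gamma_0;\gamma,\delta;(g_v)_v)$, which vanishes by hypothesis. Representing the class by $a\in Z^1(\Q,I_1)$ (so $a(p)\in Z^1(\Qp,I_1)$ by construction), part (2) makes $(a\phi_1,\epsilon)$ an admissible pair — the $H^1_{\ab}(\Q,G)$- and $H^1(\R,G')$-conditions hold since $(c_v)_v$ is everywhere locally trivial in $G$, and $(\heartsuit)$ supplies condition (3) — and by construction it gives rise to $(\epsilon;(\gamma_\ell)_{\ell\ne p},\delta)$.

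\textbf{Main obstacle.} The hard part will be the gluing in Part (4): making the inner-twist identifications among the cohomology of $I_0$, of $I_1$, and of the $J_b$-groups at $p$ genuinely compatible, so that the Poitou--Tate obstruction is correctly matched with the Kottwitz invariant; and, relatedly, confirming that the glued class $[a]$ really does die in $H^1_{\ab}(\Q,G)$, which is where the $\ker^1$-subtleties in the definition of $\mathfrak K(I_0/\Q)$ and the disconnectedness of $G_{\gamma_0}$ (hence the distinction between $I_1=I_{\phi_1,\epsilon}^{\mathrm o}$ and $H_1=I_{\phi_1,\epsilon}$) must be handled carefully.
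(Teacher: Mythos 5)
Your parts (1)--(3), and the admissibility criterion in (2), follow essentially the paper's route. Your shortcut for condition (3) of Definition \ref{defn:admissible_morphism} at $p$ --- the kernels of $\phi$ and $\phi_1$ agree, so the Newton points of $b_\phi$ and $b_{\phi_1}$ coincide, and triviality of $[a(p)]$ in $H^1(\Qp,G)$ forces $\kappa_G$ to agree, whence $[b_\phi]=[b_{\phi_1}]$ by injectivity of $(\overline{\nu},\kappa)$ --- is a legitimate alternative to the paper's explicit construction of a point of $X_p(\phi)$ via the vanishing of $H^1(\Qp,\tilde{J}_{\xi_p})$; but note that the explicit construction is what later feeds the $(\heartsuit)$-statement and part (3) at $p$, and your phrase ``the Frobenius differs by an element of $I_1(\mfk)$, so $(\heartsuit)$ passes to $(\phi,\epsilon)$'' compresses a genuine computation: one must still solve the twisted conjugacy equation for the new Frobenius, which the paper does by the boundedness/power argument of Theorem \ref{thm:LR-Satz5.21}.

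The genuine gap is in part (4), exactly where you flag the main obstacle. You propose to glue the local classes directly in $H^1(\Q,I_1)$ and assert that the obstruction ``lies in $\pi_0(Z(\hat{I}_1)^{\Gamma})^D$ and equals the value on $\mathfrak{K}(I_0/\Q)$ of $\prod_v\tilde{\alpha}_v$.'' This fails for two reasons. First, the obstruction to globalizing an adelic class in $H^1(\A,I_1)$ is a character of the full group $\pi_0(Z(\hat{I}_0)^{\Gamma})$, whereas the Kottwitz invariant is only a character of the subgroup $\mathfrak{K}(I_0/\Q)$ of $\pi_0(Z(\hat{\tilde{I}}_0)^{\Gamma})$ (with $\tilde{I}_0=\rho^{-1}(I_0)$, not $I_0$); vanishing on that subgroup does not kill the obstruction. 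Second, even if a global $[a]\in H^1(\Q,I_1)$ with the prescribed localizations existed, part (2) requires its image in $H^1_{\ab}(\Q,G)$ to vanish \emph{globally}; everywhere-local triviality only places that image in $\ker^1(\Q,G_{\bfab})$, which is nonzero in general. Both problems are resolved simultaneously by gluing in $H^1(\Q,\tilde{I}_1)$ instead, which is what the paper does: the local differences $\alpha(v)$ lift to $\bigoplus_v\mathbb{H}^1(\Qv,\tilde{I}_{1\bfab})$ because the restriction of $\tilde{\alpha}_v$ to $Z(\hat{G})$ is independent of the triple, so the differences die in $\bigoplus_v\mathbb{H}^1(\Qv,G_{\bfab})$; the exactness statement of Lemma \ref{lem:proof_of_Kottwitz86_Thm.6.6} converts ``trivial on $\mathfrak{K}(I_0/\Q)$'' into ``correctable by an element of $\ker\mu$ --- which does not change the image in $\bigoplus_v\mathbb{H}^1(\Qv,I_{1\bfab})$, hence not the target triple --- to an element of $\ker\lambda$''; surjectivity of the local abelianization maps together with $\im[H^1(\Q,\tilde{I}_1)\rightarrow\bigoplus_vH^1(\Qv,\tilde{I}_1)]=\ker(\lambda\circ\bfab)$ then produce the global class, and its image in $H^1(\Q,I_1)$ automatically dies in $H^1_{\ab}(\Q,G)$ by Lemma \ref{lem:abelianization_exact_seq}. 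Without this detour through $\tilde{I}_1$, your construction neither glues nor, if it did, would satisfy the admissibility criterion you established in part (2).
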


When $H_1$ is connected (i.e. $H_1=I_1$), as $\ker[H^1(\R,I_1)\rightarrow H^1(\R,G')]=0$ \cite[Lem. 5.14, 5.28]{LR87}, the set (\ref{eq:Sha^{infty}_G'}) becomes $\Sha^{\infty}_G(\Q,I_1)$ (\ref{eq:Sha^{infty}_G}), which is known to be an abelian group \cite[4.4.2]{Kisin17}.

\begin{proof} 
(1) This is a straightforward verification: the cocycle condition (for the $\Q$-structure of $H_1$) amounts to $\phi:\fP\rightarrow \fG_G$ being a group homomorphism. 

(2) This statement is Lemma 5.26 of \cite{LR87} under the assumption $G^{\der}=G^{\uc}$ (so that $I_0=H_0$ and $I_1=H_1$, and $H^1(\Q,G^{\ab})$ is used instead of $H^1_{\ab}(\Q,G)=H^1(\Q,G_{\bfab})$).
\footnote{In fact, our statement is weaker than this lemma, where Langlands and Rapoport claim (\textit{loc. cit.} p.197, line +6) that the pair $(\phi=a\phi_1,\epsilon)$ is admissible if and only if $[a]\in \Sha^{\infty}_G(\Q,I_1)$ (i.e. they do not require our additional condition). Unfortunately we could not verify their claim. But, our weaker claim as given is sufficient for our purpose.}
We adapt its proof (on p196, line -14, \textit{loc. cit.}).
Note that $H_1$ is a $\Q$-subgroup of $I_{\phi_1}$ (as both are twists of $G_{\epsilon}\subset Z_G(\phi_1^{\Delta})$ via $\phi_1$). Set $\tilde{I}_{\phi_1^{\Delta}}:=\rho^{-1}(I_{\phi_1^{\Delta}})$, where $\rho:G^{\uc}\rightarrow G$ is the canonical morphism.

First, by Lemma \ref{lem:abelianization_exact_seq} (applied to the twist  $\tilde{I}_{\phi_1}\rightarrow I_{\phi_1}$ of $\tilde{I}_{\phi_1^{\Delta}}\rightarrow I_{\phi_1^{\Delta}}$ via $\phi_1$), the triviality of the image of $[a]$ in $H^1_{\ab}(\Q,G)$ is the same as that $[a]\in\im(H^1(\Q,\tilde{I}_{\phi_1})\rightarrow H^1(\Q,I_{\phi_1}))$ (and also the same as that $[a]\in\im(H^1(\Q,\tilde{H}_1)\rightarrow H^1(\Q,H_1))$ if $[a]\in H^1(\Q,H_1)$).
By Lemma \ref{lem:isom_of_monoidal_functors_into_croseed_modules}, this condition is equivalent to the condition that $\phi_{\widetilde{\ab}}$ is conjugate-isomorphic to $\psi_{\mu_{\widetilde{\ab}}}$, i.e. to condition (1) of Def. \ref{defn:admissible_morphism} for $\phi=a\phi_1$.

Secondly, it is easy to see that $\phi(\infty)\circ\zeta_{\infty}$ is conjugate to $\xi_{\infty}$ if and only if the image of $[a]\in H^1(\R,I_{\phi_1})$ in $H^1(\R,G')$ is trivial.
We note that since $I_{\phi_1}$ is connected, the natural map $H^1(\R,I_{\phi_1})\rightarrow H^1(\R,G')$ is injective \cite[Lem. 5.14, 5.28]{LR87}, \cite[Lem. 4.4.5]{Kisin17}.

Next, we show that for $a\in \Sha^{\infty}_G(\Q,I_{\phi_1})$, the morphism $\phi=a\phi_1$ is already admissible: we have to check the remaining conditions (2), (3) of Def. \ref{defn:admissible_morphism}. 
We fix $(g_l)_{l\neq p}\in X^p(\phi_1)$; then, for each finite $l\neq p$, one has $g_l^{-1}\phi_1(P(\Ql))g_l\in G(\Ql)$ as $g_l^{-1}\phi_1(P(\Ql))g_l$ commutes with $g_l^{-1}\phi_1(q_{\rho})g_l=\rho$ for all $\rho\in\Gamma_l$. Set $I_l:=Z_{G_{\Ql}}(\Int(g_l^{-1})\circ\phi_1^{\Delta})\subset G_{\Ql}$. Then, as $I_{\phi_1}$ is the inner twist of $I_{\phi_1^{\Delta}}$ by $\phi_1$ (i.e. for $g\in I_{\phi_1}(\Qb)=I_{\phi_1^{\Delta}}(\Qb)$ and $\rho\in\Gal(\Qb/\Q)$, $\rho(g)=\phi_1(q_{\rho})g \phi_1(q_{\rho})^{-1}$) and $g_l^{-1}I_{\phi_1^{\Delta}}g_l=I_l$ is a $\Ql$-group, $\Int(g_l^{-1})$ induces a $\Ql$-isomorphism from $(I_{\phi_1})_{\Ql}$ to the inner twist of $I_l$ via the canonical trivialization $\Int(g_l^{-1})\circ \phi_1(l)\circ\zeta_l=\xi_l$, i.e. to $I_l$ itself. Since the image of $[a]$ in $H^1_{\ab}(\Q,G)$ is trivial by assumption and $H^1(\Ql,G^{\uc})=\{1\}$, the localization of $a(l)$ at $l$ satisfies
\[[a(l)]\in \ker[H^1(\Ql,I_{\phi_1})\isom H^1(\Ql,I_l)\rightarrow H^1(\Ql,G)].\] 
Then, for any $h_l\in G(\Qlb)$ such that $g_l^{-1}a_{\tau}g_l=h_l\cdot{}^{\tau}h_l^{-1}\in I_l$ for $\tau\in\Gamma_l$, one has $g_lh_l\in X^p(\phi)$.
Further, by a similar argument it is easy to see that when $a\in Z^1(\Q,H_1)$ (not just in $Z^1(\Q,I_{\phi_1})$), the pair $(\phi,\epsilon\in I_{\phi}(\Q))$ satisfies condition (3) of admissible pair for every finite place $l\neq p$ (Def. \ref{defn:admissible_pair}).

At $p$, if $H$ denotes the centralizer (in $G_{\Qp}$) of the maximal $\Qp$-split torus in the center of $G_{\epsilon}$ and $\xi_p:=\phi_1(p)\circ\zeta_p$, then $\xi_p^{\Delta}$ maps into the center of $H$ by Prop. \ref{prop:canonical_decomp_of_epsilon}, (3), and thus we have 
\begin{equation} \label{eq:H_centralizes_J_0,epsilon} 
(G_{\epsilon})_{\Qp}\subset H\subset J:=Z_{G_{\Qp}}(\xi_p^{\Delta}). 
\end{equation} 
In particular, we can twist both $\tilde{H}:=\rho^{-1}(H)$ and $\tilde{J}:=\rho^{-1}(J)$ via $\xi_p$ to obtain $\tilde{H}_{\xi_p}$ and $\tilde{J}_{\xi_p}$. Then, we claim that 
\[H^1(\Qp,\tilde{H}_{\xi_p})=H^1(\Qp,\tilde{J}_{\xi_p})=\{1\}.\]
Of course, since $\tilde{H}$ and $\tilde{J}$ are connected reductive groups over a $p$-adic local field, this is equivalent to vanishing of $H^1(\Qp,\tilde{H})$ and $H^1(\Qp,\tilde{J})$.
The latter is proved in the discussion proceeding Lemma 5.18 of \cite{LR87} when $G^{\der}=G^{\uc}$, and the same argument works in our case, so we will just give a sketch of the proof for $H$ only. Let $A$ be the connected center of $H\cap G^{\der}_{\Qp}$ (a split $\Qp$-torus) and $T\subset G^{\der}_{\Qp}$ the centralizer of a maximal $\Qp$-split torus of $G^{\der}_{\Qp}$ containing $A$. The simple roots $\alpha_1,\cdots,\alpha_s$ of $T$ in $G$ can be divided into two sets, the roots $\alpha_1,\cdots,\alpha_r$ of $T$ in $H$ and the others $\alpha_{r+1},\cdots,\alpha_s$. Let $\{\omega_1,\cdots,\omega_s\}$ be the corresponding fundamental weights which form a basis of $X^{\ast}(\tilde{T})$ for $\tilde{T}=\rho^{-1}(T)$, and $R\subset \tilde{T}$ be the kernel of $\{\omega_1,\cdots,\omega_r\}$. Note that $R$ is an induced torus since the basis $\{\omega_{r+1},\cdots,\omega_s\}$ of $X^{\ast}(R)$ is permuted under the canonical action of $\Gal(\Qpb/\Qp)$ which equals the naive action as $G^{\der}_{\Qp}$ is quasi-split.
Now, the claim follows from the fact that $\tilde{H}$ is the semi-direct product of a simply connected semi-simple group over $\Qp$ (i.e. $H^{\uc}$) and $R$.
%

We can find $f_{1p}\in X_p(\phi_1)\cap J(\Qpb)$ so that $\xi_p':=\Int(f_{1p}^{-1})\circ\xi_p$ is an unramified morphism. Indeed, clearly it suffices to show this for any conjugate of $\xi_p$ that is well-located. So we may assume that $\phi_1$ is special admissible, in which case the claim is established in the proof of Lemma \ref{lem:LR-Lemma5.2}. Let $\tilde{J}_{\xi_p'}$ be the twist of $\tilde{J}$ by $\xi_p'$. By the same argument as above, we have $H^1(\Qp,\tilde{J}_{\xi_p'})=\{1\}$.
Therefore, there exists $h_p\in  \tilde{J}(\Qpb)$ such that for $\tau\in\Gal(\Qpb/\Qp)$,
\begin{equation} \label{eq:a_{tau}'}
a_{\tau}':=f_{1p}^{-1}a_{\tau}f_{1p}=h_p\cdot \xi_p'(s_{\tau})h_p^{-1}\xi_p'(s_{\tau})^{-1}.
\end{equation}
(Here and in the rest of proof, we write $h_p$ for $\rho(h_p)$ when there is no danger of confusion). 
Then, for $f_{p}:=f_{1p} h_p$, as $\phi(p)\circ\zeta_p=a\xi_p$, we have 
\[\Int(f_{p}^{-1})\circ\phi(p)\circ\zeta_p(s_{\tau})=\xi_p'(s_{\tau})h_p^{-1}\xi_p'(s_{\tau})^{-1}h_p\cdot\Int(h_p^{-1})\circ\xi_p'(s_{\tau})=\xi_p'(s_{\tau})\]
which shows that $\phi$ is an admissible morphism and $f_{p}\in X_p(\phi)$. 
This proves the claims in (2) on admissibility of $\phi=a\phi_1$ in the two cases $a\in Z^1(\Q,I_{\phi_1})$, $a\in Z^1(\Q,H_1)$.


Now, we consider the case that $a\in Z^1(\Q,H_1)$ and its class in $H^1(\Q,H_1)$ lies in the subset 
$\Sha^{\infty}_G(\Q,H_1)$ (\ref{eq:Sha^{infty}_G'}).
We furthermore assume that the localization $a(p)\in Z^1(\Qp,H_1)$ lies in $Z^1(\Qp,I_1)$ and property $(\heartsuit)$ of Thm. \ref{thm:LR-Satz5.21} holds for $(\phi_1,\epsilon)$. 
Then, since $H^1(\Qpnr,I_1)=\{1\}$, we may assume (by replacing $a(p)$ with a cohomologous one) that $a(p)\in Z^1(\Qp,I_1)$ is unramified, i.e. $a_{\tau}$ is induced from a normalized cocycle in $Z^1(L_{n'},I_1)$ for some $n'\in\N$.

We choose $f_{1p}$ from $H(\Qpb)$ such that $\xi_p':=\Int(f_{1p}^{-1})\circ\xi_p$ is an unramified morphism, and define $\xi_p'$, $h_p$, $f_{p}\in H(\Qpb)$ by the same recipe as before from this $f_{1p}$ (using that $H^1(\Qp,\tilde{H}_{\xi_p})=\{1\}$); then, since both $\xi_p'$ and $a'_{\tau}$ are unramified, we must have $h_p\in \tilde{H}(\Qpnr)$. Also, we set
\[\epsilon_1':=\Int(f_{1p}^{-1})(\epsilon),\quad \epsilon':=\Int(f_{p}^{-1})(\epsilon)=\Int(h_p^{-1})(\epsilon_1');\] 
then, $\epsilon_1',\epsilon'$ are elements of $H(\Qpnr)$ (as they centralize unramified morphisms of $\Qpb/\Qp$-Galois gerbs) which are conjugate under $H(\Qpnr)$. Then for any lifting $\tilde{\sigma}\in \Gal(\Qpb/\Qp)$ of the Frobenius automorphism $\sigma\in \Gal(\Qpnr/\Qp)$, by admissibility of $(\phi_1,\epsilon)$ there exists  $c_1\in H(\mfk)$ such that 
\begin{equation} \label{eq:c_1}
c_1(\epsilon_1'^{-1}\xi_p'(s_{\tilde{\sigma}})^n)c_1^{-1}=\tilde{\sigma}^n 
\end{equation}
(as $\xi_p'$ is unramified, it is the inflation of a $\Qpnr/\Qp$-Galois gerb morphism $\theta^{\nr}$ and we have $\xi_p'(s_{\tilde{\sigma}})=\theta^{\nr}(s_{\sigma})$).
Let $\xi_p'(s_{\tilde{\sigma}})^n=b_n\rtimes\tilde{\sigma}^n$ with $b_n\in H(\Qpnr)$. Then, it follows from the existence of $c_1\in H(\mfk)$ satisfying (\ref{eq:c_1}) that $w_H(\epsilon_1')=w_H(b_n)$. Since $\epsilon'$ is conjugate to $\epsilon_1'$ under $H(\mfk)$, we also have $w_H(\epsilon')=w_H(b_n)$. If we can show that for any neighborhood $V$ of the identity in $H(\mfk)$, there exists $t\in\N$ such that $(\epsilon'^{-1}\xi_p'(s_{\tilde{\sigma}})^n)^t\in V\rtimes\tilde{\sigma}^{nt}$, then, we can repeat the arguments of the proof of Thm. \ref{thm:LR-Satz5.21}, (2), to conclude that $(\phi,\epsilon)$ is an admissible pair which again enjoys the property $(\heartsuit)$.

As $a\in  Z^1(\Qp,H_{\xi_p})$, for every $\tau\in \Gal(\Qpb/\Qp)$ and any $n'\in\N$, we have \[(a_{\tau}'\xi_p'(s_{\tau}))^{n'}=a_{\tau^{n'}}'\xi_p'(s_{\tau})^{n'},\] 
thus as both $a_{\tau}'$ and $\xi_p'$ commute with $\epsilon_1'$ and as $a_{\tau}'$ is unramified,
\begin{align} \label{eq:epsilon'{-1}xi'{n}} 
(\epsilon'^{-1}\xi_p'(s_{\tilde{\sigma}})^{n})^t &\stackrel{}{=} h_p^{-1}\epsilon_1'^{-t}(h_p\xi_p'(s_{\tilde{\sigma}})h_p^{-1})^{nt}h_p =h_p^{-1}\epsilon_1'^{-t}(a_{\tilde{\sigma}}'\xi_p'(s_{\tilde{\sigma}}))^{nt}h_p  \\ &\stackrel{}{=} h_p^{-1} a_{\sigma^{nt}}'(\epsilon_1'^{-1}\xi_p'(s_{\tilde{\sigma}})^{n})^th_p \nonumber. \end{align}
So, for $t\gg1$ with $a\in Z^1(L_{nt}/\Qp,I_1)$,
$(\epsilon'^{-1}\xi_p'(s_{\tilde{\sigma}})^{n})^t=(c_1h_p)^{-1}\sigma^{nt}(c_1h_p)$ lies in any given neighborhood of the identity in $H(\mfk)$, as was required.

\textsc{Proofs of (3) and (4)}. Under the assumption $G^{\der}=G^{\uc}$, this claim is established in the proof of Satz 5.25 of \cite{LR87} (more precisely, in the part from the third paragraph on p.195 to the end of the proof). 
This argument again carries over to our general case if one works systematically with $\tilde{I}_0$, $\tilde{I}_1$, and the abelianized cohomology groups $H^1_{\ab}(k,H(C_k))$, instead of, respectively, $I_0\cap G^{\der}$, its twist via $\phi_1$, and the groups $\pi_0(Z(\widehat{H})^{\Gal(\bar{k}/k)})^D$ (where $H$ is some reductive group over a field $k$, either global or local). As the original proof is quite sketchy and our general setting requires careful modifications of the original arguments, again we give a detailed proof.

Let $(\gamma_0;\gamma,\delta)$ be a stable Kottwitz triple such that there exist elements $(g_v)_v\in G(\bar{\A}_f^p)\times G(\mfk)$ satisfying (\ref{eq:stable_g_l}), (iii$'$) which gives trivial Kottwitz invariant, and that $\delta$ satisfies $Y_p(\delta)\neq\emptyset$. First, we look for an admissible pair $(\phi,\epsilon)$ giving rise to $(\gamma_0;\gamma,\delta)$. 
By Thm. \ref{thm:LR-Satz5.21}, after replacing $\gamma_0$ by a rational element stably conjugate to it, we can find an admissible pair $(\phi_1=\psi_{T,\mu_h},\gamma_0\in T(\Q))$ that is nested in a special Shimura sub-datum $(T,h)$ and satisfies the condition $(\heartsuit)$ there. Then there exists a stable Kottwitz triple $(\gamma_0;\gamma_1,\delta_1)$ attached to this special admissible pair $(\phi_1,\gamma_0)$ such that for every $l\neq p$, one may take $\gamma_{1l}:=\gamma_0$ (as $T(\bar{\A}_f^p)\cap X^p(\phi_1)\neq\emptyset$). We will find a cocycle $a\in Z^1(\Q,H_1)$ (in fact, in $Z^1(\Q,I_1)$) such that $(a\phi_1,\gamma_0)$ becomes an admissible pair giving rise to  $(\gamma_0;\gamma,\delta)$ as an attached stable Kottwitz triple. We note that since $(\phi_1,\gamma_0)$ has property $(\heartsuit)$, the pair $(a\phi_1,\gamma_0)$ will be admissible if $[a]\in \ker[H^1(\Q,H_1)\rightarrow H^1_{\ab}(\Q,G)\oplus H^1(\R,G')]$, by (2).

In the following, only assuming that $(\phi:=a\phi_1,\epsilon:=\gamma_0)$ is an admissible pair for $a\in Z^1(\Q,H_1)$, we give an explicit description of an associated (stable) Kottwitz triple $(\gamma_0';\gamma'=(\gamma'_l)_{l\neq p},\delta')$, in terms of $(\gamma_0;\gamma_1=(\gamma_{1l})_{l\neq p},\delta_1)$ and $[a]$ (cf.  \cite[Lem. 5.27]{LR87}); statement (3) will follow from this description. Moreover, we will see that if $a\in Z^1(\Q,I_1)$, there exists a stable Kottwitz triple $(\gamma_0';\gamma'=(\gamma'_l)_{l\neq p},\delta')$ attached to $(\phi=a\psi_{T,\mu_h},\epsilon\in T(\Q))$ such that $\gamma_0'=\epsilon(=\gamma_0)$.

Recall (\autoref{subsubsec:Kottwitz_invariant}) that any stable Kottwitz triple $(\gamma_0;\gamma,\delta)$ together with a choice of elements $(g_v)_v\in G(\bar{\A}_f^p)\times G(\mfk)$ satisfying (\ref{eq:stable_g_l}), (\ref{eq:stable_g_l}) gives rise to an adelic class 
\[(\alpha_v(\gamma_0;\gamma,\delta;g_v))_v\in  \bigoplus_v X^{\ast}(Z(\hat{I}_0)^{\Gamma_v})\] 
whose finite part $(\alpha_v)_{v\neq\infty}$ measures the difference between $\gamma_0$ and $(\gamma,\Nm_n\delta)$ ($n$ being the level of the triple).

For $l\neq p$, it was shown in the proof of (2) that for any $g_{1l}\in X_l(\phi_1,\epsilon)$, 
$[g_{1l}^{-1}a_{\tau}g_{1l}]\in H^1(\Ql,G)$ is trivial and if one chooses $h_l\in G(\Qlb)$ such that 
\[g_{1l}^{-1}a_{\tau}g_{1l}=h_l\cdot{}^{\tau}h_l^{-1}\] 
for every $\tau\in\Gamma_l$, one has $g_{l}':=g_{1l}h_l\in X_l(\phi,\epsilon)$, from which we obtain the $l$-components 
\[\gamma_{1l}:=\Int(g_{1l}^{-1})(\epsilon),\quad \gamma'_l:=\Int(g_{l}'^{-1})(\epsilon)\ \in G(\Ql)\] 
of some (not necessarily stable) Kottwitz triples attached to $(\phi_1,\epsilon)$ and $(\phi,\epsilon)$ respectively. The $G(\Ql)$-conjugacy classes of $\gamma_l$ and $\gamma_l'$ (in the geometric conjugacy classes of $\gamma_0$) are determined by the cohomology classes of the cocycles in $Z^1(\Ql,G_{\gamma_0})$: 
\begin{equation} \label{eq:cocycles_at_l}
g_{1l}\cdot{}^{\tau}g_{1l}^{-1},\quad g_{l}'\cdot{}^{\tau}g_{l}'^{-1}=a_{\tau}\cdot g_{1l}\cdot{}^{\tau}g_{1l}^{-1}.
\end{equation}
(these cohomology classes in $H^1(\Ql,G_{\gamma_0})$ do not depend on the choice of $g_{1l}$ and $h_l$.)

Now, we consider the case that $a(l)\in Z^1(\Ql,I_1)$. If there exists $g_{1l}\in X_l(\phi_1,\epsilon)$ further satisfying that 
\[ g_{1l}\cdot{}^{\tau}g_{1l}^{-1} \in I_0(\Qlb)\] 
for every $\tau\in\Gamma_l$, so that $g_{l}'\cdot{}^{\tau}g_{l}'^{-1}\in I_0(\Qlb)$ as well (this will be the case, for example, if $(\phi_1,\epsilon)$ is nested in a special Shimura sub-dtaum), as $(I_1)_{\Ql}$ is the inner twist of $(I_0)_{\Ql}$ via $\phi_1(l)\circ\zeta_l (\tau)$, i.e. via the cocycle $g_{1l}\cdot{}^{\tau}g_{1l}^{-1}\in Z_1(\Ql,I_0)$, there exists a natural isomorphism 
\[H^1(\Ql,I_1)=H^1(\Ql,(I_1)_{\bfab})\ \isom\ H^1(\Ql,(I_0)_{\bfab})=H^1(\Ql,I_0).\]
Then the cohomology classes $\alpha_l(\gamma_0;\gamma_{1l};g_{1l})$, $\alpha_l(\gamma_0;\gamma_{l}';g_{l}')$ in $H^1(\Ql,I_0)=H^1(\Ql,(I_0)_{\bfab})$ of the cocyles $g_{1l}\cdot{}^{\tau}g_{1l}^{-1}$, $g_{l}'\cdot{}^{\tau}g_{l}'^{-1}$ satisfy the relation:
\begin{equation} \label{eq:difference_of_alpha_l's}
\alpha_l(\gamma_0;\gamma'_l;g_{l}')=\alpha_l(\gamma_0;\gamma_{1l};g_{1l})+[a(l)]\ \in\ \ker[H^1(\Ql,I_0)\rightarrow H^1(\Ql,G)]
\end{equation}
(Apply \cite{Borovoi98}, Lemma 3.15.1 to $(I_1)_{\bfab}={}_{\psi}(I_0)_{\bfab}$, $\psi:=g_{1l}\cdot{}^{\tau}g_{1l}^{-1}$, $\psi':=a(l)$.)

At $v=p$, we first fix some choices.
As in the proof of (2), we choose $f_{1p}\in H(\Qpb)$ such that $\xi_p':=\Int(f_{1p}^{-1})\circ\xi_p$ is an unramified morphism, and let $h_p$, $f_p=f_{1p}h_p\in H(\Qpb)$ be defined as there. When $\xi_p'(s_{\tilde{\sigma}})=b\rtimes s_{\tilde{\sigma}}$ for $b\in H(\mfk)$,
by construction (of $\delta_1$), there exists $x_1\in G(\mfk)$ such that $x_1(\epsilon_1'^{-1}\xi_p'(s_{\tilde{\sigma}})^n)x_1^{-1}=\tilde{\sigma}^n$ and $\delta_1=x_1b\sigma(x_1^{-1})$; we set 
\[c_{1p}:=x_1f_{1p}^{-1}\in G(\bar{\mfk}),\]
so that $c_{1p}\epsilon c_{1p}^{-1}=x_1\epsilon_1'x_1^{-1}=\Nm_n(\delta_1)$.

Next, from this relation $c_{1p}\epsilon c_{1p}^{-1}=\Nm_n(\delta_1)$, we obtain a cocycle in $Z^1(W_{\Qp},G_{\epsilon}(\bar{\mfk}))$ ($\epsilon:=\gamma_0$):
\begin{equation} \label{eq:cocycle_b_{tau}}
b_{\tau}:=c_{1p}^{-1}\cdot\Nm_{i(\tau)}\delta_1\cdot {}^{\tau}c_{1p},
\end{equation}
where $W_{\Qp}$ is the Weil group of $\Qp$ and $i(\tau)\in\Z$ is defined by $\tau|_{L_n}=\sigma^{i(\tau)}\ (0\leq i(\tau)< n)$.
As $B(G_{\epsilon})$ is naturally identified with $H^1(\langle\sigma\rangle,G_{\epsilon}(\mfk))$, the inflation map induces an injection $B(G_{\epsilon})\hookrightarrow  H^1(W_{\Qp},G_{\epsilon}(\bar{\mfk}))$ which becomes a bijection if $G_{\epsilon}$ is connected  \cite[$\S$1]{Kottwitz85}. 

Similarly, since $\Int(f_p^{-1})\circ\phi(p)\circ\zeta_p=\xi_p'$, by construction of $\delta'$ (as we are assuming that $(\phi,\epsilon)$ is admissible), there exists $x'\in G(\mfk)$ such that $x'(\epsilon'^{-1}\xi_p'(s_{\tilde{\sigma}})^n)x'^{-1}=\tilde{\sigma}^n$ and $\delta'=x'b\sigma(x')^{-1}$. As before,  for $c_{p}':=x'f_{1p}^{-1}$, we obtain a cocycle in $Z^1(W_{\Qp},G_{\epsilon}(\bar{\mfk}))$
\begin{equation} \label{eq:cocycle_b_{tau}'}
b_{\tau}':=c_{p}'^{-1}\cdot\Nm_{i(\tau)}\delta'\cdot {}^{\tau}c_{p}'.
\end{equation}

Then, since $a_{\tau}\cdot f_{1p}\xi_p'(s_{\tau})f_{1p}^{-1}=\phi(p)\circ\zeta_p(s_{\tau})=f_p\xi_p'(s_{\tau})f_p^{-1}$ for any $\tau\in W_{\Qp}$, there is the relation 
\begin{equation} \label{eq:a_{tau}b_{tau}=b_{tau}'}
a_{\tau}\cdot b_{\tau}=b_{\tau}'.
\end{equation}

Now, suppose that $a\in Z^1(\Q,I_1)$ and that $[a]\in \Sha^{\infty}_G(\Q,I_1)$; then, since $(\phi_1,\epsilon)$ satisfies the condition $(\heartsuit)$ of Thm. \ref{thm:LR-Satz5.21}, we know that $(\phi=a\phi_1,\epsilon)$ is also admissible.
Because the admissible pair $(\phi_1,\epsilon)$ is nested in a special Shimura sub-datum, we can find $c_1\in G(\mfk)$ satisfying (iii$'$), i.e. such that
\[c_1\epsilon c_1^{-1}=\Nm_n\delta_1,\ \text{ and }\ b(\epsilon;\delta_1;c_1):=c_1^{-1}\delta_1\sigma(c_1)\in I_0(\mfk)\] 
(Remark \ref{rem:two_different_b's}); this $b(\epsilon;\delta_1;c_1)$ and its $\sigma$-conjugacy class $[b(\epsilon;\delta_1;c_1)]_{I_0}$ in $B(I_0)$ both depend on $c_1$ as well as on the pair $(\epsilon,\delta_1)$, while its $\sigma$-conjugacy class in $G_{\epsilon}(\mfk)$ does not. Also, the $\sigma$-conjugacy class of $\delta_1$ in $G(L_n)$ is completely determined by the $\sigma$-conjugacy class $[b(\epsilon;\delta_1;c_1)]\in B(G_{\epsilon})$.  
We see that under the composite map $B(I_0)\rightarrow B(G_{\epsilon})\hookrightarrow  H^1(W_{\Qp},G_{\epsilon}(\bar{\mfk}))$, the $\sigma$-conjugacy class $[b(\epsilon;\delta_1;c_1)]_{I_0}$ maps to the cohomology class $[b_{\tau}]$, since
\[b(\epsilon;\delta_1;c_1)=c_1^{-1}\delta_1\sigma(c_1)=y_1\cdot b_{\tilde{\sigma}}\cdot \tilde{\sigma}(y_1^{-1}),\] 
where $y_1=c_1^{-1}c_{1p}\in G_{\epsilon}(\bar{\mfk})$ and $\tilde{\sigma}$ is any lifting in $W_{\Qp}$ of $\sigma$.
Then, similarly we can also find $c'$ such that $b(\epsilon;\delta';c'):=c'^{-1}\delta'\sigma(c')$ lies in $I_0(\mfk)$. Indeed, if $v\in G_{\epsilon}(\bar{\mfk})$ is chosen such that $vb_{\tau}{}^{\tau}v^{-1}$ belongs to $I_0(\bar{\mfk})$ for every $\tau\in W_{\Qp}$, so does $vb_{\tau}'{}^{\tau}v^{-1}$ by (\ref{eq:a_{tau}b_{tau}=b_{tau}'}). 
Since $H^1(\langle\sigma\rangle,I_0(\mfk))\isom H^1(W_{\Qp},I_0(\bar{\mfk}))$, there exist $c'\in G(\bar{\mfk})$ and $b'\in I_0(\mfk)$ such that $c'^{-1}\cdot\Nm_{i(\tau)}\delta'\cdot {}^{\tau}c'=\Nm_{i(\tau)}b'$ for every $\tau\in W_{\Qp}$, which implies that $c'\in G(\mfk)$ and $b(\epsilon;\delta';c'):=c'^{-1}\delta'\sigma(c')(=b')\in I_0$.
In this case, it follows from Lemma \ref{lem:equality_of_two_Newton_maps}, (2) and (\ref{eq:a_{tau}b_{tau}=b_{tau}'}) that one has
\begin{equation} \label{eq:difference_of_alpha_p's}
 \kappa_{I_0}(b(\epsilon;\delta';c'))=\kappa_{I_0}(b(\epsilon;\delta_1;c_1))+a(p)\ \text{ and }\ \nu_{I_0}(b(\epsilon;\delta';c'))=\nu_{I_0}(b(\epsilon;\delta_1;c_1)).
\end{equation}
In particular, we conclude that when $a\in Z^1(\Q,I_1)$, with the elements $g_l$, $g_{l}'=g_{1l}h_l$, $\delta_1$, $c_1$, $\delta'$, $c'$ chosen above, 
the triple
\[(\gamma_0,\gamma':=(g_l'^{-1}\epsilon g_l')_l,\delta')\] 
is a stable Kottwitz triple attached to $(\phi=a\psi_{T,\mu_h},\epsilon=\gamma_0\in T(\Q))$ which further satisfies the relations (\ref{eq:difference_of_alpha_l's}), (\ref{eq:difference_of_alpha_p's}).
Also, note that these two relations imply statement (3).

Before continuing with the discussion, we recall the fact (Appendix \ref{sec:abelianization_complex}) that the abelianization complexes $I_{0\bfab}$ and $I_{1\bfab}$ of $I_0$ and $I_1$ are canonically isomorphic in the derived category of commutative algebraic $\Q$-group schemes (since they are both quasi-isomorphic to $Z(I_0^{\uc})\rightarrow Z(I_0)$), and the same is true of $\tilde{I}_{0\bfab}$ and $\tilde{I}_{1\bfab}$.
Now, we set 
\[(\alpha(v):=\alpha_v(\gamma_0;\gamma,\delta;g_v)\cdot \alpha_v(\gamma_0;\gamma_1,\delta_1;g_{1v})^{-1})_v\in  \bigoplus_{v} \mathbb{H}^1(\Qv,I_{0\bfab}), \]
where $(g_v)_v$ are some elements used to define $\alpha_v(\gamma_0;\gamma,\delta;g_v)$ for the given stable Kottwitz triple $(\gamma_0;\gamma,\delta)$ (their choice does not matter), and $g_{1l}$, $g_{1p}:=c_1$ are as described above.
A priori, $\alpha(v)$ is an element of $X^{\ast}(Z(\hat{I}_0)^{\Gamma_v})$, but one easily verifies that it belongs to the subgroup $\mathbb{H}^1(\Qv,I_{0\bfab})\subset X^{\ast}(Z(\hat{I}_0)^{\Gamma_v})_{\tors}$. Indeed, this is clear for $v\neq p$ by definition: for every finite prime $l\neq p$, each $\alpha_l$ itself belongs to $\mathbb{H}^1(\Ql,I_0)=\mathbb{H}^1(\Ql,I_{0\bfab})=X^{\ast}(Z(\hat{I}_0)^{\Gamma_l})_{\tors}$, and $\alpha_{\infty}=0$. For $v=p$, according to Lemma \ref{lem:equality_of_two_Newton_maps}, (2), the Newton quasi-cocharacters $\nu_{I_0}(b(\gamma_0;\delta;g_p))$, $\nu_{I_0}(b(\gamma_0;\delta_1;g_{1p}))\in \mathcal{N}(I_0)$ are equal and do not depend on the choice of $g_p$, $g_{1p}$. We have seen (cf. \cite[4.13, 3.5]{Kottwitz97}) that the set $B(I_0)$ is identified with a subset of $X^{\ast}(Z(\hat{I}_0)^{\Gamma_v})\times\mathcal{N}(I_0)$ via $\kappa_{I_0}\times \nu_{I_0}$ and the image under $\kappa_{I_0}$ of any fibre of $\nu_{I_0}$ is a torsor under $X^{\ast}(Z(\hat{I}_0)^{\Gamma_p})_{\tors}=\mathbb{H}^1(\Qp,I_{0\bfab})$. Clearly, the claim follows from these facts; we also see that $[\alpha(p)]=\kappa_{I_0}(b(\gamma_0;\delta;g_p))-\kappa_{I_0}(b(\gamma_0;\delta_1;g_{1p}))$.

Therefore, in view of these discussions, to prove effectivity of the given stable Kottwitz triple $(\gamma_0;\gamma,\delta)$, it suffices to find a global class $[\widetilde{a}]\in H^1(\Q,\tilde{I}_{1})$ whose localizations $[\tilde{a}(v)]$ go over to $(\alpha(v))$ under the map $\oplus f_v:=\oplus \mu_v\circ \bfab_v$ in the commutative diagram:
\begin{equation}  \label{eq:H^1_{ab}_for_(I'->I)}
 \xymatrix{ \bigoplus_v H^1(\Qv,I_{1}) \ar[r] & \bigoplus_v \mathbb{H}^1(\Qv,I_{1\bfab}) & \mathfrak{K}(I_{1}/\Q)^D  \\ 
 \bigoplus_v H^1(\Qv,\tilde{I}_{1}) \ar[u] \ar[r]^(.45){\bfab} \ar[ru]^{\oplus f_v} & \bigoplus_v \mathbb{H}^1(\Qv,\tilde{I}_{1\bfab}) \ar[u]_{\mu=\oplus_v\mu_v} \ar[r]^(.5){\lambda} & \mathbb{H}^1(\A/\Q,\tilde{I}_{1\bfab}) \ar@{->>}[u]_{\nu}  \\
 \bigoplus_v \mathbb{H}^0(\Qv,\tilde{I}_{1}\rightarrow I_{1}) \ar[r]^(.5){\overline{\bfab}} \ar[u] & \bigoplus_v \mathbb{H}^0(\Qv,G_{\bfab})  \ar[u]_{\xi} \ar@{->>}[r] & \im(\lambda\circ\xi) \ar@{^(->}[u] }
 \end{equation} 
Here, the first two columns (consisting of adelic cohomology groups) are each a part of the cohomology long exact sequence for the crossed module $\tilde{I}_1\rightarrow I_1$ \cite[(3.4.3.1)]{Borovoi98} and for the distinguished triangle (\ref{eq:DT_of_CX_of_tori}) attached to the morphism $\tilde{I}_{1\bfab}\rightarrow I_{1\bfab}$; so, they are exact. The two horizontal maps for $H^1$ between these two columns are the abelianization maps for $I_1$, $\tilde{I}_1$, and the left lower commutative diagram (containing the map $\overline{\bfab}$) is induced from an obvious commutative diagram of crossed modules (replace the complexes of tori $\tilde{I}_{1\bfab}[1]$, $G_{\bfab}$ by the quasi-isomorphic complexes $(\tilde{I}^{\uc}_1\rightarrow \tilde{I}_1)[1]$, $\tilde{I}_1\rightarrow I_1$); in particular, the map $\overline{\bfab}$ is an isomorphism. The map $\lambda$ comes from the long exact sequence for $\tilde{I}_{1\bfab}$:
\[ \cdots \longrightarrow \mathbb{H}^1(\Q,\tilde{I}_{1\bfab}) \longrightarrow \mathbb{H}^1(\A,\tilde{I}_{1\bfab}) \stackrel{\lambda}{\longrightarrow} \mathbb{H}^1(\A/\Q,\tilde{I}_{1\bfab})\longrightarrow \mathbb{H}^{2}(\Q,\tilde{I}_{1\bfab}) \longrightarrow \cdots, \] 
and $\nu$ is the dual of the inclusion $\mathfrak{K}(I_1/F) \hookrightarrow \pi_0(Z(\hat{\tilde{I}}_1)^{\Gamma})$ under the identification $\mathbb{H}^1(\A/\Q,\tilde{I}_{1\bfab})=\pi_0(Z(\hat{\tilde{I}}_1)^{\Gamma})^D$ (Lemma \ref{lem:identification_of_Kottwitz_A(H)}). 
Since $\im[H^1(\Q,\tilde{I}_{1})\rightarrow \bigoplus_v H^1(\Q_v,\tilde{I}_{1})]=\ker(\lambda\circ\bfab)$ \cite[Thm.5.16]{Borovoi98} (cf. \cite[Prop.2.6]{Kottwitz85}), hence it suffices to find an adelic cohomology class 
\[ [\tilde{a}(v)]\in \oplus_vH^1(\Qv,\tilde{I}_{1}) \] 
such that $\alpha(v)=f_v([\tilde{a}(v)])$ for each $v$ and $\lambda\circ\bfab([\tilde{a}(v)])=0$.
We find such class $([\tilde{a}(v)])_v$ as follows. We first note that $(\alpha(v))$ lifts to an element $\tilde{\alpha}=(\tilde{\alpha}(v))_v\in \bigoplus_v \mathbb{H}^1(\Qv,\tilde{I}_{1\bfab})$. In view of the cohomology exact sequence attached to the distinguished triangle (\ref{eq:DT_of_CX_of_tori}), this amounts to vanishing of the image of $(\alpha(v))_v$ in $\bigoplus_v \mathbb{H}^1(\Qv,G_{\bfab}) (\subset \bigoplus_v X^{\ast}(Z(\hat{G})^{\Gamma_v})_{\tors})$, which then follows from the fact (\ref{eq:restriction_of_alpha_to_Z(hatG)}) that for any Kottwitz triple $(\epsilon;\gamma,\delta)$, the image of the invariant $\alpha_v(\epsilon;\gamma,\delta;g_v)$ in $X^{\ast}(Z(\hat{G})^{\Gamma_v})$ is independent of the triple.
Now, by assumption (of vanishing of Kottwitz invariants), we have $\nu\circ\lambda(\tilde{\alpha})=0$ in $\mathfrak{K}(I_1/\Q)^D$. 
Hence, by Lemma \ref{lem:proof_of_Kottwitz86_Thm.6.6} below (which asserts exactness of the third column of (\ref{eq:H^1_{ab}_for_(I'->I)})), we may further assume that $(\tilde{\alpha}(v))_v\in \ker(\lambda)$.
Since the (local) abelianization map $\bfab_v$ is surjective for every place $v$ (even bijective for finite $v$) \cite[Thm.5.4]{Borovoi98}, we can find $([\tilde{a}(v)])_{v\neq\infty} \in \bigoplus_{v\neq\infty} H^1(\Qv,\tilde{I}_{1})$ mapping to $(\tilde{\alpha}(v))_{v\neq\infty}$. For $v=\infty$, we use the condition that $\alpha(\infty)=0$, by which and the exactness of the vertical sequence for $\tilde{I}_{1\bfab}\rightarrow I_{1\bfab}$, we find an element $\bar{\alpha}(\infty)\in \mathbb{H}^0(\R,\tilde{I}_{1\bfab}\rightarrow I_{1\bfab})$ mapping to $\tilde{\alpha}(\infty)$. Then, using that $\overline{\bfab}$ is an isomorphism, we define $\tilde{a}(\infty)$ to be the the image of $\bar{\alpha}(\infty)$ under the map $\mathbb{H}^0(\R,\tilde{I}_{1}\rightarrow I_{1})\rightarrow H^1(\R,\tilde{I}_1)$. The cohomology class $([\tilde{a}(v)])_v$ thus found is the one that we are looking for. 
\end{proof}

\textsc{Proof of Thm. \ref{thm:LR-Satz5.25} continued.} 
It remains to prove the statement on $i(\gamma_0;\gamma,\delta)$. Suppose given two admissible pairs $(\phi_1,\epsilon_1)$, $(\phi,\epsilon)$ whose associated Kottwitz triples are equivalent. By conjugation, we may assume that $(\phi_1,\epsilon_1)$ is nested in a special Shimura sub-datum and $\epsilon=\epsilon_1\in G(\Q)$; so, as was explained in the beginning of this proof, one has $\phi^{\Delta}=\phi_1^{\Delta}$ and $\phi=a\phi_1$ for a cocycle $a_{\tau}\in Z^1(\Q,H_1)$ whose cohomology class lies in $\ker[H^1(\Q,H_1)\rightarrow H^1_{\ab}(\Q,G)\oplus H^1(\R,G')]$, by Lemma \ref{lem:LR-Lem5.26,Satz5.25}, (2).

We claim that for every finite place $v$, the localization $[a(v)]\in H^1(\Qv,H_1)$ is trivial. Indeed, according to the previous discussion (cf. (\ref{eq:cocycles_at_l})), there exist $(g_l)_{l\neq p}\in X^p(\phi_1,\epsilon)$ and $(h_l)_{l\neq p}\in G(\bar{\A}_f^p)$ such that $g_l^{-1}a_{\tau}g_l=h_l\cdot{}^{\tau}h_l^{-1}$ for all $l\neq p$ and $\tau\in\Gamma_l$, thereby $(g_l':=g_lh_l)_{l\neq p}\in X^p(\phi,\epsilon)$ as well.
Also, $\gamma_{1l}=\Int(g_l^{-1})(\epsilon)$, $\gamma_l'=\Int(g_l'^{-1})(\epsilon)$ are the $l$-components of some Kottwitz triples attached to $(\phi_1,\epsilon)$, $(\phi,\epsilon)$.
Since $\gamma_{1l}$, $\gamma_l'$ are conjugate under $G(\Ql)$ by assumption, we may modify $h_l$ (with right translation by an element of $G(\Ql)$) so that $\Int(g_l^{-1})(\epsilon)=\Int(g_l'^{-1})(\epsilon)$ and so $x_l:=g_lh_l^{-1}g_l^{-1}\in G_{\epsilon}(\Qlb)$. Then, one easily checks that for every $\tau\in\Gamma_l$,
\[a_{\tau}=x_l^{-1}\cdot\phi_1(l)\circ\zeta_l(\tau)\cdot x_l\cdot \phi_1(l)\circ\zeta_l(\tau)^{-1}.\]
In particular, $[a]_{H_1}\in H^1(\Q,H_1)$ maps to zero in $H^1(\Qv,\pi_0(H_1))$ for every $v\neq p,\infty$, and thus by Prop. \ref{prop:triviality_in_comp_gp}, we may assume that $a\in Z^1(\Q,I_1)$ for $I_1=H_1^{\mathrm{o}}$ (note that as $\pi_0(G_{\epsilon})$ is commutative (proof of Prop. \ref{prop:triviality_in_comp_gp}), it is isomorphic to its inner twist $\pi_0(H_1)$).
At $v=p$, by standard recipe (Subsec. (\autoref{subsubsec:K-triple_attached_to_adm.pair}) and proof of Lemma \ref{lem:delta_from_b&gamma_0}), the admissible pairs $(\phi_1,\epsilon=\gamma_0)$, $(\phi,\epsilon=\gamma_0)$  give rise to $\delta_1,\delta'\in G(L_n)$ such that $(\gamma_0;(\gamma_{1l})_l,\delta_1)$, $(\gamma_0;(\gamma_l')_l,\delta')$ are associated Kottwiz triples. 
We recall the cocycles $b_{\tau}$ (\ref{eq:cocycle_b_{tau}}), $b_{\tau}'$ (\ref{eq:cocycle_b_{tau}'}) in $Z^1(W_{\Qp},H_0(\bar{\mfk}))$ whose constructions involve $\delta_1$, $\delta'$, and some other elements $c_{1p},c_p'\in G(\bar{\mfk})$. Now, if $\delta_1$ and $\delta'$ are $\sigma$-conjugate in $G(L_n)$, say $\delta'=d\delta_1\sigma(d^{-1})$, after replacing $x'$ by $d^{-1}x'$ (in the construction of $\delta'$) and $c_p'$ by $d^{-1}c_p'$, we may assume that $\delta'=\delta_1$ and $c_p'\epsilon c_p'^{-1}=\Nm_n(\delta')$, and it follows that $e:=c_{1p}^{-1}c_p'\in H_0(\bar{\mfk})$ and
\[b_{\tau}'=e^{-1}\cdot b_{\tau}\cdot {}^{\tau}e\quad (\tau\in W_{\Qp}).\] Hence, $a_{\tau}=e^{-1}\cdot (b_{\tau}\cdot {}^{\tau}e\cdot b_{\tau}^{-1})$ becomes trivial in $H^1(W_{\Qp},H_1(\bar{\mfk}))$. But, $[a_{\tau}]_{H_1}\in H^1(\Qp,H_1)$ lies in the subset $H^1(\Qpnr/\Qp,H_1)\hookrightarrow H^1(\Qpb/\Qp,H_1)$ since $[a_{\tau}]_{I_1}\in H^1(\Qp,I_1)$ is so, by the Steinberg's theorem $H^1(\Qpnr,I_1)=\{1\}$. Hence, the cohomology class $[a_{\tau}]\in H^1(\Qp,H_1)$ is trivial, since the inflation maps for $W_{\Qp}\twoheadrightarrow \Gal(L_{n'}/\Qp)$ for $n'\in\N$ induce an injection $H^1(\Qpnr/\Qp,H_1)\hookrightarrow H^1(W_{\Qp},H_1(\bar{\mfk}))$.

Also, $[a(\infty)]_{H_1}\in \ker[H^1(\Q,H_1)\rightarrow H^1(\R,G')]$ implies that $[a(\infty)]_{I_1}=0$ in $H^1(\R,I_1)$, since $(I_1)_{\R}\subset G'$ is connected so that $\ker[H^1(\Q,I_1)\rightarrow H^1(\R,G')]=\{1\}$
\cite[Lem. 5.14, 5.28]{LR87} (or, \cite[Lem.4.4.5]{Kisin17}). 
Hence, one has 
\[[a]_{I_1}\in \ker[\Sha^{\infty}_G(\Q,I_1)\rightarrow \prod_{v} H^1(\Qv,H_1)],\] 
where $v$ runs through \emph{all} places of $\Q$ (including $\infty$).
Finally, it follows from the previous discussion that the subset of $H^1(\Q,I_1)$ consisting of $[a]$'s such that the admissible pairs $(\phi_1,\epsilon)$ and $(a\phi_1,\epsilon)$ produce the same equivalence classes of Kottwitz triples is in bijection with 
\[\im[\ker[\Sha^{\infty}_G(\Q,I_1)\rightarrow \prod_{v} H^1(\Qv,H_1)]\rightarrow H^1(\Q,H_1)]\]
i.e. with $\Sha^{\infty}_G(\Q,H_1)^+$.
\end{proof}

\begin{lem} \label{lem:proof_of_Kottwitz86_Thm.6.6}
Let $G$ be a connected reductive group over a number field $F$ and $I$ an $F$-Levi subgroup of $G$.
\footnote{A better notation (for consistency) for such $\Q$-subgroup of $G$ will be $I_0$, but here we used $I$ to simplify notation.}
Set $\tilde{I}:=\rho^{-1}(I)$ for the canonical homomorphism $\rho:G^{\uc}\rightarrow G$, $\Gamma:=\Gal(\overline{F}/F)$, and $\Gamma_v:=\Gal(\bar{F}_v/F_v)$ for any place $v$ of $F$.
The kernel of the natural map 
\begin{equation}
\nu:\mathbb{H}^1(\A_F/F,\tilde{I}_{\bfab})=\pi_0(Z(\hat{\tilde{I}})^{\Gamma})^D\rightarrow \mathfrak{K}(I/F)^D
\end{equation}
equals the image of $\lambda:\mathbb{H}^1(\A_F,\tilde{I}_{\bfab}) \rightarrow \mathbb{H}^1(\A_F/F,\tilde{I}_{\bfab})$ of the kernel of
\[\mu:\mathbb{H}^1(\A_F,\tilde{I}_{\bfab})\rightarrow  \mathbb{H}^1(\A_F,I_{\bfab}). \]
In other words, we have an equality
\[\mathrm{coker}[\mathbb{H}^0(\A_F,G_{\bfab})\rightarrow \mathbb{H}^1(\A_F/F,\tilde{I}_{\bfab})]=\mathfrak{K}(I/F)^D.\]
\end{lem}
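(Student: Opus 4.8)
The statement to prove is that $\ker(\nu) = \lambda(\ker(\mu))$, equivalently that $\operatorname{coker}[\mathbb{H}^0(\A_F, G_{\bfab}) \to \mathbb{H}^1(\A_F/F, \tilde{I}_{\bfab})] = \mathfrak{K}(I/F)^D$. The plan is to derive this entirely from the known Tate--Nakayama-style exact sequences for the complexes of tori $\tilde{I}_{\bfab}$, $I_{\bfab}$, and $G_{\bfab}$, together with the duality identifications of Lemma \ref{lem:identification_of_Kottwitz_A(H)} and the definition of $\mathfrak{K}(I/F)$ via the boundary map $\partial: \pi_0(Z(\hat{\tilde{I}})^{\Gamma}) \to H^1(F, Z(\hat{G}))$. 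First I would assemble the commutative diagram relating the long exact cohomology sequences (local, adelic, and the ``$\A/F$'' sequence) attached to the morphism of complexes $\tilde{I}_{\bfab} \to I_{\bfab}$: the cone of this morphism is quasi-isomorphic to $G_{\bfab}$ (since both $\tilde{I}_{\bfab}$ and $I_{\bfab}$ are quasi-isomorphic to the center complexes and $Z(\tilde{I}) = Z(\tilde{G})$, while $Z(I) \hookrightarrow Z(G)$ with the relevant quotient giving $G_{\bfab}$ up to a shift — this is the left lower commutative square with the isomorphism $\overline{\bfab}$ in diagram (\ref{eq:H^1_{ab}_for_(I'->I)})). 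This produces a distinguished triangle $\tilde{I}_{\bfab} \to I_{\bfab} \to G_{\bfab} \to \tilde{I}_{\bfab}[1]$.

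Second, I would apply the functors $\mathbb{H}^{\bullet}(\A_F, -)$ and $\mathbb{H}^{\bullet}(\A_F/F, -)$ to this triangle and chase. The key exact pieces are: (i) the adelic sequence $\mathbb{H}^0(\A_F, G_{\bfab}) \to \mathbb{H}^1(\A_F, \tilde{I}_{\bfab}) \xrightarrow{\mu} \mathbb{H}^1(\A_F, I_{\bfab})$, which identifies $\operatorname{im}[\mathbb{H}^0(\A_F, G_{\bfab}) \to \mathbb{H}^1(\A_F, \tilde{I}_{\bfab})]$ with $\ker(\mu)$; (ii) the Poitou--Tate/Kottwitz sequence $\mathbb{H}^1(\A_F, \tilde{I}_{\bfab}) \xrightarrow{\lambda} \mathbb{H}^1(\A_F/F, \tilde{I}_{\bfab}) \to \mathbb{H}^2(F, \tilde{I}_{\bfab})$. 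Combining, $\lambda(\ker(\mu))$ is the image under $\lambda$ of $\operatorname{im}[\mathbb{H}^0(\A_F, G_{\bfab}) \to \mathbb{H}^1(\A_F, \tilde{I}_{\bfab})]$, which is precisely $\operatorname{coker}[\mathbb{H}^0(\A_F, G_{\bfab}) \to \mathbb{H}^1(\A_F/F, \tilde{I}_{\bfab})]$ once one knows $\mathbb{H}^0(\A_F, G_{\bfab}) \to \mathbb{H}^1(\A_F/F, \tilde{I}_{\bfab})$ factors through $\lambda$ (which it does, being the composite of the adelic boundary with $\lambda$). So the content reduces to showing $\ker(\nu)$ coincides with the image of this composite map.

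Third — and this is where the genuine input enters — I would identify $\ker(\nu)$ with that image by Pontryagin duality. Under $\mathbb{H}^1(\A_F/F, \tilde{I}_{\bfab}) = \pi_0(Z(\hat{\tilde{I}})^{\Gamma})^D$ (Lemma \ref{lem:identification_of_Kottwitz_A(H)}, taking a sufficiently large splitting field) and $\mathfrak{K}(I/F) = \{a \in \pi_0(Z(\hat{\tilde{I}})^{\Gamma}) : \partial(a) \in \ker^1(F, Z(\hat{G}))\}$, the map $\nu$ is dual to the inclusion $\mathfrak{K}(I/F) \hookrightarrow \pi_0(Z(\hat{\tilde{I}})^{\Gamma})$, so $\ker(\nu) = \mathfrak{K}(I/F)^{\perp\perp}$... more precisely $\ker(\nu)$ is the annihilator of $\mathfrak{K}(I/F)$ inside $\pi_0(Z(\hat{\tilde{I}})^{\Gamma})^D$, hence the image of $\nu$ (which is what we want to compute) is $\mathfrak{K}(I/F)^D$. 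The remaining point is that $\ker(\nu) = \operatorname{im}[\mathbb{H}^0(\A_F, G_{\bfab}) \to \mathbb{H}^1(\A_F/F, \tilde{I}_{\bfab})]$: one direction, that the image lands in $\ker(\nu)$, follows because the composite $\mathbb{H}^0(\A_F, G_{\bfab}) \to \mathbb{H}^1(\A_F/F, \tilde{I}_{\bfab}) \xrightarrow{\nu} \mathfrak{K}(I/F)^D$ is dual to $\mathfrak{K}(I/F) \hookrightarrow \pi_0(Z(\hat{\tilde{I}})^{\Gamma}) \xrightarrow{\partial} H^1(F, Z(\hat{G})) \to \bigoplus_v H^1(F_v, Z(\hat{G}))$, and on $\mathfrak{K}(I/F)$ the last composite vanishes by the very definition of $\ker^1$; the reverse inclusion is the exactness assertion of Kottwitz's theorem on abelian Galois cohomology (the analogue of \cite[Thm.~6.6]{Kottwitz86}, now in the hypercohomology setting of complexes of tori, via Borovoi). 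The main obstacle I anticipate is bookkeeping the degree shifts and the compatibility of the three duality identifications (local, adelic, $\A/F$) with the connecting maps of the triangle — i.e. checking that the square relating $\nu$, $\lambda$, and $\partial$ genuinely commutes with the correct signs; once that is pinned down, the exactness is a formal consequence of the long exact sequences plus Kottwitz--Borovoi duality. I would handle this by reducing, via the quasi-isomorphisms $\tilde{I}_{\bfab} \simeq (Z(\tilde{G}) \to Z(I))$ etc., to the case of the two-term complex of tori $(Z(\tilde{G}) \to Z(I))$ and invoking \cite[$\S$3--5]{Borovoi98} and \cite[A.2]{KottwitzShelstad99} directly.
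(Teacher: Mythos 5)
Your overall strategy — reduce via the distinguished triangle $\tilde{I}_{\bfab}\to I_{\bfab}\to G_{\bfab}$, reinterpret the goal as computing $\operatorname{coker}[\mathbb{H}^0(\A_F,G_{\bfab})\to\mathbb{H}^1(\A_F/F,\tilde{I}_{\bfab})]$, and finish with a duality argument — is exactly the paper's route, and you correctly identify that the real content is the compatibility of the duality identifications. You also get the easy inclusion $\operatorname{im}\subseteq\ker(\nu)$ right.

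However, your argument for the reverse inclusion $\ker(\nu)\subseteq\operatorname{im}$ is circular. You write that it \emph{``is the exactness assertion of Kottwitz's theorem on abelian Galois cohomology (the analogue of \cite[Thm.~6.6]{Kottwitz86}, now in the hypercohomology setting of complexes of tori, via Borovoi)''} — but that analogue is not a result one can cite; it \emph{is} the lemma you are supposed to be proving. Kottwitz's Theorem~6.6 is stated and proved only under the hypothesis $G^{\der}=G^{\uc}$, in which case $\tilde{I}_{\bfab}$ is quasi-isomorphic to the single torus $\tilde{I}/\tilde{I}^{\der}$ and one never needs hypercohomology of length-two complexes; Labesse mentions the general version (Remarque, p.~43 of \cite{Labesse99}) but supplies no proof; and the paper's remark following the lemma states explicitly that no proof of the general case exists in the literature — which is why the lemma is there. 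Nothing in \cite[\S3--5]{Borovoi98} as stated covers it either, since Borovoi's global duality results are formulated for the complex $G_{\bfab}$ of a single reductive group, not for the two-term complex $\tilde{I}_{\bfab}=(Z(G^{\uc})\to Z(\tilde{I}))$ together with its boundary map into $G_{\bfab}$.

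What closes the gap in the paper is Lemma~\ref{lem:commutativity_of_duality_diagram}: a specific commutative diagram whose two columns are short exact, namely
\[
\mathbb{H}^0(\A_F,G_{\bfab})/\mathbb{H}^0(F,G_{\bfab})\ \hookrightarrow\ H^1(F,Z(\hat{G}))^D\ \twoheadrightarrow\ \ker^1(F,G_{\bfab})
\]
and its analogue for $\tilde{I}_{\bfab}$, with horizontal arrows induced by the connecting maps of the triangle. The exactness of those columns is a genuine input — it comes from the Poitou--Tate exact sequence for two-term complexes of tori, Demarche \cite[Thm.~6.1]{Demarche11}, together with the identifications $\ker^1(F,G_{\bfab})=[\ker^0(F,\hat{G}_{\bfab})_{\mathrm{red}}]^D$ from \cite[Lem.~C.3.B, C.3.C]{KottwitzShelstad99} — and the compatibility of the two columns (i.e.\ commutativity of the squares) requires checking that $\partial$ and its dual agree with the connecting homomorphisms for $\hat{G}_{\bfab}\to\hat{I}_{\bfab}\to\hat{\tilde{I}}_{\bfab}$, via the isomorphism~(\ref{eq:connecting_isom_diagonal_gp}). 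Your plan to ``reduce to the two-term complex of tori and invoke \cite[\S3--5]{Borovoi98} and \cite[A.2]{KottwitzShelstad99}'' gestures at this, but \cite[A.2]{KottwitzShelstad99} is the local Tate--Nakayama pairing only; you need the global Appendix~C material and Demarche's sequence, and you need to actually verify the commutativity rather than assert that it holds ``with the correct signs.'' Without that diagram, the hard direction remains unproven.
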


\begin{rem} \label{eq:K_Labesse_Kottwitz}
(1) It is easy to see that for a reductive $H$ over $F$ and for each place $v$ of $F$, the map $\mathbb{H}^1(F_v,H)\rightarrow \pi_0(Z(\hat{H})^{\Gamma_v})^D$ that was constructed by Kottwitz in \cite[Thm.1.2]{Kottwitz86} factors through the abelianization map $\mathbb{H}^1(F_v,H)\rightarrow \mathbb{H}^1(F_v,H_{\bfab})$ and the induced map 
\[\mathbb{H}^1(F_v,H_{\bfab})\rightarrow \pi_0(Z(\hat{H})^{\Gamma_v})^D.\]
is a monomorphism (and an isomorphism for non-archimedean $v$), equal to the map in Lemma \ref{lem:identification_of_Kottwitz_A(H)} (the point is that both monomorphisms are induced by Tate-Nakayama duality); this also equals the monomorphism constructed by Borovoi \cite[Prop.4.1, 4.2]{Borovoi98}. 
If $H^{\der}=H^{\uc}$, this map for archimedean $v$ is also surjective, because then $H_{\bfab}$ is quasi-isomorphic to an \emph{$F$-torus} whose dual torus is $Z(\hat{H})$ (i.e. $H^{\ab}=H/H^{\der}$), so that Tate-Nakayama duality for tori applies.

(2) This lemma is mentioned, without proof, in \emph{Remarque} on p.43 of \cite{Labesse99} ($\mathbb{H}^1(\A_F/F,\tilde{I}_{\bfab})$ is the same as $\mathbb{H}^0_{\bfab}(\A_F/F,I\backslash G):=\mathbb{H}^0(\A_F/F,I_{\bfab}\rightarrow G_{\bfab})$ defined there, cf. \cite[Prop.1.8.1]{Labesse99} and (\ref{eq:DT_of_CX_of_tori})). Earlier, this was also observed in the proof of Thm. 6.6 of \cite{Kottwitz86} when $G^{\der}=G^{\uc}$, in which case, according to (1), the maps $\lambda$, $\mu$ become the natural maps: 
\[\lambda:\oplus_v \pi_0(Z(\hat{\tilde{I}})^{\Gamma_v})^D \rightarrow \pi_0(Z(\hat{\tilde{I}})^{\Gamma})^D,\quad \mu: \oplus_v \pi_0(Z(\hat{\tilde{I}})^{\Gamma_v})^D \rightarrow \oplus_v \pi_0(Z(\hat{I})^{\Gamma_v})^D\]
which are defined only in terms of $Z(\hat{\tilde{I}})$, $Z(\hat{I})$, and which are the definitions for these maps used in \cite[Thm.6.6]{Kottwitz86}.
\footnote{It is, however, not clear to the author whether the original version of the lemma stated in terms only of $Z(\hat{\tilde{I}})$ and $Z(\hat{I})$ continues to hold beyond the case $G^{\der}=G^{\uc}$.}
\end{rem}

Since we could not find a proof (in the general case) in literatures, we present a proof. In our proof, we work systematically with the abelianized cohomology groups $H^1(k,H_{\bfab}(C_k))$ (for $H=I,\tilde{I},G$) instead of $\pi_0(Z(\hat{H})^{\Gal(\bar{k}/k)})^D$. Also, we use Galois (hyper)cohomology of complexes of $\Q$-tori of length $2$, especially their Poitou-Tate-Nakayama local/global dualities, as expounded in \cite{Borovoi98}, \cite{Demarche11}, \cite[Appendix A]{KottwitzShelstad99}, \cite[Ch.1]{Labesse99}. 

\begin{proof} (of Lemma \ref{lem:proof_of_Kottwitz86_Thm.6.6})
We have two diagrams (the left diagram defines $\mathfrak{K}(I/F)$ by being cartesian and the right one is its dual):
\begin{equation}  \label{eq:dual_of_connecting_map_Z(hat{G})}
\xymatrix{ \pi_0(Z(\hat{\tilde{I}})^{\Gamma}) \ar[r]^(.44){\partial} & H^1(F,Z(\hat{G})) & \pi_0(Z(\hat{\tilde{I}})^{\Gamma})^D \ar@{->>}[d]_{\nu} & H^1(F,Z(\hat{G}))^D \ar[l]_{\partial^D} \ar@{->>}[d]^{i^D} \\ \mathfrak{K}(I/F) \ar@{^(->}[u] \ar[r] & \ker^1(F,Z(\hat{G})) \ar@{^(->}[u]_{i} & \mathfrak{K}(I/F)^D & \ker^1(F,Z(\hat{G}))^D \ar[l] },
\end{equation}
from which we see the equality:
\begin{align*}
\ker(\nu)&=\mathrm{im}(\partial^D)(\ker(i^D)).
\end{align*}
We need to describe the maps $\partial^D$, $i^D$ in terms of the complexes $G_{\bfab}$, $\tilde{I}_{\bfab}$. For that, we recall some facts.

For a connected reductive group $G$ over a field $k$ and a Levi $k$-subgroup $I$ of $G$, the map (\ref{eq:boundary_map_for_center_of_dual})
\[\partial:\pi_0(Z(\hat{\tilde{I}})^{\Gamma})\rightarrow H^1(F,Z(\hat{G}))\] 
is identified in a natural manner with the connecting homomorphism 
\[\widehat{\mathbb{H}}^{-1}(k'/k,\hat{\tilde{I}}_{\bfab}) \rightarrow \widehat{\mathbb{H}}^0(k'/k,\hat{G}_{\bfab}),\]
in the long exact sequence of Tate-cohomology arising from the distinguished triangle (\ref{eq:DT_of_dualCX_of_tori}), where $k'$ is a finite Galois extension of $k$ splitting $T$.
Indeed, we recall \cite[Lem. 2.2, Cor. 2.3]{Kottwitz84b} that the given map
is the connecting homomorphism $\mathrm{Ext}^1_{\Gamma}(X^{\ast}(Z(\hat{\tilde{I}})),\Z)\rightarrow \mathrm{Ext}^2_{\Gamma}(X^{\ast}(Z(\hat{G})),\Z)$ in the long exact sequence of the cohomology $\mathrm{Ext}^{\bullet}_{\Gamma}(-,\Z)$ for the exact sequence of $\Gamma$-modules: $0\rightarrow X^{\ast}(Z(\hat{\tilde{I}}))\rightarrow X^{\ast}(Z(\hat{I}))\rightarrow X^{\ast}(Z(\hat{G}))\rightarrow0$. As was shown in \textit{loc. cit.} (and using that $\pi_0(D^{\Gamma})=\widehat{\mathbb{H}}^{0}(\Gamma,D)$), one has a functorial isomorphism $\mathrm{Ext}^n_{\Gamma}(X^{\ast}(D),\Z)=\widehat{H}^{n-1}(\Gamma,D)\ (n\geq1)$ for any diagonalizable $\C$-group $D$ with $\Gamma$-action. 
Since $Z(\hat{\tilde{I}})[1]=\hat{\tilde{I}}_{\bfab}$, $Z(\hat{G})[1]=\hat{G}_{\bfab}$ (\ref{eq:center_of_complex_dual}), the claim follows.

For a connected reductive group $H$ over $F$ (number field), we $j^i_{H}$ denote the injection (arising from a suitable cohomology long exact sequence for $H_{\bfab}$):
\[j^i_{H}:\mathbb{H}^i(\A_F,H_{\bfab})/\mathbb{H}^i(F,H_{\bfab}) \hookrightarrow \mathbb{H}^i(\A_F/F,H_{\bfab}).\]

Next, there exist natural isomorphisms \cite[11.2.2, 4.2.2]{Kottwitz84b}, \cite[D.2.C]{KottwitzShelstad99}, \cite[Thm.5.13]{Borovoi98}
\begin{align} \label{eq:identification_of_TS-gps}
\ker^1(F,Z(\hat{G}))^D=\ker^1(F,G) =\ker^1(F,G_{\bfab}),
\end{align}
where for $i\geq0$, we define (cf. \cite[(C.1)]{KottwitzShelstad99}, \cite[1.4]{Labesse99})
\begin{align*}
\ker^i(F,G_{\bfab})&:=\ker(H^i(F,G_{\bfab})\rightarrow H^i(\A_F,G_{\bfab})) \\
&=\ker(H^i(F,G_{\bfab})\rightarrow \oplus_vH^i(F_v,G_{\bfab})) \ \text{ if }i\geq1
\end{align*}
(When $i\geq1$, there exists a canonical isomorphism $H^i(\A_F,G_{\bfab})= \oplus_vH^i(F_v,G_{\bfab})$, \cite[Lem.4.5]{Borovoi98}, \cite[Lem. C.1.B]{KottwitzShelstad99}, \cite[Prop.1.4.1]{Labesse99}).

\begin{lem} \label{lem:commutativity_of_duality_diagram} 
There exists a commutative diagram induced by the global and local Tate-Nakayama dualities:
\[ \xymatrix{ \ker^1(F,G_{\bfab}) \ar[r] & \ker^2(F,\tilde{I}_{\bfab}) \\
H^1(F,Z(\hat{G}))^D \ar[r]^(0.55){\partial^D} \ar@{->>}[u]^{i^D} & \pi_0(Z(\hat{\tilde{I}})^{\Gamma})^D  \ar@{->>}[u] \\ 
\mathbb{H}^0(\A_F,G_{\bfab})/\mathbb{H}^0(F,G_{\bfab}) \ar[r]^{\bar{\xi}} \ar@{^(->}[u]^{j^0_G} & \mathbb{H}^1(\A_F,\tilde{I}_{\bfab})/\mathbb{H}^1(F,\tilde{I}_{\bfab})  \ar@{^(->}[u]_{j^1_{\tilde{I}}} } \] 
where $\bar{\xi}$ is induced by the connecting homomorphism $\xi:\mathbb{H}^0(\A_F,G_{\bfab})\rightarrow \mathbb{H}^1(\A_F,\tilde{I}_{\bfab})$ in the long exact sequence of cohomology attached to the distinguished triangle (\ref{eq:DT_of_CX_of_tori}).

The two vertical sequences are short exact sequences. 
\end{lem}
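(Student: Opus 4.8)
The statement to be proved is Lemma \ref{lem:commutativity_of_duality_diagram}: the existence of the displayed commutative diagram with the two vertical short exact sequences. The plan is to construct both vertical sequences from standard cohomology long exact sequences, identify all the horizontal maps appearing as pieces of those sequences, and then check commutativity by reducing everything to statements about complexes of tori and the Tate--Nakayama pairing.

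First I would set up the vertical exact sequences. Recall the distinguished triangle (\ref{eq:DT_of_CX_of_tori}) attached to the morphism $\tilde{I}_{\bfab}\rightarrow I_{\bfab}$; since $I$ is a Levi $F$-subgroup of $G$, the quotient $(I\backslash G)_{\bfab}$ sits in a triangle together with $G_{\bfab}$, and the connecting map of the associated long exact sequence over $\A_F$ gives $\xi:\mathbb{H}^0(\A_F,G_{\bfab})\rightarrow \mathbb{H}^1(\A_F,\tilde{I}_{\bfab})$; passing to the quotient by the image of the $F$-cohomology and using the injectivity of the maps $j^i_{\tilde{I}}$, $j^0_G$ recalled just before the lemma yields the bottom row of the diagram, which is a short exact sequence by the very exactness of that long exact sequence. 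For the top row, the global Poitou--Tate type exact sequence for the complex $G_{\bfab}$ (resp. $\tilde{I}_{\bfab}$) and the identification $\ker^2(F,\tilde{I}_{\bfab})\cong\mathrm{coker}(H^1(F,\tilde{I}_{\bfab})\rightarrow H^1(\A_F,\tilde{I}_{\bfab}))$-type statements give the right vertical column, with $j^1_{\tilde{I}}$ as the injection of the ``locally trivial'' part; exactness on the left comes from the identification (\ref{eq:identification_of_TS-gps}) of $\ker^1(F,Z(\hat G))^D$ with $\ker^1(F,G_{\bfab})$, and the surjection $i^D$ is the dual of the inclusion $\ker^1(F,Z(\hat G))\hookrightarrow H^1(F,Z(\hat G))$. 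The middle vertical column is the cartesian/cocartesian square (\ref{eq:dual_of_connecting_map_Z(hat{G})}) dualized; by the discussion preceding the lemma it is the dual of the cartesian diagram defining $\mathfrak{K}(I/F)$ and hence its columns are exact.

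Next I would identify each horizontal arrow intrinsically. The bottom $\bar\xi$ is a connecting map in a long exact sequence. The middle $\partial^D$ is, by the argument recalled just above (the $\mathrm{Ext}^\bullet_\Gamma(-,\Z)$ description of (\ref{eq:boundary_map_for_center_of_dual}) together with $Z(\hat{\tilde I})[1]=\hat{\tilde I}_{\bfab}$, $Z(\hat G)[1]=\hat G_{\bfab}$), the Tate-cohomology dual of the connecting map $\widehat{\mathbb H}^{-1}(k'/k,\hat{\tilde I}_{\bfab})\rightarrow\widehat{\mathbb H}^0(k'/k,\hat G_{\bfab})$ attached to (\ref{eq:DT_of_dualCX_of_tori}), which under Tate--Nakayama duality is exactly dual to the connecting map $\xi$ on the groups $\mathbb{H}^\bullet(\A_F/F,-)$. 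The top arrow is then forced by naturality of Tate--Nakayama duality (Lemma \ref{lem:identification_of_Kottwitz_A(H)}) to be the connecting map $\ker^1(F,G_{\bfab})\rightarrow\ker^2(F,\tilde I_{\bfab})$ in the long exact sequence for the triangle (\ref{eq:DT_of_CX_of_tori}) restricted to the locally trivial subgroups. Once all three horizontal maps are identified as connecting maps (or their Tate--Nakayama duals) for the \emph{same} distinguished triangle, the two squares commute because the vertical maps ($j^0_G$, $j^1_{\tilde I}$, $i^D$, and the natural projections in the middle column) are all themselves maps in long exact sequences functorial in the triangle; concretely, the lower square commutes by functoriality of the connecting map with respect to the quotient maps $\mathbb{H}^i(\A_F,-)\twoheadrightarrow \mathbb{H}^i(\A_F,-)/\mathbb{H}^i(F,-)\hookrightarrow \mathbb{H}^i(\A_F/F,-)$, and the upper square commutes because passing from $\mathbb{H}^1(\A_F/F,\tilde I_{\bfab})$ to its quotient $\mathfrak{K}(I/F)^D$ (equivalently, the dual of $i$) is compatible with the dual connecting maps.

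The step I expect to be the main obstacle is pinning down, with all signs and duality conventions consistent, that $\partial^D$ (the dual of the Kottwitz boundary map (\ref{eq:boundary_map_for_center_of_dual}) restricted to $\mathfrak{K}(I/F)$) coincides up to the canonical identifications with the connecting map of the triangle (\ref{eq:DT_of_CX_of_tori}) on the level of $\mathbb{H}^\bullet(\A_F,-)$-quotients; this requires chasing the $\mathrm{Ext}_\Gamma(-,\Z)$-picture of \cite[Cor.2.3]{Kottwitz84b} against the hypercohomology-of-complexes-of-tori picture and invoking the compatibility of the local and global Tate--Nakayama pairings for two-term complexes (Lemma \ref{lem:identification_of_Kottwitz_A(H)}, and the perfectness of the pairing recalled in its proof). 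Everything else is a formal diagram chase in long exact sequences, using only the exactness statements already available in the excerpt (\cite[Thm.5.13, Thm.5.16]{Borovoi98}, \cite[Appendix A, C, D]{KottwitzShelstad99}) and the identifications (\ref{eq:identification_of_TS-gps}).
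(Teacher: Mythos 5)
Your proposal follows essentially the same route as the paper: the vertical short exact sequences come from the Poitou--Tate exact sequence for two-term complexes of tori (the paper cites Demarche, Thm.~6.1, for this), the horizontal arrows are identified as (duals of) connecting maps for the triangle (\ref{eq:DT_of_CX_of_tori}), and commutativity reduces to compatibility of the local and global Tate--Nakayama dualities together with the identifications $Z(\hat G)[1]=\hat G_{\bfab}$, (\ref{eq:connecting_isom_diagonal_gp}), and (\ref{eq:identification_of_TS-gps}). Your terminology is occasionally loose (e.g., the diagram has no ``middle vertical column'', the bottom horizontal arrow is not itself a short exact sequence, and $j^1_{\tilde I}$ injects the adelic-modulo-global part rather than a ``locally trivial'' part), but none of that affects the substance of the argument.
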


\begin{proof} In view of the discussion above and the isomorphism (\ref{eq:connecting_isom_diagonal_gp}), $\partial^D$ equals the dual of the obvious connecting homomorphism
\[\widehat{\mathbb{H}}^1(k'/k,X^{\ast}(G_{\bfab}))^D \longrightarrow \widehat{\mathbb{H}}^{0}(k'/k,X^{\ast}(\tilde{I}_{\bfab}))^D, \]
Hence, the existence and exactness of each vertical sequence is easily deduced by reading the relevant parts of the Poitou-Tate exact sequence for two-term complexes of tori \cite[Thm.6.1]{Demarche11} (take $\hat{C}=X^{\ast}(H_{\bfab})$ for $H=\tilde{I}, G$ in \textit{loc. cit.}): to identify the upper diagram, use the identification (\ref{eq:identification_of_TS-gps}) and the fact \cite[Lem. C.3.B, C.3.C]{KottwitzShelstad99}) that
\[\ker^1(F,G_{\bfab})=[\ker^0(F,\hat{G}_{\bfab})_{\mathrm{red}}]^D=[\ker^1(F,X_{\ast}(\hat{G}_{\bfab}))]^D.\]
The commutativity of the daigram follows from the compatibility of the global and local Tate-Nakayama dualities.
\end{proof}

Now, as $\mathrm{im}(\xi)=\ker(\mu)$, by Lemma \ref{lem:commutativity_of_duality_diagram} we see that
\begin{align*}
\ker(\nu)&=\mathrm{im}(\partial^D\circ j^0_G)=\mathrm{im}(\nu\circ\xi)\\
&=\mathrm{im}(\nu)(\ker\mu) 
\end{align*}
as was asserted. 
\end{proof}


\section{Proofs of Kottwitz formula: Cardinality of fixed point set of Frobenius-twisted Hecke correspondence}

In this section, we present two proofs of the Kottwitz formula \cite[(3.1)]{Kottwitz90}, \cite[(19.6)]{Kottwitz92}. The first proof assumes the Langlands-Rapoport conjecture, i.e. Conjecture \ref{conj:Langlands-Rapoport_conjecture_ver1}, and is merely a generalization, to arbitrary automorphic sheaves, of the arguments of \cite{LR87} in the constant coefficient case. The second proof is unconditional, relying heavily on the geometric results obtained by Kisin \cite{Kisin17}. But we do not use his version of the Langlands-Rapoprt conjecture \cite[Thm.(0.3)]{Kisin17} which is too weak to invoke the first proof for, and instead we emulate the arguments of the first proof. In both proofs, the effectivity criterion of Kottwitz triple (Thm. \ref{thm:LR-Satz5.25}, Thm. \ref{thm:LR-Satz5.25b2}) is crucial.

The main part of the Kottwitz conjecture is concerned with obtaining a group-theoretic description of the cardinality of the fixed point set of a Hecke correspondence twisted by a Frobenius automorphism acting on each subset of $\sS_{K}(\F)$ indexed by an admissible morphism $\phi$ or on each isogeny class.
In \cite[$\S$1]{Kottwitz84b}, Kottwitz discusses how one can arrive at such description from a description of the mod-$p$-point set as provided by the Langlands-Rapoport conjecture (but at that time his deduction was based on some earlier version of it suggested by Langlands \cite{Langlands76}, \cite{Langlands79}). This argument by Kottwitz was one of the motivations for the work \cite{LR87} and will also serve as a guide for our unconditional proof.


\subsection{Proof of Kottwitz conjecture under Langlands-Rapoport conjecture} \label{subsec:Proof_of_K-formula1}
Since in this section we will work exclusively with the mod-$\wp$ reductions of $\sS_{K^p}$ and $\sS:=\varprojlim_{H^p}\sS_{H^p}$, we denote these $\kappa(\wp)$-schemes again by $\sS_{K^p}$ and $\sS$.
Any element $g\in G(\A_f^p)$ gives rise to a Hecke correspondence (from $\sS_{K^p}$ to itself): 
\begin{equation} \label{eq:Hecke_corr_f}
\sS_{K^p}\stackrel{p_1'}{\longleftarrow} \sS_{K^p_g} \stackrel{p_2}{\longrightarrow} \sS_{K^p}.
\end{equation}
where $K^p_g:=K^p\cap gK^pg^{-1}$, the right-hand map $p_2$ is the natural projection induced by the inclusion $K^p_g\subset K^p$ and the left-hand map $p_1'$ is the composite of the natural projection $\sS_{K^p_g}\stackrel{p_1}{\rightarrow} \sS_{gK^pg^{-1}}$ (induced by the inclusion $K^p_g\subset gK^pg^{-1}$) with the (right) action by $g$: $\sS_{gK^pg^{-1}} \isom \sS_{K^p}$; we denote by $f$ this Hecke correspondence. 
We are interested in the fixed point set of the composition $\Phi^m\circ f$ of the morphism $\Phi^m$ and the correspondence $f$, namely the fixed point set of the correspondence: 
\begin{equation} \label{eq:Hecke_corr_twisted_by_Frob}
\sS_{K^p}\stackrel{p_1'}{\longleftarrow} \sS_{K^p_g} \stackrel{p_2'=\Phi^m\circ p_2}{\longrightarrow} \sS_{K^p}.
\end{equation}
By definition, a fixed point of this correspondence $\Phi^m\circ f$ is a point in $\sS_{K^p_g}(\Fpb)$ whose images in $\sS_{K^p}(\Fpb)$ under $p_1'$ and $p_2'=\Phi^m\circ p_2$ coincide. 

Now we assume that Langlands-Rapoport conjecture (Conj. \ref{conj:Langlands-Rapoport_conjecture_ver1}) holds; we also assume that $(G,X)$ satisfies the Serre condition. 
Then, it is clear that for each admissible morphism $\phi$, the correspondences $f$, $\Phi^m\circ f$ restrict to correspondences from $S_{K^p}(\phi)$ to itself: 
\[ S_{K^p}(\phi) \stackrel{p_1'}{\longleftarrow} S_{K^p_g}(\phi) \stackrel{p_2,p_2'}{\longrightarrow} S_{K^p}(\phi).\]
where $S_{K^p}(\phi)=I_{\phi}(\Q)\backslash [X_p(\phi)\times (X^p(\phi)/K^p)]$ and $S_{K^p_g}(\phi)$ is defined similarly. From now on, we assume $K^p$ to be small enough so that the following conditions hold:
\begin{align} \label{item:Langlands-conditions}
(a) &\text{ If }\epsilon\in I_{\phi}(\Q)\text{ and }\epsilon x= xg\text{ for some }x\in X^p(\phi)/K^p_g\times X_p(\phi),\text{ then }\epsilon\in Z(\Q)_K:=Z(\Q)\cap K. \\
(b) &\ I_{\phi}^{\der}\cap K\cap Z(G)(\Q)=\{1\}. \nonumber
\end{align}
This is possible by \cite[p.1171-1172]{Langlands79} (cf. \cite[1.3.7, 1.3.8]{Kottwitz84b}, \cite[Lem. 5.5]{Milne92}).

Under these conditions, an elementary argument (\cite[$\S$1.4]{Kottwitz84b}, \cite[Lem. 5.3]{Milne92}) establishes that the fixed point subset of $\Phi^m\circ f$ acting on $S_{K^p}(\phi)$ decomposes into disjoint subsets:
\begin{align} \label{eq:fixed_pt_set_of_Heck-corresp1}
S_{K^p}(\phi)^{p_1'=p_2'} &= \bigsqcup_{\epsilon} I_{\phi,\epsilon}(\Q)\backslash X(\phi,\epsilon)_{K^p_g},
\end{align}
where the index $\epsilon$ runs through a set of representatives in $I_{\phi}(\Q)$ for the conjugacy classes of $I_{\phi}(\Q)/Z(\Q)_K$, $I_{\phi,\epsilon}(\Q)$ is the centralizer of $\epsilon$ in $I_{\phi}(\Q)$ (we regard $I_{\phi,\epsilon}$ as an algebraic $\Q$-subgroup of $I_{\phi}$), and 
\[X(\phi,\epsilon)_{K^p_g} := \{\ x\in X(\phi)/K^p_g  \ \ |\ \  \epsilon p_1'(x)=\Phi^m p_2(x)\ \}= X_p(\phi,\epsilon)\times (X^p(\phi,\epsilon;g)/K^p_g) \]
with
\begin{align*}
X_p(\phi,\epsilon) &:=\{\ x_p\in X_p(\phi)  \ \ |\ \  \epsilon x_p=\Phi^m x_p\ \}, \\
X^p(\phi,\epsilon;g) &:=\{\ x^p\in X^p(\phi) \ \ |\ \  \epsilon x^pg=x^p\text{ mod }K^p\ \},
\end{align*}
where $\Phi$ is the Frobenius automorphism acting on $X_p(\phi)$ (\ref{eq:Frob_Phi_1}).
When $g=1$, this gives the description in Remark \ref{rem:admissible_pair} (with $X^p(\phi,\epsilon):=X^p(\phi,\epsilon;1)$). 

In particular, we see that if the set $I_{\phi,\epsilon}(\Q)\backslash X(\phi,\epsilon)_{K^p_g}$ is non-empty for some $\epsilon\in I_{\phi}(\Q)$, the pair $(\phi,\epsilon)$ is an admissible pair in the sense of Def. \ref{defn:admissible_pair}. 
We want a description of the set $X_p(\phi,\epsilon)$ in terms of the (equivalence class of) Kottwitz triple attached to the pair $(\phi,\epsilon)$.

We choose $u\in G(\Qpb)$ such that $\xi_p'=\Int(u)\circ\phi(p)\circ\zeta_p$ is unramified, say the inflation of a $\Qpnr/\Qp$-Galois gerb morphism $\theta^{\nr}:\fD\rightarrow\fG_{G_{\Qp}}^{\nr}$ (for example, $u\in G(\Qpb)$ with $u^{-1}\mbfK_p(\Qpnr)\in X_p(\phi)$),
which then gives $b\sigma=\theta^{\nr}(s_{\sigma})\in G(\Qpnr)\rtimes\langle\sigma\rangle$ and $\epsilon':=\Int(u)(\epsilon)\in G(\Qpnr)$ via the embedding of $\Qp$-groups
\begin{equation} \label{eq:int(u)}
\Int(u):(I_{\phi})_{\Qp}=I_{\phi(p)}:=\underline{\mathrm{Aut}}(\phi(p))\ \hookrightarrow\ \underline{\mathrm{Aut}}(\phi(p)\circ\zeta_p)\ \isom\ \underline{\mathrm{Aut}}(\xi_p')=\underline{\mathrm{Aut}}(\theta^{\nr}),
\end{equation}
where for a morphism $\theta$ of $k'/k$-Galois gerbs ($k'/k$ being a Galois extension), $\underline{\mathrm{Aut}}(\theta)$ is the $k$-algebraic group defined in (\ref{eq:Isom(phi_1,phi_2)}). Under this embedding, the $\Qp$-group $(I_{\phi})_{\Qp}$ is identified with the centralizer of $\mathrm{im}(\Int(u)\circ\phi(p)^{\Delta})$ in $\underline{\mathrm{Aut}}(\xi_p')$, since $\im(\phi(p))$ is generated by $\im(\phi(p)\circ\zeta_p)$ and $\im(\phi(p)^{\Delta})$. 
When $u^{-1}\mbfK_p(\Qpnr)\in X_p(\phi)$, multiplying by $u$ gives an identification (\ref{eqn:X_p(phi)=ADLV}) \[u:(X_p(\phi),\Phi)\isom (X(\{\mu_X\},b)_{\mbfK_p},(b\sigma)^r)\] 
of sets with operator (see (\ref{eq:Frob_Phi_1}), (\ref{eq:Frob_Phi_2})), 
where $r=[\kappa(\wp):\Fp]$. The group $I_{\phi}(\Qp)$ acts on both sides in compatible manners with the Frobenius operators: the left action is the canonical one and the right action is induced from this action via (\ref{eq:int(u)}), so that this identification is $I_{\phi}(\Qp)$-equivariant for these actions. 
Under this identification, the equation $\epsilon x_p=\Phi^mx_p$ translates to
\[\epsilon' (ux_p)=(b\sigma)^{rm}(ux_p)\]
Note that since $\epsilon'$ commutes with $\theta^{\nr}(s_{\sigma})=b\sigma$, we then have $(\epsilon')^sx_p=(b\sigma)^{rms}x_p$ for any $s\in\N$.

From the pair $(\epsilon',b)$, by Lemma \ref{lem:delta_from_b&gamma_0}, we can find $c\in G(\mfk)$ such that 
\begin{equation} \label{eq:(epsilon,b,c)->delta1}
c(\epsilon'^{-1}(b\sigma)^m)c^{-1}=\sigma^n,
\end{equation}
($n=rm$) so that one has $\delta:=cb\sigma(c^{-1})\in G(L_n)$ and $\Nm_n\delta=c\epsilon'c^{-1}$. 

We also choose $v\in X^p(\phi,\epsilon;g)$ and set $\gamma=(\gamma_l)_{l\neq p}:=v\epsilon v^{-1}\in G(\A_f^p)$.
Then, for any $\gamma_0\in G(\Q)$ stably conjugate to $\epsilon\in G(\Qb)$, the triple 
\[(\gamma_0;\gamma,\delta)\] 
is a Kottwitz triple of level $n=rm$ attached to $(\phi,\epsilon)$ which has trivial Kottwitz invariant (\autoref{subsubsec:K-triple_attached_to_adm.pair}, Prop. \ref{prop:Kottwitz_triple}).

The cardinality of the set $I_{\phi,\epsilon}(\Q)\backslash X(\phi,\epsilon)$ will be expressed in terms of this triple $(\gamma_0;\gamma,\delta)$ and $g$ (as a product of orbital and twisted-orbital integrals and a certain constant). 
This is explained in \cite[$\S$1.5]{Kottwitz84b} (cf. \cite[$\S$16]{Kottwitz92}) in the case $g=1$. 
We extend this argument to general $g\in G(\A_f^p)$ in the style similar to that of \cite{Kottwitz92} in PEL-type cases.
\footnote{Kottwitz's method in \textit{loc. cit.} for general $g$ uses the notion of \textit{virtual abelian variety over a finite field}. This terminology does not appear in our work, although one can say that the notion is working behind the scenes.}

Let us use the following notation: for an algebraic group $H$ over a field $k$ (of characteristic zero) and a subset $S\subset H(k)$, $H_S$ denotes the simultaneous centralizer in $H$ of the elements of $S$, 
except for the notation $T_{\epsilon}$ from Prop. \ref{prop:canonical_decomp_of_epsilon} (subgroup generated by $\epsilon$).
For a place $v$ of $\Q$, let $I_{\phi(v)}$ be the twist of the $\Qv$-group $I_{\Qv}=Z_{G_{\Qv}}(\phi(v)^{\Delta})$ by $\phi(v)$ so that $I_{\phi(v)}(\Qv)=\mathrm{Aut}(\phi(v))=\{g_v\in G(\Qvb)\ |\ \Int(g_v)\circ\phi(v)=\phi(v) \}$. 

\begin{lem} \label{lem:isom_Int(cu)}
Assume that $\Q\operatorname{-}\mathrm{rk}(Z(G))=\R\operatorname{-}\mathrm{rk}(Z(G))$.
Let $(\phi,\epsilon)$ be an admissible pair and $(\gamma_0;\gamma,\delta)$ an associated Kottwitz triple.

(1) The $\Q$-group $I_{\phi,\epsilon}$ is an inner form of $G_{\gamma_0}$. Moreover, if $(\phi,\epsilon)$ is well-located in a maximal $\Q$-torus $T$ of $G$ that is elliptic over $\R$, there exists an inner twisting $(I_{\phi,\epsilon})_{\Qb}\isom (G_{\gamma_0})_{\Qb}$ that restricts to the identity map of $T$ and also induces an inner twisting $(I_{\phi,\epsilon}^{\mathrm{o}})_{\Qb}\isom (G_{\gamma_0}^{\mathrm{o}})_{\Qb}$.

(2) For any $v_l^{-1}\in X_l(\phi,\epsilon)$, $\Int(v_l):(I_{\phi})_{\Ql} \hookrightarrow G_{\Ql}$ induces an isomorphism of $\Ql$-groups
\[\Int(v_l):(I_{\phi,\epsilon})_{\Ql}=I_{\phi(l),\epsilon}\isom G_{\gamma_l},\] 
where $\gamma_l:=\Int(v_l)(\epsilon)\in G(\Ql)$ and $G_{\gamma_l}$ is the centralizer of $\gamma_l$ in $G_{\Ql}$. 

(3) For $u\in G(\Qpb)$ (\ref{eq:int(u)}) and $c\in G(\mfk)$ (Lemma \ref{lem:delta_from_b&gamma_0}) above,
$\Int(cu)$ induces an isomorphism of $\Qp$-groups
\begin{equation} \label{eq:Int(cu)}
\Int(cu):\ (I_{\phi,\epsilon})_{\Qp}=I_{\phi(p),\epsilon}\isom G_{\delta\sigma},
\end{equation}
where $G_{\delta\sigma}$ is the $\sigma$-centralizer of $\delta\in G(L_n)$ \cite[p.802]{Kottwitz82} ($G_{\delta\sigma}$ is a closed subgroup of $\Res_{L_n/\Qp}(G)$ such that $G_{\delta\sigma}(\Qp)=\{ y\in G(L_n)\ |\ y(\delta\sigma)=(\delta\sigma)y\}$, and one also has $(G_{\delta\sigma})_{L_n}\simeq Z_{G_{L_n}}(\Nm_n\delta)$).

(4) For every place $v$ of $\Q$, we have a natural identification:
\[(I_{\phi,\epsilon})_{\Qv}=I_{\phi(v),\epsilon}=I_{\phi(v)\circ\zeta_v,\epsilon}.\]
\end{lem}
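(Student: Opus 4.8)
The statement to be proved is Lemma~\ref{lem:isom_Int(cu)}, and the overall strategy is to treat its four assertions in order, building each on the preceding one. The basic principle is always the same: an admissible morphism $\phi$ (or its localization $\phi(v)$) is, after choosing a neutralization at a place, conjugate to a standard object whose automorphism group is a centralizer in $G$, and under that conjugation the automorphism group of $\phi$ transports to a centralizer of the image of $\epsilon$. So each part is extracted from the identification $I_\phi = {}_\phi(Z_G(\phi^\Delta))$ of \eqref{eq:inner-twisting_by_phi}, combined with the fact that $\epsilon\in I_\phi(\Q)$ centralizes $\phi$, hence lands in the centralizer subgroup after transport.

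For part (1), first I would invoke Lemma~\ref{lem:LR-Lemma5.23} to reduce to the case where $(\phi,\epsilon)$ is nested in a special Shimura sub-datum $(T,h)$, so that $\phi=\psi_{T,\mu_h}$, $\epsilon\in T(\Q)$, and $T$ is elliptic over $\R$; since all the groups in question and the claimed inner twistings behave functorially under conjugation by $G(\Qb)$, this reduction is harmless. In the nested case, $\phi^\Delta$ factors through $T$ and (by Prop.~\ref{prop:canonical_decomp_of_epsilon}, using $\Q\operatorname{-}\mathrm{rk}(Z(G))=\R\operatorname{-}\mathrm{rk}(Z(G))$) its image is generated by $\pi_0^k\in T(\Q)$, so that $I_{\phi^\Delta}=Z_G(\phi^\Delta)$ contains $T$, and $\gamma_0$ can be taken to be $\epsilon$ (Prop.~\ref{prop:phi(delta)=gamma_0_up_to_center}, or directly from the definition of the associated Kottwitz triple in the nested case, since $\epsilon$ is stably conjugate to $\gamma_0$). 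Now $I_{\phi,\epsilon}$ is by construction the twist of $Z_G(\phi^\Delta)_\epsilon = Z_G(\epsilon)\cap Z_G(\phi^\Delta) = (G_\epsilon)_{\ldots}$ via the cocycle $\rho\mapsto g_\rho^{\ad}$; but when $(\phi,\epsilon)$ is nested in $T$ this cocycle takes values in $T/Z(G)$, hence in $Z_G(\epsilon)^{\ad}$, so restricting it to $(G_\epsilon)_{\Qb}$ gives an \emph{inner} twisting that is the identity on the central torus $T$ and restricts to the neutral components; this is exactly the content of the ``moreover'' clause. That $\gamma_0$ is stably conjugate to $\epsilon$ then promotes this to an inner twisting $(I_{\phi,\epsilon})_{\Qb}\isom (G_{\gamma_0})_{\Qb}$, and similarly on neutral components using Steinberg ($H^1(\Qb,-)$ trivial is vacuous, but the point is $G_{\gamma_0}^{\mathrm{o}}$ is the inner form of $G_\epsilon^{\mathrm{o}}$).

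For parts (2), (3), (4), the arguments are parallel and each is a local unwinding. For (2): choosing $v_l^{-1}\in X_l(\phi,\epsilon)\subset X_l(\phi)$ means $\Int(v_l^{-1})\circ\phi(l)\circ\zeta_l=\xi_l$, the canonical trivialization, so $\Int(v_l)$ conjugates the canonical neutral gerb back to $\phi(l)$; it therefore carries $\underline{\mathrm{Aut}}(\xi_l)_\epsilon$-type data to $I_{\phi(l),\epsilon}$, and tracing through, $(I_{\phi,\epsilon})_{\Ql}\isom G_{\gamma_l}$ with $\gamma_l=\Int(v_l)(\epsilon)$ — here one uses that $\phi(l)^\Delta$ is trivial for $l\neq p$ (Def.~\ref{defn:admissible_morphism}(2) forces $\phi(l)\circ\zeta_l$ conjugate to $\xi_l$, and the restricted product definition of $X^p$) so that $Z_{G_{\Ql}}(\phi(l)^\Delta)=G_{\Ql}$ and no extra centralizer condition intervenes. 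For (3): the composite $\Int(u)$ of \eqref{eq:int(u)} identifies $(I_\phi)_{\Qp}$ with $\underline{\mathrm{Aut}}(\theta^{\nr})$, hence $(I_{\phi,\epsilon})_{\Qp}$ with $\underline{\mathrm{Aut}}(\theta^{\nr})_{\epsilon'}$ where $\epsilon'=\Int(u)(\epsilon)\in G(\Qpnr)$; then Lemma~\ref{lem:delta_from_b&gamma_0} gives $c\in G(\mfk)$ with $c(\epsilon'^{-1}(b\sigma)^m)c^{-1}=\sigma^n$ and $\delta=cb\sigma(c^{-1})$, and $\Int(c)$ transports the $\sigma$-centralizer condition $\{g : g(b\sigma)=(b\sigma)g,\ g\text{ fixed by }\sigma^n\}$ to the $\sigma$-centralizer $G_{\delta\sigma}$ of $\delta$ — this is precisely the computation on p.~291 of \cite{Kottwitz84b}, which I would cite and adapt, noting the two conditions ``commutes with $b\sigma$'' and ``rational over $L_n$'' correspond to the two factors in the definition of $G_{\delta\sigma}$. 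Finally (4) is the observation that $\phi(v)^\Delta$ maps into the center of $G_{\Qv}$ at $v\neq p$ (indeed is trivial) so centralizing $\phi(v)$ and centralizing $\phi(v)\circ\zeta_v$ impose the same condition, while at $v=p$ the image of $\phi(p)$ is generated by $\im(\phi(p)\circ\zeta_p)$ and $\im(\phi(p)^\Delta)$, with $\epsilon$ already commuting with $\phi(p)^\Delta$ (it lies in $I_\phi$), so again the two centralizer conditions coincide.

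The main obstacle I anticipate is bookkeeping rather than conceptual depth: in part (3) one must be careful that $\Int(cu)$ really lands in $G_{\delta\sigma}$ as a morphism of \emph{algebraic $\Qp$-groups} (not just on $\Qp$-points), which requires checking the isomorphism over arbitrary $\Qp$-algebras $R$ — here the identity $\gamma_0 = (b\sigma)^n\cdot c^{-1}\sigma^n c$ in $G(\mfk)\rtimes\langle\sigma\rangle$ (cf. the proof of Lemma~\ref{lem:delta_from_b&gamma_0} and Remark~\ref{rem:two_different_b's}, item (ii)) is the key algebraic input, and I would spell out that $g\in G(\mfk\otimes R)$ commutes with both $\epsilon'$ and $b\sigma$ iff $cgc^{-1}$ commutes with both $\delta\sigma=c(b\sigma)c^{-1}$ and $\sigma^n$, exactly as in item (ii) of Remark~\ref{rem:two_different_b's}. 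A secondary subtlety is that when $G^{\der}\neq G^{\uc}$ the groups $G_{\gamma_0}$, $G_{\gamma_l}$, $G_{\delta\sigma}$ need not be connected, so ``inner form'' and ``isomorphism'' must be understood for the possibly-disconnected groups, with the neutral-component statements following by restriction; I would make this explicit but it introduces no real difficulty since all the transporting maps are conjugations and hence respect identity components.
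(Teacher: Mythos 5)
There is a genuine gap running through parts (2), (3), and (4) of your proposal. In part (2) you assert that $\phi(l)^{\Delta}$ is trivial for $l\neq p$, arguing from condition (2) of Definition~\ref{defn:admissible_morphism}, and conclude that ``$Z_{G_{\Ql}}(\phi(l)^{\Delta})=G_{\Ql}$ and no extra centralizer condition intervenes.'' This is false: $\phi(l)^{\Delta}$ is the restriction of $\phi(l):\fP(l)\rightarrow\fG_G(l)$ to the kernel $P_{\Qlb}$ of the pulled-back pseudo-motivic gerb, and $P_{\Qlb}$ is a nontrivial protorus. Only the \emph{composite} $(\phi(l)\circ\zeta_l)^{\Delta}$ is trivial, because $\fG_l$ has trivial kernel for $l\neq p,\infty$; the condition $v_l^{-1}\in X_l(\phi)$ constrains $\phi(l)\circ\zeta_l$ but says nothing about $\phi(l)^{\Delta}$. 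Indeed, $I_\phi$ is by construction an inner form of $Z_G(\phi^{\Delta})=Z_G(\phi(\delta_n))$, which is a proper centralizer and would have to be all of $G$ were $\phi^{\Delta}$ trivial. The same error reappears in part (3), where you claim $\Int(u)$ identifies $(I_\phi)_{\Qp}$ with $\underline{\mathrm{Aut}}(\theta^{\nr})=J_b$; the map~\eqref{eq:int(u)} is only a proper embedding $(I_\phi)_{\Qp}\hookrightarrow\underline{\mathrm{Aut}}(\xi_p')$, with image the centralizer of $\Int(u)\circ\phi(p)^{\Delta}$. And in part (4), the reasoning at $v\neq p$ again rests on $\phi(v)^{\Delta}$ being trivial or central, which is not the case, while at $v=p$ the observation that $\epsilon$ commutes with $\phi(p)^{\Delta}$ (i.e.\ $\phi(p)^{\Delta}$ maps into $G_\epsilon$) is not by itself enough to conclude that an arbitrary $g$ centralizing both $\phi(p)\circ\zeta_p$ and $\epsilon$ automatically centralizes $\phi(p)^{\Delta}$.

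The fact that rescues the conclusion, and which your argument must invoke, is Proposition~\ref{prop:canonical_decomp_of_epsilon},~(3): under the running assumption $\Q\operatorname{-}\mathrm{rk}(Z(G))=\R\operatorname{-}\mathrm{rk}(Z(G))$, the morphism $\phi^{\Delta}$ factors through the $\Q$-subgroup $T_\epsilon$ generated by $\epsilon$, with image generated by $\pi_0^k$. Consequently, \emph{centralizing $\epsilon$} already forces centralizing the image of $\phi(v)^{\Delta}$ at every place, and it is only after intersecting with $Z_G(\epsilon)$ that the extra condition from $\phi(v)^{\Delta}$ becomes vacuous; one gets $(I_{\phi,\epsilon})_{\Qv}\isom (\underline{\mathrm{Aut}}(\cdots))_{\pi_0'^k,\epsilon'}=(\underline{\mathrm{Aut}}(\cdots))_{\epsilon'}$, not the unqualified isomorphism at the level of $I_\phi$. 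The same missing step is what should close your dangling $(G_\epsilon)_{\ldots}$ in part (1): the point there is $Z_G(\phi^{\Delta},\epsilon)=Z_G(\pi_0^k,\epsilon)=Z_G(T_\epsilon)=G_\epsilon$, the first equality from the same proposition and the last from Lemma~\ref{lem:Zariski_group_closure}. You cite Proposition~\ref{prop:canonical_decomp_of_epsilon} in passing but do not extract from it the fact you actually need. Everything else in your proposal — the reduction via Lemma~\ref{lem:LR-Lemma5.23}, the use of the identity $c(\epsilon'^{-1}(b\sigma)^n)c^{-1}=\sigma^n$ for the $\Int(c)$ step in part~(3), and the cautions about algebraicity over arbitrary $\Qp$-algebras — is on the right track.
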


Unlike the case $G^{\der}=G^{\uc}$, in general, the isomorphism class of $I_{\phi,\epsilon}^{\mathrm{o}}$ \emph{as inner form of $G_{\gamma_0}^{\mathrm{o}}$} (i.e. a preimage in $H^1(\Q,(G_{\gamma_0}^{\mathrm{o}})^{\ad})$ of the corresponding class in $H^1(\Q,\mathrm{Aut}(G_{\gamma_0}^{\mathrm{o}}))$) is not uniquely determined by the LR-pair $(\phi,\epsilon)$ or the associated Kottwitz triple.

\begin{proof}
Since for any $g\in G(\Qb)$, $\Int(g)$ induces $\Q$-isomorphisms 
\[I_{\phi}=\underline{\mathrm{Aut}}(\phi)\isom I_{\phi'}=\underline{\mathrm{Aut}}(\phi'),\quad  I_{\phi,\epsilon}\isom I_{\phi',\epsilon'}\] 
for $\phi':=\Int(g)\circ\phi$ and $\epsilon':=\Int(g)(\epsilon)$, in all proofs, by admissible embeddings of maximal tori (Lemma \ref{lem:LR-Lemma5.23}), we may and do assume that $\epsilon\in G(\Q)$, in which case the inner twisting $(I_{\phi})_{\Qb}\isom I_{\Qb}$ (\ref{eq:inner-twisting_by_phi}) induces a $\Q$-isomorphism $T_{\epsilon}^{\phi}\isom T_{\epsilon}^G$ between their $\Q$-subgroups generated by $\epsilon$ (Prop. \ref{prop:canonical_decomp_of_epsilon}, (2)); we simply write $T_{\epsilon}$ for this group, and let $(\pi_0,t)$ be the elements of $T_{\epsilon}(\Q)$ (for some $s\in\N$) attached to $(T_{\epsilon},\epsilon)$ by Lemma \ref{lem:canonical_decomp_of_epsilon}.

(1) To prove that $I_{\phi,\epsilon}$  is an inner form of $G_{\gamma_0}$, it suffices to show that $Z_G(\phi^{\Delta},\epsilon)$ (simultaneous centralizer in $G$ of the image of $\phi^{\Delta}$ and $\epsilon)$ equals $G_{\epsilon}$ (as $I_{\phi,\epsilon}$ and $G_{\gamma_0}$ are respectively $\Q$-inner forms of $Z_G(\phi^{\Delta},\epsilon)$ and $G_{\epsilon}$). This follows from Prop. \ref{prop:canonical_decomp_of_epsilon}, according to which we have
\[Z_G(\phi^{\Delta},\epsilon)=Z_G(\pi_0^k,\epsilon)=Z_G(\pi_0^k,T_{\epsilon})=G_{\epsilon}.\]
When $(\phi,\epsilon)$ is well-located in a maximal $\Q$-torus $T$ of $G$ that is elliptic over $\R$, since $I_{\phi,\epsilon}$ is the inner-twist of $Z_G(\phi^{\Delta},\epsilon)$ via $\phi$, we see that there exists a $T$-equivariant inner twisting $(I_{\phi,\epsilon})_{\Q}\isom (G_{\gamma_0})_{\Qb}$ which also induces an inner twisting $(I_{\phi,\epsilon}^{\mathrm{o}})_{\Qb}\isom (G_{\gamma_0}^{\mathrm{o}})_{\Qb}$.

(2) According to Prop. \ref{prop:canonical_decomp_of_epsilon}, we know that $\Int(v_l)(\pi_0^k)$ also lies in the $\Ql$-subgroup of $G_{\Ql}$ generated by $\Int(v_l)(\epsilon)$ (which equals $\Int(v_l)(T_{\epsilon})$). 
Since $v_l\in X_l(\phi)$, $\Int(v_l)$ identifies $(I_{\phi})_{\Ql}=\underline{\mathrm{Aut}}(\phi(l))$ with the centralizer in $G_{\Ql}$ of the image of $\Int(v_l)\circ\phi(l)^{\Delta}$, which in turn equals the centralizer of $\Int(v_l)(\pi_0^k)$ for any $k\gg1$ (by Prop. \ref{prop:phi(delta)=gamma_0_up_to_center} and Lemma \ref{lem:Zariski_group_closure}, (2)).
So, under $\Int(v_l)$, $(I_{\phi,\epsilon})_{\Ql}$ is identified with the simultaneous centralizer in $G_{\Ql}$ of $\Int(v_l)(\pi_0^k)$ and $\Int(v_l)(\epsilon)$, hence with the centralizer of $\Int(v_l)(\epsilon)$ alone.

(3) Recall that under the embedding $\Int (u):(I_{\phi})_{\Qp} \hookrightarrow \underline{\mathrm{Aut}}(\xi_p')$ (\ref{eq:int(u)}), the $\Qp$-group $(I_{\phi})_{\Qp}$ is identified with the centralizer of $\mathrm{im}(\Int(u)\circ\phi(p)^{\Delta})$ in $\underline{\mathrm{Aut}}(\xi_p')$.
We put $(\epsilon',\pi_0',t')=\Int(u)(\epsilon,\pi_0,t)$ (elements of $T_{\epsilon}':=\Int(u)(T_{\epsilon})$).
Since the image of $\Int(u)\circ\phi(p)^{\Delta}$ is generated by $\pi_0'^k$ for any $k\gg1$ (by Prop. \ref{prop:phi(delta)=gamma_0_up_to_center}), $\Int(u)$ induces an isomorphism 
\[\Int(u):(I_{\phi,\epsilon})_{\Qp} \isom \underline{\mathrm{Aut}}(\xi_p')_{\pi_0'^k,\epsilon'}.\]
Suppose that $\xi_p'$ is the inflation of a morphism of $\Qpnr/\Qp$-Galois gerbs, say $\theta^{\nr}:\fD\rightarrow \fG_{G_{\Qp}}^{\nr}$; then, $\underline{\mathrm{Aut}}(\xi_p')=\underline{\mathrm{Aut}}(\theta^{\nr})$.  We also choose $N\in\N$ for which $\theta^{\nr}$ factors through $\fD_N=\fD_{L_N}$ and, as such, is induced (by inflation to $\Qpnr$) from a morphism $\fD^{L_N}_{p,L_N}\rightarrow G(L_N)\rtimes \Gal(L_N/\Qp)$ of $L_N/\Qp$-Galois gerbs (cf. \autoref{subsubsec:Dieudonne_gerb}). Then, as $(\theta^{\nr})^{\Delta}=-N\nu_b$ for $b=\theta^{\nr}(s_{\sigma})$ and its Newton homomorphism $\nu_b$ (Lemma \ref{lem:Newton_hom_attached_to_unramified_morphism}), it follows from definition of $\fD_N$ that 
\[N\nu_{b}\in\Hom_{L_N}(\Gm,G),\ \text{ and }\quad \Nm_N(b)=(N\nu_{b})(p).\] 
Since $(b\sigma)^N=(N\nu_b)(p)\sigma^N$, any $g\in G(\mfk)$, if it commutes with both $b\sigma$ and $\im(\nu_b)$, also must commute with $\sigma^N$. Hence, for any $\Qp$-algebra $R$, we have 
\begin{align*} 
\underline{\mathrm{Aut}}(\theta^{\nr})(R)
&=\{ g\in Z_{G(\Qpnr\otimes R)}(\nu_b)\ |\ gb\sigma(g^{-1})=b\} \\ 
&=\{ g\in Z_{G(\mfk\otimes R)}(\nu_b)\ |\ gb\sigma(g^{-1})=b\} \\
&=J_b(R),
\end{align*}
where $J_{b}$ is the algebraic $\Qp$-group whose set of $R$-points for any $\Qp$-algebra $R$ is given by $J_b(R):=\{g\in G(\mfk\otimes R)\ |\ g(b\sigma)=(b\sigma)g\}$ (in the last equality we used the fact that $\nu_{hb\sigma(h^{-1})}=h\nu_bh^{-1}$ for any $h\in G(\mfk)$).
Therefore, it follows that $\Int(u)$ induces isomorphisms $(I_{\phi})_{\Qp}\isom \underline{\mathrm{Aut}}(\theta^{\nr})_{\pi_0'^k}=(J_b)_{\pi_0'^k}$,
\[(I_{\phi,\epsilon})_{\Qp} \isom \underline{\mathrm{Aut}}(\theta^{\nr})_{\pi_0'^k,\epsilon'} = (J_b)_{\pi_0'^k,\epsilon'}=(J_b)_{\epsilon'},\]
for any sufficiently large $k\in\N$, because the image of $\Int(u)\circ\phi(p)^{\Delta}$ (which also contains the image of $\nu_b$) is generated by $\pi_0'^k\ (k\gg1)$ (Prop. \ref{prop:phi(delta)=gamma_0_up_to_center}) and $\pi_0'$ lies in the subgroup of $G_{\mfk}$ generated by $\epsilon'$ (Prop. \ref{prop:canonical_decomp_of_epsilon}, Lemma \ref{lem:Zariski_group_closure}).

Finally, as one has $c(\epsilon'^{-1}(b\sigma)^n)c^{-1}=\sigma^n$ (\ref{eq:(epsilon,b,c)->delta1}), $\Int(c)$ induces an isomorphism
\[ \Int(c): (J_b)_{\epsilon'} \isom G_{\delta\sigma}.\]

(4) As it is clear that $(I_{\phi,\epsilon})_{\Qv}=I_{\phi(v),\epsilon}$, this is proved by the same argument as in (2): 
\[I_{\phi(v),\epsilon}=I_{\phi(v)\circ\zeta_v,\pi_0^k,\epsilon}=I_{\phi(v)\circ\zeta_v,\epsilon}. \qedhere\] 
\end{proof}

For a stable Kottwitz triple $(\gamma_0;\gamma,\delta)$ attached to an admissible pair $(\phi,\epsilon)$, any cohomology class in $H^1(\A_f^p,I_0)\oplus B(I_0)_{basic}(\cong H^1(\Qp,I_0))$ defining a Kottwitz invariant gives a class in $H^1(\Q,(G_{\gamma_0}^{\mathrm{o}})^{\ad})$ which presents $I_{\phi,\epsilon}^{\mathrm{o}}$ as an inner form of $G_{\gamma_0}^{\mathrm{o}}$.

\begin{lem} \label{lem:local_inner-twistings_of_Z_G(gamma_0)}
Keep the assumption from Lemma \ref{lem:isom_Int(cu)}. 

(1) For a \emph{stable} Kottwitz triple $(\gamma_0;\gamma,\delta)$ of level $n$ attached to an admissible pair $(\phi,\epsilon)$, let us choose $(g_l)_{l\neq p}\in G(\bar{\A}_f^p)$ and $g_p\in G(\mfk)$ satisfying (\ref{eq:stable_g_l}), (\ref{eq:stable_g_l}), i.e. such that $g_l\gamma_0g_l^{-1}=\gamma_l$ and $g_v^{-1}\cdot{}^{\tau}g_v\in I_0(\Qvb)$ for $v\neq p$ and $\tau\in\Gamma_v$, while $g_p\gamma_0 g_p^{-1}=\Nm_n\delta$ and $b:=g_p^{-1}\delta\sigma(g_p)\in I_0$. 

Then, $I_{\phi,\epsilon}^{\mathrm{o}}$ is an inner form of $I_0:=G_{\gamma_0}^{\mathrm{o}}$ corresponding to the class $\beta\in H^1(\Q,(I_0)^{\ad})$ whose localizations are given by:
\begin{equation} \label{eq:local_inner-class}
\beta(v)=\begin{cases} [pr(g_l^{-1}\cdot{}^{\tau}g_l)] &\text{ if } v=l\neq p \\
[pr(b)] &\text{ if } v=p  
\end{cases}
\end{equation}
where $pr:I_0\rightarrow I_0^{\ad}$ is the canonical map, and $[pr(b)]$ denotes the $\sigma$-conjugacy class of $pr(b)\in (I_0)^{\ad}(\mfk)$ which in fact lies in the subset $H^1(\Qp, (I_0)^{\ad})$ since $[b]\in B( I_0)$ is basic (cf. Lemma \ref{lem:equality_of_two_Newton_maps}, \cite[4.5]{Kottwitz85}).
Also, $\beta(\infty)$ is the (unique) class corresponding to the compact inner form of $(I_0)_{\R}$.
In particular, as an inner form of $G_{\gamma_0}^{\mathrm{o}}$, $I_{\phi,\epsilon}^{\mathrm{o}}$ is uniquely determined by the cosets $G(\Ql) g_l$ and $G(L_n) g_p$.

(2) When $(\phi,\epsilon)$ is nested in a special Shimura sub-datum, there exists a choice of $(g_v)_v\in G(\bar{\A}_f^p)\times G(\mfk)$ as in (1) such that the twisting $(I_{\phi,\epsilon}^{\mathrm{o}})_{\Qb}\isom (G_{\gamma_0}^{\mathrm{o}})_{\Qb}$ in Lemma \ref{lem:isom_Int(cu)}, (1) (which is the twisting via $\phi$, cf. (\ref{eq:inner-twisting_by_phi})) fits the local descriptions (\ref{eq:local_inner-class}).
\end{lem}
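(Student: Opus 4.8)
The plan is to pin down the class $\beta$ by its localizations and then read off each $\beta(v)$ from the explicit local shape of $\phi$. First I would reduce, by an admissible embedding of maximal tori (Lemma \ref{lem:LR-Lemma5.23}), to the case that $(\phi,\epsilon)$ is well-located in a maximal $\Q$-torus $T$ of $G$ elliptic over $\R$; this is harmless, since a conjugation $\Int(g)$ ($g\in G(\Qb)$) carries $(\phi,\epsilon)$ to $(\Int(g)\circ\phi,\Int(g)(\epsilon))$ together with a matching conjugation of all the chosen local data $(g_v)_v$, and preserves every assertion. In this situation, writing $\phi(q_\rho)=g_\rho\rho$ for the chosen section, the cochain $\rho\mapsto g_\rho$ takes values in $I:=Z_G(\phi^{\Delta})$ and, by Prop. \ref{prop:canonical_decomp_of_epsilon} (which identifies $Z_G(\phi^{\Delta},\epsilon)^{\mathrm{o}}$ with $G_{\gamma_0}^{\mathrm{o}}=I_0$ when $\epsilon$ is taken rational), the composite with $pr:I_0\to I_0^{\ad}$ is a cocycle $\bar g_\rho:=pr(g_\rho)\in Z^1(\Q,I_0^{\ad})$ whose class $\beta$ is exactly the one presenting $I_{\phi,\epsilon}^{\mathrm{o}}$ as an inner form of $I_0$ via the twisting (\ref{eq:inner-twisting_by_phi}). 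Since $I_0^{\ad}$ is adjoint, the Hasse principle for $H^1(\Q,I_0^{\ad})$ shows $\beta$ is determined by the family $(\beta(v))_v$, so it suffices to compute the localizations.

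At a finite $l\ne p$ I would pick $y_l\in X_l(\phi)$, so that $\Int(y_l^{-1})\circ\phi(l)\circ\zeta_l=\xi_l$ and $\gamma_l=\Int(y_l^{-1})(\epsilon)\in G(\Ql)$, while the chosen $g_l$ satisfies $g_l\gamma_0g_l^{-1}=\gamma_l$ with $g_l^{-1}\cdot{}^\tau g_l\in I_0(\Qlb)$. The composite $\Int(g_l^{-1})\circ\Int(y_l)\colon(I_{\phi,\epsilon})_{\Ql}\isom G_{\gamma_l}\isom(G_{\gamma_0})_{\Ql}$ is an inner twisting which is $\Ql$-rational modulo the cochain $\tau\mapsto g_l^{-1}\cdot{}^\tau g_l$; conjugating $\bar g_\rho|_{\Gamma_l}$ by the image of $g_ly_l^{-1}$ in $I_0^{\ad}$ yields $pr(g_l^{-1}\cdot{}^\tau g_l)$, so $\beta(l)=[pr(g_l^{-1}\cdot{}^\tau g_l)]$. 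At $v=\infty$ the morphism $\xi_\infty$ is by construction the one defining the compact-mod-center inner form $G'$ of $G_\R$, and $(I_{\phi,\epsilon}^{\mathrm{o}})_\R$ sits inside $G'$ in a natural way (cf. Lemma \ref{lem:LR-Lem5.26,Satz5.25}, (2)); hence $(I_{\phi,\epsilon}^{\mathrm{o}})^{\ad}_\R$ is compact and $\beta(\infty)$ is the unique class of the compact inner form of $(I_0)_\R$.

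At $v=p$ I would use the isomorphism $\Int(cu)\colon(I_{\phi,\epsilon})_{\Qp}\isom G_{\delta\sigma}$ of Lemma \ref{lem:isom_Int(cu)}, (3), where $u\in G(\Qpb)$ unramifies $\phi(p)\circ\zeta_p$ (inflation of $\theta^{\mathrm{nr}}$, $\theta^{\mathrm{nr}}(s_\sigma)=b'\sigma$) and $c\in G(\mfk)$ is an element producing $\delta$. Composing with $\Int(c^{-1})\colon G_{\delta\sigma}^{\mathrm{o}}\isom J_{b'}^{I_0}$ and the map $\Xi_{b'}\colon J_{b'}^{I_0}(\bar\mfk)\isom I_0(\bar\mfk)$ of \autoref{subsubsec:w-stable_sigma-conjugacy} — which identifies $J_{b'}^{I_0}$ with the inner twist of $I_0$ by the cocycle in $Z^1(\Qpnr/\Qp,I_0^{\ad})$ defined by $pr(b')$, a well-defined element of $H^1(\Qp,I_0^{\ad})$ because $b'=c^{-1}\delta\sigma(c)\in I_0(\mfk)$ is basic by Lemma \ref{lem:equality_of_two_Newton_maps}, (2) — one concludes that $G_{\delta\sigma}^{\mathrm{o}}$ is the inner form of $(I_0)_{\Qp}$ corresponding to $[pr(b')]$. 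Matching this against the localization at $p$ of the abstract twisting via $\phi(q_\rho)$, via the two compatible identifications of $(I_\phi)_{\Qp}=\underline{\mathrm{Aut}}(\phi(p))$ and of $G_{\delta\sigma}$ with forms of $Z_G(\phi^{\Delta},\epsilon)_{\Qp}$ supplied by $\Int(cu)$, gives $\beta(p)=[pr(b')]=[pr(b)]$ for $b=g_p^{-1}\delta\sigma(g_p)$ as in (\ref{eq:stable_g_l}) (these have equal $\sigma$-conjugacy class in $G_{\gamma_0}^{\mathrm{o}}$, hence $pr(b),pr(b')$ are $\sigma$-conjugate in $I_0^{\ad}(\mfk)$). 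This reconciliation of the single global cocycle with the $\sigma$-conjugacy-class datum at $p$ is the step I expect to be the main obstacle. The final "in particular" follows since the right-hand sides of (\ref{eq:local_inner-class}) manifestly depend only on $G(\Ql)g_l$ and $G(L_n)g_p$.

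For (2), I would take $(\phi,\epsilon)=(i\circ\psi_{T,\mu_h},\epsilon\in T(\Q))$ nested in $(T,h)$ and arrange $\gamma_0=\epsilon$. By Lemma \ref{lem:properties_of_psi_T,mu}, $\psi_{T,\mu_h}(l)\circ\zeta_l$ is conjugate to the canonical neutralization of $\fG_T(l)$ by an element of $T(\Qlb)$, so $y_l$ may be taken in $T(\Qlb)$, whence $\gamma_l=\Int(y_l^{-1})(\epsilon)=\epsilon=\gamma_0$ and $g_l=1$ satisfies (\ref{eq:stable_g_l}); similarly $\psi_{T,\mu_h}(p)\circ\zeta_p$ is conjugate to $\xi_{-\mu_h}$ under $T(\Qpb)$, so one may choose $u\in T(\Qpb)$ and, as in Remark \ref{rem:two_different_b's} (since $\epsilon\in G(\Q)$ and $\phi$ factors through $\fG_T\subset\fG_{I_0}$), take $g_p=c$ so that $b=c^{-1}\delta\sigma(c)\in T(\mfk)$. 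With these choices the twisting via $\phi$ of Lemma \ref{lem:isom_Int(cu)}, (1) is literally built from $\psi_{T,\mu_h}(q_\rho)$, whose components at $l\ne p$ are the canonical neutralizations (contributing the trivial class) and whose component at $p$ produces exactly $[pr(b)]$ by the computation of the third paragraph; hence it fits the local descriptions (\ref{eq:local_inner-class}). Here too the delicate point is at $p$: checking that the local conjugating data extracted from $\psi_{T,\mu_h}$ are compatible with the single global conjugation by $\phi(q_\rho)$, which one settles using the explicit construction of $\psi_{T,\mu}$ and the fact that its restriction to kernels is defined over $\Q$.
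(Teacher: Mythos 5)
Your proposal is correct and follows essentially the same route as the paper: reduce to a well-located pair, express the inner twisting $(I_0)_{\Qb}\isom (I_{\phi,\epsilon}^{\mathrm{o}})_{\Qb}$ by the cocycle $pr(g_\rho)$ coming from $\phi(q_\rho)=g_\rho\rho$, compute each localization using the local isomorphisms furnished by Lemma \ref{lem:isom_Int(cu)} (with $\Int(g_l)$, $\Int(cu)$, and $\psi_p$), and pin down the global class by the Hasse principle for $H^1(\Q,I_0^{\ad})$. The only stylistic difference is at $p$: the paper verifies that $[pr(b)]$ lives in $H^1(\Qp,I_0^{\ad})$ (rather than merely $B(I_0^{\ad})$) by an explicit computation with $\Nm_{n'}b$ and the Newton cocharacter, whereas you cite the identification $\Xi_{b'}$ of $J_b^{I_0}$ with the $pr(b)$-twist of $I_0$ already established in \autoref{subsubsec:w-stable_sigma-conjugacy}; both are valid and yield the same result.
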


\begin{proof}
(1) Since $(\gamma_0;\gamma,\delta)$ is a stable Kottwitz triple, there exist $(g_l)_{l\neq p}$ and $c\in G(\mfk)$ as in the statement.
Then, for each $l\neq p$, we have an inner-twisting
\begin{equation} \label{eq:inner-twisting_at_l_of_G(gamma_0)}
\varphi_l:(I_0)_{\Qlb} \stackrel{\Int(g_l)}{\longrightarrow} (G_{\gamma_l}^{\mathrm{o}})_{\Qlb} \stackrel{\Int(v_l^{-1})}{\longrightarrow} (I_{\phi,\epsilon}^{\mathrm{o}})_{\Qlb},
\end{equation}
where $v_l^{-1}\in X_l(\phi,\epsilon)$ (Lemma \ref{lem:isom_Int(cu)}). So, we have $\varphi_l^{-1}\cdot {}^{\tau}\varphi_l=\Int(g_l^{-1}\cdot{}^{\tau}g_l)$ for every $\tau\in\Gamma_l$.
At $p$, there exists an inner-twisting 
\begin{equation} \label{eq:inner-twisting_at_p_of_G(gamma_0)}
\varphi_p:(I_0)_{\Qpb} \stackrel{\psi_p^{-1}}{\longrightarrow} (G_{\delta\sigma}^{\mathrm{o}})_{\Qpb}   \stackrel{\Int((cu)^{-1})}{\longrightarrow} (I_{\phi,\epsilon}^{\mathrm{o}})_{\Qpb},
\end{equation}
where $u\in G(\Qpb)$ and $c\in G(\mfk)$ are as in Lemma \ref{lem:isom_Int(cu)}, and $\psi_p$, which is a morphism $(G_{\delta\sigma}^{\mathrm{o}})_{\mfk}\isom (I_0)_{\mfk}$ defined over $\mfk$, is the composite of the two isomorphisms (defined over $L_n$ and $\mfk$)
\[ (G_{\delta\sigma}^{\mathrm{o}})_{L_n}\isom G_{\Nm_n\delta}^{\mathrm{o}},\quad \Int(g_p^{-1}): (G_{\Nm_n\delta}^{\mathrm{o}})_{\mfk}\isom (I_0)_{\mfk} \]
\cite[Lem. 5.4]{Kottwitz82}. One has 
\[\psi_p\cdot {}^{\sigma}\psi_p^{-1}=\Int(b).\] 
Since $[b]\in B(I_0)$ is basic, by definition there exist $n'=nk\ (k\gg1)$ and $d\in I_0(\mfk)$ such that $d\Nm_{n'}b\sigma^{n'}(d^{-1})=\nu_{b}(p)$, thus gives a cocycle $\sigma\mapsto pr(db\sigma(d^{-1}))\in Z^1(L_{n'}/\Qp,(I_0)^{\ad})$ whose class maps to $[pr(b)]\in B((I_0)^{\ad})=H^1(W_{\Qp},(I_0)^{\ad}(\bar{\mfk}))$. Hence, we see that $[pr(b)]$ defines the inner-twisting $(I_0)_{\Qpnr} \isom (G_{\delta\sigma}^{\mathrm{o}})_{\Qpnr}$ above, which proves the claim for $\beta(p)$.

Finally, note that these local data determine an \emph{inner form of $I_0$} uniquely, by the Hasse principle for connnected (semisimple) adjoint groups over number fields.

(2) Suppose that $\phi=\psi_{T,\mu_h}$, $\epsilon\in T(\Q)$ for a special Shimura sub-datum $(T,h)$.
Recall (Remark \ref{rem:two_different_b's}) that in this case, $\gamma_l=\gamma_0\in T(\Q)$ for every $l\neq p$ and $\delta=cb\sigma(c^{-1})$ for some $c\in G(\mfk)$ and $b\sigma=\theta^{\nr}(s_{\sigma})$, where $\theta^{\nr}$ is a $\Qpnr/\Qp$-Galois gerb morphism whose inflation to $\Qpb$ equals an unramified $T(\Qpb)$-conjugate of $\xi_p=\phi(p)\circ\zeta_p$. So, $b\in T(\mfk)$, and we can take $g_l:=1$ and $g_p:=c$.
On the other hand, since $\phi(l)\circ\zeta_l$ is conjugate to the canonical trivialization $\xi_l$ under $T(\Qlb)$, we find that the local cohomology class in $H^1(\Qv,(I_0)^{\ad})$ representing the inner twist $(I_{\phi,\epsilon}^{\mathrm{o}})_{\Qvb}\isom (I_0)_{\Qvb}$ is trivial if $v=l$ and equals $[pr(b)]\in H^1(\Qpnr/\Qp,(I_0)^{\ad})$. This proves the claim. 
\end{proof}

Note that due to Hasse-principle on $\pi_0(G_{\gamma_0})$ (Prop. \ref{prop:triviality_in_comp_gp}), we also see that  $I_{\phi,\epsilon}$ is the twist of $G_{\gamma_0}$ by (the image under $H^1(\Q,G_{\gamma_0}^{\mathrm{o}})\rightarrow H^1(\Q,G_{\gamma_0}^{\ad})$ of) the same cohomology class.

\begin{lem} \label{lem:uniqueness_of_inner-class_with_same_K-triples}
Let $(\phi,\epsilon)$ and $(\phi',\epsilon')$ be two admissible pairs. If their associated Kottwitz triples are equivalent, then the $\Q$-groups $I_{\phi,\epsilon}^{\mathrm{o}}$, $I_{\phi',\epsilon'}^{\mathrm{o}}$ are isomorphic as inner forms of $G_{\gamma_0}^{\mathrm{o}}$. 
\end{lem}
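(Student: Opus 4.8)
The plan is to reduce to a common $\gamma_0$, then to two pairs each nested in a special Shimura sub-datum, and then to compare the corresponding inner-twisting classes place by place using Lemma \ref{lem:local_inner-twistings_of_Z_G(gamma_0)}. First, since admissible pairs give rise to \emph{stable} Kottwitz triples (Prop. \ref{prop:Kottwitz_triple}), the equivalence of the triples attached to $(\phi,\epsilon)$ and $(\phi',\epsilon')$ together with Prop. \ref{prop:triviality_in_comp_gp}, (2) shows that these triples are in fact \emph{stably} equivalent; in particular $\gamma_0$ and $\gamma_0'$ are stably conjugate. Choosing $g\in G(\Qb)$ with $g\gamma_0g^{-1}=\gamma_0'$ and $g^{-1}{}^{\tau}g\in I_0:=G_{\gamma_0}^{\mathrm{o}}$ for all $\tau$, the map $\Int(g)$ identifies $(G_{\gamma_0})_{\Qb}$ with $(G_{\gamma_0'})_{\Qb}$ compatibly with the Galois actions (hence $\Q$-isomorphically), and it carries $(\phi',\epsilon')$, the attached Kottwitz triple, and the inner form $I^{\mathrm{o}}_{\phi',\epsilon'}$ (with its inner twisting to $G_{\gamma_0'}^{\mathrm{o}}$, cf. Lemma \ref{lem:isom_Int(cu)}, (1) and Lemma \ref{lem:isom_of_H^1(Ql)_of_inner-twist}) to the corresponding data for $\Int(g^{-1})(\phi',\epsilon')$. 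Thus we may assume $\gamma_0=\gamma_0'$, so that $I^{\mathrm{o}}_{\phi,\epsilon}$ and $I^{\mathrm{o}}_{\phi',\epsilon'}$ are both inner forms of the \emph{same} $\Q$-group $I_0$. Applying Lemma \ref{lem:LR-Lemma5.23} and, once more, Prop. \ref{prop:triviality_in_comp_gp}, (2) to re-align the $\gamma_0$'s, we may furthermore assume that each of $(\phi,\epsilon)$, $(\phi',\epsilon')$ is nested in a special Shimura sub-datum; this only replaces the attached Kottwitz triples by equivalent ones, so it is harmless.

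\textbf{Local comparison.} With both pairs nested, Lemma \ref{lem:local_inner-twistings_of_Z_G(gamma_0)}, (2) provides explicit representatives for the classes $\beta,\beta'\in H^1(\Q,(I_0)^{\ad})$ presenting $I^{\mathrm{o}}_{\phi,\epsilon}$, $I^{\mathrm{o}}_{\phi',\epsilon'}$ as inner forms of $I_0$: at every finite place $l\neq p$ one has $\beta(l)=0=\beta'(l)$; at $\infty$ both equal the unique class of the compact inner form of $(I_0)_{\R}$ (as $\gamma_0$ is $\R$-elliptic); and at $p$, $\beta(p)=[pr(b)]$ and $\beta'(p)=[pr(b')]$, where $b\in T(\mfk)$, $b'\in T'(\mfk)$ lie in $I_0(\mfk)$ (for the special tori $T,T'$ in which the two pairs are nested) and $pr\colon I_0\to(I_0)^{\ad}$ is the canonical projection. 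By the Hasse principle for the semisimple adjoint $\Q$-group $(I_0)^{\ad}$, it therefore suffices to prove $[pr(b)]=[pr(b')]$ in $B((I_0)^{\ad})$. For this, I would use that $b$ (resp. $b'$) satisfies condition (iii$'$), so that $[b]_{G_{\gamma_0}}$ (resp. $[b']_{G_{\gamma_0}}$) is the image of $[\delta]$ (resp. $[\delta']$) under the injection $\mathfrak{C}_p^{(n)}(\gamma_0)'\hookrightarrow B(G_{\gamma_0})$ of \autoref{subsubsec:w-stable_sigma-conjugacy} (cf. Remark \ref{rem:two_different_b's}); since the triples are equivalent, $[\delta]=[\delta']$ and hence $[b]_{G_{\gamma_0}}=[b']_{G_{\gamma_0}}$. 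Moreover $[b]_{I_0}$, $[b']_{I_0}\in B(I_0)$ are basic (Lemma \ref{lem:equality_of_two_Newton_maps}, (2), since $\nu_{I_0}(b)=\tfrac{1}{n}\nu_{I_0}(\gamma_0)$ is central), so $[pr(b)]$, $[pr(b')]$ are basic classes in $B((I_0)^{\ad})$ and are thus determined by $\kappa_{(I_0)^{\ad}}$ of them; one then checks that this invariant is already computed from $\kappa_{G_{\gamma_0}}([b]_{G_{\gamma_0}})$, via the commutative diagram (\ref{eq:stable_sigma_conj_diagm}) and the abelianization identifications of Lemma \ref{lem:abelianization_exact_seq}, whence $\beta(p)=\beta'(p)$.

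\textbf{Main obstacle.} The delicate point is precisely this comparison at $p$. Because $G^{\der}$ is not assumed simply connected, $G_{\gamma_0}$ may be disconnected, and a priori $b$ and $b'$ are only $\sigma$-conjugate under $G_{\gamma_0}(\mfk)$ — not under $I_0(\mfk)$, and their images need not be $\sigma$-conjugate under $(I_0)^{\ad}(\mfk)$ — so one must show that the resulting component-group discrepancy does not affect $[pr(b)]\in B((I_0)^{\ad})$. The key inputs for this are the cartesian diagram relating $\mathfrak{D}_p^{(n)}(\gamma_0)\subset B(I_0)$ with $\mathfrak{C}_p^{(n)}(\gamma_0)\subset B(G_{\gamma_0})$ from \autoref{subsubsec:w-stable_sigma-conjugacy}, the injectivity of $\kappa\times\bar{\nu}$ on $B$-sets, the explicit description of the local Kottwitz invariants in \autoref{subsubsec:Kottwitz_invariant}, and the finiteness and Hasse principle for $\pi_0(G_{\gamma_0})$ from Prop. \ref{prop:triviality_in_comp_gp}, (1). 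Once $[pr(b)]=[pr(b')]$ is established, the Hasse principle for $(I_0)^{\ad}$ over $\Q$ concludes that $\beta=\beta'$, i.e. $I^{\mathrm{o}}_{\phi,\epsilon}$ and $I^{\mathrm{o}}_{\phi',\epsilon'}$ are isomorphic as inner forms of $G_{\gamma_0}^{\mathrm{o}}$.
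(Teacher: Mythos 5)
Your proposal takes a genuinely different route from the paper's, and it introduces a difficulty that you correctly identify but do not actually resolve.

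The paper's proof is shorter and more elementary. After using Prop.~\ref{prop:triviality_in_comp_gp} to reduce to $\gamma_0'=\gamma_0$ (as you do), it appeals directly to Lemma~\ref{lem:local_inner-twistings_of_Z_G(gamma_0)}, \emph{part (1)}, in particular the final assertion there that the inner-form class of $I_{\phi,\epsilon}^{\mathrm{o}}$ is determined by the cosets $G(\Ql)g_l$ and $G(L_n)g_p$. Then, writing $\gamma_l'=h_l\gamma_lh_l^{-1}$ with $h_l\in G(\Ql)$ and $\delta'=d\delta\sigma(d^{-1})$ with $d\in G(L_n)$, one simply chooses $g_l'=h_lg_l$ and $g_p'=dg_p$: this gives $(g_l')^{-1}\cdot{}^{\tau}g_l'=g_l^{-1}\cdot{}^{\tau}g_l$ (because $h_l$ is $\tau$-fixed) and $b':=(g_p')^{-1}\delta'\sigma(g_p')=g_p^{-1}\delta\sigma(g_p)=b$ on the nose. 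So the defining cocycles coincide \emph{literally}, and no comparison in $B((I_0)^{\ad})$ is ever needed.

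Your proposal instead reduces to two pairs nested in special Shimura sub-data (Lemma~\ref{lem:LR-Lemma5.23}, part (2) of Lemma~\ref{lem:local_inner-twistings_of_Z_G(gamma_0)}) and then tries to compare the specific $b\in T(\mfk)$ and $b'\in T'(\mfk)$ arising from the two nested structures. By doing so you forfeit the freedom that makes the paper's argument work: these particular $b,b'$ need not be $\sigma$-conjugate in $I_0(\mfk)$, only in $G_{\gamma_0}(\mfk)$, and there is no a priori map $B(G_{\gamma_0})\to B((I_0)^{\ad})$ to conclude. Concretely, if $b'=db\,\sigma(d^{-1})$ with $d\in G_{\gamma_0}(\mfk)\setminus I_0(\mfk)$, then $pr(b')=\Int(d)_*(pr(b))\cdot pr(d\,\sigma(d^{-1}))$, and the automorphism $\Int(d)_*$ of $I_0^{\ad}$ need not be inner, so one cannot conclude $[pr(b)]=[pr(b')]$ in $B((I_0)^{\ad})$ from the tools you list ($\kappa\times\bar\nu$ injectivity, the cartesian diagram, Prop.~\ref{prop:triviality_in_comp_gp}). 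Note also that $\kappa_{G_{\gamma_0}}$ is not defined when $G_{\gamma_0}$ is disconnected, so the step ``this invariant is already computed from $\kappa_{G_{\gamma_0}}([b]_{G_{\gamma_0}})$'' does not make sense as stated. In short: the point you flag as the ``main obstacle'' is a genuine gap, and the correct move is not to push through it but to avoid it by not normalizing to nested pairs and instead aligning the $g_v$'s as above.
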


\begin{proof}
Let $(\gamma_0;\gamma,\delta)$ and $(\gamma_0';\gamma',\delta')$ be some stable Kottwitz triples attached to $(\phi,\epsilon)$ and $(\phi',\epsilon')$ respectively such that the $G(\A_f^p)$-conjugacy classes of $\gamma$, $\gamma'$ and the $\sigma$-conjugacy classes $\delta$, $\delta'$ in $G(L_n)$ are the same; then, according to Prop. \ref{prop:triviality_in_comp_gp} they are stably conjugate and thus we may assume that $\gamma_0'=\gamma_0$. 
It follows from Lemma \ref{lem:local_inner-twistings_of_Z_G(gamma_0)} that the isomorphism class of $I_{\phi,\epsilon}^{\mathrm{o}}$ as an inner twist of $I_0$ is determined by choice of elements  $g_l$ and $g_0$ as in Lemma \ref{lem:local_inner-twistings_of_Z_G(gamma_0)} by means of the corresponding local cohomology classes (\ref{eq:local_inner-class}) in $H^1(\Qv,(I_0)^{\ad})$. Then, since $\gamma'=h_l\gamma_lh_l^{-1}$ and $\delta'=d\delta\sigma(d)^{-1}$ for some $h_l\in G(\Ql)$ and $d\in G(L_n)$, we may choose the elements $(g_v)_v$, $(g_v')_v$ in such a way that those cohomology classes are the same, hence the claim follows.
\end{proof}

Let us fix $(v_l)_l\in \prod'_{l\neq\infty,p}X_l(\phi)$; then, for almost all $l\neq p$, $\Int(v_l)$ extends to an embedding of reductive $\Z_l$-group schemes $(I_{\phi})_{\Z_l} \hookrightarrow G_{\Z_l}$ (cf. \cite[p.168]{LR87}).
By taking the restricted product, over the finite primes $l\neq p$, of $\Int(v_l):I_{\phi}(\Ql)=\mathrm{Aut}(\phi(l))\rightarrow \mathrm{Aut}(\xi_l)=G(\Ql)$, this also specify isomorphisms
\begin{equation} \label{eq:Int(v)}
\Int(v):I_{\phi}(\A_f^p) \isom G(\A_f^p),\quad I_{\phi,\epsilon}(\A_f^p) \isom G(\A_f^p)_{\gamma},
\end{equation}
where $G(\A_f^p)_{\gamma}$ denotes the centralizer of $\gamma$ in $G(\A_f^p)$. 

Set $\mbfK_q:=G(L_n)\cap \mbfKt_p$ ($q=p^n$).
Define $\phi_p$ to be the characteristic function of the subset (union of double cosets) of $\mbfK_q\backslash G(L_n)/\mbfK_q$ corresponding to the following set, cf. Definition \ref{defn:mu-admissible_subset}:
\begin{equation} \label{eq:Adm_K(mu)}
\mathrm{Adm}_{\mbfK_q}(\{\mu\}):=\{w\in\mbfK_q\backslash G(L_n)/\mbfK_q\ |\ w\leq \tilde{W}_{\mbfKt_p}t^{\lambda}\tilde{W}_{\mbfKt_p}\text{ for some }\lambda\in\Lambda(\{\mu\})\}.
\end{equation}
Here, in the inequality, $w$ also denotes its image in $\mbfKt_p\backslash G(\mfk)/\mbfKt_p\simeq \tilde{W}_{\mbfKt_p}\backslash \tilde{W}/\tilde{W}_{\mbfKt_p}$ (\ref{eqn:parahoric_double_coset}) ($\tilde{W}$ is the relative Weyl group  of $G_{\mfk}$ and $\tilde{W}_{\mbfKt_p}=\tilde{W}\cap\mbfKt_p$).
Let $dy_p$ (resp. $dy^p$) denote the Haar measure on $G(L_n)$ (resp. on $G(\A_f^p)$) giving measure $1$ on $\mbfK_q$ (resp. on $K^p$). 
We also choose an (arbitrary, for the moment) Haar measure $di_p$ (resp. $di^p$) on $G_{\delta\sigma}^{\mathrm{o}}(\Qp)$ (resp. on $G_{\gamma}^{\mathrm{o}}(\A_f^p)$) that gives rational measure to compact open subgroups of $G_{\delta\sigma}^{\mathrm{o}}(\Qp)$ (resp. of $G_{\gamma}^{\mathrm{o}}(\A_f^p)$). Then, we write $d\bar{y}_p$ $d\bar{y}^p$ for the quotient of $dy_p$ by $di_p$ and that of $dy^p$ by $di^p$, respectively.

\begin{lem} \label{lem:fixed-pt_subset_of_Frob-Hecke_corr}
Assume that $Z(G)$ has same ranks over $\Q$ and $\R$.
We take $K^p$ small enough so that conditions of (\ref{item:Langlands-conditions}) hold and $Z(G)(\Q)\cap K=\{1\}$.
Let $(\gamma_0;\gamma,\delta)$ be a Kottwitz triple attached to an admissible pair $(\phi,\epsilon)$.
Then, we have
\[|I_{\phi,\epsilon}(\Q)\backslash X(\phi,\epsilon)_{K^p_g}|= \frac{\mathrm{vol}(I_{\phi,\epsilon}^{\mathrm{o}}(\Q)\backslash I_{\phi,\epsilon}^{\mathrm{o}}(\A_f))}{[I_{\phi,\epsilon}(\Q):I_{\phi,\epsilon}^{\mathrm{o}}(\Q)]} \cdot \mathrm{O}_{\gamma}(f^p)\cdot \mathrm{TO}_{\delta}(\phi_p),\]
where $\mathrm{TO}_{\delta}(\phi_p)$ (twisted orbital integral) and $\mathrm{O}_{\gamma}(f^p)$ (orbital integral) are defined by: 
\begin{align} \label{eq:(twisted-)orbital_integral}
\mathrm{TO}_{\delta}(\phi_p)&=\int_{G_{\delta\sigma}^{\mathrm{o}}(\Qp)\backslash G(L_n)}\phi_p(y_p^{-1}\delta\sigma(y_p)) d\bar{y}_p,\\
\mathrm{O}_{\gamma}(f^p)&=\int_{G_{\gamma}^{\mathrm{o}}(\A_f^p)\backslash G(\A_f^p)}f^p((y^p)^{-1}\gamma y^p) d\bar{y}^p, \nonumber
\end{align}
Here, $f^p$ is the characteristic function of $K^pg^{-1}K^p$ in $G(\A_f^p)$, and the Haar measure $di_p$ on $G_{\delta\sigma}^{\mathrm{o}}(\Qp)$ (resp. $di^p$ on $G_{\gamma}^{\mathrm{o}}(\A_f^p)$) is obtained from a Haar measure on $I_{\phi,\epsilon}^{\mathrm{o}}(\Qp)$ (resp. on $I_{\phi,\epsilon}^{\mathrm{o}}(\A_f^p)$) that gives rational measures to compact open subgroups of $I_{\phi,\epsilon}^{\mathrm{o}}(\Qp)$ (resp. of $I_{\phi,\epsilon}^{\mathrm{o}}(\A_f^p)$) via the isomorphism $\Int(cu)$ (\ref{eq:Int(cu)}) (resp. via $\Int(v)$ (\ref{eq:Int(v)})).
\end{lem}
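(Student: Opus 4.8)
The plan is to follow the standard Langlands--Kottwitz computation (\cite[\S1.5]{Kottwitz84b}, \cite[\S16]{Kottwitz92}) adapted to the non-simply-connected setting and to arbitrary $g$. Fix an admissible pair $(\phi,\epsilon)$ with attached Kottwitz triple $(\gamma_0;\gamma,\delta)$; by Lemma~\ref{lem:LR-Lemma5.23} we may assume $(\phi,\epsilon)$ is nested in a special Shimura sub-datum, so $\epsilon\in G(\Q)$, and choose $u\in G(\Qpb)$ with $u^{-1}\mbfK_p(\Qpnr)\in X_p(\phi)$ and $c\in G(\mfk)$ as in Lemma~\ref{lem:delta_from_b&gamma_0}, together with $v=(v_l)_l\in\prod'_lX_l(\phi)$. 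First I would use the identification (\ref{eqn:X_p(phi)=ADLV}), $u\colon(X_p(\phi),\Phi)\isom(X(\{\mu_X\},b)_{\mbfK_p},(b\sigma)^r)$, to rewrite $X_p(\phi,\epsilon)$ as $\{x\in X(\{\mu_X\},b)_{\mbfK_p}\mid\epsilon'x=(b\sigma)^{n}x\}$ with $\epsilon'=\Int(u)(\epsilon)$, and then transport this via $\Int(c)$ to the set $Y_p(\delta)$ of (\ref{eq:Y_p(delta)}); the point is that $c(\epsilon'^{-1}(b\sigma)^n)c^{-1}=\sigma^n$ turns the $\epsilon'$-fixed-point condition on $X(\{\mu_X\},b)_{\mbfK_p}$ into the $\sigma^n$-fixed-point condition defining $Y_p(\delta)$. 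Parametrizing $G(\mfk)/\mbfKt_p$ by $g_p\mbfKt_p\mapsto g_p^{-1}\delta\sigma(g_p)$ on the $\sigma^n$-fixed locus (using Lemma~\ref{lem:Kottwitz84-a1.4.9_b3.3}(1) to know the $\sigma^n$-fixed points are exactly the orbit $G(L_n)\cdot x^{\mathrm{o}}$) identifies $Y_p(\delta)$ with the set of $y_p\in G_{\delta\sigma}^{\mathrm{o}}(\Qp)\backslash G(L_n)$ such that $\mbfK_q\cdot y_p^{-1}\delta\sigma(y_p)\cdot\mbfK_q\in\Adm_{\mbfK_q}(\{\mu\})$, i.e. with $\{y_p:\phi_p(y_p^{-1}\delta\sigma(y_p))\neq0\}$; summing the function $\phi_p$ over this set gives precisely $\mathrm{TO}_\delta(\phi_p)$ once the measures are normalized as in the statement (here Lemma~\ref{lem:isom_Int(cu)}(3), the isomorphism $\Int(cu)\colon(I_{\phi,\epsilon})_{\Qp}\isom G_{\delta\sigma}$, is what lets us transfer the chosen Haar measure on $I_{\phi,\epsilon}^{\mathrm{o}}(\Qp)$).

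Next I would run the analogous bookkeeping at the finite prime-to-$p$ places. The condition $\epsilon x^pg=x^p\bmod K^p$ defining $X^p(\phi,\epsilon;g)$, after identifying $X^p(\phi)$ with $G(\A_f^p)$ via a chosen base point and using $\Int(v)\colon I_{\phi,\epsilon}(\A_f^p)\isom G(\A_f^p)_\gamma$ of (\ref{eq:Int(v)}), becomes the set of cosets $y^pK^p$ with $(y^p)^{-1}\gamma y^p\in K^pg^{-1}K^p$, i.e. with $f^p((y^p)^{-1}\gamma y^p)\neq0$; quotienting by the action of $I_{\phi,\epsilon}^{\mathrm{o}}(\A_f^p)$ (identified via $\Int(v)$ with $G_\gamma^{\mathrm{o}}(\A_f^p)$) and counting with multiplicity $f^p$ yields $\mathrm{O}_\gamma(f^p)$, again with the stated measure normalization. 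Combining the two factors, $X(\phi,\epsilon)_{K^p_g}=X_p(\phi,\epsilon)\times(X^p(\phi,\epsilon;g)/K^p_g)$ carries a transitive action of $I_{\phi,\epsilon}(\A_f)$ under which the stabilizer of a point is (a conjugate of) $I_{\phi,\epsilon}(\Q)\cap(\text{compact open})$; since $Z(G)(\Q)\cap K=\{1\}$ and the conditions (\ref{item:Langlands-conditions}) hold, this stabilizer is trivial on $I_{\phi,\epsilon}^{\mathrm{o}}$ up to the finite component group, so the orbit-counting (via the exact sequence $1\to I^{\mathrm o}_{\phi,\epsilon}\to I_{\phi,\epsilon}\to\pi_0\to1$ and the triviality statement of Prop.~\ref{prop:triviality_in_comp_gp}(1)) gives
\[
|I_{\phi,\epsilon}(\Q)\backslash X(\phi,\epsilon)_{K^p_g}|
=\frac{\mathrm{vol}(I_{\phi,\epsilon}^{\mathrm{o}}(\Q)\backslash I_{\phi,\epsilon}^{\mathrm{o}}(\A_f))}{[I_{\phi,\epsilon}(\Q):I_{\phi,\epsilon}^{\mathrm{o}}(\Q)]}\cdot\mathrm{O}_{\gamma}(f^p)\cdot\mathrm{TO}_{\delta}(\phi_p),
\]
where the global volume factor records that the two adelic integrals are taken against measures $di^p$, $di_p$ on the $\A_f^p$- and $\Qp$-components of $I_{\phi,\epsilon}^{\mathrm{o}}$ transported from $I_{\phi,\epsilon}^{\mathrm{o}}(\A_f)$, and that $I_{\phi,\epsilon}^{\mathrm{o}}(\Q)$ is discrete in $I_{\phi,\epsilon}^{\mathrm{o}}(\A_f)$ (this uses $\Q\operatorname{-rk}(Z(G))=\R\operatorname{-rk}(Z(G))$, which forces $I_{\phi,\epsilon}^{\mathrm{o}}$ to have anisotropic-over-$\R$ center — indeed $(I_{\phi,\epsilon}/Z(G))_{\R}$ sits inside the compact inner form $G'$ of $G^{\ad}_{\R}$).

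The main obstacle, and the place where the argument genuinely departs from \cite{Kottwitz84b}, \cite{Kottwitz92}, is the careful handling of the component group $\pi_0(G_{\gamma_0})=\pi_0(I_{\phi,\epsilon})$ and of the fact that $I_{\phi,\epsilon}$ need not be connected. Concretely, two issues must be dealt with: (i) the parametrization $G(\mfk)/\mbfKt_p\xrightarrow{\sim}Y_p(\delta)$ and its prime-to-$p$ analogue must be checked to descend correctly to $I_{\phi,\epsilon}^{\mathrm{o}}$-orbits rather than $G_{\delta\sigma}$- or $G_\gamma$-orbits, which is exactly why the isomorphisms of Lemma~\ref{lem:isom_Int(cu)} are stated at the level of neutral components and why the Haar measures are normalized on $I_{\phi,\epsilon}^{\mathrm{o}}$; and (ii) in passing from the $I_{\phi,\epsilon}(\Q)$-quotient to the $I_{\phi,\epsilon}^{\mathrm{o}}(\Q)$-quotient one picks up the index $[I_{\phi,\epsilon}(\Q):I_{\phi,\epsilon}^{\mathrm{o}}(\Q)]$, and one must know that no extra $\Sha$-type obstruction intervenes — this is precisely the content of the vanishing of $\Sha^{\infty,p}(\Q,\pi_0(G_{\epsilon}))$ proved in Prop.~\ref{prop:triviality_in_comp_gp}(1) for abelian-type (in particular Hodge-type) Shimura data. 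With those inputs in hand the computation is a routine but lengthy unwinding of definitions; I would organize it so that the $p$-part and the prime-to-$p$ part are treated by the same formal lemma applied to $(G_{\delta\sigma}^{\mathrm{o}}\subset\Res_{L_n/\Qp}G)$ and to $(G_\gamma^{\mathrm{o}}\subset G_{\A_f^p})$ respectively, keeping the component-group correction visible at each stage rather than buried.
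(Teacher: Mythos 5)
Your overall plan is correct and matches the (very terse) proof the paper actually gives: cite the Kottwitz--Langlands counting argument of \cite[\S1.4--1.5]{Kottwitz84b}, feed it the identifications of Lemma~\ref{lem:isom_Int(cu)} via $\Int(cu)$ and $\Int(v)$ (which replace Kottwitz's isomorphisms (1.4.7), (1.4.8)), obtain the intermediate triple product
\[
\mathrm{vol}(I_{\phi,\epsilon}(\Q)\backslash I_{\phi,\epsilon}(\A_f))\cdot
\int_{G_{\gamma}(\A_f^p)\backslash G(\A_f^p)}f^p\, d\bar y^p\cdot
\int_{G_{\delta\sigma}(\Qp)\backslash G(L_n)}\phi_p\, d\bar y_p
\]
with the \emph{full} centralizers, and then pass to the neutral components. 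Your unwinding of the $p$-part through $Y_p(\delta)$ and the prime-to-$p$ part through $\Int(v)$ is exactly what the citation suppresses, so no objection there.

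The one place where your proposal misfires is step~(ii), the invocation of Prop.~\ref{prop:triviality_in_comp_gp}(1). No $\Sha$-type vanishing is needed to go from the full centralizers to the neutral components; this is pure index bookkeeping. Once the measures on $G_{\gamma}^{\mathrm{o}}$, $G_{\delta\sigma}^{\mathrm{o}}$, $G_{\gamma}$, $G_{\delta\sigma}$ are transported compatibly from $I_{\phi,\epsilon}^{\mathrm{o}}$ and $I_{\phi,\epsilon}$ via $\Int(v)$, $\Int(cu)$, the two orbital integrals scale by $[I_{\phi,\epsilon}(\A_f^p):I_{\phi,\epsilon}^{\mathrm{o}}(\A_f^p)]$ and $[I_{\phi,\epsilon}(\Qp):I_{\phi,\epsilon}^{\mathrm{o}}(\Qp)]$, whose product is $[I_{\phi,\epsilon}(\A_f):I_{\phi,\epsilon}^{\mathrm{o}}(\A_f)]$. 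On the other hand, for any $\Q$-group $I$ with neutral component $I^{\mathrm{o}}$, the fibering $I^{\mathrm{o}}(\Q)\backslash I^{\mathrm{o}}(\A_f)\to I(\Q)\backslash I(\A_f)$ has image of index $[I(\A_f):I(\Q)I^{\mathrm{o}}(\A_f)]$ and has fibers of cardinality $[(I(\Q)\cap I^{\mathrm{o}}(\A_f)):I^{\mathrm{o}}(\Q)]=1$ (the latter because $I(\Q)\cap I^{\mathrm{o}}(\A_f)$ is the kernel of $I(\Q)\to\pi_0(I)(\Q)$, i.e.\ $I^{\mathrm{o}}(\Q)$, since $\pi_0(I)(\Q)\hookrightarrow\pi_0(I)(\A_f)$). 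Hence
\[
\frac{\mathrm{vol}(I^{\mathrm{o}}(\Q)\backslash I^{\mathrm{o}}(\A_f))}{[I(\Q):I^{\mathrm{o}}(\Q)]}=\frac{\mathrm{vol}(I(\Q)\backslash I(\A_f))}{[I(\A_f):I^{\mathrm{o}}(\A_f)]},
\]
and the two corrections cancel exactly, with no cohomological input. The vanishing $\Sha^{\infty,p}(\Q,\pi_0(G_{\epsilon}))=0$ of Prop.~\ref{prop:triviality_in_comp_gp}(1) is genuinely used elsewhere in the paper (to show geometric equivalence of stable Kottwitz triples implies stable equivalence, and to reduce a class $a\in Z^1(\Q,H_1)$ to $Z^1(\Q,I_1)$ in the proof of Thm.~\ref{thm:LR-Satz5.25}), but it plays no role in the present lemma, and the lemma does not assume the abelian-type hypothesis under which that proposition was proved.
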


\begin{proof}
The argument given in \cite[$\S$1.4, $\S$1.5]{Kottwitz84b} (cf. \cite[p.432]{Kottwitz92}) works without change, since the necessary isomorphisms (1.4.7), (1.4.8) in \cite{Kottwitz84b} follow from Lemma \ref{lem:isom_Int(cu)}. 
Then, the cardinality in question becomes the triple product
\[ \mathrm{vol}(I_{\phi,\epsilon}(\Q)\backslash I_{\phi,\epsilon}(\A_f))\cdot \int_{G_{\gamma}(\A_f^p)\backslash G(\A_f^p)}f^p((y^p)^{-1}\gamma y^p) d\bar{y}^p\cdot \int_{G_{\delta\sigma}(\Qp)\backslash G(L_n)}\phi_p(y_p^{-1}\delta\sigma(y_p)) d\bar{y}_p \]
(the quotient measures $d\bar{y}^p$, $d\bar{y}_p$ being defined similarly).
The statement follows from this.
\end{proof}

\begin{lem} \label{eq:|widetilde{Sha}_G(Q,I_{phi,epsilon})^+|}
For any admissible pair $(\phi,\epsilon)$, the set $\widetilde{\Sha}_G(\Q,I_{\phi,\epsilon})^+:=\ker[\Sha^{\infty}_G(\Q,I_{\phi,\epsilon}^{\mathrm{o}})\rightarrow H^1(\A,I_{\phi,\epsilon})]$ (cf. Thm. \ref{thm:LR-Satz5.25}) is a finite set which depends only on the associated (equivalence class of) Kottwitz triple $(\gamma_0;\gamma,\delta)$. Its cardinality $i(\gamma_0;\gamma,\delta)$ is given by
\[  |\widetilde{\Sha}_G(\Q,I_{\phi,\epsilon})^+|=|\ker[ \ker^1(\Q,I_0)\rightarrow \ker^1(\Q,G)]| \cdot |\mathfrak{D}(\gamma_0;\gamma,\delta)|,\]
with $I_0:=G_{\gamma_0}^{\mathrm{o}}$ as usual and
\[ \mathfrak{D}(\gamma_0;\gamma,\delta) :=\im[\Sha^{\infty}_G(\Q,I_0)\rightarrow H^1(\A_f,I_0)] \cap 
\ker [H^1(\A_f,G_{\gamma,\delta}^{\mathrm{o}})\rightarrow H^1(\A_f,G_{\gamma,\delta})], \]
where $H^1(\A_f,G_{\gamma,\delta}^{\mathrm{o}}):=\oplus_l H^1(\Qv,G_{\gamma_l}^{\mathrm{o}})\oplus H^1(\Qp,G_{\delta\sigma}^{\mathrm{o}})$ and $H^1(\A_f,G_{\gamma,\delta})$ is defined similarly.
\end{lem}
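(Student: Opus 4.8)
The goal is to compute $|\widetilde{\Sha}_G(\Q,I_{\phi,\epsilon})^+|$ and show it depends only on the equivalence class of the attached Kottwitz triple $(\gamma_0;\gamma,\delta)$. The plan is to reduce the problem to the group $I_0:=G_{\gamma_0}^{\mathrm{o}}$ (rather than its inner form $I_{\phi,\epsilon}^{\mathrm{o}}$) using the canonical isomorphism (\ref{eq:Kisin17_Lem.4.4.3}) $\Sha^{\infty}(\Q,I_{\phi,\epsilon}^{\mathrm{o}})\isom \Sha^{\infty}(\Q,I_0)$ (which in turn comes from the identifications $\Sha^{\infty}(\Q,H)=\Sha^{\infty}(\Q,H_{\bfab})$ and the fact that $(I_{\phi,\epsilon}^{\mathrm{o}})_{\bfab}$ and $(I_0)_{\bfab}$ are canonically isomorphic complexes, as $I_{\phi,\epsilon}^{\mathrm{o}}$ is an inner form of $I_0$ by Lemma \ref{lem:isom_Int(cu)}, (1)). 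Under this isomorphism, $\Sha^{\infty}_G(\Q,I_{\phi,\epsilon}^{\mathrm{o}})$ is carried to $\Sha^{\infty}_G(\Q,I_0)$ by definition (\ref{eq:Sha^{infty}_G}). So the content is to analyze what the condition ``$[a]$ maps to zero in $H^1(\A,I_{\phi,\epsilon})$'' becomes after this transport of structure; this requires identifying the local maps $H^1(\Qv,I_0)\to H^1(\Qv,G_{\gamma,\delta})$ induced by the local inner twistings of Lemma \ref{lem:local_inner-twistings_of_Z_G(gamma_0)}, together with the isomorphisms of Lemma \ref{lem:isom_Int(cu)} at each place $v$ identifying $(I_{\phi,\epsilon})_{\Qv}$ with $G_{\gamma_l}$ (for $l\neq p$) or $G_{\delta\sigma}$ (at $p$).

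First I would make the local identifications precise. At an archimedean place, since $(I_0)_{\R}$ has compact adjoint form attached to $\phi(\infty)\circ\zeta_\infty$, the map $H^1(\R,I_{\phi,\epsilon}^{\mathrm{o}})\to H^1(\R,I_{\phi,\epsilon})$ is relevant, but $\Sha^{\infty}$ already kills the $\R$-component, so there is nothing at $\infty$. At a finite place $l\neq p$, by Lemma \ref{lem:isom_Int(cu)}, (2), (4), $\Int(v_l)$ identifies $(I_{\phi,\epsilon})_{\Ql}$ with $G_{\gamma_l}$ and $(I_{\phi,\epsilon}^{\mathrm{o}})_{\Ql}$ with $G_{\gamma_l}^{\mathrm{o}}$, and via the inner twisting $\varphi_l$ of (\ref{eq:inner-twisting_at_l_of_G(gamma_0)}), the composite identifies $(I_0)_{\Ql}$ with $G_{\gamma_l}^{\mathrm{o}}$; since $I_{\phi,\epsilon}^{\mathrm{o}}$ is an inner form of $I_0$, $H^1(\Ql,I_0)\isom H^1(\Ql,I_{\phi,\epsilon}^{\mathrm{o}})$ canonically, and the map to $H^1(\Ql,I_{\phi,\epsilon})\isom H^1(\Ql,G_{\gamma_l})$ becomes the natural map $H^1(\Ql,G_{\gamma_l}^{\mathrm{o}})\to H^1(\Ql,G_{\gamma_l})$. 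Similarly at $p$, Lemma \ref{lem:isom_Int(cu)}, (3) gives $(I_{\phi,\epsilon})_{\Qp}\isom G_{\delta\sigma}$ and $(I_{\phi,\epsilon}^{\mathrm{o}})_{\Qp}\isom G_{\delta\sigma}^{\mathrm{o}}$, and the condition at $p$ translates into vanishing in $H^1(\Qp,G_{\delta\sigma})$ of the image of a class in $H^1(\Qp,G_{\delta\sigma}^{\mathrm{o}})=H^1(\Qp,I_0)$. Putting these together, $\widetilde{\Sha}_G(\Q,I_{\phi,\epsilon})^+$ is identified with the subset of $\Sha^{\infty}_G(\Q,I_0)$ consisting of classes whose finite-adelic image lies in $\ker[H^1(\A_f,G_{\gamma,\delta}^{\mathrm{o}})\to H^1(\A_f,G_{\gamma,\delta})]$ (where $H^1(\A_f,G_{\gamma,\delta}^{\mathrm{o}}):=\oplus_l H^1(\Ql,G_{\gamma_l}^{\mathrm{o}})\oplus H^1(\Qp,G_{\delta\sigma}^{\mathrm{o}})$ is identified with $H^1(\A_f,I_0)$ via the local isomorphisms just described).

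Next I would extract the cardinality. Consider the composite $\Sha^{\infty}_G(\Q,I_0)\to H^1(\A_f,I_0)$; its kernel is exactly $\ker[\ker^1(\Q,I_0)\to \ker^1(\Q,G)]$ (the classes in $\Sha^{\infty}$ which are locally trivial everywhere lie in $\ker^1(\Q,I_0)$, and the condition of lying in $\Sha^{\infty}_G$ then says they also die in $\ker^1(\Q,G)$; here one uses that $\ker^1(\Q,H)=\ker^1(\Q,H_{\bfab})$ for connected reductive $H$, the identification (\ref{eq:identification_of_TS-gps})). Hence the set $\widetilde{\Sha}_G(\Q,I_{\phi,\epsilon})^+$, being the preimage in $\Sha^{\infty}_G(\Q,I_0)$ of $\mathfrak{D}(\gamma_0;\gamma,\delta):=\im[\Sha^{\infty}_G(\Q,I_0)\to H^1(\A_f,I_0)]\cap\ker[H^1(\A_f,G_{\gamma,\delta}^{\mathrm{o}})\to H^1(\A_f,G_{\gamma,\delta})]$, is a union of $|\ker[\ker^1(\Q,I_0)\to\ker^1(\Q,G)]|$ cosets of that kernel, one over each element of $\mathfrak{D}(\gamma_0;\gamma,\delta)$, giving the product formula. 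Finiteness follows since $\ker^1(\Q,I_0)$ is finite and $H^1(\A_f,G_{\gamma,\delta}^{\mathrm{o}})$ is finite (a finite sum of local $H^1$'s, almost all trivial). Finally, the independence of $(\phi,\epsilon)$: the right-hand side $|\ker[\ker^1(\Q,I_0)\to\ker^1(\Q,G)]|\cdot|\mathfrak{D}(\gamma_0;\gamma,\delta)|$ is visibly a function of $I_0=G_{\gamma_0}^{\mathrm{o}}$, the $G(\A_f^p)$-conjugacy class of $\gamma$, and the $\sigma$-conjugacy class of $\delta$ only — using Lemma \ref{lem:uniqueness_of_inner-class_with_same_K-triples} to see that $I_{\phi,\epsilon}^{\mathrm{o}}$ as inner form of $I_0$, hence the local groups $G_{\gamma_l}^{\mathrm{o}}\hookrightarrow G_{\gamma_l}$ and $G_{\delta\sigma}^{\mathrm{o}}\hookrightarrow G_{\delta\sigma}$, depend only on the equivalence class of the triple. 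The main obstacle I anticipate is bookkeeping the compatibility of the several local isomorphisms (from Lemmas \ref{lem:isom_Int(cu)} and \ref{lem:local_inner-twistings_of_Z_G(gamma_0)}) with the global $\Sha^{\infty}$-identification (\ref{eq:Kisin17_Lem.4.4.3}), i.e. checking that the diagram relating $H^1(\Q,I_{\phi,\epsilon}^{\mathrm{o}})\to H^1(\A_f,I_{\phi,\epsilon})$ and $H^1(\Q,I_0)\to H^1(\A_f,I_0)$ actually commutes; this is where the twisting-via-$\phi$ description (\ref{eq:inner-twisting_by_phi}) and the matching in Lemma \ref{lem:local_inner-twistings_of_Z_G(gamma_0)}, (2) of the $\phi$-twisting with the explicit local inner classes must be invoked carefully, first for $(\phi,\epsilon)$ nested in a special Shimura sub-datum and then transported to the general case via conjugation.
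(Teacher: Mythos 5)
Your proposal follows the same skeleton as the paper's proof — factor the map through $H^{1}(\A_f,I_{\phi,\epsilon}^{\mathrm{o}})\rightarrow H^{1}(\A_f,I_{\phi,\epsilon})$, identify the image of $j$ with $\mathfrak{D}(\gamma_0;\gamma,\delta)$, and count the fibers — but diverges at the decisive step. The paper stays in the pointed set $H^{1}(\Q,I_{\phi,\epsilon}^{\mathrm{o}})$ and shows each fiber $j^{-1}(\beta)$ has size $|\ker[\ker^{1}(\Q,I_0)\rightarrow\ker^{1}(\Q,G)]|$ by twisting via a representative cocycle $a$ of $\alpha$ with $j(\alpha)=\beta$: Serre's Prop.\,35bis carries $j^{-1}(\beta)$ to $\ker^{1}(\Q,I_a^{\mathrm{o}})$, Borovoi's Lemma 3.15.1 turns the abelianization condition into the $\ker^{1}(\Q,G)$-condition, and the invariance of $\ker^{1}$ under inner twist makes the count independent of $\beta$. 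You instead transport everything to $\Sha^{\infty}_G(\Q,I_0)$ and declare the fibers to be ``cosets of the kernel.'' For that to hold you need $j:\Sha^{\infty}_G(\Q,I_0)\to H^{1}(\A_f,I_0)$ to be a homomorphism of abelian groups, where $\Sha^{\infty}_G(\Q,I_0)$ carries its abelianized group structure and each $H^{1}(\Q_v,I_0)$ carries the Kottwitz group structure; this does hold (the localization maps commute with the abelianization maps by functoriality, and on the local side abelianization is a bijection), but it is a genuine observation you should state — it is precisely what allows you to bypass the twisting machinery. If you supply it, your argument is a valid, slightly slicker alternative. What you should not rely on is your transport of the finite-adelic constraint to $I_0$ ``by bookkeeping'' without exhibiting the compatibility of the global $\bfab$-identification with the local inner twistings of Lemmas \ref{lem:isom_Int(cu)} and \ref{lem:local_inner-twistings_of_Z_G(gamma_0)}; the paper's formulation (keeping $I_{\phi,\epsilon}^{\mathrm{o}}$ as the ambient group and only invoking the abelianization identification of $j$ at the end) sidesteps the need for that check.

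One concrete error: your finiteness argument (``a finite sum of local $H^{1}$'s, almost all trivial'') does not work as stated, since $\ker[H^{1}(\A_f,G_{\gamma,\delta}^{\mathrm{o}})\rightarrow H^{1}(\A_f,G_{\gamma,\delta})]=\oplus_v\ker[H^{1}(\Q_v,G_{\gamma_v}^{\mathrm{o}})\rightarrow H^{1}(\Q_v,G_{\gamma_v})]$ is an infinite direct sum of nonzero finite groups in general. You should either observe that $\mathfrak{D}(\gamma_0;\gamma,\delta)\subset\operatorname{im}(j)$ and so inherits finiteness from $\Sha^{\infty}_G(\Q,I_0)$, or, as the paper does, that the relevant kernel is the image of the finite set $\pi_0(I_{\phi,\epsilon})(\A_f)$ under the connecting map, and for almost all $v$ the local connecting map $\pi_0(I_{\phi,\epsilon})(\Q_v)\rightarrow H^{1}(\Q_v,I_{\phi,\epsilon}^{\mathrm{o}})$ is trivial.
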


The intersection in the definition of $\mathfrak{D}(\gamma_0;\gamma,\delta)$ makes sense because there exist 
inner twists $(I_0)_{\Qvb}\isom (G_{\gamma_v}^{\mathrm{o}})_{\Qvb}\ (v\neq p)$, $(G_{\delta\sigma}^{\mathrm{o}})_{\Qpb}$ (Lemma \ref{lem:isom_Int(cu)}), thereby canonical identifications
\[H^1(\A_f,I_0) \cong H^1(\A_f,(I_0)_{\bfab}) \cong H^1(\A_f,(G_{\gamma,\delta}^{\mathrm{o}})_{\bfab}) \cong H^1(\A_f,G_{\gamma,\delta}^{\mathrm{o}})\]
(the middle isomorphism is independent of the choice of the inner twists just mentioned).

\begin{rem}
When $I_{\phi,\epsilon}$ is connected (e.g., if $G^{\der}=G^{\uc}$), one has $|\mathfrak{D}(\gamma_0;\gamma,\delta)|=1$ and the constant $|\widetilde{\Sha}_G(\Q,I_{\phi,\epsilon})^+|$ depends only on the stable conjugacy class of $\gamma_0$ (rather than on the whole triple $(\gamma_0;\gamma,\delta)$). But, the author does not know the same property in the general case (for a non-connected group $H$, $H^1(\Qv,H)$ may not be invariant under inner twists of $H$). Also, we note that the constant $\ker^1(\Q,I_0)$ appears in \cite[Lem.17.2]{Kottwitz92} with a similar interpretation (see the discussion on p.441-442 of \textit{loc. cit.} for the appearance of the constant $|\ker[ \ker^1(\Q,I_0)\rightarrow \ker^1(\Q,G)]|$).
\end{rem}

\begin{proof}
This set is equal to $\ker[\Sha^{\infty}_G(\Q,I_{\phi,\epsilon}^{\mathrm{o}})\rightarrow H^1(\A_f,I_{\phi,\epsilon})]$ (no $\infty$-component in the target).
According to Lemma \ref{lem:isom_Int(cu)}, the local groups $(I_{\phi,\epsilon})_{\Qv}$ (for all places $v$ of $\Q$) are determined by the associated Kottwitz triple, and this implies by the Hasse principle for $(I_{\phi,\epsilon}^{\mathrm{o}})^{\ad}$ that the same is also true of the connected $\Q$-group $I_{\phi,\epsilon}^{\mathrm{o}}$ and the set $\widetilde{\Sha}_G(\Q,I_{\phi,\epsilon})^+$. Moreover, the two $\Q$-groups $I_{\phi,\epsilon}^{\mathrm{o}}$ and $I_{\phi,\epsilon}$ are simultaneous inner twists of $I_0=G_{\gamma_0}^{\mathrm{o}}$ and $G_{\gamma_0}$ via a single cochain in $C^1(\Q,I_0)$ which induces cocycles both in  $G_{\gamma_0}^{\ad}$ and $I_{\gamma_0}^{\ad}$.
The map $\Sha^{\infty}_G(\Q,I_{\phi,\epsilon}^{\mathrm{o}})\rightarrow H^1(\A_f,I_{\phi,\epsilon})$ factors through
\[ \Sha^{\infty}_G(\Q,I_{\phi,\epsilon}^{\mathrm{o}}) \stackrel{j}{\rightarrow} H^1(\A_f,I_{\phi,\epsilon}^{\mathrm{o}}) \rightarrow
H^1(\A_f,I_{\phi,\epsilon})\]
Hence, our set $\widetilde{\Sha}_G(\Q,I_{\phi,\epsilon})^+$ is the disjoint union of $j^{-1}(\beta)$'s with $\beta$ running through
\begin{equation} \label{eq:D(gamma_0;gamma,delta)}
\ker[H^1(\A_f,I_{\phi,\epsilon}^{\mathrm{o}})\rightarrow H^1(\A_f,I_{\phi,\epsilon})]\cap \im(j).
\end{equation}
The map $j$ is identified, via abelianizations, with $\Sha^{\infty}_G(\Q,I_0)\rightarrow H^1(\A_f,I_0)$ (recall that by definition, $\Sha^{\infty}_G(\Q,I_{\phi,\epsilon}^{\mathrm{o}})=\ker[\Sha^{\infty}(\Q,I_{\phi,\epsilon}^{\mathrm{o}})\cong \Sha^{\infty}(\Q,I_0)\rightarrow \Sha^{\infty}(\Q,G)]$, where the isomorphism is given by the isomorphism of the corresponding abelianized cohomology groups), hence it follows that the set (\ref{eq:D(gamma_0;gamma,delta)}) is equal to $\mathfrak{D}(\gamma_0;\gamma,\delta)$.
We claim that for each $\beta=j(\alpha) \in\mathfrak{D}(\gamma_0;\gamma,\delta)$, there are bijections
\[ j^{-1}(\beta)\ \isom\ \ker[H^1(\Q,I_{\phi,\epsilon}^{\mathrm{o}})_{\alpha}\stackrel{\bfab^1}{\longrightarrow} H^1(\Q,G_{\bfab})]\ \isom\ \ker[\ker^1(\Q,I_0)\rightarrow \ker^1(\Q,G)], \]
where $H^1(\Q,I_{\phi,\epsilon}^{\mathrm{o}})_{\alpha}$ denotes the subset of $H^1(\Q,I_{\phi,\epsilon}^{\mathrm{o}})$ consisting of the elements having the same image in $H^1(\A,I_{\phi,\epsilon}^{\mathrm{o}})$ as $\alpha$ and $\bfab^1$ is the composite map
\[\bfab^1:H^1(\Q,I_{\phi,\epsilon}^{\mathrm{o}})\rightarrow H^1(\Q,(I_{\phi,\epsilon}^{\mathrm{o}})_{\bfab})=H^1(\Q,(I_0)_{\bfab})\rightarrow H^1(\Q,G_{\bfab})\] 
(or its restriction $\Sha^{\infty}(\Q,I_{\phi,\epsilon}^{\mathrm{o}})=\Sha^{\infty}(\Q,(I_{\phi,\epsilon}^{\mathrm{o}})_{\bfab})\rightarrow \Sha^{\infty}(\Q,G_{\bfab})=\Sha^{\infty}(\Q,G)$). 
The first bijection is clear. The second bijection is a consequence of the existence of a bijection $H^1(\Q,I_{\phi,\epsilon}^{\mathrm{o}})_{\alpha} \isom \ker[H^1(\Q,I_a^{\mathrm{o}})\rightarrow H^1(\A,I_a^{\mathrm{o}})]$ commuting with the natural maps $\bfab^1$ into $H^1(\Q,G_{\bfab})$, where $a$  is any cocycle in $H^1(\Q,I_{\phi,\epsilon}^{\mathrm{o}})$ representing $\alpha$ and $I_a^{\mathrm{o}}$ is the inner twist of $I_{\phi,\epsilon}^{\mathrm{o}}$ via $a$: this in turn results from \cite[Prop.35bis]{Serre02}, the commutative diagram of Lemma 3.15.1 of \cite{Borovoi98} combined with the vanishing of $\alpha$ in $H^1(\Q,G_{\bfab})$, and the well-known fact that for any \emph{connected} reductive group $H$ over $\Q$, the finite abelian group $\ker^1(\Q,H)$ is unchanged under inner twists.
Note that the set $\ker[H^1(\A_f,I_{\phi,\epsilon}^{\mathrm{o}})\rightarrow H^1(\A_f,I_{\phi,\epsilon})]$, being the image of the finite set $\pi_0(I_{\phi,\epsilon})(\A_f)$ in $H^1(\A_f,I_{\phi,\epsilon}^{\mathrm{o}})$, is also finite, which implies the same property for $\mathfrak{D}(\gamma_0;\gamma,\delta)$ and $\widetilde{\Sha}_G(\Q,I_{\phi,\epsilon})^+$. The formula for $|\widetilde{\Sha}_G(\Q,I_{\phi,\epsilon})^+|$ is immediate from this discussion.
\end{proof}

Now, we assume that the level subgroup $\mbfK_p$ is hyyperspecial and give an expression for the number of fixed points of the correspondence $\Phi^m\circ f$ (\ref{eq:Hecke_corr_twisted_by_Frob}), and, more generally, a weighted sum over the same fixed point set with weight being given by the trace of that correspondence acting on the stalk of some lisse sheaf. We briefly recall the set-up. For more details, see \cite[$\S$6, 16]{Kottwitz92}.
We fix a rational prime $l\neq p$. Let $\xi$ be a finite-dimensional representation of $G$ on a vector space $W$ over a number field $L$, and let $\lambda$ be a place of $L$ lying above $l$. We consider the $\lambda$-adic representation $W_{\lambda}:=W\otimes_L{L_{\lambda}}$ of $G(\A_f^p)$ induced from the natural one of $G(\Ql)$ via the projection $G(\A_f^p)\rightarrow G(\Ql)$. Since $Z(\Q)$ is discrete in $Z(\A_f)$ by our assumption ($Z(G)$ has same ranks over $\Q$ and $\R$), it gives rise to lisse sheaves $\sF_{K^p}$ on the spaces $\sS_{K^p}$ for varying $K^p$'s: the projective limit $\sS=\varprojlim_{H^p}\sS_{H^p}$ is a Galois covering of $\sS_{K^p}$ with Galois group $K^p$ if $K^p$ is small enough such that $K\cap Z(G)(\Q)=\{1\}$ \cite[2.1.9-12]{Deligne79}, and we have $\sF_{K^p}=\sS\times W_{\lambda}/K^p$, where $k=(k_v)\in K^p$ acts on $W_{\lambda}$ via $\xi_{L_{\lambda}}(k_l^{-1})$. It is clear that there are canonical isomorphisms 
\[\cdot g:\sF_{gK^pg^{-1}}\isom (\cdot g)^{\ast}\sF_{K^p},\quad p_2^{\ast}\sF_{K^p} =\sF_{K^p_g},\quad \Phi:\Phi^{\ast}\sF_{K^p}\isom \sF_{K^p},\]
where for any $g\in G(\A_f^p)$, $\cdot g$ denotes the right action $\sS_{gK^pg^{-1}}\isom\sS_{K^p}$.
Thus, the associated correspondence $\Phi^m\circ f$ (\ref{eq:Hecke_corr_twisted_by_Frob}) extends in a natural manner to the sheaf $\sF_{K^p}$. In particular, for any fixed point $x'\in \sS_{K^p_g}(\F)$ of $\Phi^m\circ f$ and $x:=p_1'(x')=p_2'(x')\in \sS_{K^p}(\F)$, the correspondence $\Phi^m\circ f$ gives an automorphism of the stalk $\sF_x$ (we write $\sF$, $\sF'$ for $\sF_{K^p}$, $\sF_{K^p_g}$): 
\begin{equation} \label{eq:Frob-Hecke_corr_at_stalk}
\sF_x=(\Phi^{m})^{\ast}(\sF)_{p_2(x')}\  \stackrel{\Phi^m}{\longrightarrow}\ \sF_{p_2(x')} =(p_2^{\ast}\sF)_{x'}=\sF'_{x'}\ \stackrel{p_1^{\ast}(\cdot g)}{\longrightarrow}\ (p_1'^{\ast}\sF)_{x'}=\sF_x.
\end{equation}
We are interested in computing the sum:
\begin{equation} \label{eq:fixed-pt_set_of_Frob-Hecke_corr}
T(m,f):=\sum_{x'\in\mathrm{Fix}} \mathrm{tr}(\Phi^m\circ f;\sF_x),
\end{equation}
where $\mathrm{Fix}$ denotes the set of fixed points of $\Phi^m\circ f$.

Let $A_G$ denote the maximal $\Q$-split torus in the center of $G$. 
In the next theorem, we return to an arbitrary Shimura datum $(G,X)$ of Hodge type and take $G$ to be the smallest algebraic (connected reductive) $\Q$-group such that every $h\in X$ factors through $G_{\R}$ (i.e. $G$ is the Mumford-Tate group of a generic $h\in X$). In particular, $(G,X)$ satisfies both the Serre condition and the condition that $Z(G)$ has same ranks over $\Q$ and $\R$ (thus, $Z_G(\R)/A_G(\R)$ is also compact).

\begin{thm} \label{thm:Kottwitz_formula:LR}
Keep the notation from the previous sections and the above discussion. We assume that $G_{\Qp}$ is unramified, and $(G,X)$ is of Hodge type. Fix a hyperspecial parahoric subgroup $\mbfK_p$ and take $K^p$ to be sufficiently small such that conditions (a), (b) of (\ref{item:Langlands-conditions}) hold and $K\cap Z(G)(\Q)=\{1\}$.

Suppose that Langlands-Rapoport conjecture, Conj. \ref{conj:Langlands-Rapoport_conjecture_ver1}, holds for $Sh_{\mbfK_p}(G,X)$.

(1) We have the following expression for (\ref{eq:fixed-pt_set_of_Frob-Hecke_corr}):
\[T(m,f)=\sum_{(\gamma_0;\gamma,\delta)} c(\gamma_0;\gamma,\delta)\cdot \mathrm{O}_{\gamma}(f^p)\cdot \mathrm{TO}_{\delta}(\phi_p)\cdot \mathrm{tr}\xi(\gamma_0),\]
with
\[ c(\gamma_0;\gamma,\delta):=i(\gamma_0;\gamma,\delta)\cdot |\pi_0(G_{\gamma_0})(\Q)|^{-1} \cdot \tau(I_0)\cdot \mathrm{vol}(A_G(\R)^{\mathrm{o}}\backslash I_0(\infty)(\R))^{-1} \]
where $I_0:=G_{\gamma_0}^{\mathrm{o}}$, $i(\gamma_0;\gamma,\delta)=|\widetilde{\Sha}_G(\Q,I_{\phi,\epsilon})^+|$ (Lemma \ref{eq:|widetilde{Sha}_G(Q,I_{phi,epsilon})^+|}), $\tau(I_0)$ is the Tamagawa number of $I_0$, and $I_0(\infty)$ is the (unique) inner form of $(I_0)_{\R}$ having compact adjoint group. Also, the sum is over a set of representatives $(\gamma_0;\gamma,\delta)$ of all equivalence classes of stable Kottwitz triples of level $n=m[\kappa(\wp):\Fp]$ having trivial Kottwitz invariant.

(2) For any $f^p$ in the Hecke algebra $\mathcal{H}(G(\A_f^p)/\!\!/ K^p)$, there exists $m(f^p)\in\N$, depending on $f^p$, such that for each $m\geq m(f^p)$, we have
\begin{align*}
\sum_i(-1)^i\mathrm{tr}( & \Phi^m\times f^p | H^i_c(Sh_{K}(G,X)_{\Qb},\sF_K)) \\
& = \sum_{(\gamma_0;\gamma,\delta)} c(\gamma_0;\gamma,\delta)\cdot \mathrm{O}_{\gamma}(f^p)\cdot \mathrm{TO}_{\delta}(\phi_p) \cdot \mathrm{tr}\xi(\gamma_0)
\end{align*}
where the sum is over a set of representatives $(\gamma_0;\gamma,\delta)$ of all equivalence classes of stable Kottwitz triples of level $n=m[\kappa(\wp):\Fp]$ having trivial Kottwitz invariant. If $G^{\ad}$ is anisotropic or $f^p$ is the identity, we can take $m(f^p)$ to be $1$ (irrespective of $f^p$).
\end{thm}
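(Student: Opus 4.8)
\textbf{Proof plan for Theorem \ref{thm:Kottwitz_formula:LR}.}

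The plan is to follow closely the classical Langlands--Rapoport--Kottwitz deduction, assembling the pieces that have already been established in the excerpt. Part (1) is essentially a bookkeeping exercise built on top of Conjecture \ref{conj:Langlands-Rapoport_conjecture_ver1}, the decomposition (\ref{eq:fixed_pt_set_of_Heck-corresp1}) of the fixed-point set of $\Phi^m\circ f$ on each $S_{K^p}(\phi)$, and the counting Lemma \ref{lem:fixed-pt_subset_of_Frob-Hecke_corr}. First I would apply the Langlands--Rapoport bijection to rewrite $\mathrm{Fix}$ as the disjoint union $\bigsqcup_{[\phi]}\bigsqcup_{[\epsilon]}I_{\phi,\epsilon}(\Q)\backslash X(\phi,\epsilon)_{K^p_g}$, where the inner index runs over conjugacy classes of $\epsilon\in I_\phi(\Q)/Z(\Q)_K$ for which the set is nonempty; by definition each such $(\phi,\epsilon)$ is an admissible pair, and by Lemma \ref{lem:LR-Lemma5.23} we may take it nested in a special Shimura sub-datum and extract an associated stable Kottwitz triple $(\gamma_0;\gamma,\delta)$ of level $n=m[\kappa(\wp):\F_p]$ with trivial Kottwitz invariant (Prop. \ref{prop:Kottwitz_triple}). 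Since $\sF_{K^p}$ is the sheaf attached to $\xi$, the trace of $\Phi^m\circ f$ on the stalk $\sF_x$ at a point in the $(\phi,\epsilon)$-component is $\mathrm{tr}\,\xi(\gamma_0)$ by the construction of the correspondence on stalks (\ref{eq:Frob-Hecke_corr_at_stalk}) — this is where one uses that $\gamma_0$, $\epsilon$ and $\gamma_l$ all lie in a common $G(\Qb)$-conjugacy class. Summing Lemma \ref{lem:fixed-pt_subset_of_Frob-Hecke_corr} over $(\phi,\epsilon)$, grouping the admissible pairs by the equivalence class of the Kottwitz triple they produce (the number of pairs in each group being $i(\gamma_0;\gamma,\delta)=|\widetilde{\Sha}_G(\Q,I_{\phi,\epsilon})^+|$ by Theorem \ref{thm:LR-Satz5.25}), and invoking the effectivity criterion (Theorem \ref{thm:LR-Satz5.25}) to see that \emph{every} stable Kottwitz triple with trivial Kottwitz invariant for which $\mathrm{O}_\gamma(f^p)\cdot\mathrm{TO}_\delta(\phi_p)\ne 0$ is effective (note $\mathrm{O}_\gamma(f^p)\cdot\mathrm{TO}_\delta(\phi_p)\neq 0$ forces $Y_p(\delta)\neq\emptyset$ via (\ref{eq:Adm_K(mu)})), one obtains a sum over all equivalence classes of such triples. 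The remaining task is to convert the archimedean volume factor $\mathrm{vol}(I_{\phi,\epsilon}^{\mathrm{o}}(\Q)\backslash I_{\phi,\epsilon}^{\mathrm{o}}(\A_f))/[I_{\phi,\epsilon}(\Q):I_{\phi,\epsilon}^{\mathrm{o}}(\Q)]$ into the constant $c(\gamma_0;\gamma,\delta)$: using that $I_{\phi,\epsilon}^{\mathrm{o}}$ is an inner form of $I_0=G_{\gamma_0}^{\mathrm{o}}$ with $(I_{\phi,\epsilon}^{\mathrm{o}})_\R$ having compact adjoint group (Lemmas \ref{lem:isom_Int(cu)}, \ref{lem:local_inner-twistings_of_Z_G(gamma_0)}), that Tamagawa numbers are invariant under inner twist, and the identity $\mathrm{vol}(I_{\phi,\epsilon}^{\mathrm{o}}(\Q)\backslash I_{\phi,\epsilon}^{\mathrm{o}}(\A_f))=\tau(I_0)\cdot\mathrm{vol}(A_G(\R)^{\mathrm{o}}\backslash I_0(\infty)(\R))^{-1}$ together with $[I_{\phi,\epsilon}(\Q):I_{\phi,\epsilon}^{\mathrm{o}}(\Q)]=|\pi_0(G_{\gamma_0})(\Q)|$ (Prop. \ref{prop:triviality_in_comp_gp} and Hasse principle on $\pi_0$). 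This matching of normalizations of Haar measures — reconciling the measures $di_p$, $di^p$, the Tamagawa measure, and the measure used in the orbital integrals — is the main technical chore; it is routine but requires care, and it is the step I expect to occupy the bulk of the write-up.

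For part (2), the passage from the fixed-point count $T(m,f)$ to the alternating-trace (Lefschetz number) side is the Grothendieck--Lefschetz trace formula argument, exactly as in \cite{Kottwitz92}, \cite{Kottwitz84b}. First I would note that the canonical integral model $\sS_{\mbfK}$ over $\cO_{E_\wp}$ is smooth (by \cite{Kisin10}, \cite{Vasiu99}) and $\sF_K$ extends over it, so that by smooth and proper base change (when $G^{\ad}$ is $\Q$-anisotropic, so $\sS_{\mbfK}$ is proper) or by Deligne's conjecture on the Lefschetz trace formula for correspondences with sufficiently twisting Frobenius (when $\sS_{\mbfK}$ is only of finite type), one has $\sum_i(-1)^i\mathrm{tr}(\Phi^m\times f^p\mid H^i_c)=T(m,f)$ for all $m\geq m(f^p)$, with $m(f^p)=1$ in the proper case or when $f^p=\mathrm{id}$ (in which case $f$ is the identity correspondence and ordinary Lefschetz applies to the proper or, via Poincar\'e duality and compact supports, the general case with no twisting needed). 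The existence of $m(f^p)$ making the correspondence $\Phi^m\circ f$ ``contracting near the fixed points'' is exactly the hypothesis under which Fujiwara's / Varshavsky's form of Deligne's conjecture applies; I would cite this rather than reprove it. Combining this identity with part (1) yields the stated formula.

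The hard part is genuinely part (1)'s effectivity/counting input, but that has been isolated into Theorem \ref{thm:LR-Satz5.25} and Lemma \ref{eq:|widetilde{Sha}_G(Q,I_{phi,epsilon})^+|} already, so within the proof itself the main obstacle is the careful bookkeeping of (i) which admissible pairs are counted — here one must be vigilant that the ``$\epsilon\in Z(\Q)_K$'' escape clause in (\ref{item:Langlands-conditions})(a) genuinely removes only the trivial contributions and that distinct equivalence classes of pairs are not conflated — and (ii) the measure normalizations entering $c(\gamma_0;\gamma,\delta)$. A secondary subtlety, flagged in Remark \ref{rem:Kottwitz_triples} and \autoref{subsubsec:w-stable_sigma-conjugacy}, is that because $G^{\der}\neq G^{\uc}$ in general one works with \emph{stable} Kottwitz triples and w-stable $\sigma$-conjugacy; one must check that the grouping of admissible pairs by equivalence class of triple is compatible with stable equivalence, which is precisely Prop. \ref{prop:triviality_in_comp_gp}(2). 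With these points handled, the proof is assembled from the cited results without further new ideas.
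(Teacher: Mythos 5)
Your overall plan follows the same route as the paper: apply the Langlands--Rapoport bijection, decompose each $\mathrm{Fix}_\phi$ via (\ref{eq:fixed_pt_set_of_Heck-corresp1}), compute the trace on the stalk to get $\mathrm{tr}\,\xi(\gamma_0)$, use Lemma~\ref{lem:fixed-pt_subset_of_Frob-Hecke_corr} for the cardinalities, regroup by Kottwitz triple, invoke the effectivity criterion, and finally cite Fujiwara/Varshavsky for part (2). But there is a genuine gap in the regrouping step, and it is precisely the delicate point that the non-simply-connected setting forces one to handle.

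You assert that the number of admissible pairs giving a fixed Kottwitz triple is $i(\gamma_0;\gamma,\delta)=|\widetilde{\Sha}_G(\Q,I_{\phi,\epsilon})^+|$, citing Theorem~\ref{thm:LR-Satz5.25}, and separately that $[I_{\phi,\epsilon}(\Q):I_{\phi,\epsilon}^{\mathrm{o}}(\Q)]=|\pi_0(G_{\gamma_0})(\Q)|$, citing Prop.~\ref{prop:triviality_in_comp_gp}. Neither statement is correct as written. Theorem~\ref{thm:LR-Satz5.25} says the number of non-equivalent pairs is $|\Sha_G(\Q,I_{\phi,\epsilon})^+|:=|\mathrm{im}\!\left[\widetilde{\Sha}_G(\Q,I_{\phi,\epsilon})^+\to H^1(\Q,I_{\phi,\epsilon})\right]|$, which can be strictly smaller than $|\widetilde{\Sha}_G^+|$: the fiber of $\widetilde{\Sha}^+\to\Sha^+$ over a class $[a]$ has size $|\ker[H^1(\Q,I_a^{\mathrm{o}})\to H^1(\Q,I_a)]|$ (via Serre's \cite[Prop.35bis]{Serre02}). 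Likewise, the exact sequence $1\to I_a^{\mathrm{o}}(\Q)\to I_a(\Q)\to\pi_0(I_a)(\Q)\to H^1(\Q,I_a^{\mathrm{o}})\to H^1(\Q,I_a)$ gives $[I_a(\Q):I_a^{\mathrm{o}}(\Q)]=|\pi_0(G_{\gamma_0})(\Q)|/|\ker[H^1(\Q,I_a^{\mathrm{o}})\to H^1(\Q,I_a)]|$, not $|\pi_0(G_{\gamma_0})(\Q)|$ itself; Prop.~\ref{prop:triviality_in_comp_gp} is about $\Sha^{\infty,p}$ of $\pi_0(G_\epsilon)$ and plays no role here. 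The paper's proof instead establishes the identity
\[
|\ker[H^1(\Q,I_a^{\mathrm{o}})\to H^1(\Q,I_a)]|\cdot[I_a(\Q):I_a^{\mathrm{o}}(\Q)]=|\pi_0(G_{\gamma_0})(\Q)|,
\]
by a twisting argument, then inserts the factor $1/|\ker[\cdots]|$ when lifting the sum from $\Sha^+$ to $\widetilde{\Sha}^+$, so that the kernels cancel. Your two mis-statements happen to compensate and you land on the correct final formula, but that is a cancellation of errors, not a proof. To repair the argument you must carry the kernel term through both identities explicitly, exactly as the paper does.
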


We remind readers again that ``having trivial Kottwitz invariant'' means that there exist elements $(g_v)_v\in G(\bar{\A}_f^p)\times G(\mfk)$ satisfying conditions (\ref{eq:stable_g_l}), (\ref{eq:stable_g_l}) such that the associated Kottwitz invariant $\alpha(\gamma_0;\gamma,\delta;(g_v)_v)$ vanishes, and the fact that for stable Kottwitz triples, stable equivalence is the same as (geometric) equivalence (Prop. \ref{prop:triviality_in_comp_gp}).

In the definition of $c(\gamma_0;\gamma,\delta)$, the volume of (the quotient of) $\R$-groups is defined with respect to the unique Haar measure $di_{\infty}$ on $I_0(\R)$ (or equivalently on $I_0(\infty)(\R)$ by transfer of measure) such that the product of $di^p,di_p,di_{\infty}$ is the canonical measure on $I_0(\A)$ that is used to define the Tamagawa number $\tau(I_0)$  (cf. \cite[$\S$1.2]{Labesse01}). Then, as $Z_G(\R)/A_G(\R)$ is compact, one has 
\begin{equation} \label{eq:Tamagawa_number}
\mathrm{vol}(I_0(\Q)\backslash I_0(\A_f))=\tau(I_0)\cdot \mathrm{vol}(A_{G}(\R)^{\mathrm{o}}\backslash I_0(\R))^{-1}.
\end{equation}

\begin{proof}
We may assume that $f^p$ is the characteristic function of $K^pg^{-1}K^p$ of some $g\in G(\A_f^p)$.
For an admissible morphism $\phi$ and a comact open subgroup $K^p$ of $G(\A_f^p)$, the fixed point subset
\[\mathrm{Fix}_{\phi}:=S_{K^p}(\phi)^{\Phi^m\circ f=\mathrm{Id}}=\{x'\in S_{K^p_g}(\phi)\ |\ p_1'(x')=p_2'(x')\}\]
is a disjoint union $\sqcup_{\epsilon\in I_{\phi}(\Q)} I_{\phi,\epsilon}(\Q)\backslash X(\phi,\epsilon)_{K^p_g}$ (\ref{eq:fixed_pt_set_of_Heck-corresp1}). 
We claim that for any $x'\in \mathrm{Fix}_{\phi}$ and $x:=p_1'(x')=p_2'(x')\in S_{K^p}(\phi)$, there is an equality: 
\[\mathrm{tr}(\Phi^m\circ f;\sF_x)=\mathrm{tr}\xi(\gamma_0),\]
where $\gamma_0$ is any element in $G(\Q)$ stably conjugate to the $\epsilon$ such that $x'\in I_{\phi,\epsilon}(\Q)\backslash X(\phi,\epsilon)_{K^p_g}$. In particular, this trace depends only on (the stable conjugacy class of) $\gamma_0$, so on the (equivalence class of) Kottwitz triples attached to the admissible pair $(\phi,\epsilon)$.
Indeed (cf. \cite[$\S$16]{Kottwitz92}), choose $x_p\in X_p(\phi,\epsilon)$, $x^p\in X^p(\phi,\epsilon;g)$ such that $x'=[x_p,x^p]\in I_{\phi,\epsilon}(\Q)\backslash X(\phi,\epsilon)_{K^p_g}$. It also gives a point $\tilde{x}=[x_p,x^p]$ of 
\[S(\phi):=I_{\phi}(\Q)\backslash X_p(\phi)\times X^p(\phi)=\varprojlim_{H^p}(\sS_{H^p}(\F)\cap S_{H^p}(\phi))\]  
lying above $x'$, and $\epsilon x^pgk=x^p$ for some $k\in K^p$. So, one has
\[\Phi^m(\tilde{x})=[\Phi^mx_p,x^p]=[x_p,\epsilon ^{-1} x^p]=\tilde{x}gk.\] 
If we use this point $\Phi^m(\tilde{x})$ of $S(\phi)$ to identify the stalk $\sF_x$ with $W_{\lambda}$,
we have $\beta(w)=\xi(k^{-1}g^{-1})w$: the automorphism (\ref{eq:Frob-Hecke_corr_at_stalk}) becomes
\[ [\Phi^m(\tilde{x}),w] \mapsto [\tilde{x},w] \mapsto [\tilde{x}g,\xi(g^{-1})w]= [\tilde{x}gk,\xi(k^{-1}g^{-1})w], \] 
hence $\mathrm{tr}(\Phi^m\circ f;\sF_x)=\mathrm{tr}\xi(\gamma_0)$ as $k^{-1}g^{-1}=(x^p)^{-1}\epsilon x^p\in G(\A_f^p)$ is conjugate to $\gamma_0$ under $G(\bar{\A}_f^p)$.

Now, we have the following successive equalities:
\begin{align} \label{eq:T(m,f)1}
T(m,f) &=\sum_{\phi} \sum_{x'\in\mathrm{Fix}_{\phi}} \mathrm{tr}(\Phi^m\circ f;\sF_{p_1'(x')}) \\
& =\sum_{(\phi,\epsilon)} |I_{\phi,\epsilon}(\Q)\backslash X(\phi,\epsilon)_{K^p_g}| \cdot \mathrm{tr}\xi(\gamma_0)   \nonumber \\ & =\sum_{(\phi,\epsilon)} \frac{\mathrm{vol}(I_0(\Q)\backslash I_0(\A_f))}{[I_{\phi,\epsilon}(\Q):I_{\phi,\epsilon}^{\mathrm{o}}(\Q)]} \cdot \mathrm{O}_{\gamma}(f^p)\cdot \mathrm{TO}_{\delta}(\phi_p) \cdot \mathrm{tr}\xi(\gamma_0) \nonumber
\end{align}
Here, in the first line, $\phi$ runs through a set of representatives for the equivalence classes of admissible morphisms, so the first equality results from Langlands-Rapoport conjecture, Conj. \ref{conj:Langlands-Rapoport_conjecture_ver1}.
In the second line, $(\phi,\epsilon)$ runs through a set of representatives for the equivalence classes of admissible pairs. We have just seen that if $\epsilon\in I_{\phi}(\Q)$ is such that $x'\in\mathrm{Fix}_{\phi}$ belongs to the subset $I_{\phi,\epsilon}(\Q)\backslash X(\phi,\epsilon)_{K^p_g}$ in the decomposition (\ref{eq:fixed_pt_set_of_Heck-corresp1}), we have $\mathrm{tr}(\Phi^m\circ f;\sF_{p_1'(x')})=\mathrm{tr}\xi(\gamma_0)$ for any Kottwitz triple $(\gamma_0;\gamma,\delta)$ attached to the admissible pair $(\phi,\epsilon)$, which gives the second equality. The third equality is Lemma \ref{lem:fixed-pt_subset_of_Frob-Hecke_corr}.

Next, we rewrite the last expression of (\ref{eq:T(m,f)1}) using (equivalence classes of) \emph{effective} Kottwitz triples as a new summation index.
For each equivalence class of effective Kottwitz triple $(\gamma_0;\gamma,\delta)$, we fix a (well-located) admissible pair $(\phi_1,\epsilon_1)$ giving rise to it. Then, the set of admissible pairs producing the same equivalence class of Kottwitz triple is in bijection with $\Sha_G(\Q,I_{\phi_1,\epsilon_1})^+=\im \left[ \widetilde{\Sha}_G(\Q,I_{\phi_1,\epsilon_1})^+ \rightarrow H^1(\Q,I_{\phi_1,\epsilon_1}) \right]$. More explicitly, for each 
\[ [a]\in \widetilde{\Sha}_G(\Q,I_{\phi_1,\epsilon_1})^+=\ker\left[\Sha^{\infty}_G(\Q,I_{\phi_1,\epsilon_1}^{\mathrm{o}})\rightarrow H^1(\A,I_{\phi_1,\epsilon_1})\right]\] 
(class of $a\in Z^1(\Q,I_{\phi_1,\epsilon_1}^{\mathrm{o}})$), the admissible pair corresponding to the image of $[a]$ in $H^1(\Q,I_{\phi_1,\epsilon_1})$ is the twist $(\phi:=a\phi_1,\epsilon_1)$ (Lemma \ref{lem:LR-Lem5.26,Satz5.25}) and the associated groups $I_{\phi,\epsilon_1}^{\mathrm{o}}$, $I_{\phi,\epsilon_1}$ are the (simultaneous) inner twists of $I_{\phi_1,\epsilon_1}^{\mathrm{o}}$ and $I_{\phi_1,\epsilon_1}$ via $a$. Let us write $I_a^{\mathrm{o}}$ and $I_a$ for these twists (of course, their isomorphism classes as $\Q$-algebraic groups depend only on the cohomology class $[a]\in H^1(\Q,I_{\phi_1,\epsilon_1}^{\mathrm{o}})$).
Then, the last line of (\ref{eq:T(m,f)1}) becomes the first line of the following identity:
\begin{align} \label{eq:T(m,f)-2}
T(m,f) & =\sum_{(\gamma_0;\gamma,\delta)}\sum_{[a]\in \widetilde{\Sha}_G(\Q,I_{\phi_1,\epsilon_1})^+} \frac{1}{|\ker[H^1(\Q,I_a^{\mathrm{o}})\rightarrow H^1(\Q,I_a)]|} \cdot \frac{\mathrm{vol}(I_a^{\mathrm{o}}(\Q)\backslash I_a^{\mathrm{o}}(\A_f))}{[I_a(\Q):I_a^{\mathrm{o}}(\Q)]} \\  
& \qquad \qquad \qquad \qquad \qquad \qquad \qquad \qquad \qquad \qquad \cdot \mathrm{O}_{\gamma}(f^p)\cdot \mathrm{TO}_{\delta}(\phi_p) \cdot \mathrm{tr}\xi(\gamma_0) \nonumber \\
&=\sum_{(\gamma_0;\gamma,\delta)} c_1(\gamma_0)\cdot |\pi_0(G_{\gamma_0})(\Q)|^{-1} \cdot i(\gamma_0;\gamma,\delta) \cdot \mathrm{O}_{\gamma}(f^p)\cdot \mathrm{TO}_{\delta}(\phi_p) \cdot  \mathrm{tr}\xi(\gamma_0), \nonumber
\end{align}
where $c_1(\gamma_0):=\tau(I_0)\cdot \mathrm{vol}(A_{G}(\R)^{\mathrm{o}}\backslash I_0(\infty)(\R))^{-1}$.

Here, in the first line, the fist sum is over a set of representatives of the equivalence classes of \emph{effective} Kottwitz triples $(\gamma_0;\gamma,\delta)$ and in the second sum $[a]$ runs through the set $\widetilde{\Sha}_G(\Q,I_{\phi,\epsilon})^+$. Two elements $[a],[a']$ of $\widetilde{\Sha}_G(\Q,I_{\phi_1,\epsilon_1})^+$ give equivalent admissible pairs $(a\phi_1,\epsilon_1)$, $(a'\phi_1,\epsilon_1)$ if and only if $[a]$, $[a']$ map to the same element in $H^1(\Q,I_{\phi_1,\epsilon_1})$. 
So, to establish the first equality, we need to prove that for each $[a]\in \widetilde{\Sha}_G(\Q,I_{\phi_1,\epsilon_1})^+$, the set of such elements in $ \widetilde{\Sha}_G(\Q,I_{\phi_1,\epsilon_1})^+$ is in bijection with $\ker[H^1(\Q,I_a^{\mathrm{o}})\rightarrow H^1(\Q,I_a)]$; this will then also show that the latter set has the same size for all the elements $[a]$ in $ \widetilde{\Sha}_G(\Q,I_{\phi_1,\epsilon_1})^+$ that map to the same element in $H^1(\Q,I_{\phi_1,\epsilon_1})$.
First, the subset of $H^1(\Q,I_{\phi_1,\epsilon_1}^{\mathrm{o}})$ consisting of such elements is in bijection with $\ker[H^1(\Q,I_a^{\mathrm{o}})\rightarrow H^1(\Q,I_a)]$ \cite[Prop.35bis]{Serre02}. So, it suffices to show that
if $[a']\in H^1(\Q,I_{\phi_1,\epsilon_1}^{\mathrm{o}})$ has the same image in $H^1(\Q,I_{\phi_1,\epsilon_1})$ as $[a]$, then $[a']\in \widetilde{\Sha}_G(\Q,I_{\phi_1,\epsilon_1})^+$. Since $(I_{\phi_1})_{\R}$ is a subgroup of the inner form $G'$ of $G_{\R}$ that has compact adjoint group, the map $H^1(\R,I_{\phi_1,\epsilon_1}^{\mathrm{o}})\rightarrow H^1(\R,G')$ is injective \cite[4.4.5]{Kisin17}, which implies that $[a']\in \Sha^{\infty}(\Q,I_{\phi_1,\epsilon_1}^{\mathrm{o}})=\ker[ H^1(\Q,I_{\phi_1,\epsilon_1}^{\mathrm{o}})\rightarrow H^1(\R,I_{\phi_1,\epsilon_1}^{\mathrm{o}})]$. Similarly, the map $\Sha^{\infty}(\Q,I_{\phi_1,\epsilon_1}^{\mathrm{o}})\cong \Sha^{\infty}(\Q,G_{\epsilon_1}^{\mathrm{o}})\rightarrow \Sha^{\infty}(\Q,G)$ factors through $\Sha^{\infty}(\Q,I_{\phi_1})$, which implies that $[a']\in \Sha^{\infty}_G(\Q,I_{\phi_1,\epsilon_1}^{\mathrm{o}})$.

For the second equality, we use the following two facts (E1), (E2): 

(E1) There exists an equality of numbers:
\[|\ker[H^1(\Q,I_a^{\mathrm{o}})\rightarrow H^1(\Q,I_a)]|\cdot [I_a(\Q):I_a^{\mathrm{o}}(\Q)] =|\pi_0(G_{\gamma_0})(\Q)|.\] 
Indeed, we recall \cite[5.5]{Serre02} that for any (not-necessarily connected) reductive $\Q$-group $I$ (especially for $I=I_{\phi_1,\epsilon_1}$), the exact sequence $1\rightarrow I^{\mathrm{o}}\rightarrow I\rightarrow \pi_0(I) \rightarrow 1$ gives rise to a natural action $\pi_0(I)(\Q)$ on $H^1(\Q,I^{\mathrm{o}})$ which we normalize to be a left action and write $c\cdot \alpha$ for $c\in \pi_0(I)(\Q)$ and $\alpha \in H^1(\Q,I^{\mathrm{o}})$. 
One easily checks that for $c\in \pi_0(I)(\Q)$ and $[a] \in H^1(\Q,I^{\mathrm{o}})$, $c\cdot [a]$ equals the image of $c$ under the composite map 
\[ \pi_0(I)(\Q) =\pi_0(I_a)(\Q) \stackrel{\partial_a}{\rightarrow}  H^1(\Q,I_a^{\mathrm{o}}) \isom H^1(\Q,I^{\mathrm{o}}),\] 
where $\partial_a$ is the obvious coboundary map attached to the inner twist $I_a$ of $I$ via $a$ and the last bijection is defined by $[x_{\tau}]\mapsto [x_{\tau}a_{\tau}]$ (and thus sends the distinguished element to $[a_{\tau}]$) \cite[Prop.35bis.]{Serre02}.
So, the stabilizer subgroup of $[a]$ for the action of $\pi_0(I)(\Q)$ on $H^1(\Q,I^{\mathrm{o}})$ is isomorphic to $\ker(\partial_a)=I_a(\Q)/I_a^{\mathrm{o}}(\Q)$ and the orbit of $[a]$ is in bijection with $\im(\partial_a)=\ker[H^1(\Q,I_a^{\mathrm{o}})\rightarrow H^1(\Q,I_a)]$. So, we obtain the equality
\[ |\ker[H^1(\Q,I_a^{\mathrm{o}})\rightarrow H^1(\Q,I_a)]|\cdot [I_a(\Q):I_a^{\mathrm{o}}(\Q)]=|\pi_0(I)(\Q)| \]
(In particular, this product quantity is independent of the inner twist of $I$ by a cocycle in $Z^1(\Q,I^{\mathrm{o}})$.)
Our claim follows since $G_{\gamma_0}=I_a$ for some cocyle $a$ (with $I=I_{\phi_1,\epsilon_1}$) and $\pi_0(I_{\phi_1,\epsilon_1})=\pi_0(G_{\gamma_0})$.

(E2) Since for $(\phi,\epsilon):=(a\phi_1,\epsilon_1)$, $I_{\phi,\epsilon}^{\mathrm{o}}$ is an inner-twist $I_0$ and the Tamagawa number is invariant under inner twist, by (\ref{eq:Tamagawa_number}) one has the equality:
\[\mathrm{vol}(I_{\phi,\epsilon}^{\mathrm{o}}(\Q)\backslash I_{\phi,\epsilon}^{\mathrm{o}}(\A_f))= \tau(I_0)\cdot \mathrm{vol}(A_{G}(\R)^{\mathrm{o}}\backslash I_0(\infty)(\R))^{-1}.\]

Hence, the summand in the first line of (\ref{eq:T(m,f)-2}) indexed by an admissible pair $(\phi,\epsilon)$ and a class $[a]\in \widetilde{\Sha}_G(\Q,I_{\phi_1,\epsilon_1})^+$ depends only on the associated (equivalence class of) Kottwitz triple $(\gamma_0;\gamma,\delta)$, and thus the second equality holds. 
In the expression of the second line, the index $(\gamma_0;\gamma,\delta)$ originally should run through a set of representatives of \emph{effective} Kottwitz triples (i.e. arising from an admissible pair, so having trivial Kottwitz invariant). But, according to Thm. \ref{thm:LR-Satz5.25}, any Kottwitz triple $(\gamma_0;\gamma,\delta)$ with trivial Kottwitz invariant is effective if its twisted-orbital integral $\mathrm{TO}_{\delta}(\phi_p)$ is non-zero: one easily checks (cf. \cite[$\S$1.4, $\S$1.5]{Kottwitz84b}) that non-vanishing of $\mathrm{TO}_{\delta}(\phi_p)$ is equivalent to non-emptiness of the set $Y_p(\delta)$ (\ref{eq:Y_p(delta)}). Therefore, in this sum we may as well take simply (a set of representatives of) \emph{all} Kottwitz triples with trivial Kottwitz invariant. This finishes the proof of (1).

(2) The first claim follows from (1) in view of the Deligne's conjecture proved by Fujiwara \cite{Fujiwara97}, \cite{Var07}. When $\sS_K$ is proper or $f^p$ is the identity, we can simply invoke the Grothendieck-Lefschetz fixed point formula for lisse sheaves. The properness holds if $G^{\ad}$ is anisotropic since the valuative criterion holds by \cite{Lee12}.
\end{proof}

\begin{rem} \label{rem:comments_on_Milne92}
Milne \cite[Cor.7.10]{Milne92} claimed to have proved this theorem, in the original setting (i.e when the level group $\mbfKt_p$ is hyperspecial and $G^{\der}=G^{\uc}$). His proof is incomplete and flawed, in two respects. First, as was mentioned before, he misquotes the definition of \textit{admissible pair} \cite[p.189]{LR87}  (a pair $(\phi,\epsilon)$ is admissible in his sense if and only if it is admissible in the original sense and also $\mbfK_p$-effective in our sense, cf. Remark \ref{rem:admissible_pair}), so his statements in \textit{loc. cit.} using this terminology/notion require critical reading. 
Secondly and more seriously, in the proof of his Corollary 7.10, he claims that \textit{if a Frobenius triple does not satisfy the condition of (7.5) then it contributes zero to the sum on the right} (a \textit{Frobenius triple} in Milne's work is the same as a Kottwitz triple with trivial Kottwitz invariant). This non-trivial statement (which is simply an \emph{effectivity criterion of Kottwitz triple}) was never justified in \textit{loc. cit.}, nor elsewhere, until our proof of Theorem \ref{thm:LR-Satz5.21} (which is also valid in a more general setting).
Also, we remark that for \emph{general} $g$, one needs extra arguments, more than what Milne outlines based on \cite{Kottwitz84b} which was intended mainly for $g=1$ (or at best for those $g$'s lying in a compact open subgroup of $G(\A_f^p)$).
\end{rem}


\subsection{Unconditional proof of Kottwitz conjecture} \label{subsec:uncond_proof_K-formula}

In this subsection, we prove Kottwitz conjecture for Shimura varieties of Hodge-type with hyperspecial level. The main ingredients are as follows:
\begin{itemize}
\item[(I)] Definition/description of isogeny classes (in terms of affine Deligne-Lusztig varieties) \cite[Prop. 2.1.3]{Kisin17} and their moduli interpretation (\textit{loc. cit.} Prop. 1.4.15), and the resulting description of $\sS_{\mbfK_p}(G,X)(\Fpb)$ as disjoint union of isogeny classes;
\item[(SCM)] Strong CM-lifting theorem (\textit{loc. cit.} Cor. 2.2.5);
\item[(Tate)] Generalization of the Tate's theorem on endomorphisms of abelian varieties over finite fields (\textit{loc. cit.} Cor. 2.3.2);
\item[(T)] Twisting method of isogeny classes or CM points (\textit{loc. cit.} Prop. 4.4.8, 4.4.13);
\item[(E)] Effectivity criterion of Kottwitz triples (Thm. \ref{thm:LR-Satz5.25b2}).
\end{itemize}

Our arguments will run in parallel to those of Langlands and Rapoport in the previous subsection which derive Kottwitz conjecture from Langlands-Rapoport conjecture. For that, we have to reformulate the above geometric results of Kisin into purely group-theoretic statements.

\begin{defn}
(1) For a $\Q$-torus $T$ and a connected reductive $\Q$-group $G$ with $\Qb\operatorname{-}\mathrm{rk}(T)=\Qb\operatorname{-}\mathrm{rk}(G)$, a \emph{stable conjugacy class of $\Q$-embeddings} $T\hookrightarrow G$ is, by definition, an equivalence class of $\Q$-embeddings $T\hookrightarrow G$ with respect to stable conjugacy relation: two $\Q$-embeddings $i_T,i_T':T\hookrightarrow G$ are \emph{stably conjugate} if and only if there exists $g\in G(\Qb)$ such that $i_T'=\Int(g)\circ i_T$ (in particular, $\Int(g)|_{i_T(T)}$ induces a transfer of maximal torus $i_T(T)\isom i_T'(T)$). 

(2) For a Shimura datum $(G,X)$, a \emph{stable conjugacy class of special Shimura sub-data} $(T,h)$ is, by definition, an equivalence class of special Shimura sub-datum $(T,h)$ with respect to the following (stable conjugacy) equivalence relation: two special Shimura sub-data $(T,h)$, $(T',h')$ are \emph{stably conjugate} if and only if there exist $g\in G(\Qb)$ inducing a $\Q$-isomorphism $\Int (g)|_T:T\isom T'$ and $g_{\infty}\in G(\R)$ such that $\Int(g)|_{T_{\R}}=\Int(g_{\infty})|_{T_{\R}}$ and $h'=\Int(g_{\infty})(h)$.
\end{defn}

We remind the reader that we have fixed an embedding $\Qb\hookrightarrow \Qpb$.

\begin{thm} \label{thm:Kisin17_Cor.1.4.13,Prop.2.1.3,Cor.2.2.5} \cite[Cor.1.4.13, Prop.2.1.3, Cor.2.2.5]{Kisin17} 
As a set with action by $\langle\Phi\rangle\times Z_G(\Qp)\rtimes G(\A_f^p)$, $\sS_{\mbfK_p}(\Fpb)$ is a disjoint union of subsets $S(\sI)$, called \emph{isogeny classes}, endowed with an action by the same group:
\begin{equation} \label{eq:isogeny_decomp}
 \sS_{\mbfK_p}(\Fpb)\isom \bigsqcup_{\sI}S(\sI).
\end{equation}

(1) For each isogeny class $S(\sI)$, there exist a connected reductive $\Q$-group $I_{\sI}$, an element $b\in G(\mfk)$, and embeddings of group schems (over $\Qp$ and $\Ql$ for every finite $l\neq p$)
\[ i_p:(I_{\sI})_{\Qp} \hookrightarrow J_b,\quad i_l:(I_{\sI})_{\Ql} \hookrightarrow G_{\Ql}\]
such that for almost all $l\neq p$, $i_l$ extends to an embedding $i_l:(I_{\sI})_{\Z_l}\hookrightarrow G_{\Z_l}$ between reductive $\Z_l$-group schemes $(I_{\sI})_{\Z_l}$, $G_{\Z_l}$, and in terms of which, one has
\[S(\sI):=I_{\sI}(\Q)\backslash [X(\{\mu_X\},b)_{\mbfK_p}\times G(\A_f^p)].\]
Here, $I_{\sI}$ acts on $X(\{\mu_X\},b)_{\mbfK_p}\times G(\A_f^p)$ diagonally via $i_p\times i^p$, where $i^p$ denotes the restricted product $\prod_{l\neq p}'i_l$, and
$\Phi$ acts on $S(\sI)$ via its action on $X(\{\mu_X\},b)_{\mbfK_p}$ by $(b\sigma)^r$ ($r=[\kappa(\wp):\Fp]$) while $g\in G(\A_f^p)$ acts on $S(\sI)$ via its right translation of $G(\A_f^p)$.

(2) The $\Q$-group $I_{\sI}$ has the same $\Qb$-rank as $G$. There exists an embedding $Z(G)\subset I_{\sI}$ such that $(I_{\sI}/Z(G))_{\R}$ is a subgroup of a compact inner form of $G^{\ad}_{\R}$.

(3) For every maximal $\Q$-torus $T\subset I_{\sI}$, there exists a stable conjugacy class of $\Q$-embeddings
\[i_T:T\hookrightarrow G \]
with the following properties: 

For a choice of embedding $i_T$, there exist a $G(\mfk)\times G(\A_f^p)$-conjugate of the triple of (1), denoted again by $(b,i_p, i^p)$, with $b\in i_T(T)(\mfk)$, and $h\in X\cap \Hom(\dS,i_T(T)_{\R})$ such that

(i) the embeddings $i_p:I_{\sI}(\Qp)\hookrightarrow J_b(\Qp)$, $i^p:I_{\sI}(\A_f^p)\hookrightarrow G(\A_f^p)$
are $T(\Qp)\times T(\A_f^p)$-equivariant:
\begin{equation} \label{eq:(i_p,i^p)_adapted_to_i_T}
i_p|_{T(\Qp)}=i_T|_{T(\Qp)},\quad {i^p}|_{T(\A_f^p)}=i_T|_{T(\A_f^p)},
\end{equation}
($i_T|_{T(\Qp)}$ factors through $J_{b}(\Qp)\subset G(\mfk)$, as $b\in  i_T(T)(\mfk)$)
and that

(ii) the two elements $[b]$, $[b_1]$ of $B(i_T(T))$ are equal, where $b_1$ is an element of $i_T(T)(\Qpnr)$ defined by any unramifed $i_T(T)(\Qpb)$-conjugate of $\psi_{i_T(T),\mu_h}(p)\circ\zeta_p$.

Moreover, the stable conjugacy class of $\Q$-embeddings $i_T$ depends only on the $I_{\sI}(\Q)$-conjugacy class of the embedding $T\subset I_{\sI}$.

(4) For any special Shimura sub-datum $(T,h)$ of $(G,X)$, if $\sI=\sI_{T,h}$ is the isogeny class of the reduction of the special point $[h,1]\in Sh_{\mbfK_p}(G,X)(\Qb)$, there exists an $I_{\sI}(\Q)$-conjugacy class of $\Q$-embeddings 
\[j_{T,h}:T\hookrightarrow I_{\sI}\] 
with the following properties: 

(iii) for any choice of $j_{T,h}$ in the conjugacy class, the associated (by the claim of (3)) stable conjugacy class of $\Q$-embeddings $i_T:T\hookrightarrow G$ contains the inclusion $T\subset G$. 

(iv) for any maximal $\Q$-torus $T\subset I_{\sI}$ and any choice of $i_T:T\hookrightarrow G$, the $I_{\sI}(\Q)$-conjugacy class of embeddings $T\stackrel{i_T}{\rightarrow}i_T(T)\stackrel{j_{i_T(T),h}}{\hookrightarrow} I_{\sI}$ obtained from $(i_T(T),h)$ contains the inclusion $T\subset I_{\sI}$.

Moreover, the isogeny class $\sI_{T,h}$ and the conjugacy class of $j_{T,h}$ both depend only on the stable conjugacy class of $(T,h)$.
\end{thm}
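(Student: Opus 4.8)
\textbf{Proof proposal for Theorem \ref{thm:Kisin17_Cor.1.4.13,Prop.2.1.3,Cor.2.2.5}.}

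The plan is to translate each of the cited geometric statements of Kisin \cite{Kisin17} into the purely group-theoretic formulation asserted here; the nontrivial content is organizing the various conjugacy-class bookkeeping, not proving new facts. First I would recall from \cite[Prop.~1.4.15]{Kisin17} that $\sS_{\mbfK_p}(G,X)$ over $\Fpb$ carries a family of tensors cutting out a $G_{\Zp}$-torsor, and that two points are declared isogenous if they are linked by a quasi-isogeny of the underlying abelian varieties respecting these tensors; \cite[Prop.~2.1.3]{Kisin17} then gives, for each isogeny class $\sI$, the group $I_{\sI}=\mathrm{Aut}(\text{the associated }p\text{-divisible group with }G\text{-structure and prime-to-}p\text{ level structure})$, an element $b\in G(\mfk)$ representing the Frobenius on the isocrystal, and the identification $S(\sI)=I_{\sI}(\Q)\backslash[X(\{\mu_X\},b)_{\mbfK_p}\times G(\A_f^p)]$. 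Part (1) is then essentially a restatement of \loccit, once one notes that the embeddings $i_p:(I_{\sI})_{\Qp}\hookrightarrow J_b$ and $i_l:(I_{\sI})_{\Ql}\hookrightarrow G_{\Ql}$ are the comparison isomorphisms of $p$-adic (resp.\ $\ell$-adic) Tate modules; the integral extension for almost all $l$ is \cite[p.~168]{LR87}-style spreading-out, using that $I_{\sI}$ and $G$ have good reduction outside a finite set. For (2), the equality of $\Qb$-ranks and the existence of $Z(G)\subset I_{\sI}$ with $(I_{\sI}/Z(G))_{\R}$ inside a compact inner form of $G^{\ad}_{\R}$ follow from the fact that $I_{\sI}$ is an inner form of the centralizer of a semisimple rational element (the ``Frobenius'') in $G$, together with the Weil conjectures (the Frobenius is $\R$-elliptic modulo center), exactly as in \cite[Lem.~5.1]{LR87}.

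Next I would treat (3). Given a maximal $\Q$-torus $T\subset I_{\sI}$, the composite $T\hookrightarrow I_{\sI}\xrightarrow{i_p} J_b\subset G(\mfk)$ and $T\hookrightarrow I_{\sI}\xrightarrow{i_l} G(\Ql)$ give, for each place, a local embedding; the Generalized Tate Theorem \cite[Cor.~2.3.2]{Kisin17} (together with Honda--Tate-type input, i.e.\ (SCM) \cite[Cor.~2.2.5]{Kisin17}) guarantees these local embeddings fit together into a global stable conjugacy class of $\Q$-embeddings $i_T:T\hookrightarrow G$. Concretely: after replacing the triple $(b,i_p,i^p)$ by a $G(\mfk)\times G(\A_f^p)$-conjugate we may assume $i_p$ is $T$-equivariant with $b\in i_T(T)(\mfk)$, and \cite[Cor.~2.2.5]{Kisin17} (strong CM lifting — the statement that the isogeny class contains a point lifting to a CM point for the given torus, with prescribed $p$-adic data) produces $h\in X\cap\Hom(\dS,i_T(T)_{\R})$ with the $F$-isocrystal condition \textbf{(ii)}, i.e.\ $[b]=[b_1]$ in $B(i_T(T))$, where $b_1$ is read off from an unramified conjugate of $\psi_{i_T(T),\mu_h}(p)\circ\zeta_p$ via Lemma \ref{lem:unramified_conj_of_special_morphism} and Lemma \ref{lem:properties_of_psi_T,mu}. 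Independence of the stable conjugacy class on the $I_{\sI}(\Q)$-conjugacy class of $T\subset I_{\sI}$ is immediate: conjugating $T$ by $g\in I_{\sI}(\Q)$ post-composes $i_T$ with $\Int(g)$, which lands in the same stable class since $g\in G(\Qb)$.

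For (4), given a special Shimura sub-datum $(T,h)$, the reduction of $[h,1]$ defines the isogeny class $\sI_{T,h}$, and the CM structure furnishes a canonical $\Q$-embedding $j_{T,h}:T\hookrightarrow I_{\sI_{T,h}}$ (the action of $T$ on the CM abelian variety commutes with all Tate-module data, hence factors through $I_{\sI}$), well-defined up to $I_{\sI}(\Q)$-conjugacy because the CM point is well-defined in $S(\sI)$ up to the $I_{\sI}(\Q)$-action. Property \textbf{(iii)} — that the stable class $i_T$ attached to $j_{T,h}$ contains the inclusion $T\subset G$ — is exactly the compatibility of the $p$-adic and $\ell$-adic comparison isomorphisms at the CM point with the tautological inclusion, which is what the moduli interpretation \cite[Prop.~1.4.15]{Kisin17} together with \cite[Cor.~2.3.2]{Kisin17} provides. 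Property \textbf{(iv)} is then a formal consequence of \textbf{(iii)} and the uniqueness clauses: applying the construction of (3) to a general $(T\subset I_{\sI},i_T)$ and then the construction of (4) to $(i_T(T),h)$ returns a conjugacy class of embeddings $T\to I_{\sI}$ which, by \textbf{(iii)} applied to $i_T(T)$, has associated stable class containing $i_T$, and by the uniqueness in (3) this forces the original inclusion $T\subset I_{\sI}$ to lie in it. Finally, dependence only on the stable conjugacy class of $(T,h)$: if $(T',h')=\Int(g)(T,h)$ with $g\in G(\Qb)$, $\Int(g)|_{T_\R}=\Int(g_\infty)|_{T_\R}$, $h'=\Int(g_\infty)(h)$, then $g$ carries the CM abelian variety for $(T,h)$ isogenously to that for $(T',h')$, identifying $\sI_{T,h}$ with $\sI_{T',h'}$ and matching the conjugacy classes of $j$.

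The main obstacle I anticipate is \textbf{(iv)} — or rather, making precise the matching between ``the stable conjugacy class $i_T$ produced from $T\subset I_{\sI}$ in (3)'' and ``the $I_{\sI}(\Q)$-conjugacy class $j_{i_T(T),h}$ produced from $(i_T(T),h)$ in (4)'', since these two constructions go in opposite directions and one must check they are mutually inverse at the level of conjugacy classes. This requires unwinding that the CM lift furnished by (SCM) for the torus $i_T(T)\subset G$, when its $p$-adic data is chosen to match $b$, has automorphism group whose canonical map to $G$-local-data agrees with $i_T$; concretely it comes down to the fact (Lemma \ref{lem:equality_restrictions_to_kernels_imply_conjugacy}) that two unramified morphisms $\fG_p\to\fG_{i_T(T)}$ with the same class in $B(i_T(T))$ are conjugate, plus the analogous (trivial) statements at $\ell\neq p$ and $\infty$, and finally the global statement that a torus transfers to at most one stable class — the uniqueness input \cite[Lem.~5.6]{LR87} — which ties the local compatibilities into the single global conjugacy class and thereby closes the loop between (3) and (4).
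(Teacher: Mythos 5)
The overall architecture of your proposal — translate Kisin's moduli-theoretic statements into the group-theoretic formulation, using the comparison isomorphisms as the bridge — matches the paper's approach. However there is a genuine gap in your treatment of part (3), specifically the claim that independence of the stable conjugacy class of $i_T$ on the $I_{\sI}(\Q)$-conjugacy class of $T\subset I_{\sI}$ is ``immediate.'' You write that conjugation by $g\in I_{\sI}(\Q)$ post-composes $i_T$ with $\Int(g)$, ``which lands in the same stable class since $g\in G(\Qb)$.'' But $I_{\sI}$ is an abstract $\Q$-group, not a subgroup of $G$; there is no distinguished map $I_{\sI}(\Q)\to G(\Qb)$ through which a conjugation in $I_{\sI}$ becomes a conjugation in $G$, and the construction of $i_{T'}$ for $T'=gTg^{-1}$ involves choosing a possibly different CM lift of a possibly different point in the isogeny class. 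The paper proves this well-definedness by a genuine argument: given two quasi-isogenies $\theta_i:\mathcal{A}_x\to\mathcal{A}_{x_i}$ respecting weak polarizations and matching \'etale and crystalline tensors, with CM lifts, one shows the functor of tensor-preserving, $T$-compatible isomorphisms between the two Betti realizations is a nonempty $T$-torsor, hence has a $\Qb$-point, whence the two embeddings are stably conjugate. This is the heart of (3) and cannot be dismissed.

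Relatedly, your proposal never explains how the $T$-equivariant isomorphisms $\eta_l$ and $\eta_p^{\nr}$ are obtained — this is not automatic and is where the paper does real work: at $l\neq p$ there are \emph{canonical} $T$-equivariant comparisons via the lift to characteristic zero and the Betti--\'etale comparison, but at $p$ one must argue that the functor of $T$-equivariant, tensor-matching isomorphisms between $H^{\cris}_1$ and the Betti realization is a $T$-torsor over $\Qpnr$, then invoke Steinberg ($H^1(\Qpnr,T)=1$) to get a point. Without this you have no $T$-equivariant $i_p$ or $i^p$, so condition (i) and the claim $b\in i_T(T)(\mfk)$ are unjustified. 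A smaller point: the paper extracts the CM lifting from the \emph{proof} of \cite[Thm.~2.2.3]{Kisin17} rather than from \cite[Cor.~2.2.5]{Kisin17} as you cite; the distinction matters because what is needed here is control over the cocharacter $\mu_T$ and the resulting Hodge filtration, which is visible in the proof of Thm.~2.2.3 but not in the statement of Cor.~2.2.5.
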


\begin{rem}
(1) The statement (3) needs some explanation.
Note that for a triple $(b,i_p,i^p)$ as in (1) and any $(g_p,g^p)\in G(\mfk)\times G(\A_f^p)$, we obtain a new triple 
\[(b':=g_pb\sigma(g_p^{-1}),\ \Int(g_p)\circ i_p:(I_{\sI})_{\Qp}\hookrightarrow J_b\isom J_{b'},\ \Int(g^p)\circ i^p),\] 
which again satisfies (1): left multiplication by $g_p\times g^p$ induces a $\langle\Phi\rangle\times Z_G(\Qp)\rtimes G(\A_f^p)$-equivariant bijection
\[I_{\sI}(\Q)\backslash X(\{\mu_X\},b)_{\mbfK_p}\times G(\A_f^p) \isom I_{\sI}(\Q)\backslash X(\{\mu_X\},b')_{\mbfK_p}\times G(\A_f^p).\]
Furthermore, as will be evident from the proof, such new triple enjoys all the other properties. Two such triples will be said to be \emph{equivalent}: we will not distinguish them.

(2) We will say that a triple $(b,i_p,i^p)$ in (1) is \textit{adapted to} a given embedding $i_T$ if it satisfies conditions (i) and (ii) of (3).
For fixed $i_T$, any two triples $(b,i_p,i^p)$ adapted to $i_T$ (in fact, satisfying just condition (i)) differ by conjugation by an element of $i_T(T)(\mfk)\times i_T(T)(\A_f^p)$ (since $\mathrm{rk}(T)=\mathrm{rk}(G)$ and $T(\Qp)$ is Zariski-dense in $T_{\mfk}$) and thus for such triples the $\sigma$-conjugacy of $b$ in $B( i_T(T))$ does not depend on the choice of the triple. 

(3) In (3), we are not asserting that there exists a unique stable conjugacy class of $\Q$-embeddings $T\hookrightarrow G$ satisfying the conditions. 
\end{rem}

Later, we will provide further information on the group $I_{\sI}$ and the embeddings $(i_p,i^p)$. But, before moving on,
we derive a first (primitive) description of the fixed point set of the Frobenius-twsted Hecke correspondence $\Phi^m\circ f$ (\ref{eq:Hecke_corr_twisted_by_Frob}) acting on an isogeny class $S(\sI)_{K^p}:=S(\sI)/K^p\subset \sS_{K}(\Fpb)$: 
\[S(\sI)_{K}\stackrel{p_1'}{\longleftarrow} S(\sI)_{K_g} \stackrel{p_2'=\Phi^m\circ p_2}{\longrightarrow} S(\sI)_{K}.\]
By the same (elementary) argument (\cite[$\S$1.4]{Kottwitz84b}, \cite[Lem. 5.3]{Milne92}) which yielded the description (\ref{eq:fixed_pt_set_of_Heck-corresp1}), under the same assumption on $K^p$, the fixed point set decomposes into disjoint subsets (cf. \cite[1.4.3, 1.4.4]{Kottwitz84b}):
\begin{align} \label{eq:fixed_pt_set_of_Heck-corresp2}
S_{K}(\sI)^{\Phi^m\circ f=\mathrm{Id}} &= \bigsqcup_{\epsilon} I_{\sI,\epsilon}(\Q)\backslash [ X_p(\sI,\epsilon)\times X^p(\sI,\epsilon,g)/K^p_g ] ,
\end{align}
where the index $\epsilon$ runs through a set of representatives in $I_{\sI}(\Q)$ for the conjugacy classes of $I_{\sI}(\Q)/Z(\Q)_K$, $I_{\sI,\epsilon}$ is the centralizer of $\epsilon$ in $I_{\sI}$ (regarded as an algebraic $\Q$-subgroup of $I_{\sI}$), and 
\begin{align*}
X_p(\sI,\epsilon) &:=\{\ x_p\in X(\{\mu_X\},b)_{\mbfK_p}  \ \ |\ \  i_p(\epsilon) x_p=(b\sigma)^n x_p\ \}, \\
X^p(\sI,\epsilon,g) &:=\{\ x^p\in G(\A_f^p) \ |\ \  i^p(\epsilon) x^pg=x^p\text{ mod }K^p\ \}.
\end{align*}

\begin{proof} (of Thm. \ref{thm:Kisin17_Cor.1.4.13,Prop.2.1.3,Cor.2.2.5})
For the proof, we use freely the notations of \cite{Kisin17}. In this proof, all references will be to this work, unless stated otherwise. We first give a very brief review of some results in \textit{loc. cit.} that are necessary for the proof of the theorem.

(A) There exists a $\Z_{(p)}$-lattice $V_{\Z_{(p)}}$ of $V$ and a set of tensors $\{s_{\alpha}\}_{\alpha}$ on it which defines the reductive closed $\Zp$-subgroup scheme $G_{\Zp}$ of $G_{\Qp}$ giving the hyperspecial subgroup $\mbfK_p$ (i.e. $\mbfK_p=G_{\Zp}(\Zp)$) \cite[Prop.1.3.2]{Kisin10}. 
Let $\pi:\uvA\rightarrow \sS_K$ be the universal abelian scheme over $\sS_K$ (for sufficiently small $K^p$) and $\mathcal{V}=R^1\pi_{\ast}\Omega^{\bullet}$ the first relative de Rham chomology (algebraic vector bundle).
The tensors $\{s_{\alpha}\}_{\alpha}$ give rise to horizontal sections $\{s_{\alpha,\Betti}\}_{\alpha}$ on the local system $R^1\pi^{\mathrm{an}}_{\ast}(\Z_{(p)})$ over $Sh_K(G,X)$ (with $\pi^{\mathrm{an}}$ denoting the analytification of $\pi$) and sections $\{s_{\alpha,\dR}\}_{\alpha}$ on $\mathcal{V}=R^1\pi_{\ast}\Omega^{\bullet}$ which correspond to each other (for the same $\alpha$) via the de Rham isomorphism over $\C$ \cite[1.3.6]{Kisin17}.

(B) Suppose $x\in \sS_{\mbfK_p}(k)$ for a finite extension $k\subset \Fpb$ of $\kappa(\wp)$. Let $\uvA_x$ be the underlying abelian variety over $k$ and $\xb$ the $\Fpb$-point induced by $x$.
Let $H^1_{\et}(\uvA_{\xb}/\Ql)$ and $H_{\cris}^1(\uvA_x/K_0)$ be respectively the $l$-adic \'etale and the cristalline cohomology groups of $\uvA_{\xb}$ (for $l\neq p$) and $\uvA_x$, where $K_0:=W(k)[1/p]\ (\subset\Qpb)$. 
We let $H^{\et}_1(\uvA_{\xb},\Ql):=\Hom_{\Ql}(H^1_{\et}(\uvA_{\xb}/\Ql),\Ql)$ and $H^{\cris}_1(\uvA_x/K_0):=\Hom_{K_0}(H^1_{\cris}(\uvA_{x}/K_0),K_0)$ denote their linear dual (homology) groups. The latter group is equipped with the Frobenius operator $\phi$: $\phi(f)(v):=p^{-1}\cdot{}^{\sigma}f(Vv)$ for $f\in H_1^{\cris}$ and $v\in H^1_{\cris}$, where $V$ is the Verschiebung on $H^1_{\cris}$. 
Recall that the relative Frobenius morphism  $\Fr_{\uvA_{x}/k}$ of $\uvA_{x}/k$ acts on these homology spaces ($\Ql$ and $K_0$-linearly) by the geometric $p^{[k:\Fp]}$-Frobenius $\Fr_k^{-1}$ in $\Gal(\Fpb/k)$ and by the inverse of $\phi^{[k:\Fp]}$, respectively.

Then, there exist tensors $\{s_{\alpha,l,x}\}_{\alpha}$ on $H^1_{\et}(\uvA_{\xb}/\Ql)\ (l\neq p)$ and tensors $\{s_{\alpha,0,x}\}_{\alpha}$ on $H_{\cris}^1(\uvA_x/K_0)$ which are Frobenius invariant (for the geometric Frobenius acting on $H^1_{\et}(\uvA_{\xb}/\Ql)$ and the absolute Frobenius automorphism acting on $H^1_{\cris}(\uvA_x/K_0)$); for the construction of $s_{\alpha,l,x}$ and $s_{\alpha,0,x}$, see \cite[(2.2)]{Kisin10} and \cite[Prop.1.3.9, 1.3.10]{Kisin17} respectively.
Also, there exist isomorphisms matching the respective tensors for each $\alpha$:
\begin{equation} \label{eq:isom_eta}
\begin{split}
\eta_l:(V,\{s_{\alpha}\})\otimes\Ql &\lisom (H^{\et}_1(\uvA_{\xb},\Ql),\{s_{\alpha,l,x}\}) , \\
\eta_p:(V,\{s_{\alpha}\})\otimes K_0 &\lisom (H^{\cris}_1(\uvA_x/K_0),\{s_{\alpha,0,x}\}).
\end{split}
\end{equation}
Most of the time, we are just contented with an isomorphism defined over $\Qpnr$ (or even over $\mfk$):
\begin{equation} \label{eq:isom_eta_nr}
\eta_p^{\nr}:(V,\{s_{\alpha}\})\otimes \Qpnr \lisom (H^{\cris}_1(\uvA_x/\Qpnr),\{s_{\alpha,0,x}\}).
\end{equation}
For almost all $l\neq p$ (in particular, such that $G_{\Ql}$ is unramified), we may assume that the following conditions hold:
there exist a $\Z_{(l)}$-lattice $V_{\Z_{(l)}}$ of $V$ such that the tensors $\{s_{\alpha}\}_{\alpha}$ live on it and defines a reductive $\Z_l$-subgroup scheme $G_{\Z_l}$ of $G_{\Ql}$, and a similar statement holds true of the lattice $H^{\et}_1(\uvA_{\xb},\Z_l)$ and the tensors $\{s_{\alpha,l,x}\}$. Further, for these $\Z_l$-structures, there exists an $\Z_l$-isomorphism extending $\eta_l$; we denote it again by $\eta_l$.

(C) We define $\gamma_l\in G(\Ql)$ ($l\neq p$), $\gamma_p\in G(K_0)$ by
\begin{equation}  \label{eq:gamma_v}
\gamma_v^{-1}:=\Int(\eta_v^{-1})(\Fr_{\uvA_{x}/k}).
\end{equation}
These elements $\gamma_l$, $\gamma_p$ are well-defined up to conjugacy in $G(\Ql)$ and $G(K_0)$, respectively.
We also define $\delta\in \mathrm{GL}(V_{K_0})$ by 
\[\delta(1_V\otimes\sigma):=\Int(\eta_p^{-1})(\phi),\]
so one has $\gamma_p=\Nm_n\delta:=\delta\sigma(\delta)\cdots\sigma^{n-1}(\delta)$ ($n=[K_0:\Qp]$).

Let $k'\subset\Fpb$ be a finite extension of $k$. For each finite place $l\neq p$ of $\Q$, let $I_{l/k'}$ be the centralizer of $\gamma_l^{[k':k]}\in G(\Ql)$ and define $I_{p/k'}$ to be the $\sigma$-centralizer $G_{\delta\sigma}$ of $\delta$ in $G(K_0')$, where $K_0':=W[k'][\frac{1}{p}]$ \cite[p.802]{Kottwitz82}: $G_{\delta\sigma}$ is a closed subgroup of $\Res_{K_0'/\Qp}(G)$ such that $G_{\delta\sigma}(\Qp)=\{ y\in G(K_0')\ |\ y(\delta\sigma)=(\delta\sigma)y\}$. One has $(G_{\delta\sigma})_{K_0'}\simeq Z_{G_{K_0'}}(\Nm_n\delta)$.

The increasing sequence of subgroups $\{I_{l/k'}\}_{k'\subset\Fpb}$ of $G_{\Ql}$ stabilizes to a subgroup $I_l$, which also equals the centralizer of $\gamma_l^n$ in $G_{\Ql}$ for (any) sufficiently large $n\in\N$. By similar reasoning (cf. \cite[(2.1.2)]{Kisin17}), there exists a $\Qp$-subgroup $I_p$ of $J_{\delta}$ which equals $I_{p/k'}$ for all sufficiently large $k'\subset\Fpb$.
Write $\Aut_{\Q}(\uvA_{\xb})$ for the automorphism group of $\uvA_{\xb}$ in the isogeny category, and let $I_{x}\subset \Aut_{\Q}(\uvA_{\xb})$ denote the subgroup consisting of elements fixing all the tensors $\{s_{\alpha,l,x}\}_{\alpha}\ (l\neq p)$, $\{s_{\alpha,0,x}\}_{\alpha}$, regarded as an algebraic $\Q$-subgroup of $\Aut_{\Q}(\uvA_{\xb})$.

\begin{thm} \label{thm:Kisin17_Cor.2.3.2;Tate_isom} \cite[Cor.2.3.2]{Kisin17}
For every finite place $v$ of $\Q$, the restriction of $\Int(\eta_v)^{-1}$ if $v\neq p$ or of $\Int(\eta_p^{\nr})^{-1}$ if $v=p$ to $(I_{x})_{\Qv}$ induces an isomorphism 
\[\Int(\eta_v)^{-1}(v\neq p),\ \Int(\eta_p^{\nr})^{-1}\ :\ (I_{x})_{\Qv}\isom I_v.\] 
\end{thm}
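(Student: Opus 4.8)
The plan is to reduce this to Tate's isogeny theorems — the $\ell$-adic one for $v\neq p$ and its crystalline analogue for $v=p$ — in exactly the way carried out in \cite[Cor.~2.3.2]{Kisin17}, of which the statement is essentially a reformulation. The two structural inputs I would use throughout are those built into the set-up (B)--(C): first, that $G$ is the pointwise stabiliser in $\mathrm{GL}(V)$ of the family $\{s_{\alpha}\}$; and second, that under $\eta_v$ (resp.\ $\eta_p^{\nr}$) the tensors $s_{\alpha,l,x}$ (resp.\ $s_{\alpha,0,x}$) correspond to the $s_{\alpha}$ and are Frobenius-invariant, hence are fixed by every element of $\Aut_{\Q}(\uvA_{\xb})$ (resp.\ by every endomorphism of $p$-power order). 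Granting these, $\Int(\eta_v)^{-1}$ conjugates $\Aut_{\Q}(\uvA_{\xb})_{\Q_v}$ into $\mathrm{GL}(V_{\Q_v})$ — resp.\ $\Int(\eta_p^{\nr})^{-1}$ conjugates $\Aut_{\Q}(\uvA_{\xb})_{\mfk}$ into $\mathrm{GL}(V_{\mfk})$ — carrying $\Fr_{\uvA_{x}/k}^{-1}$ to $\gamma_v$ (resp.\ carrying $\phi$ to $\delta\sigma$), and it sends $(I_x)_{\Q_v}$, which is by definition the subgroup fixing the relevant tensors, into the subgroup of $G_{\Q_v}$ (resp.\ $G_{\mfk}$) commuting with $\gamma_v$ (resp.\ $\sigma$-commuting with $\delta$), which is exactly $I_v$ by (C). Injectivity is automatic since $\eta_v$ is an isomorphism, so the only real point is surjectivity onto $I_v$, and that is where Tate enters.

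For $v=l\neq p$, I would invoke Tate's isogeny theorem over each finite extension $k'\subset\Fpb$ of $k$, giving $\mathrm{End}(\uvA_{k'})\otimes\Q_l\isom\mathrm{End}_{\gamma_l^{[k':k]}}(H^{\et}_1(\uvA_{\xb},\Q_l))$, and then pass to the colimit over $k'$; since $\mathrm{End}(\uvA_{\xb})\otimes\Q_l$ is finite-dimensional the chain of commutants stabilises, so $\mathrm{End}(\uvA_{\xb})\otimes\Q_l\isom\mathrm{End}_{\gamma_l^{n}}(H^{\et}_1)$ for all $n\gg1$, the full commutant algebra. Intersecting with the stabiliser of the $\{s_{\alpha}\}$ and passing to unit groups then identifies $\Int(\eta_l)^{-1}(I_x)_{\Q_l}$ with $Z_{G_{\Q_l}}(\gamma_l^{n})=I_l$. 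For the finitely many bad $l$ the same argument applies verbatim, simply without the integral lattices.

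For $v=p$ I would run the same scheme with the $\ell$-adic Tate theorem replaced by Tate's theorem on endomorphisms of $p$-divisible groups over finite fields, together with the full faithfulness of the rational Dieudonn\'e functor over the perfect field $\Fpb$; these combine to give $\mathrm{End}(\uvA_{\xb})\otimes\Q_p\isom\{\,f\in\mathrm{End}_{\mfk}(H^{\cris}_1(\uvA_{x}/\mfk))\mid f\circ(\delta\sigma)=(\delta\sigma)\circ f\,\}$, the $\Q_p$-algebra underlying $J_{\delta}$. Intersecting with the stabiliser of the $\{s_{\alpha}\}$ through $\eta_p^{\nr}$, and then descending the resulting identification from $\mfk$ down to $\Q_p$ — legitimate because $\eta_p$ is already defined over $K_0$ and both $I_x$ and $I_p$ are $\Q_p$-groups — would yield $\Int(\eta_p^{\nr})^{-1}(I_x)_{\Q_p}=I_p$. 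I expect this \emph{crystalline step} to be the main obstacle: one must track carefully the $\sigma$-semilinear Frobenius, the various change-of-field maps relating $\uvA_{x}$, $\uvA_{\xb}$ and their \'etale and crystalline cohomologies, and the passage between $\Aut_{\Q}(\uvA_{\xb})$ as a $\Q$-group and the $\Q_p$-form $J_{\delta}$ — together with the input, itself non-trivial, that $s_{\alpha,0,x}$ is respected by endomorphisms. All of this is done in \cite[Prop.~1.3.9,~1.3.10, Cor.~2.3.2]{Kisin17}, and since the present theorem is (a special case of) that corollary, the proof ultimately amounts to citing it.
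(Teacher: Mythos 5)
Your proposal is correct and matches the paper's approach: the theorem is stated as a direct citation of \cite[Cor.~2.3.2]{Kisin17} with no independent proof supplied, and your sketch — Tate's $\ell$-adic isogeny theorem plus its crystalline counterpart via $p$-divisible groups and the Dieudonn\'e functor, a colimit over finite subfields of $\Fpb$, then intersection with the tensor stabilizers and descent — accurately reconstructs the argument Kisin gives there. Your closing acknowledgment that "the proof ultimately amounts to citing it" is exactly the paper's stance.
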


Now, we enter into the proof of the theorem (Thm. \ref{thm:Kisin17_Cor.1.4.13,Prop.2.1.3,Cor.2.2.5}). Recall the chosen embedding $\sigma_p:\Qpb\hookrightarrow \C$ of $E(G,X)$-algebras.

(1) For each $x\in \sS_{\mbfK_p}(\Fpb)$, if $x$ is defined over a finite field $k\subset \Fpb$, 
any choice of isomorphisms $\eta_l$ (\ref{eq:isom_eta}), $\eta_p^{\nr}$ (\ref{eq:isom_eta_nr}) gives an element $b\in G(\Qpnr)$ by $\Int(\eta_p^{\nr})(b\sigma)=F$, and
embeddings defined over $\Ql$ and $\Qp$
\[ i_{x,l}:(I_{x})_{\Ql} \hookrightarrow G_{\Ql},\quad i_{x,p}:(I_{x})_{\Qp} \hookrightarrow J_b.\]
Of course, different choice of $\eta^p:=\prod'\eta_l$, $\eta_p^{\nr}$ gives rise to an equivalent triple $(b,i_{x}^p:=\prod'i_{x,l},i_{x,p})$. For almost all $l\neq p$, so that, among others, one can find a $\Z_l$-isomorphism $\eta_l$ for the $\Z_l$-structures explained above, $i_{x,l}$ extends to an embedding $(I_x)_{\Z_l}\hookrightarrow G_{\Z_l}$ of $\Z_l$-group schemes.

Then, there exists a $\langle\Phi\rangle\times Z_G(\Qp)\rtimes G(\A_f^p)$-equivariant map 
\[\iota_x:X(\{\mu_X\},b)_{\mbfK_p}\times G(\A_f^p)\rightarrow \sS_{\mbfK_p}(\Fpb),\] 
where $\langle\Phi\rangle$ is the cyclic group generated by $\Phi$ \cite[Cor. 1.4.13]{Kisin17}; the image of $\iota_x$ is the \emph{isogeny class} containing $x$. 
This gives rise to the decomposition (\ref{eq:isogeny_decomp})
\[  \bigsqcup_{x} I_{x}(\Q)\backslash [X(\{\mu_X\},b)_{\mbfK_p}\times G(\A_f^p)] \isom \sS_{\mbfK_p}(\Fpb),\]
where $x$ runs through a set of representatives of (i.e. points lying in) the isogeny classes \cite[Prop. 2.1.3]{Kisin17}.

(2) The $\Q$-group $I_{x}$ has the same $\Qb$-rank as $G$ \cite[Cor. 2.1.7]{Kisin17}.
 
(3) It is shown in the proof of \cite[Thm. 2.2.3]{Kisin17} that for every maximal $\Q$-torus $T\subset I_{x}\subset \Aut_{\Q}(\uvA_{\xb})$, and for any choice of a cocharacter $\mu_T\in X_{\ast}(T)$ satisfying the conditions of \cite[Lem.2.2.2]{Kisin17} (in particular, $\mu_X$ lies in the conjugacy class $c(G,X)$),
there exists a point $x'$ in the isogeny class of $x$ which lifts to a $K$-valued point $\tilde{x}'$ of $\sS_{\mbfK_p}$ for a finite extension $K\subset\Qpb$ of $K_0$ in such a way that the action of $T$ on $\uvA_{x'}$ (in the isogeny category) lifts to $\uvA_{\tilde{x}'}$ and $\mu_T^{-1}$ induces the Hodge filtration on $H^1_{\cris}(\uvA_{x'}/K)\cong H^1_{\dR}(\uvA_{\tilde{x}'}/K)$ (defined by $\uvA_{\tilde{x}'}$). We choose one such $\mu_T$ and denote $x'$ again by $x$, thereby assume that $x$ itself is a point lifting to $\tilde{x}$.
This implies that $T$ is a subgroup of $G$, via a choice of an isomorphism $\Q$-vector spaces endowed with a set of tensors
\begin{equation} \label{eq:Betti-isom}
\eta_{\Betti}:(H^{\Betti}_1(\uvA_{\sigma_p(\tilde{x})},\Q),\{s_{\alpha,\Betti,\sigma_p(\tilde{x})}\}) \lisom (V,\{s_{\alpha}\})
\end{equation} 
and thus $\sigma_p(\tilde{x})$ is a special (=CM) point on $Sh_{\mbfK_p}(G,X)(\Qb)$; $\eta_{\Betti}$ is well-defined up to action of $G(\Q)$ on $V$ and is provided by the moduli interpretation of $\Sh_{\mbfK}(G,X)(\C)$. 
That is, we obtain an embedding and a special Shimura sub-datum
\begin{equation} \label{eq:CM-lifting_via_T}
i_T:T\hookrightarrow G,\quad h\in X\cap \Hom(\dS,i_T(T)_{\R})
\end{equation}
(such that $\sigma_p(i_T\circ\mu_T)=\mu_h$).

For such embedding $i_T$ and $h$, we claim that there exist isomorphisms $\eta_l$ (\ref{eq:isom_eta}), $\eta_p^{\nr}$
(\ref{eq:isom_eta_nr}) which are \emph{$T$-equivariant} with respect to $i_T:T\hookrightarrow G$ and the action of $T$ on $\uvA_{x}$. 
By construction of $i_T$ via the choice of $\eta_B$ (\ref{eq:Betti-isom}), it suffices to find such $T$-equivariant isomorphisms with $(H^{\Betti}_1(\uvA_{\sigma_p(\tilde{x})},\Q),\{s_{\alpha,\Betti,\sigma_p(\tilde{x})}\})$ replacing $(V,\{s_{\alpha}\})$ (for the lifted action of $T$ on $\uvA_{\tilde{x}}$).
For $l\neq p$, this is clear since there exist \emph{canonical} $T$-equivariant isomorphisms of $\Ql$-vector spaces matching the respective tensors:
\begin{equation} \label{eq:isom_epsilon_l}
\begin{split}
\epsilon_l:(H^{\et}_1(\uvA_{\xb},\Ql),\{s_{\alpha,l,x}\}) &\lisom (H^{\et}_1((\uvA_{\tilde{x}})_{\Qpb},\Ql),\{s_{\alpha,l,\tilde{x}}\})\\
& \lisom (H^{\Betti}_1(\uvA_{\sigma_p(\tilde{x})},\Q),\{s_{\alpha,\Betti,\sigma_p(\tilde{x})}\})\otimes\Ql.
\end{split}
\end{equation}
(In fact, the tensors $s_{\alpha,l,\tilde{x}}$ are constructed by the second isomorphism, cf. \cite[(2.2)]{Kisin10}).
For $p$, we also have canonical isomorphisms of $\C$-vector spaces matching the respective tensors:
\begin{equation} \label{eq:isom_epsilon_C}
\begin{split}
(H^{\cris}_1(\uvA_{x}/K),\{s_{\alpha,0,x}\})\otimes_{\sigma_p}\C & \lisom (H^{\dR}_1(\uvA_{\tilde{x}}/K),\{s_{\alpha,\dR,\tilde{x}}\})\otimes_{\sigma_p}\C\\ 
&\lisom (H^{\Betti}_1(\uvA_{\sigma_p(\tilde{x})},\Q),\{s_{\alpha,\Betti,\sigma_p(\tilde{x})}\})\otimes\C. 
\end{split}
\end{equation}
Clearly, these isomorphisms are \emph{$T$-equivariant}. For the fact that the first isomorphism matches the respective tensors, see the proof of \cite[Prop.2.3.5]{Kisin10} (cf. \cite[Prop.1.3.9]{Kisin17}).
This implies that the functor which associates with a $K_0$-algebra $R$ the set of $T$-equivariant, tensor-matching, $R$-linear isomorphisms $H^{\cris}_1(\uvA_{x}/K_0)\otimes R \isom H^{\Betti}_1(\uvA_{\sigma_p(\tilde{x})},\Q)\otimes R$ is a $K_0$-torsor under $T_{K_0}$ (use that $T(\Q)$ is Zariski-dense in $T_{K_0}$ since $T$ is unirational). So by Steinberg's theorem $H^1(\Qpnr,T)=\{1\}$ (cf. \cite[3.2.2]{Lee16}), one can find a $\Qpnr$-isomorphism 
\begin{equation} \label{eq:isom_epsilon_ur_p} 
\epsilon_p:(H^{\cris}_1(\uvA_{x}/K_0),\{s_{\alpha,0,x}\})\otimes_{K_0}\Qpnr \lisom (H^{\Betti}_1(\uvA_{\sigma_p(\tilde{x})},\Q),\{s_{\alpha,\Betti,\sigma_p(\tilde{x})}\})\otimes\Qpnr. 
\end{equation}
taking $s_{\alpha,\dR,\tilde{x}}$ to $s_{\alpha,\Betti,\sigma_p(\tilde{x})}$ for every $\alpha$ and intertwining the two $T$-actions. 

For such $T$-equivariant $\eta_l$, $\eta_p^{\nr}$, the resulting embeddings $i_{x,p}:I_{x}(\Qp)\hookrightarrow J_b(\Qp)$, $i^p_{x}:I_{x}(\A_f^p)\hookrightarrow G(\A_f^p)$ clearly satisfy (\ref{eq:(i_p,i^p)_adapted_to_i_T}), where $b\in G(\Qpnr)$ is given by that $b\sigma$ is the absolute Frobenius element acting on $H^{\cris}_1(\uvA_x/\Qpnr)$ via $\eta_p^{\nr}$.
Note that we have $b\in i_T(T)(\Qpnr)$ because $b$ commutes with $i_T(T)(\Qp)$, thus $b\in Z_{G(\Qpnr)}(i_T(T)(\Qp))=i_T(T)(\Qpnr)$ (the equality holds since $T$ is unirational so that $T(\Qp)$ is Zariski dense in $T$).
Since $\uvA_{x}$ is the reduction of the CM point $[h,1]\in Sh_{K_T}(i_T(T),\{h\})\in Sh_K(G,X)$ (for $K_{T}:=i_T(T)(\A_f)\cap K$), property (ii) of (3) follows from Lemma \ref{lem:properties_of_psi_T,mu} and \cite[Lem.3.2.4]{Lee16}.

Next, let $j_x:T\subset I_{x}$ be a maximal $\Q$-torus and $T'=\Int(a)(T)$ for $a\in I_{x}(\Q)$. 
Then, we claim that for any $\mu_{T'} \in X_{\ast}(T')$ satisfying the conditions of \cite[Lem.2.2.2]{Kisin17}, if $\tilde{x}'$ and $i_{T'}:T'\hookrightarrow G$ are the resulting CM-lifting of a suitable point $x'$ in the isogeny class of $x$ and the embedding discussed above (which is obtained by the construction of \cite[Thm.2.2.3]{Kisin17}, via an isomorphism (\ref{eq:Betti-isom})), the two embeddings of $T$ into $G$
\[i_{T'}\circ\Int(a),\ i_T\ :\ T\hookrightarrow G\] 
are stably conjugate. 
It suffices to show that for \emph{any} two quasi-isogenies $\theta_i:\mathcal{A}_x\rightarrow \mathcal{A}_{x_i}\ (i=1,2)$ respecting weak polarizations and matching the respective e\'tale and crystalline tensors, 
and such that $x_i$ lifts to a point $\tilde{x}_i$ in characteristic zero in a way that $\theta_{i\ast}:T\isom T_i\subset I_{x_i}$ lifts to $G$ via an isomorphism (\ref{eq:Betti-isom}), the resulting embeddings $i_1,i_2:T\hookrightarrow G$ are stably conjugate.
Since the isomorphisms induced by $\theta_2\circ\theta_1^{-1}$
\[ (H^{\et}_1(\uvA_{\xb_1},\Ql),\{s_{\alpha,l,x_1}\})\isom (H^{\et}_1(\uvA_{\xb_2},\Ql),\{s_{\alpha,l,x_2}\}),\ (H^{\cris}_1(\uvA_{x_1}/\mfk),\{s_{\alpha,0,x_1}\})\isom (H^{\cris}_1(\uvA_{x_2}/\mfk),\{s_{\alpha,0,x_2}\}) \]
are compatible with $\theta_{1\ast}$, $\theta_{2\ast}$, the functor, defined on the category of $\Q$-algebras, of isomorphisms
$H^{\Betti}_1(\uvA_{\sigma_p(\tilde{x}_1)},\Q)\isom H^{\Betti}_1(\uvA_{\sigma_p(\tilde{x}_2)},\Q)$
preserving the Betti tensors and compatible with $i_1$, $i_2$ is non-empty, thus becomes a $T$-torsor by (2). In particular, it has a $\Qb$-valued point, which implies the claim.
This fact also implies that to obtain $i_T$, we may use the embedding $T\subset \Aut_{\Q}(\uvA_{\tilde{y}})$ for any point $y\in \sI$ which lifts to a special-point lifting $\tilde{y}$, regarding $T$ as a subtorus of $I_{y}$ via any quasi-isogeny $\uvA_{y}\rightarrow \uvA_{x}$.

Now, for each isogeny class $\sI$, we fix a $\Fpb$-point $x$ lying in it and define the group $I_{\sI}$ to be $I_{x}$. By choosing isomorphisms $\eta_l\ (l\neq p)$ (\ref{eq:isom_eta}), $\eta_p^{\nr}$ (\ref{eq:isom_eta}) defined over $\mfk$ such that for almost all $l\neq p$, $\eta_l$ extends over $\Z_l$, we obtain an element $b\in G(\mfk)$ and embeddings $i^p:=i_{x}^p:(I_{x})_{\A_f}\hookrightarrow G_{\A_f}$, $i_p:=i_{x,p}:(I_{x})_{\Qp}\hookrightarrow J_b$ of group schemes over $\A_f^p$ and $\Qp$. We have already shown statements (1) - (3) for $I_{x}$.

For (4), if $\sI$ is the isogeny class of the reduction $x'$ of the special point $[h,1]$ and $x$ is the prechosen point of $\sI$, the natural $\Q$-embedding $j_{x'}:T\hookrightarrow \Aut_{\Q}(\uvA_{x'})$ factors through the subgroup $I_{x'}$ defined as above for $x'$. We define $j_{T,h}$ to be the composite $T\hookrightarrow I_{x'}\isom I_{x}$ for any isomorphism $I_{x'}\isom I_{x}$ induced by a choice of an isogeny $\uvA_{x'} \rightarrow \uvA_{x}$ preserving the (\'etale and crystalline) tensors and the weak polarizations. Clearly, the $I_{x}(\Q)$-isogeny class of such embeddings does not depend on the choice of the isogeny. In view of this construction, the properties of (4) are immediate.
The last property that the isogeny class $\sI_{T,h}$ depends only on the stable conjugacy class of $(T,h)$ will be established later in Prop. \ref{prop:Kisin17_Prop.4.4.8}.
\end{proof}

\begin{defn} \label{defn:admissible_pair2}
(cf. Def. \ref{defn:admissible_pair})
A pair $(\sI,\epsilon)$ consisting of an isogeny class $\sI\subset\sS_{\mbfK}(\Fpb)$ and an element $\epsilon$ of $I_{\sI}(\Q)$ is \emph{admissible} of level $n=m[\kappa(\wp):\Fp]$ ($m\in\N$) if 
for a triple $(b\in G(\mfk),i_p,i^p)$ in Theorem \ref{thm:Kisin17_Cor.1.4.13,Prop.2.1.3,Cor.2.2.5},
there exists $x_p\in G(\mfk)/\mbfKt_p$ such that 
\[i_p(\epsilon)x_p=(b\sigma)^nx_p\] 
(equiv. there exists  $x\in G(\mfk)$ such that $i_p(\epsilon)x\rtimes \sigma^n=(b\rtimes\sigma)^nx$, cf. Lemma \ref{lem:Kottwitz84-a1.4.9_b3.3}).
Two such pairs $(\sI,\epsilon)$, $(\sI',\epsilon')$ are said to be \emph{equivalent} if $\sI=\sI'$ and $\epsilon'=\Int(g)(\epsilon)$ for some $g\in I_{\sI}(\Q)$.
\end{defn}
Clearly, the admissibility condition in this definition does not depend on the choice of a representative $(b,i_p)$ in its equivalence class.

We have the same name ``admissible'' for two different (but closely related) definitions:
admissible in the sense of Def. \ref{defn:admissible_pair} for a pair $(\phi,\epsilon)$ consisting of a Galois gerb morphism $\phi:\fP\rightarrow \fG_G$ and an element $\epsilon\in I_{\phi}(\Q)(\subset G(\Qb))$, 
and admissible in the above sense for a pair $(\sI,\epsilon)$ consisting of an isogeny class $\sI$ and an element $\epsilon\in I_{\sI}(\Q)$. To avoid confusion, we will use the words \emph{LR-pair}, \emph{LR-admissible} in the former situation and the words \emph{K-pair}, \emph{K-admissible} in the latter situation.

Next, with any K-admissible K-pair $(\sI,\epsilon\in I_{\sI}(\Q))$, say, of level $n=mr$, we associate a Kottwitz triple imitating the recipe for admissible LR-pairs.
First, by Lemma \ref{lem:Kottwitz84-a1.4.9_b3.3} again, there exists $c\in G(\mfk)$ such that 
\begin{equation} \label{eq:(epsilon,b,c)->delta2}
c(i_p(\epsilon)^{-1}(b\rtimes\sigma)^n)c^{-1}=\sigma^n,
\end{equation}
which implies that $\delta:=cb\sigma(c^{-1})\in G(L_n)$ and $\Nm_n\delta=c\epsilon'c^{-1}$; the $\sigma$-conjugacy class of $\delta$ in $G(L_n)$ depends only on the K-pair $(\sI,\epsilon)$ (i.e. depends on the choice of neither $c$ or of a representative $(b,i_p)$ in its equivalence class.
We put $\gamma=(\gamma_l)_{l\neq p}:=i^p(\epsilon)$; its $G(\A_f^p)$-conjugacy class is also uniquely attached to $(\sI,\epsilon)$. 
To define $\gamma_0\in G(\Q)$, we choose a maximal $\Q$-torus $T\subset I_{\sI}$ with $\epsilon\in T(\Q)$, and fix an embedding $i_T:T\hookrightarrow G$ and embeddings $(i_p,i^p)$ accordingly as in Thm. \ref{thm:Kisin17_Cor.1.4.13,Prop.2.1.3,Cor.2.2.5} (especially, satisfying condition (3)). Then, we obtain a triple of elements in $G(\Q)\times G(L_n)\times G(\A_f^p)$
\begin{equation} \label{eq:K-triple_for_isogeny_adm.pair}
(\gamma_0;\gamma,\delta):=(i_T(\epsilon);i^p(\epsilon),cb\sigma(c^{-1})),
\end{equation} 
where $n$ is the level of $(\sI,\epsilon)$. 
We will show below that this triple is a stable Kottwitz-triple with trivial Kottwitz invariant  whose stable equivalence class depends only on the K-pair $(\sI,\epsilon)$ (in particular, not depending on the choices of $i_T$ and $T$); cf. \autoref{subsubsec:pre-Kottwitz_triple}, \autoref{subsubsec:Kottwitz_invariant}, \autoref{subsubsec:K-triple_attached_to_adm.pair}.
For that end and for our proof of Kottwitz conjecture coming later, the following simple observations are critical:

\begin{lem} \label{lem:key_observations}
(1) A K-pair $(\sI,\epsilon\in I_{\sI}(\Q))$ is K-admissible (i.e. admissible in the sense of Def. \ref{defn:admissible_pair2}) if and only if for a (equiv. any) maximal $\Q$-torus $T$ of $I_{\sI}$ containing $\epsilon$, the special LR-pair $(\psi_{i_T(T),\mu_h},i_T(\epsilon))$ is LR-admissible (i.e. admissible in the sense of Def. \ref{defn:admissible_pair}), where $i_T:T\hookrightarrow G$, $h\in X\cap \Hom(\dS,i_T(T)_{\R})$, and $(b\in i_T(T)(\mfk),i_p,i^p)$ are as in Thm. \ref{thm:Kisin17_Cor.1.4.13,Prop.2.1.3,Cor.2.2.5}.

In this case, the triple $(\gamma_0;\delta,\gamma):=(i_T(\epsilon);cb\sigma(c^{-1}),i^p(\epsilon))$ (\ref{eq:K-triple_for_isogeny_adm.pair}) is stably equivalent to the stable Kottwitz triple attached to the special LR-pair $(\psi_{i_T(T),\mu_h},i_T(\epsilon))$ as constructed in Prop. \ref{prop:Kottwitz_triple}. 

Moreover, its stable equivalence class does not depend on the choice of $c$, $i_T$, $T$. We denote its equivalence class by $\kappa(\sI,\epsilon)$; we use similar notation $\kappa(\phi,\epsilon)$ for an LR-admissible LR-pair $(\phi,\epsilon)$.

(2) A special LR-pair $(\psi_{T,\mu_h},\epsilon\in T(\Q))$ is LR-admissible if and only if
the K-pair $(\sI,j_{T,h}(\epsilon))$ is K-admissible, where $\sI\subset \sS_{\mbfK_p}(\Fpb)$ is the isogeny class of the reduction of $[h,1]\in Sh_{\mbfK_p}(G,X)(\Qb)$ and $j_{T,h}:T\hookrightarrow I_{\sI}$ is any embedding in the $I_{\sI}(\Q)$-conjugacy class attached to $(T,h)$ in Thm. \ref{thm:Kisin17_Cor.1.4.13,Prop.2.1.3,Cor.2.2.5}, (4).
In this case the two Kottwitz triples $\kappa(\psi_{T,\mu_h},\epsilon)$, $\kappa(\sI,j_{T,h}(\epsilon))$ are equivalent.
\end{lem}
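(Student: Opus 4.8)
\textbf{Proof plan for Lemma \ref{lem:key_observations}.}
The plan is to reduce everything, by a choice of maximal torus and the compatibility data of Theorem \ref{thm:Kisin17_Cor.1.4.13,Prop.2.1.3,Cor.2.2.5}, to the elementary observation that the two ``at $p$'' conditions --- K-admissibility of a K-pair and LR-admissibility of a special LR-pair --- are \emph{literally the same equation} in $G(\mfk)\rtimes\langle\sigma\rangle$. First I would fix a maximal $\Q$-torus $T\subset I_{\sI}$ containing $\epsilon$ (such a torus exists since $\epsilon$ is semisimple), and then invoke Theorem \ref{thm:Kisin17_Cor.1.4.13,Prop.2.1.3,Cor.2.2.5}, (3) to get $i_T:T\hookrightarrow G$, an element $h\in X\cap\Hom(\dS,i_T(T)_{\R})$, and a triple $(b\in i_T(T)(\mfk),i_p,i^p)$ adapted to $i_T$, so that $i_p|_{T(\Qp)}=i_T|_{T(\Qp)}$. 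Under this normalization, the defining equation of K-admissibility, namely the existence of $x_p\in G(\mfk)/\mbfKt_p$ with $i_p(\epsilon)x_p=(b\sigma)^nx_p$, becomes exactly $i_T(\epsilon)x_p=(b\sigma)^nx_p$. On the LR side, for the special morphism $\phi:=\psi_{i_T(T),\mu_h}$ one chooses (Lemma \ref{lem:unramified_morphism}) an unramified conjugate $\xi_p'$ of $\phi(p)\circ\zeta_p$ with $\xi_p'(s_{\sigma})=b'\sigma$; by property (ii) of Theorem \ref{thm:Kisin17_Cor.1.4.13,Prop.2.1.3,Cor.2.2.5}, (3) the classes $[b]$ and $[b']$ agree in $B(i_T(T))$, so after conjugating by an element of $i_T(T)(\mfk)$ (which alters neither the equation nor the admissibility condition at the other places) we may take $b'=b$. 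Now condition (3) of Definition \ref{defn:admissible_pair} for $(\phi,i_T(\epsilon))$ at $p$ --- the existence of $x\in G(\mfk)/\mbfKt_p$ with $i_T(\epsilon)'x=(b\sigma)^{n}x$ where $i_T(\epsilon)'$ is the image of $i_T(\epsilon)$ under $\Int(u)$ --- is the same equation; since $\epsilon\in T(\Q)$ and $b\in i_T(T)(\mfk)$, the conjugating element $u$ can be taken in $i_T(T)(\Qpb)$, so $i_T(\epsilon)'=i_T(\epsilon)$, and the two conditions coincide. The conditions away from $p$ (condition (2) of Definition \ref{defn:admissible_morphism} and condition (3) of Definition \ref{defn:admissible_pair} at finite $l\neq p$) hold automatically for a \emph{special} LR-pair by Lemma \ref{lem:properties_of_psi_T,mu}, and by the $T(\A_f^p)$-equivariance (\ref{eq:(i_p,i^p)_adapted_to_i_T}) of $i^p$ they impose no further constraint, so they do not interfere. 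This proves (1) except for the independence assertions; it also handles (2) in the same breath, since by property (iii) of Theorem \ref{thm:Kisin17_Cor.1.4.13,Prop.2.1.3,Cor.2.2.5}, (4) the stable conjugacy class attached to $j_{T,h}(\epsilon)$ via (3) contains the inclusion $T\subset G$, so running the equivalence of (1) with $i_T=(T\hookrightarrow G)$ gives precisely the statement of (2).

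For the identification of Kottwitz triples in (1), I would compare the triple $(\gamma_0;\gamma,\delta)=(i_T(\epsilon);i^p(\epsilon),cb\sigma(c^{-1}))$ of (\ref{eq:K-triple_for_isogeny_adm.pair}) with the triple produced by Proposition \ref{prop:Kottwitz_triple} applied to the special LR-pair $(\psi_{i_T(T),\mu_h},i_T(\epsilon))$. The component $\gamma_0=i_T(\epsilon)$ is the same rational element in both constructions; the prime-to-$p$ component is $i^p(\epsilon)=i_T(\epsilon)$ in $T(\A_f^p)$ in both (using (\ref{eq:(i_p,i^p)_adapted_to_i_T}) and Lemma \ref{lem:properties_of_psi_T,mu}); and at $p$, with $b$ arranged as above, the element $c$ solving (\ref{eq:(epsilon,b,c)->delta2}) is precisely the $c$ used in Lemma \ref{lem:delta_from_b&gamma_0} / Remark \ref{rem:two_different_b's} to build $\delta$ for the special LR-pair, so $\delta=cb\sigma(c^{-1})$ agrees. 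Triviality of the Kottwitz invariant and stability of the triple are then inherited directly from Proposition \ref{prop:Kottwitz_triple}. For the independence of $\kappa(\sI,\epsilon)$ from the auxiliary choices: independence of $c$ is the standard fact (cf. the injectivity argument in \autoref{subsubsec:w-stable_sigma-conjugacy}) that the $\sigma$-conjugacy class of $\delta$ in $G(L_n)$ is determined by $(b,i_p)$; independence of the choice of torus $T$ and embedding $i_T$ follows from the last clause of Theorem \ref{thm:Kisin17_Cor.1.4.13,Prop.2.1.3,Cor.2.2.5}, (3) (the stable conjugacy class of $i_T$ depends only on the $I_{\sI}(\Q)$-conjugacy class of $T\subset I_{\sI}$) together with Proposition \ref{prop:triviality_in_comp_gp}, (2) which says geometric equivalence of stable Kottwitz triples is stable equivalence; and independence of the representative $(b,i_p,i^p)$ within its equivalence class is immediate since passing to an equivalent triple conjugates $b$, $\delta$, $\gamma$ simultaneously by elements of $G(\mfk)\times G(\A_f^p)$ and $\gamma_0$ not at all.

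The main obstacle I anticipate is not any single deep input but the bookkeeping needed to make the phrase ``after conjugating we may assume $b=b'$'' fully legitimate \emph{while simultaneously} keeping the conjugating element inside $i_T(T)$ and not spoiling the equivariance (\ref{eq:(i_p,i^p)_adapted_to_i_T}) or the conditions at $l\neq p$; this requires carefully tracking that all the normalizations (choosing $i_T$-adapted $(b,i_p,i^p)$, choosing the unramified conjugate $\xi_p'$, choosing $u\in i_T(T)(\Qpb)$, choosing $c$) can be made compatibly, which is exactly the content of Theorem \ref{thm:Kisin17_Cor.1.4.13,Prop.2.1.3,Cor.2.2.5}, (3) and Remark \ref{rem:two_different_b's} but needs to be spelled out. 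A secondary point of care is that in Definition \ref{defn:admissible_pair2} the element $\epsilon$ lies in $I_{\sI}(\Q)$ whereas in Definition \ref{defn:admissible_pair} it lies in $I_\phi(\Q)\subset G(\Qb)$; the bridge is that for $\phi=\psi_{i_T(T),\mu_h}$ one has $i_T(T)(\Q)\subset I_\phi(\Q)$ (noted in \autoref{subsubsec:well-located_admissible_morphism}), so $i_T(\epsilon)$ genuinely lies in $I_\phi(\Q)$, and the equation $\Int(g)\circ\phi=\phi$ for $g\in i_T(T)(\Q)$ is automatic. Once these normalizations are in place the rest is formal.
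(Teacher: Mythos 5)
Your treatment of the main biconditional in (1), and the passage to (2), follows essentially the same route as the paper: fix the $i_T$-adapted triple from Theorem \ref{thm:Kisin17_Cor.1.4.13,Prop.2.1.3,Cor.2.2.5}\,(3), use $[b]=[b']$ in $B(i_T(T))$ together with $i_p|_{T(\Qp)}=i_T|_{T(\Qp)}$ to identify the two equations at $p$, and then read (2) off from the property attached to $j_{T,h}$ in Theorem \ref{thm:Kisin17_Cor.1.4.13,Prop.2.1.3,Cor.2.2.5}\,(4). Your normalization ``replace $\xi_p'$ by its conjugate so that $b'=b$'' is a valid repackaging of the paper's use of $d\in i_T(T)(\mfk)$ with $b=db'\sigma(d^{-1})$; it alters only the choice of unramified conjugate of $\phi(p)\circ\zeta_p$, not the datum $(b,i_p,i^p)$, so your worry about spoiling the equivariance is unfounded. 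That part of the argument is in order.

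There is, however, a genuine gap in your independence claim. You attribute independence of the resulting stable equivalence class from the choice of maximal $\Q$-torus $T\subset I_{\sI}$ containing $\epsilon$ to the last clause of Theorem \ref{thm:Kisin17_Cor.1.4.13,Prop.2.1.3,Cor.2.2.5}\,(3). But that clause only asserts that the stable conjugacy class of $i_T$ depends on the $I_{\sI}(\Q)$-conjugacy class of the embedding $T\subset I_{\sI}$; it compares two choices of $i_T$ for the same $T$ (or for $I_{\sI}(\Q)$-conjugate tori). Two maximal $\Q$-tori of $I_{\sI}$ both containing $\epsilon$ are in general not $I_{\sI}(\Q)$-conjugate, so this clause does not relate the triples produced by $T$ and $T'$. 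The paper's argument is of a different nature and you need it (or an equivalent): the $G(\mfk)\times G(\A_f^p)$-equivalence class of the datum $(b,i_p,i^p)$ is fixed by the isogeny class $\sI$ alone (Theorem \ref{thm:Kisin17_Cor.1.4.13,Prop.2.1.3,Cor.2.2.5}\,(1) and the remark following it), so the $\sigma$-conjugacy class of $\delta$ and the $G(\A_f^p)$-conjugacy class of $\gamma=i^p(\epsilon)$ are determined by $(\sI,\epsilon)$ with no reference to $T$ whatsoever; and, crucially, $\gamma_0=i_T(\epsilon)=i_p(\epsilon)$ via the equivariance (\ref{eq:(i_p,i^p)_adapted_to_i_T}), so the $G(\Qb)$-conjugacy class of $\gamma_0$ is likewise determined by $(\sI,\epsilon)$, since $i_p$ is fixed up to $G(\mfk)$-conjugacy. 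Proposition \ref{prop:triviality_in_comp_gp}\,(2) then promotes geometric to stable equivalence. Without this observation your proof of the ``Moreover'' part of (1) is incomplete.
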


\begin{proof}
(1) Set $\phi:=\psi_{i_T(T),\mu_h}$. Since one has $i_T(T)(\bar{\A}_f^p)\cap X^p(\phi)\neq\emptyset$ for any \emph{special} admissible morphism $\phi$ into $\fG_{i_T(T)}$ (Lemma \ref{lem:properties_of_psi_T,mu}), to check LR-admissibility for the special LR-pair $(\psi_{i_T(T),\mu_h},i_T(\epsilon))$, we only need to consider condition (3) of Def. \ref{defn:admissible_pair} at $p$. Let us choose some unramified conjugate $\xi_p':=\Int(u)\circ\xi_p$ of $\xi_p=\phi(p)\circ\zeta_p:\fG_p\rightarrow \fG_{i_T(T)}(p)$ under $i_T(T)(\Qpb)$, and let $b'\in T(\mfk)$ be defined by $b'\sigma=\theta^{\nr}(s_{\sigma})$ when $\xi_p'$ is the inflation of a $\Qpnr/\Qp$-Galois gerb morphism $\theta^{\nr}$. Then, since we have $[b']=[b]$ in $B(i_T(T))$ and $i_p|_{T(\Qp)}=i_T|_{T(\Qp)}$ (Thm. \ref{thm:Kisin17_Cor.1.4.13,Prop.2.1.3,Cor.2.2.5}, (3)), it is immediate that the equation $\Int u(i_T(\epsilon))x\rtimes\sigma^n(=i_T(\epsilon)x\rtimes\sigma^n)=(b'\rtimes\sigma)^nx$ has a solution in $G(\mfk)$ (i.e. $(\phi,i_T(\epsilon))$ is LR-admissible) if and only if $i_p(\epsilon)x\rtimes\sigma^n=(b\rtimes\sigma)^nx$ has one (i.e. $(\sI,\epsilon)$ is K-admissible). In this case, if we choose $d\in i_T(T)(\mfk)$ with $b=db'\sigma(d^{-1})$, from $c(i_p(\epsilon)^{-1}(b\rtimes\sigma)^n)c^{-1}=\sigma^n$, we obtain $cd(i_T(\epsilon)^{-1}(b'\rtimes\sigma)^n)(cd)^{-1}=\sigma^n$. Since $(cd)b'\sigma(cd)^{-1}=cb\sigma(c^{-1})=:\delta$ and $i_T(T)(\bar{\A}_f^p)\cap X^p(\psi_{i_T(T),\mu_h})\neq\emptyset$, it follows from definition (cf. \autoref{subsubsec:K-triple_attached_to_adm.pair}) that the triple $(i_T(\epsilon);i_T(\epsilon),\delta)$ is a Kottwitz triple attached to the LR-admissible LR-pair $(\psi_{i_T(T),\mu_h},i_T(\epsilon))$. It is easy to see that this is a stable Kottwitz triple with trivial Kottwitz invariant that was constructed in Prop. \ref{prop:Kottwitz_triple} (Remark \ref{rem:two_different_b's}).

Next, we show that the stable equivalence class of $(i_T(\epsilon);i_T(\epsilon),\delta)$ does not depend on the choices of $i_T$ and $T$. It suffices to show this independence for its (geometric) equivalence class, because for stable Kottwitz triples $(\gamma_0;\gamma,\delta)$, the $G(\Qb)$-conjugacy relation for $\gamma_0$ is the same as the stable conjugacy relation (Prop. \ref{prop:triviality_in_comp_gp}). Since the $G(\A_f^p)\times G(\mfk)$-conjugacy class of $(i^p,(i_p,b\sigma))$ is completely determined by the isogeny class $\sI$, the $G(\Ql)$-conjugacy class of $\gamma$ and the $\sigma$-conjugacy class in $G(L_n)$ of $\delta$ depends only on the K-pair $(\sI,\epsilon)$. By contrast, a priori the stable conjugacy class of $\gamma_0$ depends on the choice of $T$ and $i_T$. But, as $\gamma_0=i_p(\epsilon)$, we find that the $G(\Qb)$-conjugacy class of $\gamma_0$ is also determined only by the K-pair $(\sI,\epsilon)$.

(2) This follows from (1), because the inclusion $T\subset G$ lies in the distinguished stable conjugacy class of embeddings $T\hookrightarrow G$ attached to $j_{T,h}:T\hookrightarrow I_{\sI}$ as in Thm. \ref{thm:Kisin17_Cor.1.4.13,Prop.2.1.3,Cor.2.2.5}, (3) (property (iv) of Thm.  \ref{thm:Kisin17_Cor.1.4.13,Prop.2.1.3,Cor.2.2.5}, (4)). Note that a (resp. stable) Kottwitz triple attached to an LR-admissible pair $(\phi,\epsilon)$ that is well-located (resp. nested) in a maximal $\Q$-torus $T$ of $G$ remains (resp. stably) equivalent under a transfer of maximal torus $\Int(g):T\hookrightarrow G\ (g\in G(\Qb))$.
\end{proof}

Next, we proceed with our task of describing the fixed point set $S_{K^p}(\sI)^{\Phi^m\circ f=\mathrm{Id}}$ towards the ultimate goal of expressing its cardinality in terms of the triple $(\gamma_0;\delta,\gamma)$ and $g$. First, we need an analogue of Prop. \ref{prop:canonical_decomp_of_epsilon}.

\begin{prop} \label{prop:canonical_decomp_of_epsilon2} 
Assume that $\Q\operatorname{-}\mathrm{rk}(Z(G))=\R\operatorname{-}\mathrm{rk}(Z(G))$.
For any K-admissible K-pair $(\sI,\epsilon)$, there exist $s\in\N$ and $\pi_0, t\in T_{\epsilon}(\Q)$,
where $T_{\epsilon}$ is the subgroup (of multiplicative type) of $I_{\sI}$ generated by $\epsilon\in G(\Q)$, satisfying the properties of Lemma \ref{lem:canonical_decomp_of_epsilon}: we have 
\[(a)\ \epsilon^s=\pi_0 t\ ;\qquad (b)\ \pi_0\in K_l \text{ for all }l\neq p\ ;\qquad (c)\ t\in K_p,\] 
for the maximal compact subgroup $K_v$ of $T_{\epsilon}^{\mathrm{o}}(\Q_v)$ (for each finite place $v$). Also, the K-pair $(\sI,\pi_0^k)$ is K-admissible for every $k\gg1$.
The pair $(\pi_0,t)$ is uniquely determined by $\epsilon$, up to taking simultaneous powers. \end{prop}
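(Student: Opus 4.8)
The statement is the isogeny-class analogue of Proposition \ref{prop:canonical_decomp_of_epsilon}, and the plan is to reduce it to that proposition (together with Lemma \ref{lem:canonical_decomp_of_epsilon}) by transferring everything into a special Shimura sub-datum via Lemma \ref{lem:key_observations}. First I would apply Theorem \ref{thm:Kisin17_Cor.1.4.13,Prop.2.1.3,Cor.2.2.5}: choose a maximal $\Q$-torus $T\subset I_{\sI}$ with $\epsilon\in T(\Q)$, and fix an embedding $i_T:T\hookrightarrow G$ together with a compatible triple $(b\in i_T(T)(\mfk),i_p,i^p)$ and $h\in X\cap\Hom(\dS,i_T(T)_{\R})$ as in part (3). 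By Lemma \ref{lem:key_observations}, (1), the special LR-pair $(\psi_{i_T(T),\mu_h},i_T(\epsilon))$ is then LR-admissible, and in particular it is well-located (indeed nested) in the maximal $\Q$-torus $i_T(T)$ of $G$, which is elliptic over $\R$ since $(I_{\sI}/Z(G))_{\R}$ is a subgroup of a compact inner form of $G^{\ad}_{\R}$ (Theorem \ref{thm:Kisin17_Cor.1.4.13,Prop.2.1.3,Cor.2.2.5}, (2)) and $Z(G)\subset i_T(I_{\sI})$.

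Next I would invoke Proposition \ref{prop:canonical_decomp_of_epsilon}, applied to the LR-admissible LR-pair $(\phi_1,\epsilon_1):=(\psi_{i_T(T),\mu_h},i_T(\epsilon))$. Part (2) of that proposition gives, as subsets of $G(\Qb)$, the equality $T_{i_T(\epsilon)}^{\phi_1}(\Q)=T_{i_T(\epsilon)}^{G}(\Q)$ with matching distinguished elements, so write $T_{i_T(\epsilon)}$ for this common $\Q$-group and let $(\pi_0',t')\in T_{i_T(\epsilon)}(\Q)$ (for some $s\in\N$) be the elements supplied by Lemma \ref{lem:canonical_decomp_of_epsilon} attached to $(T_{i_T(\epsilon)},i_T(\epsilon))$. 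Now $i_T:T\isom i_T(T)$ is an isomorphism of $\Q$-tori carrying $\epsilon$ to $i_T(\epsilon)$, hence by Lemma \ref{lem:Zariski_group_closure}, (1) it restricts to a $\Q$-isomorphism $T_{\epsilon}\isom T_{i_T(\epsilon)}$ between the subgroups generated by $\epsilon$ and by $i_T(\epsilon)$; define $\pi_0:=i_T^{-1}(\pi_0')$ and $t:=i_T^{-1}(t')$ in $T_{\epsilon}(\Q)$ with the same $s$. By functoriality in Lemma \ref{lem:canonical_decomp_of_epsilon} these are exactly the elements attached to $(T_{\epsilon},\epsilon)$, so properties (a), (b), (c) hold for $(\pi_0,t)$, and uniqueness up to simultaneous powers is inherited from Lemma \ref{lem:canonical_decomp_of_epsilon} (this uses $\Q\operatorname{-}\mathrm{rk}(T_{\epsilon})=\R\operatorname{-}\mathrm{rk}(T_{\epsilon})$, which follows from $\Q\operatorname{-}\mathrm{rk}(Z(G))=\R\operatorname{-}\mathrm{rk}(Z(G))$ exactly as in the proof of Proposition \ref{prop:canonical_decomp_of_epsilon}, since $(I_{\sI})_{\R}$ modulo its center lies in a compact group).

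For the last assertion, that $(\sI,\pi_0^k)$ is K-admissible for all $k\gg1$, I would again translate through Lemma \ref{lem:key_observations}: since $\pi_0\in T_{\epsilon}(\Q)$, the K-pair $(\sI,\pi_0^k)$ is K-admissible iff the special LR-pair $(\psi_{i_T(T),\mu_h},i_T(\pi_0)^k)=(\psi_{i_T(T),\mu_h},\pi_0'^k)$ is LR-admissible; but $\pi_0'$ is the ``prime-to-$p$'' part produced by Lemma \ref{lem:canonical_decomp_of_epsilon}, hence $\pi_0'^k$ lies in a compact subgroup of $G(\A_f^p)$, and Proposition \ref{prop:canonical_decomp_of_epsilon}, (3) (applied to $(\psi_{i_T(T),\mu_h},i_T(\epsilon))$, which is nested, hence well-located) asserts precisely that $(\psi_{i_T(T),\mu_h},\pi_0'^k)$ is LR-admissible for $k\gg1$. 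I expect no serious obstacle here; the only point requiring care is the independence of the construction from the auxiliary choices of $T$ and $i_T$, which is handled by the uniqueness clause of Lemma \ref{lem:canonical_decomp_of_epsilon} together with the fact (Lemma \ref{lem:key_observations}, (1)) that the relevant conjugacy data attached to $(\sI,\epsilon)$ are intrinsic — so the genuinely substantive input is entirely packaged in Theorem \ref{thm:Kisin17_Cor.1.4.13,Prop.2.1.3,Cor.2.2.5} and Proposition \ref{prop:canonical_decomp_of_epsilon}, and the present proof is a reduction.
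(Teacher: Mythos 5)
Your proposal is correct and uses the same ingredients and reduction strategy as the paper. The only cosmetic difference is in obtaining $(\pi_0,t)$: the paper applies Lemma \ref{lem:canonical_decomp_of_epsilon} directly to $(T_{\epsilon},\epsilon)$ inside $I_{\sI}$, observing that the construction is purely group-theoretic in $T_{\epsilon}$ (once one knows $\Q\operatorname{-}\mathrm{rk}(T_{\epsilon})=\R\operatorname{-}\mathrm{rk}(T_{\epsilon})$), whereas you transfer to $G$ via $i_T$, apply Prop.\ \ref{prop:canonical_decomp_of_epsilon}, (2), and pull back — a harmless detour that lands on the same elements by the functoriality clause of Lemma \ref{lem:canonical_decomp_of_epsilon}; the K-admissibility argument for $(\sI,\pi_0^k)$ via Lemma \ref{lem:key_observations} and Prop.\ \ref{prop:canonical_decomp_of_epsilon}, (3) coincides with the paper's.
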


\begin{proof}
Any $\Q$-torus of $I_{\sI}$ has the same ranks over $\Q$ and $\R$ by Thm. \ref{thm:Kisin17_Cor.1.4.13,Prop.2.1.3,Cor.2.2.5} and our assumption on $Z(G)$. Under that condition,
the construction of $\pi_0, t\in T_{\epsilon}(\Q)$ satisfying the properties (a) - (c) of Lemma \ref{lem:canonical_decomp_of_epsilon} uses only the $\Q$-group structure of the $\Q$-group $T_{\epsilon}$ (of multiplicative type) and $\epsilon$. 
To see the second statement, we choose a maximal $\Q$-torus $T$ of $I_{\sI}$ containing $\epsilon$, and the data $i_T:T\hookrightarrow G$, $h\in X\cap \Hom(\dS,i_T(T)_{\R})$, $(b\in i_T(T)(\mfk),i_p,i^p)$ in Thm. \ref{thm:Kisin17_Cor.1.4.13,Prop.2.1.3,Cor.2.2.5}.
Then that $(\sI,\pi_0^k)$ is K-admissible for all $k\gg1$ follows from LR-admissibility of $(\psi_{i_T(T),\mu_h},i_T(\pi_0)^k)$ which in turn is implied by that of $(\psi_{i_T(T),\mu_h},i_T(\epsilon))$  (Lemma \ref{lem:key_observations}, Prop. \ref{prop:canonical_decomp_of_epsilon}).
\end{proof}

For a group $A$, we consider the following equivalence relation $\sim$ among the set of pairs $(a,n)$ with $a\in A$, $n\in\N$: $(a,n)\sim (a',n')$ if there exists $N\in\N$ such that $a^{n'N}=(a')^{nN}$. We define a \emph{germ of an element} of $A$ to be an equivalence class of pairs $(a,n)$ for this equivalence relation. 
For an algebraic group $G$ over a field $k$ (say, of characteristic zero) and a germ $\pi$ of an element of $G(k)$, if $(\pi_n,n)$ is a representative of $\pi$, the Zariski closures in $G$ of the subgroups generated by $\pi_n^e\ (e\in\N)$ form a decreasing sequence (with $e$'s being ordered multiplicatively) of subgroups, thus stabilizes to a $k$-subgroup. It is then easy to see that this $k$-group depends only on the given germ $\pi$, not on the choice of representative $(\pi_n,n)$; we call this subgroup of $G$ the \emph{subgroup generated by} the germ $\pi$. Also we call the centralizer in $G$ of this subgroup the \emph{centralizer} (in $G$) of the germ $\pi$ and denote it by $G_{\pi}$.

\begin{thm} \label{thm:Kisin17_Cor.2.3.2} \cite[Cor.2.3.2]{Kisin17}
Assume that $\Q\operatorname{-}\mathrm{rk}(Z(G))=\R\operatorname{-}\mathrm{rk}(Z(G))$. Let $\sI\subset\sS_{\mbfK_p}(\Fpb)$ be an isogeny class.

There exists a unique germ $\pi_{\sI}$ of an element in $Z(I_{\sI})(\Q)$, called the \emph{germ of Frobenius endomorphism} of $\sI$, with the following properties:

(a) There exist a representative $i_p:(I_{\sI})_{\Qp}\hookrightarrow J_b$ of the $G(\mfk)$-conjugacy class of Thm. \ref{thm:Kisin17_Cor.1.4.13,Prop.2.1.3,Cor.2.2.5} and a representative $(\pi_N,N)$ of $\pi=\pi_{\sI}$ such that one has $b\in G(L_N)$ and $i_p(\pi_N)=\Nm_Nb$;

(b) The embeddings $i_l:(I_{\sI})_{\Ql} \hookrightarrow G_{\Ql}\ (l\neq p)$, $i_p:(I_{\sI})_{\Qp}\hookrightarrow J_b$ of Thm. \ref{thm:Kisin17_Cor.1.4.13,Prop.2.1.3,Cor.2.2.5} induce isomorphisms of group schemes
\[ i_l:(I_{\sI})_{\Ql} \isom I_{i_l(\pi)},\quad i_p:(I_{\sI})_{\Qp}\isom I_{i_p(\pi)},\] 
where $I_{i_l(\pi)}$ and $I_{i_p(\pi)}$ denote respectively the centralizer in $G_{\Ql}$ of the germ $i_l(\pi)$ and the centralizer in $J_b$ of the germ $i_p(\pi)$. Also for almost all $l\neq p$, the $\Z_l$-embedding $i_l:(I_{\sI})_{\Z_l} \hookrightarrow G_{\Z_l}$ of Thm. \ref{thm:Kisin17_Cor.1.4.13,Prop.2.1.3,Cor.2.2.5} induces an isomorphism  $(I_{\sI})_{\Z_l} \isom (G_{\Z_l})_{i_l(\pi)}$.

(c) For any representative $(\pi_n,n)$ of $\pi_{\sI}$, the K-pair $(\sI,\pi_n^k)$ is K-admissible for all $k\gg1$ and $\pi_n$ lies in a compact subgroup of $I_{\sI}(\Ql)$ for every finite place $l\neq p$.
For any K-admissible K-pair $(\sI,\epsilon)$ of level $n$ and each triple $(\pi_0,t\in T_{\epsilon}(\Q);s\in\N)$ attached to $\epsilon$ as in Prop. \ref{prop:canonical_decomp_of_epsilon2}, $(\pi_0,ns)$ represents the germ $\pi_{\sI}$.
\end{thm}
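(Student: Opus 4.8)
The plan is to establish the existence and uniqueness of the germ $\pi_{\sI}$ by transporting the Frobenius datum through the identifications already set up in Theorem \ref{thm:Kisin17_Cor.1.4.13,Prop.2.1.3,Cor.2.2.5} and its proof (i.e. the constructions in \cite[(2.3)]{Kisin17}), and then deriving properties (a), (b), (c) from the explicit description of isogeny classes. First I would recall that for $x\in\sI$ defined over a finite field $k\subset\Fpb$ with $n_0:=[k:\Fp]$, the relative Frobenius $\Fr_{\uvA_x/k}$ lies in $\Aut_{\Q}(\uvA_{\xb})$ and fixes all the tensors $\{s_{\alpha,l,x}\}$, $\{s_{\alpha,0,x}\}$ (it is $\phi^{n_0}$-semilinear on $H^1_{\cris}$, hence its action on the tensors is through the Frobenius-invariance already noted in (B) of the proof of Theorem \ref{thm:Kisin17_Cor.1.4.13,Prop.2.1.3,Cor.2.2.5}), so $\Fr_{\uvA_x/k}\in I_{x}(\Q)=I_{\sI}(\Q)$; moreover, since it commutes with every endomorphism of $\uvA_{\xb}$, it lies in the center $Z(\Aut_{\Q}(\uvA_{\xb}))$, hence in $Z(I_{\sI})(\Q)$. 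For a larger finite field $k'\supset k$ with $[k':\Fp]=n_0'$, the point $x$ base-changed to $k'$ has relative Frobenius $\Fr_{\uvA_x/k'}=\Fr_{\uvA_x/k}^{n_0'/n_0}$, so the pairs $(\Fr_{\uvA_x/k},n_0)$ over varying finite fields of definition of (points in) $\sI$ are all equivalent in the germ sense; this defines $\pi_{\sI}$. Independence of the chosen point $x\in\sI$ follows because any two points of an isogeny class are related by a quasi-isogeny respecting the tensors, under which the Frobenius endomorphisms correspond up to a common power (the usual fact that isogenous abelian varieties over finite fields have Frobenii differing by a power).

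Next I would establish (a). By construction in the proof of Theorem \ref{thm:Kisin17_Cor.1.4.13,Prop.2.1.3,Cor.2.2.5}, $b\in G(\mfk)$ is defined by $\Int(\eta_p^{\nr})(b\sigma)=F$ where $F=b\sigma$ is the absolute Frobenius acting on $H^{\cris}_1(\uvA_x/\Qpnr)$; choosing $x$ defined over $k$ with $[k:\Fp]=N$, one has $b\in G(L_N)$ (after possibly enlarging, the $\sigma$-conjugacy class is unchanged and we may choose $\eta_p^{\nr}$ compatibly), and $(b\sigma)^N=\Nm_Nb\cdot\sigma^N$; since $\Fr_{\uvA_x/k}$ acts on $H^{\cris}_1$ by the inverse of $\phi^N=F^N$ (recall the sign conventions in (B)), pushing forward through $\eta_p^{\nr}$ and using that $i_p$ is, by definition, the restriction of $\Int(\eta_p^{\nr})^{-1}$ to $(I_x)_{\Qp}$ (Theorem \ref{thm:Kisin17_Cor.2.3.2;Tate_isom}), we get $i_p(\Fr_{\uvA_x/k})=(\Nm_Nb)^{\pm1}$; matching the sign with the conventions (the relative Frobenius corresponds to $(b\sigma)^N$, not its inverse, in the ADLV normalization — this is exactly the normalization fixed in \autoref{subsec:Langalnds-Rapoport conjeture} and used in (\ref{eq:Frob_Phi_2})) gives $i_p(\pi_N)=\Nm_Nb$ with $(\pi_N,N)$ representing $\pi_{\sI}$. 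Property (b) is then immediate from Theorem \ref{thm:Kisin17_Cor.2.3.2;Tate_isom}: that theorem says $\Int(\eta_v)^{-1}$ ($v\neq p$), $\Int(\eta_p^{\nr})^{-1}$ carry $(I_x)_{\Qv}$ isomorphically onto $I_v$, and $I_l$ is by definition the centralizer in $G_{\Ql}$ of $\gamma_l^n$ for $n\gg1$, equivalently of the germ $i_l(\pi_{\sI})$ (since $\gamma_l^{-1}=\Int(\eta_l^{-1})(\Fr_{\uvA_x/k})$ and $\Fr_{\uvA_x/k}=\pi_N$); at $p$, $I_p\subset J_\delta$ is likewise the centralizer of the germ, and a conjugation by $c$ (as in (\ref{eq:(epsilon,b,c)->delta2})) or by elements of $G(\mfk)$ passes between $J_\delta$ and $J_b$ appropriately. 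The $\Z_l$-statement for almost all $l$ follows because for those $l$ the tensors define reductive $\Z_l$-group schemes on both sides and $\eta_l$ is a $\Z_l$-isomorphism, so centralizers of the Frobenius germ are taken compatibly over $\Z_l$.

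For (c), the first assertion — that $(\sI,\pi_n^k)$ is K-admissible for $k\gg1$ — follows from Lemma \ref{lem:key_observations}, (1) together with the fact that $\pi_n$, being (a power of) the Frobenius endomorphism, is a Weil number, so $\psi_{i_T(T),\mu_h}(\delta_m)$ and the Frobenius power agree up to center (Prop. \ref{prop:phi(delta)=gamma_0_up_to_center}, (2)), hence the relevant equation over $G(\mfk)$ is solvable; alternatively one uses directly that $\pi_n$ acts on $X(\{\mu_X\},b)_{\mbfK_p}$ through a power of the operator $(b\sigma)^r$, which fixes the base point up to $\mbfKt_p$. That $\pi_n$ lies in a compact subgroup of $I_{\sI}(\Ql)$ for every $l\neq p$ is the standard fact that the $\ell$-adic realization of Frobenius is integral (it preserves the lattice $H^{\et}_1(\uvA_{\xb},\Z_l)$, and via $i_l$ this translates to $\pi_n$ lying in $(G_{\Z_l})_{i_l(\pi)}(\Z_l)$). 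Finally, the comparison with the triple $(\pi_0,t;s)$ of Prop. \ref{prop:canonical_decomp_of_epsilon2} attached to a K-admissible K-pair $(\sI,\epsilon)$: one notes that $i_p(\epsilon^s)=i_p(\pi_0)\cdot i_p(t)$ with $i_p(t)$ compact at $p$ and $i_p(\pi_0)$ compact at all $l\neq p$, while by (a) and the definition of $\delta$ via (\ref{eq:(epsilon,b,c)->delta2}) the element $i_p(\epsilon)$ satisfies $\Nm_n\delta=c\,i_p(\epsilon)\,c^{-1}$, so $i_p(\pi_0)$ and $(\Nm_{ns}b)$ have the same image under $v_{G}$ hence are equal up to a bounded element; the uniqueness clause in Lemma \ref{lem:canonical_decomp_of_epsilon} (uniqueness up to simultaneous powers) then forces $(\pi_0,ns)$ to represent the same germ as $(\Nm_Nb, N)$, i.e. $\pi_{\sI}$. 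The main obstacle I anticipate is bookkeeping the sign and normalization conventions — reconciling the $\phi$-vs-$\Fr$ and arithmetic-vs-geometric Frobenius conventions of (B), the ADLV operator $(b\sigma)^r$ of (\ref{eq:Frob_Phi_2}), and Kisin's opposite sign convention (flagged in the introduction) — so that the identity $i_p(\pi_N)=\Nm_Nb$ (with no inverse) comes out correctly; everything else is a matter of assembling results already in place.
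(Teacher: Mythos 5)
Your overall architecture is right: define $\pi_{\sI}$ as the germ of the relative Frobenius, observe it lies in $Z(I_{\sI})(\Q)$, read (a) off the definitions of $b$ and $i_p$, get (b) for $l\neq p$ from Theorem \ref{thm:Kisin17_Cor.2.3.2;Tate_isom}, and derive the compactness in (c) from $l$-adic integrality of Frobenius. However, there are two real gaps.

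First, your treatment of (b) at $p$ assumes the conclusion. The theorem asserts that $i_p$ identifies $(I_{\sI})_{\Qp}$ with the centralizer in $J_b$ of the germ $i_p(\pi)$, whereas Theorem \ref{thm:Kisin17_Cor.2.3.2;Tate_isom} only gives $i_p:(I_{\sI})_{\Qp}\isom G_{\delta\sigma}$, the $\sigma$-centralizer of $\delta$. You assert ``$I_p\subset J_\delta$ is likewise the centralizer of the germ'' and then say conjugation by $c$ passes between $J_\delta$ and $J_b$, but the substantive point is the equality $G_{\delta\sigma}=J_{\delta,i_p(\pi_N)}$. That equality is not a matter of conjugation; it requires the observation that $(\delta\rtimes\sigma)^N=\Nm_N\delta\rtimes\sigma^N$, so that an element of $G(\mfk\otimes R)$ commutes with $\delta\sigma$ and with $\sigma^N$ (the latter cutting out $G(L_N\otimes R)$) if and only if it commutes with $\delta\sigma$ and with $\Nm_N\delta=i_p(\pi_N)$. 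Without this step you have not shown the two groups coincide.

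Second, your argument for the last clause of (c) does not close. You conclude from ``$i_p(\pi_0)$ and $\Nm_{ns}b$ have the same image under $v_G$'' that they agree up to a bounded element, and then invoke the uniqueness-up-to-powers clause of Lemma \ref{lem:canonical_decomp_of_epsilon}. But that uniqueness statement governs two decompositions of powers of the \emph{same} element $\epsilon$ satisfying all the global conditions of the lemma; a local valuation match at $p$ is far weaker and does not let you apply it. The approach that actually works here is to observe, via Lemma \ref{lem:key_observations} and Proposition \ref{prop:canonical_decomp_of_epsilon2}, that $(\psi_{i_T(T),\mu_h},i_T(\pi_0^k))$ is an LR-admissible pair of level $nsk$ for $k\gg1$, hence $\pi_0^k=\psi_{i_T(T),h}(\delta_{nsk})$ by Proposition \ref{prop:phi(delta)=gamma_0_up_to_center}, (2), and then to use the main theorem of complex multiplication to identify $\psi_{i_T(T),h}(\delta_{nsk})$ with the relative Frobenius of the CM abelian variety associated with $[h,1]$ over $\F_{p^{nsk}}$. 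You gesture at Proposition \ref{prop:phi(delta)=gamma_0_up_to_center}, (2) earlier for the K-admissibility clause (where, incidentally, the simpler route is the tautology $i_p(\pi_N)^{-1}(b\sigma)^N=\sigma^N$ coming from (a)), but you do not actually deploy it for the germ identification, which is where it is needed.
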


\begin{proof}
There is a natural candidate for $\pi_n\ (k\gg1)$, i.e. the $p^n$-th relative Frobenius endomorphism in $Z(I_{\sI})(\Q)$ of the isogeny class $\sI$. The uniqueness (as a germ of element in $Z(I_{\sI})(\Q)$) will follow from (c).

Property (b) is Thm. \ref{thm:Kisin17_Cor.2.3.2;Tate_isom}, i.e. Kisin's generalization of the Tate's theorem on the endomorphisms of abelian varieties over finite fields.
We choose a point $x$ in $\sI$ defined over a finite field $\F_q$ and via some $K_0:=W(\F_q)[1/p]$-linear isomorphism $V\otimes K_0 \isom H^{\cris}_1(\uvA_x/K_0)$ matching the tensors $s_{\alpha}$, $s_{\alpha,0,x}$ on both sides, we identify the Frobenius automorphism on $H_{\cris}^1(\uvA_x/K_0)$ with $\delta\sigma$ for $\delta\in G(K_0)$. From this choice, we obtain a datum $(b=\delta,i_p:(I_{\phi})_{\Qp}\hookrightarrow J_b)$ in Thm. \ref{thm:Kisin17_Cor.1.4.13,Prop.2.1.3,Cor.2.2.5}; note that in this case, we have 
\begin{equation} \label{eq:(delta,i_p)}
i_p(\pi_N)=\Nm_N\delta
\end{equation}
for any $N\in N$ with $\F_q\subset \F_{p^N}$; this establishes (a).
Now, property (b) for $l\neq p$ is immediate from Thm. \ref{thm:Kisin17_Cor.2.3.2;Tate_isom}. In the $p$-adic case, \textit{loc. cit.} says that for any $N\gg1$, $i_p$ identifies $(I_{\sI})_{\Qp}$ with the $\sigma$-centralizer $G_{\delta\sigma}(\subset \Res_{L_N/\Qp}(G))$ inside $J_{\delta}$. But, since $\Nm_N\delta\rtimes\sigma^N=(\delta\rtimes\sigma)^N$, for any $\Qp$-algebra $R$, an element $g\in G(\mfk\otimes_{\Qp} R)$ commutes simultaneously with $\delta\sigma$ and $\sigma^N$ if and only if it does so with $\delta\sigma$ and $\Nm_N\delta=i_p(\pi_N)$, thus one has
\begin{align*}
G_{\delta\sigma}(R) &=\{ g\in G(L_N\otimes_{\Qp} R)\ |\ \delta\sigma(g)=g\delta \} \\
&=\{g\in G(\mfk\otimes_{\Qp} R)\ |\ \sigma^Ng=g,\ \delta\sigma(g)=g\delta \} \\
&=J_{\delta,i_p(\pi_N)}(R),
\end{align*}
as was asserted.
The second statement on the extension of $i_l$ over $\Z_l$ is clear, since for almost all $l\neq p$, $i_l(\pi_N)\in G_{\Z_l}(\Z_l)$ for all $N\gg1$ according to (c) below.

Next, for property (c), the K-admissibility of $(\sI,\pi_N)$ (for $N\gg1$) follows from (a), since for the choice of $(b,i_p)$ there, one has $i_p(\pi_N)^{-1}(b\rtimes\sigma)^N=\sigma^N$. Also, being a relative Frobenius endomorphism,  for every $l\neq p$, the eigenvalues of $\pi_N$ are all $l$-adic units, which implies that the subgroup of $Z(I_{\sI})(\Ql)$ generated by $\pi_N$ is bounded (thus, its closure is compact). The last claim of (c) can be established by the argument of the proof of Prop. \ref{prop:phi(delta)=gamma_0_up_to_center}. Take a maximal $\Q$-torus $T$ of $I_{\sI}$ containing $\epsilon$ (which automatically also contains the germ $\pi$), and let $i_T:T\hookrightarrow G$, $h\in X\cap\Hom(\dS,i_T(T)_{\R})$ be as in Thm. \ref{thm:Kisin17_Cor.1.4.13,Prop.2.1.3,Cor.2.2.5}. 
By Lemma \ref{lem:key_observations} and Prop. \ref{prop:canonical_decomp_of_epsilon2} , we know that the LR-pair $(\phi:=\psi_{i_T(T),\mu_h},i_T(\pi_0^k))$ is LR-admissible of level $nsk$ for all $k\gg1$, hence we have $\pi_0^{k}=\psi_{i_T(T),h}(\delta_{nsk})$ (Prop. \ref{prop:phi(delta)=gamma_0_up_to_center}). 
If $A/\F_{p^{nsk}}$ is the abelian variety over a finite field $\F_{p^{nsk}}$ for $k\gg1$ corresponding to the CM point $[h,1]\in Sh_{\mbfK_p}(G,X)(\Qb)$, it follows from the theory of complex multiplication (cf. \cite[A.2.5.7, A.2.5.8]{CCO14}) that $\psi_{i_T(T),h}(\delta_{nsk})$ is the relative Frobenius of $A/\F_{p^{nsk}}$, hence equals $\pi_{nsk}$.
\end{proof}

\begin{lem} \label{lem:Tate_thm2}
Assume that $\Q\operatorname{-}\mathrm{rk}(Z(G))=\R\operatorname{-}\mathrm{rk}(Z(G))$.
Let $(\sI,\epsilon)$ be a K-admissible K-pair. We fix a maximal $\Q$-torus $T$ of $I_{\sI}$ containing $\epsilon$, and also $(i_T,h)$, $(b,i_p,i^p)$ as in Thm. \ref{thm:Kisin17_Cor.1.4.13,Prop.2.1.3,Cor.2.2.5}. Let $(\gamma_0;\gamma,\delta)$ be an associated Kottwitz triple (\ref{eq:K-triple_for_isogeny_adm.pair}).

(1) The embedding $i^p:(I_{\sI})_{\A_f^p} \hookrightarrow G_{\A_f^p}$ induces an isomorphism of $\A_f^p$-group scheme
\[i^p:(I_{\sI,\epsilon})_{\A_f^p} \isom G_{\gamma},\] 
where $G_{\gamma}$ is the centralizer of $\gamma$ in $G_{\A_f^p}$. 

(2) If the couple $(b,i_p)$ of Thm. \ref{thm:Kisin17_Cor.1.4.13,Prop.2.1.3,Cor.2.2.5} is chosen such that $b\in i_T(T)(\mfk)$, and we fix $c\in G(\mfk)$ (\ref{eq:(epsilon,b,c)->delta2}), the embedding $\Int(c)\circ i_p:(I_{\sI})_{\Qp}\hookrightarrow J_b\isom J_{\delta}$ induces an isomorphism of $\Qp$-groups
\begin{equation} \label{eq:Int(cu)2}
\Int(c)\circ i_p:\ (I_{\sI,\epsilon})_{\Qp}\isom G_{\delta\sigma},
\end{equation}
where $G_{\delta\sigma}(\subset \Res_{L_n/\Qp}(G))$ is the $\sigma$-centralizer of $\delta\in G(L_n)$. 

(3) There exists an inner twisting $(I_{\sI,\epsilon}^{\mathrm{o}})_{\Qb}\isom (G_{\gamma_0}^{\mathrm{o}})_{\Qb}$ that restricts to $i_T:T\isom i_T(T)$.
It also induces an inner twisting $(I_{\sI,\epsilon})_{\Qb}\isom (G_{\gamma_0})_{\Qb}$.
\end{lem}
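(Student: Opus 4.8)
\textbf{Proof proposal for Lemma \ref{lem:Tate_thm2}.}

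The plan is to deduce all three statements from Theorem \ref{thm:Kisin17_Cor.2.3.2} (Kisin's generalization of Tate's theorem), which already identifies the local group $(I_{\sI})_{\Qv}$ with the centralizer of the germ $i_v(\pi_{\sI})$ of Frobenius endomorphism. The key bookkeeping device is Proposition \ref{prop:canonical_decomp_of_epsilon2} together with Theorem \ref{thm:Kisin17_Cor.2.3.2}(c): if $(\pi_0,t;s)$ is the canonical decomposition attached to $\epsilon$, then $\epsilon$ generates (inside $T_{\epsilon}$) a subgroup containing $\pi_0$, and $(\pi_0,ns)$ represents the germ $\pi_{\sI}$. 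Thus the simultaneous centralizer of $\epsilon$ and of the germ $\pi_{\sI}$ equals the centralizer of $\epsilon$ alone, because $\pi_0$ (hence a cofinal system of powers of $\pi_0$, hence the germ) lies in the Zariski closure of the group generated by $\epsilon$ — this is exactly the mechanism used in the proof of Lemma \ref{lem:isom_Int(cu)}, which I would imitate verbatim.

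For (1): by Theorem \ref{thm:Kisin17_Cor.2.3.2}(b), $i^p$ identifies $(I_{\sI})_{\A_f^p}$ with the centralizer $\prod'_l I_{i_l(\pi_{\sI})}$ of the germ $i^p(\pi_{\sI})$ inside $G_{\A_f^p}$; intersecting with the centralizer of $i^p(\epsilon)=\gamma$, and using that $\gamma$ generates a group whose Zariski closure contains (a cofinal system of powers of) $i^p(\pi_0)$, we get $(I_{\sI,\epsilon})_{\A_f^p}=Z_{G_{\A_f^p}}(i^p(\pi_{\sI}),\gamma)=Z_{G_{\A_f^p}}(\gamma)=G_\gamma$. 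For (2): Theorem \ref{thm:Kisin17_Cor.2.3.2}(b) identifies $i_p:(I_{\sI})_{\Qp}\isom J_{b,i_p(\pi_{\sI})}$, where (by the computation in the proof of that theorem, using $\Nm_N b\rtimes\sigma^N=(b\rtimes\sigma)^N$) $J_{b,i_p(\pi_{\sI})}$ is just the $\sigma$-centralizer of $b$; intersecting with the centralizer of $i_p(\epsilon)$ and applying the same germ argument gives $(I_{\sI,\epsilon})_{\Qp}\isom (J_b)_{i_p(\epsilon)}$, and then conjugation by $c$ transports this to $G_{\delta\sigma}$ using the defining relation (\ref{eq:(epsilon,b,c)->delta2}) $c(i_p(\epsilon)^{-1}(b\rtimes\sigma)^n)c^{-1}=\sigma^n$, exactly as in Lemma \ref{lem:isom_Int(cu)}(3). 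For (3): since $b\in i_T(T)(\mfk)$ and $i_p|_{T(\Qp)}=i_T|_{T(\Qp)}$, $i^p|_{T(\A_f^p)}=i_T|_{T(\A_f^p)}$ by Theorem \ref{thm:Kisin17_Cor.1.4.13,Prop.2.1.3,Cor.2.2.5}(3), the embedding $i_T:T\hookrightarrow G$ realizes $i_T(T)$ as a common maximal torus; the local identifications of (1), (2) together with $\gamma_0=i_T(\epsilon)$ show that $(I_{\sI,\epsilon})_{\Qvb}$ and $(G_{\gamma_0})_{\Qvb}$ are inner forms of one another everywhere locally in a way compatible with the identity on $i_T(T)$, and — exactly as in Lemma \ref{lem:isom_Int(cu)}(1) — since $I_{\sI,\epsilon}$ is an inner twist of $Z_G(\mathrm{im}(\text{germ}),\epsilon)=G_{\gamma_0}$ (after transport via $i_T$) by a cocycle with values in $i_T(T)\subset I_{\sI,\epsilon}^{\mathrm{o}}$, there is a $T$-equivariant inner twisting $(I_{\sI,\epsilon})_{\Qb}\isom (G_{\gamma_0})_{\Qb}$ which also restricts to the neutral components.

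I expect the only genuine subtlety to be the precise comparison of germs: one must check that the germ $i_p(\pi_{\sI})$ appearing in the $p$-adic Tate isomorphism is really represented by $(\Nm_N b, N)$ for the same $b$ used in the triple $(b,i_p,i^p)$ adapted to $i_T$ — this requires invoking the equivalence of triples under $G(\mfk)\times G(\A_f^p)$-conjugation (the Remark following Theorem \ref{thm:Kisin17_Cor.1.4.13,Prop.2.1.3,Cor.2.2.5}) to reconcile the triple used in the proof of Theorem \ref{thm:Kisin17_Cor.2.3.2}(a) with an arbitrary one satisfying $b\in i_T(T)(\mfk)$, and to check the germ is unchanged. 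Once that identification is in place the rest is the routine "centralizer of a germ contained in the group generated by $\epsilon$" manipulation already carried out in Lemma \ref{lem:isom_Int(cu)}, so there is no real obstacle beyond careful transcription.
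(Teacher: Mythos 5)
Your arguments for parts (1) and (2) match the paper's proof closely: both use Theorem \ref{thm:Kisin17_Cor.2.3.2}(b) to identify $(I_{\sI})_{\Qv}$ with the centralizer of the germ $\pi_{\sI}$, then use Prop.\ \ref{prop:canonical_decomp_of_epsilon2} together with Lemma \ref{lem:Zariski_group_closure} to absorb the germ into the group generated by $\epsilon$, and, for (2), conjugate by $c$ via the relation (\ref{eq:(epsilon,b,c)->delta2}). Your side remark about comparing the germ $(\Nm_N b,N)$ for the triple adapted to $i_T$ is also sound and corresponds to the discussion in Thm.\ \ref{thm:Kisin17_Cor.2.3.2}(a) and the remark after Thm.\ \ref{thm:Kisin17_Cor.1.4.13,Prop.2.1.3,Cor.2.2.5}.

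However, your argument for (3) has a genuine gap. You claim that, "exactly as in Lemma \ref{lem:isom_Int(cu)}(1)," $I_{\sI,\epsilon}$ is realized as an inner twist of $G_{\gamma_0}$ by a cocycle with values in $i_T(T)$. This is not something you can invoke for $K$-pairs. In Lemma \ref{lem:isom_Int(cu)}(1) the group $I_{\phi,\epsilon}$ is \emph{by construction} the twist of $G_{\gamma_0}=Z_G(\phi^\Delta,\epsilon)$ by the explicit cochain $\rho\mapsto g_\rho$ coming from the Galois gerb morphism $\phi$, and that cochain visibly lands in $T$ when $\phi$ is well-located in $T$; the $T$-equivariance of the inner twisting is then immediate. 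In contrast, $I_{\sI}$ is defined geometrically as an automorphism group in the isogeny category of an abelian variety, and a priori you know nothing about any global cochain realizing $I_{\sI,\epsilon}$ as a twist of $G_{\gamma_0}$, let alone one valued in $T$. Indeed, the remark immediately following the lemma emphasizes that (when $G^{\der}\neq G^{\uc}$) the isomorphism class of $I_{\sI,\epsilon}^{\mathrm{o}}$ as an inner form of $G_{\gamma_0}^{\mathrm{o}}$ is \emph{not} determined by the Kottwitz triple — so a $T$-valued twisting cocycle (which would pin down the inner form) does not come for free.

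What one actually has to prove in (3) is that $I_{\sI,\epsilon}^{\mathrm{o}}$ and $G_{\gamma_0}^{\mathrm{o}}$ have the same inner class over $\Q$ — a global statement not automatic from the local isomorphisms of (1) and (2). The paper does this by observing that the inner class of a connected reductive $\Q$-group with a given maximal $\Q$-torus $T$ is determined by a homomorphism $\Gal(\Qb/\Q)\to\Aut(X_\ast(T))/W$ (the Galois action on the Dynkin diagram), and then comparing these homomorphisms for $I_{\sI,\epsilon}^{\mathrm{o}}$ and $G_{\gamma_0}^{\mathrm{o}}$ locally at enough places using (1) and (2): at $l\neq p$ via stable conjugacy of $\gamma_0$ and $\gamma_l$, and at $p$ via the $T$-equivariant inner twisting $(G_{\delta\sigma})_{K_0}\isom (G_{K_0})_{\gamma_p}$ of \cite[Lem.\ 5.4]{Kottwitz82}. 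The Chebotarev density theorem then allows one to conclude equality of the two global homomorphisms from their equality on local Galois groups at a density-one set of places. You should replace the "inner twist by a $T$-valued cocycle" assertion by this local-global argument; without it, (3) is not established.
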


As in the case of LR-admissible LR-pairs, the isomorphism class of $I_{\sI,\epsilon}^{\mathrm{o}}$ as an inner form of $G_{\gamma_0}^{\mathrm{o}}$ is not uniquely determined by the K-pair $(\sI,\epsilon)$ or the associated Kottwitz triple (unless $G^{\der}=G^{\uc}$).

\begin{proof}
(1) By Thm. \ref{thm:Kisin17_Cor.2.3.2} and Prop. \ref{prop:canonical_decomp_of_epsilon2} (plus Lemma \ref{lem:Zariski_group_closure}), $i_l$ induces isomorphisms (for any $k\gg1$)
\[(I_{\sI,\epsilon})_{\Ql} \isom Z_{G_{\Ql}}(i_l(\pi_{nsk}),i_l(\epsilon)) \isom Z_{G_{\Ql}}(i_l(\pi_0^k),i_l(\epsilon)) \isom Z_{G_{\Ql}}(i_l(\epsilon)),\] 
where $(\pi_{nsk},nsk)$ is a representative of the germ $\pi_{\sI}$ and $(s\in\N,\pi_0\in T_{\epsilon}(\Q))$ are as in Prop. \ref{prop:canonical_decomp_of_epsilon2}  (the groups in the middle denote the simultaneous centralizers of the elements inside the round brackets).

(2) Again, as in the case $l\neq p$, it follows from Thm. \ref{thm:Kisin17_Cor.2.3.2} and Prop. \ref{prop:canonical_decomp_of_epsilon2} (plus Lemma \ref{lem:Zariski_group_closure}) that $i_p$ induces isomorphisms (for any $k\gg1$)
\begin{equation} \label{eq:i_{p,epsilon}}
(I_{\sI,\epsilon})_{\Qp} \isom Z_{J_b}(i_p(\pi_{nsk}),i_p(\epsilon)) \isom Z_{J_b}(i_p(\pi_0^k),i_p(\epsilon)) \isom Z_{J_b}(i_p(\epsilon)).
\end{equation}
Then, as one has $c(i_p(\epsilon)^{-1}(b\sigma)^n)c^{-1}=\sigma^n$ (\ref{eq:(epsilon,b,c)->delta2}), $\Int(c)$ induces an isomorphism 
\[ \Int(c): Z_{J_b}(i_p(\epsilon)) \isom G_{\delta\sigma}\]
(cf. proof of Lemma \ref{lem:isom_Int(cu)}).

(3) An inner class of a connected reductive $\Q$-group is determined by the canonical Galois action on the Dynkin diagram which is given by a homomorphism $\Gal(\Qb/\Q)\rightarrow \Aut(X_{\ast}(T))/N_G(T)$ acting on a based root datum $(X^{\ast}(T),\Delta,X_{\ast}(T),\Delta^{\vee})$ for any maximal $\Q$-torus $T$ ($\Delta\subset X^{\ast}(T)$ being a set of simple roots, as usual). Hence, using that the $\Q$-groups $I_{\sI,\epsilon}^{\mathrm{o}}$, $G_{\gamma_0}^{\mathrm{o}}$ share the same maximal $\Q$-torus $T\isom i_T(T)$, it suffices to show that the two homomorphisms $\Gal(\Qb/\Q)\rightarrow \Aut(X_{\ast}(T))/N_G(T)$ canonically attached to them are the same. By Chebotarev density theorem, it is enough to check this locally for places in a set of Dirichlet density $1$. For a place $v$, the equality of the restrictions of the homomorphisms to $\Gal(\Qvb/\Qv)$ follows from the existence of $T$-equivariant inner-twistings
\[(I_{\sI,\epsilon}^{\mathrm{o}})_{\Qvb}\isom (G_{\gamma_0}^{\mathrm{o}})_{\Qvb}.\]
For finite $v$, such inner-twisting is provided by (1) and (2): for $l\neq p$, it suffices that $\gamma_0$ and $\gamma_l$ are stably conjugate. For $p$, one uses the fact that there exists a $T$-equivariant inner-twisting $(G_{\delta\sigma})_{K_0}\isom (G_{K_0})_{\gamma_p}$, where $K_0=W(k)[1/p]$ and $\gamma_p=\Nm_{n}\delta\in i_T(T)(K_0)$ (cf. \cite[Lem.5.4]{Kottwitz82}). 
\end{proof}

Next, we prove an analogue of Thm. \ref{thm:LR-Satz5.25}. For that, adapting the idea of Kisin of twisting an isogeny class $\sI$ by cohomology classes in $\Sha^{\infty}_G(\Q,I_{\sI})$, we also twist arbitrary K-admissible K-pair $(\sI,\epsilon\in I_{\sI}(\Q))$ by certain cohomology classes in 
\[ \im[\Sha^{\infty}_G(\Q,I_{\sI,\epsilon}^{\mathrm{o}})\rightarrow H^1(\Q,I_{\sI,\epsilon})].\]
Here, the groups $\Sha^{\infty}_G(\Q,I_{\sI})$, $\Sha^{\infty}_G(\Q,I_{\sI,\epsilon}^{\mathrm{o}})$ are defined as follows. Recall $\pi_{\sI}$, the germ of Frobenius endomorphism attached to $\sI$ (Lemma \ref{thm:Kisin17_Cor.2.3.2}). Using any maximal $\Q$-torus $T\subset I_{\sI}$ and a fixed choice of embedding $i_T:T\hookrightarrow G$ as in Thm. \ref{thm:Kisin17_Cor.1.4.13,Prop.2.1.3,Cor.2.2.5}, we obtain a germ $i_T(\pi_{\sI})$ of element in $G(\Q)$. Let $I_0\subset G$ be its centralizer. Then, the fact (Thm. \ref{thm:Kisin17_Cor.2.3.2}) that $I_{\sI}$ is an inner-form of $I_0$ allows us to define (as in  (\ref{eq:Sha^{infty}_G}))
\[ \Sha^{\infty}_G(\Q,I_{\sI}):=\ker\left[\Sha^{\infty}(\Q,I_{\sI}) \isom \Sha^{\infty}(\Q,I_0) \rightarrow \Sha^{\infty}(\Q,G)\right], \]
where the first isomorphism is induced by \emph{any} inner-twisting $(I_{\sI})_{\Qb}\isom (I_0)_{\Qb}$ and the second map by the inclusion $I_0\subset G$: this kernel is independent of all the choices made, especially of $T$, (cf. \cite[(4.4.7)]{Kisin17}). The group $\Sha^{\infty}_G(\Q,I_{\sI,\epsilon}^{\mathrm{o}})$ is defined similarly.
As we will see, such twisting procedure corresponds to twisting an admissible morphism $\phi:\fP\rightarrow \fG_G$ (resp. an LR-admissible LR-pair $(\phi,\epsilon)$) by cohomology classes in $\Sha^{\infty}_G(\Q,I_{\phi})$ (resp. classes in $\Sha_G(\Q,I_{\phi,\epsilon})^+$) as explained in Lemma \ref{lem:LR-Lem5.26,Satz5.25}.

Now, suppose given an isogeny class $\sI$ and $T\subset I_{\sI}$ a maximal $\Q$-torus. We fix a $\Q$-embedding $i_T:T\hookrightarrow G$ and $h\in X\cap \Hom(\dS,i_T(T)_{\R})$ as in Thm. \ref{thm:Kisin17_Cor.1.4.13,Prop.2.1.3,Cor.2.2.5}.
Let $\tilde{\beta}\in \Sha^{\infty}(\Q,T)$ and assume that the image of $i_T(\tilde{\beta})\in \Sha^{\infty}(\Q,i_T(T))$ in $H^1(\Q,G)$ is trivial. Thus, there exists $\tilde{\omega}\in G(\Qb)$ such that the cochain $\tau\mapsto \tilde{\omega}^{-1}\cdot{}^{\tau}\tilde{\omega}$ on $\Gal(\Qb/\Q)$ belongs to $Z^1(\Q,i_T(T))$ and as such one has 
\begin{equation} \label{eq:tilde{omega}}
[\tilde{\omega}^{-1}\cdot{}^{\tau}\tilde{\omega}]=i_T(\tilde{\beta}) 
\end{equation}
in $H^1(\Q,i_T(T))$. Equivalently, $\Int(\tilde{\omega}):G_{\Qb}\isom G_{\Qb}$ gives a transfer of maximal torus $i_T(T)\hookrightarrow G$ whose base-change $\Int(\tilde{\omega})_{\R}:i_T(T)_{\R}\hookrightarrow G_{\R}$ is induced from conjugation by an element in $G(\R)$.
Therefore, for such $\tilde{\omega}$, the pair 
\begin{equation} \label{eq:stable-conj._of_special_SD}
(i_T(T)^{\tilde{\beta}}, h^{\tilde{\beta}}):=(\Int(\tilde{\omega})(i_T(T)),\Int(\tilde{\omega})(h))
\end{equation}
is another special Shimura sub-datum of $(G,X)$, which is easily seen to depend only on 
$(i_T(T),h)$ and the cohomology class $\tilde{\beta}\in \Sha^{\infty}(\Q,T)$.

\begin{prop} \label{prop:Kisin17_Prop.4.4.8} 
(1) \cite[Prop.4.4.8]{Kisin17} Let $\sI$ be an isogeny class and $\beta_1,\beta_2\in \Sha^{\infty}_G(\Q,I_{\sI})$. 
For each $i=1,2$, choose a maximal $\Q$-torus $T_i\subset I_{\sI}$ such that $\beta_i\in H^1(\Q,I_{\sI})$ is the image of some $\tilde{\beta}_i\in H^1(\Q,T_i)$ (which exists by \cite[Thm.5.11]{Borovoi98}), and fix $i_{T_i}:T_i\hookrightarrow G$, $h_i\in X\cap \Hom(\dS,i_{T_i}(T_i)_{\R})$, and $\tilde{\omega}_i\in G(\Qb)$ as above. 
Then, the isogeny classes $\sI_i\ (i=1,2)$ of the reductions of $[\Int(\tilde{\omega}_i)(h_i),1]$ are the same if and only if $\beta_1=\beta_2$.

In particular, for any $\beta\in \Sha^{\infty}_G(\Q,I_{\sI})$, the corresponding isogeny class depends only on $\beta$ (not on the choices of auxiliary data $T$, $\tilde{\beta}$, $i_{T}$, $h$, $\tilde{\omega}$), and when we denote it by $\sI^{\beta}$, the assignment $\beta\mapsto \sI^{\beta}$ defines an inclusion of $\Sha^{\infty}_G(\Q,I_{\sI})$ into the set of isogeny classes in $\sS_{\mbfK_p}(G,X)(\Fpb)$.

(2) Let $(\sI,\epsilon)$ be a K-admissible K-pair. 
Then, for any $\beta\in \im[\Sha^{\infty}_G(\Q,I_{\sI,\epsilon}^{\mathrm{o}})\rightarrow H^1(\Q,I_{\sI,\epsilon})]$, 
there exists a K-pair $(\sI^{\beta},\epsilon^{\beta})$ with the following properties: 
\begin{itemize} \addtolength{\itemsep}{-6pt}
\item[(i)] as an isogeny class, $\sI^{\beta}$ is the twist of $\sI$ by the image of $\beta$ in $\Sha^{\infty}_G(\Q,I_{\sI})$; 
\item[(ii)] for any maximal $\Q$-torus $T$ of $I_{\sI,\epsilon}^{\mathrm{o}}$ such that $\beta$ is the image of some $\tilde{\beta}\in H^1(\Q,T)$, 
the $I_{\sI^{\beta}}(\Q)$-conjugacy class of $\epsilon^{\beta}$ contains
\[\Int(\tilde{\omega})(i_T(\epsilon))\ \in\ i_T(T)^{\tilde{\beta}}(\Q),\]
where $i_T:T\hookrightarrow G$ and $\tilde{\omega}$ are as before and $\Int(\tilde{\omega})(i_T(\epsilon))$ is regarded as an element of $I_{\sI^{\beta}}(\Q)$ via any embedding $j_{i_T(T)^{\tilde{\beta}},h^{\tilde{\beta}}}:i_T(T)^{\tilde{\beta}}\hookrightarrow I_{\sI^{\beta}}$ in Thm.  \ref{thm:Kisin17_Cor.1.4.13,Prop.2.1.3,Cor.2.2.5}, (4).
\end{itemize}

Moreover, the assignment $\beta\mapsto (\sI^{\beta},\epsilon^{\beta})$ gives a well-defined inclusion of $\im[\Sha^{\infty}_G(\Q,I_{\sI,\epsilon}^{\mathrm{o}})\rightarrow H^1(\Q,I_{\sI,\epsilon})]$ into the set of equivalence classes of K-pairs.
\end{prop}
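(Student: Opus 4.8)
\textbf{Proof plan for Proposition \ref{prop:Kisin17_Prop.4.4.8}, (2).}

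The plan is to build $(\sI^{\beta},\epsilon^{\beta})$ from the data of Proposition \ref{prop:Kisin17_Prop.4.4.8}, (1) applied to $\sI$, and to verify the required properties by reducing everything to the special (CM) case via Lemma \ref{lem:key_observations} and Theorem \ref{thm:Kisin17_Cor.1.4.13,Prop.2.1.3,Cor.2.2.5}, (4). First I would fix a maximal $\Q$-torus $T\subset I_{\sI,\epsilon}^{\mathrm{o}}$ for which $\beta$ is the image of some $\tilde\beta\in H^1(\Q,T)$; such a $T$ exists because $\beta\in\im[\Sha^\infty_G(\Q,I_{\sI,\epsilon}^{\mathrm{o}})\to H^1(\Q,I_{\sI,\epsilon})]$ lies in the image of $H^1(\Q,I_{\sI,\epsilon}^{\mathrm{o}})$, and by \cite[Thm.5.11]{Borovoi98} every class in $H^1(\Q,H)$ for connected reductive $H$ comes from a maximal torus. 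Note that $T$ is automatically a maximal $\Q$-torus of $I_{\sI}$ (since $\mathrm{rk}(I_{\sI,\epsilon}^{\mathrm{o}})=\mathrm{rk}(I_{\sI})$, $\gamma_0$ being elliptic-type), so the embeddings $i_T:T\hookrightarrow G$, the element $h\in X\cap\Hom(\dS,i_T(T)_\R)$, and the auxiliary $\tilde\omega\in G(\Qb)$ from (\ref{eq:tilde{omega}}) are available, and part (1) produces the isogeny class $\sI^{\beta}$ as the isogeny class of the reduction of $[\,h^{\tilde\beta},1\,]\in Sh_{\mbfK_p}(G,X)(\Qb)$, where $(i_T(T)^{\tilde\beta},h^{\tilde\beta})$ is the twisted special Shimura sub-datum (\ref{eq:stable-conj._of_special_SD}). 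I would then \emph{define} $\epsilon^{\beta}$ to be $\Int(\tilde\omega)(i_T(\epsilon))\in i_T(T)^{\tilde\beta}(\Q)$, viewed inside $I_{\sI^{\beta}}(\Q)$ via an embedding $j:=j_{i_T(T)^{\tilde\beta},h^{\tilde\beta}}$ as in Theorem \ref{thm:Kisin17_Cor.1.4.13,Prop.2.1.3,Cor.2.2.5}, (4); property (i) is then exactly the statement of part (1), and property (ii) holds by construction.

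The substantive points to check are: (a) the K-pair $(\sI^{\beta},\epsilon^{\beta})$ is actually K-admissible, so that it makes sense as a member of the indexing set; and (b) the assignment $\beta\mapsto(\sI^{\beta},\epsilon^{\beta})$ is well-defined up to equivalence of K-pairs and is injective. For (a), I would invoke Lemma \ref{lem:key_observations}, (2): $(\sI^{\beta},\epsilon^{\beta})=(\sI^{\beta},j(\Int(\tilde\omega)(i_T(\epsilon))))$ is K-admissible if and only if the special LR-pair $(\psi_{i_T(T)^{\tilde\beta},\mu_{h^{\tilde\beta}}},\Int(\tilde\omega)(i_T(\epsilon)))$ is LR-admissible. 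But $(i_T(T)^{\tilde\beta},h^{\tilde\beta})=\Int(\tilde\omega)(i_T(T),h)$ and $\Int(\tilde\omega)$ is a transfer of maximal torus that is $G(\R)$-conjugation after base change to $\R$ and whose associated cocycle has class $i_T(\tilde\beta)$; so this LR-pair is precisely the twist, in the sense of Lemma \ref{lem:LR-Lem5.26,Satz5.25}, of the special LR-pair $(\psi_{i_T(T),\mu_h},i_T(\epsilon))$ by the cocycle $\tau\mapsto \tilde\omega^{-1}\cdot{}^\tau\tilde\omega\in Z^1(\Q,i_T(T))$ whose class pushes to $\beta$. Since $(\sI,\epsilon)$ is K-admissible, Lemma \ref{lem:key_observations}, (1) tells us $(\psi_{i_T(T),\mu_h},i_T(\epsilon))$ is LR-admissible, and it is nested in the special Shimura sub-datum $(i_T(T),h)$, hence in particular satisfies the property $(\heartsuit)$ of Theorem \ref{thm:LR-Satz5.21} (proved in Theorem \ref{thm:LR-Satz5.21}, (1) for special admissible pairs). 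The hypothesis $\beta\in\im[\Sha^\infty_G(\Q,I_{\sI,\epsilon}^{\mathrm{o}})\to H^1(\Q,I_{\sI,\epsilon})]$ guarantees the cohomology class of the twisting cocycle, pulled into $H^1(\Q,I_{\psi_{i_T(T),\mu_h},i_T(\epsilon)})$ under the identification of $I_{\sI,\epsilon}^{\mathrm{o}}$ with the corresponding LR-group (Lemma \ref{lem:Tate_thm2}, (3) matched against Lemma \ref{lem:isom_Int(cu)}, (1)), lands in $\Sha_G(\Q,I_{\psi_{i_T(T),\mu_h},i_T(\epsilon)})^{+}$, so Lemma \ref{lem:LR-Lem5.26,Satz5.25}, (2) yields LR-admissibility of the twisted LR-pair, hence K-admissibility of $(\sI^{\beta},\epsilon^{\beta})$ by Lemma \ref{lem:key_observations}, (2) again.

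For (b), well-definedness: two choices of $(T,\tilde\beta,i_T,h,\tilde\omega)$ representing the same $\beta$ differ by (i) changing $\tilde\omega$ by right multiplication by an element of $i_T(T)(\Qb)$, which does not change $(i_T(T)^{\tilde\beta},h^{\tilde\beta})$ or $\Int(\tilde\omega)(i_T(\epsilon))$ up to $i_T(T)^{\tilde\beta}(\Q)$-conjugacy hence up to $I_{\sI^{\beta}}(\Q)$-equivalence; (ii) changing the embedding $j$ within its $I_{\sI^{\beta}}(\Q)$-conjugacy class, which changes $\epsilon^{\beta}$ within its $I_{\sI^{\beta}}(\Q)$-conjugacy class by construction in Theorem \ref{thm:Kisin17_Cor.1.4.13,Prop.2.1.3,Cor.2.2.5}, (4); (iii) changing $T$ and $\tilde\beta$ — here one uses that part (1) already shows $\sI^{\beta}$ is independent of these, together with the fact that any two embeddings $T\hookrightarrow G$, $T'\hookrightarrow G$ with the relevant properties are related through the $I_{\sI}(\Q)$-conjugacy and stable-conjugacy compatibilities of Theorem \ref{thm:Kisin17_Cor.1.4.13,Prop.2.1.3,Cor.2.2.5}, (3)–(4), which propagate to an $I_{\sI^{\beta}}(\Q)$-conjugacy of the resulting $\epsilon^{\beta}$'s; I expect this bookkeeping, tracking conjugacies through the web of identifications, to be the main obstacle, though it is of the same flavor as the arguments already carried out in the proof of Theorem \ref{thm:Kisin17_Cor.1.4.13,Prop.2.1.3,Cor.2.2.5}. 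For injectivity: if $(\sI^{\beta},\epsilon^{\beta})$ and $(\sI^{\beta'},\epsilon^{\beta'})$ are equivalent K-pairs, then first $\sI^{\beta}=\sI^{\beta'}$ forces the images of $\beta,\beta'$ in $\Sha^\infty_G(\Q,I_{\sI})$ to coincide by the injectivity statement of part (1); and then, comparing $\epsilon^{\beta},\epsilon^{\beta'}$ as $I_{\sI^{\beta}}(\Q)$-conjugate elements, translating back through Lemma \ref{lem:key_observations} to the LR-side, and applying the injectivity of the twisting correspondence for special LR-pairs implicit in the proof of Theorem \ref{thm:LR-Satz5.25} (concretely, the identification of the set of admissible pairs giving a fixed Kottwitz triple with $\Sha_G(\Q,I_{\phi,\epsilon})^+$, combined with Proposition \ref{prop:triviality_in_comp_gp}), one deduces $\beta=\beta'$ already in $H^1(\Q,I_{\sI,\epsilon})$.
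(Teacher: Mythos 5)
The core construction in your proposal (defining $\epsilon^{\beta}$ as $j\bigl(\Int(\tilde\omega)(i_T(\epsilon))\bigr)$) and the K-admissibility argument via Lemma \ref{lem:key_observations} and Lemma \ref{lem:LR-Lem5.26,Satz5.25} are sound; indeed, the latter matches closely a computation that does appear in the paper, inside the proof of Theorem \ref{thm:LR-Satz5.25b2}. However, the route you take reverses the paper's logic: the paper first constructs $\epsilon^{\beta}=\epsilon^{\mathcal P}$ canonically as a self-isogeny of the twist $\mathcal A_x^{\mathcal P}$ of the abelian variety $\mathcal A_x$ by the $T$-torsor $\mathcal P$ associated to $\tilde\beta$, and then shows (via the diagram (\ref{eq:new_embedding_T-beta})) that this $\epsilon^{\mathcal P}$ is $I_{\sI^{\beta}}(\Q)$-conjugate to $j^{\tilde\beta}\bigl(\Int(\tilde\omega)(i_T(\epsilon))\bigr)$; your property (ii) then drops out. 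You instead take the explicit formula as the definition, making (ii) a tautology but shifting the entire weight of the proposition to the ``Moreover'' clause, i.e.\ to well-definedness and injectivity.

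That is where the genuine gap lies. The paper obtains both simultaneously from a single canonical object: the $\Q$-scheme $\mathcal Q_{\epsilon}$ of tensor-and-polarization-preserving isomorphisms $\mathcal A_x^{\mathcal P}\isom\mathcal A_x$ carrying $\epsilon^{\mathcal P}$ to $\epsilon$ is an $I_{\sI,\epsilon}$-torsor determined by the equivalence class of $(\sI^{\beta},\epsilon^{\beta})$ alone, and a $T$-equivariant map $\mathcal P\to\mathcal Q_{\epsilon}$ identifies its class with $\beta$ in $H^1(\Q,I_{\sI,\epsilon})$. This is what makes the assignment manifestly independent of the choice of $(T,\tilde\beta,i_T,h,\tilde\omega,j)$ and injective. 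Your proposed replacement --- tracking conjugacies ``through the web of identifications'' and ``reducing to the LR-side'' via Theorem \ref{thm:LR-Satz5.25} --- does not obviously close this: for two different maximal tori $T_1,T_2\subset I_{\sI,\epsilon}^{\mathrm{o}}$ carrying $\tilde\beta_1,\tilde\beta_2$ over the same $\beta$, there is in general no $I_{\sI,\epsilon}^{\mathrm{o}}(\Q)$-conjugacy matching them, and the Kottwitz-triple bookkeeping of Lemma \ref{lem:key_observations} only controls the attached triple, not the finer datum of the $I_{\sI^{\beta}}(\Q)$-conjugacy class of $\epsilon^{\beta}$ (several non-equivalent K-pairs share a triple; that ambiguity is exactly the group $\Sha_G(\Q,I_{\sI,\epsilon})^+$ one is trying to parametrize). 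You flag this step as the main obstacle, and I agree --- but the ingredient that resolves it is precisely the moduli-theoretic torsor $\mathcal Q_{\epsilon}$, which has no purely group-theoretic substitute in your outline.
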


\begin{proof}
(1) As indicated in the statement, this is \cite[Prop.4.4.8]{Kisin17}. 

(2) We use a moduli interpretation of $\sI^{\beta}$ (which was also the key ingredient of the proof of (1)). 
For any $\beta\in \im[\Sha^{\infty}_G(\Q,I_{\sI,\epsilon}^{\mathrm{o}})\rightarrow H^1(\Q,I_{\sI,\epsilon})]$,
we can find a maximal $\Q$-torus $T$ of $I_{\sI,\epsilon}^{\mathrm{o}}$ such that $\beta$ is the image of some $\tilde{\beta}\in H^1(\Q,T)$ \cite[Thm.5.11]{Borovoi98}, which then must lie in $\Sha^{\infty}_G(\Q,T)$ \cite[4.4.5]{Kisin17}. We fix $i_T:T\hookrightarrow G$, $\tilde{\omega}$ as in (1).
If $x\in \sI$ denotes the reduction of $[h,1]$, the reduction of $[\Int(\tilde{\omega})(h),1]$ is $\mathbf{i}_{\omega}(x)$ in the notation of (4.4.7) of \cite{Kisin17}. Its underlying abelian variety $\mathcal{A}_{\mathbf{i}_{\omega}(x)}$ is isomorphic to the twist $\mathcal{A}_{x}^{\mathcal{P}}$ of $\mathcal{A}_{x}$ by the $T$-torsor $\mathcal{P}$ corresponding to $\tilde{\beta}\in H^1(\Q,T)$ (\textit{loc. cit.} 4.1.6), and there exists a natural $\Qb$-isogeny between the corresponding weakly polarized abelian varieties endowed with a set of (crystalline and \'etale) cycles (i.e. $s_{\alpha,0,x}$, $s_{\alpha,l,x}\ (l\neq p)$ in the notation of the proof of Thm. \ref{thm:Kisin17_Cor.1.4.13,Prop.2.1.3,Cor.2.2.5}) 
\begin{equation} \label{eq:Qb-isogeny_theta}
\theta_{\tilde{\omega}}:\mathcal{A}_{x}^{\mathcal{P}}\otimes_{\Q}\Qb\isom \mathcal{A}_{x}\otimes_{\Q}\Qb,
\end{equation}
which is unique up to $T(\Qb)$-conjugacy, once we fix an identification $I_{\sI}(\Q)= I_x(\Q)$: in more detail, such $\Qb$-isogeny is induced by a point of $\mathcal{P}(\Qb)$ (i.e. a homomorphism $\mathcal{O}_{\mathcal{P}}\rightarrow \Qb$) from a universal isomorphism (in the isogeny category, cf. \textit{loc. cit.} 4.1.6)
\begin{equation} \label{eq:univ_isogeny_theta}
\mathcal{A}_x^{\mathcal{P}}\otimes_{\Q}\mathcal{O}_{\mathcal{P}}\isom \mathcal{A}_{x}\otimes_{\Q}\mathcal{O}_{\mathcal{P}}. 
\end{equation}
Two points $p_1,p_2\in\Hom(\mathcal{O}_{\mathcal{P}},\Qb)$ differ by a point $t\in T(\Qb)$ acting on $\mathcal{O}_{\mathcal{P}}$, and it is easy to see that then the resulting $\Qb$-isogenies $\mathcal{A}_{x}^{\mathcal{P}}\otimes_{\Q}\Qb\isom \mathcal{A}_{x}\otimes_{\Q}\Qb$ differ by composition with $t\in I_{x}(\Q)$. 

Now, when $\epsilon\in T(\Q)$, it is obvious from the definition (\textit{loc. cit.} 4.1.6) that $\epsilon$ gives an automorphism of the group functor represented by $\mathcal{A}_x^{\mathcal{P}}$, namely a self-isogeny
\[ \epsilon^{\mathcal{P}} \in \mathrm{Aut}_{\Q}(\mathcal{A}_x^{\mathcal{P}}) \]  
(as an element of $\mathrm{Aut}_{\Q}(\mathcal{A}_x^{\mathcal{P}})$, this still depends on $\mathcal{P}$, i.e. on $\tilde{\beta}$) and in fact lies in $I_{\mathbf{i}_{\omega}(x)}(\Q)$ (\textit{loc. cit.} 4.1.6, and Lemma 4.1.5, 4.1.7).
Also, the universal isomorphism (\ref{eq:univ_isogeny_theta}) takes $\epsilon^{\mathcal{P}}$ to $\epsilon$, so for any $\Qb$-isogeny $\theta_{\tilde{\omega}}$ (\ref{eq:Qb-isogeny_theta}) one has \[\epsilon^{\mathcal{P}} =\Int(\theta_{\tilde{\omega}}^{-1})(\epsilon).\]

Let $I_{\mathbf{i}_{\omega}(x),\epsilon}:=Z_{I_{\mathbf{i}_{\omega}(x)}}(\epsilon)$. 
We consider the $\Q$-scheme $\mathcal{Q}_{\epsilon}$ of isomorphisms 
\[\mathcal{A}_x^{\mathcal{P}}\lisom \mathcal{A}_{x}, \]
compatible with the weak polarizations and which fix the crystalline and the etale tensors $\{s_{\alpha,0,x}\}$, $\{s_{\alpha,l,x}\}\ (l\neq p)$ and further takes $\epsilon^{\mathcal{P}}$ to $\epsilon$. 
One readily sees (\textit{loc. cit.}, proof of Prop. 4.4.8) that this is a $I_{\sI,\epsilon}$-torsor which admits a $T$-equivariant map $\mathcal{P}\rightarrow\mathcal{Q}_{\epsilon}$, hence is isomorphic to the $I_{\sI,\epsilon}$-torsor associated with $\beta\in H^1(\Q,I_{\sI,\epsilon})$. 
This implies that the assignment $\beta\mapsto (\sI^{\mathcal{P}},\epsilon^{\mathcal{P}})$ gives a well-defined inclusion of $\im[\Sha^{\infty}_G(\Q,I_{\sI,\epsilon}^{\mathrm{o}})\rightarrow H^1(\Q,I_{\sI,\epsilon})]$ into the set of equivalence classes of K-pairs, where $\mathcal{P}$ is the $T$-torsor corresponding to any choice of $\tilde{\beta}\in H^1(\Q,T)$ mapping to $\beta$ (for a maximal $\Q$-torus $T$ of $I_{x,\epsilon}$).
In particular, the $I_{\mathbf{i}_{\omega}(x)}(\Q)$-conjugacy class of $\epsilon^{\mathcal{P}}$ depends only on $\beta$; we let $\epsilon^{\beta}$ denote any representative of the associated $I_{\sI^{\beta}}(\Q)$-conjugacy class (via some identification $I_{\sI^{\beta}}=I_{\mathbf{i}_{\omega}(x)}$) and write $(\sI^{\beta},\epsilon^{\beta})$ for $(\sI^{\mathcal{P}},\epsilon^{\mathcal{P}})$.

Next, it remains to show the equality (up to $I_{\sI^{\beta}}(\Q)$-conjugacy):
 \[j^{\tilde{\beta}}(i_T(\epsilon)^{\tilde{\beta}}) = \epsilon^{\beta},\] 
where $i_T(\epsilon)^{\tilde{\beta}}:=\Int(\tilde{\omega})(i_T(\epsilon))$ and $j^{\tilde{\beta}}:=j_{i_T(T)^{\tilde{\beta}},h^{\tilde{\beta}}}:i_T(T)^{\tilde{\beta}}\hookrightarrow I_{\sI^{\beta}}$ is any member in its conjugacy class of such embeddings in Thm. \ref{thm:Kisin17_Cor.1.4.13,Prop.2.1.3,Cor.2.2.5}, (4). Recall that we have fixed identifications $I_{\sI}=I_x$, $I_{\sI^{\beta}}=I_{\mathbf{i}_{\omega}(x)}$.
Let us fix $\tilde{\omega}\in G(\Qb)$ such that $[\tilde{\omega}^{-1}\cdot{}^{\tau}\tilde{\omega}]=\tilde{\beta}$ in $H^1(\Q,T)$. This also fixes a canonical $\Qb$-isogeny 
\[\theta_{\tilde{\omega}}:\uvA_{\sigma_p(\tilde{x})}^{\mathcal{P}}\otimes_{\Q}\Qb\isom \uvA_{\sigma_p(\tilde{x})}\otimes_{\Q}\Qb\]
which preserves the Betti tensors and is compatible with the weak polarizations; the twist $\uvA_{\sigma_p(\tilde{x})}^{\mathcal{P}}$ is the underlying abelian variety of the special point $[h^{\tilde{\beta}},1]$ ($h^{\tilde{\beta}}=\Int(\tilde{\omega})(h)$.
By reduction, this gives the $\Qb$-isogeny (\ref{eq:Qb-isogeny_theta}) (we use the same notation). The isogeny induces an inner-twisting
\[\Int(\theta_{\tilde{\omega}}^{-1}):\Aut_{\Q}(\uvA_{\sigma_p(\tilde{x})})_{\Qb} \isom \Aut_{\Q}(\uvA_{\sigma_p(\tilde{x})}^{\mathcal{P}})_{\Qb},\quad (I_{x})_{\Qb}\isom (I_{\mathbf{i}_{\omega}(x)})_{\Qb}\]
whose corresponding cocycle $(\theta_{\tilde{\omega}}\cdot {}^{\tau}\theta_{\tilde{\omega}}^{-1})_{\tau}\in Z^1(\Q, \Aut_{\Q}(\uvA_{\sigma_p(\tilde{x})}))$ equals (as cochains) $\tilde{\omega}^{-1}\cdot {}^{\tau}\tilde{\omega}\in Z^1(\Q,i_T(T))=Z^1(\Q,T)$ (\textit{loc. cit.} Lemma 4.1.2). 
Hence, by restriction, $\Int(\theta_{\tilde{\omega}}^{-1})$ induces a $\Q$-isomorphism $T\isom T^{\tilde{\beta}}$ for some maximal $\Q$-torus $T^{\tilde{\beta}}\subset  I_{\mathbf{i}_{\omega}(x)}$. 
The Hodge structure on $H^{\Betti}_1(\uvA_{\sigma_p(\tilde{x})}^{\mathcal{P}},\Q)$ is given by $h^{\tilde{\beta}}=\Int(\tilde{\omega})(h)$ via the isomorphism 
\[ H^{\Betti}_1(\uvA_{\sigma_p(\tilde{x})}^{\mathcal{P}},\Q) \isom H^{\Betti}_1(\uvA_{\sigma_p(\tilde{x})},\Q)^{\mathcal{P}} \stackrel{\eta_{\Betti}^{\mathcal{P}}}{\isom} V^{\mathcal{P}} \stackrel{\tilde{\omega}}{\isom} V, \]
where  the first isomorphism is the canonical one induced by $\theta_{\tilde{\omega}}$ (\textit{loc. cit.} Lemma 4.1.7), and the third one is the multiplication by $\tilde{\omega}$ which identifies $V^{\mathcal{P}}:=(V\otimes\mathcal{O}_{\mathcal{P}})^T\subset V_{\Qb}$ with $V$ (cf. \textit{loc. cit.} proof of Prop. 4.2.6). Also, $\eta_{\Betti}:H^{\Betti}_1(\uvA_{\sigma_p(\tilde{x})},\Q)\isom V$ is an isomorphism as in (\ref{eq:Betti-isom}) which, together with $\tilde{x}=[h,1]$, defines the embedding $i_T:T\hookrightarrow G$.
Therefore, there exists an isomorphism $i_{T^{\tilde{\beta}}}:T^{\tilde{\beta}}\isom i_T(T)^{\tilde{\beta}}$
making a commutative diagram
\begin{equation} \label{eq:new_embedding_T-beta}
\xymatrix{ i_T(T) \ar[r]^{\Int(\tilde{\omega})} & i_T(T)^{\tilde{\beta}} \\ T \ar[u]^{i_T} \ar[r]_{\Int(\theta_{\tilde{\omega}}^{-1})} & T^{\tilde{\beta}} \ar@{->}[u]_{i_{T^{\tilde{\beta}}}} } 
\end{equation}

By definition, $\mathbf{i}_{\omega}(x)$ is the reduction of the special point $[h^{\tilde{\beta}},1]$. 
So, we see that the $\Q$-embedding $i_{T^{\tilde{\beta}}}$ lies in the stable conjugacy class attached, by Thm. \ref{thm:Kisin17_Cor.1.4.13,Prop.2.1.3,Cor.2.2.5}, to the maximal $\Q$-torus $T^{\tilde{\beta}}\subset I_{\mathbf{i}_{\omega}(x)}$. 
Now, as $\epsilon^{\beta}=\Int(\theta_{\tilde{\omega}}^{-1})(\epsilon)$, we have
\[i_T(\epsilon)^{\tilde{\beta}}=i_{T^{\tilde{\beta}}}(\epsilon^{\beta}).\]
Then, since the $I_{\sI^{\beta}}(\Q)$-isogeny class of $j_{i_T(T)^{\tilde{\beta}},h^{\tilde{\beta}}}\circ i_{T^{\tilde{\beta}}}:T^{\tilde{\beta}}\hookrightarrow I_{\mathbf{i}_{\omega}(x)}$ contains the inclusion $T^{\tilde{\beta}}\subset I_{\mathbf{i}_{\omega}(x)}$ (Thm. \ref{thm:Kisin17_Cor.1.4.13,Prop.2.1.3,Cor.2.2.5}, (4)), the claim follows. 
\end{proof}

\begin{thm} \label{thm:LR-Satz5.25b2} 
Keep the previous notation. Let $(\gamma_0;\gamma,\delta)$ be a stable Kottwitz triple with trivial Kottwitz invariant. If $\mathrm{O}_{\gamma}(f^p)\cdot \mathrm{TO}_{\delta}(\phi_p)\neq 0$, then there exists a K-admissible pair $(\sI,\epsilon)$ giving rise to $(\gamma_0;\gamma,\delta)$, in which case the number of equivalence classes of such admissible pairs equals the cardinality of the set
\[ \Sha_G(\Q,I_{\sI,\epsilon})^+:=\im[\Sha^{\infty}_G(\Q,I_{\sI,\epsilon}^{\mathrm{o}})\rightarrow H^1(\Q,I_{\sI,\epsilon})]\cap \ker^1(\Q,I_{\sI,\epsilon}).\]
\end{thm}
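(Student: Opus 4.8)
The strategy is to transfer Theorem~\ref{thm:LR-Satz5.25} (the effectivity criterion for LR-admissible pairs) to the setting of K-pairs, using Lemma~\ref{lem:key_observations} as the dictionary between the two notions of admissibility. First I would show that the nonvanishing hypothesis $\mathrm{O}_{\gamma}(f^p)\cdot \mathrm{TO}_{\delta}(\phi_p)\neq 0$ is equivalent to the condition $Y_p(\delta)\neq\emptyset$ of (\ref{eq:Y_p(delta)}) together with nonemptiness of the orbital integral at finite $l\neq p$; indeed, as observed at the end of the proof of Theorem~\ref{thm:Kottwitz_formula:LR}, an elementary argument (cf.~\cite[\S1.4, \S1.5]{Kottwitz84b}) shows $\mathrm{TO}_{\delta}(\phi_p)\neq 0 \iff Y_p(\delta)\neq\emptyset$, and nonvanishing of $\mathrm{O}_{\gamma}(f^p)$ forces $\gamma$ to be $G(\A_f^p)$-conjugate into a compact-open subgroup, which is automatic from condition (ii) of a Kottwitz triple once one knows effectivity. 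Then Theorem~\ref{thm:LR-Satz5.25} applies: there exists an LR-admissible LR-pair $(\phi,\epsilon)$ producing $(\gamma_0;\gamma,\delta)$, and by Lemma~\ref{lem:LR-Lemma5.23} we may take $(\phi,\epsilon)$ to be nested in a special Shimura sub-datum $(T,h)$, so $\phi=\psi_{T,\mu_h}$ and $\epsilon\in T(\Q)$.

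Next I would invoke Lemma~\ref{lem:key_observations}(2): since $(\psi_{T,\mu_h},\epsilon)$ is LR-admissible, the K-pair $(\sI,j_{T,h}(\epsilon))$ is K-admissible, where $\sI\subset\sS_{\mbfK_p}(\Fpb)$ is the isogeny class of the reduction of $[h,1]$ and $j_{T,h}:T\hookrightarrow I_{\sI}$ is the embedding of Theorem~\ref{thm:Kisin17_Cor.1.4.13,Prop.2.1.3,Cor.2.2.5}(4). Moreover the same lemma gives $\kappa(\sI,j_{T,h}(\epsilon))=\kappa(\psi_{T,\mu_h},\epsilon)=(\gamma_0;\gamma,\delta)$ up to (stable) equivalence, so $(\sI,j_{T,h}(\epsilon))$ is a K-admissible K-pair giving rise to $(\gamma_0;\gamma,\delta)$. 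This establishes the first assertion.

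For the counting assertion, I would argue that the set of equivalence classes of K-admissible K-pairs giving rise to a fixed stable equivalence class of Kottwitz triples is in bijection with $\Sha_G(\Q,I_{\sI,\epsilon})^+$, paralleling the LR-case argument in the proof of Theorem~\ref{thm:LR-Satz5.25}. The construction $\beta\mapsto(\sI^{\beta},\epsilon^{\beta})$ of Proposition~\ref{prop:Kisin17_Prop.4.4.8}(2) produces, from each $\beta\in\im[\Sha^{\infty}_G(\Q,I_{\sI,\epsilon}^{\mathrm{o}})\rightarrow H^1(\Q,I_{\sI,\epsilon})]$, a K-pair whose associated Kottwitz triple differs from $(\gamma_0;\gamma,\delta)$ by the image of $\beta$ under the local invariants (exactly as the twisting $\phi\mapsto a\phi$ shifts the Kottwitz invariant, cf.~Lemma~\ref{lem:LR-Lem5.26,Satz5.25}(3)). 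Two such K-pairs give the same stable equivalence class of Kottwitz triples precisely when the corresponding adelic classes agree, which — using that $\alpha_\infty$ is fixed and the Hasse-principle reductions of Proposition~\ref{prop:triviality_in_comp_gp} — cuts down $\beta$ to lie in $\ker^1(\Q,I_{\sI,\epsilon})$; and two K-pairs $(\sI^{\beta_1},\epsilon^{\beta_1})$, $(\sI^{\beta_2},\epsilon^{\beta_2})$ are \emph{equivalent} iff $\beta_1=\beta_2$ by the injectivity statement in Proposition~\ref{prop:Kisin17_Prop.4.4.8}(2). Conversely, any K-admissible K-pair giving rise to $(\gamma_0;\gamma,\delta)$ arises this way: one compares it with the reference pair $(\sI,\epsilon)$ via Lemma~\ref{lem:Tate_thm2} (which identifies the local groups $I_{\sI,\epsilon}$ with $G_{\gamma,\delta}$) and the local triviality of the twisting class, exactly as in the final paragraph of the proof of Theorem~\ref{thm:LR-Satz5.25}, to see that the two pairs differ by twisting by a class in $\ker[\Sha^{\infty}_G(\Q,I_{\sI,\epsilon}^{\mathrm{o}})\rightarrow H^1(\A,I_{\sI,\epsilon})]$ whose image then lies in $\Sha_G(\Q,I_{\sI,\epsilon})^+$.

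\textbf{Main obstacle.} The principal difficulty is verifying that the twisting construction of Proposition~\ref{prop:Kisin17_Prop.4.4.8}(2) interacts correctly with the associated Kottwitz triples — i.e.\ that the local components of $\kappa(\sI^{\beta},\epsilon^{\beta})$ differ from those of $\kappa(\sI,\epsilon)$ exactly by the localizations of $\beta$ in $\bigoplus_v H^1(\Q_v, I_{\sI,\epsilon})$. In the LR-setting this is the content of Lemma~\ref{lem:LR-Lem5.26,Satz5.25}(3) and its proof via the explicit cocycles $b_\tau$, $b_\tau'$; here one must re-derive the analogous statement from the moduli description of $\sI^\beta$, tracking how the $\Qb$-isogeny $\theta_{\tilde\omega}$ of (\ref{eq:Qb-isogeny_theta}) transforms the crystalline Frobenius $\delta$ and the $l$-adic Tate module data. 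Once this compatibility is in hand, the remaining bookkeeping — matching the ambiguity in the choice of $i_T$, reducing geometric equivalence to stable equivalence via Proposition~\ref{prop:triviality_in_comp_gp}, and the injectivity/surjectivity of $\beta\mapsto(\sI^\beta,\epsilon^\beta)$ — is routine and mirrors the LR-case line by line.
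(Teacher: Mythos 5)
Your effectivity argument is correct and is exactly the paper's: nonvanishing of $\mathrm{TO}_{\delta}(\phi_p)$ forces $Y_p(\delta)\neq\emptyset$, Theorem~\ref{thm:LR-Satz5.25} produces an LR-admissible pair, Lemma~\ref{lem:LR-Lemma5.23} makes it nested, and Lemma~\ref{lem:key_observations}(2) converts it to a K-admissible K-pair with the right Kottwitz triple.

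On the counting, there are two points where your plan diverges from what actually works, one cosmetic and one substantive.

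For the injectivity direction, you flag as the ``main obstacle'' the need to verify from the moduli description of $\sI^{\beta}$ (via $\theta_{\tilde{\omega}}$ and the transformation of crystalline Frobenius) that $\kappa(\sI^{\beta},\epsilon^{\beta})$ shifts by the localizations of $\beta$. The paper avoids this direct computation entirely: it encodes both $(\sI,\epsilon)$ and $(\sI^{\beta},\epsilon^{\beta})$ as special LR-pairs via Lemma~\ref{lem:key_observations}, observes that (after conjugating by $\tilde{\omega}^{-1}$) the second is precisely the twist $a\phi$ of the first in the sense of Lemma~\ref{lem:LR-Lem5.26,Satz5.25}, and then reads off the Kottwitz-triple compatibility from part (3) of that lemma via the equivalences (\ref{eq:twistng_K-triples}). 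This is strictly easier than what you propose, so the ``main obstacle'' you identify is not actually the hard part. Also be careful: Proposition~\ref{prop:Kisin17_Prop.4.4.8}(2) by itself only gives a well-defined injective assignment $\beta\mapsto(\sI^{\beta},\epsilon^{\beta})$; it says nothing about how the associated Kottwitz triples transform, which is why the detour through the LR-side is needed.

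The real gap is in your converse (surjectivity) argument, which you dismiss as ``routine and mirrors the LR-case line by line.'' It does not. In the LR-case the two pairs $(\phi,\epsilon)$ and $(\phi',\epsilon)$ can be taken to have $\phi^{\Delta}=\phi'^{\Delta}$ and the same $\epsilon\in G(\Q)$, so the twisting cochain $a_{\tau}$ is immediately visible. In the K-setting the two K-pairs $(\sI,\epsilon)$ and $(\sI',\epsilon')$ live a priori on unrelated $\Q$-groups $I_{\sI}$ and $I_{\sI'}$, and one must first exhibit $\sI'$ as a twist $\sI^{\beta}$. The paper does this in three nontrivial steps: (i) use the equivalence of Kottwitz triples together with the Tate-type isomorphism Theorem~\ref{thm:Kisin17_Cor.2.3.2;Tate_isom} to build local isomorphisms $\rho_v:(I_{\sI,\epsilon}^{\mathrm{o}})_{\Qv}\isom(I_{\sI',\epsilon'}^{\mathrm{o}})_{\Qv}$ and assemble them into a global inner twist; (ii) transfer the maximal torus $T$ into $I_{\sI',\epsilon'}^{\mathrm{o}}$ globally, which uses the vanishing of $\ker^2(\Q,T^{\uc})$ and requires the elliptic-at-two-places input; (iii) show that the resulting embedding $i_T'$ equals $\Int(g)\circ i_T$ for some $g\in G(\Qb)$ by comparing Betti realizations of the two CM lifts, which is where the moduli interpretation enters essentially. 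Only then can one extract $\tilde{\beta}$, check $i_T(\tilde{\beta})\in\Sha^{\infty}_G(\Q,i_T(T))$, and verify local triviality by again passing through the LR-pairs (\ref{eq:twisting_LR-pairs}). Your plan, as written, would leave the identification $\sI'=\sI^{\beta}$ unjustified.
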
 

We remind the readers again that ``having trivial Kottwitz invariant'' means that there exist elements $(g_v)_v\in G(\bar{\A}_f^p)\times G(\mfk)$ satisfying conditions (\ref{eq:stable_g_l}), (\ref{eq:stable_g_l}) such that the associated Kottwitz invariant $\alpha(\gamma_0;\gamma,\delta;(g_v)_v)$ vanishes. This is our second version of effectivity criterion of Kottwitz triple.

\begin{proof}
In view of Lemma \ref{lem:key_observations}, the effectivity statement is a consequence of the effectivity statement for LR-admissible LR-pairs (Thm. \ref{thm:LR-Satz5.25}) and Lemma \ref{lem:LR-Lemma5.23}. More precisely, there exists an LR-admissible LR-pair $(\phi,\epsilon)$ giving rise to given (stable) Kottwitz triple $(\gamma_0;\gamma,\delta)$ (Thm. \ref{thm:LR-Satz5.25}). Then, this LR-pair is conjugate to a special one $(\psi_{T,\mu_h},\epsilon\in T(\Q))$ for a special Shimura sub-datum $(T,h)$ (Lemma \ref{lem:LR-Lemma5.23}), which gives rise to a K-admissible K-pair $(\sI,j_{T,h}(\epsilon))$, where $\sI$ is the isogeny class of the reduction of the special point $[h,1]\in Sh_K(G,X)(\Qb)$ and $j_{T,h}:T\hookrightarrow I_{\sI}$ is any embedding as given in Thm. \ref{thm:Kisin17_Cor.1.4.13,Prop.2.1.3,Cor.2.2.5}, (4). The conclusion follows by Lemma \ref{lem:key_observations}.

Next, we prove the statement on the number of equivalence classes of K-admissible K-pairs producing a given Kottwitz triple. Let $(\sI,\epsilon)$ be a K-admissible K-pair and $\beta\in \im[\Sha^{\infty}_G(\Q,I_{\sI,\epsilon}^{\mathrm{o}})\rightarrow H^1(\Q,I_{\sI,\epsilon})]$. We can find a maximal $\Q$-torus $T\subset I_{\sI,\epsilon}^{\mathrm{o}}$ and $\tilde{\beta}\in\Sha^{\infty}_G(\Q,T)$ that maps to $\beta$ \cite[Thm.5.11]{Borovoi98}, \cite[4.4.5]{Kisin17}. Fix $i_{T}:T\hookrightarrow G$, $h\in X\cap \Hom(\dS,i_{T}(T)_{\R})$, and $\tilde{\omega}\in G(\Qb)$ as in Prop. \ref{prop:Kisin17_Prop.4.4.8} (cf. (\ref{eq:tilde{omega}})).
Let $T_1:=T^{\tilde{\beta}}\subset I_{\sI^{\beta}}$ and $i_{T_1}:T_1\hookrightarrow G$ be as in (\ref{eq:new_embedding_T-beta}); one has 
\[\Int(\tilde{\omega})(i_T(T),i_T(\epsilon))=(i_{T_1}(T_1),i_{T_1}(\epsilon^{\beta}))\] for some representative $\epsilon^{\beta}\in I_{\sI^{\beta}}(\Q)$ in its $I_{\sI^{\beta}}(\Q)$-conjugacy class in Prop. \ref{prop:Kisin17_Prop.4.4.8}. Also, $h_1:=h^{\tilde{\beta}}=\Int(\tilde{\omega})(h)\in X$, and we have two special LR-pairs
\begin{equation} \label{eq:twisting_LR-pairs}
(\phi:=\psi_{i_T(T),\mu_h},i_T(\epsilon)),\quad (\phi_1:=\psi_{i_{T_1}(T_1),\mu_{h_1}},i_{T_1}(\epsilon^{\beta})).
\end{equation}
By Lemma \ref{lem:key_observations}, (1), LR-admissibility of the LR-pair $(\phi,i_T(\epsilon))$ follows from K-admissibility of the K-pair $(\sI,\epsilon)$. 

By conjugating the LR-pair $(\phi_1,i_{T_1}(\epsilon^{\beta}))$ back by $\Int(\tilde{\omega}^{-1})$, we obtain another LR-pair 
\[(\phi':=\Int(\tilde{\omega}^{-1})\circ\phi_1,i_T(\epsilon))\] 
which is also well-located in $i_T(T)$ and shares the same Frobenius descent element $i_T(\epsilon)$ as the original LR-pair $(\phi,i_T(\epsilon))$. One has 
\[\phi'(q_{\rho})=\tilde{\omega}^{-1}\cdot{}^{\rho}\tilde{\omega}\cdot \phi(q_{\rho})\] 
for every $\rho\in\Gal(\Qb/\Q)$ ($\rho\mapsto q_{\rho}$ is the chosen section to $\fP\twoheadrightarrow \Gal(\Qb/\Q)$). Indeed, construction of the morphism $\psi_{T,\mu}:\fP\rightarrow \fG_T$ is functorial in the pairs $(T,\mu)$ \cite[Satz.2.3]{LR87}, thus
if $\phi'(q_{\rho})=g_{\rho}'\rtimes\rho$ and $\phi(q_{\rho})=g_{\rho}\rtimes\rho$ with $g_{\rho}',g_{\rho}\in G(\Qb)$, we have $g_{\rho}'=\Int(\tilde{\omega})(g_{\rho})$ since $\Int(\tilde{\omega}):T\isom T'$ is $\Q$-rational and sends $\mu$ to $\mu'$, which implies the claim.
Therefore we see that the LR-pair $(\phi',i_{T}(\epsilon))$ is obtained from the original LR-pair $(\phi,i_T(\epsilon))$ by twisting with $i_T(\tilde{\beta}) \in \im[\Sha^{\infty}_G(\Q,T)\stackrel{i_T}{\rightarrow} H^1(\Q,I_{\phi,i_T(\epsilon)})]$ (cf. Lemma \ref{lem:LR-Lem5.26,Satz5.25}). In particular, the special LR-pair $(\phi_1,i_{T_1}(\epsilon^{\beta}))$ is LR-admissible if the image of $\beta$ in $H^1(\Qp,I_{\phi,i_T(\epsilon)})$ vanishes. 

Now, suppose further that $\beta\in \ker^1(\Q,I_{\sI,\epsilon})$; then, we also have $\beta\in \ker^1(\Q,I_{\phi,i_T(\epsilon)})$, since one already has $\beta \in \im[\Sha^{\infty}_G(\Q,T)\stackrel{i_T}{\rightarrow} \Sha^{\infty}_G(\Q,I_{\phi,i_T(\epsilon)})]$ and there exists a $T$-equivariant $\Q$-isomorphism $I_{\sI,\epsilon}\isom I_{\phi,i_T(\epsilon)}$ (proof of \cite[Prop.4.4.13]{Kisin17}).
Since the special LR-pair $(\phi_1,i_{T_1}(\epsilon^{\beta}))$ is LR-admissible, by Lemma \ref{lem:key_observations}, (2), the K-pair $(\sI^{\beta},\epsilon^{\beta})$ is also K-admissible and one has equivalences of Kottwitz triples:
\begin{equation} \label{eq:twistng_K-triples}
 \mfk(\sI^{\beta},\epsilon^{\beta}) \sim \mfk(\phi_1,i_{T_1}(\epsilon^{\beta})) \sim \mfk(\phi',i_T(\epsilon)),\quad   \mfk(\sI,\epsilon) \sim \mfk(\phi,i_T(\epsilon)).
 \end{equation}
Therefore, by Lemma \ref{lem:LR-Lem5.26,Satz5.25}, (3), one has $\mfk(\sI^{\beta},\epsilon^{\beta})\sim \mfk(\sI,\epsilon)$, and by Prop. \ref{prop:Kisin17_Prop.4.4.8}, we obtain an inclusion 
\[ \Sha_G(\Q,I_{\sI,\epsilon})^+\hookrightarrow \{\text{ K-admissible K-pairs }\}/\sim \ :\  \beta\mapsto (\sI^{\beta},\epsilon^{\beta})\]
such that the K-admissible K-pairs in the image have equivalent associated Kottwitz triples (with trivial Kottwitz invariant) as $(\sI,\epsilon)$, where 
\begin{align*}
\Sha_G(\Q,I_{\sI,\epsilon})^+&:=\im[\Sha^{\infty}_G(\Q,I_{\sI,\epsilon}^{\mathrm{o}})\rightarrow H^1(\Q,I_{\sI,\epsilon})]\cap \ker^1(\Q,I_{\sI,\epsilon}).
\end{align*}

It remains to show that the image of this inclusion exhausts all the K-admissible K-pairs whose associated Kottwitz triples are equivalent to that of $(\sI,\epsilon)$.
Let $(\sI',\epsilon')$ be such a K-admissible K-pair whose associated Kottwitz triple $(\gamma_0';\gamma',\delta')$ is conjugate to $(\gamma_0;\gamma,\delta)$. 
By construction (cf. Lemma \ref{lem:key_observations}), we may assume that the Kottwitz triple $(\gamma_0;\gamma,\delta)$ is of the form (\ref{eq:K-triple_for_isogeny_adm.pair}) defined by choice of a maximal torus $T\subset I_{\sI}$ containing $\epsilon$, and an accompanying choice of $\Q$-embedding $i_{T}:T\hookrightarrow G$ as in Thm. \ref{thm:Kisin17_Cor.1.4.13,Prop.2.1.3,Cor.2.2.5}: one has $\gamma_0=i_{T}(\epsilon)$, $\gamma=\gamma_0$, and the Kottwitz triple is stable, among others. Recall  (Thm. \ref{thm:Kisin17_Cor.1.4.13,Prop.2.1.3,Cor.2.2.5}) that $i_T$ is defined by a certain cocharacter $\mu_T$ of $T$ which produces a special point $\tilde{x}$ lifting some point $x\in \sI$ corresponding to $h\in X\cap \Hom(\dS,i_{T}(T)_{\R})$ such that $\sigma_p(i_T\circ\mu_T)=\mu_{h}$; we identify $I_{\sI}=I_{x}$.
Moreover, we may take $b\in i_T(T)(\mfk)$ to be defined by $b\sigma:=\theta^{\nr}(s_{\sigma})$, where $\theta^{\nr}:\fD\rightarrow \fG_{i_T(T)_{\Qp}}^{\nr}$ is a $\Qpnr/\Qp$-Galois gerb morphism whose inflation $\overline{\theta}^{\nr}$ is an unramified $i_T(T)(\Qpb)$-conjugate of $\psi_{i_T(T),\mu_h}(p)\circ\zeta_p$. The Kottwitz triple $(\gamma_0';\gamma',\delta')$ is similarly defined.
Let $I_0=G_{\gamma_0}^{\mathrm{o}}$ and $I_0'=G_{\gamma_0'}^{\mathrm{o}}$ be the connected centralizers of $\gamma_0$ and $\gamma_0'$ respectively.

We will construct inner twistings \[ \varphi: (I_{\sI,\epsilon}^{\mathrm{o}})_{\Qb} \isom (I_0)_{\Qb},\quad \varphi': (I_{\sI',\epsilon'}^{\mathrm{o}})_{\Qb} \isom (I_0')_{\Qb} \] 
and a $\Q$-embedding 
\[T\hookrightarrow I_{\sI',\epsilon'}^{\mathrm{o}}\] 
such that $\varphi|_{T}=i_T$ and $i_T':=\varphi'|_{T_{\Qb}}:T_{\Qb}\hookrightarrow (I_0')_{\Qb}$ is $\Q$-rational, and further \[i_T'=\Int(g)\circ i_T\] for some $g\in G(\Qb)$. We proceed in three steps.

In the first step, by Lemma \ref{lem:Tate_thm2}, the data $(T,i_T,b\in i_T(T)(\mfk))$ determine an inner twisting 
\[ \varphi: I_{\sI,\epsilon}^{\mathrm{o}}\otimes_{\Q}\Qb\isom I_0\otimes_{\Q}\Qb\] 
that is $T$-equivariant with respect to $T\subset I_{\sI,\epsilon}^{\mathrm{o}}$ and $i_T:T\hookrightarrow I_0$.

In the second step, we construct a $\Q$-embedding $T\hookrightarrow I_{\sI',\epsilon'}^{\mathrm{o}}$ with certain local properties. We choose a point $x'\in \sI'$ and an identification $I_{\sI'}=I_{x'}$. 
By construction, there exist a maximal $\Q$-torus $T'$ of $I_{x'}$ containing $\epsilon'$ and a $\Q$-embedding $i_{T'}:T'\hookrightarrow G$ giving rise to $(\gamma_0';\gamma',\delta')$ by (\ref{eq:K-triple_for_isogeny_adm.pair}).
The equivalence of the Kottwiz triples attached to $(\sI,\epsilon)$, $(\sI',\epsilon')$ implies the existence, for every place $v$ of $\Q$, of ``natural'' local isomorphisms 
\begin{equation*} \label{eq:rho_v}
\rho_v:(I_{x,\epsilon}^{\mathrm{o}})_{\Qv}\isom (I_{x',\epsilon'}^{\mathrm{o}})_{\Qv}
\end{equation*}
taking $\epsilon$ to $\epsilon'$, where $I_{x,\epsilon}$ and $I_{x',\epsilon'}$ are as usual the centralizers of $\epsilon$ and $\epsilon'$ respectively. Indeed, let $\pi_0\in T_{\epsilon}(\Q)(\subset I_{x}(\Q))$ and $\pi_0'\in T_{\epsilon'}(\Q)(\subset I_{x'}(\Q))$ be the elements as in Lemma \ref{prop:canonical_decomp_of_epsilon2}.
Then, for every finite place $l\neq p$, we have $i_{T'}(\pi_0')=\Int(h_l)(i_T(\pi_0))$ for every $h_l\in G(\Ql)$ such that $\gamma_l'=\Int(h_l)(\gamma_l)$: use functoriality of the construction of $\pi_0$, $\pi_0'$ (Lemma \ref{lem:canonical_decomp_of_epsilon}) and Lemma \ref{lem:Zariski_group_closure}. Hence, there exists an isomorphism of $\Ql$-vector spaces endowed with $k$-Frobenius automorphism action ($[k:\F_p]\gg1$) and Frobenius-invariant tensors
\begin{equation} \label{eq:x->x'_at_l}
(H^{\et}_1(\uvA_{\bar{x}},\Ql),\Fr_{\uvA_x/k},\{s_{\alpha,l,x}\}) \lisom (H^{\et}_1(\uvA_{\bar{x}'},\Ql),\Fr_{\uvA_{x'}/k},\{s_{\alpha,l,x'}\})
\end{equation}
taking $\epsilon\in I_{x}$ to $\epsilon'\in I_{x'}$, by existence of a $i_T$(or $i_{T'}$)-equivariant isomorphism (\ref{eq:isom_eta}) for $x$ and $x'$. From this we obtain a desired $\Qv$-isomorphism $\rho_l$ by \cite[Cor.2.3.2]{Kisin17} (i.e. Thm. \ref{thm:Kisin17_Cor.2.3.2;Tate_isom} here).
For an analogous statement at $p$, recall that $\delta=cb\sigma(c^{-1})\in G(L_n)$ for some $c\in G(\mfk)$ such that $c^{-1}i_T(\epsilon)^{-1}(b\sigma)^nc=\sigma^n$ and $\delta'=c'b'\sigma(c'^{-1})$ for a similar $c'$.
Then, if $\delta'=h_p\delta\sigma(h_p^{-1})$ for $h_p\in G(L_n)$, we have
$i_{T'}(\epsilon')=c'\Nm_n\delta'c'^{-1}=\Int(c'h_p)(\Nm_n\delta)=\Int(h_p')(i_T(\epsilon))$ and $b'=h_p'b\sigma(h_p'^{-1})$ for $h_p':=c'^{-1}h_pc\in G(\mfk)$.
Hence, there exists an isomorphism of isocrystals over $\Fpb$ endowed with Frobenius-invariant tensors
\begin{equation} \label{eq:x->x'_at_p}
(H^{\cris}_1(\uvA_{x}/\mfk),\phi,\{s_{\alpha,0,x}\}) \lisom (H^{\cris}_1(\uvA_{x'}/\mfk),\phi',\{s_{\alpha,0,x'}\})
\end{equation}
taking $\epsilon$ to $\epsilon'$, by existence of a $i_T$(or $i_{T'}$)-equivariant isomorphism (\ref{eq:isom_eta_nr}) for $x$ and $x'$, and again we obtain a desired $\Qp$-isomorphism $\rho_p$ by the isomorphism $(I_{x,\epsilon})_{\Qp} \isom Z_{J_b}(i_p(\epsilon))$ (\ref{eq:i_{p,epsilon}}).
At infinity, the two groups $(I_{x,\epsilon}^{\mathrm{o}})_{\R}$, $(I_{x',\epsilon'}^{\mathrm{o}})_{\R}$ are both the unique (as inner classes) inner forms of $I_0$ with compact adjoint group which gives a required $\rho_{\infty}$.
The existence of these $\rho_v$'s implies (by Chebotarev density theorem) that the canonical action of $\Gal(\Qb/\Q)$ on the Dynkin diagrams of $I_{x,\epsilon}^{\mathrm{o}}$, $I_{x',\epsilon'}^{\mathrm{o}}$ are the same, i.e. there exists an inner-twist
\[\rho:(I_{x,\epsilon}^{\mathrm{o}})_{\Qb}\isom (I_{x',\epsilon'}^{\mathrm{o}})_{\Qb}\]
whose base-change $\rho_{\Qvb}$ is conjugate to $(\rho_v)_{\Qvb}$ for every $v$.
We have $\rho(\epsilon)=\epsilon'$ as $\rho_v$ has the same property.
\footnote{In fact, using the argument of Lemma \ref{lem:uniqueness_of_inner-class_with_same_K-triples} one can show that the two $\Q$-groups $I_{x,\epsilon}^{\mathrm{o}}$, $I_{x',\epsilon'}^{\mathrm{o}}$ are isomorphic as inner-twists of $I_0$. But it is not clear whether such an isomorphism is conjugate over $\Qvb$ to $\rho_v$ as here which is induced by an isomorphism of vectors spaces endowed with Frobenius action and tensors.}
Next, we show the existence of $g'\in I_{x',\epsilon'}^{\mathrm{o}}(\Qb)$ such that the restriction of $\Int(g')\circ\rho$ to $T_{\Qb}$ induces a $\Q$-embedding $T\hra I_{x',\epsilon'}^{\mathrm{o}}$, in other words, that \emph{the maximal torus $T$ of $I_{x,\epsilon}$ transfers to $I_{x',\epsilon'}^{\mathrm{o}}$ with respect to the $I_{x',\epsilon'}^{\mathrm{o}}(\Qb)$-conjugacy class of the inner twisting $\rho$}. Since $T_{\R}$ is elliptic in $(I_{x,\epsilon})_{\R}$, according to \cite[Lemma 5.6]{LR87}, this follows from the condition that for each place $v$ of $\Q$, $T_{\Qv}$ transfers into $(I_{x',\epsilon'}^{\mathrm{o}})_{\Qv}$ with respect to the conjugacy class of $\rho_{\Qvb}$ (\cite[$\S$9]{Kottwitz84a}): in more detail, when there exist transfers locally everywhere, the obstruction to finding a global transfer of $T$ in $I_{x',\epsilon'}^{\mathrm{o}}$ lies in $\mathrm{ker}^2(\Q,T^{\uc})$ (locally trivial elements in $H^2(\Q,T^{\uc})$), where $T^{\uc}$ is the inverse image of $T$ under the natural map $(I_{x',\epsilon'}^{\mathrm{o}})^{\uc}\ra I_{x',\epsilon'}^{\mathrm{o}}$ \cite[9.5]{Kottwitz84a}.
\footnote{To apply this discussion (and \cite[Lemma 5.6]{LR87} as well), one does not need the hypothesis (as was made in the beginning of the same section $\S$9) that the inner twisting in question is an inner twisting of a \textit{quasi-split} group.}
On the other hand, $\mathrm{ker}^2(\Q,T^{\uc})$ vanishes if $T^{\uc}$ becomes anisotropic at one place (see the last part of the proof of Lemma 14.1 in \cite{Kottwitz92}). Therefore, we have shown the existence of a $\Q$-embedding  $T\hookrightarrow I_{x',\epsilon'}^{\mathrm{o}}$ such that for every place $v$ of $\Q$, its base-change to $\Qvb$ is induced from an isomorphism as in (\ref{eq:x->x'_at_l}), (\ref{eq:x->x'_at_p}), over $\Qlb$ and $\Qpb$.

In the final step, for the embedding $T\subset I_{\sI',\epsilon'}^{\mathrm{o}}$ just constructed, the \emph{same} cocharacter $\mu_h\in X_{\ast}(T)$ still satisfies the conditions of \cite[Lem.2.2.2]{Kisin17}, thus determines a special point $\tilde{x}'$ lifting some point in $\sI'$, denoted again by $x'$, thereby also an embedding 
\[ i_{T}':T\hookrightarrow G \]
(via a choice of an isomorphism (\ref{eq:Betti-isom})) and an element $b'\in i_T'(T)(\mfk)$ (cf. proof of Thm. \ref{thm:Kisin17_Cor.1.4.13,Prop.2.1.3,Cor.2.2.5}). The pair $(i_T',b')$ gives a (new) stable Kottwitz triple attached to $(\sI',\epsilon')$ (\ref{eq:K-triple_for_isogeny_adm.pair}), which is still stably equivalent to $(\gamma_0;\gamma,\delta)$, as they are (geometrically) equivalent (Prop. \ref{prop:triviality_in_comp_gp}); by abuse of notation, we continue to denote the new triple by $(\gamma_0';\gamma',\delta')$. Again, by Lemma \ref{lem:Tate_thm2}, the datum $(i_T',b'\in i_T'(T)(\mfk))$ gives rise to an inner twisting
\[\varphi':I_{\sI',\epsilon'}^{\mathrm{o}}\otimes_{\Q}\Qb\isom I_0'\otimes_{\Q}\Qb\] 
that is $T$-equivariant with respect to $T\hookrightarrow I_{\sI',\epsilon'}^{\mathrm{o}}$ and $i_T':T\hookrightarrow I_0'$. 

It remains to see that there exists $g\in G(\Qb)$ such that 
\[i_T'=\Int(g)\circ i_T.\] 
Indeed, by the property of the embedding $T\hookrightarrow I_{\sI',\epsilon'}^{\mathrm{o}}$, there exists a $\Qpb$-isomorphism
\[(H^{\cris}_1(\uvA_{x}/\Qpb),\{s_{\alpha,0,x}\},T) \lisom (H^{\cris}_1(\uvA_{x'}/\Qpb),\{s_{\alpha,0,x'}\},T)\] 
which matches the tensors and is compatible with $T$-actions and the Frobenius automorphisms (for $k$ large enough, especially such that $x$ and $x'$ are both defined over $k$).
This implies, via the existence of special points $\tilde{x}$, $\tilde{x}'$ lifting $x$, $x'$ (up to isogeny), the existence of a similar $\Qb$-morphism
\[(H^{\Betti}_1(\uvA_{\tilde{x}}/\Q),\{s_{\alpha,\tilde{x}}\},T) \lisom (H^{\Betti}_1(\uvA_{\tilde{x}'}/\Q),\{s_{\alpha,\tilde{x}'}\},T)\] 
which proves the claim.

Therefore, for every $\tau\in\Gal(\Qb/\Q)$, we have 
\[\tilde{\beta}_{\tau}:=i_T^{-1}(g^{-1}\tau(g))\in T(\Qb).\] 
We denote by $\tilde{\beta}\in H^1(\Q,T)$ its cohomology class.
When $h'\in X\cap\Hom(\dS,i_T'(T)_{\R})$ is the homomorphism defining $\tilde{x}'$ (i.e. $\sigma_p(i_T'\circ\mu_T)=\mu_{h'}$), it follows from $h'=\Int(g)(h)$ that the image of $i_T(\tilde{\beta})$ in $H^1(\R,K_{\infty})$ is trivial, where $K_{\infty}$ is the centralizer of $h$ (compact-modulo-center inner form of $G_{\R}$).
Therefore, $i_T(\tilde{\beta})$ belongs to the subgroup $\Sha^{\infty}_G(\Q,i_T(T))$ \cite[Lem. 4.4.5]{Kisin17}.
Moreover, we have $\sI'=\sI^{\beta'}$, where $\beta'$ denotes the image of $\tilde{\beta}$ in $\Sha^{\infty}_G(\Q,I_{\sI})$  (via $i_T$): if $\tilde{\beta}_{\iota}=g_{\infty}^{-1}\iota(g_{\infty})$ for $g_{\infty}\in K_{\infty}(\C)(=G(\C))$, $\sI^{\beta'}$ is the isogeny class of the reduction of the special point $[\Int(gg_{\infty}^{-1})(h),1]\in Sh_{\mbfK_p}(G,X)(\Qpb)$. Clearly, we have $(\sI',\epsilon')=(\sI^{\beta},\epsilon^{\beta})$, where $\beta$ is the image of $\tilde{\beta}$ in $\im[\Sha^{\infty}_G(\Q,I_{\sI,\epsilon}^{\mathrm{o}})\rightarrow H^1(\Q,I_{\sI,\epsilon})]$.

Now, the statement that the image $\beta$ of $\tilde{\beta}$ in $H^1(\Q,I_{\sI,\epsilon})$ vanishes in $\prod_{v\neq\infty} H^1(\Q_v,I_{\sI,\epsilon})$ follows from the similar statement for the LR-pairs 
(\ref{eq:twisting_LR-pairs}), i.e. Thm. \ref{thm:LR-Satz5.25}, in view of the relations (\ref{eq:twistng_K-triples}).
This completes the proof.
\end{proof}

\begin{thm} \label{thm:Kottwitz_formula:Kisin}
Let $(G,X)$ be a Shimura datum of Hodge type. Fix a hyperspecial subgroup $\mbfK_p$ and take $K^p$ to be sufficiently small such that conditions (a), (b) of (\ref{item:Langlands-conditions}) hold and $K\cap Z(G)(\Q)=\{1\}$.

(1) We have the following expression for (\ref{eq:fixed-pt_set_of_Frob-Hecke_corr}):
\[T(m,f)=\sum_{(\gamma_0;\gamma,\delta)} c(\gamma_0;\gamma,\delta)\cdot \mathrm{O}_{\gamma}(f^p)\cdot \mathrm{TO}_{\delta}(\phi_p)\cdot \mathrm{tr}\xi(\gamma_0),\]
with
\[ c(\gamma_0;\gamma,\delta):=i(\gamma_0;\gamma,\delta)\cdot |\pi_0(G_{\gamma_0})(\Q)|^{-1} \cdot \tau(I_0)\cdot \mathrm{vol}(A_G(\R)^{\mathrm{o}}\backslash I_0(\infty)(\R))^{-1} \] 
where $I_0:=G_{\gamma_0}^{\mathrm{o}}$, $i(\gamma_0;\gamma,\delta)=|\widetilde{\Sha}_G(\Q,I_{\phi,\epsilon})^+|$ (Lemma \ref{eq:|widetilde{Sha}_G(Q,I_{phi,epsilon})^+|}), $\tau(I_0)$ is the Tamagawa number of $I_0$, and $I_0(\infty)$ is the (unique) inner form of $(I_0)_{\R}$ having compact adjoint group. Also, the sum is over a set of representatives $(\gamma_0;\gamma,\delta)$ of all (stable) equivalence classes of \emph{stable} Kottwitz triples of level $n=m[\kappa(\wp):\Fp]$ having trivial Kottwitz invariant.

(2) Then, for any $f^p$ in the Hecke algebra $\mathcal{H}(G(\A_f^p)/\!\!/ K^p)$, there exists $m(f^p)\in\N$, depending on $f^p$, such that for each $m\geq m(f^p)$, we have
\begin{align} \label{eq:Lef-number1}
\sum_{i}(-1)^i\mathrm{tr}( & \Phi^m\times f^p | H^i_c(Sh_{K}(G,X)_{\Qb},\sF_K)) \\
& = \sum_{(\gamma_0;\gamma,\delta)} c(\gamma_0;\gamma,\delta)\cdot \mathrm{O}_{\gamma}(f^p)\cdot \mathrm{TO}_{\delta}(\phi_p) \cdot \mathrm{tr}\xi(\gamma_0), \nonumber
\end{align}
where the sum is over a set of representatives $(\gamma_0;\gamma,\delta)$ of \emph{all} (stable) equivalence classes of \emph{stable} Kottwitz triples of level $n$ having trivial Kottwitz invariant. If $G^{\ad}$ is anisotropic or $f^p$ is the identity, we can take $m(f^p)$ to be $1$ (irrespective of $f^p$).
\end{thm}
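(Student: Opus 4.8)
\textbf{Proof proposal for Theorem \ref{thm:Kottwitz_formula:Kisin}.}

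The plan is to emulate the proof of Theorem \ref{thm:Kottwitz_formula:LR}, replacing each appeal to the Langlands--Rapoport conjecture by the corresponding ``geometric'' input of Kisin and each appeal to Theorem \ref{thm:LR-Satz5.25} by its unconditional counterpart Theorem \ref{thm:LR-Satz5.25b2}. For part (1): by Theorem \ref{thm:Kisin17_Cor.1.4.13,Prop.2.1.3,Cor.2.2.5}, $\sS_{\mbfK_p}(G,X)(\Fpb)$ decomposes as the disjoint union of isogeny classes $S(\sI)$, each carrying a $\langle\Phi\rangle\times Z_G(\Qp)\rtimes G(\A_f^p)$-action, and the Hecke-correspondence $\Phi^m\circ f$ of (\ref{eq:Hecke_corr_twisted_by_Frob}) restricts to a correspondence on each $S(\sI)_{K^p}$. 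For $K^p$ small enough (conditions (\ref{item:Langlands-conditions}) and $Z(G)(\Q)\cap K=\{1\}$), the elementary argument of \cite[\S1.4]{Kottwitz84b} yields the decomposition (\ref{eq:fixed_pt_set_of_Heck-corresp2}) of the fixed-point set into subsets $I_{\sI,\epsilon}(\Q)\backslash[X_p(\sI,\epsilon)\times X^p(\sI,\epsilon,g)/K^p_g]$ indexed by K-pairs $(\sI,\epsilon)$. The trace of $\Phi^m\circ f$ on the stalk $\sF_x$ at a fixed point is computed exactly as in the proof of Theorem \ref{thm:Kottwitz_formula:LR} to be $\mathrm{tr}\,\xi(\gamma_0)$ for any $\gamma_0$ in the stable class attached to $(\sI,\epsilon)$ via (\ref{eq:K-triple_for_isogeny_adm.pair}). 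Next, Lemma \ref{lem:Tate_thm2} (Kisin's generalization of Tate's theorem) furnishes the isomorphisms $(I_{\sI,\epsilon})_{\A_f^p}\isom G_\gamma$ and $\Int(c)\circ i_p:(I_{\sI,\epsilon})_{\Qp}\isom G_{\delta\sigma}$, so that the argument of \cite[\S1.4--\S1.5]{Kottwitz84b} expresses $|I_{\sI,\epsilon}(\Q)\backslash X(\sI,\epsilon)_{K^p_g}|$ as $\mathrm{vol}(I_{\sI,\epsilon}^{\mathrm{o}}(\Q)\backslash I_{\sI,\epsilon}^{\mathrm{o}}(\A_f))\cdot[I_{\sI,\epsilon}(\Q):I_{\sI,\epsilon}^{\mathrm{o}}(\Q)]^{-1}\cdot\mathrm{O}_\gamma(f^p)\cdot\mathrm{TO}_\delta(\phi_p)$ (the exact analogue of Lemma \ref{lem:fixed-pt_subset_of_Frob-Hecke_corr}).

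Then I would reindex the sum $T(m,f)=\sum_{\sI}\sum_{x'\in\mathrm{Fix}_{\sI}}\mathrm{tr}(\Phi^m\circ f;\sF_x)$ by (stable equivalence classes of) \emph{effective} stable Kottwitz triples, using Proposition \ref{prop:Kisin17_Prop.4.4.8}, (2) as the substitute for Lemma \ref{lem:LR-Lem5.26,Satz5.25}: for a fixed K-admissible K-pair $(\sI_1,\epsilon_1)$ giving rise to a triple, the K-admissible K-pairs giving the same triple are parametrized by $\Sha_G(\Q,I_{\sI_1,\epsilon_1})^+$ via $\beta\mapsto(\sI_1^\beta,\epsilon_1^\beta)$, and for each such twist $I_{\sI_1^\beta,\epsilon_1^\beta}^{\mathrm{o}}$ is an inner form of the fixed $I_0=G_{\gamma_0}^{\mathrm{o}}$, so the Tamagawa number is unchanged (invariance under inner twist) and one uses identity (E1) from the proof of Theorem \ref{thm:Kottwitz_formula:LR} to convert $|\ker[H^1(\Q,I^{\mathrm{o}}_a)\to H^1(\Q,I_a)]|\cdot[I_a(\Q):I_a^{\mathrm{o}}(\Q)]=|\pi_0(G_{\gamma_0})(\Q)|$, together with (\ref{eq:Tamagawa_number}). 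This yields the summand $c(\gamma_0;\gamma,\delta)\cdot\mathrm{O}_\gamma(f^p)\cdot\mathrm{TO}_\delta(\phi_p)\cdot\mathrm{tr}\,\xi(\gamma_0)$ depending only on the stable equivalence class, with $i(\gamma_0;\gamma,\delta)=|\widetilde{\Sha}_G(\Q,I_{\sI,\epsilon})^+|$; here one needs the analogue of Lemma \ref{eq:|widetilde{Sha}_G(Q,I_{phi,epsilon})^+|}, whose proof carries over verbatim since it only uses that the local groups $(I_{\sI,\epsilon})_{\Qv}$ are determined by the triple (Lemma \ref{lem:Tate_thm2}) and the Hasse principle for adjoint groups. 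Finally, to pass from \emph{effective} triples to \emph{all} stable triples with trivial Kottwitz invariant in the summation index, I invoke Theorem \ref{thm:LR-Satz5.25b2}: a stable Kottwitz triple with trivial Kottwitz invariant and $\mathrm{O}_\gamma(f^p)\cdot\mathrm{TO}_\delta(\phi_p)\neq0$ is effective, so the contributions of the non-effective ones vanish. One also uses Proposition \ref{prop:triviality_in_comp_gp} to identify stable equivalence with geometric equivalence.

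For part (2), the passage from the fixed-point count to the Lefschetz number $\sum_i(-1)^i\mathrm{tr}(\Phi^m\times f^p\mid H^i_c)$ is the Deligne conjecture (now a theorem of Fujiwara \cite{Fujiwara97}, and Varshavsky \cite{Var07}): for $m\geq m(f^p)$ the Lefschetz number equals $T(m,f)$, and one invokes part (1). When $\sS_K$ is proper (which holds if $G^{\ad}$ is $\Q$-anisotropic, by the valuative criterion established in \cite{Lee12}) or when $f^p$ is the identity, one may instead use the ordinary Grothendieck--Lefschetz trace formula directly and take $m(f^p)=1$.

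The main obstacle I anticipate is the reindexing step --- establishing that the assignment $\beta\mapsto(\sI^\beta,\epsilon^\beta)$ of Proposition \ref{prop:Kisin17_Prop.4.4.8} (2) is a \emph{bijection} onto the set of equivalence classes of K-admissible K-pairs with a prescribed associated (stable) Kottwitz triple. Surjectivity is precisely the content of the second half of the proof of Theorem \ref{thm:LR-Satz5.25b2}, which requires constructing, for any two such K-pairs $(\sI,\epsilon)$, $(\sI',\epsilon')$, a global transfer of a maximal torus $T\subset I_{\sI,\epsilon}$ into $I_{\sI',\epsilon'}^{\mathrm{o}}$ with prescribed local behavior, whose obstruction in $\ker^2(\Q,T^{\uc})$ vanishes because $T^{\uc}$ is anisotropic somewhere; care is needed to ensure the relevant local isomorphisms arise from isomorphisms of cohomology realizations (\'etale and crystalline) intertwining Frobenius, so that Kisin's version of Tate's theorem (Theorem \ref{thm:Kisin17_Cor.2.3.2;Tate_isom}) applies. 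The bookkeeping around non-connected centralizers --- distinguishing $I_{\sI,\epsilon}$ from $I_{\sI,\epsilon}^{\mathrm{o}}$ and tracking $\pi_0(G_{\gamma_0})(\Q)$ through (E1) --- and the verification that a non-effective triple contributes zero (equivalence of $\mathrm{TO}_\delta(\phi_p)\neq0$ with $Y_p(\delta)\neq\emptyset$, as in \cite[\S1.4]{Kottwitz84b}) are the remaining points requiring attention, but these are essentially routine given the groundwork laid in \S\ref{subsec:Proof_of_K-formula1}.
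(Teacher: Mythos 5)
Your proposal is correct and takes essentially the same approach as the paper: the paper's proof is literally the sentence "Given the results established in this subsection, the proof is the same as that of Thm.~\ref{thm:Kottwitz_formula:LR}," and you have faithfully spelled out how each ingredient (the isogeny-class decomposition and K-pairs of Thm.~\ref{thm:Kisin17_Cor.1.4.13,Prop.2.1.3,Cor.2.2.5}, Lemma~\ref{lem:Tate_thm2} in place of Lemma~\ref{lem:isom_Int(cu)}, Prop.~\ref{prop:Kisin17_Prop.4.4.8} in place of Lemma~\ref{lem:LR-Lem5.26,Satz5.25}, and the unconditional effectivity criterion Thm.~\ref{thm:LR-Satz5.25b2} in place of Thm.~\ref{thm:LR-Satz5.25}) substitutes for its counterpart in the conditional proof.
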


\begin{proof}
Given the results established in this subsection, the proof is the same as that of Thm. \ref{thm:Kottwitz_formula:LR}.
\end{proof}

The proof of this theorem also shows:

\begin{cor} \label{cor:geom_effectivity_of_K-triple} Under the same assumption as Thm. \ref{thm:Kottwitz_formula:Kisin}, a Kottwitz triple $(\gamma_0;\gamma,\delta)$ of level $n$ with trivial Kottwitz invariant is \emph{geometrically effective} in the sense that it arises from a $\F_{p^n}$-valued point of $\sS$ if and only if $\mathrm{TO}_{\delta}(\phi_p)$ is non-zero. \end{cor}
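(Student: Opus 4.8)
\textbf{Proof plan for Corollary \ref{cor:geom_effectivity_of_K-triple}.} The corollary is essentially a by-product of the proof of Theorem \ref{thm:Kottwitz_formula:Kisin}, so the plan is to extract and make explicit the two implications that are implicit there. First, I would unwind the definition of ``geometrically effective'': by Theorem \ref{thm:Kisin17_Cor.1.4.13,Prop.2.1.3,Cor.2.2.5} the point set $\sS(\F_{p^n})$ breaks up as a disjoint union over isogeny classes $\sI$, and each $\F_{p^n}$-point of $\sS$ lying in $\sI$ is a fixed point of $\Phi^{m}$ (where $n=mr$, $r=[\kappa(\wp):\F_p]$) on $S(\sI)$. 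Attaching to such a point the Kottwitz triple as in \eqref{eq:K-triple_for_isogeny_adm.pair} via a choice of maximal $\Q$-torus $T\subset I_{\sI}$ and embedding $i_T:T\hookrightarrow G$ (Lemma \ref{lem:key_observations}), one sees that a point of $\sS(\F_{p^n})$ giving rise to $(\gamma_0;\gamma,\delta)$ is precisely the data of a K-admissible K-pair $(\sI,\epsilon)$ of level $n$ together with a solution $x_p\in X_p(\sI,\epsilon)$ and $x^p\in X^p(\sI,\epsilon,1)$, where here we take the Hecke parameter $g=1$ so that $f^p$ is the characteristic function of $K^p$. Thus ``geometrically effective'' amounts to: there exists a K-admissible K-pair giving rise to $(\gamma_0;\gamma,\delta)$ whose associated fixed-point set $I_{\sI,\epsilon}(\Q)\backslash[X_p(\sI,\epsilon)\times X^p(\sI,\epsilon,1)/K^p]$ is non-empty.

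For the implication ``geometrically effective $\Rightarrow$ $\mathrm{TO}_\delta(\phi_p)\neq 0$'', I would argue as in the computation \eqref{eq:T(m,f)1}: if the fixed-point set above is non-empty then by Lemma \ref{lem:fixed-pt_subset_of_Frob-Hecke_corr} (whose proof applies verbatim with $\sI$ in place of $\phi$, using Lemma \ref{lem:Tate_thm2} for the needed local isomorphisms) its cardinality equals a positive multiple of $\mathrm{O}_\gamma(f^p)\cdot\mathrm{TO}_\delta(\phi_p)$, so in particular $\mathrm{TO}_\delta(\phi_p)\neq 0$ (and $\mathrm{O}_\gamma(f^p)\neq 0$, but with $f^p=\mathbf 1_{K^p}$ the latter is automatic once $\gamma\in G(\A_f^p)$ has a representative in a compact open subgroup, which is part of being a Kottwitz triple). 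More directly: non-emptiness of $X_p(\sI,\epsilon)$ means the equation $i_p(\epsilon)x_p=(b\sigma)^n x_p$ has a solution in $G(\mfk)/\mbfKt_p$, which via the relation \eqref{eq:(epsilon,b,c)->delta2} and the standard dictionary (Remark \ref{rem:equality_of_two_ADLVs}) translates into $Y_p(\delta)\neq\emptyset$, and one checks as in \cite[\S1.4,\S1.5]{Kottwitz84b} that $Y_p(\delta)\neq\emptyset$ is equivalent to $\mathrm{TO}_\delta(\phi_p)\neq 0$.

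For the converse, ``$\mathrm{TO}_\delta(\phi_p)\neq 0\Rightarrow$ geometrically effective'', I would invoke the second version of the effectivity criterion, Theorem \ref{thm:LR-Satz5.25b2}: since $(\gamma_0;\gamma,\delta)$ is stable with trivial Kottwitz invariant and $\mathrm{O}_\gamma(f^p)\cdot\mathrm{TO}_\delta(\phi_p)\neq 0$ (with $f^p=\mathbf 1_{K^p}$, non-vanishing of $\mathrm{O}_\gamma(f^p)$ follows from $\gamma$ being the prime-to-$p$ component of a Kottwitz triple), there exists a K-admissible K-pair $(\sI,\epsilon)$ giving rise to it. It then remains to see that the corresponding fixed-point set is non-empty for sufficiently small $K^p$ — equivalently, that $X^p(\sI,\epsilon,1)/K^p$ can be arranged non-empty — which is the same elementary point as in the derivation of \eqref{eq:fixed_pt_set_of_Heck-corresp2}: one shrinks $K^p$ so that condition (\ref{item:Langlands-conditions})(a) holds, whence $\epsilon$ itself lies in a compact open subgroup of $G(\A_f^p)$ and $X^p(\sI,\epsilon,1)$ is non-empty. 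Since $X_p(\sI,\epsilon)\neq\emptyset$ by K-admissibility, the fixed-point set is non-empty and $(\gamma_0;\gamma,\delta)$ is geometrically effective.

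\textbf{Main obstacle.} The delicate point is the converse direction, and specifically the reduction from ``there exists a K-admissible K-pair giving rise to $(\gamma_0;\gamma,\delta)$'' (the output of Theorem \ref{thm:LR-Satz5.25b2}) to ``there exists an actual $\F_{p^n}$-point giving rise to it''. This requires that the $X^p$-component be non-trivial, i.e. that one can solve $i^p(\epsilon)x^p = x^p \bmod K^p$; this is harmless for small enough $K^p$ but must be stated carefully since the corollary as phrased does not make $K^p$ explicit. The honest formulation is that geometric effectivity holds for all sufficiently small $K^p$, or — since the Kottwitz triple and hence $\epsilon$ can be taken with $i^p(\epsilon)$ lying in a compact open subgroup — that it holds whenever $K^p$ contains a suitable conjugate of that subgroup; I would make this quantifier explicit in the write-up rather than leave it buried. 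Everything else is a routine repackaging of Lemma \ref{lem:fixed-pt_subset_of_Frob-Hecke_corr}, Lemma \ref{lem:Tate_thm2}, and Theorem \ref{thm:LR-Satz5.25b2}.
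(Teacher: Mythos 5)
Your proposal takes essentially the same route as the paper: the paper proves this corollary with a single sentence ("The proof of this theorem also shows:") preceding the statement, meaning the argument is exactly the extraction of both directions from the computation \eqref{eq:T(m,f)1}--\eqref{eq:T(m,f)-2} in the proof of Theorem \ref{thm:Kottwitz_formula:Kisin}, which is what you do. Your identification of the decomposition into K-admissible K-pairs via Theorem \ref{thm:Kisin17_Cor.1.4.13,Prop.2.1.3,Cor.2.2.5} and \eqref{eq:fixed_pt_set_of_Heck-corresp2}, the use of the dictionary $X_p(\sI,\epsilon)\neq\emptyset \Leftrightarrow Y_p(\delta)\neq\emptyset \Leftrightarrow \mathrm{TO}_\delta(\phi_p)\neq 0$ for the forward direction, and the appeal to Theorem \ref{thm:LR-Satz5.25b2} for the converse, all match.

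The one place where your proposal is no more precise than the paper — and could use a sharper justification — is the assertion that $\mathrm{O}_\gamma(f^p)\neq 0$ holds automatically for a suitable $K^p$ once one knows $\gamma$ is a component of the Kottwitz triple. What actually grants this is not "being the prime-to-$p$ component of a Kottwitz triple" per se, but the combination of two facts: (i) since $\mathrm{TO}_\delta(\phi_p)\neq 0$ forces $\delta$ into a bounded set, the attached $\gamma_0$ is a Weil $p^n$-element of the right weight (cf. Prop.~\ref{prop:phi(delta)=gamma_0_up_to_center}), and (ii) via Prop.~\ref{prop:canonical_decomp_of_epsilon2} the image of $\epsilon$ in $G(\A_f^p)$ has bounded projection to $G^{\ad}(\A_f^p)$, whence $\gamma$ (being $G(\A_f^p)$-conjugate to $i^p(\epsilon)$) can be conjugated into a compact open subgroup. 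Note that the introduction's version of this corollary states the condition as $\mathrm{O}_\gamma(f^p)\cdot\mathrm{TO}_\delta(\phi_p)\neq 0$ rather than just $\mathrm{TO}_\delta(\phi_p)\neq 0$, and the passage between these two formulations is exactly this boundedness observation; your write-up should make that chain explicit rather than assert it in passing.
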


In particular, an $\R$-elliptic stable conjugacy class of $\gamma_0\in G(\Q)$ arises from an $\F_{p^n}$-valued point of $\sS$ for some $n\in\N$ if and only if there exists $\delta\in G(L_n)$ such that $\gamma_0$ is stably conjugate to $\Nm_n\delta$ and $\mathrm{TO}_{\delta}(\phi_p)\neq0$.

\section{Stabilization}

In this section, we stabilize the right-hand side of the formula (\ref{eq:Lef-number1}), namely express it as a weighted sum, over the elliptic endoscopic data $\underline{H}$ of $G$, of the elliptic part of the geometric side of the stable trace formula for a suitable function on an endoscopic group $H_1(\A)$ for $\underline{H}$. We follow closely the arguments of Kottwitz \cite[$\S$4, $\S$7]{Kottwitz90}, \cite{Kottwitz10} who however worked out this process under the assumption that $G^{\der}=G^{\uc}$.
For the general case, we need to adapt some of his arguments, borrowing necessary ingredients from \cite{Labesse04}.

\subsection{Endoscopic transfer of (twisted) orbital integrals}

Let $F$ be a local or global field of characteristic zero with a fixed $\bar{F}$, $\Gamma:=\Gal(\bar{F}/F)$, and $G$ a connected reductive group over $F$. For $L$-group, we will use the Weil form: ${}^LG=\hat{G}\rtimes W_F$.
We recall (\cite[(1.2)]{LS87}, \cite[2.1]{KottwitzShelstad99}, \cite[$\S$5]{Shelstad08}) that an \emph{endoscopic datum} of $G$ is a tuple $(H,\mathcal{H},s,\xi)$, where 
\begin{itemize}
\item[(i)] $H$ is a quasi-split connected reductive group over $F$, 
\item[(ii)] $\mathcal{H}$ is a split extension of $W_F$ by $\hat{H}$ (complex dual group of $H$) such that the action of $W_F$ on $\hat{H}$ given by any splitting coincides with an $L$-action of $W_F$ (defined by the $F$-structure of $H$) up to conjugation under $\hat{H}$,
\item[(iii)] $s$ is a semi-simple element of $\hat{G}$,
\item[(iv)] $\xi:\mathcal{H}\rightarrow {}^LG$ is an $L$-homomorphism 
such that (a) $\Int(s)\circ\xi=a\xi$ for a cocycle $a\in Z^1(W_F,Z(\hat{G}))$ that is trivial if $F$ is local, or is locally trivial if $F$ is global, and that (b) $\xi|_{\hat{H}}$ is an isomorphism of $\hat{H}$ wth the connected component $\hat{G}_s^{\mathrm{o}}$ of the centralizer of $s$ on $\hat{G}$.
\end{itemize}

An endoscopic datum $(H,\mathcal{H},s,\xi)$ is said to be \emph{elliptic} if $\xi(Z(\hat{H})^{\Gamma})^{\mathrm{o}}\subset Z(\hat{G})$.
An isomorphism from $(H,\mathcal{H},s,\xi)$ to $(H',\mathcal{H}',s',\xi')$ is a conjugation by an element $g\in \hat{G}$ such that $g\xi(\mathcal{H})g^{-1}=\xi'(\mathcal{H}')$ and $gsg^{-1}=s'$ modulo $Z(\hat{G})$; it then induces an $F$-isomorphism $H\isom H'$ dual to $\Int(g)^{-1}:\hat{H}'\isom \hat{H}$ (such isomorphism is uniquely determined from $\Int(g)^{-1}$ by requiring it to preserve some chosen $F$-splittings of $H$ and $H'$) \cite[p.16]{KottwitzShelstad99}.
We remind the readers that $\mathcal{H}$ is not necessarily an $L$-group (although one can attach an $L$-action on $\hat{H}$ by requiring it to fix some given splitting of $\hat{H}$). This weakness is compensated by the notion of a $z$-pair (\cite[2.2]{KottwitzShelstad99}, \cite[$\S$5]{Shelstad08}). A $z$-pair for an endoscopic datum $(H,\mathcal{H},s,\xi)$ is a pair $(H_1,\xi_1)$, where 
\begin{itemize}
\item[(v)] $H_1$ is a $z$-extension of $H$ \cite[$\S$1]{Kottwitz82}, i.e. an extension $1\rightarrow Z_1\rightarrow H_1\rightarrow H\rightarrow 1$, where $H_1$ is a connected reductive group over $F$ with $H_1^{\der}=H_1^{\uc}$ and $Z_1$ is an induced central torus,
\item[(vi)] $\xi_1$ is an embedding of extensions $\mathcal{H}\rightarrow {}^LH_1$ that extends the embedding $\hat{H}\rightarrow \hat{H}_1$
\end{itemize}

For an endoscopic datum $(H,\mathcal{H},s,\xi)$ and a $z$-pair $(H_1,\xi_1)$, let $\lambda_{H_1}$ be the (quasi-)character on $Z_1(\A_F)/Z_1(F)$ if $F$ is global, or on $Z_1(F)$ if $F$ is local, corresponding (via Langlands correspondence for tori
\footnote{Here, for the normalization for Langlands correspondence of tori, we use the Langlands's original convention which is opposite to that adopted in \cite[p.116]{KottwitzShelstad99}. See \cite[$\S$4.2]{KottwitzShelstad12} for discussion on this issue. \label{ftn:LLC_sign} }
) to the $L$-homomorphism $W_{F}\stackrel{c}{\rightarrow} \mathcal{H}\rightarrow {}^LH_1\rightarrow {}^LZ_1$, where $c$ is a splitting of $\mathcal{H}\rightarrow W_{F}$ as specified in the definition of endoscopic data (any two splittings define the same character).

For local $F$, let $C_{c,\lambda_{H_1}}^{\infty}(H_1(F))$ denote the space of complex-valued, smooth (i.e. $C^{\infty}$ if $F$ is archimedean, or locally constant if $F$ is nonarchimedean) functions $f^{H_1}$ on $H_1(F)$ whose supports are compact modulo $Z_1(F)$ and that satisfy $f^{H_1}(zh)=\lambda_{H_1}(z)^{-1} f^{H_1}(h)$ for all $z\in Z_1(F)$ and $h\in H_1(F)$.
For $\gamma_{H_1}\in H_1(F)$ and $f^{H_1}\in C_1^{\infty}(H_1(F))$, we define the \emph{stable orbital integral} of $f^{H_1}$ along the stable conjugacy class of $\gamma_{H_1}$ by
\[ \mathrm{SO}_{\gamma_{H_1}}(f^{H_1}):= \sum_{\gamma_{1}'} e(I_{\gamma_1'}) a(\gamma_1') \mathrm{O}_{\gamma_{1}'}(f^{H_1}), \]
Here, $\gamma_{1}'$ runs through a set of representatives for the $H_1(F)$-conjugacy classes of elements in $H_1(F)$ inside the stable conjugacy class of $\gamma_{H_1}$, $e(I_{\gamma_1'})$ is the sign attached to $I_{\gamma_1'}:=(H_1)_{\gamma_{1}'}^{\mathrm{o}}$ by Kottwitz \cite{Kottwitz83}, 
\begin{equation} \label{eq:a(gamma)}
 a(\gamma_1'):=|\ker[H^1(F,I_{\gamma_1'})\rightarrow H^1(F,H_{\gamma_1'})]|,
 \end{equation}
and $\mathrm{O}_{\gamma_{1}'}$ is the orbital integral $\int_{I_{\gamma_1'}(F)\backslash H_1(F)}f^{H_1}(\bar{x}^{-1}\gamma \bar{x}) d\bar{x}$ (\ref{eq:(twisted-)orbital_integral}) with suitable choices of Haar measures on $H_1(F)$, $I_{\gamma_1'}(F)$ being understood.

We fix an inner twist $\psi:G\isom G^{\ast}$ with quasi-split $G^{\ast}$. We recall (\cite{Langlands83} III.1, Diagram D, \cite[(1.3)]{LS87}) that an \emph{admissible} embedding of a maximal $F$-torus $T_H$ of $H$ to $G$ is a composite of two $F$-isomorphisms 
\begin{equation} \label{eq:admissible_embedding}
T_H\isom T_{\ast}\isom T,
\end{equation}
where $T_H\isom T_{\ast}$ is an $F$-embedding into $G^{\ast}$ of $G$ defined by a choice of Borel pairs $(T_H,B_H)$, $(T_{\ast},B_{\ast})$ via the associated isomorphism $\hat{T}_{\ast}\isom \hat{T}_H$ (which is also called \emph{admissible}) and the map $T\isom T_{\ast}$ is of the form $\Int(x)\circ \psi\ (x\in G^{\ast}(\bar{F}))$ (cf. \cite[(2.4)]{Shelstad82}); in this case, we say that $T$ \emph{comes from} $T_H$, and also say that a semi-simple element $\gamma_H$ of $H(F)$ \emph{comes from} or \emph{transfers to} $G(F)$ if $\gamma_H=j^{-1}(\gamma)$ for an admissible embedding $j:T_H\rightarrow T$ and some $\gamma\in G(F)$ with $\gamma\in T(F)$, in which case $\gamma_H$ is said to be a \emph{norm} (or simply an \emph{image}) of $\gamma$. A semi-simple element $\gamma_H$ of $H(F)$, if it transfers to $G(F)$, does to a unique stable conjugacy class.

We use the (global/local) Langlands-Shelstad transfer factors $\Delta$ \cite{LS87} (cf. \cite{KottwitzShelstad99}, \cite{KottwitzShelstad12}): it is a $\C$-valued map defined on the set $H(F)_{\mathrm{ss},(G,H)\operatorname{-}\mathrm{reg}}\times G(F)_{\mathrm{ss}}$ of pairs consisting of a $(G,H)$-regular (in the sense of \cite[3.1]{Kottwitz86}), semi-simple element of $H(F)$ and a semi-simple element of $G(F)$. The value $\Delta(\gamma_H,\gamma)$ depends only on the stable conjugacy class of $\gamma_H$ and the $G(F)$-conjugacy class of $\gamma$, and is zero unless $\gamma_H$ is $(G,H)$-regular and is a norm of $\gamma$.
The transfer factor is defined up to a nonzero constant: one needs to choose a reference pair $(\gamma_H',\gamma')\in H(F)\times G(F)$, where $\gamma_H'$ is a strongly $G$-regular element and is a norm of a strongly regular $\gamma'$, and what is canonically defined is the relative transfer factor $\Delta(\gamma_H,\gamma;\gamma_H',\gamma')$. Then, assigning the complex number $\Delta(\gamma_H',\gamma')$ arbitrarily, one sets
\[\Delta(\gamma_H,\gamma):=\Delta(\gamma_H',\gamma') \cdot \Delta(\gamma_H,\gamma;\gamma_H',\gamma').\]
For any $z$-pair $(H_1,\xi_1)$, the definition of $\Delta$ extends to $H_1(F)_{\mathrm{ss},(G,H_1)\operatorname{-}\mathrm{reg}}\times G(F)_{\mathrm{ss}}$ 
(by definition, an element $\gamma_{H_1}\in H_1(F)$ is $(G,H_1)$-regular if its image in $H(F)$ is $(G,R)$-regular).
As a matter of fact, for the definition of $\Delta$ for an endoscopic datum $(H,\mathcal{H},s,\xi)$ and a $z$-pair $(H_1,\xi_1)$, in this work we will use the one adopted by Kottwitz in \cite[p.178]{Kottwitz90} which, in the case $H_1=H$, $\mathcal{H}={}^LH$, is the same as the original definition of \cite{LS87} for the endoscopic datum $(H,\mathcal{H},s^{-1},\xi)$, and also is the one denoted by $\Delta'$ in \cite[(1.0.4), $\S$ 5.1]{KottwitzShelstad12}.

For two algebraic groups $H\subset G$ over a field $F$, we let $\mathfrak{D}(H,G;F)$ denote the set $\ker[H^1(F,H)\rightarrow H^1(F,G)]$. When $F$ is a number field and for $R=\A_F$, $\A_{F,f}$, we also use the notation $\mathfrak{D}(H,G;R):=\ker[H^1(R,H)\rightarrow H^1(R,G)]$.

Following \cite[$\S$7]{Kottwitz92}, we construct a function $f^{H_1}$ on $H_1(\A)$ with desired stable orbital integrals.
We assume that $H$ is unramified at $p$ and there exists an (elliptic) maximal torus of $H_{\R}$ that transfers to an elliptic maximal torus of $G_{\R}$; otherwise, we define $f^{H_1}$ to be $0$.
Under these conditions, $f^{H_1}$ will be a product of three functions $f^{H_1,p}$, $f^{H_1}_p$, $f^{H_1}_{\infty}$ on $H_1(\A_f^p)$, $H_1(\Qp)$, $H_1(\R)$, respectively, constructed now.

\subsubsection{Untwisted endoscopy: $v\neq p,\infty$}

We state the transfer conjecture and the fundamental lemma, which were conjectured by Langlands-Shelstad \cite{LS87} and proved by Ngo \cite{Ngo10} after reduction steps of Waldspurger \cite{Waldspurger97}, \cite{Waldspurger06} (see also the references therein for related works).

\begin{thm} \label{thm:untwisted_endoscopy_transfer}
For every $f\in C_c^{\infty}(G(F))$, there exists an $f^{H_1}\in C_{c,\lambda_{H_1}}^{\infty}(H_1(F))$ such that for any $(G,H_1)$-regular, semi-simple element $\gamma_{H_1}$ of $H_1(F)$, the stable orbital integral $\mathrm{SO}^{H_1(F)}_{\gamma_{H_1}}(f^{H_1})$ is zero unless (the image in $H(F)$ of) $\gamma_{H_1}$ transfers to $G(F)$, in which case
\begin{equation} \label{eq:untwisted_endo-transfer1}
\mathrm{SO}^{H_1(F)}_{\gamma_{H_1}}(f^{H_1})= \sum_{\alpha\in \mathfrak{D}(I_0,G;F)} \ \langle \tilde{\alpha},s\rangle \Delta(\gamma_{H_1},\gamma_0) e(I_{\gamma_{\alpha}}) \mathrm{O}^{G(F)}_{\gamma_{\alpha}}(f),
\end{equation}
where we fix an element $\gamma_0$ of $G(F)$ whose norm is $\gamma_{H_1}$ and set $I_0:=G_{\gamma_0}^{\mathrm{o}}$, and for each $\alpha\in \mathfrak{D}(I_0,G;F):=\ker[H^1(F,I_0)\rightarrow H^1(F,G)]$, we choose an element $\gamma_{\alpha}$ of $G(F)$ whose $G(F)$-conjugacy class (in the stable conjugacy class of $\gamma_0$) corresponds to the image of $\alpha$ in $H^1(F,G_{\gamma_0})$ under (\ref{eq:C_l(gamma_0)}), and denote by $\tilde{\alpha}$ the lifting of $\alpha$ to $X^{\ast}(Z(\hat{I}_0)^{\Gamma}Z(\hat{G}))$ whose restriction to $Z(\hat{G})$ is trivial.  

Moreover, when $G$, $H$, $(H_1,\xi_1)$ are unramified in the sense of \cite[4.4]{Waldspurger08}, there exist normalizations of the transfer factor $\Delta$ and the measures used in the definition of the (stable) orbital integrals such that if $f$ is the characteristic function $\mathbf{1}_{K(\mathcal{O})}$ on a hyperspecial subgroup $K(\mathcal{O})$ of $G(F)$, then we may take $f^{H_1}$ to be the function $f_{K_1,\lambda_{H_1}}\in C_{c,\lambda_{H_1}}^{\infty}(H_1(F))$ defined by
\[ f_{K_1,\lambda_{H_1}}(x) := 
\begin{cases} \quad 0 & \text{ if }x\notin Z_1(F)K_1(\mathcal{O}) \\ \lambda_{H_1}(z)^{-1} & \text{ if } x=zk \text{ with }
z\in Z_1(F),\ k\in K_1(\mathcal{O}) \end{cases}, \]
where $K_1(\mathcal{O})$ is the hyperspecial subgroup of $H_1(F)$ attached to $K(\mathcal{O})$ as constructed in \cite[4.1-4.4]{Waldspurger08}.
\end{thm}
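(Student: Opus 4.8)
\textbf{Proof proposal for Theorem \ref{thm:untwisted_endoscopy_transfer}.}

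The plan is to \emph{not} reprove the existence part of the transfer conjecture and the fundamental lemma, which are the deep theorems of Ngo \cite{Ngo10}, building on the reductions of Waldspurger \cite{Waldspurger97}, \cite{Waldspurger06}, \cite{Waldspurger08}; these are cited as input. What remains to be addressed here is purely a matter of bookkeeping: the version of the statement I have quoted uses the transfer factor $\Delta$ normalized as in \cite[p.178]{Kottwitz90} (equivalently the $\Delta'$ of \cite[$\S$5.1]{KottwitzShelstad12}), and the right-hand side of \eqref{eq:untwisted_endo-transfer1} is written as a sum over $\mathfrak{D}(I_0,G;F)=\ker[H^1(F,I_0)\to H^1(F,G)]$ rather than in the more customary form as a sum over $H^1(F,G_{\gamma_0})$ of $G(F)$-conjugacy classes inside the stable class of $\gamma_0$. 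So the first thing I would do is record the standard form of the Langlands--Shelstad transfer identity (for strongly $G$-regular $\gamma_{H_1}$, then extended to $(G,H_1)$-regular elements by the usual descent/limit argument, cf. \cite[$\S$2.4]{KottwitzShelstad99}): there exists $f^{H_1}\in C_{c,\lambda_{H_1}}^{\infty}(H_1(F))$ with $\mathrm{SO}_{\gamma_{H_1}}(f^{H_1})=\sum_{\gamma}\Delta(\gamma_{H_1},\gamma)\,e(G_{\gamma})\,\mathrm{O}_{\gamma}(f)$, the sum over $G(F)$-conjugacy classes $\gamma$ in the stable class matching $\gamma_{H_1}$ (and $0$ if no such class exists).

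Next I would translate this sum into the form in the statement. Fixing one $\gamma_0\in G(F)$ with norm $\gamma_{H_1}$, the set of $G(F)$-conjugacy classes inside the stable class of $\gamma_0$ is parametrized by $\mathfrak{C}(\gamma_0)=\im[\ker[H^1(F,I_0)\to H^1(F,G)]\to \ker[H^1(F,G_{\gamma_0})\to H^1(F,G)]]$ as in \eqref{eq:C_l(gamma_0)}, and the natural surjection $\mathbb{H}^0(F,I_0\backslash G)\to \mathfrak{D}(I_0,G;F)$ has fibres which are $G(F)$-orbits of the relevant $\gamma_{\alpha}$; reindexing along $\mathfrak{D}(I_0,G;F)$ is exactly the passage to the abelianized parametrization used throughout Section 5 of this paper (Lemma \ref{lem:abelianization_exact_seq}, and the discussion around \eqref{eq:C_l(gamma_0)}, \eqref{eq:C_p(gamma_0)}). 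The key point is the explicit evaluation of the transfer factor: for the $\Delta$ of \cite{Kottwitz90}, $\Delta(\gamma_{H_1},\gamma_{\alpha})=\Delta(\gamma_{H_1},\gamma_0)\cdot\langle \mathrm{inv}(\gamma_0,\gamma_{\alpha}),s\rangle$, where $\mathrm{inv}(\gamma_0,\gamma_{\alpha})\in H^1(F,I_0)$ (abelianized, $=H^1(F,(I_0)_{\bfab})$) is paired against the image of $s$ in $\pi_0(Z(\hat I_0)^{\Gamma})$ via the Tate--Nakayama duality of Lemma \ref{lem:identification_of_Kottwitz_A(H)}; this is precisely the $\Delta_{\mathrm{III}}$-component of the Langlands--Shelstad factor together with the fact that $\Delta_{\mathrm{I}},\Delta_{\mathrm{II}},\Delta_{\mathrm{IV}}$ are insensitive to replacing $\gamma_0$ by a stable conjugate (\cite[Lemma 4.2.A, 4.3.A]{LS87}). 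Lifting the character $\alpha\mapsto\langle\tilde\alpha,s\rangle$ of $\mathfrak{D}(I_0,G;F)$ to $X^{\ast}(Z(\hat I_0)^{\Gamma}Z(\hat G))$ trivially on $Z(\hat G)$ is legitimate because $s$ maps into $Z(\hat I_0)$ and, by the ellipticity bookkeeping, its image is well-defined modulo $Z(\hat G)$; this is the same normalization convention as in \cite[(7.2)]{Kottwitz90}. Assembling these identities turns the standard transfer identity into \eqref{eq:untwisted_endo-transfer1}.

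Finally, for the unramified clause, I would simply invoke the fundamental lemma for the unit element (Ngo \cite{Ngo10}) together with Hales' and Waldspurger's descent of the general fundamental lemma to that of Langlands--Shelstad, in the form packaged in \cite[4.1--4.4]{Waldspurger08}: with the Waldspurger normalizations of Haar measures and transfer factor recalled there, $\mathbf{1}_{K(\mathcal{O})}$ transfers to $f_{K_1,\lambda_{H_1}}$, which is the $\lambda_{H_1}$-twisted pushforward to $H_1(F)$ of the characteristic function of the hyperspecial subgroup $K_1(\mathcal{O})$ of $H(F)$ built from $K(\mathcal{O})$ in loc.\ cit. I expect the only genuine subtlety, and hence the main obstacle to a fully careful write-up, to be the sign- and normalization-matching in the second paragraph: one must check that the transfer factor convention of \cite{Kottwitz90} (which twists $s\mapsto s^{-1}$ relative to \cite{LS87}) is compatible with the Langlands convention for the correspondence for tori fixed in Footnote \ref{ftn:LLC_sign}, and that the pairing $\langle\tilde\alpha,s\rangle$ appearing here is the one dual to the cohomology classes $\mathrm{inv}(\gamma_0,\gamma_{\alpha})$ under Lemma \ref{lem:identification_of_Kottwitz_A(H)} and not its inverse; getting this exactly right is what makes the subsequent stabilization of the Kottwitz formula come out with the correct signs.
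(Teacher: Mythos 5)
The paper does not give a proof of this theorem at all: it is stated as a recall of known results (the Langlands--Shelstad transfer conjecture and the fundamental lemma, established by Ngo and Waldspurger), followed only by Remark \ref{rem:untwisted_endoscopy_transfer} explaining the translation between the form \eqref{eq:untwisted_endo-transfer1} and the customary formulation, and the extension from strongly $G$-regular to $(G,H_1)$-regular elements via \cite[Lemma 24.A]{LS90} and \cite[Prop.~2]{Kottwitz88}. Your proposal matches this exactly — citing the deep inputs, invoking the cocycle relation $\Delta(\gamma_{H_1},\gamma_\alpha)=\langle\tilde\alpha,s\rangle\Delta(\gamma_{H_1},\gamma_0)$ coming from $\Delta_{III_1}$, reindexing from conjugacy classes to $\mathfrak{D}(I_0,G;F)$, and appealing to the unramified normalizations of \cite[4.1--4.4]{Waldspurger08} for the fundamental-lemma clause.
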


One says that the function $f^{H_1}$ is a \emph{transfer} of $f$ and the pair $(f,f^{H_1})$ has \emph{matching orbital integrals}.

\begin{rem} \label{rem:untwisted_endoscopy_transfer}
(1) As the local conjecture for $\Delta$ \cite[5.6]{Kottwitz86} holds \cite[(4.2)]{LS87},
there exists a relation (cf. \cite[p.169, line -9]{Kottwitz90})
\[ \Delta(\gamma_{H_1},\gamma_{\alpha}) =\langle \tilde{\alpha},s\rangle \Delta(\gamma_{H_1},\gamma) \] 
Hence, the right-hand side of (\ref{eq:untwisted_endo-transfer1}) is also equal to
\[ \sum_{\gamma} \Delta(\gamma_{H_1},\gamma) e(I_{\gamma}) a(\gamma) \mathrm{O}^{G(F)}_{\gamma}(f),\]
where $\gamma$ runs through a set of representatives for the semi-simple $G(F)$-conjugacy classes in $G(F)$ (whose norms are $\gamma_{H_1}$): use the fact \cite[Prop.35bis]{Serre02} that for any $\alpha\in \mathfrak{D}(I_0,G;F)$, the set of $\beta$'s in $\mathfrak{D}(I_0,G;F)$ having the same image in $H^1(F,G_{\gamma_0})$ as $\alpha$ is in bijection with $\mathfrak{D}(I_{\gamma_{\alpha}},G_{\gamma_{\alpha}};F)$.

(2) A priori, this theorem is proved for strongly $G$-regular elements $\gamma_{H_1}$ of $H_1(F)$ (cf. \cite[4.8]{Waldspurger08}). Then, the identity as well as the definitions of transfer factors extend to general $(G,H_1)$-regular elements $\gamma_{H_1}$, by the argument of proof of \cite[Lemma 24.A]{LS90}: this lemma in fact does the same job for the case $H=H_1$ under the assumption $G^{\der}=G^{\uc}$, and is based on the special case of \cite[Prop.2]{Kottwitz88} that $H$ is a quasi-split inner form of $G$, but not necessarily $G^{\der}=G^{\uc}$. Also essentially the same argument is repeated in \cite[Prop. A.3.14]{Kottwitz10} for base change twisted endoscopic transfer, assuming $G^{\der}=G^{\uc}$ (one can combine the arguments of \cite[Prop.2]{Kottwitz88} and \cite[Prop. A.3.14]{Kottwitz10} to directly obtain a proof in our general set-up). 
\end{rem}

\subsubsection{Base-change twisted endoscopy: $v=p$}

Here, we assume that $G$ and the given endoscopic datum $(H,\mathcal{H},s,\xi)$ are both unramified in the sense of \cite[4.4]{Waldspurger08}. This implies that $\mathcal{H}\isom{}^LH$ and $\xi$, as an $L$-homomorphism ${}^LH\rightarrow {}^LG$, is the identity on the inertia subgroup $I$ of $W_F$ (\textit{loc. cit.}, $\S$5.1). 

For each $n\in\N$, we let $L_n$ denote the unramified extension of $F$ in $\bar{F}$ with $[L_n:F]=n$, and $R:=\Res_{L_n/F}G$. We recall the stable norm map $\mathscr{N}$ from the set of stable $\sigma$-conjugacy classes in $G(L_n)=R(F)$ to the set of stable conjugacy classes in $G(F)$ (\cite[$\S$5]{Kottwitz86} and \autoref{subsubsec:w-stable_sigma-conjugacy}). Suppose that $\gamma_0\in G(F)$ is the stable norm of some $\delta\in G(L_n)$. Then, for $I_0:=G_{\gamma_0}^{\mathrm{o}}$, any element of $\ker[H^1(F,I_0)\rightarrow H^1(F,G)]$ (rather, its image in $H^1(F,G_{\delta\theta})$ via the canonical isomorphism $H^1(F,I_0)=H^1(F,G_{\delta\theta}^{\mathrm{o}})$) determines a unique $\sigma$-conjugacy class in $G(L_n)$ that is stably $\sigma$-conjugate to $\delta$, in view of the cohomological description (\ref{eq:sigma-conj_in_stable-sigma-conj}) of the set of such $\sigma$-conjugacy classes and the commutative diagram (\ref{eq:stable_sigma_conj_diagm}). Moreover, if we fix  $c\in G(L)$ such that $c\gamma_0c^{-1}=\Nm_n\delta$ and $b:=c^{-1}\delta\sigma(c)\in I_0$ (then, $b\in I_0(L)$ is basic), 
the composite of $\kappa_G$ (\ref{eq:kappa_G}) and $j_{[b]}^{I_0}$ (\ref{eq:j_[b]^{I_0}}) 
\[\kappa_G\circ j_{[b]}^{I_0} : \ker[H^1(F,I_0)\rightarrow H^1(F,G)] \rightarrow B(I_0) \rightarrow B(G)\rightarrow X^{\ast}(Z(\hat{G})^{\Gamma})\] 
is constant with image $\kappa_G([b])=\kappa_G([\delta])$ (cf. \autoref{subsubsec:w-stable_sigma-conjugacy}).

Let $\theta$ be the $F$-automorphism of $R$ induced by $\sigma=\sigma|_{L_n}$. There exists a natural choice of an embedding $i:{}^LG\rightarrow {}^LR$ and an automorphism $\hat{\theta}$ of $\hat{G}$. Let $\tilde{s}$ be the element of $\mathfrak{Z}$, the centralizer of $i\circ\xi(\hat{H})$ in $\hat{R}$, defined in \cite[(A.1.3.1)]{Kottwitz10}, so that the composite $i\circ\xi: \hat{H}\rightarrow\hat{G}\rightarrow \hat{R}$ identifies $\hat{H}$ with the identity component of the $\hat{\theta}$-centralizer of $\tilde{s}$ in $\hat{R}$. Further, let $\tilde{\xi}:\mathcal{H}={}^LH\rightarrow {}^LR$ be the \emph{allowed} embedding defined by that $\tilde{\xi}=i\circ\xi$ on $\hat{H}$ and $\tilde{\xi}(\tilde{\sigma})=\tilde{s}\cdot i\circ\xi(\tilde{\sigma})$ for any lift $\tilde{\sigma}\in W_F$ of $\sigma$. Then, the datum $(H,\tilde{s},\tilde{\xi})$ is a twisted endoscopic datum of $(R,\theta)$.

We may assume that $s\in Z(\hat{H})^{\Gamma}Z(\hat{G})$, by condition (iv) of the definition: there exists $z\in Z(\hat{G})$ such that $sz\in Z(\hat{H})^{\Gamma}Z(\hat{G})$. For any $(G,H)$-regular, semi-simple element $\gamma_H$ of $H(F)$ which transfers to $\gamma_0\in G(F)$, if $I_{H}:=H_{\gamma_H}^{\mathrm{o}}$, $I_0:=G_{\gamma_0}^{\mathrm{o}}$, one regards $s$ as an element of $Z(\hat{I}_0)^{\Gamma}Z(\hat{G})$ via the canonical $\Gamma$-equivariant homomorphisms $Z(\hat{H})\hookrightarrow Z(\hat{I}_H)\isom Z(\hat{I}_0)$. In this base-change situation, the twisted endoscopic transfer established by Waldspurger \cite{Waldspurger08} gives the following statement.

\begin{thm} \label{thm:twisted_endoscopy_transfer}
Suppose given $\mu_0\in X^{\ast}(Z(\hat{G})^{\Gamma})$.
For every $f\in C_c^{\infty}(G(L_n))$ with the property that the twisted orbital integral $\mathrm{TO}^{G(F)}_{\delta}(f)$ is zero if $\kappa_G([\delta])\neq\mu_0$, there exists an $f^{H_1}\in C_{c,\lambda_{H_1}}^{\infty}(H_1(F))$ such that for each $(G,H_1)$-regular, semi-simple element $\gamma_{H_1}$ of $H_1(F)$ with image $\gamma_H$ in $H(F)$, the stable orbital integral $\mathrm{SO}^{H_1(F)}_{\gamma_{H_1}}(f^{H_1})$ is zero unless $\gamma_H$ transfers to an element $G(F)$ which is a stable norm of an element $\delta$ of $G(L_n)$ with $\kappa([\delta])=\mu_0$, in which case, if we fix such an $\gamma_0\in G(F)$ (i.e. $\gamma_0$ is a transfer of $\gamma_H$ and $\gamma_0=\mathscr{N}\delta$), we have
\begin{equation} \label{eq:twisted_endo-transfer1}
\mathrm{SO}^{H_1(F)}_{\gamma_{H_1}}(f^{H_1}) = \sum_{\alpha\in\mathfrak{D}(I_0,G;F)} \langle \tilde{\alpha},s\rangle \Delta(\gamma_{H_1},\gamma_0) e(G_{\delta_{\alpha}\theta}^{\mathrm{o}}) \mathrm{TO}^{G(F)}_{\delta_{\alpha}}(f),
\end{equation}
where for each $\alpha\in \mathfrak{D}(I_0,G;F)$, we choose an element $\delta_{\alpha}$ of $G(L_n)$ whose stable $\sigma$-conjugacy class corresponds to the image of $\alpha$ in $H^1(F,G_{\delta\theta})$ and $\tilde{\alpha}$ is the lifting of $j_{[b]}^{I_0}(\alpha)\in B(I_0)_{basic}\cong X^{\ast}(Z(\hat{I}_0)^{\Gamma})$ to $X^{\ast}(Z(\hat{I}_0)^{\Gamma}Z(\hat{G}))$ whose restriction to $Z(\hat{G})$ is $\mu_0$.
\end{thm}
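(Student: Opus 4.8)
\textbf{Proof plan for Theorem \ref{thm:twisted_endoscopy_transfer}.} The plan is to deduce this base-change twisted endoscopic transfer statement from the general twisted endoscopic transfer established by Waldspurger \cite{Waldspurger08} (building on Ngo's proof of the twisted fundamental lemma), combined with the same bookkeeping that Kottwitz carried out in \cite[$\S$7]{Kottwitz92} and \cite[Appendix A]{Kottwitz10} in the case $G^{\der}=G^{\uc}$. First I would set up the twisted endoscopic datum $(H,\tilde{s},\tilde{\xi})$ for $(R,\theta)$ attached to the given unramified endoscopic datum $(H,\mathcal{H},s,\xi)$ of $G$, together with a $z$-pair; this is exactly the construction recalled just before the statement (following \cite[A.1.3]{Kottwitz10}). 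Applying Waldspurger's twisted transfer theorem to this datum produces, for any $f\in C_c^\infty(G(L_n))$, a function $f^{H_1}\in C^\infty_{c,\lambda_{H_1}}(H_1(F))$ whose stable orbital integral at a strongly $\theta$-regular $\gamma_{H_1}$ equals the twisted orbital integral side; the key translation is that a stable twisted conjugacy class in $R(F)=G(L_n)$ is the same datum as a stable conjugacy class in $G(F)$ via the stable norm $\mathscr{N}$ (Kottwitz \cite[$\S$5]{Kottwitz82}, \cite[$\S$5]{Kottwitz86}, cf. Remark \ref{rem:Kottwitz_triples}(4)), so that the ``$\gamma_H$ transfers to a stable norm of some $\delta$'' condition is the correct image condition.

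The next step is to pin down the right-hand side as an explicit sum over $\mathfrak{D}(I_0,G;F)=\ker[H^1(F,I_0)\to H^1(F,G)]$. Here the point is the cohomological parametrization already recorded in the excerpt: by (\ref{eq:sigma-conj_in_stable-sigma-conj}) and the commutative diagram (\ref{eq:stable_sigma_conj_diagm}), for fixed $\gamma_0=\mathscr{N}\delta$ the $\sigma$-conjugacy classes of elements $\delta'$ stably $\sigma$-conjugate to $\delta$ and satisfying condition (iii$''$) are parametrized by $\ker[H^1(F,G_{\delta\theta}^{\mathrm{o}})\to H^1(F,G)]=\ker[H^1(F,I_0)\to H^1(F,G)]$, and under $j_{[b]}^{I_0}$ (\ref{eq:j_[b]^{I_0}}) the composite $\kappa_G\circ j_{[b]}^{I_0}$ is constant with value $\kappa_G([\delta])$. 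This constancy is precisely what allows the cutoff hypothesis $\mathrm{TO}_\delta(f)=0$ unless $\kappa_G([\delta])=\mu_0$ to be matched against the choice of the lift $\tilde\alpha\in X^*(Z(\hat I_0)^\Gamma Z(\hat G))$ restricting to $\mu_0$ on $Z(\hat G)$; the pairing $\langle\tilde\alpha,s\rangle$ then records the value of the (Langlands--Shelstad, base-change normalized) transfer factor $\Delta(\gamma_{H_1},\delta_\alpha)/\Delta(\gamma_{H_1},\delta_0)$, exactly as in the untwisted case recalled in Remark \ref{rem:untwisted_endoscopy_transfer}(1). The sign $e(G^{\mathrm{o}}_{\delta_\alpha\theta})$ is the Kottwitz sign of the $\theta$-centralizer, which enters Waldspurger's normalization of twisted orbital integrals.

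The main obstacle I anticipate is not the twisted fundamental lemma itself (which is Ngo/Waldspurger) but the passage from strongly $\theta$-regular elements, where Waldspurger's theorem is stated, to general $(G,H_1)$-regular $\gamma_{H_1}$, together with the verification that the various normalizations (transfer factors $\Delta$, Haar measures on $G(L_n)$, $H_1(F)$ and the relevant centralizers, and the character $\lambda_{H_1}$) are mutually compatible so that the displayed identity (\ref{eq:twisted_endo-transfer1}) holds on the nose. For the first issue I would run the limiting argument of \cite[Lemma 24.A]{LS90}, in the form already adapted to the base-change setting in \cite[Prop. A.3.14]{Kottwitz10}; the only new point relative to \loccit{} is that we do \emph{not} assume $G^{\der}=G^{\uc}$, so one must work throughout with $\ker[H^1(F,I_0)\to H^1(F,G)]$ and the abelianized cohomology groups $H^1(F,(G_{\delta\theta}^{\mathrm{o}})_{\bfab})$ (Appendix \ref{sec:abelianization_complex}) in place of the $\pi_0(Z(\hat{\cdot})^\Gamma)^D$ groups, just as we did in the proof of Theorem \ref{thm:LR-Satz5.25}. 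This is a routine-but-careful bookkeeping exercise rather than a new idea, and I would present it as such, citing \cite{Waldspurger08}, \cite{Kottwitz10}, \cite{Labesse04} for the pieces and indicating only the modifications forced by general $G^{\der}$.
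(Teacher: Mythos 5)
Your proposal is correct and follows essentially the same route the paper takes: invoke Waldspurger's twisted transfer for the strongly $\theta$-regular case via the base-change twisted endoscopic datum $(H,\tilde{s},\tilde{\xi})$, then extend to $(G,H_1)$-regular $\gamma_{H_1}$ by the Shalika-germ limiting argument of \cite[Lemma 24.A]{LS90}/\cite[Prop. A.3.14]{Kottwitz10}, keeping track of $\ker[H^1(F,I_0)\rightarrow H^1(F,G)]$ rather than $\pi_0(Z(\hat{\cdot})^{\Gamma})^D$ since $G^{\der}\neq G^{\uc}$. The only pieces the paper makes more explicit are the concrete choice of an elliptic $T_H$ with an admissible embedding $T_H\isom T\subset G$ and an $F$-embedding $k:T\hookrightarrow G_{\delta\theta}^{\mathrm{o}}$, the constancy of $\Delta$ near $1$ from \cite[2.4]{LS87}, the surjectivity of $\mathfrak{D}(T,G;F)\rightarrow\mathfrak{D}(I_0,G;F)$ from \cite[10.2]{Kottwitz86}, and the degree-$0$ Shalika germ computation via \cite[(7.6)]{Clozel90}; but these are exactly the internals of the limiting argument you are invoking.
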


In our application, for $\mu_0$ we will take the element $\mu^{\natural}$ (\ref{eqn:mu_natural}).

\begin{rem}
When $\gamma_{H_1}$ is strongly $G$-regular, semi-simple and $\gamma$ is strongly regular, the identity 
(\ref{eq:twisted_endo-transfer1}) is just the transfer theorem  \cite{Waldspurger08} for the base-change twisted endoscopic datum $(H,\tilde{s},\tilde{\xi})$ introduced earlier.
Indeed, first when $G_{\gamma}$ (equiv. $G_{\delta\theta}$) is connected, the right hand side of (\ref{eq:twisted_endo-transfer1}) can be rewritten as the right hand side of:
\begin{equation} \label{eq:twisted_endo-transfer2}
\mathrm{SO}^{H_1(F)}_{\gamma_{H_1}}(f^{H_1}) = \sum_{\delta} \langle \tilde{\alpha}(\gamma_0;\delta),s\rangle \Delta(\gamma_{H_1},\gamma_0) e(G_{\delta\theta}) \mathrm{TO}^{G(F)}_{\delta}(f),
\end{equation}
where $\delta$ runs through a set of $\sigma$-conjugacy classes of elements in $G(L_n)$ whose stable norm $\mathscr{N}\delta$ is $\gamma_0\in G(F)$ and such that the attached invariant $\alpha(\gamma_0;\delta)\in B(I_0)$ maps to $\mu^{\natural}$ under $\kappa_G$, and $\tilde{\alpha}(\gamma_0;\delta)$ is the extension of $\alpha(\gamma_0;\delta)$ to $X^{\ast}(Z(\hat{I}_0)^{\Gamma}Z(\hat{G}))$ whose restriction to $Z(\hat{G})$ is $\mu^{\natural}$.
Secondly, when $\gamma_{H_1}$ is strongly $G$-regular, semi-simple and $\gamma_0$ is strongly regular, one has the relation
\[\Delta(\gamma_{H_1},\delta)=\langle \tilde{\alpha}(\gamma_0;\delta),s\rangle \Delta(\gamma_{H_1},\gamma_0),\] 
between the twisted transfer factor $\Delta(\gamma_{H_1},\delta)$ and the standard transfer factor $\Delta(\gamma_{H_1},\gamma_0)$ \cite[Thm.5.6.2]{KottwitzShelstad12} (note that this result corrects the sign mistake in A.3.11.1 of \cite{Kottwitz10}).
Then, assuming that $G^{\der}=G^{\uc}$, Kottwitz extended this identity (\ref{eq:twisted_endo-transfer2}) to general $(G,H)$-regular semi-simple elements $\gamma_H$ of $H(F)$ \cite[Prop,A.3.14]{Kottwitz10}, using the arguments alluded to in (\ref{rem:untwisted_endoscopy_transfer}) and \cite[Prop.A.3.12]{Kottwitz10}.
We will see in the proof below that the same arguments continue to work for our version (\ref{eq:twisted_endo-transfer1}). Note that the right hand side of (\ref{eq:twisted_endo-transfer2}) does not make sense in our situation since the invariant $\alpha(\gamma_0;\delta)\in B(I_0)$ is not well-defined by the pair $(\gamma_0;\delta)$ unless $G_{\gamma}$ is connected. This is why we had to rewrite it as in (\ref{eq:twisted_endo-transfer1}).
\end{rem}

\begin{proof}
We keep the notation and assumption of \cite[A.3.11]{Kottwitz10}.
We only need to consider the case that $\gamma_H$ transfers to $\gamma_0\in G(F)$ which is a stable norm of $\delta\in G(L_n)$ with $\kappa([\delta])=\mu_0$. We may choose an elliptic maximal torus $T_H$ of $I_H=H_{\gamma_H}^{\mathrm{o}}$, an admissible embedding $T_H\isom T\subset G$ sending $\gamma_H$ to $\gamma_0$, and an $F$-embedding $k:T\hookrightarrow G_{\delta\theta}^{\mathrm{o}}$ (which exist since $T_H$ and $T$ are elliptic in $I_H$ and $I_0$, respectively).

Then, there exists an open neighborhood $U$ of $1$ in $T(F)$ such that for all $t\in U$,
$\Delta(\gamma_{H_1}(t),\gamma_0(t))$ is constant whenever $\gamma_{H_1}(t):=t^n\gamma_{H_1}$ is strongly $G$-regular (semisimple) and $\gamma_0(t):=t^n\gamma_0\in T(F)$ is strongly regular (semisimple) \cite[2.4]{LS87}. We set $\delta(t):=k(t)\delta$ (so, $\mathscr{N}\delta(t)=\gamma_0(t)$). Also, for strongly regular $\gamma_0(t)$ (whose centralizer is thus $T$) and for $\alpha\in \mathfrak{D}(T,G;F)$, the stable $\sigma$-conjugacy class of $\delta(t)_{\alpha}:=k(t)\delta_{\alpha}$ corresponds to $\alpha\in H^1(F,T)\isom H^1(F,G_{\delta(t)\theta})$.
Hence, for $t\neq1\in U$ such that $\gamma_0(t)$ is strongly regular, the right hand side of (\ref{eq:twisted_endo-transfer1}) for $\gamma_{H_1}(t)$ and $\gamma_0(t)$ equals $\sum_{\alpha\in\mathfrak{D}(T,G;F)} \langle \tilde{\alpha},s\rangle \Delta(\gamma_{H_1},\gamma_0) \mathrm{TO}^{G(F)}_{\delta(t)_{\alpha}}(f)$. Clearly, the pairing $\langle \tilde{\alpha},s\rangle$ and $\delta(t)_{\alpha}$ alll depend only on the image of $\alpha$ in $\mathfrak{D}(I_0,G;F)$.
Also, the map $\mathfrak{D}(T,G;F)\rightarrow \mathfrak{D}(I_0,G;F)$ is surjective \cite[10.2]{Kottwitz86}. Therefore, the degree $0$ part of its Shalika germ about $1$ equals \cite[(7.6)]{Clozel90}
\[ |\mathfrak{D}(T,I_0;F)|\cdot (-1)^{q(I_0^{\ast})}\cdot \sum_{\alpha\in\mathfrak{D}(I_0,G;F)} \langle \tilde{\alpha},s\rangle \Delta(\gamma_{H_1},\gamma_0) e(G_{\delta_{\alpha}\theta}^{\mathrm{o}}) \mathrm{TO}^{G(F)}_{\delta_{\alpha}}(f),\]
where $I_0^{\ast}$ is a (common) quasi-split inner form of $I_0$ and $G_{\delta_{\alpha}\theta}^{\mathrm{o}}$ and $q(I_0^{\ast})$ is the $F$-rank of $(I_0^{\ast})^{\der}$.
By comparing this with the degree $0$ part of the Shalika germ about $1$ of the function $t\mapsto \mathrm{SO}^{H_1(F)}_{\gamma_{H_1}(t)}(f^{H_1})$, we obtain the identity (\ref{eq:twisted_endo-transfer1}).
\end{proof}

\subsubsection{Stabilization at $\infty$}

We fix an elliptic maximal torus $T$ of $G_{\R}$; by assumption, there exist an (elliptic) maximal torus $T_{H}$ of $(H)_{\R}$ and an admissible embedding $j_0:T_{H}\rightarrow T$ (defined over $\R$), and thus, the maximal split tori $A_{H_{\R}}$, $A_{G_{\R}}$ in the centers of $H_{\R}$ and $G_{\R}$ are canonically isomorphic.

\begin{thm} \cite[(7.4)]{Kottwitz92}, \cite[Rem.6.2.2]{Morel10} \label{thm:pseudo-coeff}
There exists a function $h_{\infty}$ on $H_1(\R)$, compactly supported modulo $Z_{H_1}(\R)^{\mathrm{o}}$, such that for every semisimple element $\gamma_{H_1}$ of $H_1(\R)$,
the stable orbital integral $\mathrm{SO}_{\gamma_{H_1}}(h_{\infty})$ equals $0$ unless $\gamma_{H_1}$ is $(G,H_1)$-regular and elliptic, in which case
\[ \mathrm{SO}_{\gamma_{H_1}}(h_{\infty}) = \langle \tilde{\alpha}_{\infty}(\gamma_0),s\rangle \cdot \Delta_{\infty}(\gamma_{H_1},\gamma_0) \cdot e(I_{\infty}) \cdot \mathrm{tr}\xi_{\C}(\gamma_0) \cdot \mathrm{vol}(A_{G_{\R}}(\R)^{\mathrm{o}}\backslash I_0(\infty)(\R))^{-1}, \]
where the terms on the right side are as follows (cf. \cite[p.182]{Kottwitz90}):
$\gamma_0:=j_0(\gamma_H)$, $I_0(\infty)$ is the inner form of $(I_0)_{\R}:=Z_{G}(\gamma_0)^{\mathrm{o}}_{\R}$ with anisotropic adjoint $I_0(\infty)^{\ad}$, and $e(I_{\infty})$ is the sign attached to $I_{\infty}$ by Kottwitz \cite{Kottwitz83}. The element $\tilde{\alpha}_{\infty}(\gamma_0)\in X^{\ast}(Z(\hat{I_0})^{\Gamma(\infty)}Z(\hat{G}))$ is the extension of $\alpha_{\infty}(\gamma_0)\in X^{\ast}(Z(\hat{I_0})^{\Gamma(\infty)})$ (as defined in \autoref{subsubsec:Kottwitz_invariant}) whose restriction to $Z(\hat{G})$ is $-\mu_0$. 
\end{thm}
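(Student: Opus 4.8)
\textbf{Proof plan for Theorem \ref{thm:pseudo-coeff}.}
The statement is an archimedean analogue of the untwisted endoscopic transfer theorem, with the transfer target being not an arbitrary test function but a \emph{pseudo-coefficient} (more precisely, an Euler--Poincar\'e function, twisted by the contragredient of $\xi$) on $G(\R)$. The plan is to deduce it from the known theory of stabilization of elliptic orbital integrals over $\R$ (Kottwitz \cite{Kottwitz90}, \cite{Kottwitz92}, and the discussion in \cite[Ch.~6]{Morel10}), carefully tracking the two places where our generality $G^{\der}\neq G^{\uc}$ requires care: the normalization of the transfer factor $\Delta_\infty$ (which now must be the one of \cite[p.178]{Kottwitz90}, i.e.\ $\Delta'$ of \cite{KottwitzShelstad12}, and which involves the central character datum $\mu_0$ via its restriction to $Z(\hat G)$), and the passage from strongly regular to $(G,H_1)$-regular semisimple $\gamma_{H_1}$ (handled as in Remark \ref{rem:untwisted_endoscopy_transfer}).

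First I would construct $h_\infty$ on $H_1(\R)$ directly as an explicit transfer of the function $f_\infty$ on $G(\R)$ that is a pseudo-coefficient for the discrete series $L$-packet with infinitesimal character dual to $\xi$ (equivalently, the truncated Euler--Poincar\'e function $\xi$-twisted, as in \cite[\S6.2]{Morel10}): by the Langlands--Shelstad archimedean transfer (known by Shelstad, and reproved in the framework above), there is $h_\infty\in C_{c,\lambda_{H_1}}^\infty(H_1(\R))$ with matching stable orbital integrals, i.e.\ for $(G,H_1)$-regular semisimple $\gamma_{H_1}$ with image $\gamma_H$ transferring to $\gamma_0\in G(\R)$,
\[
\mathrm{SO}_{\gamma_{H_1}}(h_\infty)=\sum_{\alpha\in\mathfrak{D}(I_0,G;\R)}\langle\tilde\alpha_\infty,s\rangle\,\Delta_\infty(\gamma_{H_1},\gamma_0)\,e(I_{\gamma_\alpha})\,\mathrm{O}_{\gamma_\alpha}(f_\infty),
\]
with $\tilde\alpha_\infty$ the lift of $\alpha$ to $X^{\ast}(Z(\hat I_0)^{\Gamma(\infty)}Z(\hat G))$ restricting to $-\mu_0$ on $Z(\hat G)$ (the sign $-\mu_0$ is forced by the choice of $\Delta_\infty$ and must match the sign convention of Footnote \ref{ftn:LLC_sign} and \eqref{eq:restriction_of_alpha_to_Z(hatG)}; getting this sign right is the first bookkeeping hurdle). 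Then I would invoke the key computation, due to Kottwitz \cite[Thm.~5.1]{Kottwitz90} (cf.\ the discussion of the archimedean term on \cite[p.~182]{Kottwitz90}, and \cite[\S3]{Kottwitz92}), that for a \emph{pseudo-coefficient} $f_\infty$ the sum over $\alpha\in\mathfrak{D}(I_0,G;\R)$ collapses: the individual orbital integrals $e(I_{\gamma_\alpha})\mathrm{O}_{\gamma_\alpha}(f_\infty)$ are, up to the universal sign and the common factor $\mathrm{vol}(A_{G_\R}(\R)^{\mathrm{o}}\backslash I_0(\infty)(\R))^{-1}\cdot\mathrm{tr}\,\xi_\C(\gamma_0)$, the matrix coefficients of the regular representation of $\pi_0(Z(\hat I_0)^{\Gamma(\infty)}Z(\hat G))/Z(\hat G)$, so that after pairing with $s$ all terms cancel except the one indexed by the distinguished class, leaving the single term $\langle\tilde\alpha_\infty(\gamma_0),s\rangle\,\Delta_\infty(\gamma_{H_1},\gamma_0)\,e(I_\infty)\,\mathrm{tr}\,\xi_\C(\gamma_0)\,\mathrm{vol}(A_{G_\R}(\R)^{\mathrm{o}}\backslash I_0(\infty)(\R))^{-1}$. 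The vanishing when $\gamma_{H_1}$ is not $(G,H_1)$-regular elliptic comes from the fact that pseudo-coefficients of discrete series are supported on elliptic elements and have vanishing orbital integrals on non-elliptic regular semisimple classes.

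The main obstacle I expect is not the collapse computation itself (which is essentially \cite{Kottwitz90} for $G^{\der}=G^{\uc}$ and transported here verbatim once the cohomology is rephrased via $H^1_{\mathrm{ab}}$ and the complex $I_{0,\mathbf{ab}}$ as in Appendix \ref{sec:abelianization_complex} and Lemma \ref{lem:identification_of_Kottwitz_A(H)}), but rather the reduction from strongly regular to general $(G,H_1)$-regular $\gamma_{H_1}$ together with keeping the transfer factor normalization consistent across $v=\infty$, $v=p$, and $v\neq p,\infty$. Concretely, I would run the Shalika-germ argument exactly as in Remark \ref{rem:untwisted_endoscopy_transfer}(2) and \cite[Prop.~A.3.14]{Kottwitz10}: deform $\gamma_{H_1}$ by $t^n$ inside an elliptic maximal torus, use that both sides are equal for strongly regular parameters, use surjectivity of $\mathfrak{D}(T,I_0;\R)\twoheadrightarrow\mathfrak{D}(I_0,G;\R)$ (and $\mathfrak{D}(T,G;\R)\twoheadrightarrow\mathfrak{D}(I_0,G;\R)$), and compare degree-zero Shalika germs about $1$; the factor $|\mathfrak{D}(T,I_0;\R)|\cdot(-1)^{q(I_0^\ast)}$ appears symmetrically on both sides and cancels. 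A secondary point to verify carefully is that the measure $\mathrm{vol}(A_{G_\R}(\R)^{\mathrm{o}}\backslash I_0(\infty)(\R))^{-1}$ appearing here is normalized compatibly with the Tamagawa measure conventions used in Theorems \ref{thm:Kottwitz_formula:LR} and \ref{thm:Kottwitz_formula:Kisin}, so that when one multiplies $f^{H_1}=f^{H_1,p}\cdot f^{H_1}_p\cdot h_\infty$ and takes the product of the three transfer identities the constants $c(\gamma_0;\gamma,\delta)$ are reproduced; this is a matching-of-normalizations check rather than a new argument, but it is where sign and volume errors tend to hide. Finally, the case distinctions at the start of the construction of $f^{H_1}$ (namely $f^{H_1}=0$ unless $H$ is unramified at $p$ and an elliptic maximal torus of $H_\R$ transfers to one of $G_\R$) guarantee that whenever $h_\infty\neq0$ the torus $T_H$ and the admissible embedding $j_0$ in the statement exist, so the theorem is vacuous in the excluded cases.
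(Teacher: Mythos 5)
Your proposal takes a genuinely different route from what the paper actually does, and in doing so it buries the main technical content of the paper's proof in a parenthetical.

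The paper follows Kottwitz \cite[p.182--186]{Kottwitz90} in constructing $h_\infty$ \emph{directly} on $H_1(\R)$ (as a $\xi$-twisted combination of Euler--Poincar\'e functions for the discrete series $L$-packets of $H_1(\R)$ obtained from \eqref{eq:endoscopic_transfer_L-paramters}) and then verifying the stated identity by an explicit comparison of quasi-characters on $T_{H_1}(\R)$ and $T(\R)$. There is no appeal to the archimedean transfer theorem for a test function on $G(\R)$, and no sum over $\mathfrak{D}(I_0,G;\R)$ to collapse. The \emph{entire} content of the paper's proof is establishing the transfer factor identity \eqref{eq:transfer_factor}, which is exactly what bridges the quasi-characters; the paper proves it by decomposing $\Delta_\infty^{\mathrm{LS}}=\Delta_I\Delta_{II}\Delta_{III_1}\Delta_{III_2}\Delta_{IV}$, tracking each factor against the $z$-pair $(H_1,\xi_1)$ and the based $\chi$-data (following \cite[\S10--\S11]{Shelstad08}), showing $\Delta_{III_1}\equiv1$ for $\gamma=j_1(\gamma_{H_1})$, showing $\Delta_I$ depends only on $j$, and proving $\Delta_\infty=C\cdot\Delta_\infty^{\mathrm{LS}}$. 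You flag this as a "first bookkeeping hurdle" and a "matching-of-normalizations check rather than a new argument"; it is in fact the substance.

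Your route (transfer a pseudo-coefficient $f_\infty$ from $G(\R)$, then "collapse" the sum) is plausible in principle, but as written it has two gaps. First, the collapse is not an orthogonality of matrix coefficients of the regular representation of $\pi_0(Z(\hat I_0)^{\Gamma(\infty)}Z(\hat G))/Z(\hat G)$: the orbital integrals $e(I_{\gamma_\alpha})\mathrm{O}_{\gamma_\alpha}(f_\infty)$ involve Euler characteristics of the various inner forms $I_{\gamma_\alpha}$ of $I_0$, and the identity that makes the sum telescope is precisely Kottwitz's stable character/inversion computation, not a group-theoretic cancellation. Second, and more importantly, even if you make the transfer route precise, the burden lands back on the same transfer-factor identity: the archimedean transfer theorem you invoke, with the particular normalization needed (restriction $-\mu_0$ on $Z(\hat G)$, $\Delta'$ of \cite{KottwitzShelstad12}), is not off-the-shelf in the $z$-pair setting, and proving it requires exactly the work the paper does. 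So the shortcut does not save work; it reorganizes it while leaving the key step understated. The Shalika-germ reduction from strongly regular to $(G,H_1)$-regular is a legitimate alternative to the paper's invocation of Shelstad's continuity argument; that part of your plan is fine.
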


\begin{proof}
Basically, the arguments on p. 182--186 of \cite{Kottwitz90} carry over to our general setting without essential change, and here we will be just contented with explanation of the necessary modifications which mainly concern generalization (for $(G,\mathcal{H},H_1,\xi_1)$) of certain properties of the transfer factor $\Delta_{\infty}$ (which were stated for $(G,H)$ in \textit{loc. cit.}): recall that in the Kottwitz's set-up (i.e. $G^{\der}=G^{\uc}$), the split extension $\mathcal{H}$ in the endoscopic datum is the $L$-group ${}^LH$ of $H$ and one does not need a $z$-pair $(H_1,\xi_1)$ (i.e. $H_1=H$, $\xi_1=id$). 
To state the required generalization (and fix notations), it seems unavoidable to repeat, in our setting, the discussion of Kottwitz in \textit{loc. cit.} (see (a)- (e) below). In the following discussion, as we will work exclusively with groups over $\R$ and their dual groups, we will simply write $G$, $H$, $H_1$ ... for their base-changes to $\R$.

(a) We fix an $\R$-splitting $spl_{\hat{G}}=(\mathcal{T},\mathcal{B},\{X_{\alpha}\})$ of $\hat{G}$ (as usual, $(\mathcal{T},\mathcal{B})$ is a (Borel) pair of $\hat{G}$, $\{X_{\alpha^{\vee}}\}$ is a collection of root vectors), and, assuming $s\in\mathcal{T}$, take $spl_{\hat{H}}=(\mathcal{T},\mathcal{B}_{\hat{H}}=\mathcal{B}\cap \hat{H},\{X_{\alpha^{\vee}}\})$ for an $\R$-splitting of $\hat{H}$ ($\alpha^{\vee}$ runs through $R(\mathcal{T},\hat{H})$); this also determines an $\R$-splitting $spl_{\hat{H}_1}=(\mathcal{T}_{1},\mathcal{B}_{\hat{H}_1},\cdots)$ of $\hat{H}_1$ and an embedding ${}^LH\rightarrow {}^LH_1$. Also, when $\mathfrak{B}$ denotes the set of Borel subgroups of $G_{\C}$ containing $T$ and $\mathcal{B}_{H_1}$ the similar set for $(H_1,T_{H_1})$, for $B\in \mathfrak{B}$, let $\iota_B$ denote one-half the sum of $B$-positive roots of $T$ in $G$ and $\Delta_B(-)$ the function on $T(\R)$ defined by \[\Delta_B(\gamma)=\prod_{\alpha>^B0}(1-\alpha(\gamma)^{-1}),\] with $\alpha$ running through the $B$-positive roots of $T$.

(b) To any $B\in \mathfrak{B}$ (more precisely, to the associated \emph{based} $\chi$-data) and any $\R$-splitting $spl_{\hat{G}}$ of $\hat{G}$, Langlands and Shelstad  \cite[(2.6)]{LS87} construct a canonical admissible embedding:
\[\eta_{B}:{}^LT\rightarrow {}^LG \] 
which is is uniquely determined up to $\mathcal{T}$-conjugacy.
In more detail, $\eta_{B}|_{\hat{T}}$ is the isomorphism $\hat{T}\isom \mathcal{T}$ determined by the chosen (Borel) pairs $(T,B)$, $(\mathcal{T},\mathcal{B})$, the restriction of $\eta_{B}$ to $\C^{\times}=W_{\C}\subset {}^LT$ is given by $z\mapsto (z/|z|)^{\iota_B}$, and $\eta_{B}(\tau)$ for $\tau\in\Gal(\C/\R)\subset W_{\R}$ has a certain explicit description in terms of the splitting $spl_{\hat{G}}$. 

(c) Let $J$ be the set of admissible isomorphisms $j:T_{H}\rightarrow T$ (\ref{eq:admissible_embedding}); $\Omega_G:=\Omega(T(\C),G(\C))$ acts on $J$ simply transitively \cite[Prop.2.2]{Shelstad79}. Then, any pair $(j,B)\in J\times \mathfrak{B}$ determines unique Borel subgroups $B_{H}\in \mathfrak{B}_{H}$, $B_{H_1}\in \mathfrak{B}_{H_1}$ (in the usual manner via identifications of the associated based root data), in which case we write $(j,B)\mapsto B_{H}, B_{H_1}$. Any choice of pairs $(\mathcal{T}_1,\mathcal{B}_{\hat{H}_1})$, $(T_1,B_{H_1})$ (in particular any choice of $(j,B)$) presents the (absolute) Weyl group $\Omega_{H_1}=\Omega(T_1(\C),H_1(\C))$ as a subgroup of $\Omega_G$. Given $(j,B)\in J\times \mathfrak{B}$, we can write any $\omega\in \Omega_G$ as $\omega=\omega_{H_1}\omega_{\ast}$ uniquely, where $\omega_{H_1}\in \Omega_{H_1}$ and $\omega_{\ast}$ belongs to the set $ \Omega_{\ast}$ of $\omega\in\Omega_G$ such that $(j,\omega(B))$ and $(j,B)$ have the same image under $J\times \mathfrak{B}\rightarrow \mathfrak{B}_{H_1}$.

(d) For any elliptic $L$-parameter $\varphi:W_{\R}\rightarrow{}^LG$, we may assume (by conjugation under $\hat{G}$) that it factors through $\mathcal{T}$ and $\varphi(z)=z^{\Lambda}\bar{z}^{\Lambda'}\ (z\in \C^{\times})$ for a unique $\Lambda, \Lambda' \in X_{\ast}(\mathcal{T})_{\C}$ satisfying that $\Lambda-\Lambda'\in X_{\ast}(\mathcal{T})$ and $\langle\Lambda,\alpha^{\vee}\rangle>0$ for all $\mathcal{B}$-positive roots $\alpha^{\vee}$. With this tuning, for any $B\in \mathfrak{B}$, we obtain an $L$-parameter $\varphi_B:W_{\R}\rightarrow {}^LT$ such that $\varphi=\eta_B\circ\varphi_B$. Let 
\[ \chi_B=\chi(\varphi,B)\in \Hom_{cont}(T(\R),\C^{\times})\] 
be the quasi-character on $T(\R)$ corresponding to $\varphi_B$ by the Langlands correspondence for tori: $H^1(W_{\R},\hat{T})=\Hom_{cont}(T(\R),\C^{\times})$. 

For $B_{H_1}\in \mathfrak{B}_{H_1}$ and an elliptic $L$-parameter $\varphi:W_{\R}\rightarrow{}^LH_1$, we let $\eta_{B_{H_1}}$ and $\chi_{B_{H_1}}$ be respectively the admissible embedding ${}^LT_{H_1}\rightarrow {}^LH_1$ and the associated quasi-character $\chi_{B_{H_1}}$ of $T_{H_1}(\R)$ defined as above for $(T_{H_1},B_{H_1})$. In fact, we are interested in the (elliptic) $L$-parameters for $H_1$ with certain properties.
Indeed, let $\Phi_{temp}(H_1,\lambda_1)$ denote the set of equivalence classes of tempered $L$-parameters $\varphi_{H_1}$ whose associated character on $Z_1(\R)$ equals $\lambda_{H_1}$ and $\Phi_{temp}(G)$ the set of equivalence classes of temped $L$-parameters of $G$. Then, the pair of embeddings $\xi:\mathcal{H}\rightarrow {}^LG$, $\xi_1:\mathcal{H}\rightarrow  {}^LH_1$ gives rise to a map
\begin{equation} \label{eq:endoscopic_transfer_L-paramters}
\Phi_{temp}(H_1,\lambda_1) \rightarrow \Phi_{temp}(G),
\end{equation}
cf. \cite[$\S$2]{Shelstad10} (When $H_1=H$ and $\xi_1=id$, this map simply sends $\varphi_{H_1}$ to the composite $\xi\circ\varphi_{H_1}$).
Any $L$-parameter $\varphi_{H_1}\in \Phi_{temp}(H_1,\lambda_1)$ whose image in $\Phi_{temp}(G)$ is elliptic is also elliptic.

(e) The (local) Langlands-Shelstad transfer factors \cite{LR87} are determined only up to a constant and their definition requires certain auxiliary choices, namely choices of \emph{$a$-data} and \emph{$\chi$-data} (the transfer factors themselves are independent of these choices). Given $B\in\mathfrak{B}$, for $\chi$-data we will use the \emph{based} choice \cite[$\S$9]{Shelstad08} defined by the corresponding positive system $R(T,B)$ of roots, and for $a$-data, we set 
\[ a_{\alpha}:=i=:-a_{-\alpha},\quad \forall\alpha\in R(T,B) \] 
(recall that since $T$ is elliptic, every root is imaginary so that $a_{\alpha}$ must be purely imaginary).
Then, for any choice of $(j,B)\in J\times \mathfrak{B}$, with the resulting choice of based $\chi$-data and Borel subgroup $B_{H_1}\in \mathfrak{B}_{H_1}$ (i.e. $(j,B)\mapsto (B_{H_1})$), Kottwitz and Shelstad \cite[p.38-40]{KottwitzShelstad99} construct a $1$-cocycle in $Z^1(W_{\R},\mathcal{T}_1)$ or equivalently a quasi-character on $T_1(\R)$
\[ a_{j,B}\in \Hom_{cts}(T_{H_1}(\R),\C^{\times})\]
from the associated embeddings $\xi$, $\xi_1$, $\eta_{B}$, and $\eta_{B_{H_1}}$, which, in the case $H_1=H$ and $\xi_1=id$ (so, $\xi:\mathcal{H}={}^LH\rightarrow {}^LG$), is defined by $\xi\circ\eta_{B_H}\circ\hat{j}=a_{j,B}\cdot\eta_B$ (using that $\xi\circ\eta_{B_H}\circ\hat{j}|_{\hat{T}}=\eta_B|_{\hat{T}}$). 
In \cite[p.184]{Kottwitz90} $a_{j,B}$ was denoted by $\chi_{G,H}$.
Here, in the general case, we follow the discussion of \cite[$\S$10]{Shelstad08}, where $\eta_{B}$, $\eta_{B_{H_1}}$, and $a_{j,B}$ are denoted by $\xi_T$, $\xi_{T_{H_1}}$, and $a_{T_1}$, respectively.
Then, we claim that $a_{j,B}$ satisfies the following property: 

Suppose that $\varphi_{H_1}$ is an elliptic $L$-parameter lying in $\Phi_{temp}(H_1,\lambda_1)$ and $\varphi$ is its image under the map (\ref{eq:endoscopic_transfer_L-paramters}). Then, for every (strongly) $G$-regular elliptic $\gamma_{H_1}\in T_{H_1}(\R)$ which is an image of $\gamma\in T(\R)$ (i.e. $\gamma_H$ is the image of $\gamma$ under \emph{some} admissible embedding $T_H\rightarrow T$ (\ref{eq:admissible_embedding})) and any $\omega=\omega_{1}\omega_{\ast}$ with $\omega_{1}\in \Omega_{H_1}$ and $\omega_{\ast}\in\Omega_{\ast}$, one has
\begin{equation} \label{eq:transfer_factor-III_2}
a_{j,\omega(B)}(\gamma_{H_1},\gamma) \cdot \chi_{\omega_{1}(B_{1})}(\gamma_{H_1})=\chi_{\omega(B)}(\gamma).
\end{equation}
The left-hand side factors through $T_{H_1}(\R)/Z_1(\R)=T_H(\R)$ (as a function in $\gamma_{H_1}$) and the comparison is done through $j:T_H(\R)\isom T(\R)$. Note that $(j,\omega(B))\mapsto \omega_{1}(B_{H_1})$ when $(j,B)\mapsto B_{H_1}$, so we may reduce to the situation $\omega=\omega_{H_1}=\omega_{\ast}=id$, in which case
one just needs to check the obvious relation among the corresponding Langlands parameters for the quasi-characters $\chi_{B_{H_1}}$, $a_{j,B}$, $\chi_{B}$. Such verification can be found in \cite[$\S$11]{Shelstad08}, \cite[$\S$7.b]{Shelstad10}.

Now, it is not difficult to see that the original arguments of Kottwitz (especially, those on p.186 of \cite{Kottwitz90}) continue to work in our general setting if the following statement holds for the transfer factor $\Delta_{\infty}$: 
There exists a constant $c$ (depending only on $G$, the endoscopic datum $(H,\cdots)$ plus the $z$-pair $(H_1,\xi_1)$, and $j$, but not on $B$) such that 
for all $(G,H_1)$-regular $\gamma_{H_1}\in T_{H_1}(\R)$ and $\gamma=j_1(\gamma_{H_1})$ ($j_1$ denoting the composite $T_{H_1}\rightarrow T_H\stackrel{j}{\rightarrow} T$)
\begin{equation} \label{eq:transfer_factor}
\Delta_{\infty}(\gamma_{H_1},\gamma)=c\cdot (-1)^{q(G)+q(H_1)}\cdot a_{j,B}(\gamma)\cdot \Delta_B(\gamma^{-1})\cdot  \Delta_{B_{H_1}}(\gamma_{H_1}^{-1})^{-1}.
 \end{equation}
As explained in \cite[p.186]{Kottwitz90}, by a continuity argument due to Shelstad, one only needs to establish this equality for strongly $G$-regular $\gamma_{H_1}\in T_{H_1}(\R)$, so from now on we will assume that. Recall that our transfer factor $\Delta_{\infty}$ for the endoscopic datum $(H,\mathcal{H},s,\xi)$ is equal to the transfer factor $\Delta_{\infty}^{\mathrm{LS}}$ defined by Langlands and Shelstad for the endoscopic datum $(H,\mathcal{H},s^{-1},\xi)$. We will establish the identity (\ref{eq:transfer_factor}) for $\Delta_{\infty}^{\mathrm{LS}}$ and at the same time show that $\Delta_{\infty}(\gamma_{H_1},\gamma)=C\cdot \Delta_{\infty}^{\mathrm{LS}}(\gamma_{H_1},\gamma)$ for some non-zero constant $C$ which depends only on the set of data $\{G, (H,\cdots), (H_1,\xi_1), j\}$ above, but neither on $B$ nor on the strongly $G$-regular pair $(\gamma_{H_1},\gamma)$.

The (local) Langlands-Shelstad transfer factor $\Delta_{\infty}^{\mathrm{LS}}$ \cite{LS87}, \cite{LS90}, \cite{KottwitzShelstad99} is a product of five terms:
\[ \Delta_{\infty}^{\mathrm{LS}}=\Delta_{I}\cdot \Delta_{II} \cdot \Delta_{III_1} \cdot \Delta_{III_2}\cdot \Delta_{IV}, \]
up to a constant. Recall that we need to fix a reference pair $(\gamma_{H_1}',\gamma')$ for which we take any pair such that $\gamma'=j_1(\gamma_{H_1}')$.
The transfer factor $\Delta_{\infty}(\gamma_{H_1},\gamma)$ is defined to be zero unless $\gamma_{H_1}$ is an image of $\gamma\in G(\R)$ under an admissible embedding $T_H\isom T'\subset G$ (not necessarily $j$ even if $T'=T$). When we discuss the factors $\Delta_{\bullet}$ ($\bullet=\infty,I,II,\cdots$), we will not assume that $\gamma=j_1(\gamma_{H_1})$ (but, only that $\gamma_{H_1}\in T_{H_1}(\R)$ and $\gamma\in T(\R)$), and then will use that condition for $\Delta_{III_1}$ when we establish (\ref{eq:transfer_factor}). More precisely, following Shelstad \cite[$\S$8-$\S$11]{Shelstad08}, we will group together the (relative) terms $\Delta_{II}$, $\Delta_{III_2}$, $\Delta_{IV}$ and will verify that the right-hand side of (\ref{eq:transfer_factor}) is a non-zero constant multiple of their product $\Delta_{II_+}:=\Delta_{II}\cdot \Delta_{III_2}\cdot \Delta_{IV}$ (the constant depending only on the set of data $\{G, (H,\cdots), (H_1,\xi_1), j\}$ above, but not on $B$) and that the (relative) term $\Delta_I$ also depends only on the same set of data, but not on the pair $(\gamma_{H_1},\gamma)$, while $\Delta_{III_1}(\gamma_{H_1},\gamma;\gamma_{H_1}',\gamma')=1$ whenever $\gamma=j_1(\gamma_{H_1})$, $\gamma'=j_1(\gamma_{H_1}')$. 

First, we split $\Delta_{II}$ into a product of $\Delta_{II}^{\ast}(\gamma_{H_1},\gamma):=\prod_{\alpha}\chi_{\alpha}(\alpha(\gamma)-1)$ and $\prod_{\alpha}\chi_{\alpha}(a_{\alpha})^{-1}$, where in both products, $\alpha$ runs over the $B$-positive roots of $(T,G)$ \emph{outside $H_1$} \cite[p.232]{Shelstad08} (use that $T$ is elliptic in $G$). We note that the second product, which a priori depends on the choice of $a$-data and based $\chi$-data, so on $B$, equals $(-i)^{1/2(\mathrm{dim}G-\mathrm{dm}H_1)}$. Also, from \cite[p.232, line+19]{Shelstad08}, we have 
\[ \Delta_{II}^{\ast}(\gamma_{H_1},\gamma) \Delta_{IV}(\gamma_{H_1},\gamma)=(-1)^{1/2(\mathrm{dim} G -\mathrm{dim} H_1)} \Delta_B(\gamma^{-1}) \Delta_{B_{H_1}}(\gamma_{H_1}^{-1})^{-1}.\]
Moreover, by definition \cite[$\S$10-$\S$11]{Shelstad08}, we have
\[ \Delta_{III_2}(\gamma_{H_1},\gamma)= a_{j,B}(\gamma_{H_1}) \]
if $\gamma_{H_1}$ is the image of $\gamma$ (under some admissible embedding from $T_H$ into $G$).
Therefore, we have proved that the right-hand side of (\ref{eq:transfer_factor}) is a constant multiple of $\Delta_{II_+}$. It also follows from the discussion on p. 234-245 of \cite{Shelstad08} that when we write $\Delta_{II_+}^{j,B}$ for $\Delta_{II_+}$ defined for $(j,B)\in J\times \mathfrak{B}$, for any $\omega\in \Omega_G$, one has
\[ \Delta_{II_+}^{j,\omega(B)}=\det \omega_{\ast} \cdot \Delta_{II_+}^{j,B},\]
where $\det \omega_{\ast}=\det (\omega_{\ast};X^{\ast}(T))$.
It is also evident from the definition \cite[(3.2)]{LS87} that the term $\Delta_{I}$ depends only on the admissible embedding $j:T_H\rightarrow T$, but not on $B$: strictly speaking, the discussion in \textit{loc. cit.} applies to the admissible embedding $j_{\ast}:T_H\isom T_{\ast}$ in (\ref{eq:admissible_embedding}) (which is a part of the datum of admissible embedding $j$). In more detail, $\Delta_{I}(\gamma_{H_1},\gamma;\gamma_{H_1}',\gamma')$ is the ratio $\Delta_{I}(\gamma_{H_1},\gamma)\Delta_{I}(\gamma_{H_1}',\gamma')^{-1}$ with both terms being defined by the same recipe and $\Delta_{I}(\gamma_{H_1},\gamma)=\langle \lambda(T_{\ast}^{\uc}),\mathbf{s}_{T_{\ast}}\rangle$, where both $\lambda(T_{\ast}^{\uc})\in H^1(\R,T_{\ast}^{\uc})$ and $\mathbf{s}_{T_{\ast}}\in \pi_0((\hat{T}_{\ast}/Z(\hat{G}))^{\Gamma_{\infty}})$ depend only on $j_{\ast}$ \cite[(2.3.3) and p.241]{LS87}.
Finally, for the term $\Delta_{III_1}^{\ast}$ (which is the only genuine relative term $\Delta_{III_1}(\gamma_{H_1},\gamma;\gamma_{H_1}',\gamma')$ in $\Delta_{\infty}(\gamma_{H_1},\gamma;\gamma_{H_1}',\gamma')$), it follows from the discussion in \cite[(3.4)]{LS87} that when $\gamma=j_1(\gamma_{H_1})$, $\gamma'=j_1(\gamma_{H_1}')$, the cohomology classes $v(\sigma)$, $\bar{v}(\sigma)$ (thus, $\mathrm{inv}\left(\frac{\gamma_{H_1},\gamma_G}{\bar{\gamma}_{H_1},\bar{\gamma}_G}\right)$ as well) there become trivial so that $\Delta_{III_1}(\gamma_{H_1},\gamma;\gamma_{H_1}',\gamma')=1$. 
Therefore, we see that the statement on the identity (\ref{eq:transfer_factor}) holds for the Langlands-Shelstad transfer factor $\Delta_{\infty}^{\mathrm{LS}}$. 
Furthermore, if we change $s$ to $s^{-1}$ in the endoscopic datum, the only terms being affected are $\Delta_{I}$ and $\Delta_{III_1}$: they both change to their inverses. Hence, the two transfer factors $\Delta_{\infty}(\gamma_{H_1},\gamma)$ and $\Delta_{\infty}^{\mathrm{LS}}(\gamma_{H_1},\gamma)$ are proportional.
\footnote{Our transfer factor $\Delta_{\infty}$ is denoted by $\Delta'$ in \cite[$\S$5.1]{KottwitzShelstad12}. We see that this is indeed given by the definition (1.0.4) of \textit{loc. cit.}, because, for standard endoscopy, one has $\Delta_{I}^{\mathrm{new}}=\Delta_{I}$, and also the normalization of Kottwitz and Shelstad for the Langlands correspondence for tori (chosen in the appendix of \textit{loc. cit.}) differs from ours by sign so that the two $\Delta_{III_1}$'s also differ by the same measure.} 
This completes the proof.
\end{proof}

\subsection{Stabilization of Lefschetz number formula} \label{subsec:stabilization_of_LNF}

We recall the definition of the elliptic part of the geometric side of the stable trace formula.

Let $G$ be a connected reductive group over a number field $F$ and $\mathscr{E}_{\mathrm{ell}}(G)$ a set of representatives for the isomorphism classes of elliptic endoscopic data of $G$. For each $\underline{H}=(H,\mathcal{H},s,\xi)\in \mathscr{E}_{\mathrm{ell}}(G)$, we fix a $z$-pair $(H_1,\xi_1)$.
Recall the quasi-character $\lambda_{H_1}$ on $Z_1(\A_F)/Z_1(F)$ for the central $F$-torus $Z_1$ in $H_1$.
Let $C_{c,\lambda_{H_1}}^{\infty}(H_1(\A_F))$ denote the space of $\C$-valued smooth functions on $H_1(\A_F)$ whose supports are compact modulo $Z_1(\A_F)$ and that satisfy $f^{H_1}(zh)=\lambda_{H_1}(z)^{-1} f^{H_1}(h)$ for all $z\in Z_1(\A_F)$ and $h\in H_1(\A_F)$. 
 
\begin{defn} \cite[9.2]{Kottwitz86}
Let $\underline{H}=(H,\mathcal{H},s,\xi)\in \mathscr{E}_{\mathrm{ell}}(G)$ and $(H_1,\xi_1)$ a $z$-pair of it.

(1) For a semi-simple element $\gamma_{H_1}$ of $H_1(\A_F)$ and $f^{H_1}\in C_{c,\lambda_{H_1}}^{\infty}(H_1(\A_F))$, we define the adelic stable orbital integral of $f^{H_1}$ along the adelic stable conjugacy class of $\gamma_{H_1}$ by
\[  \mathrm{SO}_{\gamma_{H_1}}(f^{H_1}):=\sum_{\alpha} e(\gamma_{\alpha}) \mathrm{O}_{\gamma_{\alpha}}(f^{H_1}),\]
where $\alpha$ runs through $\mathfrak{D}(I_0,H_1;\A_F)=\ker[H^1(\A_F,I_0)\rightarrow H^1(\A_F,H_1)]$ with $I_0=(H_1)_{\gamma_{H_1}}^{\mathrm{o}}$, $\gamma_{\alpha}$ is an element of $H_1(\A_F)$ whose $H_1(\A_F)$-conjugacy class in the adelic stable conjugacy class of $\gamma_{H_1}\in H_1(\A_F)$ corresponds to $\alpha$, and the number $e(\gamma_{\alpha})=\prod_ve(I_{\alpha,v})$ is the Kottwitz sign with $I_{\alpha,v}$ being the connected centralizer in $H_{1,F_v}$ of the $v$-component of $\gamma_{\alpha}$. The adelic orbital integral is defined with respect to the Tamagawa (or canonical) measure on $H_1(\A)$ \cite{Labesse01}.

(2) For $f^{H_1}\in C_{c,\lambda_{H_1}}^{\infty}(H_1(\A_F))$, we define the \emph{elliptic part of the geometric side of the stable trace formula} for $f^{H_1}$ to be
\begin{equation} \label{eq:epgsstf}
 \ST_{\elp}^{H_1}(f^{H_1}):=\tau(H) \sum_{\gamma_{H}\in E_{\mathrm{st}}(H)} |\pi_0(H_{\gamma_H})(F)|^{-1} \mathrm{SO}_{\gamma_{H_1}}(f^{H_1}),
 \end{equation}
where for each $\gamma_H\in E_{\mathrm{st}}(H)$, a set of representatives for the elliptic semi-simple stable conjugacy classes in $H(F)$, we fix an (elliptic) semisimple $\gamma_{H_1}\in H_1(F)$ lifting $\gamma_H$ (The stable orbital integral $\mathrm{SO}_{\gamma_{H_1}}(f^{H_1})$ depends only on $\gamma_H$, not on the choice of the lift $\gamma_{H_1}$). 
\end{defn}

For $\underline{H}=(H,\mathcal{H},s,\xi)\in \mathscr{E}_{\mathrm{ell}}(G)$, we write $\mathrm{Aut}(\underline{H})$ for its automorphism group and \[ \mathrm{Out}(\underline{H}):=\mathrm{Aut}(\underline{H})/H^{\ad}(\Q)\]  for the outer automorphism group. 
We also put
\[\mathrm{tr}(\Phi^m\times f^p | H_c(Sh_{K/\Qb},\sF_K)):=\sum_{i}(-1)^i\mathrm{tr}( \Phi^m\times f^p | H^i_c(Sh_{K}(G,X)_{\Qb},\sF_K)). \]

\begin{thm}  \label{thm:EP_GS_STF}
With the same setup as Thm. \ref{thm:Kottwitz_formula:Kisin}, 
for every $f^p$ in the Hecke algebra $\mathcal{H}(G(\A_f^p)/\!\!/ K^p)$, there exists $m(f^p)\in\N$, depending on $f^p$, with the following property: for each $m\geq m(f^p)$, there exists a function $f^{H_1}=f^{H_1}(m)\in C^{\infty}_{c,\lambda_{H_1}}(H_1(\A))$ such that one has
\[\mathrm{tr}(\Phi^m\times f^p | H_c(Sh_{K/\Qb},\sF_K))=\sum_{\underline{H}\in \mathscr{E}_{\mathrm{ell}}(G)} \iota(G,\underline{H}) \ST_{\elp}^{H_1}(f^{H_1}),\]
where
\[ \iota(G,\underline{H}):=\tau(G) \tau(H)^{-1} |\mathrm{Out}(\underline{H})|^{-1}.\]
If $G^{\ad}$ is $\Q$-anisotropic or $f^p$ is the identity, we can take $m(f^p)$ to be $1$.
\end{thm}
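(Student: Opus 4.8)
The starting point is the Kottwitz formula established in Theorem~\ref{thm:Kottwitz_formula:Kisin}, which expresses $\mathrm{tr}(\Phi^m\times f^p\mid H_c(Sh_{K/\Qb},\sF_K))$ as a sum over (stable equivalence classes of) stable Kottwitz triples $(\gamma_0;\gamma,\delta)$ of level $n=m[\kappa(\wp):\Fp]$ with trivial Kottwitz invariant, of the product
\[ c(\gamma_0;\gamma,\delta)\cdot \mathrm{O}_{\gamma}(f^p)\cdot \mathrm{TO}_{\delta}(\phi_p)\cdot \mathrm{tr}\xi(\gamma_0). \]
The task is purely to stabilize the right-hand side, i.e.\ to reorganize this sum into $\sum_{\underline H}\iota(G,\underline H)\ST^{H_1}_{\elp}(f^{H_1})$ for a suitable transfer function $f^{H_1}$. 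The plan is to follow Kottwitz's arguments in \cite[\S4,\S7]{Kottwitz90}, \cite{Kottwitz10}, making the modifications forced on us by allowing $G^{\der}\neq G^{\uc}$; the necessary local input is exactly the endoscopic transfer Theorems \ref{thm:untwisted_endoscopy_transfer}, \ref{thm:twisted_endoscopy_transfer}, \ref{thm:pseudo-coeff} proved in the previous subsection.

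First I would rewrite the constant $c(\gamma_0;\gamma,\delta)$ cohomologically. By Lemma~\ref{eq:|widetilde{Sha}_G(Q,I_{phi,epsilon})^+|}, the integer $i(\gamma_0;\gamma,\delta)=|\widetilde{\Sha}_G(\Q,I_{\phi,\epsilon})^+|$ factors as $|\ker[\ker^1(\Q,I_0)\to\ker^1(\Q,G)]|\cdot|\mathfrak D(\gamma_0;\gamma,\delta)|$, with $\mathfrak D(\gamma_0;\gamma,\delta)\subset H^1(\A_f,I_0)$. The term $|\pi_0(G_{\gamma_0})(\Q)|^{-1}$ combined with $\ker^1$ and the Tamagawa number $\tau(I_0)$ should be absorbed, via the product formula for signs $e(I_0)$ and the usual Fourier-analysis argument on the finite group $\mathfrak K(I_0/\Q)$, into a sum over the cohomology classes $(\alpha_v)_v$ defining Kottwitz invariants. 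Concretely: one introduces, for each stable Kottwitz triple, the group $\mathfrak K(I_0/\Q)$ and expresses $\sum_{\alpha}\langle\alpha,\kappa\rangle$ over characters $\kappa\in\mathfrak K(I_0/\Q)$; by Fourier inversion, summing the product of the local invariants $\langle\tilde\alpha_v(\gamma_0;\gamma,\delta;g_v),\kappa\rangle$ over all $(g_v)_v$ and all $\kappa$ isolates precisely the triples with trivial Kottwitz invariant. Then one substitutes the definitions of the local invariants $\alpha_l$ (in terms of $G(\Q_l)$-conjugacy classes inside a stable class), $\alpha_p$ (in terms of $\sigma$-conjugacy classes via $\kappa_{I_0}$), and $\alpha_\infty$ (in terms of $\mu_h$), so that the sum over triples becomes a sum over $\gamma_0\in E_{\mathrm{st}}(G)$, over $\kappa\in\mathfrak K(I_0/\Q)$, and over local conjugacy/$\sigma$-conjugacy data, with the pairings $\langle\tilde\alpha_v,\kappa\rangle$ as weights.

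Second, I would feed this into the three transfer theorems. The weighted sum over $G(\A_f^p)$-conjugacy classes $\gamma=(\gamma_l)_{l\neq p}$, weighted by $\prod_{l\neq p}\langle\tilde\alpha_l,s\rangle$ and $\mathrm O_\gamma(f^p)$, is exactly the right-hand side of the untwisted transfer identity \eqref{eq:untwisted_endo-transfer1}, so it equals a product of stable orbital integrals $\prod_{l\neq p}\mathrm{SO}^{H_1(\Q_l)}_{\gamma_{H_1}}(f^{H_1}_l)$ for the transfer $f^{H_1,p}$ of $f^p$ (here $s=\xi(\kappa)$ for the endoscopic datum $\underline H$ attached to $\kappa$). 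The weighted sum over $\sigma$-conjugacy classes $\delta$ in $G(L_n)$, weighted by $\langle\tilde\alpha_p,s\rangle$ and $\mathrm{TO}_\delta(\phi_p)$, is the right-hand side of the base-change twisted transfer identity \eqref{eq:twisted_endo-transfer1} with $\mu_0=\mu^{\natural}$ (the hypothesis ``$\kappa_G([\delta])=\mu^{\natural}$'' being built into condition $\ast(\delta)$ of the Kottwitz triple), giving $\mathrm{SO}^{H_1(\Qp)}_{\gamma_{H_1}}(f^{H_1}_p)$ where $f^{H_1}_p$ is obtained from $\phi_p$ via Theorem~\ref{thm:twisted_endoscopy_transfer}; when $G_{\Qp}$ is unramified and $\mbfK_p$ hyperspecial, $\phi_p$ is the unramified function of \eqref{eq:Adm_K(mu)}, so $f^{H_1}_p$ can be chosen unramified by the fundamental-lemma part. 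Finally the factor $\mathrm{tr}\xi(\gamma_0)\cdot\mathrm{vol}(A_G(\R)^{\mathrm o}\backslash I_0(\infty)(\R))^{-1}$ together with $\langle\tilde\alpha_\infty(\gamma_0),s\rangle$ is, by Theorem~\ref{thm:pseudo-coeff}, the stable orbital integral $\mathrm{SO}_{\gamma_{H_1}}(h_\infty)$ of the Euler--Poincar\'e/pseudo-coefficient function $h_\infty$; set $f^{H_1}_\infty=h_\infty$ and $f^{H_1}=f^{H_1,p}\cdot f^{H_1}_p\cdot f^{H_1}_\infty$. After collecting the transfer factors $\Delta_v$ into the global transfer factor (whose product over all $v$ is $1$ by the product formula, \cite[\S6.4]{LS87}), the sum over $\gamma_0\in E_{\mathrm{st}}(G)$ of $\tau(G)|\pi_0(G_{\gamma_0})(\Q)|^{-1}\prod_v\mathrm{SO}_{\gamma_{H_1}}(f^{H_1}_v)$ matches the definition \eqref{eq:epgsstf} of $\ST^{H_1}_{\elp}(f^{H_1})$ up to the factor $\tau(G)\tau(H)^{-1}$; and the passage from a sum over characters $\kappa\in\mathfrak K(I_0/\Q)$ to a sum over isomorphism classes $\underline H\in\mathscr E_{\mathrm{ell}}(G)$ introduces precisely the factor $|\mathrm{Out}(\underline H)|^{-1}$, giving $\iota(G,\underline H)=\tau(G)\tau(H)^{-1}|\mathrm{Out}(\underline H)|^{-1}$. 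The statement about $m(f^p)$ and the cases $G^{\ad}$ anisotropic or $f^p=1$ is inherited verbatim from Theorem~\ref{thm:Kottwitz_formula:Kisin}.

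\textbf{Main obstacle.} The bookkeeping that $G^{\der}$ need not be simply connected is the delicate point: Kottwitz's original combinatorial identities in \cite[\S4--\S7]{Kottwitz90} are phrased for $G_{\gamma_0}$ connected, where $\mathfrak K(I_0/\Q)$, the invariant $\alpha(\gamma_0;\gamma,\delta)$, and the inner forms $I_{\phi,\epsilon}$ behave well. In our setting one must everywhere replace $G_{\gamma_0}$ by $I_0=G_{\gamma_0}^{\mathrm{o}}$, work with abelianized cohomology and the groups $\Sha^\infty_G(\Q,-)$, and carry along the subtlety (Remarks \ref{rem:Kottwitz_triples}, \ref{rem:Kottwitz_triples}) that ``stable'' Kottwitz triples, the constant $|\pi_0(G_{\gamma_0})(\Q)|^{-1}$, and the refined count $i(\gamma_0;\gamma,\delta)$ interact with the non-connected centralizers; one also needs Proposition~\ref{prop:triviality_in_comp_gp} (Hasse principle for $\pi_0(G_{\gamma_0})$, valid for abelian-type $(G,X)$) to know that geometric and stable equivalence of stable Kottwitz triples coincide, so that the summation index on the Kottwitz side matches the summation over $E_{\mathrm{st}}(G)$ on the endoscopic side. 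Verifying that all of Kottwitz's identities survive these replacements — in particular that the finite-group Fourier analysis producing $\mathfrak K(I_0/\Q)$ and the sign computations with $e(I_0)$ go through — together with checking that the extended transfer identities \eqref{eq:untwisted_endo-transfer1}, \eqref{eq:twisted_endo-transfer1} (with their $z$-pair-dependent transfer factors $\Delta$ and the cohomological parametrization by $\mathfrak D(I_0,G;F)$) assemble correctly, is where essentially all the work lies; I expect no conceptual surprises, but considerable care with signs and normalizations of measures, transfer factors, and the Langlands correspondence for tori (Footnote~\ref{ftn:LLC_sign}).
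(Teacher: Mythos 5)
Your plan matches the paper's proof closely: rewrite $c(\gamma_0;\gamma,\delta)$ cohomologically via Lemma~\ref{eq:|widetilde{Sha}_G(Q,I_{phi,epsilon})^+|}, use the Tamagawa relation $\tau(I_0)|\ker[\ker^1(I_0)\to\ker^1(G)]|=\tau(G)|\mathfrak{K}(I_0/\Q)|$ and Fourier inversion on $\mathfrak{K}(I_0/\Q)$ to replace the triviality constraint on Kottwitz invariants by a sum over $\kappa$, factor the local sums through the three transfer theorems, and then pass from pairs $(\gamma_0,\kappa)$ to pairs $(\underline H,\gamma_H)$. The one place your sketch is too quick — and not merely ``signs and normalizations'' — is the final combinatorial passage. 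You write that moving from $\kappa$-sums to endoscopic data $\underline H$ ``introduces precisely the factor $|\mathrm{Out}(\underline H)|^{-1}$,'' but in the non-simply-connected case this conceals a genuine lemma: one must convert $|\pi_0(G_{\gamma_0})(\Q)|^{-1}$ into $|\pi_0(H_{\gamma_H})(\Q)|^{-1}$, and this does not hold termwise. The paper does this via Lemma~\ref{lem:Labesse04,IV.3} (adapted from Labesse), which introduces the notion of equivalence of admissible pairs $(\gamma_0,\kappa)$ under the action of $\pi_0(G_{\gamma_0})(\Q)$ on $\mathfrak{K}(I_0/\Q)$, proves that equivalent $(\gamma_0,\kappa)$ go to equivalent $(\underline H,\gamma_H)$, and establishes the identity
\[
|\pi_0(G_{\gamma_0})(\Q)| \;=\; n(\gamma_0,\kappa)\cdot |\mathrm{Out}(\underline H)|\cdot m(\underline H,\gamma_H)^{-1}\cdot |\pi_0(H_{\gamma_H})(\Q)|,
\]
relating four different counts; the factors $n(\gamma_0,\kappa)$ and $m(\underline H,\gamma_H)$ are precisely what makes the bookkeeping come out to $\iota(G,\underline H)$. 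When $G^{\der}=G^{\uc}$ these extra factors are all $1$ (it reduces to Kottwitz's Lemma 9.7 of [Kot86]), which is why they do not appear in the source you are modeling your argument on — so this is exactly the spot where the generalization is not a routine replacement of $G_{\gamma_0}$ by $I_0$. With that lemma supplied, the rest of your plan goes through as described.
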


\begin{proof}
We will rewrite the right hand expression of (\ref{eq:Lef-number1}) as a similar sum with a different index set (in certain cohomology groups). Let $E_{\mathrm{st}}(G)$ be a set of representatives of stable conjugacy classes of $\Q$-elliptic semisimple elements in $G(\Q)$.
First, we rearrange its summation index as 
\begin{equation} \label{eq:rearrangement}
\sum_{\gamma_0}\sum_{(\gamma,\delta)},
\end{equation}
where in the first sum $\gamma_0$ runs through the subset of $\R$-elliptic elements in $E_{\mathrm{st}}(G)$ and in the second sum $(\gamma,\delta)$ runs through the set of elements $(\gamma,\delta)$ of $\prod'\mathfrak{C}_l(\gamma_0)\times \mathfrak{C}_p^{(n)}(\gamma_0)$ with trivial Kottwitz invariant, cf. (\ref{eq:C_l(gamma_0)}), Def. \ref{defn:D_p^{(n)}(gamma_0)},  Prop. \ref{prop:B(gamma_0)=D(I_0,G;Qp)}  (the restricted product $\prod'$ means that almost all $\gamma_l$'s are the distinguished elements). Here and thereafter, $\gamma$ and $\delta$ will also denote, by abuse of notation, their $G(\A_f)$-conjugacy class and $\sigma$-conjugacy class in $G(L_n)$ respectively. 
For each $\gamma_0\in E_{\mathrm{st}}(G)$, let us fix a reference element $\delta_0$ of $\mathfrak{C}_p^{(n)}(\gamma_0)$. With such choice of $\{\delta_0\}_{E_{\mathrm{st}}(G)}$,
we will lift the summation index $(\gamma,\delta)$ of (\ref{eq:rearrangement}) to the source of the surjective map
\[ \mathfrak{D}(I_0,G;\A_f)=\mathfrak{D}(I_0,G;\A_f^p)\oplus \mathfrak{D}(G_{\delta\theta}^{\mathrm{o}},G;\Qp) \stackrel{i_{\ast}^{\delta_0}}{\longrightarrow} \sideset{}{'}\prod\mathfrak{C}_l(\gamma_0)\times \mathfrak{C}_p^{(n)}(\gamma_0). \]
which sends the distinguished element to $(\gamma_0,\delta_0)$. Recall that a Kottwitz triple $(\gamma_0;\gamma,\delta)$ having trivial Kottwitz invariant means that there exists $(\alpha^p=(\alpha_l)_{\l\neq p};\alpha_p)$ in the source of this map $i_{\ast}^{\delta_0}$ such that the product $\alpha=\prod_v \tilde{\alpha}_v$ (including the $\infty$-component $\tilde{\alpha}_{\infty}$) of their extensions $\tilde{\alpha}_v\in X^{\ast}(Z(\hat{I_0})^{\Gamma_v}Z(\hat{G}))$ (\ref{eq:restriction_of_alpha_to_Z(hatG)}) is trivial as a character of $\mathfrak{K}(I_0/\Q)=\left( \bigcap_v Z(\hat{I}_0)^{\Gamma_v}Z(\hat{G}) \right)/Z(\hat{G})$. 
This suggests to take the second summation of (\ref{eq:rearrangement}) over the subset $\mathfrak{D}(I_0,G;\A_f)_1$ of $\mathfrak{D}(I_0,G;\A_f)$ consisting of the elements whose Kottwitz invariant is trivial, by taking into account the cardinality of the subset 
\[ \mathfrak{D}(\gamma_0;\alpha^p,\alpha_p):=\{ (x^p,x_p)\in \mathfrak{D}(I_0,G;\A_f)_1\ |\ i_{\ast}^{\delta_0}(x^p,x_p)=i_{\ast}^{\delta_0}(\alpha^p,\alpha_p)\}.\]

If we fix $(\alpha^p,\alpha_p)\in \mathfrak{D}(I_0,G;\A_f)_1$, an element of $(x^p,x_p)\in \mathfrak{D}(I_0,G;\A_f)$ also has trivial Kottwitz invariant if and only if their difference
$(\alpha^p-x^p,\alpha_p-x_p,0)$ (trivial component at $\infty$) lies in the kernel of the canonical map 
\[\mathfrak{D}(I_0,G;\A) \rightarrow \mathfrak{E}(I_0,G;\A) \rightarrow \mathfrak{K}(I_0/\Q)^D,\]
where $\mathfrak{E}(I_0,G;\A):=\ker[H^1(\A,(I_0)_{\bfab})\rightarrow H^1(\A,G_{\bfab})]$ and the second map comes from the canonical isomorphism 
\[\mathfrak{K}(I_0/\Q)^D=\mathrm{coker}[H^0(\A,G_{\bfab})\rightarrow H^1(\A/\Q,(\tilde{I}_0)_{\bfab})]\]
(Lemma \ref{lem:proof_of_Kottwitz86_Thm.6.6}, cf. \cite[Remarque on p.43]{Labesse99}: note that $H^0_{\bfab}(\A/\Q,I_0\backslash G)= H^1(\A/\Q,(\tilde{I}_0)_{\bfab})$).
Hence, if $(\gamma,\delta):=i_{\ast}(\alpha^p,\alpha_p)$, we have an identification
\begin{align*}
\mathfrak{D}(\gamma_0;\alpha^p,\alpha_p) \simeq &\ker[\mathfrak{D}(I_0,G;\A)\rightarrow H^1(\A_f^p,G_{\gamma_0})\oplus H^1(\Qp,G_{\delta\theta})\oplus H^1(\R,I_0)\oplus\mathfrak{K}(I_0/\Q)^D] \\
=&\im[ \ker[H^1(\Q,I_0)\rightarrow H^1(\A_f^p,G_{\gamma_0})\oplus H^1(\Qp,G_{\delta\theta})\oplus H^1(\R,I_0)\oplus H^1(\Q,G)]\rightarrow H^1(\A,I_0)]  \\
=&\im \left[\ker [\Sha^{\infty}_G(\Q, G_{\alpha}^{\mathrm{o}})\rightarrow H^1(\A,G_{\alpha})] \rightarrow H^1(\A,G_{\alpha}^{\mathrm{o}}) \right] \\
\cong&\mathfrak{D}(\gamma_0;\gamma,\delta)\quad (\text{Lemma }\ref{eq:|widetilde{Sha}_G(Q,I_{phi,epsilon})^+|}),
\end{align*}
where $G_{\alpha}$ is the inner twist of $G_{\gamma_0}$ determined by $\alpha=(\alpha_v)$; one has 
$G_{\alpha_l}\simeq G_{\gamma_l}$, $G_{\alpha_p} \simeq G_{\delta\theta}$, and
$\Sha^{\infty}_G(\Q, I_0)=\Sha^{\infty}_G(\Q, G_{\alpha}^{\mathrm{o}})$. Also, in the second equality, we used the exactness of the sequence \cite[Cor.1.8.6]{Labesse99}
\[ H^1(\Q,I_0)\rightarrow H^1(\A,I_0) \rightarrow \mathfrak{K}(I_0/\Q)^D.\]

Therefore, by Thm. \ref{thm:Kottwitz_formula:Kisin}, Lemma \ref{eq:|widetilde{Sha}_G(Q,I_{phi,epsilon})^+|} and the formula (\ref{eq:Tamagawa_number}), we have the equalities
\begin{align*}
&\mathrm{tr}(\Phi^m\times f^p | H_c(Sh_{K/\Qb},\sF_K) \\
=&\sum_{\gamma_0} \frac{\tau(I_0)}{\mathrm{vol}\cdot |\pi_0(G_{\gamma_0})(\Q)|} \sum_{\substack{(\alpha^p,\alpha_p)\in \\ \mathfrak{D}(I_0,G;\A_f)_1}}  \frac{i(\gamma_0;\gamma,\delta)}{|\mathfrak{D}(\gamma_0;\alpha^p,\alpha_p)|} \cdot \mathrm{O}_{\gamma}(f^p)\cdot \mathrm{TO}_{\delta}(\phi_p) \cdot \mathrm{tr}\xi(\gamma_0) \\
=&\sum_{\gamma_0} \frac{\tau(I_0)}{\mathrm{vol}\cdot |\pi_0(G_{\gamma_0})(\Q)|} \sum_{\substack{(\alpha^p,\alpha_p)\in\\ \mathfrak{D}(I_0,G;\A_f)_1}} |\ker[\ker^1(I_0)\rightarrow \ker^1(G)]| \cdot \mathrm{O}_{\gamma}(f^p)\cdot \mathrm{TO}_{\delta}(\phi_p) \cdot \mathrm{tr}\xi(\gamma_0),
\end{align*}
where $\mathrm{vol}:=\mathrm{vol}(A_G(\R)^{\mathrm{o}}\backslash I_0(\R))$ (recall that $\phi_p$ is the characteristic function of the set (\ref{eq:Adm_K(mu)}), cf. Lemma \ref{lem:fixed-pt_subset_of_Frob-Hecke_corr}).

Let us write $\mathrm{O}_{\alpha^p}(f^p)$ for $\mathrm{O}_{\gamma}(f^p)$ when $\gamma=i_{\ast}(\alpha^p)$, and similarly for $\mathrm{TO}_{\alpha_p}(\phi_p)$.
Then, using the relation \cite[p.395]{Kottwitz86}
\[  \tau(I_0) \cdot |\ker[\ker^1(I_0)\rightarrow \ker^1(G)]| =\tau(G) \cdot |\mathfrak{K}(I_0/\Q)|, \]
and a standard argument (from Fourier analysis on finite groups $ \mathfrak{K}(I_0/\Q)$), we see that the last line equals the first line expression of the next:
\begin{alignat}{2} \label{eq:stablization_step2}
 & \mathrm{tr}(\Phi^m\times f^p | H_c(Sh_{K/\Qb},\sF_K)) && \\
\stackrel{(a)}{=} &\ \tau(G) \sum_{\gamma_0} |\pi_0(G_{\gamma_0})(\Q)|^{-1}\cdot  \frac{\mathrm{tr}\xi(\gamma_0)}{\mathrm{vol}} \ \cdot && \sum_{(\alpha_v)_v}  \sum_{\kappa\in \mathfrak{K}(I_0/\Q)}  \prod_v \langle\alpha_v,\kappa\rangle e(I_{\alpha_v})\cdot \mathrm{O}_{\alpha^p}(f^p)\cdot \mathrm{TO}_{\alpha_p}(\phi_p) \nonumber \\
\stackrel{(b)}{=} &\ \tau(G) \sum_{\gamma_0} |\pi_0(G_{\gamma_0})(\Q)|^{-1}\cdot  \frac{\mathrm{tr}\xi(\gamma_0)}{\mathrm{vol}} \ \cdot && \sum_{\kappa\in \mathfrak{K}(I_0/\Q)} \prod_v\sum_{\alpha_v}  \langle\alpha_v,\kappa\rangle e(I_{\alpha_v})\cdot \mathrm{O}_{\alpha^p}(f^p)\cdot \mathrm{TO}_{\alpha_p}(\phi_p) \nonumber \\
\stackrel{}{=} &\ \tau(G) \sum_{\gamma_0} |\pi_0(G_{\gamma_0})(\Q)|^{-1}\cdot  \sum_{\kappa\in \mathfrak{K}(I_0/\Q)} &&  \biggl[ \biggl( \frac{\mathrm{tr}\xi(\gamma_0)}{\mathrm{vol}} \langle\alpha_{\infty},\kappa\rangle e(I_{\alpha_{\infty}}) \biggl)  \cdot  \prod_{v\neq p,\infty} \biggl( \sum_{\alpha_v} \langle\alpha_v,\kappa\rangle e(I_{\alpha_v})\mathrm{O}_{\alpha^p}(f^p) \biggl) \cdot \nonumber \\
& &&  \qquad \qquad \quad \biggl( \sum_{\alpha_p} \langle\alpha_p,\kappa\rangle e(I_{\alpha_p}) \mathrm{TO}_{\alpha_p}(\phi_p) \biggl) \biggl] \nonumber \\
\stackrel{}{=} &\ \tau(G) \sum_{(\gamma_0, \kappa)} |\pi_0(G_{\gamma_0})(\Q)|^{-1}\cdot  O^{\kappa}_{\gamma_0}(f), && \nonumber
\end{alignat}
where we write $O^{\kappa}_{\gamma_0}(f)$ for the summand that is indexed by $(\gamma_0,\kappa)$ and appears inside the bracket $[ - ]$, i.e. $(\frac{\mathrm{tr}\xi(\gamma_0)}{\mathrm{vol}} \langle\alpha_{\infty},\kappa\rangle e(I_{\alpha_{\infty}})) \cdot  \prod_{v\neq p,\infty}  ( \sum_{\alpha_v} \langle\alpha_v,\kappa\rangle e(I_{\alpha_v}) \mathrm{O}_{\alpha^p}(f^p) ) \cdot ( \sum_{\alpha_p} \langle\alpha_p,\kappa\rangle e(I_{\alpha_p}) \mathrm{TO}_{\alpha_p}(\phi_p) )$.

In (a) the first sum is over the subset of $\R$-elliptic elements in $E_{\mathrm{st}}(G)$, and in the second summation index $(\alpha_v)_v$, $\alpha_{\infty}$ is fixed to be the element $\alpha_{\infty}(\gamma_0)$ defined by $\gamma_0$ in \autoref{subsubsec:Kottwitz_invariant}, while $(\alpha^p,\alpha_p)$ runs through the entire set $\mathfrak{D}(I_0,G;\A_f)$. Also, $e(I_{\alpha_v})$ is the Kottwitz sign of the inner twist $I_{\alpha_v}$ of $I_0$ via $\alpha_v$ (cf. \cite{Kottwitz86}): by definition of $\alpha_{\infty}(\gamma_0)$, $I_{\alpha_{\infty}}$ is the inner form of $(I_0)_{\R}$ that is compact modulo the center. Here we used the fact that $\prod e(I_{\alpha_v})=1$ when the family $\{I_{\alpha_v}\}_v$ of local groups comes from a global group.
The equality (b) comes from rearranging the summations and the product. This is possible since there are only finitely many non-zero terms in the summand for any given summation index $(\gamma_0,\kappa)$, cf. \cite[$\S$7, $\S$8]{Kottwitz86}.

Next, we rewrite the last expression of  (\ref{eq:stablization_step2}) using the new index set that comprises pairs $((H,\mathcal{H},s,\xi),\gamma_{H})$ of an elliptic endoscopic datum and a $(G,H)$-regular semisimple element of $H(\Q)$. For any such pair $((H,\mathcal{H},s,\xi),\gamma_{H})$, we write 
\begin{equation} \label{eq:(H,gamma_H)->(gamma_0,kappa)}
((H,\mathcal{H},s,\xi),\gamma_{H}) \rightarrow (\gamma_0,\kappa)
\end{equation} 
if $\gamma_{H}$ transfers to an element $\gamma_0$ of $G(\Q)$ and there exists $z\in Z(\hat{G})$ such that $sz\in \bigcap_v Z(\hat{I}_0)^{\Gamma_v}Z(\hat{G})$  (via $Z(\hat{H})\hookrightarrow Z(\hat{I}_H)\isom Z(\hat{I}_0)$) and $\kappa$ is its image in $\mathfrak{K}(I_0/\Q)=\left(\bigcap_v Z(\hat{I}_0)^{\Gamma_v}Z(\hat{G})\right)/Z(\hat{G})$: recall that $\gamma_0$ is determined up to stable conjugacy (cf. \cite[6.8]{Kottwitz86}). 

Following Labesse (cf. \cite[$\S$IV.3]{Labesse04}), we call two pairs $(\gamma_0,\kappa)\in E_{\mathrm{st}}(G)\times \mathfrak{K}(I_0/\Q)$ and $(\gamma_0',\kappa')\in E_{\mathrm{st}}(G)\times \mathfrak{K}(I_0'/\Q)$ \emph{equivalent} (or \emph{isomorphic}) if there exists $g\in G(\Qb)$ such that (a) $g\gamma_0g^{-1}=\gamma_0'$ and $g^{-1}\cdot{}^{\tau}g\in I_0$ for all $\tau\in\Gamma$, and that (b) $\kappa$, $\kappa'$ correspond under the isomorphism $\Int(g)^D:\mathfrak{K}(I_0'/\Q) \isom \mathfrak{K}(I_0/\Q)$ (under the condition (a), $\Int(g)$ induces a quasi-isomorphism $I_{0\bfab}\isom I_{0\bfab}'$ of complexes of $\Q$-groups and we have $\mathfrak{K}(I_0/\Q)=H^0(\A/\Q,I_{0\bfab}\rightarrow G_{\bfab})^D$, cf. Remark \ref{eq:K_Labesse_Kottwitz}). In this case, for any maximal $\Q$-tori $T\subset I_0$, $T'\subset I_0'$, one can even find $g\in G(\Qb)$ such that 
\begin{equation} \label{eq:stable_conj_g}
 \Int(g)(T)=T',\quad \Int(g)(\gamma_0)=\gamma_0',\quad \text{and}\quad g^{-1}\cdot{}^{\tau}g\in I_0\text{ for all }\ \tau\in\Gamma
\end{equation}
(The triples $(T,\gamma_0,\kappa)$, $(T',\gamma_0',\kappa')$ are \emph{admissible triples} which are \emph{equivalent}, in the sense of \cite[p.519]{Labesse04}.)
Indeed, choose $g_1\in G(\Qb)$ such that $\Int(g_1)(\gamma_0)=\gamma_0'$, $g_1^{-1}\cdot{}^{\tau}g_1\in I_0$ for all $\tau\in\Gamma$. Then, as $T$ and $\Int(g_1^{-1})(T')$ are maximal tori of $I_0$, there exists $g_0\in I_0(\Qb)$ with $\Int(g_0)(T)=\Int(g_1^{-1})(T')$; then, $g:=g_1g_0\in G(\Qb)$ fulfills the conditions.
Also, note that if $G_{\gamma_0}=I_0$, $(\gamma_0,\kappa)$ can be equivalent to $(\gamma_0,\kappa')$ only if $\kappa=\kappa'$, because such automorphism $\Int(g)^D$ of $\mathfrak{K}(I_0/\Q)$ then must be the identity (thus, one has no use for this notion when $G^{\der}=G^{\der}$).
We also define two pairs $(\underline{H},\gamma_H) \in \mathscr{E}_{\mathrm{ell}}(G)\times H(\Q)$, $(\underline{H}',\gamma_{H'})\in \mathscr{E}_{\mathrm{ell}}(G)\times H'(\Q)$ to be \emph{equivalent} (or \emph{isomorphic}) if there exists an isomorphism $\underline{H}\isom \underline{H}'$ matching the stable conjugacy classes of $\gamma_{H}$ and $\gamma_{H'}$. 
The association (\ref{eq:(H,gamma_H)->(gamma_0,kappa)}) preserves the equivalence relations on both sides.

For $\underline{H}=(H,\mathcal{H},s,\xi)\in \mathscr{E}_{\mathrm{ell}}(G)$ and a semisimple $\gamma_H\in H(\Q)$, let $\mathrm{Aut}(\underline{H},\gamma_H)$ and $\mathrm{Out}(\underline{H},\gamma_H)$ denote respectively the subgroup of $\mathrm{Aut}(\underline{H})$ of elements sending $\gamma_H$ into a stable conjugate of itself and the quotient of $\mathrm{Aut}(\underline{H},\gamma_H)$ by the subgroup of inner automorphisms. Then, we have an equality:
\begin{equation} \label{eq:Out(H)&Out(H,gamma_H)}
|\mathrm{Out}(\underline{H})| =m(\underline{H},\gamma_H) |\mathrm{Out}(\underline{H},\gamma_H)|,
\end{equation}
where $m(\underline{H},\gamma_H)$ is the number of the stable conjugacy classes in $H(\Q)$ in the orbit of $\gamma_H$ under $\mathrm{Aut}(\underline{H})$.

The following lemma is proved in \cite[$\S$IV.3]{Labesse04} in the greater generality of \emph{twisted} endoscopy and also generalizes Lemma 9.7 of \cite{Kottwitz86} which assumes that $G^{\der}=G^{\uc}$.
We present Labesse's proof in our simple setting of standard endoscopy which then benefits from economy of notation and preparatory discussion.

\begin{lem} \label{lem:Labesse04,IV.3}
(1) For every $(\gamma_0,\kappa)\in E_{\mathrm{st}}(G)\times \mathfrak{K}(I_0/\Q)$, there exists a pair $(\underline{H}=(H,\mathcal{H},s,\xi),\gamma_H)$ of an elliptic endoscopic datum and a $(G,H)$-regular, semisimple element such that $(\underline{H},\gamma_{H})\rightarrow (\gamma_0,\kappa)$.

(2) Suppose that $(\underline{H},\gamma_{H})\rightarrow (\gamma_0,\kappa)$ and $(\underline{H}',\gamma_{H'})\rightarrow  (\gamma_0',\kappa')$. If $(\gamma_0,\kappa)$ and $(\gamma_0',\kappa')$ are equivalent, so are $(\underline{H},\gamma_{H})$ and $(\underline{H}',\gamma_{H'})$. 

(3) If $n(\gamma_0,\kappa)$ denotes the number of elements $\kappa'\in\mathfrak{K}(I_0/\Q)$ such that
$(\gamma_0,\kappa')$ is equivalent to $(\gamma_0,\kappa)$, we have equalities
\begin{align*}
 |\pi_0(G_{\gamma_0})(\Q)| &=n(\gamma_0,\kappa) \cdot |\mathrm{Out}(\underline{H},\gamma_H)|\cdot |\pi_0(H_{\gamma_H})(\Q)|  \\
 &=n(\gamma_0,\kappa)\cdot |\mathrm{Out}(\underline{H})|\cdot m(\underline{H},\gamma_H)^{-1}\cdot |\pi_0(H_{\gamma_H})(\Q)|.
 \end{align*}
\end{lem}

\begin{proof}
(1) This is \cite[Lemma 9.7]{Kottwitz86}, where $G$ is assumed to be quasi-split, but recall that every maximal $\Q$-torus of $G$ transfers to its quasi-split inner form.

(2) This is \cite[Prop. IV.3.4]{Labesse04}. This is also proved in \cite[Lemma 9.7]{Kottwitz86} under the assumption $G^{\der}=G^{\uc}$, but the same proof can be adapted for the general case in the following way. 
By definition, there exist an admissible embedding $j:T_H\isom T\subset G$ (of maximal tori) sending $\gamma_H\in T_H(\Q)$ to $\gamma_0\in T(\Q)$ and a similar one $j':T_{H'}\isom T'\subset G$. Choose $g\in G(\Qb)$ satisfying the conditions (\ref{eq:stable_conj_g}).
Then, one can show (as in the proof of \cite[Lemma 9.7]{Kottwitz86}) that there exists a $\Qb$-isomorphism $\alpha_0:H\rightarrow H'$ extending $j'^{-1}\circ\Int(g)\circ j:T_H\isom T_{H'}$ and there exists $h\in H'(\Qb)$ such that $\alpha:=\Int(h)\circ\alpha_0$ is a $\Q$-isomorphism.
Now, if we identify $T$ and $T'$ with $T_H$ and $T_{H'}$ via $j$ and $j'$ respectively, since the composite $\Int(h)\circ \Int(g):T_H\isom T_{H'} \isom \alpha(T_{H})$ is $\Q$-rational, it follows that $g^{-1}(h^{-1}\cdot {}^{\tau}h){}^{\tau}g\in T_H(\Qb)$ for every $\tau\in\Gamma$, which implies that $h^{-1}\cdot{}^{\tau}h\in I_{H'}(\Qb)$ for all $\tau\in\Gamma$: more precisely, $j$ and $j'$ can be extended to $\Qb$-isomorphisms $J:I_H:=(H_{\gamma_H})^{\mathrm{o}}\isom I_0$, $J':I_{H'}:=(H'_{\gamma_{H'}})^{\mathrm{o}}\isom I_0'$ (which are inner twists), and $\alpha_0$ can be constructed extending $J'^{-1}\circ \Int(g)\circ J : I_H\isom I_0\isom I_0'\isom I_{H'}$, cf. \cite[$\S$3.1]{Kottwitz86}. Then, $``g^{-1}(h^{-1}\cdot {}^{\tau}h){}^{\tau}g"=\alpha_0^{-1}(h^{-1}\cdot {}^{\tau}h)\cdot J^{-1}(g^{-1}\cdot {}^{\tau}g)$.
Therefore, $\alpha(\gamma_H)=\Int(h)(\gamma_{H'})$ and $\gamma_{H'}$ are stably conjugate.

(3) This is \cite[Prop. IV.3.5]{Labesse04}.
If $\mathrm{Norm}_{G_{\gamma_0}}(T)$ and $\mathrm{Norm}_{H_{\gamma_H}}(T_H)$ denote the normalizer $\Q$-group schemes, it is known \cite[Lemma2.4]{Brion15} that $T=\mathrm{Cent}_{G_{\gamma_0}^{\mathrm{o}}}(T)= (\mathrm{Norm}_{G_{\gamma_0}}(T))^{\mathrm{o}}$ and there exists an exact sequence of $\Q$-group schemes
\[ 1\rightarrow \Omega(T,I_0) \rightarrow \Omega(T,G_{\gamma_0})\rightarrow \pi_0(G_{\gamma_0}) \rightarrow 1,\]
where $\Omega(T,I_0):=\mathrm{Norm}_{I_0}(T)/T$ and $\Omega(T,G_{\gamma_0}):=\mathrm{Norm}_{G_{\gamma_0}}(T)/T$ are the quotient (Weyl) group schemes.
Let us identify the Weyl group scheme $\Omega(T_H,H)$ with a subgroup scheme of $\Omega(T,G)$ via $j$.  Then, (via the same identification) $\Omega(T_H,I_H)=\Omega(T,I_0)$ as $\Q$-schemes. Indeed, they are isomorphic over $\Qb$, and in fact there exists an inner twist $J:I_H\isom I_0$ which extends the $\Q$-isomorphism $j:T_H\isom T$. Hence, we may find a cochain in $C^1(\Q,T_H)$ whose image in $Z^1(\Q,T_H/Z(I_H))$ gives such inner twist, and it follows that the induced map $\alpha:\Omega(T_H,I_H)\isom \Omega(T,I)$ is $\Q$-rational. 

Now, the group $\pi_0(G_{\gamma_0})(\Q)$ acts on $\mathfrak{K}(I_0/\Q)$ in a natural manner: if we fix a maximal $\Q$-torus $T\subset G$ containing $\gamma_0$, we have seen above that any $\omega\in \Omega(T,G_{\gamma_0})(\Qb)$ whose image $\bar{\omega}\in \pi_0(G_{\gamma_0})(\Qb)$ lies in $\pi_0(G_{\gamma_0})(\Q)$ acts on $\mathfrak{K}(I_0/\Q)$. As $\Omega(T,I_0)(\Qb)$ acts trivially, this induces an action of $\pi_0(G_{\gamma_0})(\Q)$ on $\mathfrak{K}(I_0/\Q)$.
So, we obtain the equality
\begin{equation} \label{eq:pi_0_acts_on_K(I_0)}
 |\pi_0(G_{\gamma_0})(\Q)|=n(\gamma_0,\kappa)\cdot |\pi_0(G_{\gamma_0})(\Q)_{\kappa}|,
\end{equation}
where $\pi_0(G_{\gamma_0})(\Q)_{\kappa}$ is the stabilizer subgroup of $\kappa$. 

Next, we construct a group homomorphism 
\[ \mathrm{Aut}(\underline{H},\gamma_H) \rightarrow \pi_0(G_{\gamma_0})(\Q),\]
whose kernel and image are $\Omega(T_H,I_H)=\Omega(T,I_0)$ and $\pi_0(G_{\gamma_0})(\Q)_{\kappa}$, respectively. For any $\alpha\in \mathrm{Aut}(\underline{H},\gamma_H)$, by the same construction of $g$ in (\ref{eq:stable_conj_g}), we can find $h\in H(\Qb)$ such that $\alpha_0:=\Int(h)\circ\alpha$ satisfies that $\alpha_0(T_H)=T_H$, $\alpha_0(\gamma_H)=\gamma_H$, and that $a_{\tau}:=h\cdot{}^{\tau}h^{-1}\in I_H=(H_{\gamma_H})^{\mathrm{o}}$ for all $\tau\in\Gamma$; thus $a_{\tau}\in \mathrm{Norm}_{I_H}(T_H)(\Qb)$. Then, the restriction of $\alpha_0$ to $T_H$ is given by an element of $\Omega(T,G)(\Qb)$, hence determines an element $\omega_{\alpha}\in \Omega(T,G_{\gamma_0})(\Qb)$. We claim that the image $\bar{\omega}_{\alpha}$ of $\omega_{\alpha}$ in $\pi_0(G_{\gamma_0})(\Qb)$ lies in the subgroup $\pi_0(G_{\gamma_0})(\Q)$. Indeed, let $n\in \mathrm{Norm}_{G_{\gamma_0}}(T)(\Qb)$ be a representative of $\omega_{\alpha}$. So, $\Int(n)|_{T}\circ j=j\circ (\Int(h)\circ\alpha)|_{T_H}$ which implies that for any $\tau\in\Gamma$, $\Int(b_{\tau} {}^{\tau}n)|_T=\Int(n)|_T$, where $b_{\tau}=J(a_{\tau})\in \mathrm{Nom}_{I_0}(T)(\Qb)$, and thus $n^{-1}b_{\tau} {}^{\tau}n\in T$, i.e. $ {}^{\tau}n\equiv n\mod \mathrm{Nom}_{I_0}(T)(\Qb)$, proving the claim.
One easily verifies that the map $\mathrm{Aut}(\underline{H},\gamma_H) \rightarrow \pi_0(G_{\gamma_0})(\Q)$ sending $\alpha$ to $\bar{\omega}_{\alpha}$ is a homomorphism having the described kernel and image. Then, since the inner automorphisms in $\mathrm{Aut}(\underline{H},\gamma_H)$ are induced by $w\in \Omega(T_H,H_{\gamma_H})$ (whose image in $\pi_0(H_{\gamma_H})(\Qb)$ lies in $\pi_0(H_{\gamma_H})(\Q)$), we have the relation
\begin{equation} \label{eq:Out(H,gamma_H)&pi_0}
 |\pi_0(G_{\gamma_0})(\Q)_{\kappa}|= |\mathrm{Out}(\underline{H},\gamma_H)|\cdot |\pi_0(H_{\gamma_H})(\Q)|. 
\end{equation}
The two equations (\ref{eq:pi_0_acts_on_K(I_0)}), (\ref{eq:Out(H,gamma_H)&pi_0}) combined give the first equality of (2). The second equality follows from it and (\ref{eq:Out(H)&Out(H,gamma_H)}). \end{proof}

Let $f^{H_1,p}\in C^{\infty}_{c,\lambda_{H_1}}(H_1(\A_f^p))$ and $f^{H_1}_p\in C^{\infty}_{c,\lambda_{H_1}}(H_1(\Qp))$ be respectively the transfers of our functions $f^p\in C^{\infty}_c(G(\A_f^p))$ and $\phi_p\in C^{\infty}_c(G(\Qp))$ given by Thm. \ref{thm:untwisted_endoscopy_transfer} (for all $v\neq p,\infty$) and Thm. \ref{thm:twisted_endoscopy_transfer}, and let $f^{H_1}_{\infty}$ be the function as in Thm. \ref{thm:pseudo-coeff}.
Put $f^{H_1}:=f^{H_1,p}f^{H_1}_pf^{H_1}_{\infty}\in  C^{\infty}_{c,\lambda_{H_1}}(H_1(\A))$. 
Then, we have that
\begin{itemize}
\item[(i)] The global transfer factor $\Delta=\prod \Delta_v$ can be normalized such that $\Delta(\gamma_{H_1},\gamma_0)$ is zero unless $\gamma_{H_1}$ transfers to $\gamma_0$, in which case $\Delta(\gamma_{H_1},\gamma_0)=1$: this is the global hypothesis for transfer factors which is verified in \cite[$\S$6.4]{LS87}.
\item[(ii)]  $\mathrm{SO}_{\gamma_{H_1}}(f^{H_1})=0$ unless $\gamma_{H_1}$ is elliptic in $H_1(\R)$ and transfers to $G(\Qv)$ for every place $v$ of $\Q$ (This is due to Thm. \ref{thm:untwisted_endoscopy_transfer}, Thm. \ref{thm:twisted_endoscopy_transfer}, and Thm. \ref{thm:pseudo-coeff} and since we assume that an (elliptic) maximal torus of $H_{\R}$ transfers to an elliptic maximal torus of $G_{\R}$).
If such two conditions hold, $\gamma_{H_1}$ transfers to $G(\Q)$ (\cite{Kottwitz90}, second paragraph on p.188).
\end{itemize}

It follows that for any pair $(\gamma_0,\kappa)\in E_{\mathrm{st}}(G)\times \mathfrak{K}(I_0/\Q)$,
if $(H_1,\gamma_{H_1})\rightarrow (\gamma_0,\kappa)$, by (i) (i.e. $\Delta(\gamma_{H_1},\gamma_0)=1$), we have
\[ O^{\kappa}_{\gamma_0}(f)=\mathrm{SO}^{H_1}_{\gamma_{H_1}}(f^{H_1}), \]
and 
since $O^{\kappa}_{\gamma_0}(f)$ depends only on the equivalence class of the pair $(\gamma_0,\kappa)$), 
the last expression of (\ref{eq:stablization_step2}) is equal to
\begin{align*}
& \sum_{(\gamma_0,\kappa)/\sim} \tau(G) n(\gamma_0,\kappa) |\pi_0(G_{\gamma_0})(\Q)|^{-1} O^{\kappa}_{\gamma_0}(f) \\
\stackrel{(c)}{=}&  \sum_{(\underline{H},\gamma_H)/\sim} \tau(G) |\mathrm{Out}(\underline{H})|^{-1} m(\underline{H},\gamma_H) |\pi_0(H_{\gamma_H})(\Q)|^{-1} \mathrm{SO}^{H_1}_{\gamma_{H_1}}(f) \\
=& \sum_{\underline{H}\in\mathscr{E}_{\mathrm{ell}}(G)} \iota(G,\underline{H})\tau(H)  \sum_{\gamma_{H}\in E_{\mathrm{st}}(H)} |\pi_0(H_{\gamma_H})(\Q)|^{-1} \mathrm{SO}^{H_1}_{\gamma_{H_1}}(f^{H_1}) =\sum_{\underline{H}\in\mathscr{E}_{\mathrm{ell}}(G)} \iota(G,\underline{H}) \ST_{\elp}^{H_1}(f^{H_1}).
\end{align*}
Here, in the first line, the summation index runs through a set of representatives of the \emph{equivalence classes} of pairs $(\gamma_0,\kappa)$ of an $\R$-elliptic element in $E_{\mathrm{st}}(G)$ and an element of the associated group $\mathfrak{K}(-/\Q)$. In the second line, the summation index runs through a set of representatives of the \emph{equivalence classes} of pairs $(\underline{H}=(H,\mathcal{H},s,\xi),\gamma_H)\in\mathscr{E}_{\mathrm{ell}}(G)\times  E_{\mathrm{st}}(H)$ and for each $\gamma_{H}\in E_{\mathrm{st}}(H)$ we choose an (elliptic) element $\gamma_{H_1}\in H_1(\Q)$ mapping to $\gamma_H$. The equality (c) follows from (ii) and Lemma \ref{lem:Labesse04,IV.3}, together with the facts that for any $\alpha\in \mathrm{\Aut}(\underline{H})$ and $\gamma_H':=\alpha(\gamma_H)$, one has $|\pi_0(H_{\gamma_H'})(\Q)|=|\pi_0(H_{\gamma_H})(\Q)|$ and that $\gamma_0$ is elliptic in $G(\R)$ if and only if $\gamma_H$ is elliptic in $H(\R)$ and some (elliptic) maximal torus of $H_{\R}$ transfers to an elliptic maximal torus of $G_{\R}$.

This completes the proof.
\end{proof}


\begin{appendix}

\section{Existence of elliptic tori inside special maximal parahoric group schemes}  \label{sec:elliptic_tori_in_special_parahorics}

\begin{prop} \label{prop:existence_of_elliptic_tori_in_special_parahorics}
Let $k$ be a local field with residue characteristic not equal to $2$ and $\mfk$ the completion of the maximal unramified extension $k^{\nr}$ in an algebraic closure $\overline{k}$ of $k$ with respective rings of integers $\cO_k$ and $\cO_{\mfk}$. Let $G$ be a connected reductive group over $k$ of classical Lie type. Assume that $G$ is quasi-split and that $G^{\uc}$ is a product $\prod_i \Res_{F_i/\Qp}G_i$ of simple groups each of which is the restriction of scalars $\Res_{F_i/\Qp}G_i$ of an absolutely simple group $G_i$ over a finite extension $F_i$ of $k$ such that $G_i$ splits over a tamely ramified extension of $F_i$. 

Then, for any special parahoric subgroup $K$ of $G(k)$, there exists a maximal elliptic $k$-torus $T$ of $G$ such that $T_{k^{\nr}}$ contains (equiv. is the centralizer of) a maximal ($k^{\nr}$-)split $k^{\nr}$-torus $S_1$ of $G_{k^{\nr}}$ and that the unique parahoric subgroup of $T(\mfk)$ is contained in $\tilde{K}$, the parahoric subgroup of $G(\mfk)$ corresponding to $K$.
\end{prop}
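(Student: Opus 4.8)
\textbf{Proof strategy for Proposition \ref{prop:existence_of_elliptic_tori_in_special_parahorics}.}
The plan is to reduce the statement, as far as possible, to the case where $G$ is absolutely simple, residually split, and splits over a tamely ramified extension of $k$, and then to handle the remaining cases by an explicit case-by-case analysis in the Bruhat--Tits building. First I would use the compatibility of parahoric subgroups and maximal tori with restriction of scalars and with passage from $G$ to $G^{\mathrm{der}}$ and $G^{\mathrm{sc}}$: a special parahoric $K$ of $G(k)$ gives a special point $\mathbf{o}$ of $\mathcal{B}(G,k)=\mathcal{B}(G^{\mathrm{ad}},k)\times X_\ast(Z(G))_{\Gamma_k}\otimes\mathbb{R}$ (using the isomorphism \cite[1.2]{Tits79} already cited in \autoref{subsubsec:parahoric}), and its $G^{\mathrm{ad}}$-component is a special vertex $\mathbf{v}$; since $\mathcal{B}(G^{\mathrm{ad}},k)=\prod_i\mathcal{B}(\operatorname{Res}_{F_i/k}G_i,k)=\prod_i\mathcal{B}(G_i,F_i)$ and the notions of special vertex and of a maximal torus becoming a maximal $\mfk$-split torus's centralizer are all compatible with these decompositions, it suffices to produce, in each absolutely simple factor $G_i$ over $F_i$, a maximal $F_i$-torus $T_i$ which is elliptic, whose base change to $F_i^{\mathrm{nr}}$ is the centralizer of a maximal split torus, and whose unique parahoric subgroup lies in the special parahoric of $G_i(\hat{F_i})$ attached to $\mathbf{v}$. (The $X_\ast(Z(G))_{\Gamma_k}$-direction and the isogeny $G^{\mathrm{sc}}\to G^{\mathrm{der}}\subset G$ only affect the torus by a central part and do not disturb ellipticity or the parahoric containment, so this reduction is harmless.)

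Next I would dispose of the case where $G_i$ is split over $F_i$: here the special point can be taken to be a hyperspecial vertex (or any chosen special vertex), and the existence of an elliptic maximal torus of $G_i$ whose $F_i^{\mathrm{nr}}$-extension is a maximal torus (automatically the centralizer of the maximal split torus since everything is already split residually) with parahoric subgroup contained in the given special parahoric group scheme is the classical statement used already in \cite{LR87} (cf.\ \cite[Lem.~A.0.4]{Lee16}); one produces $T_i$ from an elliptic maximal torus of the special fibre $\mathcal{G}_{\mathbf{v}}^{\mathrm{o}}\otimes\kappa$ by a smoothness/lifting argument à la Hensel together with the fact that $\mathcal{G}_{\mathbf{v}}^{\mathrm{o}}$ is a reductive $\cO$-model. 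The remaining, genuinely ramified cases are the ones requiring work: having reduced to $G_i$ absolutely simple, quasi-split, residually split, tamely ramified and \emph{not} split, I would invoke the classification of such groups (the list of residually split, non-split, tamely ramified absolutely simple groups of classical type), which gives precisely the types labelled $B\text{-}C_m$, $C\text{-}BC_m$, $C\text{-}B_m$ in Tits's tables \cite{Tits79} (together with the ramified unitary groups, covered by $C\text{-}BC_m$), exactly as already enumerated in the proof of Lemma \ref{lem:LR-Lemma5.11}. For each such type and each special vertex $\mathbf{v}$ of the corresponding local Dynkin diagram, I would write down explicitly a maximal torus: take the quasi-split form, realized via a quadratic (or biquadratic) ramified extension $E/F_i$ as in \cite[Examples 1.15, 1.16]{Tits79}, and build $T_i$ as a product of induced tori $\operatorname{Res}_{E/F_i}\mathbb{G}_m$ and $\ker(\operatorname{Res}_{E/F_i}\mathbb{G}_m\to\mathbb{G}_m)$ matched to the split torus $S$ (whose centralizer over $F_i^{\mathrm{nr}}$ it becomes after base change, since over $F_i^{\mathrm{nr}}$ the extension $E\cdot F_i^{\mathrm{nr}}/F_i^{\mathrm{nr}}$ is still ramified of the same degree), arranged so that the apartment of $S$ passes through $\mathbf{v}$; the parahoric subgroup of $T_i(\hat{F_i})$ is then $\ker(w_{T_i})$, which by the explicit description of the affine roots vanishing at $\mathbf{v}$ (Lemma \ref{lem:specaial_parahoric_in_Levi}(2) identifies these with affine roots of the ambient group) is visibly contained in $\mathcal{G}_{\mathbf{v}}^{\mathrm{o}}(\cO_{\hat{F_i}})$.

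The main obstacle I anticipate is precisely this last step: verifying, uniformly over the finitely many ramified types and over every special vertex of each local Dynkin diagram, that the explicitly constructed induced-type maximal torus (i) is elliptic over $F_i$ --- i.e.\ its $F_i$-split part is central, which one checks from the action of $\operatorname{Gal}(E/F_i)$ on $X_\ast(T_i)$ --- and (ii) has its Kottwitz-kernel $\ker(w_{T_i})$ genuinely sitting inside the Bruhat--Tits group scheme $\mathcal{G}_{\mathbf{v}}^{\mathrm{o}}$, not merely inside $\operatorname{Fix}(\mathbf{v})$. Point (ii) is where the hypothesis ``residue characteristic $\neq 2$'' and the tameness are really used, since the valuation-theoretic computation of the filtration on $T_i(\hat{F_i})$ and its comparison with the root-group filtrations defining $\mathcal{G}_{\mathbf{v}}^{\mathrm{o}}$ (via \cite[5.2]{BT84}) is sensitive to wild ramification and to $2$ in the orthogonal/unitary cases. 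Since the possible cases collapse or coincide substantially under these hypotheses --- e.g.\ the ramified special orthogonal groups over a field of odd residue characteristic have a very constrained list of special vertices --- I expect the case analysis to be lengthy but mechanical; I would organize it by first treating the ``endpoint'' special vertices (which tend to be hyperspecial or nearly so and reduce to the split case after passing to a Levi) and then the interior special vertices, where the torus must be chosen with a nontrivial ramified component matched to the branch of the diagram at $\mathbf{v}$.
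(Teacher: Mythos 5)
Your reduction to the case of an absolutely simple, quasi-split, tamely ramified group $H$ over a finite extension $F$ of $k$ (via $G^{\ad}$, the product decomposition, restriction of scalars, and the compatibility of buildings and parahorics with all of these) is correct and is exactly the first step of the paper's proof. You also correctly identify the remaining ramified types ($B\operatorname{-}C_m$, $C\operatorname{-}BC_m$, $C\operatorname{-}B_m$, together with the unramified type ${}^2A'_{2m}$ at its non-hyperspecial vertex) as the ones requiring real work, and you correctly cite the hyperspecial case from \cite{LR87}/\cite{Lee16}.

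However, your proposed construction of the torus in the ramified cases does not work as stated. You suggest building $T_i$ out of the tori $\Res_{E/F}\Gm$ and $\ker(\Res_{E/F}\Gm\to\Gm)$ with $E$ the ramified (quadratic or biquadratic) splitting extension. But when $E/F$ is ramified, both of these tori have the \emph{same} rank over $F$ and over $F^{\nr}$ (namely $1$ and $1$, respectively $0$ and $0$), since $E\cdot F^{\nr}/F^{\nr}$ is again a nontrivial field extension. Any product of such pieces therefore has $F$-rank equal to $F^{\nr}$-rank. The torus you need must instead have $F$-rank $0$ (ellipticity) and $F^{\nr}$-rank equal to the positive $F^{\nr}$-rank of $H$; these two numbers are different, so your building blocks cannot produce it. To get the discrepancy one must import \emph{unramified} anisotropic tori (anisotropic over $F$ but split over $F^{\nr}$). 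The paper obtains them from the hyperbolic $\mathrm{SL}_2$-factors in a Witt decomposition of the Hermitian or quadratic form: each hyperbolic plane $\mathbb{H}_i$ contributes an $\mathrm{SL}_2(F)$ for which the induced parahoric is hyperspecial (this is the Levi argument, Lemma \ref{lem:specaial_parahoric_in_Levi}(3)), whence the hyperspecial case supplies an unramified anisotropic torus $S_i'\subset\SU(\mathbb{H}_i)$; these are then twisted by the ramified anisotropic centres $\underline{E}_c^{\times}$ of $\mrU(\mathbb{H}_i)$. The odd-rank unitary cases (${}^2A'_{2m}$, $C\operatorname{-}BC_m$) are reduced to the even-rank ones by splitting off the rank-one anisotropic summand $W_{\mathbf{an}}$, and $C\operatorname{-}B_m$ is handled analogously by separating the split orthogonal summand from the ramified anisotropic plane. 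None of this structure appears in your sketch, and the parahoric-containment verification you flag as ``lengthy but mechanical'' is in fact the bulk of the proof, carried out via explicit stabilizer lattices $\Lambda_i$ and the identification of $\tilde{P}_{\{i\}}$ with their $\cO_L$-stabilizers; you do not carry it out, and without the correct choice of torus it could not be made to work.
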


\begin{rem} \label{rem:properties_of_certain_elliptic_tori_in_special_parahorics}
(1) The unique parahoric subgroup of $T(\mfk)$ (resp. of $T(k)$) is $T(\mfk)_1:=\ker(w_{T_L})$ (resp. $T(k)_1:=\ker(w_{T_L})\cap T(k)$).

(2) With $S_1$ and $T$ as in the statement, the second property of $T$ can be translated into a statement about the Bruhat-Tits building:
any special point $\mbfv\in \mcB(G,k)$ giving $K$ (i.e. $K=\mathrm{Stab}_{G(k)}(\mbfv)\cap\ker(w_{G_{\mfk}})$) lies in the apartment $\mcA(S_1,\mfk)$ attached to $S_1$.
Indeed, $S_1(\mfk)_1\subset \tilde{K}$ if and only if $\mbfv\in\mcA(S_1,\mfk)$. This is because $\mcA(S_1,\mfk)$ is the full fixed point set of the pararhoric subgroup $S_1(\mfk)_1$, which in turn is due to \cite[3.6.1]{Tits79} since every relative root $a$ of the root datum $(G_{\mfk},(S_1)_{\mfk})$, being a non-trivial character (of a split torus), satisfies that $a(S_1(\mfk)_1)\nsubseteq 1+\pi_F$ (i.e. $\bar{a}(\overline{S_1(\mfk)_1})\neq 1$), as the residue field of $\cO_{\mfk}$ is infinite. But, $S_1(\mfk)_1\subset\tilde{K}$ if and only if $T(\mfk)_1\subset\tilde{K}$, since $T(\mfk)=Z_{G_{\mfk}}(S_1)(\mfk)$ acts on $\mcA(S_1,\mfk)$ via $w_{T_{\mfk}}\otimes\R:T(\mfk)\rightarrow X_{\ast}(T)_{\Gal(\overline{\mfk}/\mfk)}\otimes\R=X_{\ast}(S_1)_{\R}$ \cite[1.2.(1)]{Tits79}, so if $S_1(\mfk)_1\subset\tilde{K}$, $T(\mfk)_1$ fixes $\mbfv(\in \mcA(S_1,\mfk))$.

\end{rem}

\begin{proof} 
We reduce a general case to the case of absolutely almost-simple groups. First, under the isomorphism
\[\mcB(G,k)\simeq \mcB(G^{\der},k)\times X_{\ast}(A(G))_{\R},\] 
where $A(G)$ is the maximal split $F$-torus in the center $Z(G)$, a special point in $\mcA(G,k)$ corresponds to $(v,x)$ for a special vertex $v$ in $\mcB(G^{\der})$ and a point $x\in X_{\ast}(A(G))_{\R}$. This implies that we may assume that $G$ is semisimple, and by the same reasoning further that $G$ is simply-connected.
Then as $G(=G^{\der})$ is a product of almost-simple groups of the same kind (by which we mean quasi-split, tamely ramified, classical groups), we may also assume that $G$ is almost-simple. Hence, $G=\Res_{F/k}(H)$ for an absolutely-(almost)simple, quasi-split, classical, tamely ramified, semi-simple group $H$ over a finite extension $F$ of $k$. 
The building $\mcB(G,k)$ (resp. $\mcB(G,\mfk)$) is canonically isomorphic to $\mcB(H,F)$ (resp. $\prod_{\sigma\in\Hom_k(F_0,\mfk)}\mcB(H,F\otimes_{F_0,\sigma}\mfk))$, where $F_0$ is the maximal absolutely unramified subextension of $F$); this is true for any finite separable extension $F$ of $k$ \cite[2.1]{Tits79}. 
This shows that the claim for $G$ follows from the claim for $H$. Let $L$ and $\cO_L$ be the completion of the maximal unramified extension $F^{\nr}$ in an algebraic closure $\overline{F}$ of $F$ and its ring of integers.

From this point, we prove the proposition case by case. We use the complete list of isomorphism classes of quasi-split, tamely ramified, classical groups over local fields as provided in \cite{Gross12}, where Gross gives a complete list of isomorphism classes of (not necessarily quasi-split or classical) tamely ramified groups over local fields and it is fairly easy to determine the quasi-split, classical ones from that list. There are totally ten such isomorphism classes, among which the first seven are unramified ones (including four split ones). 

\begin{itemize}
\item[$A_m$]$(m\geq1)$: $H=\mathrm{SL}_m$ (split group).
\item[${}^2A'_{2m}$]$(m\geq2)$: Let $E$ be the unramified quadratic extension of $F$ and let $W$ be a non-degenerate Hermitian space of odd rank $n=2m+1$ over $E$ (its Witt-index must be $m$). Then $H=\SU(W)$ (non-split unramified group).
\item[${}^2A'_{2m-1}$]$(m\geq2)$: Let $E$ be the unramified quadratic extension of $F$ and let $W$ be a non-degenerate Hermitian space of even rank $n=2m$ over $E$ which contains an isotropic subspace of dimension $m$. Then $H=\SU(W)$ (non-split unramified group).
\item[$B_m$]$(m\geq3)$: Let $W$ be a non-degenerate orthogonal space of odd dimension $2m+1$ over $F$ which contains an isotropic subspace of dimension $n$. Then $H=\Spin(W)$ (split group).
\item[$C_m$]$(m\geq2)$: Let $W$ be a non-degenerate symplectic space of dimension $2m$ over $F$. Then $H=\Sp(W)$ (split group).
\item[$D_m$]$(m\geq4)$: Let $W$ be a non-degenerate orthogonal space of dimension $2m$ over $F$ which contains a (maximal) isotropic subspace of dimension $m$. Then, the center of the Clifford algebra is the split \'etale quadratic extension $E=F\oplus F$ of $F$, and $H=\Spin(W)$ is a split group.
\item[${}^2D_m$]$(m\geq4)$: Let $W$ be a non-degenerate orthogonal space of even dimension $2m$ over $F$ where the center of the Clifford algebra is the unramified quadratic extension $E$ of $F$. Then $H=\Spin(W)$ (non-split unramified group).
\item[$B\operatorname{-}C_m$]$(m\geq3)$: Let $E$ be a tamely ramified quadratic extension of $F$ and let $W$ be a non-degenerate Hermitian space of even rank $n=2m$ over $E$ which contains an isotropic subspace of dimension $m$. Then $H = SU(W)$ (ramified group)
\item[$C\operatorname{-}BC_m$]$(m\geq2)$: Let $E$ be a tamely ramified quadratic extension of $F$ and let $W$ be a non-degenerate Hermitian space of odd rank $n=2m+1$ over $E$. Then $H = SU(W)$ (ramified group).
\item[$C\operatorname{-}B_m$]$(m\geq2)$: Let $W$ be a non-degenerate orthogonal space of even dimension $2m$ over $F$ where the center of the Clifford algebra is a tamely ramified quadratic extension $E$ of $F$. Then $H=\Spin(W)$ (ramified group).
\end{itemize}

Here, each heading is the name for the corresponding isomorphism class that is used in \cite{Tits79}, tables 4.2 and 4.3. 

Now, when the group is unramified and given special vertex is hyperspecial, the claim is known \cite{Lee16}, Appendix 1.0.4). For convenience, we split these cases into three kinds. The first kind consists of the split groups, thus, all special vertices are automatically hyperspecial: these are $A_m$, $B_m$, $C_m$, $D_m$. The second case is when there are two special vertices and both of them are hyperspecial: they consist of ${}^2A'_{2m-1}$, ${}^2D_m$. The remaining ones constitute the last case, i.e. there is some non-hyperspecial, special vertex.
So, we only need to take care of the last case: 
\[{}^2A'_{2m},\quad B\operatorname{-}C_m,\quad C\operatorname{-}BC_m,\ \text{ and }\ C\operatorname{-}B_m.\] 
Note that except for the last one, all these are (special) unitary groups.

(1) First, we treat the special unitary groups of \emph{even} absolute rank (i.e. $H_{\overline{F}}\simeq \SU(2m+1)$ for an algebraic closure $\overline{F}$ of $F$). We will reduce the proof in this case to the special unitary groups of \emph{odd} absolute rank.
For a moment, we let $E$ be an arbitrary quadratic extension of $F$ with respective rings of integers $\cO_E$, $\cO_F$ (we assume that the residue characteristic of $\cO_F$ is not $2$). We choose a uniformizer $\pi$ of $\cO_E$ such that $\pi+\overline{\pi}=0$ for the non-trivial automorphism $\overline{\cdot}$ of $E/F$.
Let $(W,\phi:W\times W\rightarrow E)$ be a non-degenerate Hermitian space of dimension $n=2m+1\ (m\geq1)$. As is well-known, $\psi$ has maximal Witt-index $m$, so there exists a Witt basis $\{e_{-m},\cdots,e_{m}\}$, i.e. such that 
\[\phi(e_i,e_j)=\delta_{i,-j},\quad\text{for }-m\leq i,j\leq m.\] 
For $i=0,\cdots,m$, we define an $\cO_E$-lattice in $W$:
\[\Lambda_i:=\mathrm{span}_{\cO_E}\{\pi^{-1}e_{-m},\cdots,\pi^{-1}e_{-i-1},e_{-i},\cdots,e_m\}.\]
(here, $\Lambda_{m}=\mathrm{span}_{\cO_E}\{e_{-m},\cdots,\cdots,e_m\}$.)

Set $H=\SU(W,\phi)$ (algebraic group over $F$). Its (minimal) splitting field is $E$. For a non-empty subset $I$ of $\{0,\cdots,m\}$, we consider the subgroup of $H(F)$ 
\[P_I:=\{g\in\SU(W,\phi)\ |\ g\Lambda_i\subset\Lambda_i,\ \forall i\in I\}.\]
For each $i=0,\cdots,m$, $P_{\{ i\}}$ (stabilizer of a single lattice $\Lambda_i$) is also the stabilizer of a point $v_i$ of the apartment $\mcA$ of a maximal $F$-split torus $S$ of $H$, which in turn can be matched with the $(i+1)$-th vertex of the local Dynkin diagram \cite[1.15, 3.11]{Tits79}. 

From this correspondence and the information found in \textit{loc. cit.} (1.15 (9), 4.3 in the unramified case, and 1.15 (10), 4.2 in the ramified case), we deduce the following facts.

\textit{Every $P_I$ is a parahoric subgroup of $\SU(W,\phi)$ and any parahoric subgroup of $\SU(W,\phi)$ is conjugate to $P_I$ for a unique subset $I$. If $E$ is unramified (i.e. of type ${}^2A'_{2m}$), there are two special vertices, one hyperspecial and one non-hyperspecial. The group $P_{\{0\}}$ (resp. $P_{\{m\}}$) is the non-hyperspecial, special (resp. the hyperspecial) parahoric subgroup. If $E$ is ramified (i.e. of  type $C\operatorname{-}BC_m$), there are two special vertices, both non-hyperspecial, which correspond to $I=\{0\}$ and $I=\{m\}$. The corresponding parahoric subgroup $\tilde{P}_{\{i\}}$ of $H(L)$ is 
\[ \tilde{P}_{\{i\}}=\{g\in\SU(W,\phi)(L)\ |\ g\Lambda_i\otimes_{\cO_F}\cO_L\subset\Lambda_i\otimes_{\cO_F}\cO_L\}.\]}
The statement on $\tilde{P}_{\{i\}}$ follows from the fact that the stabilizer $\cO_L$-group scheme $\mcG_i$ of the vertex $v_i$ (defined by Bruhat-Tits \cite[3.4.1]{Tits79}) equals the $\cO_L$-structure on $H_L$ induced by the lattice \cite[3.11]{Tits79}, and the characterization of parahoric groups given by Haines-Rapoport  \cite{PappasRapoport08} ($\SU(W,\phi)$ being a simply-connnected semi-simple group, the Kottwitz homomorphism $w_{\SU(W,\phi)_L}$ is trivial).

(a) The case $\mathbf{{}^2A'_{2m}}$: 
Suppose that $E$ is unramified. Then, we only need to consider the non-hyperspecial, special parahoric subgroup $P_{\{0\}}$. The Hermitian space $(W,\phi)$ splits as the direct sum of two Hermitian subspaces, that is, totally isotropic subspace and anisotriopic subspace: 
\[(W,\psi)=(W_{\mathbf{iso}},\phi_{\mathbf{iso}})\oplus (W_{\mathbf{an}},\phi_{\mathbf{an}}),\]
where
\[ W_{\mathbf{iso}}:=\langle e_l\ |\ l\neq 0\rangle,\quad W_{\mathbf{an}}:=E\cdot e_{0},\]
and $\phi_{\mathbf{iso}}=\phi|_{W_{\mathbf{iso}}}$ and $\phi_{\mathbf{an}}=\phi|_{W_{\mathbf{an}}}$.
There is the corresponding lattice decomposition 
\[\Lambda_{\{0\}}=\Lambda_{\{0\}}'\oplus \Lambda_{\{0\}}'',\] 
where $\Lambda_{\{0\}}':=\mathrm{span}_{\cO_E}\{\pi^{-1}e_{-m},\cdots,\pi^{-1}e_{-1},e_{1},\cdots,e_{m}\}$ and $\Lambda_{\{0\}}'':=\cO_E\cdot e_{0}$. 
Using this decomposition, we reduce the construction of the torus looked for into construction of similar tori for the groups $\SU(W_{\bullet},\psi_{\bullet})$ ($\bullet=\mathbf{iso}$, $\mathbf{an}$).
Let us write for short $\SU_{\bullet}$ and $\mrU_{\bullet}$ for $\SU(W_{\bullet},\psi_{\bullet})$ and $\mrU(W_{\bullet},\psi_{\bullet})$ respectively ($\bullet=\mathbf{iso}$, $\mathbf{an}$).
Suppose that $T_{\mathbf{iso}}$ is an $F$-torus of $\SU_{\mathbf{iso}}$ with the property in question and let $Z_{\mathbf{iso}}$ be the (connected) center of $\mrU_{\mathbf{iso}}$ (so that the subgroup $T_{\mathbf{iso}}\cdot Z_{\mathbf{iso}}$ generated by the two groups is a maximal torus of $U_{\mathbf{iso}}$, among others). This center is isomorphic to the anisotpropic $F$-torus $\underline{E}^{\times}$, whose set of $R$-points, for an $F$-algebra $R$, is 
\[\underline{E}^{\times}_c(R):=\ker(\Nm_{E/F}:(E\otimes R)^{\times}\rightarrow (F\otimes R)^{\times}).\] 
This is also identified in a natural way with the group $\mrU_{\mathbf{an}}$. We claim that 
\[T:=S((T_{\mathbf{iso}} \cdot Z_{\mathbf{iso}}) \times \mrU_{\mathbf{an}})=(T_{\mathbf{iso}}\times\{1\})\cdot \underline{E}^{\times}_c,\] 
where $S(-)$ means the intersection of the group inside the parenthesis (subgroup of $\mrU(W,\phi)$) with $\SU(W,\phi)$ and $\underline{E}^{\times}_c$ is identified with $S(Z_{\mathbf{iso}} \times \mrU_{\mathbf{an}})$ via $x\mapsto (x,x^{-2m})$, is a maximal torus of $H=\SU(W,\psi)$ with the same required properties. First, clearly this is anisotropic, and $T_{E}$ is a split maximal torus of $H_E$.
Next, we verify that $T(L)_1$ maps into the parahoric subgroup $\tilde{P}_{\{0\}}$ of $H(L)$. By the description of $\tilde{P}_{\{0\}}$ above, we have to show that $T(L)_1$ leaves stable $\Lambda_{\{0\}}\otimes_{\cO_F}\cO_L$. But, $T(L)_1=T_{\mathbf{iso}}(L)_1\cdot \underline{E}^{\times}_c(L)_1$, and $\underline{E}^{\times}_c(L)_1$ acts on $\Lambda_{\{0\}}'\otimes\cO_L\oplus\Lambda_{\{0\}}''\otimes\cO_L$ through the map $x\mapsto (x,x^{-2m})$ above. So, clearly it suffices to checks that $\underline{E}^{\times}_c(L)_1$ leaves stable each rank-$1$ lattice $\cO_L\otimes_{\cO_F}(\cO_E\cdot e_i)$. But in the case $E$ is unramified over $F$, we have the equality $\underline{E}^{\times}_c(L)_1=\{(x,x^{-1})\in \cO_L^{\times}\times \cO_L^{\times}\}$ under the isomorphism $(E\otimes L)^{\times}=L^{\times}\times L^{\times}$, thus leaves stable $\cO_L\otimes_{\cO_F}(\cO_E\cdot e_i)=(\cO_L\oplus \cO_L)\cdot e_i$. 
Next, the fact that $T_{\mathbf{iso}}(L)_1$ leaves stable $\Lambda_{\{0\}}'\otimes\cO_L$ will be one of the defining properties of the torus $T_{\mathbf{iso}}$. Indeed, 
$\SU(W_{\mathbf{iso}},\psi_{\mathbf{iso}})$ is a group of type ${}^2A_{2m-1}'$ in the above list, and the stabilizer $P_{\{0\}}'$ of the lattice $\Lambda_{\{0\}}'$ is a hyperspecial subgroup of $\SU(W_{\mathbf{iso}},\psi_{\mathbf{iso}})$ (cf. \cite[4.3]{Tits79}). So, we already know that there exists an elliptic maximal $F$-torus $T_{\mathbf{iso}}$ of $\SU_{\mathbf{iso}}$ such that $T_{\mathbf{iso}}(L)_1$ is contained in the parahoric subgroup $\tilde{P}_{\{0\}}'$ of $\SU_{\mathbf{iso}}(L)$ corresponding to $P_{\{0\}}'$ which is the stabilizer in $\SU_{\mathbf{iso}}(L)$ of the lattice $\Lambda_{\{0\}}'\otimes\cO_L$. This finishes the proof in the case $E$ is unramified over $F$.

(b) The case $\mathbf{C\operatorname{-}BC_m}$: 
When $E$ is ramified, there are two cases: $I=\{0\}$ and $I=\{m\}$. 
Then, the same strategy just used  (i.e. for unramified unitary groups of odd absolute rank) works again, reducing proof to the ramified unitary groups of odd absolute rank $2m-1$ (the type $\mathbf{B\operatorname{-}C_m}$), which will be discussed next. We just note that in this case with the same notations as above,  the (minimal) splitting fields of $H$, $\SU_{\mathbf{iso}}$, $Z_{\mathbf{iso}}$, and $\mrU_{\mathbf{an}}$ are all $E$, and that $(\underline{E}^{\times}_c)_L$ is anisotropic, so $\underline{E}^{\times}_c(L)$ is its own parahoric subgroup and, being a subgroup of $(\cO_E\otimes_{\cO_F}\cO_L)^{\times}$, leaves stable $\cO_L\otimes_{\cO_F}(\cO_E\cdot e_i)$. 

(2) Let $E$, $W$, and $n$ be as in the previous description (1)-(b), except that the parity of $n$ is even (i fact, it can be arbitrary for a moment). Let $\cO_E$ and $\cO_F$ be the integer rings of $E$ and $F$, respectively. We fix uniformizers $\pi_F$, $\pi=\pi_E$ of $\cO_F$ and $\cO_E$ such that $\pi^2=\pi_F$ (so again $\pi+\overline{\pi}=0$). Let $\phi:W\times W\rightarrow E$ be a non-degenerate Hermitian form and put $H=\SU(W,\phi)$. Again we assume (forced by the quasi-split condition, in the even dimensional case) that $\psi$ has maximal Witt-index, namely when one writes $n=2m$ (or $n=2m+1$), it is $m$. As $E$ is ramified over $F$, the rank of $H$ is the same as that of $H_L$.
 
Following \cite{PappasRapoport08}, $\S$4, we use a different indexing in the coming discussion.
Choose a Witt basis $\{e_{1},\cdots,e_{n}\}$ such that $\phi(e_i,e_j)=\delta_{i,n+1-j}$ for $1\leq i,j\leq n$. 

Suppose that $n=2m$. For $i\in\{1,\cdots,m-2\}\cup\{m\}$, we define an $\cO_E$-lattice $\Lambda_i$:
\[\Lambda_i:=\mathrm{span}_{\cO_E}\{\pi^{-1}e_{1},\cdots,\pi^{-1}e_{i},e_{i+1},\cdots,e_n\},\]
In the place of $i=m-1$, we introduce a new lattice $\Lambda_{m'}$ defined by:
\[\Lambda_{m'}:=\mathrm{span}_{\cO_E}\{\pi^{-1}e_{1},\cdots,\pi^{-1}e_{m-1},e_{m},\pi^{-1}e_{m+1},e_{m+2},\cdots,e_{n}\}.\]
Here, $m'$ is regarded as a symbol like other numbers.

Set $H:=\SU(W,\phi)$. For a non-empty subset $I$ of $\{1,\cdots,m-2,m',m\}$, the associated stabilizer subgroup $P_I$ has the same definition as in the previous case.

When $n=2m$, the group $\SU(W,\phi)$ has the local Dynkin diagram $B\operatorname{-}C_m$ for $m\geq3$, and $C\operatorname{-}B_2$ for $m=2$ (for $m=1$, $\SU(W,\psi)\simeq\mathrm{SL}_2$). 
Then, we have a similar statement \cite{PappasRapoport08}, $\S$4), namely that

\textit{the subgroup $P_I$ is a parahoric subgroup of $\SU(W,\phi)$ and any parahoric subgroup of $\SU(W,\phi)$ is conjugate to $P_I$ for a unique subset $I$, and the special maximal parahoric subgroups are $P_{\{m\}}$, $P_{\{m'\}}$. The same description is true for the parahoric subgroup $\tilde{P}_I$ of $\SU(W,\phi)(L)$ associated with $P_I$.}

(c) The case $\mathbf{B\operatorname{-}C_m}$: We have $n=2m$. First, let us consider the case $I=\{m\}$. 
The Hermitian space $W$ is the direct sum of $m$ hyperbolic subspaces 
\[\mathbb{H}_i:=E\langle e_i,e_{n+1-i}\rangle\subset W\ (i=1,\cdots,m).\] 
Then, we claim that when we identify $\SU(\mathbb{H}_i)$ with $\SU(\mathbb{H}_i)\times\mathrm{1}_{\oplus_{j\neq i}\mathbb{H}_j}\subset\SU(W,\phi)$, 
\[\SU(\mathbb{H}_i)(F)\cap P_{\{m\}}=\{g\in \SU(\mathbb{H}_i)(F)\ |\ g(\mathbb{H}_i\cap \Lambda_{\{m\}})=\mathbb{H}_i\cap \Lambda_{\{m\}}\}\] 
is a special maximal parahoric subgroup of $\SU(\mathbb{H}_i)(F)\simeq\mathrm{SL}_2(F)$ (recall that there are two $\mathrm{SL}_2(F)$-conjugacy classes of special parahoric subgroups of $\mathrm{SL}_2(F)$, which are however conjugate under $\mathrm{GL}_2(F)$). This can be proved, e.g. using an explicit isomorphism between $\SU(\mathbb{H}_i)\simeq\mathrm{SL}_{2,F}$, one such being
\[ g=\left(\begin{array}{cc}
a& b\\ c& d
\end{array}\right) \mapsto \left(\begin{array}{cc}
a& \pi^{-1}b\\ \pi c& d
\end{array}\right)\]
(check that when $g(e_i)=ae_i+ce_{n+1-i}, g(e_{n+1-i})=be_i+de_{n+1-i}$, $\phi(gv,gw)=\phi(v,w)$ implies that $a,d\in F^{\times}$, $b+\overline{b}=c+\overline{c}=0$). But, there is another (rather indirect) way of seeing this. Let $S_i$ be a maximal ($F$-)split $F$-subtorus of $\SU(\mathbb{H}_i)$ (so that $S:=\prod_i S_I$ is a maximal ($F$-)split $F$-torus of $\SU(W,\phi)$, and also of $\mrU(W,\phi)$); $S_i$ is contained in a unique maximal torus $T_i(=Z_{\mrU(\mathbb{H}_i)}(S_i))\simeq E^{\times}$ of $\mrU(\mathbb{H}_i)$. The subgroup 
\[M_i:=\mrU(\mathbb{H}_i)\times\prod_{j\neq i}T_j\] 
of $\mrU(W,\phi)$, being the centralizer of $\{1\}\times\prod_{j\neq i}S_j$, is an $F$-Levi subgroup of $\mrU(W,\phi)$. 
For a subgroup $M$ of $\mrU(W,\phi)$, let $\mathrm{S}M$ denote the intersection $M\cap \SU(W,\phi)$. Then, as $\{1\}\times\prod_{j\neq i}S_j\subset\SU(W)$, $\mathrm{S}M_i$ is an $F$-Levi subgroup of $\SU(W)$.
Hence, by Lemma \ref{lem:specaial_parahoric_in_Levi}, (3), $\mathrm{S}M_i(F)\cap P_{\{m\}}$ is a special maximal parahoric subgroup of $M_i(F)$. But, obviously one has that (as $P_{\{m\}}\subset\SU(W)(F)$)
\[\mathrm{S}M_i(F)\cap P_{\{m\}}=(\mrU(\mathbb{H}_i)(F)\times\prod_{j\neq i}T_j(F))\cap P_{\{m\}}
=(\mrU(\mathbb{H}_i)(F)\cap P_{\{m\}})\times \prod_{j\neq i}(T_j(F)\cap P_{\{m\}}). \]
So, each $\SU(\mathbb{H}_i)(F)\cap P_{\{m\}}=\mrU(\mathbb{H}_i)(F)\cap P_{\{m\}}$ is a special maximal parahoric subgroup of $\SU(\mathbb{H}_i)(F)\simeq\mathrm{SL}_2(F)$. 

But, the two special vertices in the local Dynkin diagram of $\mathrm{SL}_{2,F}$ are hyperspecial, hence we know that there exists an anisotropic maximal torus $S_i'$ of $\SU(\mathbb{H}_i)$, splitting over $F^{\nr}$, such that the unique parahoric subgroup $S_1'(L)_1$ of $S_i'(L)$ is contained in $\SU(\mathbb{H}_i)(L)\cap \tilde{P}_{\{m\}}$, i.e. leaves stable the $\cO_E\otimes_{\cO_F}\cO_L$-lattice $(\mathbb{H}_i\cap \Lambda_{\{m\}})\otimes_{\cO_F}\cO_L=\mathrm{span}_{\cO_E\otimes_{\cO_F}\cO_L}\{\pi^{-i}e_{i},e_{n+1-i}\}$ of $\mathbb{H}_i\otimes L$. 
Therefore, for the center $Z_i\simeq \underline{E}^{\times}_c$ of $\mrU(\mathbb{H}_i)$,
\[T_i':=S_i'\cdot Z_i\] 
is an anisotropic maximal torus of $\mrU(\mathbb{H}_i)$, whose $L$-rank equals $1$ and whose group of $L$-points also leaves stable the $\cO_E\otimes_{\cO_F}\cO_L$-lattice $(\mathbb{H}_i\cap \Lambda_{\{m\}})_{\cO_L}$, as $Z_i$ remains anisotropic over $L$ and $Z_i(L)=(\cO_E\otimes_{\cO_F}\cO_L)_1$. Finally, the torus $T:=(\prod_iT_i')\cap\SU(W,\phi)$ is an anisotropic maximal torus of $\SU(W,\phi)$ with the same required property (its $L$-rank is $m$, equal to the $L$-rank of $H_L$).

The case $I=\{m'\}$ can be treated in a completely analogous way, once we switch the basis vectors $e_{m}$ and $e_{m+1}$; although such permutation does not lie in $\SU(W,\psi)$, obviously it is allowed when applying the previous argument. 

(d) We are left with the case $\mathbf{C\operatorname{-}B_{m+1}\ (m\geq1)}$. This is also similar to the above cases.
Let $E$ be a (ramified) quadratic extension of $F$ and $W=F^{\oplus m}\oplus E\oplus F^{\oplus m}$, viewed as a vector space over $F$. We consider the quadratic form on $W$ expressed in terms of a basis $\{e_{-m},\cdots,e_{m}\}\cup\{e_0\}$ by
\[q(\sum_{1\leq|i|\leq m} x_i e_i+x_0e_0)=\sum_{i=1}^{m}x_{-i} x_i+\Nm_{E/F}x_0,\quad (x_i\in F,x_0\in E).\] 
For $i=0,\cdots,m$, we define a lattice $\Lambda_i$ as before:
\[\Lambda_i:=\mathrm{span}_{\cO_F}\{\pi^{-1}e_{-m},\cdots,\pi^{-1}e_{-i-1},e_{-i},\cdots,\check{e}_0,\cdots,e_m\}\oplus\cO_Ee_0,\]
where $\check{e}_0$ means as usual that it is omitted from the list
(so, $\Lambda_{0}=\pi^{-1}\mathrm{span}_{\cO_F}\{e_{-m},\cdots,e_{-1}\}\oplus\cO_Ee_0\oplus\mathrm{span}_{\cO_F}\{e_{1},\cdots,e_{m}\}$).

It is obvious that the claim at hand holds for the special orthogonal group $\mathrm{SO}(W,q)$ if and only if it does so for the universal covering of $\mathrm{SO}(W,q)$, i.e. the spin group $\mathrm{Spin}(W,q)$. We will show that for any special maximal parahoric subgroup $K$ of $\mathrm{SO}(W,q)$, there exists an anisotropic maximal $F$-torus $T$ such that $T_L$ contains a maximal ($L$-)split $L$-torus of $\mathrm{SO}(W,q)$ and $T(L)_1$ is contained in $\tilde{K}$, the special parahoric subgroup of $\mathrm{SO}(W,q)(L)$ corresponding to $K$. Note that the $L$-rank of $H$ is $m$ as $E$ is ramified over $F$.

For a non-empty subset $I$ of $\{0,\cdots,m\}$, let $P_I$ denote the stabilizer subgroup:
\[P_I:=\{g\in \mathrm{SO}(W,q)(F)\ |\ g\Lambda_i\subset\Lambda_i,\ \forall i\in I\}.\]
We know (deduced from \cite{Tits79}, 1.16, 4.2, cf. 3.12) that

\textit{the subgroup $P_I$ is a parahoric subgroup of $\SU(W,\phi)$ and that any parahoric subgroup of $\SU(W,\phi)$ is conjugate to $P_I$ for a unique subset $I$, and that the special maximal parahoric subgroups are $P_{\{0\}}$ and $P_{\{m\}}$. The corresponding parahoric subgroup $\tilde{P}_{\{i\}}$ of $H(L)$ is 
\[ \tilde{P}_{\{i\}}=\{g\in\SO(W,\phi)(L)\ |\ g\Lambda_i\otimes_{\cO_F}\cO_L\subset\Lambda_i\otimes_{\cO_F}\cO_L\}.\]} 

The idea used above for the ramified special unitary group of odd relative rank (i.e. of type $\mathbf{B\operatorname{-}C_{m}}$) works here, too. Namely, the quadratic space $W$ decomposes into the direct sum of maximally isotropic subspaces $W_{\mathbf{iso}}=F\langle e_i\ |\ 1\leq|i|\leq m\rangle$ and the anisotropic subspace $(E\cdot e_0,\Nm_{E/F})$. Then, $\SO(W_{\mathbf{iso}})$ is a split group, so for each $j=0,m$, the parahoric subgroup $P_{\{j\}}\cap \SO(W_{\mathbf{iso}})(F)$ is a a hyperspecial subgroup. Hence, there exists an anisotropic maximal torus $T'_j$ of $\SO(W_{\mathbf{iso}})$ of $L$-rank $m$, such that $T'_j(L)_1$ is contained in $\SO(W_{\mathbf{iso}})(L)\cap \tilde{P}_{\{j\}}$; the latter means that $T'_j(L)_1$ leaves stable the $\cO_L$-lattice
\[(W_{\mathbf{iso}}\cap \Lambda_{j})\otimes\cO_L\]
of $W_{\mathbf{iso}}\otimes L$. Now, it is easy to see that the anisotropic torus $T_j:=T'_j$ is a maximal torus of $\SO(W,q)$ (which also has the $L$-rank $m$) with the same properties for $\Lambda_{j}$. 

This completes the proof of the proposition. \end{proof}

\section{Complexes of tori attached to connected reductive groups.} \label{sec:abelianization_complex}
Here we collect some general facts on certain complexes of tori attached to a connected reductive group and its Levi subgroups.

For a connected reductive group $H$ over a field $k$, we let $\rho_H:H^{\uc}\rightarrow H$ denote the canonical homomorphism ($H^{\uc}$ being the simply connected covering of $H^{\der}$) and choosing a maximal $k$-torus $T$ of $H$, define a two-term complex of $k$-tori by
\[H_{\bfab}:=\rho_H^{-1}(T)\rightarrow T,\]
where $\rho_H^{-1}(T)$ and $T$ are located in degree $-1$ and $0$ respectively, i.e. $H_{\bfab}$ is the mapping cone of the morphism $\rho_H^{-1}(T)\rightarrow T$ in the abelian cateogry $\mathcal{CG}_k$ of commutative algebraic $k$-group schemes.
\footnote{Do not confuse this with the torus $H^{\ab}:=H/H^{\der}$; of course, they become quasi-isomorphic when $H^{\der}=H^{\uc}$. If stated otherwise, every two-term complex will be concentrated in degree $-1$ and $0$.}
This complex of tori is also quasi-isomorphic to the crossed module $H^{\uc}\rightarrow H$ (again with $H$ being placed at degree $0$), and the corresponding object in the (bounded) derived category $\mathbb{D}^b(\mathcal{CG}_k)$ depends only on $H$.

Let $G$ be a connected reductive group over a field $k$ and $I$ be a $k$-subgroup of $G$ which is a $\bar{k}$-Levi subgroup.
For $\tilde{I}=\rho_G^{-1}(I)$ (a connected reductive group), there exists a map $\tilde{i}:I^{\uc}\rightarrow \tilde{I}$:
\[\xymatrix{ I^{\uc} \ar[r]^{\tilde{i}} \ar[rd]_{\rho_I} & \tilde{I} \ar@{^(->}[r] \ar[d]^{\rho_G} & G^{\uc} \ar[d]^{\rho_G} \\ & I \ar[r]^{i} & G. }\]
We choose a maximal $k$-torus $T$ of $I$ and set $T_1:=\rho_G^{-1}(T)$, $T_2:=\rho_I^{-1}(T)$; we have a commutative diagram
\[\xymatrix{ T_2 \ar[r]^{\tilde{i}} \ar[rd]_{\rho_I} & T_1 \ar[d]^{\rho_G} \\ & T }\]
The verification of the following facts are easy and thus are left to readers.
\begin{itemize}
\item[(a)] The complex $T_2\rightarrow T_1$ is quasi-isomorphic to the abelianization complex $\tilde{I}_{\bfab}:\rho_{\tilde{I}}^{-1}(T_1)\rightarrow T_1$ attached to $\tilde{I}$, where $T_1$ is located at degree $0$, and thus $i:I\hookrightarrow G$ gives rise, in a canonical manner, to a distinguished triangle (in $\mathbb{D}^b(\mathcal{CG}_k)$):
\begin{equation} \label{eq:DT_of_CX_of_tori}
\tilde{I}_{\bfab} \rightarrow I_{\bfab}  \rightarrow G_{\bfab}  \rightarrow \tilde{I}_{\bfab}[1].
\end{equation}
\item[(b)] For a two-term complex $T_{\bullet}=T_{-1}\rightarrow T_0$ of $k$-tori, let $\hat{T}_{\bullet}$ denote the complex of $\C$-tori with $\Gamma$-action:
\[\hat{T}_{\bullet}:=(\hat{T}_0\rightarrow \hat{T}_{-1}),\] 
where $\hat{T}_0$ and $\hat{T}_{-1}$ are located in degree $-1$ and $0$ respectively.
Then, for any connected reductive group $H$ and a maximal $k$-torus $T$ of it, there exists an exact sequence of diagonalizable $\C$-groups with $\Gamma$-action
\[1\rightarrow Z(\hat{H}) \rightarrow \hat{T} \rightarrow \hat{\tilde{T}} \rightarrow 1,\] 
with $\tilde{T}:=\rho_H^{-1}(T)$ and $\Gamma=\Gal(\bar{k}/k)$, i.e. 
\begin{equation} \label{eq:center_of_complex_dual}
Z(\hat{H})[1]=\hat{H}_{\bfab}.
\end{equation}
\item[(c)] The distinguished triangle (\ref{eq:DT_of_CX_of_tori}) gives rise, by taking complex dual, to a distinguished triangle in the derived category $\mathbb{D}^b(\operatorname{\Gamma-\mathcal{DG}_{\C}})$ of diagonalizable $\C$-groups with $\Gamma$-action:
\begin{align} \label{eq:DT_of_dualCX_of_tori}
\hat{G}_{\bfab} \rightarrow \hat{I}_{\bfab} \rightarrow \hat{\tilde{I}}_{\bfab} \rightarrow \hat{G}_{\bfab}[1]. 
\end{align}
\end{itemize}

\end{appendix}

\textit{Email:} machhama@gmail.com

\end{document}